\numberwithin{equation}{section}
\newcommand{\Real}{\mathbb R}
\newcommand{\Torus}{\mathbb T}
\newcommand{\Integers}{\mathbb Z}
\newcommand{\Integer}{\mathbb Z}
\newcommand{\norm}[1]{\left\lVert #1 \right\rVert}
\newcommand{\abs}[1]{\left\vert#1\right\vert}
\newcommand{\set}[1]{\left\{#1\right\}}
\newcommand{\grad}{\nabla}
\newcommand{\G}{\mathcal{G}}
\newcommand{\I}{\mathbf{I}}
\newcommand{\jap}[1]{\left\langle #1 \right\rangle} 
\newtheorem{theorem}{Theorem}
\theoremstyle{definition}
\newtheorem{remark}{Remark}
\theoremstyle{lemma}
\newtheorem{proposition}{Proposition}[section]
\theoremstyle{definition}
\theoremstyle{lemma}
\newtheorem{lemma}{Lemma}[section]
\numberwithin{remark}{section}
\begin{document}

\title{Dynamics near the subcritical transition of the 3D Couette flow I: Below threshold case}
\author{Jacob Bedrossian\footnote{\textit{jacob@cscamm.umd.edu}, University of Maryland, College Park} \, and Pierre Germain\footnote{\textit{pgermain@cims.nyu.edu}, Courant Institute of Mathematical Sciences} \, and Nader Masmoudi\footnote{\textit{masmoudi@cims.nyu.edu}, Courant Institute of Mathematical Sciences}}
\date{\today}
\maketitle

\begin{abstract} 
We study small disturbances to the periodic, plane Couette flow in the 3D incompressible Navier-Stokes equations at high Reynolds number \textbf{Re}. 
We prove that for sufficiently regular initial data of size $\epsilon \leq c_0\textbf{Re}^{-1}$ for some universal $c_0 > 0$, 
the solution is global, remains within $O(c_0)$ of the Couette flow in $L^2$, and returns to the Couette flow as $t \rightarrow \infty$. 
For times $t \gtrsim \textbf{Re}^{1/3}$, the streamwise dependence is damped by a mixing-enhanced dissipation effect and the solution is rapidly attracted to the class of ``2.5 dimensional'' streamwise-independent solutions referred to as \emph{streaks}. 
Our analysis contains perturbations that  experience a transient growth of kinetic energy from $O(\textbf{Re}^{-1})$ to $O(c_0)$ due to the algebraic linear instability known as the \emph{lift-up effect}. 
Furthermore, solutions can exhibit a direct cascade of energy to small scales. 
The behavior is very different from the 2D Couette flow, in which stability is independent of $\textbf{Re}$, enstrophy experiences a direct cascade, and inviscid damping is dominant (resulting in a kind of \emph{inverse} energy cascade). 
In 3D, inviscid damping will play a role on one component of the velocity, but the primary stability mechanism is the mixing-enhanced dissipation. 
Central to the proof is a detailed analysis of the interplay between  the stabilizing effects of the mixing and enhanced dissipation and the destabilizing effects of the lift-up effect, vortex stretching, and weakly nonlinear instabilities connected to the non-normal nature of the linearization.
\end{abstract}

\setcounter{tocdepth}{1}
{\small\tableofcontents}

\section{Introduction}
We study the 3D Navier-Stokes equations near the Couette flow in the idealized domain $(x,y,z) \in \Torus \times \Real \times \Torus$: if $u + (y,0,0)^{T}$  solves the Navier-Stokes equation, then the disturbance $u$ solves 
\begin{subequations}\label{def:3DNSE}
\begin{align} 
\partial_t u + y \partial_x u + u\cdot \grad u + \grad p^{NL} & = \begin{pmatrix} - u^2 \\ 0 \\ 0 \end{pmatrix} - \grad p^L  + \nu \Delta u \\ 
\Delta p^{NL} & = -\partial_i u^j \partial_j u^i \\ 
\Delta p^L & = -2\partial_x u^2 \\ 
\grad \cdot u & = 0,
\end{align}
\end{subequations}
where $\nu = \textbf{Re}^{-1}$ denotes the inverse Reynolds number, $p^{NL}$ is the nonlinear contribution to the pressure due to the disturbance 
and $p^L$ is the linear contribution to the pressure due to the interaction between the disturbance and the Couette flow.
A vast effort in the applied mathematics and physics community has been made towards understanding the stability of laminar shear flows at high Reynolds number for over 130 years (see \S\ref{sec:History} and \cite{DrazinReid81,SchmidHenningson2001,Yaglom12} for references) and \eqref{def:3DNSE} is the simplest example, representing a fundamental, canonical problem in the field. The goal of this work, and the companion work \cite{BGM15II}, is to advance the mathematically rigorous  understanding of \eqref{def:3DNSE} and, for sufficiently regular initial data, resolve several long-standing questions regarding the (in)stability of \eqref{def:3DNSE} at high Reynolds number. 

\subsection{History and context of new results} \label{sec:History}
Understanding the stability of laminar flows and the transition to turbulence is one of the main objectives of hydrodynamic stability theory (see e.g. the texts \cite{DrazinReid81,SchmidHenningson2001,Yaglom12} and the references therein). 
One of the first and most influential experiments in the field were those of Reynolds \cite{Reynolds83} in 1883, which demonstrated the instability of laminar flow in a pipe for sufficiently high Reynolds number (in fact, this work is the origin of the name \emph{Reynolds number}).
However, such instabilities appeared inconsistent with theoretical studies, which suggested spectral stability independent of Reynolds number for a variety of simple laminar flows, including variations of the Couette flow \eqref{def:3DNSE} studied here \cite{Rayleigh80,Kelvin87,DrazinReid81}.  
Moreover, this spectral stability can indeed translate to nonlinear asymptotic stability, as has been shown in some cases \cite{Romanov73,DrazinReid81,KreissEtAl94,Liefvendahl2002}. 
Sometimes this apparent paradox is referred to as the ``Sommerfeld paradox'' or the ``turbulence paradox''\cite{LiLin11}.
In other cases, even when there are spectral instabilities for high Reynolds number, the flow is sometimes observed to transition at much lower Reynolds number than what the eigenvalue theory predicts and/or exhibits a completely different kind of instability \cite{SchmidHenningson2001,Yaglom12}. 
This behavior is ubiquitous in 3D hydrodynamics and is often referred to as \emph{subcritical transition} or \emph{by-pass transition} in the fluid mechanics literature. 
Since the work of Reynolds, many other influential experiments (see e.g. \cite{Nishioka1975,klebanoff1962,Tillmark92,Daviaud92,Elofsson1999,bottin98,HofJuelMullin2003,Mullin2011,LemoultEtAl2012}) and computer simulations (see e.g. \cite{Orszag80,HLJ93,ReddySchmidEtAl98,DuguetEtAl2010} and the references therein) on subcritical transition phenomena have been performed.

It is natural to suggest that while the flow is technically stable for all finite Reynolds number, the set of stable perturbations shrinks as the Reynolds number increases, leading to transition in any real system at some finite Reynolds number (this suggestion goes back to Lord Kelvin \cite{Kelvin87}, or arguably Reynolds \cite{Reynolds83}).
It is of practical importance to determine, for a specific physical system, the Reynolds number at which transition will likely occur. 
A similar question, formulated better for theoretical analysis, is to determine how the maximal size of stable perturbations in a given norm, the ``transition threshold'', will scale with respect to the Reynolds number (see e.g. \cite{TTRD93}). 
For example, given a norm $\norm{\cdot}_N$, find a $\gamma = \gamma(N)$ such that $\norm{u_{in}}_N \ll \nu^\gamma$ implies stability and perhaps such that $\norm{u_{in}}_N \gg \nu^\gamma$ permits instability. 
Notice that the transition threshold \emph{depends on the norm} and that different norms may result in different answers \cite{ReddySchmidEtAl98}; at asymptotically high Reynolds numbers, vanishing viscosity will not suppress the high frequencies and the Couette flow can move information from the small scales to the large (see Remark \ref{rmk:RegularityInstab} below and \cite{BGM15II} for further discussion). 
It is also of practical interest to determine how the instability will occur if the perturbation is too large. 
To state more precisely: given a norm on the initial perturbation, (A) how large can the perturbation be and still result in an asymptotically stable solution and (B) if the perturbation is large enough, what kind of instabilities are observed? 
For contrast, we emphasize that for sufficiently regular perturbations, the 2D Couette flow does \emph{not} undergo subcritical transition, and instead is nonlinearly, asymptotically stable (in a suitable sense) uniformly at high Reynolds number \cite{BMV14} and also infinite Reynolds number \cite{BM13}. 

A great deal of effort has been spent on trying to determine the transition threshold and the nature of the instabilities for simple laminar flows (see e.g. the texts \cite{SchmidHenningson2001,Yaglom12} and the references therein). 
The linearization of \eqref{def:3DNSE} is non-normal, which means a large transient growth before eventual decay is possible even on the linear level. The suggestion that this is the source of the observed instability goes all the way back to Orr \cite{Orr07} in 1907, even though he was thinking about a 2D non-normal effect, called the \emph{Orr mechanism}, which will not be the main cause of transient growth here (although it will be absolutely crucial for understanding \eqref{def:3DNSE}!). 
Instead, the main mechanism here for transient kinetic energy growth is the 3D non-normal effect known as the \emph{lift-up effect}; see \cite{EllingsenPalm75,landahl80} and \S\ref{sec:lin} below. 
The work of Trefethen et. al. \cite{TTRD93} forwarded the idea that the nonlinearity could interact poorly 
with the non-normal behavior by repeatedly re-exciting growing linear modes, producing a ``nonlinear bootstrap'' scenario. 
The authors discussed a low-dimensional toy model meant to capture certain aspects of this idea and used it to 
conjecture a stability threshold of $\epsilon \sim \nu^{\gamma}$ with $\gamma > 1$ for \eqref{def:3DNSE} (where $\epsilon$ is the size of the initial data). 
A number of works used variations of this idea to understand the threshold via combinations of simplified ODE models, asymptotic analysis, and computation \cite{Gebhardt1994,BDT95,Waleffe95,BT97,LHRS94,Chapman02}.
Various predictions have been made, ranging generally from $1 \leq \gamma \leq 7/4$; for the infinite channel, the mathematically rigorous bound $\gamma \leq 4$ is known \cite{Liefvendahl2002}, see also the earlier work of \cite{Romanov73,KreissEtAl94}.  
We also would like to emphasize that not all of these works consider exactly the same problem. For example, some consider boundaries in $y$ and/or consider a domain which is unbounded in $x$.
Both could potentially alter the answers. 
Boundary layers are notoriously problematic in fluid mechanics, and are known to introduce instabilities in most channel flows (see \cite{DrazinReid81,GrenierGuoNguyen14a} and the references therein).  
The prospect of removing the periodicity assumptions is discussed further in Remark \ref{rmk:periodicity} below.  

In this work, we prove that there exists a universal constant $c_0 > 0$ such that if the initial data is of size $\epsilon < c_0 \nu$ (in a sufficiently regular sense), then the solution is global in time and converges back to the Couette flow as $t \rightarrow \infty$. 
Further, we demonstrate that solutions to \eqref{def:3DNSE} which are $O(\epsilon)$ initially can grow to be as large as $O(c_0)$ before eventually decaying back to Couette flow. 
Note that the supremum in time of these solutions remains $O(c_0)$ uniformly as $\nu \rightarrow 0$. 
Hence, for sufficiently regular perturbations, we are essentially proving that $\gamma = 1$.  
That we can still obtain global solutions despite of this large growth depends crucially on the stabilizing effects of the mixing combined with a detailed weakly nonlinear study. 
Due to this mixing, the $x$-dependence of the solution is damped for $t \gtrsim \nu^{-1/3}$ and all solutions converge to the class of ``streak'' solutions (see \S\ref{sec:streak} below).   
Furthermore, due to the mixing and vortex stretching, the solutions can also exhibit a direct cascade of energy to the small scales where it is subsequently dissipated at the time $\sim \nu^{-1/3}$. To our knowledge, this kind of behavior in the 3D Navier-Stokes equations has not previously been confirmed in a mathematically rigorous setting. 
The class of initial data we consider is the sum of a sufficiently smooth function (Gevrey-$\frac{1}{s}$ for $s > 1/2$ \cite{Gevrey18}) and a much smaller (relative to $\nu$) $H^3$ function. 
It should be possible to improve $H^3$ to $H^{1/2}$ or $L^3$, but it was not our goal to be optimal there; actually this question is totally independent of the work we undertake here.

The stability mechanisms which make our results possible are \emph{mixing-enhanced dissipation} and \emph{inviscid damping}.  
Both effects arise from the mixing effect of the background Couette flow. 
Inviscid damping was first derived on the linear level by Orr \cite{Orr07} in 1907 and was later noticed to be the hydrodynamic analogue of Landau damping in plasma physics \cite{Landau46,Ryutov99,MouhotVillani11,BMM13} (hence the name, see e.g. \cite{BM13,Briggs70,BouchetMorita10,BM95,SchecterEtAl00} for discussions on the relationship between the effects). 
Both are special cases of a more general effect known as \emph{phase mixing} (see e.g. \cite{BMT13,BM13} and the references therein).  
Inviscid damping causes decay of the velocity field in inviscid fluids via mixing of the vorticity.
Indeed, mixing is characterized by a transfer of enstrophy to higher frequencies which then yields strong convergence of the velocity field via the negative order Biot-Savart law (see e.g. \cite{LinZeng11,BM13}).  
In 2D, the effect leads to the asymptotic stability (in the correct sense) of the 2D Couette flow even with no viscosity at all \cite{BM13}.
It is also expected to be relevant in other 2D contexts, for example, in understanding the stability of general 2D shear flows \cite{Zillinger2014} and the axi-symmetrization of 2D vortices \cite{GilbertBassom98}, hurricanes \cite{MontgomeryKallenback1997,SmithMontgomery1995}, and cyclotron beams \cite{CerfonEtAl13}. 
However in 3D, due to the vortex stretching caused by the Couette flow (see \S\ref{sec:lin} below), it only results in the decay of $u^2$, the second component of the velocity.
Due to the special structure of the nonlinearity in \eqref{def:3DNSE}, this is still key for suppressing certain, specific nonlinear effects (see \S\ref{sec:NonlinHeuristics} for more discussion).   

Inviscid damping will play an important role, but enhanced dissipation via mixing is the primary stability mechanism at work. It was first derived in the context of \eqref{def:3DNSE} by Lord Kelvin \cite{Kelvin87} (at least in 2D) and has been subsequently observed or studied by numerous authors in fluid mechanics in various settings (see e.g \cite{RhinesYoung83,DubrulleNazarenko94,LatiniBernoff01,BernoffLingevitch94,Bajer2001,Gilbert88,Lundgren82} as well as the rigorous works \cite{BeckWayne11,ConstantinEtAl08,BMV14}). 
In the 2D analogue of \eqref{def:3DNSE}, this effect causes rapid convergence of the solution on time scales $t \gtrsim \nu^{-1/3}$ to a slowly evolving shear flow, which only relaxes on time scales like $t \gtrsim \nu^{-1}$ (these time scales are explained in more detail below). 
The general intuition is that as information is mixed to smaller scales, the effectiveness of the viscosity is greatly enhanced in streamwise dependent modes.
Here it will imply that the $x$ dependence of the perturbation is damped out for $t \gtrsim \nu^{-1/3}$, but in 3D, the solution does not converge to a shear flow, but rather to the class of ``2.5-dimensional'' $x$-independent solutions to \eqref{def:3DNSE}, sometimes referred to as \emph{streaks} (see \S\ref{sec:streak} below).

\subsection{Linearized equations} \label{sec:lin}
First, we detail the behavior observed on the linear level and derive the lift-up effect, linear vortex stretching, inviscid damping, and enhanced dissipation in the linearization of \eqref{def:3DNSE}. 
The linear behavior will naturally serve as an important guide for the full dynamics.

\subsubsection{Linearized inviscid equations: lift-up effect, vortex stretching, and inviscid damping} 
Since we are interested in asymptotically high Reynolds number, it is sensible to consider first the linearized 3D Euler equations, which read
\begin{subequations}\label{def:3DEuler_Linear}
\begin{align} 
\partial_t u + y \partial_x u  & = \begin{pmatrix} - u^2 \\ 0 \\ 0 \end{pmatrix} - \grad p^L \\ 
\Delta p^L & = -2\partial_x u^2 \\ 
\grad \cdot u & = 0. 
\end{align}
\end{subequations}
It is often natural to consider the vorticity form of the equations, but the stretching nonlinearity destroys much of the simple structure of the vorticity formulation in dimension 3. 
However, it has long been known that the quantity
$$
q^2 = \Delta u^2,
$$
plays in dimension 3 for \eqref{def:3DNSE} a similar role to that played by the vorticity in dimension 2. 
This unknown dates back at least to Lord Kelvin \cite{Kelvin87} and is a standard tool in considering the stability of planar shear flows (see e.g. \cite{SchmidHenningson2001,Yaglom12,Chapman02} and the references therein). 
The distinguished role is due to it being a conservation law of the linear problem (for other shear flows it is no longer conserved, but it solves a special PDE): upon taking the Laplacian of the second component of \eqref{def:3DEuler_Linear}, we derive
\begin{align} 
\partial_t q^2 + y \partial_x q^2 = 0. 
\end{align} 
If we rewind by the action of the Couette flow and define $X = x-ty$, write $U^i(t,X,y,z) = u^i (t,x,y,z)$, and $Q^2(t,X,y,z) = q^2(t,x,y,z)$ and $P^L(t,X,y,z) = p^L(t,x,y,z)$, then we derive
\begin{subequations}\label{def:3DEuler_Linear_CouetteAction}
\begin{align} 
\partial_t U  & = \begin{pmatrix} - U^2 \\ 0 \\ 0 \end{pmatrix} - \grad^L P^L \\ 
\partial_t Q^2 & = 0 \\
\Delta_L U^2 & = Q^2 \\ 
\Delta_L P^L & = -2\partial_X U^2 \\ 
\grad^L \cdot U & = 0,  
\end{align}
\end{subequations}
where we are using
\begin{subequations} \label{def:gradDelL}
\begin{align} 
\grad^L & = (\partial_X, \partial_y - t\partial_X, \partial_z) \\ 
\Delta_L & = \partial_{XX} + (\partial_y - t\partial_X)^2 + \partial_{zz}.
\end{align}
\end{subequations} 
Here `L' stands for `linear'. 
For any sufficiently smooth quantity $f$ we have from the elementary inequality $\frac{1}{k^2 + (\eta - tk)^2} \lesssim \frac{\jap{\eta}^2}{\jap{k t}^2}$ for any non-zero integer $k$, the following fundamental \emph{inviscid damping} estimate for any $\sigma \in [0,\infty)$ and $\beta \in [0,2]$,
 \begin{align}
\norm{\Delta_L^{-1} f_{\neq}}_{H^\sigma} = \left(\sum_{l, k \neq 0} \int \frac{ \jap{k,\eta,l}^{2\sigma} \abs{\hat{f}(k,\eta,l)}^2}{\left(k^2 + l^2 + \abs{\eta-kt}^2\right)^2} d\eta \right)^{1/2} & \lesssim \frac{1}{\jap{t}^{\beta}} \norm{f_{\neq}}_{H^{\sigma+\beta}}, \label{ineq:IDfundamental}
\end{align} 
where we are using $H^{\sigma}$ to denote the $L^2$ Sobolev norm of order $\sigma$ and we are using the notation 
\begin{subequations} 
\begin{align} 
f_{0}(y,z) & = \frac{1}{2\pi}\int f(x,y,z)dx, \label{def:ProjectionNotesZero} \\ 
f_{\neq} & = f - f_{0}, \label{def:ProjectionNotes}
\end{align}
\end{subequations} 
where then `$\neq$' refers to the projection to non-zero Fourier frequencies in $x$; see Appendix \ref{apx:Gev} for the Fourier analysis conventions we are taking here. 

Inviscid damping was first observed by Orr \cite{Orr07} in the context of 2D inviscid flows. 
He also pointed out the potential for transient growth before decay: indeed,  
\begin{align} 
\widehat{\Delta_L^{-1} f}(k,\eta,l) = -\frac{\hat{f}(k,\eta,l)}{k^2 + l^2 + \abs{\eta-kt}^2}, 
\end{align} 
predicts transient growth for modes with $\eta k > 0$: modes that are tilted against the shear.  
These modes are first \emph{un-mixed} to larger scales before being subsequently mixed, eventually yielding the decay in \eqref{ineq:IDfundamental}. 
This manifests itself in the loss of ellipticity for modes with $\eta = kt$ in $\Delta_L$ and it is clear that in order to get a decay estimate like that stated in \eqref{ineq:IDfundamental},  one cannot gain powers of $t^{-1}$ in \eqref{ineq:IDfundamental} without paying regularity. 
We refer to a time $t \sim \eta/k$ as a \emph{critical time} (Orr's original terminology \cite{Orr07}) or a \emph{resonant time} (modern terminology \cite{Craik1971,YuDriscoll02,YuDriscollONeil,SchmidHenningson2001}). 
This loss of regularity is to control the amount of information in the small scales which is to be subsequently unmixed in the future. 
See \cite{BM13} for more discussion of the Orr mechanism. 

Returning to \eqref{def:3DEuler_Linear_CouetteAction}, since $Q^2$ is conserved and $U^2 = \Delta_L^{-1} Q^2$, \eqref{ineq:IDfundamental} implies the inviscid damping of $U^2$: 
\begin{align*} 
\norm{U^2_{\neq}}_{H^N} & \lesssim \jap{t}^{-2} \norm{(Q^2_{in})_{\neq}}_{H^{N+2}}.  
\end{align*}
In particular, this shows that the background shear flow suppresses $x$ variations in $u^2$ even at infinite Reynolds number. 
In turn, this implies the inviscid damping of the linear pressure $P^L$: 
\begin{align*} 
\norm{P_L}_{H^{N}} & \lesssim \jap{t}^{-2} \norm{U^2_{\neq}}_{H^{N+3}} \lesssim \jap{t}^{-4} \norm{(Q^2_{in})_{\neq}}_{H^{N+5}}.  
\end{align*} 
Hence, we see that $U^1_{\neq}$ and $U^3_{\neq}$ actually converge strongly as $t \rightarrow \infty$. 
We can therefore infer that in general, there is no inviscid damping on $u^1_{\neq}$ and $u^3_{\neq}$. 
In fact, this is due to vortex stretching: the vorticity is being mixed along with $q^2$ and the negative-order Biot-Savart law would imply inviscid damping on all components in the absence of vortex stretching, which is precisely what happens in 2D \cite{LinZeng11,BM13}.    
This is also why there is a direct cascade of kinetic energy in 3D but not in 2D. 

Next, we observe that the only contribution on the RHS of \eqref{def:3DEuler_Linear_CouetteAction} which is \emph{not}
integrable in time is the $X$ average of $U^2$. Indeed, upon taking $X$ averages of \eqref{def:3DEuler_Linear_CouetteAction} we derive the degenerate Jordan block-type system
\begin{subequations}\label{def:3DEuler_Liftup}
\begin{align} 
\partial_t U^1_0  & = -U_0^2 \\ 
\partial_t U^2_0 & = \partial_t U^3_0 = 0.  
\end{align}
\end{subequations}
By \eqref{def:3DEuler_Liftup}, $U_0^1$ grows linearly in time, and therefore the 3D Couette flow is linearly (algebraically) unstable in the 3D Euler equations (although classically known to be \emph{spectrally stable} in the sense that there are no unstable eigenvalues \cite{DrazinReid81,Yaglom12}). 
Hence, we see that the instability is ``non-modal''. 
To summarize the behavior of the 3D linear Euler equations we state the following (without making any effort to be optimal in terms of regularity):   
\begin{proposition}[Linearized Euler] \label{prop:LinEuler} 
Let $u_{in}$ be a divergence free vector field with $u_{in} \in H^7$. Then the solution $u(t)$ to the linearized Euler equations \eqref{def:3DEuler_Linear} with initial data $u_{in}$ satisfies the following for some final state $u_\infty = (u_\infty^1,0,u_\infty^3)$: 
\begin{subequations} \label{ineq:LinearID}
\begin{align} 
\norm{u^{2}_{\neq}(t)}_{2} + \norm{u^{2}_{\neq}(t,x+ty,y,z)}_{H^3}  & \lesssim \jap{t}^{-2} \norm{u^2_{in}}_{H^7} \label{ineq:U2LinearID} \\ 
\norm{u^1_{\neq}(t,x+ty,y,z) - u_\infty^1(x,y,z)}_{H^1} & \lesssim \jap{t}^{-1}\norm{u_{in}}_{H^7} \label{ineq:U1LinearID} \\ 
\norm{u^3_{\neq}(t,x+ty,y,z) - u_\infty^3(x,y,z)}_{H^1} & \lesssim \jap{t}^{-3}\norm{u_{in}}_{H^7}, \label{ineq:U3LinearID}  
\end{align} 
\end{subequations} 
and the formulas
\begin{subequations} \label{def:EulerStreak} 
\begin{align}
u^1_0(t,y,z) & = u^1_{in \; 0}(y,z) - tu_{in \; 0}^2(y,z) \label{eq:liftup} \\ 
u^2_0(t,y,z) & = u^2_{in \; 0}(y,z) \\
u^3_0(t,y,z) & = u^3_{in \; 0}(y,z). 
\end{align}
\end{subequations} 
\end{proposition}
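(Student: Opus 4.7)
The argument splits cleanly into the $X$-average (zero mode in $x$) and the non-zero modes in $x$.

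\emph{Zero modes.} Averaging \eqref{def:3DEuler_Linear_CouetteAction} in $X$, the components of $\grad^L P^L$ carrying a $\partial_X$ vanish, and the remaining Poisson problem $(\partial_{yy} + \partial_{zz}) P^L_0 = 0$ (together with the boundary conditions on $\mathbb{T} \times \mathbb{R} \times \mathbb{T}$) forces $P^L_0 \equiv 0$. What remains is precisely the closed ODE system \eqref{def:3DEuler_Liftup}, which is integrated fiberwise in $(y,z)$ to yield \eqref{def:EulerStreak}; the linear-in-$t$ lift-up \eqref{eq:liftup} is the Jordan-block interaction between $U^1_0$ and $U^2_0$.

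\emph{Inviscid damping of $u^2_\neq$.} The key conservation law is that $q^2 = \Delta u^2$ is transported by the Couette flow: taking $\Delta$ of the second component of \eqref{def:3DEuler_Linear} and using $\Delta p^L = -2\partial_x u^2$ cancels the pressure, producing $\partial_t q^2 + y \partial_x q^2 = 0$. In profile coordinates this reads $\partial_t Q^2 = 0$, so $Q^2(t) \equiv Q^2_{in}$ and $U^2(t) = \Delta_L^{-1} Q^2_{in}$. Applying \eqref{ineq:IDfundamental} with $\beta = 2$ and the two choices $\sigma = 0, 3$, together with $\norm{Q^2_{in}}_{H^5} \lesssim \norm{u^2_{in}}_{H^7}$, gives \eqref{ineq:U2LinearID}. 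The plain $L^2$ bound for $u^2_\neq$ transfers from $U^2_\neq$ because the shift $x \mapsto x + ty$ is an $L^2$-isometry.

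\emph{Non-zero modes of $u^1$ and $u^3$.} The remaining momentum equations in profile variables are
\begin{align*}
\partial_t U^1_\neq = -U^2_\neq - \partial_X P^L_\neq, \qquad \partial_t U^3_\neq = -\partial_z P^L_\neq,
\end{align*}
with $P^L = -2 \partial_X \Delta_L^{-2} Q^2_{in}$ obtained by solving the elliptic relation and using conservation of $Q^2$. The plan is to prove that each right-hand side is $L^1_t$ as an $H^1$-valued function, then define
\begin{align*}
u^j_\infty := U^j_\neq(0) + \int_0^\infty \partial_s U^j_\neq(s) \, ds, \qquad j \in \{1, 3\},
\end{align*}
and bound $U^j_\neq(t) - u^j_\infty$ by the tail integral. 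For $j = 1$ the dominant term is $-U^2_\neq$, which by \eqref{ineq:IDfundamental} is $O(\jap{s}^{-2})$ in $H^1$, so its tail integral produces the $\jap{t}^{-1}$ rate in \eqref{ineq:U1LinearID}; the pressure contribution $\partial_X P^L_\neq = -2 \Delta_L^{-1}(\partial_X^2 U^2_\neq)$ is of the same order or better. For $j = 3$ there is no lift-up-type term, and iterating \eqref{ineq:IDfundamental} twice through $\Delta_L^{-2}$ yields $O(\jap{s}^{-4})$ decay of $\partial_s U^3_\neq$ in $H^1$, hence the $\jap{t}^{-3}$ rate in \eqref{ineq:U3LinearID}.

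\emph{Main obstacle.} The principal bookkeeping is extracting the sharper $\jap{t}^{-3}$ decay of $u^3_\neq$: each factor of $\Delta_L^{-1}$ trades two derivatives for $\jap{t}^{-2}$ decay via \eqref{ineq:IDfundamental}, and a naive iteration for the pressure term in $\partial_t U^3_\neq$ consumes several derivatives of $Q^2_{in}$. Since the proposition is explicitly stated ``without making any effort to be optimal in terms of regularity'' and the envelope $H^7$ is generous, such losses are absorbed by simply taking $\sigma$ and $\beta$ large enough at each step rather than by any delicate refinement.
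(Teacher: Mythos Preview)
Your proposal is correct and follows precisely the argument laid out in the paper's discussion preceding the proposition: conservation of $Q^2$ in profile variables, inviscid damping of $U^2_{\neq}$ via \eqref{ineq:IDfundamental}, the faster decay of $P^L$ through iterated application of $\Delta_L^{-1}$, and then tail-integral convergence of $U^1_{\neq}$ and $U^3_{\neq}$; the zero-mode formulas are likewise obtained exactly as you describe from the Jordan-block system \eqref{def:3DEuler_Liftup}. Your identification of the regularity bookkeeping for the $\jap{t}^{-3}$ rate of $u^3_{\neq}$ as the only nontrivial point, and your handling of it by invoking the paper's explicit disclaimer on sharpness, are both appropriate.
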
 
The mechanism described in \eqref{eq:liftup} is the \emph{lift-up effect}, named so as it is caused by faster fluid moving down and slower fluid moving up (hence `lift-up') in the shear flow and so inducing linear-in-time growth of the perturbation \cite{landahl80,EllingsenPalm75,TTRD93,Trefethen2005,SchmidHenningson2001}.   
The same instability is alternatively known as the \emph{streamwise vortex/streak instability} since it is due to streamwise directed vorticity  mixing fluid in the $yz$ planes, which due to the background shear flow, induces the formation of perturbations in $u_0^1$ known as ``streaks'', due the streak-like appearance of the relatively fast fluid \cite{TTRD93,SchmidHenningson2001,Trefethen2005,bottin98}.
Due to the periodicity in $x$, there exists a class of global, exact nonlinear solutions to \eqref{def:3DNSE} (with or without viscosity) with precisely this behavior, which is the class of solutions we are referring to as ``streaks'' (see \S\ref{sec:streak} below).
 
Another property of the behavior in Proposition \ref{prop:LinEuler} worth emphasizing again is the inviscid damping in \eqref{ineq:U2LinearID}, 
which suppresses completely all the $x$ dependence of the $u^2$ component of the velocity field and leads to the convergence of the other components of the velocity field in \eqref{ineq:U1LinearID} and \eqref{ineq:U3LinearID}. 
Note that the convergence of $u^1$ and $u^3$ in \eqref{ineq:U1LinearID} shows that there is generally no inviscid damping of these components due to the vortex stretching and that there is a direct cascade of kinetic energy to high frequencies (at an approximately linear rate). 
This is in contrast to the behavior of 2D Euler, in which both components of the velocity experience inviscid damping and a strong convergence back to a shear flow, a behavior which is more akin to an inverse cascade \cite{BM13}.  

\subsubsection{Linearized viscous equations: enhanced dissipation} \label{sec:NSELin}
We saw above that the linearized 3D Euler equations are indeed unstable, consistent with the experimental observation that 
laminar flows become unstable for sufficiently high Reynolds number.
When accounting for finite Reynolds number we are now considering the linearized Navier-Stokes equations
\begin{subequations}\label{def:3DNSE_Linear}
\begin{align} 
\partial_t u + y \partial_x u  & = \begin{pmatrix} - u^2 \\ 0 \\ 0 \end{pmatrix} - \grad p^L + \nu \Delta u \\ 
\Delta p^L & = -2\partial_x u^2 \\ 
\grad \cdot u & = 0. 
\end{align}
\end{subequations}
In \eqref{def:3DNSE_Linear} we will find the mixing enhanced dissipation, as derived by Lord Kelvin \cite{Kelvin87}. 
We will see that as the Couette flow mixes information to small scales, the viscous dissipation has an increasing effect on the solution, ultimately yielding the enhanced homogenization effect. 
To understand the origins of this effect, consider the evolution of $q^2 = \Delta u^2$: 
\begin{align*} 
\partial_t q^2 + y\partial_x q^2 = \nu \Delta q^2, 
\end{align*} 
which, after re-writing in the variables $(X,y,z)$ with $X = x-ty$ and $Q^2(t,X,y,z) = q^2(t,X+ty,y,z)$, becomes
\begin{align*} 
\partial_t Q^2  & = - \nu \Delta_L Q^2 \\ 
\partial_t \widehat{Q}^2(k,\eta,l) & = -\nu (k^2 + (\eta-kt)^2 + l^2) \widehat{Q}^2(k,\eta,l),
\end{align*}
which integrates to 
\begin{align*} 
\widehat{Q}^2(t,k,\eta,l) = \exp\left[-\nu \int_0^t (k^2 + (\eta-k\tau)^2 + l^2)\,d\tau \right] \widehat{Q^2_{in}}(k,\eta,l). 
\end{align*} 
The elementary inequality $\int_0^t (k^2 + (\eta-k \tau)^2 + l^2)\,d\tau \gtrsim t^3$ for $k$ a non-zero integer gives a decay $\sim e^{-c\nu t^3}$ for some $c > 0$ for all modes which depend on $X$. 
Essentially, since the Couette flow induces something like a linear-in-time transfer of information to high frequencies, the second order viscous dissipation behaves like an $O(\nu t^{2})$ damping.  
Combining this effect with the inviscid case (Proposition~\ref{prop:LinEuler}) gives the following: 

\begin{proposition}[Linearized Navier-Stokes] \label{prop:LinNSE} 
Let $u_{in}$ be a divergence free vector field with $u_{in} \in H^7$. The solution to the linearized Navier-Stokes equations $u(t)$ with initial data $u_{in}$ satisfies the following for some $c \in (0,1/3)$
\begin{subequations} 
\begin{align} 
\norm{u^{2}_{\neq}(t)}_{2} + \norm{u^{2}_{\neq}(t,x+ty,y,z)}_{H^3}  & \lesssim \jap{t}^{-2} e^{-c\nu t^3} \norm{u^2_{in}}_{H^7} \label{ineq:U2LinearID_vs} \\ 
\norm{u^1_{\neq}(t,x+ty,y,z)}_{H^1} & \lesssim e^{-c\nu t^3}  \norm{u_{in}}_{H^7} \label{ineq:U1LinearID_vs} \\ 
\norm{u^3_{\neq}(t,x+ty,y,z)}_{H^1} & \lesssim e^{-c\nu t^3} \norm{u_{in}}_{H^7}, \label{ineq:U3LinearID_vs} 
\end{align} 
\end{subequations} 
and the formulas
\begin{subequations} \label{def:NSEstreak}
\begin{align}
u^1_0(t,y,z) & = e^{\nu t \Delta}\left(u^1_{in \; 0} -  t u_{in \; 0}^2\right) \label{eq:liftup_vs} \\ 
u^2_0(t,y,z) & = e^{\nu t \Delta} u^2_{in \; 0} \\
u^3_0(t,y,z) & = e^{\nu t \Delta} u^3_{in \; 0}. 
\end{align}
\end{subequations}
\end{proposition}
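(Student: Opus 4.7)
The plan is to mirror the Fourier-side computations of Proposition \ref{prop:LinEuler} while retaining the dissipation throughout. Setting $U^i(t,X,y,z) = u^i(t,X+ty,y,z)$ turns the Laplacian into $\Delta_L$, so after the Couette unwinding each Fourier mode $(k,\eta,l)$ picks up the explicit dissipation multiplier $\exp\bigl(-\nu\int_0^t (k^2 + (\eta-k\tau)^2 + l^2)\,d\tau\bigr)$. For any non-zero integer $k$, the lower bound $\int_0^t (\eta-k\tau)^2\,d\tau \geq k^2 t^3/12$ (the minimum being attained at $\eta = kt/2$) yields the uniform-in-$\eta,l$ control $e^{-c\nu t^3}$ for any $c<1/12$. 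This is the enhanced-dissipation factor that decorates every non-zero-mode estimate.

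For the $x$-average the linear pressure drops out and the system decouples into three scalar heat equations, with $\partial_t U_0^1 = -U_0^2 + \nu\Delta U_0^1$ forced by the lift-up term; Duhamel combined with the semigroup composition $e^{\nu(t-s)\Delta}e^{\nu s\Delta} = e^{\nu t\Delta}$ yields \eqref{def:NSEstreak} directly. For the non-zero modes of $u^2$, a short computation (in which the commutator $[\Delta, y\partial_x]u^2 = 2\partial_{xy}u^2$ cancels exactly against $-\partial_y \Delta p^L$ via $\Delta p^L = -2\partial_x u^2$) shows that $q^2 = \Delta u^2$ satisfies the unforced drift-diffusion $\partial_t q^2 + y\partial_x q^2 = \nu \Delta q^2$. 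The multiplier above then controls $\widehat{Q^2_{\neq}}$ pointwise, and inverting via $U^2 = \Delta_L^{-1} Q^2$ together with the inviscid damping estimate \eqref{ineq:IDfundamental} at $\beta=2$ produces \eqref{ineq:U2LinearID_vs}.

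For $u^1$ and $u^3$ I would pass to the standard wall-normal vorticity $\omega^2 = \partial_z u^1 - \partial_x u^3$, which a direct computation from the linearized vorticity equation shows satisfies the forced drift-diffusion $\partial_t\omega^2 + y\partial_x\omega^2 = -\partial_z u^2 + \nu\Delta\omega^2$. The velocity components $(U^1,U^3)$ are recovered algebraically from $(U^2,\Omega^2)$ by solving the $2\times 2$ system built from the divergence-free condition (in the form $\nabla^L \cdot U = 0$) and the definition of $\omega^2$, whose determinant is $k^2+l^2 \geq 1$ on non-zero-$k$ modes, so this recovery costs only bounded factors. Duhamel gives
$$\widehat{\Omega^2_{\neq}}(t,k,\eta,l) = e^{-\nu\int_0^t (k^2+(\eta-k\tau)^2+l^2)\,d\tau}\widehat{\omega^2_{in,\neq}} - il \int_0^t e^{-\nu\int_s^t (k^2+(\eta-k\tau)^2+l^2)\,d\tau}\,\widehat{U^2_{\neq}}(s,k,\eta,l)\,ds.$$
The homogeneous term obeys the $e^{-c\nu t^3}$ bound immediately. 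For the forced term, substituting the exact expression $\widehat{U^2_{\neq}}(s) = -\widehat{Q^2_{\neq}}(s)/(k^2+(\eta-ks)^2+l^2)$ and using the identity $e^{-\nu\int_s^t(\cdot)\,d\tau}\,e^{-\nu\int_0^s(\cdot)\,d\tau} = e^{-\nu\int_0^t(\cdot)\,d\tau}$ factors out $e^{-c\nu t^3}$ entirely, reducing the time integral to $\int_0^t (k^2+(\eta-ks)^2+l^2)^{-1}\,ds$, which is bounded by $\pi/|k|$ by elementary calculus. This bookkeeping is really the only place where any care is required; no conceptual obstacle arises because the whole argument is diagonal on the Fourier side and the enhanced-dissipation multiplier simply factors out.
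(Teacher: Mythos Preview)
Your proposal is correct and follows essentially the same route as the paper, which only sketches the argument (``Combining this effect with the inviscid case gives the following''). Your treatment of $q^2$, the dissipation multiplier, and the zero modes is identical to the paper's. For $u^1_{\neq}$ and $u^3_{\neq}$ you make the argument explicit via the wall-normal vorticity $\omega^2$ and the algebraic recovery from $(U^2,\Omega^2)$; the paper instead would integrate $\partial_t U^1 = -U^2 - \partial_X P^L + \nu\Delta_L U^1$ (and similarly for $U^3$) directly by Duhamel, using the decay of $U^2$ and $P^L$. Both variants reduce to the same factorization $e^{-\nu\int_s^t(\cdot)}e^{-\nu\int_0^s(\cdot)} = e^{-\nu\int_0^t(\cdot)}$ and the same uniformly bounded time integral $\int_0^t (k^2+(\eta-ks)^2+l^2)^{-1}\,ds$; your $\omega^2$ route is slightly cleaner because the forcing $-\partial_z u^2$ avoids the pressure, and the paper itself notes that $(q^2,\omega^2)$ is the standard pair for such linear computations.
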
 
 Proposition \ref{prop:LinNSE} introduces two important time-scales: the mixing dissipation time scale $O(\nu^{-1/3})$ and the slow dissipation time scale $O(\nu^{-1})$. 
After $O(\nu^{-1/3})$, the $x$ dependence of the solution has essentially been completely damped, and the evolution is dominated by the simpler (linearized) streak evolution \eqref{def:NSEstreak}. 

\subsection{Streaks} \label{sec:streak}
The streaks are solutions to \eqref{def:3DNSE} which do not depend on $x$; one can verify that in this case, \eqref{def:3DNSE} reduces to 
\begin{subequations}\label{def:streak}
\begin{align} 
\partial_t u^i + u^2 \partial_y u^i + u^3 \partial_z u^i + \mathbf{1}_{i \neq 1}\partial_i p^{NL} & = \begin{pmatrix} - u^2 \\ 0 \\ 0 \end{pmatrix} + \nu \Delta u^i \\ 
\Delta p^{NL} & = -\mathbf{1}_{i \neq 1, j \neq 1}\partial_i u^j \partial_j u^i \\ 
\partial_y u^2 + \partial_z u^3 & = 0.   
\end{align}
\end{subequations}
Therefore, if $(u^2(t,y,z),u^3(t,y,z))$ solve the 2D Navier-Stokes (or 2D Euler) equations, then we may simply solve the forced, linear advection-diffusion equation for $u^1(t,y,z)$ and get an exact, global solution of the nonlinear dynamics \eqref{def:3DNSE} (since 2D Navier-Stokes and Euler are globally well-posed for reasonable initial data \cite{MajdaBertozzi}). 
Solutions of this general type are sometimes called ``2.5 dimensional'' \cite{MajdaBertozzi} and we will refer to this particular family as \emph{streaks}. 
That is: 
\begin{proposition}[Streak solutions] \label{prop:Streak} 
Let $\nu \in [0,\infty)$, $u_{in} \in H^{5/2+}$ be divergence free and independent of $x$, that is, $u_{in}(x,y,z) = u_{in}(y,z)$, and denote by $u(t)$ the corresponding unique strong solution to \eqref{def:3DNSE} with initial data $u_{in}$. Then $u(t)$ is global in time and for all $T > 0$, $u(t) \in L^\infty( (0,T);H^{5/2+}(\Real^3))$. 
Moreover, the pair $(u^2(t),u^3(t))$ solves the 2D Navier-Stokes/Euler equations on $(y,z) \in \Real \times \Torus$:
\begin{subequations} \label{def:2DNSEStreak} 
\begin{align} 
\partial_t u^i + (u^2,u^3)\cdot \grad u^i &  = -\partial_i p + \nu \Delta u^i \\ 
\partial_y u^2 + \partial_z u^3 & = 0,  
\end{align} 
\end{subequations} 
and $u^1$ solves the (linear) forced advection-diffusion equation
\begin{align} 
\partial_t u^1 + (u^2,u^3)\cdot \grad u^1 = -u^2 + \nu \Delta u^1. \label{eq:u1streak}
\end{align}  
\end{proposition}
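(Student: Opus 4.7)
The plan is to construct a candidate solution by reducing \eqref{def:3DNSE} to a 2D system coupled with a linear equation, verify that it solves the full 3D problem, and then invoke uniqueness of strong solutions on $[0,T]$ to conclude that it is the unique solution with initial data $u_{in}$.

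First, I would check the \emph{reduction}. Since $u_{in}$ is independent of $X$ (with $X=x$ at $t=0$), the natural ansatz is that $u(t,x,y,z)=u(t,y,z)$ for all times. Under this ansatz, $y\partial_x u=0$, and the linear pressure equation $\Delta p^L=-2\partial_x u^2=0$ forces $p^L\equiv 0$ (under the usual decay/zero-mean conventions). Moreover, $\partial_i u^j\partial_j u^i$ is supported on indices $i,j\in\{2,3\}$, so $p^{NL}$ is $x$-independent and $\partial_x p^{NL}=0$. Writing the three components of \eqref{def:3DNSE} then gives exactly \eqref{def:2DNSEStreak} for the pair $(u^2,u^3)$ on $(y,z)\in\Real\times\Torus$, decoupled from $u^1$, together with the linear forced advection--diffusion equation \eqref{eq:u1streak} for $u^1$, where $(u^2,u^3)$ enters only as a divergence-free drift.

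Next, I would produce a global-in-time strong solution of this reduced system. For $(u^2,u^3)$, standard 2D Navier--Stokes theory on $\Real\times\Torus$ gives global well-posedness in $H^{5/2+}$; in the inviscid case $\nu=0$ one uses conservation of $\|\omega\|_{L^\infty}$ with $\omega=\partial_y u^3-\partial_z u^2$ together with the Beale--Kato--Majda criterion to propagate $H^{5/2+}$ regularity globally. Once $(u^2,u^3)\in L^\infty_{\text{loc}}([0,\infty);H^{5/2+})$ is known, equation \eqref{eq:u1streak} is a linear transport--diffusion equation with a divergence-free, sufficiently regular drift and a forcing $-u^2$ of the same regularity; standard energy estimates (using $\diverg(u^2,u^3)=0$ to kill the transport term in the $L^2$ estimate, followed by commutator estimates à la Kato--Ponce for higher derivatives) give $u^1\in L^\infty_{\text{loc}}([0,\infty);H^{5/2+})$ globally in time. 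The resulting $u=(u^1,u^2,u^3)$ is divergence free and $x$-independent, and by direct verification satisfies \eqref{def:3DNSE}.

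Finally, I would identify this constructed object with the solution promised by the proposition. Local well-posedness of \eqref{def:3DNSE} in $H^{5/2+}$ provides a unique strong solution on some maximal interval; since the candidate built above lies in $L^\infty_{\text{loc}}([0,\infty);H^{5/2+})$, it is the strong solution, and the maximal interval is $[0,\infty)$. The only real point requiring care is the regularity bookkeeping in the propagation step for $u^1$: because $u^2$ appears as a \emph{forcing} in \eqref{eq:u1streak} and produces the lift-up growth of \eqref{eq:liftup_vs} on the linearized level, the $H^{5/2+}$ bound for $u^1$ will grow in time (linearly, when $\nu=0$); this growth is harmless for global existence but is the step where one must be cleanest.
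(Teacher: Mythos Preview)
Your proposal is correct and follows essentially the same approach as the paper. The paper does not give a formal proof; it simply notes in the paragraph preceding the proposition that once $(u^2,u^3)$ solve 2D Navier--Stokes/Euler (globally well-posed by \cite{MajdaBertozzi}), one solves the linear forced advection--diffusion equation for $u^1$ to obtain an exact global solution, which is precisely the construction and uniqueness argument you outline in more detail.
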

Once a streak becomes $O(1)$ relative to the Couette flow, the solution will generally induce an unstable shear flow and therefore is expected to develop a secondary instability and transition either to a more complicated, nonlinear time-dependent state or directly to turbulence (see e.g. \cite{klebanoff1962,ReddySchmidEtAl98,Chapman02,SchmidHenningson2001} and \cite{BGM15II} for more discussion).
The instability predicted by formal arguments and numerical or physical experiments tends to be a transverse instability that involves a large growth in the $x$-dependent modes \cite{klebanoff1962,Orszag80,ReddySchmidEtAl98,SchmidHenningson2001}. 
Therefore, if one restricts initial data to be independent of $x$, the subcritical transition threshold for \eqref{def:3DNSE} is $\epsilon \sim \nu$ (that is $\gamma = 1$), as any larger perturbation will produce an unstable streak that would immediately transition in a real physical setting.  

Notice that if we do not have periodicity in $x$, then we cannot conveniently write down exact streak solutions unless we make a global, infinite energy perturbation.
It was shown in \cite{landahl80} that the same kind of kinetic energy growth is expected also for localized disturbances in the infinite channel case.
However, the long-time dynamics of the localized perturbations is a little bit more complicated. 
Streaks (or localized approximations of them) have been observed experimentally \cite{klebanoff1962,Elofsson1999,bottin98} and in computer simulations \cite{ReddySchmidEtAl98} and have been noted to be essentially optimal excitations of the linearized Navier-Stokes equations, and, more or less equivalently,  are expected to be on the leading edges of the pseudo-spectrum of the linearized Navier-Stokes equations \cite{TTRD93,Trefethen2005}. 
They are widely believed to be crucial to understanding the transition of \eqref{def:3DNSE} and of other related laminar flows \cite{SchmidHenningson2001}.

\subsection{Statement of result} 
The enhanced dissipation effect observed in Proposition \ref{prop:LinNSE} suggests that the streak solutions may be \emph{attractors} of the nonlinear dynamics near the transition threshold. 
This is the essential content of our results here and in \cite{BGM15II}, for initial data which is \emph{not too rough}. 

Our theorem requires the use of Gevrey regularity class \cite{Gevrey18}, defined on the Fourier side for $\lambda > 0$ and $s \in (0,1]$
\begin{align} 
\norm{f}_{\G^{\lambda;s}}^2 = \sum_{k,l}\int \abs{\hat{f}(k,\eta,l)}^2e^{2\lambda\abs{k,\eta,l}^s} d\eta.  
\end{align}
For $s = 1$ the class coincides with real analytic, however, for $s < 1$ it is less restrictive, for example, compactly supported functions can still be Gevrey class with $s < 1$.  
This class has appeared in most proofs involving inviscid damping \cite{BM13,BMV14} or Landau damping \cite{CagliotiMaffei98,HwangVelazquez09,MouhotVillani11,BMM13,Young14} in nonlinear PDE and in these previous works is associated with the nonlinear \emph{echo resonance} (see e.g. \cite{MalmbergWharton68,Vanneste02,VMW98,YuDriscollONeil,MouhotVillani11} and \S\ref{sec:Disc} and \S\ref{sec:Toy} -- the exception is \cite{FaouRousset14}, but the model considered therein satisfies a strong non-resonance condition which is not satisfied by most other physically relevant models). 
 We will have to deal with 3D variants of the echoes, so the presence of Gevrey class here is not surprising, although it is not obvious that the 2D work \cite{BM13,BMV14} and this work should both require the same Gevrey-2, since 2D nonlinear effects are much too weak to play a role in this work.

\begin{theorem} \label{thm:Threshold} 
For all $s \in (1/2,1)$, all $\lambda_0 > \lambda^\prime > 0$, all integers $\alpha \geq 10$, all $\delta_1>0$, and all $\nu \in (0,1]$, 
there exists constants $c_{00} = c_{00}(s,\lambda_0,\lambda^\prime,\alpha,\delta_1)$ and $K_0 = K_0(s,\lambda_0,\lambda^\prime)$ (both independent of $\nu$), such that for all $c_{0} \leq c_{00}$ and $\epsilon < c_{0} \nu$, if $u_{in} \in L^2$ is a divergence-free vector field that can be written $u_{in} = u_{S} + u_R$ (both also divergence-free) with
\begin{align} 
\norm{u_S}_{\mathcal{G}^{\lambda;s}} + e^{K_0\nu^{-\frac{3s}{2(1-s)}}}\norm{u_R}_{H^{3}} & < \epsilon, \label{ineq:QuantGev2}
\end{align} 
then the unique, classical solution $u(t)$ to \eqref{def:3DNSE} with initial data $u_{in}$ is global in time and the following estimates hold with all implicit constants independent of $\nu$, $\epsilon$, $t$ and $c_{0}$: 
\begin{itemize} 
\item[(i)] transient growth of the streak: if $t < \frac{1}{\nu}$, 
\begin{subequations} \label{ineq:trans}
\begin{align}
\norm{u^1_0(t) -  \left(e^{\nu t \Delta} \left(u^{1}_{in \; 0} - t u_{in \; 0}^2\right) \right) }_{\G^{\lambda^\prime;s}} & \lesssim c_{0}^2 \label{ineq:u01grwth}\\
\norm{u^{2}_0(t) -  e^{\nu t \Delta} u^{2}_{in \; 0} }_{\G^{\lambda^\prime;s}} + \norm{u^{3}_0(t) -  e^{\nu t \Delta} u^{3}_{in \; 0} }_{\G^{\lambda^\prime;s}} & \lesssim c_{0}\epsilon \label{ineq:u023} 
\end{align}
\end{subequations} 
\item[(ii)] uniform bounds and decay of the background streak 
\begin{subequations} 
\begin{align} 
\norm{u^1_0(t)}_{\G^{\lambda^\prime;s}} & \lesssim \min\left(\epsilon \jap{t},c_{0}\right) \label{ineq:finalu1} \\ 
\norm{u^2_0(t)}_{\G^{\lambda^\prime;s}} & \lesssim \frac{\epsilon}{\jap{\nu t}^{\alpha}} \label{ineq:finalu2} \\ 
\norm{u^3_0(t)}_{\G^{\lambda^\prime;s}} & \lesssim \epsilon \label{ineq:finalu3} \\  
\norm{u^1_0(t)}_4 & \lesssim \frac{c_{0}}{\jap{\nu t}^{1/4}} \label{ineq:finalu1L4}\\ 
\norm{u^3_0(t)}_4 & \lesssim \frac{\epsilon}{\jap{\nu t}^{1/4}}; \label{ineq:finalu3L4} 
\end{align}
\end{subequations}
\item[(iii)] the rapid convergence to a streak 
\begin{subequations} \label{ineq:uidamping}
\begin{align} 
\norm{u^1_{\neq}(t,x + ty + t\psi(t,y,z),y,z)}_{\G^{\lambda^\prime;s}} & \lesssim \frac{\epsilon \jap{t}^{\delta_1}}{\jap{\nu t^3}^\alpha} \label{ineq:u1damping}\\ 
\norm{u^2_{\neq}(t,x + ty + t\psi(t,y,z),y,z)}_{\G^{\lambda^\prime;s}} & \lesssim \frac{\epsilon}{\jap{t}^{2-\delta_1}\jap{\nu t^3}^\alpha}, \label{ineq:u2damping} \\ 
\norm{u^3_{\neq}(t,x + ty + t\psi(t,y,z),y,z)}_{\G^{\lambda^\prime;s}} & \lesssim \frac{\epsilon}{\jap{\nu t^3}^\alpha}. 
\end{align}  
\end{subequations}
Here $\psi(t,y,z)$ is an $O(c_{0})$ correction to the mixing which depends on the disturbance 
(defined below in \eqref{def:psiyz}), and satisfies
\begin{align} 
\norm{\psi(t) -  u_0^1(t)}_{\G^{\lambda^\prime;s}} \lesssim \epsilon \jap{t}^{-1}.   \label{ineq:psiest}
\end{align}
\end{itemize}
\end{theorem}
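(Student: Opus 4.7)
I would prove Theorem~\ref{thm:Threshold} by a continuity (bootstrap) argument in time-dependent Gevrey energy norms, preceded by a coordinate change $X = x - ty - t\psi(t,y,z)$ that absorbs the $O(c_0)$ growth of the streak into the background advection. The correction $\psi$ is defined implicitly so that \eqref{ineq:psiest} holds, essentially $\psi \approx u^1_0$; in the new frame the would-be dangerous self-interaction $u^1_0 \, \partial_x u^j_{\neq}$ is annihilated, and the non-zero modes see an effective drift whose gradient in $y,z$ is integrable in time, so that the inviscid damping \eqref{ineq:IDfundamental} and the enhanced dissipation of Proposition~\ref{prop:LinNSE} survive perturbatively with only $O(c_0)$ losses.

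\textbf{Energy setup.} On the non-zero side the primary unknowns are $Q^2 = \Delta_L U^2$ and $(U^1,U^3)$ (with $U^2$ recovered through $\Delta_L^{-1} Q^2$ and $U^1,U^3$ treated directly because there is no inviscid damping for them). On the zero-mode side $(U^2_0, U^3_0)$ satisfy a forced 2D Navier--Stokes system driven by quadratics in $U_{\neq}$, while $U^1_0$ is driven by the lift-up forcing $-U^2_0$ plus a feedback from $U_{\neq}$. All quantities are measured in weighted norms $\|A(t,\grad) f\|_{L^2}^2$ where $A(t,k,\eta,l) \sim e^{\lambda(t)|k,\eta,l|^s}$, multiplied by a resonance multiplier that trades Gevrey regularity near the critical times $\eta \sim kt$; the decreasing $\lambda(t)$ produces the Cauchy--Kovalevskaya-type ghost dissipation needed to absorb nonlinear echoes. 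Additional factors $e^{c\nu t^3}$ on non-zero modes and $e^{c\nu t}$ on zero modes deliver the $\jap{\nu t^3}^{-\alpha}$ and $\jap{\nu t}^{-\alpha}$ gains of \eqref{ineq:uidamping} and \eqref{ineq:finalu2}; the $L^4$ bounds \eqref{ineq:finalu1L4}, \eqref{ineq:finalu3L4} come from interpolating the Gevrey bound with the 2D heat-type decay for the streak system \eqref{def:2DNSEStreak}--\eqref{eq:u1streak}.

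\textbf{Main obstacles.} The hardest step is controlling the pressure-type nonlinearity $\grad \Delta^{-1}(\partial_i u^j \partial_j u^i)$ when $u^1_0 = O(c_0)$: high-frequency parts of the streak drive 3D echo cascades whose resonance structure forces Gevrey exponent $s > 1/2$, and the leading high-low contribution can only be closed if the coordinate correction $\psi$ cancels it exactly; any residue produces a non-integrable log loss at each critical time that the ghost dissipation cannot absorb. Standard paraproduct decomposition (high-high, high-low, low-high) together with the inviscid damping of $U^2_{\neq}$ and the enhanced dissipation of $Q^2$ handles the remaining pieces, and the feedback onto the zero mode must be kept within the size budgets $c_0$ on $U^1_0$ and $c_0\epsilon$ on $(U^2_0,U^3_0)$ (this is where the factors $\epsilon\jap{t}$ and $c_0$ in \eqref{ineq:finalu1} meet).

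\textbf{Low-regularity component and conclusion.} To accommodate the rough piece $u_R \in H^3$ in \eqref{ineq:QuantGev2}, I would run an initial burn-in phase in which the linearized enhanced dissipation of Proposition~\ref{prop:LinNSE} converts the huge factor $e^{K_0 \nu^{-3s/(2(1-s))}}$ into $\G^{\lambda^\prime;s}$ smallness of order $\epsilon$; $K_0$ is tuned so that at the optimally chosen Fourier frequency the heat kernel $e^{-c\nu t^3 |k,\eta,l|^2}$ dominates $e^{\lambda_0|k,\eta,l|^s}$ before the Gevrey bootstrap begins. A crude $H^3$ energy estimate driven by the existing control on $u_S$ suffices to close the nonlinearity on this short interval. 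After the burn-in, the Gevrey bootstrap above applies uniformly, and unwinding the coordinate change $\Phi_t$ yields \eqref{ineq:trans}--\eqref{ineq:uidamping} with constants independent of $\nu$.
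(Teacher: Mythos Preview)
Your overall architecture---nonlinear coordinate change, bootstrap in Gevrey norms with decreasing radius, enhanced dissipation, burn-in for the rough part---matches the paper, but several of your concrete choices would not close.

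First, your coordinate change only moves $X = x - ty - t\psi$; the paper also transforms $Y = y + \psi(t,y,z)$. This is not cosmetic: without the $Y$-shift, the Laplacian in the new variables takes the form $\partial_X^2 + (\partial_Y - t(1+\partial_y\psi)\partial_X)^2 + \dots$, and with $\partial_y\psi = O(c_0)$ the critical times of $\Delta_t$ are shifted by an $O(c_0)$ multiplicative factor in frequency. One then cannot treat $\Delta_t^{-1}$ as a perturbation of $\Delta_L^{-1}$, and the inviscid damping / enhanced dissipation machinery collapses. The paper's specific ansatz \eqref{def:XYZ} is designed precisely so that $\partial_y \mapsto (1+\psi_y)(\partial_Y - t\partial_X)$, which keeps the symbol $\eta - kt$ intact.

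Second, the paper explicitly rejects your choice of primary unknowns $(Q^2, U^1, U^3)$ in favor of the full triple $Q^i = \Delta u^i$. The reason (see \S\ref{sec:indepC}) is that in the nonlinear coordinates, recovering velocity components from mixed data like $(q^2,\omega^2)$ or from $(Q^2,U^1,U^3)$ requires inverting operators that are badly distorted by the change of variables, and one loses the precise frequency-by-frequency control needed to implement the toy-model-driven norms. Working with all three $Q^i$ and then recovering $U^i = \Delta_t^{-1} Q^i$ via the precision elliptic lemmas of Appendix~\ref{sec:PEL} is what makes the high-norm bookkeeping tractable.

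Third, your enhanced-dissipation weight $e^{c\nu t^3}$ is too blunt. Near a critical time $t \approx \eta/k$ the dissipation $\nu|\eta-kt|^2$ is small, so the true integrated dissipation $\nu\int_0^t |\eta-k\tau|^2\,d\tau$ stalls there; imposing a raw $e^{c\nu t^3}$ growth in the norm forces a positive $G^\nu$ term in the energy that the viscosity cannot absorb at those times. The paper instead uses $\jap{D(t,\eta)}^\alpha$ with $D$ defined in \eqref{def:D} so that $\partial_t D$ is only large when $t>2|\eta|$, i.e.\ away from all critical times, which is exactly what allows $G^\nu$ to be dominated by the dissipation.

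Finally, the student's claim that ``the leading high-low contribution can only be closed if the coordinate correction $\psi$ cancels it exactly'' is an oversimplification. The coordinate change eliminates the $u_0^1\partial_x$ transport, but the pressure and stretching nonlinearities of type \textbf{(SI)} and \textbf{(3DE)} are not cancelled; they are controlled through the specially designed multipliers $w(t,\eta)$ (derived from the toy model of \S\ref{sec:Toy}) whose $CK_w$ terms absorb the echo losses. This is where the Gevrey-$2$ requirement $s>1/2$ actually enters, and it needs substantially more structure than ghost dissipation alone.
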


\begin{remark} 
We are only interested in $\nu$ small, so henceforth we will without loss of generality assume $c_0 \nu^{-1} \gg 1$. 
\end{remark} 

\begin{remark}
If $u_{in\;0}^2$ is such that $\norm{u_{in \; 0}^2}_{\G^{\lambda^\prime;s}} \geq \frac{1}{4}\epsilon = \frac{1}{16}c_{0}\nu$ then \eqref{ineq:u01grwth} shows that for $c_{0}$ small (but independent of $\epsilon$ and $\nu$), the streak $u_0^1(t)$ reaches the maximal amplitude of $\norm{u_0^1(t)}_2 \gtrsim c_{0}$. In this sense, Theorem \ref{thm:Threshold} includes perturbations which undergo dramatic growth of kinetic energy before decaying, so much in fact, that the solutions go from $O(\epsilon)$ to $O(c_0)$ before eventually decaying.  
Hence, we are far beyond the realm of monotonically stable perturbations \cite{DrazinReid81,SchmidHenningson2001} and are effectively on the edge of the linear/weakly nonlinear regime (as $c_0$ is independent of $\nu$).
\end{remark}

\begin{remark} 
By the previous remark, we note that the parameter $c_0$ is essentially the maximum size of $u_0^1(t)$. 
\end{remark}

\begin{remark}  \label{rmk:RegularityInstab}
That we need a regularity requirement beyond, for example $H^1$ (finite kinetic energy and finite enstrophy) or $L^3$ (for local well-posedness) is qualitatively consistent with the experimental and computer simulation observations that the nature of the disturbance can be important for determining the threshold or pathway to transition (see \cite{ReddySchmidEtAl98,SchmidHenningson2001,Yaglom12} and the references therein -- in fact, the sensitivity of subcritical transition was noted by Reynolds \cite{Reynolds83}). There are many instabilities observed in subcritical transition processes, and at lower regularity it is likely that other behaviors are possible besides that predicted by Theorem \ref{thm:Threshold} and \cite{BGM15II} (for example, see \cite{LinZeng11,LiLin11} and \cite{BGM15II} for more discussion). 
Further, the transition threshold at lower regularities may also be different, as is observed in computer experiments \cite{ReddySchmidEtAl98}.
That being said, we make no conjecture either way about whether or not Gevrey-2 (in the sense of \eqref{ineq:QuantGev2}) is the lowest regularity class for which $\gamma =1$ and one \emph{only} sees the streamwise vortex/streak instability. 
\end{remark}

\begin{remark} \label{rmk:periodicity} 
One would also like to be able to study unbounded channels in $x$. However, this is significantly more difficult, and the dynamics could potentially be different. 
The main reason for this is that mixing is very weak at the low $x$-frequencies in unbounded shear flows. 
For example, the Landau damping effect in plasma physics is known to be very weak in $\Real_x \times \Real_v$ \cite{glassey94,glassey95} (and can sometimes fail completely). 
However, some formal analyses suggest that at higher frequencies at least, the behavior on the infinite channel will be similar to that predicted by Theorem \ref{thm:Threshold} \cite{Chapman02}. 
\end{remark} 

An interesting by-product of the proof of Theorem \ref{thm:Threshold} is the demonstration of an open set of solutions to \eqref{def:3DNSE} which exhibit a linear-in-time flux of kinetic energy to high frequencies for times $1 \lesssim t \lesssim \nu^{-\frac{1}{3+\delta_1}}$.  
In particular, Proposition \ref{prop:KEflux} below quantifies that there are solutions with an $O(\epsilon)$ packet of kinetic energy in $u^1$ and/or $u^3$ which
for $1 \ll t \ll \nu^{-\frac{1}{3+\delta_1}}$ is roughly at length-scale $O(t^{-1})$ (after $t \sim \nu^{-1/3}$, Theorem \ref{thm:Threshold} shows that it is dissipated by the viscosity). 
As mentioned above, to our knowledge, this is the first rigorous confirmation of a direct energy cascade in the 3D Navier-Stokes equations in any setting. 
See \S\ref{sec:PropBootThm} for a proof: it is proved by taking data which satisfies $u^1_{\neq}, u^3_{\neq} = O(\epsilon)$ and $u^2_{\neq} = O(\epsilon^2)$.   
The time derivatives in \eqref{ineq:inviscidlinear} are showing that $u^1$ and $u^3$ are being moved to small scales while retaining their original profile to leading order.

\begin{proposition}[Direct cascade of kinetic energy]\label{prop:KEflux}
For all $\delta_1 > 0$, there exists an open set of global solutions to \eqref{def:3DNSE} such that for times $1 \lesssim t \ll \nu^{-\frac{1}{3+\delta_1}} \ll \epsilon^{-1/2}$ there holds (with constant independent of $t$, $\nu$, and $\epsilon$),
for all $ 0 < \sigma < \infty$,  
\begin{align}
\norm{u^1(t)}_{H^\sigma} + \norm{u^3(t)}_{H^\sigma} \gtrsim \epsilon \jap{t}^\sigma.  \label{ineq:nrmexplode}
\end{align}
More precisely, for $1 \lesssim t \ll \nu^{-\frac{1}{3+\delta_1}} \ll \epsilon^{-1/2}$ there holds (with constants independent of $t$, $\nu$, and $\epsilon$),
\begin{subequations} \label{ineq:inviscidlinear}
\begin{align}
\norm{\frac{d}{dt}\left(u^1_{\neq}(t,x+ty + t\psi(t,y,z),y,z)\right)}_{\G^{\lambda^\prime;s}} & \lesssim \epsilon \nu t^{2+\delta_1} + \epsilon^2 t^{1+\delta_1} \\ 
\norm{\frac{d}{dt}\left(u^3_{\neq}(t,x+ty + t\psi(t,y,z),y,z)\right)}_{\G^{\lambda^\prime;s}} & \lesssim \epsilon \nu t^2 + \epsilon^2 t^{1+\delta_1} \\ 
\norm{u^2_{\neq}(t,x+ty+t\psi(t,y,z),y,z)}_{\G^{\lambda^\prime;s}} & \lesssim  \epsilon \nu t, \label{ineq:u2grwKE}
\end{align}
\end{subequations} 
with $\psi$ satisfying $\norm{\psi}_{\G^{\lambda^\prime;s}} \lesssim \epsilon t$. 
\end{proposition}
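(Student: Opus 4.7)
The plan is to select suitable initial data, apply Theorem \ref{thm:Threshold} to control the solution in the shifted frame $X = x - ty - t\psi$, and then translate those bounds back to physical coordinates, where the shift by itself inflates Sobolev norms in $(y, z)$ by a factor $\langle kt \rangle^\sigma$. Concretely I would take a divergence-free Gevrey $u_{in} = u_S$ with $\|u_{in}\|_{\G^{\lambda_0;s}} < \epsilon$ subject to the two strict (and thus open) conditions
\[
\|u^1_{in\,\neq}\|_{L^2} + \|u^3_{in\,\neq}\|_{L^2} \gtrsim \epsilon \qquad \text{and} \qquad \|u^2_{in\,\neq}\|_{\G^{\lambda_0;s}} \lesssim \epsilon^2.
\]
Together with $\epsilon < c_0\nu$, this carves out an open, nonempty set of data for which Theorem \ref{thm:Threshold} gives a global solution, the damping estimates \eqref{ineq:uidamping}, and the phase bound \eqref{ineq:psiest}.

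For the time-derivative bounds \eqref{ineq:inviscidlinear}, I would define $U^i(t,X,y,z) = u^i_{\neq}(t, X + ty + t\psi, y, z)$ and differentiate using \eqref{def:3DNSE}. In the shifted frame $\partial_t U^i$ splits into three pieces: the viscous contribution $\nu \Delta_L U^i$ has Fourier symbol of size $\nu \langle t\rangle^2$ on the Gevrey support and produces the $\epsilon\nu t^{2+\delta_1}$ term; the nonlinear transport and pressure terms pay a single $\nabla^L$ of size $\langle t\rangle^{1+\delta_1}$ on $O(\epsilon)$ Gevrey factors and produce the $\epsilon^2 t^{1+\delta_1}$ term; and the linear drift from $-U^2$ and $-\nabla^L P^L$ is absorbed via \eqref{ineq:u2grwKE}. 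To establish \eqref{ineq:u2grwKE} itself, I would re-examine the equation for $U^2$ (or equivalently for $Q^2 = \Delta_L U^2$) using the smaller initial size: since $\|U^2(0)\|_{\G^{\lambda_0;s}} = O(\epsilon^2)$ and the nonlinear forcing is at most $O(\epsilon^2)$ per unit time (it is bilinear in $U^1, U^3$ and only higher-order in $U^2$), a Gevrey energy estimate gives $\|U^2(t)\|_{\G^{\lambda';s}} \lesssim \epsilon^2\langle t\rangle \leq c_0\epsilon\nu\langle t\rangle$ via $\epsilon \leq c_0\nu$, which is stronger than the stated bound.

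Integrating \eqref{ineq:inviscidlinear} then yields $\|U^i(t) - u^i_{in\,\neq}\|_{\G^{\lambda';s}} \lesssim \epsilon\nu t^{3+\delta_1} + \epsilon^2 t^{2+\delta_1} \ll \epsilon$ in the time window $1 \lesssim t \ll \nu^{-1/(3+\delta_1)} \ll \epsilon^{-1/2}$, so at least one of $U^1, U^3$ stays within $o(\epsilon)$ of a nonzero initial profile. Fixing $(k_0, l_0)$ and a Fourier window $E \subset \Real$ on which $|\widehat{u^i_{in\,\neq}}(k_0, \eta, l_0)| \gtrsim 1$, Plancherel in the shifted frame gives
\[
\|u^i(t)\|_{H^\sigma}^2 \gtrsim \int_E |\widehat{U^i}(t,k_0, \eta, l_0)|^2 \bigl(k_0^2 + (\eta - k_0 t)^2 + l_0^2\bigr)^\sigma d\eta \gtrsim \epsilon^2\langle t\rangle^{2\sigma},
\]
which is \eqref{ineq:nrmexplode}. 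The main obstacle is handling the $\psi$-correction in this final change of variables: one must verify that the phase factor $e^{-ikt\psi(y,z)}$ produced by the composition $y \mapsto y + \psi$ only perturbs the Fourier weight $k_0^2 + (\eta - k_0 t)^2 + l_0^2$ by a harmless multiplicative constant. This is where the smallness of $\psi$ given by \eqref{ineq:psiest} and \eqref{ineq:finalu1} (together with $c_0 \ll 1$) is essential, and the required estimate is a near-identity composition bound in the Gevrey category, in the spirit of the straightening arguments of \cite{BM13,BMV14}.
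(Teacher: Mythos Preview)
Your proposal is essentially correct and follows the same approach as the paper: choose data with $\|u^1_{in\,\neq}\|_2 + \|u^3_{in\,\neq}\|_2 \gtrsim \epsilon$ and $\|u^2_{in\,\neq}\| \lesssim \epsilon^2$, use the $Q^2$ equation (which has no linear drive) together with the a~priori estimates from the bootstrap to get the improved bound $\|Q^2\|_{\G^{\lambda,\beta-5}} \lesssim \epsilon\nu t^3$ and hence \eqref{ineq:u2grwKE} via $\Delta_t^{-1}$, feed this back into the $U^1,U^3$ equations to get \eqref{ineq:inviscidlinear}, and then integrate and undo the shift for \eqref{ineq:nrmexplode}. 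The only cosmetic difference is that the paper carries out the $Q^2$ and $U^{1,3}$ estimates in the full $(X,Y,Z)$ coordinates of \S\ref{sec:coordinates} (where the bootstrap machinery lives) and then transfers back to $(x,y,z)$, whereas you propose to work directly in the intermediate $(X,y,z)$ frame; either route works, and your identification of the $\psi$-composition as the only nontrivial step in the final Plancherel argument is exactly what the paper's one-line remark ``the deformations from $\psi$\ldots are smaller than the contribution of the initial data'' is pointing to.
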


\begin{remark}
Proposition \ref{prop:KEflux} is consistent with the qualitative behavior predicted by the linearized 3D Euler equations for $1 \lesssim t \ll \nu^{-\frac{1}{3+\delta_1}}$.  
\end{remark} 

\subsection{Discussion of Theorem \ref{thm:Threshold}} \label{sec:Disc}

The idea that enhanced dissipation could suppress nonlinear effects in \eqref{def:3DNSE}, and hence influence the stability threshold, seems to go back at least to \cite{DubrulleNazarenko94} but is also present, at least implicitly, in many works, and the expectation that a large mean shear should contribute to flow stability is expressed in many works with varying levels of precision (for example \cite{DrazinReid81,ReddySchmidEtAl98,Lundgren82,SchmidHenningson2001,Chapman02} and the references therein). 
However, without a detailed analysis of the interaction between the transient behaviors associated with non-normality and the nonlinear structure, 
it is not clear how to use these mechanisms to obtain results without any approximations. 
Carrying out and utilizing such an analysis is central to our proof (see \S\ref{sec:Toy}). 
 
The main focus of the toy models in \cite{TTRD93,Gebhardt1994,BDT95,Waleffe95,BT97,LHRS94} mentioned above was to study the interaction of the nonlinearity and the 3D lift-up effect and examine the possibility of transient growths bootstrapping to create instability. 
It is also possible to produce nonlinear bootstraps with the 2D Orr mechanism; see for example in 2D Euler/Navier-Stokes \cite{VMW98,Vanneste02,BM13} and 
in 3D \cite{Craik1971,SchmidHenningson2001}. 
This weakly nonlinear effect is usually referred to as an \emph{echo resonance} (more accurately a ``pseudo-resonance'' \cite{TTRD93,Trefethen2005}), and occurs when two modes interact to excite a mode which is unmixing, producing a strong response later in the future (hence ``echo'').  
Such echoes were isolated experimentally in 2D Euler near a vortex \cite{YuDriscoll02,YuDriscollONeil}. 
These echoes are analogous to the plasma echo phenomenon associated with Landau damping in Vlasov-Poisson, observed earlier in \cite{MalmbergWharton68}. 
The echo resonance is a central difficulty in the proofs of stability for 2D Couette flow in \cite{BM13,BMV14} and in the proofs of nonlinear Landau damping \cite{MouhotVillani11,BMM13}.  
In our work, we derive a new toy model which estimates the worst possible growth due to the leading order weakly nonlinear effects; see \S\ref{sec:Toy} below.
The toy model is used to understand the nonlinear interactions between the three components of the solution, mode-by-mode, accounting for the transient unmixing effects, the lift-up effect, the vortex stretching, and the stabilizing effects of the inviscid damping and enhanced dissipation.
From the model, we derive a set of norms in order the measure the solution that are well-adapted to the nonlinear structure, allowing us to retain more information about the solution than would be possible with standard norms.    

The idea of using toy models to design special norms was inspired by the previous work on the 2D Couette flow in \cite{BM13,BMV14} (one can also find analogies with Alinhac's ``ghost weight'' energy method \cite{Alinhac01}), 
 and although the leading order nonlinear effects are different in 3D, the structure of the Orr mechanism will ensure that the norms we arrive at here share similarities with those in \cite{BM13,BMV14} (and \cite{Zillinger2014}). 
Several other methods from the 2D works will find use here after adaptation to 3D (see \S\ref{sec:proof}), however, the proofs are ultimately fundamentally different. 
Here, the enhanced dissipation is the primary stabilizing mechanism and is used to control most nonlinear terms (because $\epsilon \lesssim \nu$); in 2D, the dissipation plays no such role and instead the inviscid damping is the primary stabilizing mechanism (hence why stability is possible at infinite Reynolds number \cite{BM13}). 
Further, regularity imbalancing at the critical times is far less important here than in \cite{BM13,BMV14} -- in this work it is not done at all and it is only done on the $z$ component of the solution in \cite{BGM15II} (and it is the unbalancing between different \emph{components} which is key).  
Finally, in 3D, the nonlinearity is much more complicated than it was in 2D, and we will need to understand all of the available cancellations. 
Of particular interest is structure which provides a kind of ``non-resonance'' so that the resonant frequencies and problematic components do not directly force each other too strongly (one may also draw analogy to the ``null condition'' from quasilinear wave equations \cite{Klainerman1986,Christodoulou1986,Alinhac01}); see \S\ref{sec:NonlinHeuristics} and \S\ref{sec:Toy} for a discussion of some of the more obvious structures, others we might point out when they arise in the proof.  
There is also a kind of null structure in \cite{BM13,BMV14}, but here it is much more subtle and complex.     

\subsection{Notations and Conventions} \label{sec:Notation}
We use superscripts to denote vector components and subscripts such as $\partial_i$ to denote derivatives with respect to the components $x,y,z$ (or $X,Y,Z$) with the obvious identification $\partial_1 = \partial_X$, $\partial_2 = \partial_Y$, and $\partial_3 = \partial_Z$. Summation notation is assumed: in a product, repeated vector and differentiation indices are always summed over all possible values.   

See Appendix \ref{apx:Gev} for the Fourier analysis conventions we are taking.
A convention we generally use is to denote the discrete $x$ (or $X$) frequencies as subscripts.   
By convention we always use Greek letters such as $\eta$ and $\xi$ to denote frequencies in the $y$ or $Y$ direction, frequencies in the $x$ or $X$ direction as $k$ or $k^\prime$ etc, and frequencies in the $z$ or $Z$ direction as $l$ or $l^\prime$ etc.
Another convention we use is to denote dyadic integers by $M,N \in 2^{\Integer}$ where  
\begin{align*} 
2^\Integer & = \set{...,2^{-j},...,\frac{1}{4},\frac{1}{2},1,2,...,2^j,...}. 
\end{align*}
This will be useful when defining Littlewood-Paley projections and paraproduct decompositions. See \S\ref{sec:paranote} for more information on the paraproduct decomposition and the associated short-hand notations we employ. 
Given a function $m \in L^\infty_{loc}$, we define the Fourier multiplier $m(\grad) f$ by 
\begin{align*} 
(\widehat{m(\grad)f})_k(\eta) =  m( (ik,i\eta,il) ) \hat{f}_k(\eta,l). 
\end{align*}   
We use the notation $f \lesssim g$ when there exists a constant $C > 0$ independent of the parameters of interest 
such that $f \leq Cg$ (we analogously define $f \gtrsim g$). 
Similarly, we use the notation $f \approx g$ when there exists $C > 0$ such that $C^{-1}g \leq f \leq Cg$. 
We sometimes use the notation $f \lesssim_{\alpha} g$ if we want to emphasize that the implicit constant depends on some parameter $\alpha$.
We also employ the shorthand $t^{\alpha+}$ when we mean that there is some small parameter $\gamma >0$ such that $t^{\alpha+\gamma}$ and that we can choose $\gamma$ as small as we want at the price of a constant (e.g. $\norm{f}_{L^\infty} \lesssim \norm{f}_{H^{3/2+}}$). 
We will denote the $\ell^1$ vector norm $\abs{k,\eta,l} = \abs{k} + \abs{\eta} + \abs{l}$, which by convention is the norm taken in our work. 
Similarly, given a scalar or vector in $\Real^n$ we denote
\begin{align*} 
\jap{v} = \left( 1 + \abs{v}^2 \right)^{1/2}. 
\end{align*} 
We denote the standard $L^p$ norms by $\norm{f}_{p}$ and Sobolev norms $\norm{f}_{H^\sigma} := \norm{\jap{\grad}^\sigma f}_2$.  
We make common use of the Gevrey-$\frac{1}{s}$ norm with Sobolev correction defined by 
\begin{align*} 
\norm{f}_{\G^{\lambda,\sigma;s}}^2 = \sum_{k,l}\int \abs{\hat{f}_k(\eta,l)}^2 e^{2\lambda\abs{k,\eta,l}^s}\jap{k,\eta,l}^{2\sigma} d\eta. 
\end{align*} 
Since most of the paper we are taking $s$ as a fixed constant, it is normally omitted. Also, if 
$\sigma =0$, it is omitted. 
We refer to this norm as the $\mathcal{G}^{\lambda,\sigma;s}$ norm and occasionally refer to the space of functions
\begin{align*} 
\mathcal{G}^{\lambda,\sigma;s} = \set{f \in L^2 :\norm{f}_{\G^{\lambda,\sigma;s}}<\infty}. 
\end{align*}
See Appendix \ref{apx:Gev} for a discussion of the basic properties of this norm. 

For $\eta \geq 0$, we define $E(\eta)\in \Integer$ to be the integer part.  
We define for $\eta \in \Real$ and $1 \leq \abs{k} \leq E(\sqrt{\abs{\eta}})$ with $\eta k > 0$, 
$t_{k,\eta} = \abs{\frac{\eta}{k}} - \frac{\abs{\eta}}{2\abs{k}(\abs{k}+1)} =  \frac{\abs{\eta}}{\abs{k}+1} +  \frac{\abs{\eta}}{2\abs{k}(\abs{k}+1)}$ and $t_{0,\eta} = 2 \abs{\eta}$ and 
the critical intervals  
\begin{align*} 
I_{k,\eta} = \left\{
\begin{array}{lr}
[t_{\abs{k},\eta},t_{\abs{k}-1,\eta}] & \textup{ if } \eta k \geq 0 \textup{ and } 1 \leq \abs{k} \leq E(\sqrt{\abs{\eta}}), \\ 
\emptyset & otherwise.
\end{array} 
\right. 
\end{align*} 
For minor technical reasons, we define a slightly restricted subset as the \emph{resonant intervals}
\begin{align*} 
\mathbf I_{k,\eta} = \left\{
\begin{array}{lr} 
I_{k,\eta} & 2\sqrt{\abs{\eta}} \leq t_{k,\eta}, \\ 
\emptyset & otherwise.
\end{array} 
\right. 
\end{align*} 
Note this is the same as putting a slightly more stringent requirement on $k$: $k \leq \frac{1}{2}\sqrt{\abs{\eta}}$.

\section{Outline of the proof}\label{sec:proof} 
In this section we give an outline of the main steps of the proof of Theorem \ref{thm:Threshold} and set up the main energy estimates, focusing on exposition, intuition, and organization. 
After \S\ref{sec:proof}, the remainder of the paper is dedicated to the proof of the energy estimates required and the analysis of the various norms and Fourier analysis tools being employed. 

\subsection{Summary and weakly nonlinear heuristics}

\subsubsection{New dependent variables}
It is common in linear and formal weakly nonlinear studies to work with $(q,\omega^2)$ (the second component of the vorticity) coupled with momentum equations for the $x$-independent velocities (see e.g. \cite{Chapman02,SchmidHenningson2001}) 
as these satisfy good equations and can be used to recover the other unknowns from the divergence free condition. 
However, we found this approach untenable with the new coordinate system we will employ below for several reasons, the primary one being that it significantly distorts the derivatives and hence the procedure for recovering the other unknowns while still retaining precise information on the regularities is far from clear (we will see that very precise information is necessary for the proof). 
Instead, we found it much more natural to define the full set of auxiliary unknowns (which in some sense entirely replace the role of the vorticity) $q^i = \Delta u^i$ for $i = 1,2,3$. A computation shows that $(q^i)$ solves
\begin{equation} \label{def:qi} 
\left\{ \begin{array}{l} 
\partial_t q^1 + y \partial_x q^1 +  2\partial_{xy} u^1 + u \cdot \grad q^1  = -q^2 + 2\partial_{xx} u^2 - q^j \partial_j u^1 + \partial_x\left(\partial_i u^j \partial_j u^i\right) - 2\partial_{i} u^j \partial_{ij}u^1 + \nu \Delta q^1  \\
\partial_t q^2 + y \partial_x q^2 + u \cdot \grad q^2  = -q^j \partial_j u^2 + \partial_y\left(\partial_i u^j \partial_j u^i\right) - 2\partial_{i} u^j \partial_{ij}u^2 + \nu \Delta q^2  \\
\partial_t q^3 + y \partial_x q^3 +  2\partial_{xy} u^3 + u \cdot \grad q^3  =  2\partial_{zx} u^2 -q^j \partial_j u^3 + \partial_z\left(\partial_i u^j \partial_j u^i\right) - 2\partial_{i} u^j \partial_{ij}u^3 + \nu \Delta q^3 .
\end{array}\right.
\end{equation}
As seen in \S\ref{sec:lin}, the linear terms have disappeared from the $q^2$ equation, leaving only the nonlinear terms on the RHS. 
 Note that the equations on $q^1$ and $q^3$ are far less favorable in that they contain linear terms which are associated with the vortex stretching.   
\subsubsection{New independent variables} \label{sec:indepC} 
The need for a change of independent variables can be understood by considering the convection term $y\partial_x q^i + u \cdot \nabla q^i$ which appears in \eqref{def:qi} above. 
Due to the mixing of the Couette flow, any classical energy estimates on $q$ in (say) Sobolev spaces will rapidly grow (as seen in \eqref{ineq:nrmexplode}). 
Moreover, $u_0^1$ will grow like $O(\epsilon \jap{t})$ via the lift-up effect and hence could be $\sim c_0$ by $t \sim \frac{c_0}{\nu}$. 
Therefore, the $u_0^1$ component of the streak will have a major effect on the mixing and will need to be accounted for. 
A full study of the coordinate transformation is done in \S\ref{sec:coordinates} below, but let us just make a quick summary here. 
We start with the ansatz 
\begin{align*}
\left\{ \begin{array}{l} X  = x - ty - t \psi(t,y,z) \\ Y  = y + \psi(t,y,z) \\ Z  = z, \end{array} \right.
\end{align*}
(this is motivated by a further requirement that $\Delta^{-1}$ have good properties in the new coordinates, see \S\ref{sec:coordinates} below). 
Consider the simple convection diffusion equation on a passive scalar $f(t,x,y,z)$
$$
\partial_t f + y \partial_x f + u \cdot \nabla f = \nu \Delta f.
$$
Denoting $F(t,X,Y,Z) = f(t,x,y,z)$ and $U(t,X,Y,Z) = u(t,x,y,z)$, and $\Delta_t$ and $\nabla^t$ for the expressions for $\Delta$ and $\nabla$ in the new coordinates, this simple equation becomes
\begin{align} 
\partial_t F + \begin{pmatrix}u^1 - t (1+\partial_y\psi) u^2 - t \partial_z\psi u^3 - \frac{d}{dt}(t\psi) + \nu t \Delta \psi \\ (1+\partial_y\psi) u^2 + \partial_z \psi u^3 + \partial_t \psi - \nu\Delta \psi  \\ u^3 \end{pmatrix} \cdot \grad_{X,Y,Z} F = \nu \tilde{\Delta_t} F,  \label{ineq:transf}
\end{align}
where $\tilde{\Delta_t}$ is a variant of $\Delta_t$ without lower order terms; it is given below in \eqref{def:tildeDel1}. 
Eliminating $u^1_0$ leads to the equation $u^1_0 - t (1+\partial_y\psi) u^2_0 - t \partial_z\psi u^3_0 - \frac{d}{dt}(t\psi) + \nu t\Delta \psi = 0$. 
We now recast this equation on $\psi$ in terms of $C(t,Y,Z)=\psi(t,y,z)$ and an auxiliary unknown $g = \frac{1}{t}(U_0^1 - C)$ (this roughly measures the time-oscillations of $C$).
A variety of cancellations which take advantage of the precise structures give
\begin{equation} \label{def:Cgintro}
\left\{
\begin{array}{l}
\partial_t C + \tilde U_0 \cdot \grad_{Y,Z} C = g - U_0^2 + \nu \tilde{\Delta_t} C, \\
\partial_t g  + \tilde U_0 \cdot \grad_{Y,Z}g = -\frac{2}{t}g -\frac{1}{t} \left(U_{\neq} \cdot \grad^t U^1_{\neq}\right)_0 + \nu \tilde{\Delta_t}  g,
\end{array}
\right.
\end{equation}
where $\tilde U = \begin{pmatrix} U_{\neq}^1 - t(1 + \psi_y ) U^2_{\neq} - t \psi_z U^3_{\neq}  \\ (1 + \psi_y)U^2_{\neq} + \psi_z U^3_{\neq}+ g \\ U^3 \end{pmatrix}$. 
Coming back to \eqref{def:qi}, we further derive in the new coordinates ($Q(t,X,Y,Z) = q(t,x,y,z)$).
\begin{align} \label{def:Qiintro}
\left\{
\begin{array}{l}
Q^1_t + \tilde U \cdot \grad_{X,Y,Z} Q^1 = -Q^2 - 2\partial_{XY}^t U^1 + 2\partial_{XX} U^2 - Q^j \partial_j^t U^1 - 2\partial_i^t U^j \partial_{ij}^t U^1 + \partial_X(\partial_i^t U^j \partial_j^t U^i) + \nu \tilde{\Delta_t} Q^1 \\ 
Q^2_t + \tilde U \cdot \grad_{X,Y,Z} Q^2 = -Q^j \partial_j^t U^2 - 2\partial_i^t U^j \partial_{ij}^t U^2 + \partial_Y^t(\partial_i^t U^j \partial_j^t U^i) + \nu \tilde{\Delta_t} Q^2 \\ 
Q^3_t + \tilde U \cdot \grad_{X,Y,Z} Q^3 = -2\partial_{XY}^t U^3 + 2\partial_{XZ}^t U^2 - Q^j \partial_j^t U^3 - 2\partial_i^t U^j \partial_{ij}^t U^3 + \partial_Z^t(\partial_i^t U^j \partial_j^t U^i) + \nu \tilde{\Delta_t} Q^3, \\  
\end{array}
\right.
\end{align} 
We will perform most of our estimates on the coupled systems \eqref{def:Qiintro} and \eqref{def:Cgintro}, recovering $U^i$ from $Q^i$ using $\Delta^{-1}$ in the new coordinates.  
The one exception is in controlling very low frequencies of the velocity fields, for which we go back to the momentum formulation.  

\subsubsection{Choice of the norms}
The proof of Theorem \ref{thm:Threshold} relies on a bootstrap argument, or a priori estimate, for a certain set of norms of the solution. The choice of the norms is extremely delicate and amounts to describing precisely the possible distribution of information in Fourier space for $Q$ and $C$. 
The highest norms are derived from the toy model explained in \S\ref{sec:Toy}; the full rigorous definition of the norms is carried out in Appendix \ref{sec:def_nrm}. 
Each $Q^i$ is measured with a slightly different norm, of the form $\norm{A^i(t,\grad) Q^i(t)}_2$ where $A^i(t,\grad)$ are special Fourier multipliers.   
Let us just describe the  norm used to measure $Q^3$, the rest are defined below in \eqref{def:A},  
\begin{align*}
A^3_k(t,\eta,\ell) = e^{\lambda(t)\abs{k,\eta,l}^s}\jap{k,\eta,l}^\sigma \frac{e^{\mu \abs{\eta}^{1/2}}}{w(t,\eta) w_L(t,k,\eta,l)}\left(\mathbf{1}_{k \neq 0} \min\left(1, \frac{\jap{\eta,l}^2}{t^2}\right) + \mathbf{1}_{k = 0} \right). 
\end{align*} 
We now comment on the different components: $e^{\lambda(t)|k,\eta,\ell|^s}$ corresponds to a Gevrey-$\frac{1}{s}$ norm, with decreasing radius, while $\langle k,\eta,\sigma \rangle^\sigma$ gives a Sobolev correction (mainly for technical convenience). 
The next factors correspond to important physical effects and are derived in \S\ref{sec:Toy}.     
The factor $w$ is an estimate on the ``worst-possible'' growth of high frequencies due to weakly nonlinear effects. 
Roughly speaking, it is taken to satisfy the following for $\abs{k}^2 \lesssim \abs{\eta}$ (hence $\sqrt{\abs{\eta}} \lesssim t \lesssim \abs{\eta}$), 
\begin{align*}
\frac{\partial_t {w(t,\eta)}}{w(t,\eta)} \sim \frac{1}{1 + |t-\frac{\eta}{k}|}, \qquad \mbox{when $\left| t-\frac{\eta}{k} \right| \lesssim \frac{\eta}{k^2}$} \quad \mbox{and} \quad w(1,\eta) = 1.
\end{align*}
By requiring that $w$ be increasing, from Stirling's formula this relation leads to a growth of the type $\frac{w(2\abs{\eta},\eta)}{w(1,\eta)} \approx e^{\frac{\mu}{2}|\eta|^{1/2}}$ (up to a small polynomial correction), hence the choice of the numerator in $ \frac{e^{\mu|\eta|^{1/2}}}{w(t,\eta)}$, and the Gevrey-2 regularity requirement (see \S\ref{sec:Toy} below for more details). 
The upshot of including the $\frac{1}{w}$ factor in the definition of the norm is that it allows to estimate $\frac{1}{\sqrt{1+\left| t-\frac{\eta}{k} \right|}}A^i Q^i$ in spaces of the type $L^2_t L^2_x$, close to any resonant time $\frac{\eta}{k}$. In other words, this gives additional time integrability around the times at which the Orr mechanism is the strongest. 
The multiplier $w_L$ is a uniformly bounded multiplier that corrects for the anisotropy of the bounded growth experienced due to linear pressure effects (the $L$ stands for `linear'). 

The last factor corresponds to a growth occurring for times large compared to the frequency due to the linear vortex stretching. That $Q^1$ and $Q^3$ ultimately grow at least quadratically is evident on the linear level: 
 we saw in Proposition \ref{prop:LinEuler} that, due to vortex stretching, no inviscid damping occurs in general. 
That the growth can (and should) be taken only for frequencies small relative to time is less clear, and is explained more in \S\ref{sec:Toy}. 

While the norm which was just sketched corresponds to the highest regularity estimate, estimates at lower regularity are also needed, in particular to quantify the enhanced dissipation. For this, we use an approach similar to that employed for 2D NSE in \cite{BMV14}, and define a set of semi-norms of the type $\norm{A^{\nu;i}(t,\grad) Q^i}_2$ for specially designed Fourier multipliers $A^{\nu;i}(t,\grad)$ (see \eqref{def:Anu} below). For example, for $Q^3$:  
\begin{align*}
A^{\nu;3} = e^{\lambda(t)|k,\eta,\ell|^s} \langle k,\eta,\sigma \rangle^\beta \jap{D(t,\eta)}^\alpha \frac{1}{w_L(t,k,\eta,l)} \min\left(1, \frac{\jap{\eta,l}^2}{t^2}\right) \mathbf{1}_{k \neq 0}, 
\end{align*} 
where $D(t,\eta) \gtrsim \nu t^3$, hence giving the $\nu^{-1/3}$ time scale of enhanced dissipation. 
The multiplier $D$ is adapted to the fact that the enhanced dissipation slows down near critical times, another effect of the Orr mechanism (see \ref{def:D} below). 

\subsubsection{Basic weakly nonlinear heuristics} \label{sec:NonlinHeuristics}
While the actual derivation of the estimates occupies most of this paper, and is of great complexity, we are guided by a few basic principles.
The behavior of the solution is roughly divided into three phases. 
During the early times, $t \lesssim  \nu^{-1/3}$, 
the solution has fully 3D nonlinear effects until the enhanced dissipation eventually dominates. 
During the middle times, $\nu^{-1/3} \lesssim t \lesssim \nu^{-1}$, the solution is mostly in $x$-independent modes and is slowly growing via the lift-up effect. 
 By the time $t \gtrsim \nu^{-1}$ the solution has, in general, become extremely large relative to $\nu$ but it is also very close to a globally regular $x$-independent streak and eventually returns to Couette.  
At the middle and later times, the goal is to prove that the solution retains this special structure.

There are several nonlinear mechanisms which have the potential to cause instability and many have been proposed as important in the applied mathematics and physics literature for understanding transition, see e.g. \cite{Craik1971,TTRD93,ReddySchmidEtAl98,SchmidHenningson2001} and the references therein. 
We are particularly worried about so called ``bootstrap'' mechanisms \cite{TTRD93,Trefethen2005,Waleffe95,BM13}: nonlinear interactions 
that repeatedly excite growing linear modes. 
We will classify the main effects by the $x$ frequency of the interacting functions: denote for instance $0 \cdot \neq \,\to\, \neq$ for the interaction of a zero mode (in $x$) and a non-zero mode (in $x$) giving a non-zero mode (in $x$), and similarly, with obvious notations, $0 \cdot 0 \to 0$, $\neq \cdot \neq \, \to \,\neq$, and $\neq \cdot \neq \,\to 0$.

\begin{itemize} 
\item[\textbf{(2.5NS)}]  ($0 \cdot 0 \to 0$)  For  \emph{2.5D Navier-Stokes}, this corresponds to self-interactions of the streak. 
This interaction is not the most worrisome, however, we will need to get reasonable decay estimates on the streak. 
\item[\textbf{(SI)}] ($0 \cdot \neq \,\to\, \neq$) For \emph{secondary instability}, this effect is the transfer of momentum from the large $u_0^1$ mode to other modes (actually $u_0^2$ and $u_0^3$ also matter, but less). 
These involve a zero frequency and non-zero frequency $k$ interacting to amplify the same mode $k$, or the $k$ mode of a different component, e.g. $u_0^1$ and $u_k^3$ together force $u_k^2$. These interactions are those that arise when linearizing an $x$-dependent perturbation of a streak and so are what ultimately give rise to the secondary instabilities observed in larger streaks  (hence the terminology) \cite{ReddySchmidEtAl98,Chapman02}. 
\item[\textbf{(3DE)}] ($\neq \cdot \neq \,\to \,\neq$) For \emph{three dimensional echoes}, these effects are 3D variants of the 2D hydrodynamic echo phenomenon as observed in \cite{YuDriscoll02,YuDriscollONeil}: nonlinear interactions of $x$-dependent modes forcing unmixing modes \cite{Morrison98,Vanneste02,BM13} -- a nonlinear manifestation of the Orr mechanism (also recall from \S\ref{sec:lin} that this mostly involves frequencies in $x$ and $y$). 
In 3D, this is still important and the range of possible interactions is much wider (see e.g. \cite{Craik1971,SchmidHenningson2001} and \S\ref{sec:Toy} below). 
This involves two non-zero frequencies $k_1$, $k_2$ interacting to force mode $k_1 + k_2$ with $k_{1},k_2,k_1 + k_2 \neq 0$. 
Since these involve the interaction of only non-zero frequencies, they should only be problematic for short times: for $t \gtrsim \nu^{-1/3}$, this effect should be wiped out by the enhanced dissipation. 
\item[\textbf{(F)}] ($\neq \cdot \neq \,\to 0$) For \emph{nonlinear forcing}, this is the effect of the forcing from $x$-dependent modes back into $x$-independent modes.
This involves two non-zero frequencies $k$ and $-k$ interacting to force a zero frequency (in general this could involve a variety of the components). Similar to \textbf{(3DE)}, this effect is over-powered by the enhanced dissipation after $t \gtrsim \nu^{-1/3}$. 
\end{itemize} 
All of these are coupled to one another, and one can imagine bootstrap mechanisms involving several of them (e.g. $u_0^1$ forces a non-zero mode which unmixes and then strongly forces $u_0^2$ which strongly forces $u_0^1$ via the lift-up effect  and repeat: $\textbf{(SI)} \Rightarrow \textbf{(3DE)} \Rightarrow \textbf{(F)} \Rightarrow \textbf{(SI)}$). 
It is the need to consider exactly these kinds of nonlinear bootstraps that eventually precipitates the Gevrey-$2$ regularity requirement, as we will derive formally in \S\ref{sec:Toy} (although one can derive the regularity requirement using just the interaction between $\textbf{(SI)}$ and $\textbf{(3DE)}$). 
In \S\ref{sec:Toy} we derive a special ``toy model'' meant to model the worst possible growth in $Q$ caused by these effects, and then use it to derive the norms $A^i$. 
These specially adapted norms allow us to retain the maximal amount of control over the solution, despite the possible de-stabilizing effects. 

There are various ways in which the special structure of the nonlinearity is key to the proof of Theorem \ref{thm:Threshold}. 
The simplest such structure can be seen in the ubiquitous  $U^j \partial_j^t$ in \eqref{def:Qiintro}. This will balance the bad $U^1$ with the best derivative $\partial_X$ whereas $U^2$ will pair with the $\partial_Y^t$, so that the inviscid damping in $U^2$ will balance the time-growth (indeed, this is reason inviscid damping is important in the proof of Theorem \ref{thm:Threshold}). 
Much more subtle structural properties, such as the way different resonant and non-resonant frequencies interact, will become apparent in \S\ref{sec:Toy} and elsewhere in the proof (and \cite{BGM15II}). 

Now that we have some intuition about the approach, we now begin a detailed outline of the proof of Theorem \ref{thm:Threshold} and set up the main energy estimates.   

\subsection{Instantaneous regularization and continuation of solutions}
To begin the full proof of Theorem \ref{thm:Threshold}, the first step is to see that our initial data becomes small in $\G^{\lambda;s}$ (for an appropriate $\lambda$) after a short time. 
The proof is a straightforward variant of existing parabolic regularizations, see \S\ref{sec:RegCont} for a brief sketch.
 
\begin{lemma}[Local existence and instanteous regularization] \label{lem:Loc} 
Let $u_{in} \in L^2$ satisfy \eqref{ineq:QuantGev2}. 
Then for all $\nu\in (0,1]$, $c_{0}$ sufficiently small, $K_0$ sufficiently large, and all $\lambda_0 > \lambda^\prime > 0$, if $u_{in}$ satisfies \eqref{ineq:QuantGev2}, then there exists a time $t_\star = t_\star(s,K_0,\lambda_0,\lambda^\prime) > 0$ and a unique classical solution to \eqref{def:3DNSE} with initial data $u_{in}$ on $[0,t_\star]$ which is real analytic on $(0,t_\star]$, and satisfies
\begin{align} 
\sup_{t \in [t_\star/2,t_\star]} \norm{u(t)}_{\G^{\bar{\lambda}}} \leq \frac{10}{9}\epsilon, 
\end{align} 
where $\bar{\lambda} = \frac{9\lambda_0}{10} + \frac{\lambda^\prime}{10}$. 
\end{lemma}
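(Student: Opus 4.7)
The plan is to first obtain a classical solution on a short interval via standard theory, and then upgrade its regularity to Gevrey class using a parabolic smoothing argument that treats the two pieces of the initial data decomposition separately.

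First, I observe that the hypothesis \eqref{ineq:QuantGev2} implies $\|u_{in}\|_{H^3} \leq 2\epsilon \leq 2 c_{0} \nu$, since $\G^{\lambda_0;s} \hookrightarrow H^3$ and the penalty factor on $u_R$ is $\geq 1$. Standard local well-posedness for 3D Navier--Stokes in $H^3$ yields a unique classical solution on some interval $[0,T_\nu]$; by parabolic smoothing this solution is real analytic for $t > 0$, so I only need to produce a quantitative Gevrey bound at time $t_\star$ chosen independent of $\nu$ (which, combined with a standard continuation argument, will place $t_\star$ well inside the existence interval).

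For the quantitative Gevrey estimate I would write $u(t) = e^{\nu t \Delta} u_S + e^{\nu t \Delta} u_R + \int_0^t e^{\nu(t-\tau)\Delta} \mathbb{P}\,\mathcal{N}(u(\tau))\,d\tau$, where $\mathbb{P}$ is the Leray projector and $\mathcal{N}$ collects the bilinear and Couette--forcing terms. The first piece is handled by a direct multiplier bound: for any $0 \leq \tau \leq t_\star$ and any $\mu \leq \lambda_0$,
\begin{equation*}
\|e^{\nu\tau\Delta} u_S\|_{\G^{\mu;s}} \leq \|u_S\|_{\G^{\lambda_0;s}} \sup_{\xi}\, e^{(\mu-\lambda_0)|\xi|^s - \nu\tau|\xi|^2} \leq \|u_S\|_{\G^{\lambda_0;s}},
\end{equation*}
so the smooth part simply loses a small part of its Gevrey radius. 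For the rough part, an analogous Fourier multiplier computation on $e^{-\nu\tau|\xi|^2}\jap{\xi}^{-3} e^{\mu|\xi|^s}$ gives
\begin{equation*}
\|e^{\nu\tau\Delta} u_R\|_{\G^{\mu;s}} \lesssim \|u_R\|_{H^3}\, \exp\!\Big(C_s \mu^{\frac{2}{2-s}} (\nu\tau)^{-\frac{s}{2-s}}\Big),
\end{equation*}
so after time $\tau \sim t_\star$ (independent of $\nu$) the $H^3$ datum becomes Gevrey class with a multiplicative cost exponential in an inverse power of $\nu$. The quantitative factor $e^{K_0 \nu^{-3s/(2(1-s))}}$ in \eqref{ineq:QuantGev2} is tuned to dominate this cost together with the additional amplification produced by the Picard iteration below, so that the contribution of $u_R$ to $\|u(t_\star)\|_{\G^{\bar\lambda;s}}$ is bounded by, say, $\epsilon/20$.

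Finally I would close a Picard/fixed-point estimate for the Duhamel integral in the space $Y = C([t_\star/2, t_\star]; \G^{\bar\lambda;s})$ (with the ball of radius $\tfrac{10}{9}\epsilon$). Using the Banach algebra property of $\G^{\lambda;s}$ available because $s > 1/2$, together with the smoothing bound $\|e^{\nu(t-\tau)\Delta}\partial_j f\|_{\G^{\bar\lambda;s}} \lesssim (\nu(t-\tau))^{-1/2} \|f\|_{\G^{\bar\lambda;s}}$ and a slight interpolation of the Gevrey radii between $\bar\lambda$ and $\lambda_0$ (used to absorb the $y\partial_x$ and lift-up linear terms), the nonlinear contribution is controlled by $C t_\star^{1/2} \nu^{-1/2} \epsilon^2 \ll \epsilon$. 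Choosing $t_\star$ small depending only on $s,\lambda_0,\lambda^\prime$ (together with $\epsilon \leq c_0 \nu$ so that $\epsilon^2/\nu \leq c_0 \epsilon$) closes the estimate and yields $\sup_{[t_\star/2, t_\star]}\|u(t)\|_{\G^{\bar\lambda;s}} \leq \tfrac{10}{9}\epsilon$. The main technical obstacle is the interplay between the large pre-factor $e^{K_0 \nu^{-3s/(2(1-s))}}$ penalizing $u_R$ and the rate of Gevrey smoothing of the heat kernel; this is precisely why $K_0$ must be chosen large depending on $s, \lambda_0, \lambda^\prime$ but independently of $\nu$.
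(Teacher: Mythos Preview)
Your overall strategy is correct in spirit, and your linear smoothing estimates on $e^{\nu\tau\Delta}u_S$ and $e^{\nu\tau\Delta}u_R$ are right; you also correctly identify the role of the penalty $e^{K_0\nu^{-3s/(2(1-s))}}$ in \eqref{ineq:QuantGev2}. The difference from the paper lies in the handling of the nonlinear part, and here there is a genuine gap.

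You claim to close a Picard estimate in $Y=C([t_\star/2,t_\star];\G^{\bar\lambda;s})$ with nonlinear contribution $\lesssim t_\star^{1/2}\nu^{-1/2}\epsilon^2$. But the Duhamel integral $\int_0^t e^{\nu(t-\tau)\Delta}\mathbb{P}\,\mathcal{N}(u(\tau))\,d\tau$ requires control of $u(\tau)$ in $\G^{\bar\lambda;s}$ for \emph{all} $\tau\in[0,t]$, not just on $[t_\star/2,t_\star]$. For small $\tau$ the only a priori information is $\|u(\tau)\|_{H^3}\lesssim\epsilon$, so the early-time contribution (even after heat smoothing over a gap $\gtrsim t_\star$) picks up a factor $\epsilon\,\exp\big(C(\nu t_\star)^{-s/(2-s)}\big)$---note the \emph{linear} Couette-forcing terms in $\mathcal{N}$ contribute at order $\epsilon$, not $\epsilon^2$---and this is not controlled by $\epsilon\le c_0\nu$ alone. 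Your proposed Gevrey-radius interpolation can absorb those linear terms only if you already know the \emph{solution} (not just the data) decomposes as Gevrey plus small-with-penalty, which is precisely what has not been established.

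The paper avoids this circularity by using a \emph{nonlinear} splitting rather than your linear one: it first solves the full equation with initial data $u_S$ (the resulting $u_S(t)$ stays Gevrey by propagation of regularity, with no smoothing needed), then writes $u_R(t)=u(t)-u_S(t)$, which satisfies a perturbation equation around $u_S(t)$. The extreme smallness of $u_R(0)$ in $H^3$ then controls $u_R(t)$ in Gevrey via parabolic smoothing on a short $\nu$-dependent interval, after which a third step propagates the combined Gevrey bound forward to a time $t_\star$ independent of $\nu$. This three-step structure is what lets the nonlinear estimate close without assuming a priori Gevrey control on $u(\tau)$ for small $\tau$.
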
  

Once such a solution exists, we will be able to continue it and ensure also that it propagates real analyticity as long as a certain Sobolev norm remains finite. 
This will allow us to rigorously justify our a priori estimates and make sure the solution to our transformed system corresponds to the solution of \eqref{def:3DNSE}.   
Analyticity itself is not important, the main point is that the solution propagates some regularity a few derivatives higher than the Gevrey-$\frac{1}{s}$ that we are working in below so that we may easily justify that the norms we are considering take values continuously in time.  

\begin{lemma}[Continuation and propagation of analyticity] \label{lem:Cont} 
Let $T > 0$ be such that the classical solution $u(t)$ to \eqref{def:3DNSE} constructed in Lemma \ref{lem:Loc} exists on $[0,T]$ and is real analytic for $t \in (0,T]$. 
Then there exists a maximal time of existence $T_0$ with $T < T_0 \leq \infty$ such that the solution $u(t)$ remains unique and real analytic on $(0,T_0)$. 
Moreover, if for some $\tau \leq T_0$ and $\sigma > 5/2$ we have $\limsup_{t \nearrow \tau} \norm{u(t)}_{H^{\sigma}} < \infty$, then $\tau < T_0$.  
\end{lemma}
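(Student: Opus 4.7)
The plan is to handle existence/continuation and propagation of analyticity separately, each by adapting a standard parabolic argument to the sheared frame in which the Couette transport $y\partial_x$ is absorbed. First, pass to coordinates $(X,y,z) = (x-ty,y,z)$ with $U(t,X,y,z) := u(t,x,y,z)$, so that \eqref{def:3DNSE} becomes a 3D Navier-Stokes-type system with the modified gradient $\grad^L$ and Laplacian $\Delta_L$ from \eqref{def:gradDelL}, no unbounded transport, and bounded lower-order terms (the lift-up forcing and the linear pressure $\grad^L \Delta_L^{-1}(-2\partial_X U^2)$).

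For existence and continuation, the plan is a Kato-type $H^\sigma$ iteration. Standard energy estimates in the sheared frame give, for $\sigma > 5/2$,
\begin{equation*}
\tfrac{d}{dt}\norm{U(t)}_{H^\sigma}^2 + 2\nu \norm{\grad^L U(t)}_{H^\sigma}^2 \lesssim \jap{t}^2 \norm{U(t)}_{H^\sigma}^3,
\end{equation*}
using $H^\sigma \hookrightarrow W^{1,\infty}$ in three dimensions to handle $U\cdot \grad^L U$ and $L^2$-boundedness of the Leray projection to handle the pressures; the $\jap{t}^2$ factor reflects the growth of $\grad^L$. Picard iteration in $C([t_\star, t_\star + \delta]; H^\sigma)$ produces a unique local strong solution with $\delta$ depending only on $\norm{U(t_\star)}_{H^\sigma}$ and on $t_\star$, and uniqueness follows from an energy estimate on the difference of two solutions in $L^2$. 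Defining $T_0$ as the supremum of times up to which the solution remains in $C_t H^\sigma$, the continuation criterion is immediate: if $\limsup_{t\nearrow \tau}\norm{u(t)}_{H^\sigma} < \infty$, the uniform lifespan bound lets one restart the iteration at some $t < \tau$ and extend past $\tau$, so $\tau < T_0$.

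For propagation of real analyticity on $(0, T_0)$, the plan is a Foias-Temam argument with shrinking analyticity radius. Lemma \ref{lem:Loc} gives real analyticity at $t = t_\star$; set $\lambda(t_\star) := \lambda_\star > 0$ and consider
\begin{equation*}
E_\lambda(t) := \norm{e^{\lambda(t)\abs{\grad}} U(t)}_{H^\sigma}^2,
\end{equation*}
with $\abs{\grad}$ in the sheared coordinates. The subadditivity $e^{\lambda\abs{\xi}} \leq e^{\lambda\abs{\xi-\eta}}e^{\lambda\abs{\eta}}$ converts the nonlinear commutator into a convolution product of exponentially-weighted norms, yielding a differential inequality of the schematic form
\begin{equation*}
\tfrac{d}{dt} E_\lambda + 2\nu \norm{e^{\lambda\abs{\grad}}\grad^L U}_{H^\sigma}^2 \leq \dot\lambda(t)\, \norm{e^{\lambda\abs{\grad}}\abs{\grad}^{1/2} U}_{H^\sigma}^2 + C\jap{t}\norm{U}_{H^\sigma} E_\lambda.
\end{equation*}
Choosing $\dot\lambda(t) \leq 0$ makes the first term on the right harmless, and a Grönwall bound on the remainder, using the already-finite $H^\sigma$ norm of $U$ from the previous step, controls $E_\lambda$. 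Picking $\dot\lambda$ with $\int_{t_\star}^{\tau}\abs{\dot\lambda(s)}ds < \lambda_\star$ keeps $\lambda(t) > 0$, yielding analyticity on $[t_\star, \tau]$ for any $\tau < T_0$, and hence on all of $(0, T_0)$ after combining with the analyticity already known on $(0, t_\star]$.

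The main technical obstacle is purely bookkeeping: the $\jap{t}$ growth of $\grad^L$ enters both the $H^\sigma$ and the analytic estimate, forcing $\lambda(t)$ to decay more quickly at large times. Since only fixed intervals $[t_\star, \tau]$ with $\tau < T_0$ are needed, the time factors are bounded and the required $\int_{t_\star}^{\tau}\abs{\dot\lambda(s)}ds < \lambda_\star$ is satisfied once $\tau$ is close enough to $t_\star$; iterating in finitely many steps (possibly shrinking $\lambda_\star$, which is admissible since we only need $\lambda(t)>0$) covers any such interval. Everything else is routine parabolic regularity theory; no delicate analytic-smoothing effects are exploited here because the global Gevrey bounds are provided by the entirely separate bootstrap driving the main theorem.
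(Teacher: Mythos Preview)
Your approach---Kato-type $H^\sigma$ local theory in the sheared frame for the continuation criterion, plus a Foias--Temam shrinking-radius argument for analyticity---is exactly what the paper intends (its own proof simply cites \cite{FoiasTemam89,LevermoreOliver97,KukavicaVicol09} and omits details). One point in your analytic-norm closure needs fixing, however: the nonlinear term cannot be bounded by $C\jap{t}\norm{U}_{H^\sigma} E_\lambda$ with the \emph{unweighted} $H^\sigma$ norm as coefficient, because the subadditivity $e^{\lambda\abs{\xi}}\le e^{\lambda\abs{\xi-\eta}}e^{\lambda\abs{\eta}}$ puts an exponential weight on \emph{both} factors of the product. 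The correct schematic has the form $C\jap{t}E_\lambda^{1/2}\norm{e^{\lambda\abs{\grad}}\abs{\grad}^{1/2}U}_{H^\sigma}^2$ on the right, and then merely taking $\dot\lambda\le 0$ and applying Gr\"onwall does not close (the resulting inequality for $E_\lambda$ is superlinear). The standard remedy---which is the actual content of the Foias--Temam references---is to couple the radius to the solution via $\dot\lambda(t) \sim -C\jap{t}E_\lambda(t)^{1/2}$, so that the $\dot\lambda$ term absorbs the nonlinearity and $E_\lambda$ becomes non-increasing; positivity of $\lambda$ on a short step then follows from $E_\lambda\le E_\lambda(t_\star)$, and one iterates over $[t_\star,\tau]$ as you already describe. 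With this adjustment your argument is complete and matches the paper.
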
 

\subsection{$Q^i$ formulation, the coordinate transformation, and some key cancellations}  \label{sec:coordinates}
The next step in the proof of Theorem \ref{thm:Threshold} 
is the observation that if we are to get good estimates, then we need to remove the fast mixing action of \emph{both} the Couette flow \emph{and} the background streak.
The approach is inspired by the coordinate transforms of \cite{BM13,BMV14}, 
however, here it will be significantly different due to the lift-up effect, the $z$ dependence, and the different kind of estimates we require. 

Our coordinate system needs to satisfy three features for the proof to work:
\begin{enumerate} 
\item the relative velocity field in the new variables must be time integrable or $O(\epsilon)$ (so that the dissipation can control it) as otherwise one cannot propagate uniform estimates for $t \rightarrow \infty$;  
\item we need to be able to treat the Laplacian in the new coordinates as a perturbation from $\Delta_L$, so that we can take advantage of the inviscid damping and enhanced dissipation effects;
\item we need to be able to make practical estimates on the behavior of the coordinate system and the coordinate transformation needs to treat the dissipation in a natural way. 
\end{enumerate}
The second requirement will translate to a need to get derivatives of the form $(\partial_Y - t\partial_X)$, as opposed to something like $(\partial_Y - t C(Y,Z) \partial_X)$, which we do not know how to treat (in particular, it is not obvious how to isolate the critical times). For this to happen, we need the coordinate transform to be of the form:  
\begin{subequations} \label{def:XYZ}
\begin{align} 
X & = x - ty - t \psi(t,y,z) \\ 
Y & = y + \psi(t,y,z) \\ 
Z & = z. 
\end{align} 
\end{subequations}
We could imagine also changing the $z$ variable, but we want our coordinate system to be as simple as possible (although we will do this in \cite{BGM15II}). 
Notice that $Y$ has units of velocity, indeed, we have transformed this coordinate not to measure the vertical location but instead to measure a sort of time-average of the shear velocity.
Provided $\psi$ is sufficiently small in a suitable sense, one can invert \eqref{def:XYZ} for $x,y,z$ as functions of $X,Y,Z$ (see \S\ref{sec:RegCont} below for more information).  
In what follows, denote the Jacobian factors (by slight abuse of notation), which, in some sense, measure space and time variations in the background mixing flow $(y + u_0^1,0,0)$ (note that $Z = z$ in \eqref{def:XYZ}): 
\begin{align*} 
\psi_t(t,Y,Z) & = \partial_t \psi(t,y(t,Y,Z),Z) \\ 
\psi_y(t,Y,Z) & = \partial_y \psi(t,y(t,Y,Z),Z) \\ 
\psi_z(t,Y,Z) & = \partial_z \psi(t,y(t,Y,Z),Z). 
\end{align*}
In what follows we will usually omit the arguments of $y(t,Y,Z)$ and use a more informal notation, such as $\psi_t(t,Y,Z) = \partial_t \psi(t,y,z)$.  

Next we determine $\psi$. 
As in \S\ref{sec:indepC}, for motivation, imagine now a passive scalar transport equation in a perturbation of the shear flow:
\begin{align} 
\partial_t f + y\partial_x f + u\cdot \grad f = \nu \Delta f. \label{def:ftrans} 
\end{align}
Denoting $F(t,X,Y,Z) = f(t,x,y,z)$ and $U(t,X,Y,Z) = u(t,x,y,z)$, we can check that the derivatives transform according to
\begin{equation*}
\left\{ \begin{array}{l}
\partial_t f = ( \partial_t + \psi_t\partial_Y - y\partial_X - \frac{d}{dt}(t\psi) \partial_X) F \\
\partial_x f = \partial_X F \\
\partial_y f = (1 + \psi_y)(\partial_Y - t\partial_X) F\\
\partial_z f = (\partial_Z + \psi_z (\partial_Y - t\partial_X) ) F.
\end{array} \right.
\end{equation*}
It will be useful to define notation for the $(x,y,z)$ derivatives in the new coordinate system
\begin{subequations} \label{def:transD}  
\begin{align} 
\partial_X^t & = \partial_X \\ 
\partial_Y^t & = (1 + \psi_y)(\partial_Y - t\partial_X) \\ 
\partial_Z^t & = \partial_Z  + \psi_z(\partial_Y - t\partial_X) \\ 
\grad^t & = (\partial_X, \partial_Y^t, \partial_Z^t)^{T}.  
\end{align}
\end{subequations}
Note that necessarily these derivatives commute. 
The transport equation \eqref{def:ftrans} in the new coordinate system is given by (it will be more clear to keep $\psi$ and its derivatives in $(y,z)$ for a moment)
\begin{align}  
\partial_t F + \begin{pmatrix}u^1 - t (1+\partial_y\psi) u^2 - t \partial_z\psi u^3 - \frac{d}{dt}(t\psi) \\ (1+\partial_y\psi) u^2 + \partial_z \psi u^3 + \partial_t \psi  \\ u^3 \end{pmatrix} \cdot \grad_{X,Y,Z} F = \nu \Delta_t F,  \label{ineq:transf}
\end{align}
where the upper-case letters are evaluated at $(X,Y,Z)$ and the lower case letters (including $\psi$ and its derivatives) are evaluated at $(x,y,z)$, 
and we are denoting 
\begin{align} 
\Delta_t F = \partial_{XX} F+ \partial_Y^t\left(\partial_Y^t F\right) + \partial_Z^t\left(\partial_Z^t F\right).
\end{align}
Note that
\begin{align*}
\Delta_t F&  = \partial_{XX}F + \left((1+\partial_y\psi)^2 + (\partial_z\psi)^2\right)(\partial_Y - t\partial_X)^2 F + \partial_{ZZ} F\\ & \quad  + 2\partial_z \psi \partial_Z (\partial_Y - t\partial_X) F + \Delta \psi (\partial_Y - t\partial_X) F, 
\end{align*}
and denote
\begin{align} 
\tilde{\Delta_t}F & = \Delta_t F - \Delta \psi (\partial_Y - t\partial_X)F. \label{def:tildeDel1}
\end{align} 
Then, \eqref{ineq:transf} becomes
\begin{align} 
\partial_t F + \begin{pmatrix}u^1 - t (1+\partial_y\psi) u^2 - t \partial_z\psi u^3 - \frac{d}{dt}(t\psi) + \nu t \Delta \psi \\ (1+\partial_y\psi) u^2 + \partial_z \psi u^3 + \partial_t \psi - \nu\Delta \psi  \\ u^3 \end{pmatrix} \cdot \grad_{X,Y,Z} F = \nu \tilde{\Delta_t} F.  \label{ineq:transf2}
\end{align}
Velocity fields of the type appearing in \eqref{ineq:transf2} will henceforth be referred to as ``relative velocity fields''.  
With the lift-up effect in mind, it is the $X$ average of the first component which can grow to $O(1)$ sizes and does not decay appreciably, whereas $x$ dependent modes will be suppressed by inviscid damping and enhanced dissipation.  
Hence, we derive
\begin{subequations} \label{def:psiyz}
\begin{align} 
u_0^1 - t\left( 1 + \partial_y \psi \right)u_0^2 - t \partial_z \psi u_0^3 - \frac{d}{dt}(t\psi) & = -\nu t\Delta \psi \label{def:psi2} \\
\lim_{t \rightarrow 0} t\psi(t,y,z) = 0.  
\end{align} 
\end{subequations} 
The form of the dissipation on the RHS of \eqref{def:psi2} will also ensure that the evolution equations we derive next have a natural dissipative term, rather than an unnatural loss of derivatives. 

In what follows, we denote
\begin{align} 
C(t,Y,Z) & = \psi(t,y,z). 
\end{align} 
From the chain rule we derive:
\begin{subequations} \label{eq:psiyzt}    
\begin{align} 
\psi_y & = \partial_Y^tC = \left(1 + \psi_y\right) \partial_Y C \\  
\psi_z & = \partial_Z^t C =\partial_Z C + \psi_z \partial_YC \\ 
\psi_t & = \partial_t C + \psi_t \partial_YC, \label{ineq:psit} 
\end{align} 
\end{subequations}
and hence, if $C$ is small in the appropriate norm, then we can write
\begin{subequations} \label{def:psi2sqrBrack} 
\begin{align} 
\psi_y & =  \frac{\partial_Y C}{1 - \partial_YC} = \partial_Y C\sum_{j = 0}^\infty (\partial_YC)^j \\ 
\psi_z & = \frac{\partial_Z C}{1 - \partial_YC} = \partial_Z C\sum_{j = 0}^\infty (\partial_YC)^j  \\ 
\psi_t & = \frac{\partial_t C}{1 - \partial_YC} = \partial_t C \sum_{j = 0}^\infty (\partial_YC)^j.  
\end{align} 
\end{subequations} 
From the chain rule together with \eqref{def:psi2}, we derive 
\begin{align}
\partial_t C + \begin{pmatrix} \left(1 + \psi_y\right)U_0^2 + \psi_z U^3_0 + \psi_t - \nu \Delta \psi \\ U_0^3 \end{pmatrix} \cdot \grad_{Y,Z} C = \frac{1}{t}\left(U_0^1 - tU_0^2 - C\right) + \nu \tilde{\Delta_t} C. \label{def:C}
\end{align}
Estimates on $C$ via \eqref{def:C} will enable us to get estimates on the Jacobian factors through \eqref{def:psi2sqrBrack}. 
However, based on the behavior suggested by the linear problem in \S\ref{sec:lin}, it is not clear that the linear terms on the right hand side of \eqref{def:C} are decaying fast enough. 
A similar issue arose in \cite{BM13,BMV14} where good estimates on the coordinate system required additional estimates on the background shear flow. 
In a similar spirit, we will define another auxiliary unknown $g$ which, roughly speaking, is a measure of the time oscillations of the $x$ component of the streak:   
\begin{align} 
g = \frac{1}{t}\left(U_0^1 - C\right). \label{def:g}
\end{align}
Taking $x$ averages of \eqref{def:3DNSE}  and using \eqref{def:transD}, we get  
\begin{align}  
\partial_t U_0^1 + \begin{pmatrix} (1  + \psi_y)U_0^2 + \psi_zU_0^3 + \psi_t - \nu \Delta \psi \\ U_0^3 \end{pmatrix} \cdot \grad_{Y,Z}U_0^1 = - U_0^2 - \left(U_{\neq} \cdot \grad^t U^1_{\neq}\right)_0 + \nu \tilde{\Delta_t} U_0^1.  
\end{align}
Therefore, together with \eqref{def:C}, we derive the following evolution for $g$: 
\begin{align}
\partial_t g  + \begin{pmatrix} (1  + \psi_y)U_0^2 + \psi_z U_0^3 + \psi_t - \nu \Delta \psi \\ U_0^3 \end{pmatrix} \cdot \grad_{Y,Z}g = -\frac{2}{t}g -\frac{1}{t}\left(U_{\neq} \cdot \grad^t U^1_{\neq}\right)_0 + \nu \tilde{\Delta_t} g.  \label{def:gPDE1}
\end{align} 
The most crucial observation here is that the lift-up effect terms canceled.  
This allows us to make good estimates on the regularity of $g$ and, in particular, a decay estimate using the linear damping term in \eqref{def:gPDE1}. 
In turn, this gets us good control on $C$ via \eqref{def:C}.
Note that $\Delta \psi = \Delta_t C$, and hence it will be the following relative velocity field that will govern our equations: 
\begin{align} 
\tilde U = \begin{pmatrix} U_{\neq}^1 - t(1 + \psi_y ) U^2_{\neq} - t \psi_z U^3_{\neq}  \\ (1 + \psi_y)U^2 + \psi_z U^3+ \psi_t - \nu \Delta_t C \\ U^3 \end{pmatrix}, \label{def:tildeU}
\end{align}
From \eqref{def:C}, \eqref{def:g} and \eqref{ineq:psit} we derive
\begin{align} 
\psi_t - \nu\Delta_t C & = g - U_0^2  - \begin{pmatrix} \left(1 + \psi_y \right)U_0^2 + \psi_z U^3_0  \\ U_0^3 \end{pmatrix} \cdot \grad_{Y,Z} C.   \label{def:dtpsi}
\end{align}
We see that the $\nu \Delta_t C$ factor cancels with the similar term in $\tilde U^2$ in \eqref{def:tildeU}. 
Further, from \eqref{eq:psiyzt}, we have the following convenient cancellation:  
\begin{align} 
\tilde U_0^2 = (\psi_y - \partial_YC - \psi_y \partial_YC) U^2_0 + g + (\psi_z - \psi_z \partial_YC - \partial_ZC)U_0^3 = g.   
\end{align} 
It follows that the relative velocity is very different depending on whether we are considering the zero mode or not: 
\begin{align}
\tilde U = \tilde U_0 + \tilde U_{\neq } = \begin{pmatrix} 0 \\ g \\ U^3_0 \end{pmatrix}  + \begin{pmatrix} U_{\neq}^1 - t(1 + \psi_y ) U^2_{\neq} - t \psi_z U^3_{\neq}  \\ (1 + \psi_y)U^2_{\neq} + \psi_z U^3_{\neq}\\ U^3_{\neq} \end{pmatrix} \label{def:tildeU2} 
\end{align}  
(it is worth noting that $\tilde U_{\neq} \cdot \nabla_{X,Y,Z} = U_{\neq} \cdot \nabla^t$).
To see that $\tilde U$ is really the relative velocity, we first begin with $C$. 
Putting \eqref{def:dtpsi} together with \eqref{def:g} and \eqref{def:C} we derive,  
\begin{align} 
\partial_t C + \tilde U_0 \cdot \grad_{Y,Z} C = g - U_0^2 + \nu \tilde{\Delta_t} C. \label{def:CReal}
\end{align} 
The advantage of the cancellation that transfers $\Delta_t$ to $\tilde{\Delta_t}$ is that (A) it eliminates error terms that involve two derivatives of $C$, thereby eliminating some serious regularity issues and (B) the associated terms were slowly decaying. 
Similarly, we derive
\begin{align}
\partial_t g  + \tilde U_0 \cdot \grad_{Y,Z}g = -\frac{2g}{t} -\frac{1}{t} \left(U_{\neq} \cdot \grad^t U^1_{\neq}\right)_0 + \nu \tilde{\Delta_t}  g.   \label{def:gPDE2} 
\end{align}
Recall that $\left(\tilde{U}_{\neq} \cdot \grad_{X,Y,Z} U^1_{\neq}\right)_0 = \left(U_{\neq} \cdot \grad^t U^1_{\neq}\right)_0$.  

Next we change coordinates in \eqref{def:qi}. 
Notice that from \eqref{def:psi2} and \eqref{def:tildeDel1}, for any $f$ solving the passive scalar \eqref{def:ftrans}, we have for $F(t,X,Y,Z) = f(t,x,y,z)$: 
\begin{align*}   
\partial_t F + \tilde U \cdot \grad_{X,Y,Z} F & = \nu \tilde{\Delta_t} F.  
\end{align*} 
Applying this to $q^i = \Delta u^i$, which solve \eqref{def:qi},  gives the following, denoting $Q^i(t,X,Y,Z) = q^i(t,x,y,z)$: 
\begin{align} \label{def:MainSys}
\left\{
\begin{array}{l}
Q^1_t + \tilde U \cdot \grad_{X,Y,Z} Q^1 = -Q^2 - 2\partial_{XY}^t U^1 + 2\partial_{XX} U^2 - Q^j \partial_j^t U^1 - 2\partial_i^t U^j \partial_{ij}^t U^1 + \partial_X(\partial_i^t U^j \partial_j^t U^i) + \nu \tilde{\Delta_t} Q^1 \\  
Q^2_t + \tilde U \cdot \grad_{X,Y,Z} Q^2 = -Q^j \partial_j^t U^2 -  2\partial_i^t U^j \partial_{ij}^t U^2 + \partial_Y^t(\partial_i^t U^j \partial_j^t U^i) + \nu \tilde{\Delta_t} Q^2 \\ 
Q^3_t + \tilde U \cdot \grad_{X,Y,Z} Q^3 = -2\partial_{XY}^t U^3 + 2\partial_{XZ}^t U^2 - Q^j \partial_j^t U^3 - 2\partial_i^t U^j \partial_{ij}^t U^3 + \partial_Z^t(\partial_i^t U^j \partial_j^t U^i) + \nu \tilde{\Delta_t} Q^3,
\end{array}
\right.
\end{align} 
where we use the following to recover the velocity fields: 
\begin{subequations} \label{eq:Ui}
\begin{align} 
U^i & = \Delta_t^{-1} Q^i \label{eq:UiFromQi} \\ 
\partial_i^t U^i & = 0. \label{eq:divfree}
\end{align} 
\end{subequations} 
For the majority of the remainder of the proof, \eqref{def:MainSys}, together with \eqref{def:CReal}, \eqref{def:gPDE2} and \eqref{eq:Ui}, will be the main governing equations. 
The one exception will be in the treatment of the low frequencies of $X$ independent modes, where the use of \eqref{eq:UiFromQi} can be problematic. 
For the remainder of the proof we will use $\grad$ to, by default, denote $\grad_{X,Y,Z}$ or $\grad_{Y,Z}$ (where appropriate) unless otherwise noted. 

From now on we will use the following vocabulary and shorthands 
\begin{subequations} 
\begin{align} 
\tilde U \cdot \grad Q^{\alpha} & = \textup{``transport nonlinearity''} & \mathcal{T} \\  
-Q^j \partial_j^t U^\alpha - 2\partial_i^t U^j \partial_{ij}^t U^\alpha & = \textup{``nonlinear stretching''} & NLS\\  
\partial_\alpha^t(\partial_i^t U^j \partial_j^t U^i)  & = \textup{``nonlinear pressure''} & NLP\\
-2\partial_{XY}^t U^\alpha & = \textup{``linear stretching''} & LS  \\
2\partial_{X\alpha}^t U^2 & = \textup{``linear pressure''} & LP \\ 
\left(\tilde{\Delta_t} - \Delta_L\right)Q^\alpha & =  \textup{``dissipation error''} & \mathcal{D}_E. 
\end{align} 
\end{subequations} 
The pressure terms are named due to the fact that they arise originally from $p^{NL}$ (in the nonlinear case) and $p^L$ (in the linear case) in \eqref{def:3DNSE}.
The stretching terms originally arose from $\Delta(u\cdot \grad u^{\alpha})$ (in the nonlinear case) and $\Delta(y\partial_x u^{\alpha})$ (in the linear case). 
The linear stretching terms are the analogue of the linear vortex stretching which inhibits the inviscid damping, whereas the $NLS$ terms
are analogous to the nonlinear vortex stretching in the 3D Navier-Stokes equations, although the  geometrical interpretation is less clear.     
We remark further that each of the nonlinear terms will be further sub-divided into as many as four pieces in accordance with the different types of nonlinear effects described in \S\ref{sec:NonlinHeuristics}, as each one requires a different treatment. 
The technical complexity is furthered  by the fact that each of the three components of the solution is qualitatively different, which means one should view \eqref{def:MainSys} as a system of three distinct unknowns and some specific combinations of $i$ and $j$ need to be treated specially.  

Before concluding this section, there is one additional, crucial cancellation we would like to emphasize.
As discussed in \S\ref{sec:NonlinHeuristics}, the effect \textbf{(F)}, feedback of $X$-dependent modes on $X$-independent modes, is a potentially important mechanism for streak growth and subsequent transition (see e.g. \cite{Chapman02}).
We need to take advantage of a simple, but useful, cancellation to reduce the strength of this effect (another `non-resonant' structure). 
This is the 3D analogue of the cancellation behind the rapid convergence of the $x$-independent vorticity observed in 2D Euler near Couette flow in \cite{BM13}.     

Consider the evolution of $Q^\alpha$ for $\alpha \in \set{1,2,3}$. 
First, denoting $U_k$ the projection of $U$ on the $k$-th Fourier mode in $x$, we notice the following for $k \neq 0$:
\begin{align*} 
\Delta_t\left( U^j_{-k} \partial_j^t U_k^\alpha \right) = U_{-k} \cdot \grad^t Q^\alpha_k + \partial_i^t(\partial_i^t U^j_{-k} \partial_j^t U^\alpha_{k})  + \partial_i^t U_{-k}^j \partial_i^t \partial_j^t U_{k}^\alpha.
\end{align*}
By the divergence free condition \eqref{eq:divfree}, which holds mode-by-mode in $X$ (again due to the fact that $\psi$ does not depend on $x$), we derive 
\begin{align} 
\Delta_t\left( U^j_{-k} \partial_j^t U^\alpha_k \right) & = \left(\partial_{Y}^t \partial_Y^t + \partial_Z^t\partial_Z^t\right)\partial_j^t \left(U^j_{-k}  U_k^\alpha\right). \label{eq:DeltatCanc2}  
\end{align} 
Similarly, 
\begin{align*} 
\partial_\alpha^t \left(\partial_j^t U^i_{-k} \partial_i^t U^j_k\right) = \partial_\alpha^t \partial_j^t \partial_i^t \left(U^i_{-k} U^j_k\right).
\end{align*} 
Therefore, combining the interaction of two non-zero frequencies, $k$ and $-k$, in transport, stretching and nonlinear pressure and taking $X$ averages, we get the terms  
which we refer to as \emph{forcing}, corresponding to the nonlinear interactions of type \textbf{(F)}: 
\begin{align} 
\mathcal{F}^\alpha & := -\Delta_t \left(U^j_{\neq} \partial_j^t U^\alpha_{\neq}\right)_0 - \partial_\alpha^t \left(\partial_i^t U^j_{\neq} \partial_j^t U^i_{\neq}\right)_0  = \partial_i^t \partial_i^t \partial_j^t \left(U^j_{\neq} U^\alpha_{\neq}\right)_0 - \partial_{\alpha}^t \partial_j^t \partial_i^t \left(U^i_{\neq} U^j_{\neq}\right)_0,  \label{eq:XavgCanc}
\end{align}
where necessarily the sum only runs over $i,j \neq 1$ due to the $X$ average. 
The main point of \eqref{eq:XavgCanc} is that, due to the $X$ averages, the $-t\partial_X$ in the $\partial_{Y}^t$ and $\partial_Z^t$ disappear, which is crucial for limiting the overall size of $\mathcal{F}^\alpha$.

\subsection{The toy model and design of the norms} \label{sec:Toy}

In this section we discuss the toy model for the weakly nonlinear effects. This toy model is then employed to design the norms used to measure the solution to \eqref{def:MainSys} and is the origin for the requirement of Gevrey regularity with $s>1/2$ in Theorem \ref{thm:Threshold}. 
The goal will be to account for the leading order contributions of the nonlinear interactions suggested in \S\ref{sec:NonlinHeuristics}. 
The toy model here will consist of six ODEs (although not fully coupled), which is significantly more complicated than the 2x2 system used for the 2D Couette flow \cite{BM13,BMV14}. 

It is important to emphasize that we are concerned with estimating the \emph{worst possible} behavior, and so are interested in approximate super-solutions of approximate models. 
Moreover, we are interested in only representative, leading order terms:  weaker terms will be ignored as will terms which may be leading order but have the same basic structure as the terms we examine below.   
Note that, due to the ubiquitous presence of $U^j \partial_j$ and $\partial_i U^j \partial_j U^i$, many terms in \eqref{def:MainSys} have a vaguely similar structure. 

Denote the Fourier dual variables of $(X,Y,Z)$ as $(k,\eta,l)$. 
Recall the definition of $\I_{k,\eta}$ from \S\ref{sec:Notation}, which denotes the resonant intervals $t \approx \frac{\eta}{k}$ with $k^2 \lesssim \abs{\eta}$. 
This latter restriction is possible because $\Delta_L$ is elliptic in $k$ which implies that the larger the $k$, the weaker the effect of the resonance. 
Fix $\eta$ and $l$ and $t \in \I_{k,\eta}$ and for simplicity of exposition, assume $\eta,l,k > 0$ for the remainder of \S\ref{sec:Toy}. Here we are only concerned with the case $\eta \gg \max(1,l)$; other contributions are not handled via the toy model. 
This distinguishes a specific $X$-dependent mode as resonant: where the unmixing has brought the information in the mode $(k,\eta,l)$ to $O(1)$ length-scales and the transient growth in the Orr mechanism is reaching its peak.  .
In particular, the velocity field associated with $\widehat{Q^i_k}(\eta,l)$ will be large if $\abs{k,l} \lesssim \abs{\eta}$ (see \S\ref{sec:lin}).

\subsubsection{Leading order \textbf{(SI)} interactions}

The nonlinear interaction \textbf{(SI)}, suggested in \S\ref{sec:NonlinHeuristics}, is the transfer of energy from $U_0^1$ to the $X$-dependent modes. 
Of these, one of the leading order contributions is the nonlinear pressure term in the $Q^2_k$ equation: $\partial_Z^t U_0^1 \partial_{YX}^t U_k^3$.
The leading order contributions to the $Q^3_k$ evolution are the linear pressure and stretching terms. 
Coupling these together isolates the following subsystem between the resonant modes $Q^2_k$ and $Q^3_k$ (notice that this does not happen with $Q^1$; this is a kind of null structure):  
\begin{align*} 
\partial_t Q^2_k & = (\partial^t_Z U_0^1)_{Lo} (\partial^t_{YX} U^3_k)_{Hi} + ... \\  
\partial_t Q^3_k & = - \partial^{t}_{XY}U^3_k + \partial^t_{ZX}U^2_k + ...  
\end{align*} 
where `Hi' and `Lo' denote frequency localization and the ellipses denote temporarily neglected terms. 
By considering $U^j \approx \Delta_L^{-1} Q^j$, we expect that these effects will be greatly amplified when the high frequency modes on the left-hand side are near a critical time. 

To get a better estimate, we will ignore the coefficients $\psi$, replace everything with absolute values, replace $\widehat{(\partial^t_Z U_0^1)_{Lo}} \ast \widehat{(\partial^t_{YX} U^3_k)_{Hi}}$ with $\norm{U_0^1} \widehat{(\partial^t_{YX} U^3_k)_{Hi}}$ and finally replace $U^j$ by $\Delta_L^{-1}Q^j$.
Therefore, using the expected bound $\norm{U_0^1} \lesssim \max(\epsilon \jap{t}, c_0)$, we have
\begin{subequations} \label{def:Resonant} 
\begin{align}
\partial_t Q^2_k & = \max(\epsilon t, c_0) (\partial_Y - t \partial_X)\partial_{X}\Delta_L^{-1} Q_k^3 \\  
\partial_t Q^3_k & = -(\partial_Y - t \partial_X)\partial_{X}\Delta_L^{-1}Q_k^3 + \partial_{ZX}\Delta_L^{-1} Q_k^2.  
\end{align}  
\end{subequations}
On the Fourier side, this becomes
\begin{align*} 
\partial_t \widehat{Q^2_k}(\eta,l) & =  \frac{\max(\epsilon t,c_0) (\eta - kt) k}{k^2+l^2 +\abs{\eta-kt}^2} \widehat{Q^3_k}(\eta,l) \\   
\partial_t \widehat{Q^3_k}(\eta,l) & = \frac{(\eta - kt) k}{k^2+l^2 +\abs{\eta-kt}^2} \widehat{Q^3_k}(\eta,l) + \frac{kl}{k^2 + l^2 + \abs{\eta - kt}^2} \widehat{Q^2_k} (\eta,l). 
\end{align*} 
As $\Delta_L$ is elliptic in $Z$, we will not have to consider very carefully the interaction between the $Z$ frequency and the Orr mechanism, and so we reduce further to
\begin{subequations} \label{def:Resonant_Fourier}
\begin{align}
\partial_t \widehat{Q^2_k}(\eta,l) & =\max(\epsilon t, c_0) \frac{ k}{k +\abs{\eta-kt}} \widehat{Q^3_k}(\eta,l) \\
\partial_t \widehat{Q^3_k}(\eta,l) & =\frac{k}{k +\abs{\eta-kt}} \widehat{Q^3_k}(\eta,l) + \frac{k}{k + \abs{\eta - kt}} \widehat{Q^2_k}(\eta,l). 
\end{align}
\end{subequations}
Since $c_{0}$ is essentially $O(1)$ compared to the size of the non-zero frequencies, \eqref{def:Resonant_Fourier} alone would suggest 
that the components of $Q^2_k$ and $Q^3_k$  could \emph{grow roughly the same way near the critical time} and that, near the critical time, the growth can be approximated by 
\begin{align*} 
\partial_t \widehat{Q^2_k}(\eta,l) \approx \partial_t \widehat{Q^3_k}(\eta,l)  \approx \frac{1}{1 + \abs{\frac{\eta}{k} -t} } \widehat{Q^3_k} \approx \frac{1}{1 + \abs{\frac{\eta}{k} -t} } \widehat{Q^2_k}.  
\end{align*}
Notice that this appears at odds with the linearization! 
Indeed, recall that \eqref{ineq:U2LinearID} predicts that $Q^2$ should remain bounded whereas \eqref{ineq:U1LinearID} and \eqref{ineq:U3LinearID} show that $Q^1$ and $Q^3$ will necessarily grow quadratically by vortex stretching.  
Hence, \eqref{def:Resonant_Fourier} predicts that the quadratic growth of $Q^3_k$ relative to $Q^2_k$ will not occur until after the critical time. 
We will see that due to interaction with non-resonant frequencies, we will actually need to delay the relative quadratic growth of $Q^3$
until after \emph{all} of the critical times. 
For $Q^1_k$, we have to leading order (as above, dropping signs) from the lift-up effect and the linear stretching term:
\begin{align*} 
\partial_t \widehat{Q^1_k} = \widehat{Q^2_k} + \frac{1}{1 + \abs{t - \frac{\eta}{k}}} \widehat{Q^1_k}. 
\end{align*}
From the first term, we see that (in terms of amplitude) we can consider $Q^1_k \approx t Q^2_k$ near the critical times. 
As was the case for $Q^3$, this is at odds with the predictions of \eqref{ineq:LinearID}, which suggests a quadratic growth relative to $Q^2$. 
Instead, we will need to have $Q^1$ grow linearly relative to $Q^2$ until after the critical times, and then eventually quadratically. 
Hence, for the nonlinear problem, we see that each of the three components must be qualitatively different. 

\subsubsection{Leading order \textbf{(3DE)} interactions}
The behavior of \eqref{def:Resonant} suggests how the \emph{resonant} frequency $k$ would grow near $kt \approx \eta$ 
due to a leading order nonlinear interaction of type \textbf{(SI)}.  
Hence, next we add in the effect of the nonlinear interactions between a resonant frequency (denoted $k$) and other $X$ dependent frequencies (denoted $k^\prime$).
As nearby frequencies interact much more, we will assume $\abs{k - k^\prime} \lesssim 1$ (it is enough to consider $k^\prime = k \pm 1$).

We will use the heuristic discussed above, involving taking $Q^1_k \approx tQ^2_k$ and $Q^1_{k^\prime} \approx tQ^2_{k^\prime}$, which allows us to focus only on $Q^2$ and $Q^3$ (along with the observation that many of the nonlinear terms in the $Q^1$ equation of \eqref{def:MainSys} are generally weaker than the other components).    
In estimating the size of the nonlinear terms, it is important to understand that while $Q^3$ will grow quadratically at ``low frequencies'', we saw above that near the critical times, which is ``high frequency'', $Q^3$ is expected to be roughly the same size as $Q^2$.
This creates a large imbalance between low and high frequencies in $Q^3$, which suggests that the worst \textbf{(3DE)} terms likely involve $(Q^3_{\neq})_{Lo}$.
The term that stands out as the worst is from the transport nonlinearity, when the velocity field appears in high frequency near the critical time, which only occurs in the $Q^3_{k^\prime}$ equation; in the $Q^3_k$ equation, the velocity will be away from the critical time and hence less dangerous.    
Therefore, we derive the following (using the same nonlinear pressure term for the $Q^2_{k^\prime}$ equation):  
\begin{subequations} \label{eq:ToyIntermed3DE}
\begin{align}
\partial_t Q^2_{k} & = \left(U_0^1(t)\right)_{Lo} (\partial_Y - t\partial_X) \partial_X \Delta_L^{-1} (Q^3_k)_{Hi}  + ... \\ 
\partial_t Q^2_{k^\prime} & = \left(U_0^1(t)\right)_{Lo} (\partial_Y - t\partial_X) \partial_X \Delta_L^{-1} (Q^3_{k^\prime})_{Hi} + ... \\  
\partial_t Q^3_{k^\prime} & = -t\left(U^{2}_{k}\right)_{Hi}  \left(\partial_X Q^3_{k^\prime-k}\right)_{Lo} + ... \\ 
\partial_t Q^3_{k} & = -(\partial_Y - t \partial_X)\partial_{X}\Delta_L^{-1}Q_k^3 + \partial_{ZX}\Delta_L^{-1} Q^2_k + ... 
\end{align}
\end{subequations}  
Obviously, we have neglected many terms in the nonlinearities. 
Indeed, it will turn out that we have neglected terms which are the same ``size'' as the leading order terms, however, they have essentially the same structure as the leading ones we have identified and hence do not need to be separately considered. 
On the other hand, many other terms which we have neglected have a rather different structure but are instead smaller.

Using the same approximations that we applied to derive \eqref{def:Resonant_Fourier} and substituting in for the low frequency terms the amplitudes predicted by the linear theory in \S\ref{sec:NSELin}, \eqref{eq:ToyIntermed3DE} becomes on the Fourier side, 
\begin{subequations} \label{def:Q2Q3_Toy3DE}
\begin{align} 
\partial_t \widehat{Q^2_{k}}(t,\eta,l) & = \max(\epsilon t, c_0) \frac{k}{k +\abs{\eta-kt}} \widehat{Q^3_k} \\
\partial_t \widehat{Q^2_{k^\prime}}(t,\eta,l) & = \max(\epsilon t, c_0) \frac{k^\prime}{\abs{k^\prime} +\abs{\eta-k^\prime t}} \widehat{Q^3_{k^\prime}} \\  
\partial_t \widehat{Q^3_{k^\prime}}(t,\eta,l) & = \frac{\epsilon t^3}{\jap{\nu t^3}^\alpha} \frac{\widehat{Q^{2}_{k}} }{k^2 +\abs{\eta-kt}^2} \\  
\partial_t \widehat{Q^3_{k}}(t,\eta,l) & = \frac{k}{k +\abs{\eta-kt}}\widehat{Q^3_k} + \frac{k}{k +\abs{\eta-kt}} \widehat{Q^2_k}.  
\end{align}
\end{subequations} 
where all unknowns are evaluated at $(\eta,l)$. 
Notice that by non-resonance we have $\abs{\eta - k^\prime t} \gtrsim t$ and so we can derive 
\begin{subequations} \label{def:Q2Q3_Toy3DE2}
\begin{align} 
\partial_t \widehat{Q^2_{k}}(t,\eta,l) & = \max(\epsilon t, c_0)\frac{k}{k +\abs{\eta-kt}} \widehat{Q^3_k} \\ 
\partial_t \widehat{Q^2_{k^\prime}}(t,\eta,l) & = \max(\epsilon t, c_0) \frac{k^\prime}{\jap{k^\prime, t}} \widehat{Q^3_{k^\prime}} \\  
\partial_t \widehat{Q^3_{k^\prime}}(t,\eta,l) & = \frac{\epsilon t^3}{\jap{\nu t^3}^\alpha} \frac{\widehat{Q^{2}_{k}} }{k^2 +\abs{\eta-kt}^2} \\
\partial_t \widehat{Q^3_{k}}(t,\eta,l) & = \frac{k}{k +\abs{\eta-kt}}\widehat{Q^3_k} + \frac{k}{k +\abs{\eta-kt}} \widehat{Q^2_k}.  
\end{align}
\end{subequations} 
This toy model accounts for the leading order interactions between \textbf{(3DE)} and \textbf{(SI)}. From this model alone one can derive the Gevrey-2 regularity requirement (see below for how to do this). 

\subsubsection{Leading order \textbf{(F)} and \textbf{(2.5NS)} interactions} 

At the zero frequencies, there is another challenge, which are interactions of type \textbf{(F)}: the forcing from non-zero frequencies into the zero frequency. Thanks to the cancellations in \eqref{eq:XavgCanc}, we can narrow this effect down to a pair of forcing terms from the resonant frequencies. We will also account for the leading order interaction in the zero-mode nonlinear pressure interactions of type \textbf{(2.5NS)}, giving (note we are including the dissipation terms here)
\begin{subequations} 
\begin{align} 
\partial_tQ^2_0 & = \partial_{Y}( (\partial_Y U^3_0)_{Hi} (\partial_Z U^2_0)_{Lo}) +  \partial_{YYZ}\left(\left(U^2_{k}\right)_{Hi} \left(U^3_{-k}\right)_{Lo}\right) - \nu(\partial_{YY} + \partial_{ZZ})Q_0^2 \\ 
 \partial_tQ^3_0 & = \partial_{Z}( (\partial_Y U^3_0)_{Hi} (\partial_Z U^2_0)_{Lo}) +  \partial_{YYY}\left(\left(U^2_{k}\right)_{Hi} \left(U^3_{-k}\right)_{Lo}\right) - \nu(\partial_{YY} + \partial_{ZZ})Q_0^3,  
\end{align} 
\end{subequations}
where again we are considering $t \sim \frac{\eta}{k}$. 
Using the same approximations as above together with $\Delta U_0^i = Q_0^i$ and the assumption that $\eta \gg \max(1,l)$, we have 
\begin{subequations} \label{def:Q2Q30_Fourier}
\begin{align} 
\partial_t\widehat{Q^2_0} & = \epsilon \widehat{Q^3_0} +  \frac{\epsilon t^2}{\jap{\nu t^3}^{\alpha}}\frac{\widehat{Q^2_k}}{k^2 +\abs{\eta-kt}^2} - \nu \eta^2 \widehat{Q^2_0} \\ 
 \partial_t\widehat{Q^3_0} & = \epsilon \widehat{Q^3_0} + \frac{\epsilon t^3}{\jap{\nu t^3}^{\alpha}}\frac{\widehat{Q^2_k}}{k^2 +\abs{\eta-kt}^2} - \nu \eta^2 \widehat{Q^3_0}. \label{def:Q03ForceToy}  
\end{align} 
\end{subequations}
Notice that the cancellations in \eqref{eq:XavgCanc} imply the forcing terms on $Q^2_0$ are weaker than $Q^3_0$. This special structure will be crucial in our follow-up work \cite{BGM15II}. 

\subsubsection{Final toy model and resulting predictions}
Next we put together everything into one main model. 
As $Q^3_0$ is strongly forced near the critical times, one should worry that this will couple back to the non-zero frequencies modeled in \eqref{def:Q2Q3_Toy3DE2} and precipitate strong growth. However, these terms turn out not to be leading order due to the overwhelming enhanced dissipation, and so can be omitted from the toy model. 
Finally, it also makes sense to include the dissipation terms in all the equations, as for large $t$ these are dominant.
Putting everything together yields the final toy model, 
\begin{subequations} \label{def:Q2Q3_ToyFinal}
\begin{align} 
\partial_t \widehat{Q^2_{k}}(t,\eta,l) & = \max(\epsilon t, c_0) \frac{k}{k +\abs{\eta-kt}} \widehat{Q^3_k} - \nu\left(k^2 + \abs{\eta-kt}^2\right)\widehat{Q^2_k}   \\  
\partial_t \widehat{Q^2_{k^\prime}}(t,\eta,l) & = \max(\epsilon t, c_0) \frac{k^\prime}{\jap{k^\prime, t}} \widehat{Q^3_{k^\prime}} - \nu\left(k^2 + \abs{\eta-kt}^2\right)\widehat{Q^2_{k^\prime}} \label{eq:Q2kp}  \\  
\partial_t \widehat{Q^3_{k^\prime}}(t,\eta,l) & = \frac{\epsilon t^3}{\jap{\nu t^3}^\alpha} \frac{\widehat{Q^{2}_{k}} }{k^2 +\abs{\eta-kt}^2} - \nu\left(k^2 + \abs{\eta-kt}^2\right)\widehat{Q^3_{k^\prime}}   \\ 
\partial_t \widehat{Q^3_{k}}(t,\eta,l) & = \frac{k}{k +\abs{\eta-kt}}\widehat{Q^3_k} + \frac{k}{k +\abs{\eta-kt}} \widehat{Q^2_k} - \nu\left(k^2 + \abs{\eta-kt}^2\right)\widehat{Q^3_k} \\ 
\partial_t\widehat{Q^2_0}(t,\eta,l) & = \epsilon \widehat{Q^3_0} +  \frac{\epsilon t^2}{\jap{\nu t^3}^{\alpha}}\frac{\widehat{Q^2_k}}{k^2 +\abs{\eta-kt}^2} - \nu \eta^2 \widehat{Q^2_0} \\ 
 \partial_t\widehat{Q^3_0}(t,\eta,l) & = \epsilon \widehat{Q^3_0} + \frac{\epsilon t^3}{\jap{\nu t^3}^{\alpha}}\frac{\widehat{Q^2_k}}{k^2 +\abs{\eta-kt}^2} - \nu \eta^2 \widehat{Q^3_0},
\end{align}
\end{subequations} 
 where all unknowns are evaluated at frequency $(\eta,l)$
From \eqref{def:Q2Q3_ToyFinal} (together with the long-time analyses similar to what is carried out below in \eqref{eq:wL}), one can predict, and design the norms necessary to prove, the results both here and in the above threshold case \cite{BGM15II}. 

In this work, we would like to get control for all time with the requirement $\nu \gtrsim \epsilon$.
Under the assumption $\nu \gtrsim \epsilon$, the following simple choice provides an approximate super-solution to \eqref{def:Q2Q3_ToyFinal} (over just the single critical interval $\I_{k,\eta}$): 
\begin{subequations} \label{def:stablesuper}
\begin{align} 
\partial_t w(t,\eta) & \approx \frac{1}{1 + \abs{t - \frac{\eta}{k}}}w(t,\eta) \label{eq:wapprox} \\ 
Q^2_k \approx Q^2_{k^\prime} & \approx Q^3_k \approx Q^3_{k^\prime} \approx Q^2_0 \approx Q^3_{0} \approx w(t,\eta) \\ 
Q^1_k \approx Q^1_{k^\prime} & \approx tQ^2_{k^\prime}  \approx tQ^2_k. 
\end{align}
\end{subequations} 
By integrating \eqref{eq:wapprox}, we have
\begin{align} 
\frac{w(\frac{\eta}{k} + \frac{\eta}{k^2},\eta)}{w(\frac{\eta}{k} - \frac{\eta}{k^2},\eta)} \approx \left(\frac{\eta}{k^2}\right)^{C}, \label{ineq:wapproxloss} 
\end{align}
where $C$ is a fixed constant depending on the implicit constant in \eqref{eq:wapprox}. 
Hence, \eqref{ineq:wapproxloss} predicts that \emph{both} the resonant (mode $k$) and non-resonant (mode $k^\prime$) modes will lose the same amount of Sobolev regularity near \emph{each} critical time. 
Therefore, instead of just losing regularity once at the critical time, the regularity loss is repeatedly amplified as $t$ goes through successive resonant times $\eta/k$, $\eta/(k-1)$, $\eta/(k-2)$ etc (to see this, take $k^\prime = k-1$).  
The loss \eqref{ineq:wapproxloss} is amplified multiplicatively in each critical time, so counting over all the possible critical times which satisfy $\eta \gtrsim k^2$ gives a regularity loss like
\begin{align}
\frac{Q^2(2\eta,\eta)}{Q^2(\sqrt{\eta},\eta)} \approx \left(\frac{\eta^{\sqrt{\eta}}}{\left((\eta^{1/2})!\right)^{2}}\right)^{C},
\end{align}
which by Stirling's formula is size $O(e^{2C\sqrt{\eta}})$ (up to a polynomial correction) and hence \eqref{def:stablesuper} predicts Gevrey-2 regularity loss; see Appendix \ref{sec:def_nrm} for more information. 
This is the origin of the requirement $s > 1/2$ in Theorem \ref{thm:Threshold}. 
We emphasize that the nonlinear effects from \textbf{(3DE)} are the main cause of the infinite regularity class -- one needs to account for the repeated resonant/non-resonant interactions in order to predict this potential instability. In particular, this cannot be predicted from linear theory.  
In the companion work \cite{BGM15II}, we use a different super-solution which does not require $\epsilon \lesssim \nu$. 
Using \eqref{def:Q2Q3_ToyFinal}, one can find a super-solution for only $\epsilon \lesssim \nu^{2/3}$ by taking advantage of more of the available structure, but this also results in more complicated norms and only works until $t \sim \epsilon^{-1}$ (which if $\epsilon \gg \nu$ is approximately the time when the secondary instability will set in; see \cite{BGM15II} for more discussion). 
Finally, we mention that the growth in \eqref{def:stablesuper} is sharply peaked near the critical times and it will turn out to be useful for unifying and simplifying estimates below to modify $w$ by including additional steady, gradual losses of Gevrey-2 regularity over $1 \leq t \leq 2\abs{\eta}$  (see \eqref{def:wextraloss} in Appendix \ref{sec:defw}). 

There is one remaining detail that must be addressed, which is the behavior of the components for times very large. Notice that in $Q^3$, for $t \gg \abs{\eta}$ the linear terms are to leading order: 
\begin{align} 
\partial_t Q^3_k = \frac{2}{t}Q^3_k + \frac{kl}{k^2 + l^2 + \abs{\eta-kt}^2} Q^2_k. 
\end{align} 
The first term gives the quadratic growth from the stretching, as we are expecting. The second term appears small (as $Q^3$ is growing quadratically whereas $Q^2$ is not) but we cannot so easily neglect its contribution for $l$ large. However, the pre-factor is actually integrable in time \emph{uniformly} in $(k,\eta,l)$.  
To deal with the anisotropy carefully, we will incorporate this factor into the norm via a multiplier of the form: 
\begin{align}
\partial_t w_L(t,k,\eta,l) & \approx \frac{kl}{k^2 + l^2 + \abs{\eta-kt}^2} w_L(t,k,\eta,l), \label{eq:wL}
\end{align}   
which, unlike $w$, is order one due to the uniform integrability (see Appendix \ref{sec:Nmult}).    

\subsubsection{Design of the norms based on the toy model}
Now let us put all of the observations together to design the norms that we will use. 
 As the details are somewhat tedious and mostly involve technicalities similar to those that already appear in \cite{BM13}, we reserve the details for Appendix \ref{sec:def_nrm} and simply give the overview here based on the heuristics above. 
Fix $\beta > 3\alpha + 6$, $\gamma > \beta + 3\alpha + 12$ and $\sigma > \gamma + 6$. 
We will use a hierarchy of regularities, and the high norms will be of the form $\norm{A^i(t,\grad) Q^i(t)}_2$ for Fourier multipliers $A^i$ defined by the following, for a time-varying $\lambda(t)$ defined below, $s > 1/2$, $0 < \delta_1 \ll 1$, and corrector multipliers $w$ and $w_L$ (here $(t,k,\eta,l)$ are now arbitrary):
\begin{subequations} \label{def:A}
\begin{align} 
A^Q_k(t,\eta,l) & = e^{\lambda(t)\abs{k,\eta,l}^s}\jap{k,\eta,l}^\sigma \frac{e^{\mu \abs{\eta}^{1/2}}}{w(t,\eta) w_L(t,k,\eta,l)} \\ 
A^{1}_k(t,\eta,l) & = \frac{1}{\jap{t}}\left(\mathbf{1}_{k \neq 0} \min\left(1, \frac{\jap{\eta,l}^{1+\delta_1}}{t^{1+\delta_1}}\right) + \mathbf{1}_{k = 0} \right) A^Q_k(t,\eta,l) \\ 
A^{2}_k(t,\eta,l) & = \left(\mathbf{1}_{k \neq 0} \min\left(1, \frac{\jap{\eta,l}}{t}\right) + \mathbf{1}_{k = 0} \right) A^Q_k(t,\eta,l) \\ 
A^{3}_k(t,\eta,l) & = \left(\mathbf{1}_{k \neq 0} \min\left(1, \frac{\jap{\eta,l}^2}{t^2}\right) + \mathbf{1}_{k = 0} \right) A^Q_k(t,\eta,l) \\ 
A(t,\eta,l) & = \jap{\eta,l}^2 A^Q_0(t,\eta,l), 
\end{align}
\end{subequations}
where $\mu$ and $w$ are defined precisely in Appendix \ref{sec:def_nrm} and $w_L$ is defined in Appendix \ref{sec:Nmult}.  
The multiplier $A$ is used to measure $C$ and $g$ whereas $A^{i}$ will measure $Q^i$.  
Note that for technical reasons, we are allowing $A^2$ to decay linearly at `low' frequencies, which is not predicted by the linear theory in \S\ref{sec:lin} nor in \S\ref{sec:Toy}. 
For separate technical reasons, we are also allowing $A^1$ to decay slightly faster than quadratically.
We will choose the radius of Gevrey-$\frac{1}{s}$ regularity to satisfy, for some $\delta_\lambda$ small,   
\begin{align*} 
\dot{\lambda}(t) & = - \frac{\delta_\lambda}{\jap{t}^{\min(2s,3/2)}} \\ 
\lambda(1) & = \frac{3\lambda}{4} +  \frac{\lambda^\prime}{4}.  
\end{align*} 
Fix $\delta_\lambda \ll \min(1,\lambda_0 - \lambda^\prime)$ small such that $\lambda(t) > (\lambda_0 + \lambda^\prime)/2$. 

To quantify the enhanced dissipation, we build on the scheme used in \cite{BMV14}.
The general idea is to use semi-norms $\norm{A^{\nu;i}(t,\grad)Q^i(t)}_2$ for Fourier multipliers $A^{\nu;i}(t,\grad)$ which gradually trade regularity for decay of lower frequencies, a kind of parabolic analogue to the regularity one pays to deduce inviscid damping. 
Define $D$ as in \cite{BMV14}: 
\begin{align} 
D(t,\eta) & = \frac{1}{3\alpha}\nu \abs{\eta}^3 + \frac{1}{24 \alpha} \nu\left(t^3 - 8\abs{\eta}^3\right)_+, \label{def:D}
\end{align} 
which satisfies 
\begin{align} 
\max\left( \nu \abs{\eta}^3, \nu t^3\right) \lesssim \alpha D(t,\eta).  \label{ineq:DLowB}
\end{align}
For $\beta$ chosen above, we define the enhanced dissipation multipliers  
\begin{subequations} \label{def:Anu} 
\begin{align}
A^{\nu}_k(t,\eta,l) & = e^{\lambda(t)\abs{k,\eta,l}^s}\jap{k,\eta,l}^{\beta} \jap{D(t,\eta)}^\alpha \frac{1}{w_L(t,k,\eta,l)} \mathbf{1}_{k \neq 0} \\ 
A^{\nu;1}_k(t,\eta,l) & = \frac{1}{\jap{t}}\min\left(1, \frac{\jap{\eta,l}^{1+\delta_1}}{t^{1+\delta_1}}\right) A^{\nu}_k(t,\eta,l)  \\ 
A^{\nu;2}_k(t,\eta,l) & = \min\left(1, \frac{\jap{\eta,l}^{\delta_1}}{t^{\delta_1}}\right) A^{\nu}_k(t,\eta,l) \\
A^{\nu;3}_k(t,\eta,l) & = \min\left(1, \frac{\jap{\eta,l}^2}{t^2}\right) A^{\nu}_k(t,\eta,l). 
\end{align} 
\end{subequations}
Note that $A^{\nu;2}$ now matches the linear behavior predicted in Proposition \ref{prop:LinNSE} up to a deviation by a small $t^{\delta_1}$.  
We believe this is purely technical.
Note that we do not need $w$ in \eqref{def:Anu}. The Orr mechanism (and associated nonlinear effects) do not play a major role in the enhanced dissipation estimates; they are instead mainly determined by how the vortex stretching manifests in the nonlinearity.  
 
Finally, the following inequalities are useful to keep in mind
\begin{subequations} 
\begin{align} 
\int_0^\infty \frac{1}{\jap{\nu t}^{2+}} dt & \approx \nu^{-1} \lesssim \frac{c_{0}}{\epsilon}, \label{ineq:nutint} \\ 
\int_0^\infty \frac{1}{\jap{\nu t^3}^{1/3+}} dt & \approx \nu^{-1/3} \lesssim \frac{c_{0}^{1/3}}{\epsilon^{1/3}} \label{ineq:nut3int} \\ 
 \int_0^\infty \frac{\epsilon t^2}{\jap{\nu t^3}^{1/3+}} dt & \approx c_0. \label{ineq:nut4int}
\end{align} 
\end{subequations}

\subsection{Main energy estimates} \label{sec:energy}
Equipped with the norms defined in \eqref{def:Anu} and \eqref{def:A}, we will be able to propagate estimates from local-in-time (provided by Lemma \ref{lem:Loc}) to global-in-time via a bootstrap argument for as long as the solution to \eqref{def:3DNSE} exists and remains analytic; by un-doing the coordinate transformation, this in turn allows us to continue the solution of \eqref{def:3DNSE} via Lemma \ref{lem:Cont}. See \S\ref{sec:ContCoord} below for more details on this procedure.    

In addition to the norm controls, we have a number of ``dissipation energies'' which arise both due the dissipation itself and due to the fact that the norms are getting weaker in time; we will refer to the dissipation-like terms that arise due to this effect as `CK' terms (for `Cauchy-Kovalevskaya' due to the appearance of similar terms in the classical proofs of abstract Cauchy-Kovalevskaya-type results, e.g. \cite{Nishida77,Nirenberg72}). 
We define the following dissipation energies, for $i \in \set{2,3}$, 
\begin{subequations} \label{def:DissipEnergy}
\begin{align} 
\mathcal{D}Q^i & = \nu \norm{\sqrt{-\Delta_{L}}A^{i} Q^i}_2^2 + CK_\lambda^i + CK_w^i + CK_{wL}^i \nonumber  \\ 
& := \nu \norm{\sqrt{-\Delta_{L}}A^{i} Q^i}_2^2 + \dot{\lambda}\norm{\abs{\grad}^{s/2}A^i Q^i}_2^2 + \norm{\sqrt{\frac{\partial_t w}{w}} A^i Q^i}_2^2 + \norm{\sqrt{\frac{\partial_t w_L}{w_L}}A^i Q^i}_2^2 \\ 
\mathcal{D}Q^1_{\neq} & =  \nu \norm{\sqrt{-\Delta_{L}}A^{1} Q^1_{\neq} }_2^2 + CK_{\lambda;\neq}^{1} + CK_{w;\neq}^{1} + CK_{wL;\neq}^{1} \nonumber  \\ 
& :=  \nu \norm{\sqrt{-\Delta_{L}}A^{1} Q^1_{\neq}}_2^2 + \dot{\lambda}\norm{\abs{\grad}^{s/2}A^1 Q^1_{\neq}}_2^2 + \norm{\sqrt{\frac{\partial_t w}{w}} A^1 Q^1_{\neq}}_2^2 + \norm{\sqrt{\frac{\partial_t w_L}{w_L}}A^1 Q^1_{\neq}}_2^2 \\ 
\mathcal{D}g & = \nu \norm{\sqrt{-\Delta_{L}}A g }_2^2 + CK_L^g + CK_{\lambda}^{g} + CK_{w}^{g} \nonumber \\
& := \nu \norm{\sqrt{-\Delta_{L}}A g }_2^2 + \frac{2}{t}\norm{Ag}_2^2 + \dot{\lambda}\norm{\abs{\grad}^{s/2}A g}_2^2 + \norm{\sqrt{\frac{\partial_t w}{w}} A g}_2^2 \\ 
\mathcal{D}C & = \nu \norm{\sqrt{-\Delta_{L}}A C }_2^2 + CK_{\lambda}^{C} + CK_{w}^{C} \nonumber \\
& := \nu \norm{\sqrt{-\Delta_{L}}A C }_2^2 + \dot{\lambda}\norm{\abs{\grad}^{s/2}A C}_2^2 + \norm{\sqrt{\frac{\partial_t w}{w}} A C}_2^2 \\ 
CK_L^i & := \frac{1}{t}\norm{\mathbf{1}_{t \geq \jap{\grad_{Y,Z}}} A^iQ^i_{\neq}}_2^2 \\ 
\mathcal{D}Q^{\nu;i} & = \nu \norm{\sqrt{-\Delta_{L}}A^{\nu;i} Q^i}_2^2 + CK_\lambda^{\nu;i} + CK_{wL}^{\nu;i} \nonumber  \\ 
& := \nu \norm{\sqrt{-\Delta_{L}}A^{\nu;i} Q^i}_2^2 + \dot{\lambda}\norm{\abs{\grad}^{s/2}A^{\nu;i} Q^i}_2^2 + \norm{\sqrt{\frac{\partial_t w_L}{w_L}}A^{\nu;i} Q^i}_2^2 \\ 
\mathcal{D}Q^{\nu;1} & =  \nu \norm{\sqrt{-\Delta_{L}}A^{\nu;1} Q^{\nu;1} }_2^2 + CK_{\lambda}^{\nu;1} + CK_{wL}^{\nu;1} \nonumber  \\ 
& :=  \nu \norm{\sqrt{-\Delta_{L}}A^{\nu;1} Q^1}_2^2 + \dot{\lambda}\norm{\abs{\grad}^{s/2}A^{\nu;1} Q^1}_2^2 + \norm{\sqrt{\frac{\partial_t w_L}{w_L}}A^{\nu;1} Q^1}_2^2 \\ 
CK_L^{\nu;i} & := \frac{1}{t}\norm{\mathbf{1}_{t \geq \jap{\grad_{Y,Z}}} A^{\nu;i}Q^i}_2^2. 
\end{align} 
\end{subequations} 
Fix constants $K_{Hi}, K_{M1}, K_{H1\neq}, K_{HC1}, K_{HC2}, K_{LC}, K_{EDi}, K_{ED2}, K_{GL3}, K_{U1}, K_{U12}, K_{U3}, K_L$ for $i \in \set{1, 3}$ determined by the proof depending only on $s,\lambda^\prime,\delta_1,\alpha$ and $\lambda_0$, as well as the somewhat arbitrary parameters such as $\sigma$ and $\beta$. 
These are mostly necessary due to the linear terms present in \eqref{def:CReal} and \eqref{def:MainSys}.
Further fix $\sigma^\prime > 3$.
Let $1 \leq T^\star < T^0$ be the largest time such that the following \emph{bootstrap hypotheses} hold (that we can take $T^\star \geq 1$ will be discussed below): 
the high norm controls on $Q^i$,
\begin{subequations} \label{ineq:Boot_Hi} 
\begin{align} 
\norm{A^{1} Q^1_{0}(t)}_2^2 & \leq 4K_{H1} \epsilon^2 \label{ineq:Boot_Q1Hi1} \\
\norm{Q_0^1(t)}_{\G^{\lambda,\gamma}} & \leq 4K_{M1} c_0^2 \label{ineq:Boot_Q1Mid} \\ 
\norm{A^{1} Q^1_{\neq}(t)}_2^2 + \frac{1}{2}\int_1^t \mathcal{D}Q^1_{\neq}(\tau) d\tau  & \leq 4K_{H1\neq}\epsilon^2 \label{ineq:Boot_Q1Hi2} \\   
\norm{A^{2} Q^2}^2_2 + \int_1^t\frac{1}{2}\mathcal{D}Q^2(\tau) + CK_L^2(\tau) d\tau & \leq 4\epsilon^2 \label{ineq:Boot_Q2Hi} \\
\norm{A^{3} Q^3}^2_2 + \frac{1}{2}\int_1^t \mathcal{D}Q^3(\tau) d\tau & \leq 4K_{H3}\epsilon^2; \label{ineq:Boot_Q3Hi}
\end{align} 
\end{subequations}
the coordinate system controls, 
\begin{subequations} \label{ineq:Boot_CgHi}
\begin{align} 
\norm{A C}_2^2 + \frac{1}{2}\int_1^t \mathcal{D}C(\tau) d\tau  &\leq 4 K_{HC1} c^2_{0} \label{ineq:Boot_ACC}\\ 
\jap{t}^{-2}\norm{A C}_2^2 + \frac{1}{2}\int_1^t \jap{\tau}^{-2} \mathcal{D}C(\tau) d\tau &\leq 4K_{HC2}\epsilon^2 \log \epsilon^{-1} \label{ineq:Boot_ACC2}\\ 
\norm{Ag}_2^2 + \frac{1}{2}\int_1^t \mathcal{D}g  d\tau &\leq 4 \epsilon^2 \label{ineq:Boot_Ag} \\ 
\norm{g}_{\G^{\lambda,\gamma}} & \leq  4  \frac{\epsilon}{\jap{t}^{2}} \label{ineq:Boot_gLow} \\
\norm{C}_{\G^{\lambda,\gamma}} & \leq  4K_{LC} \min\left(\epsilon \jap{t}, c_{0}\right); \label{ineq:Boot_LowC} 
\end{align} 
\end{subequations}
the enhanced dissipation estimates,
\begin{subequations} \label{ineq:Boot_ED}
\begin{align} 
\norm{A^{\nu;1} Q^1}_2^2 + \frac{1}{10} \int_1^t \mathcal{D}Q^{\nu;1}(\tau) d\tau & \leq 4K_{ED1}\epsilon^2 \label{ineq:Boot_ED1} \\ 
\norm{A^{\nu;2} Q^2}_2^2 + \int_1^t\frac{1}{10}\mathcal{D}Q^{\nu;2}(\tau) + \delta_1 CK_{L}^{\nu;2}(\tau) d\tau & \leq 4K_{ED2}\epsilon^2 \\ 
\norm{A^{\nu;3} Q^3}_2^2 + \frac{1}{10}\int_1^t \mathcal{D}Q^{\nu;3}(\tau) d\tau & \leq  4K_{ED3}\epsilon^2; \label{ineq:Boot_ED3}
\end{align} 
\end{subequations}
and the additional low frequency controls on the background streak  
\begin{subequations} \label{ineq:Boot_LowFreq} 
\begin{align} 
\norm{Q_0^2}_{H^{\sigma^\prime}} & \leq 4  \frac{\epsilon}{\jap{\nu t}^{\alpha}} \label{ineq:Boot_Q02_Low} \\ 
\norm{U_0^2}_{H^{\sigma^\prime}} & \leq  4 \frac{\epsilon}{\jap{\nu t}^{\alpha}} \label{ineq:Boot_U02_Low} \\ 
\norm{\partial_Z U_0^3}_{H^{\sigma^\prime}} & \leq 4K_{GL3} \frac{\epsilon}{\jap{\nu t}^\alpha} \label{ineq:Boot_partZU03_Low} \\ 
\norm{U_0^1}_{H^{\sigma^\prime}} & \leq 4K_{U1} \epsilon \jap{t} \label{ineq:Boot_U01_Low1} \\ 
\norm{U_0^1}_{H^{\sigma^\prime}}^2 + \frac{\nu}{2} \int_1^t \norm{\grad U_0^1}_{H^{\sigma^\prime}}^2 & \leq 4 K_{U12}^2 c_{0}^2 \label{ineq:Boot_U01_Low2} \\  
\norm{U_0^3}_{H^{\sigma^\prime}}^2 + \frac{\nu}{2} \int_1^{t} \norm{\grad U_0^3}_{H^{\sigma^\prime}}^2 d \tau   & \leq 4K^2_{U3} \epsilon^2 \label{ineq:Boot_U03_Low} \\ 
\norm{U_0^3}_4 & \leq  4K_L\frac{\epsilon}{\jap{ \nu t }^{1/4}} \label{ineq:L43} \\ 
\norm{U_0^1}_4 & \leq 4K_L\frac{c_{0}}{\jap{ \nu t }^{1/4}}. \label{ineq:L41} 
\end{align} 
\end{subequations}
For most steps of the proof we do not need to differentiate so precisely between different bootstrap constants so we define 
\begin{align} 
K_B =  \max\left(K_{Hi}, K_{M1}, K_{H1\neq}, K_{HC1}, K_{HC2}, K_{LC}, K_{EDi},K_{ED2},K_{GL3}, K_{U1}, K_{U12},K_{U3} \right).  \label{def:KB}
\end{align}

It is a consequence of Lemma \ref{lem:Loc} that $T^\star > t_\star > 0$ and it is a consequence 
of Lemma \ref{lem:Cont} that $T^\star < T^0$. 
It is relatively easy to prove that for $\epsilon$ sufficiently small, we have $1 \leq T^\star$; see \S\ref{sec:RegCont} for further details. 
Due to the real analyticity of the solution on $(0, T^0)$, it will follow from the ensuing proof that the quantities in the bootstrap hypotheses take values continuously in time for as long as the solution exists.  
Therefore, we may deduce $T^\star = +\infty$ via the following proposition, the proof of which is the main focus of the remainder of the paper. 

\begin{proposition} [Bootstrap] \label{prop:Boot}
For the constants appearing in the right-hand side of \eqref{def:KB} chosen sufficiently large and $c_0$ chosen sufficiently small (depending only on $s,\lambda_0,\lambda^\prime,\alpha,\delta_1$ and arbitrary parameters such as $\sigma,\beta, \ldots$), if the bootstrap hypotheses \eqref{ineq:Boot_Hi}, \eqref{ineq:Boot_CgHi}, \eqref{ineq:Boot_ED}, and \eqref{ineq:Boot_LowFreq} hold on $[1,T^\star]$, then on the same time interval all the inequalities in \eqref{ineq:Boot_Hi}, \eqref{ineq:Boot_CgHi}, \eqref{ineq:Boot_ED}, and \eqref{ineq:Boot_LowFreq} hold with constant `$2$' instead of `$4$'.  
\end{proposition}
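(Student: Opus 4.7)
The plan is to close the bootstrap by performing a separate energy estimate for each quantity listed in \eqref{ineq:Boot_Hi}, \eqref{ineq:Boot_CgHi}, \eqref{ineq:Boot_ED}, and \eqref{ineq:Boot_LowFreq}, and showing that under the bootstrap hypotheses (with constant $4$) each controlled quantity is in fact bounded by $2$ times its initial value for $\epsilon < c_{0}\nu$ with $c_{0}$ small. Continuity in time of all the controlled quantities follows from the real analyticity guaranteed by Lemma \ref{lem:Cont}, so it suffices to establish these improved a priori bounds. The base case $T^\star \geq 1$ comes from Lemma \ref{lem:Loc} combined with the short-time regularization and continuity at $t=1$.

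First, I would set up the high-norm energy identities. For each $i\in\{1,2,3\}$, compute $\tfrac{1}{2}\tfrac{d}{dt}\|A^i Q^i\|_2^2$ using \eqref{def:MainSys}. This produces, on the right, (i)~the \emph{dissipation terms} $-\nu\|\sqrt{-\Delta_L}A^i Q^i\|_2^2$ plus the dissipation-error contribution $\mathcal{D}_E^i$; (ii)~the \emph{CK terms} from $\dot\lambda$, $\partial_t w/w$, $\partial_t w_L/w_L$, and the $\jap{\eta,l}/t$ factors (which, crucially, provide additional positive terms along resonant intervals, exactly as predicted by the toy model); (iii)~the \emph{linear stretching} $LS^i$ and \emph{linear pressure} $LP^i$ contributions; and (iv)~the \emph{nonlinear} transport $\mathcal{T}$, $NLS$, and $NLP$ terms. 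The goal is to absorb all non-sign-definite contributions into the dissipation energies $\mathcal{D}Q^i$, $CK_L^i$, and the forcing structure inherited from the bootstrap. For $LS^1$ and $LS^3$ one exploits the quadratic-in-$\jap{\eta,l}/t$ decay built into $A^1$ and $A^3$; for $LP$ one uses the inviscid damping of $U^2$; the dissipation error $\mathcal{D}_E$ is small once $\psi$ (hence $C$) is small in the topology set by $A$. For each nonlinear term I would use the paraproduct decomposition from \S\ref{sec:paranote}, split into the four interaction classes \textbf{(2.5NS)}, \textbf{(SI)}, \textbf{(3DE)}, \textbf{(F)} of \S\ref{sec:NonlinHeuristics}, and estimate each against the appropriate dissipation/CK term. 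The crucial cancellation \eqref{eq:XavgCanc} eliminates the most dangerous part of \textbf{(F)}, while the null-type pairing in $U^j\partial_j^t$ matches each bad component with a favorable derivative (the inviscid-damped $\partial_Y^t$ with $U^2$).

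Second, I would close the coordinate-system estimates for $C$ and $g$ from \eqref{def:CReal}--\eqref{def:gPDE2} using the norm $A$. The damping $-2g/t$ in the $g$ equation gives the $CK_L^g$ term and is responsible for \eqref{ineq:Boot_Ag}, \eqref{ineq:Boot_gLow}. The $C$-equation is then driven by $g-U_0^2$, yielding \eqref{ineq:Boot_ACC}, \eqref{ineq:Boot_ACC2}, \eqref{ineq:Boot_LowC}; the two bounds on $\|AC\|_2$ (one $O(c_0)$ and one $O(\epsilon\log\epsilon^{-1})$ with an extra $\jap{t}^{-2}$) are handled by two parallel energy estimates, the latter profiting from the decay of $g$. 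The enhanced-dissipation controls \eqref{ineq:Boot_ED} use the semi-norms $A^{\nu;i}$: the multiplier $\jap{D(t,\eta)}^\alpha$ produces a coercive term $\sim \alpha D'/D$ on the LHS that dominates the streak-driven forcing from \textbf{(SI)} because $D'/D\gtrsim \nu t^2\gg \epsilon t$ once $t\gg \nu^{-1/3}$; for $t\lesssim \nu^{-1/3}$ one uses the smallness of $\epsilon t$ directly. The low-frequency controls \eqref{ineq:Boot_LowFreq} are proved in the original Eulerian variables: \eqref{ineq:Boot_Q02_Low}--\eqref{ineq:Boot_partZU03_Low} follow from the heat-regularization estimates of the 2D-Navier-Stokes-like streak equation, while the $L^4$ bounds \eqref{ineq:L43}--\eqref{ineq:L41} come from $L^2$--$L^4$ interpolation combined with the $\jap{\nu t}^{-\alpha}$ decay of $U_0^2$.

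The main obstacle is the coupled \textbf{(SI)}$\to$\textbf{(3DE)}$\to$\textbf{(F)} bootstrap near resonant times $t\approx \eta/k$: one must use the full strength of the $1/w$ factor inside $A^i$ to extract an $L^2_tL^2_x$ integrability of $\sqrt{\partial_t w/w}\, A^i Q^i$ and pair each resonant-frequency contribution with the corresponding $CK_w$ term (and its analogue $CK_{wL}$ for the anisotropic linear-pressure correction). The delicate part is keeping track of \emph{which} component sits in the high-frequency slot in each paraproduct piece, because the toy-model heuristics \eqref{def:Resonant_Fourier}--\eqref{def:Q2Q3_ToyFinal} show that $Q^1$, $Q^2$, $Q^3$ behave qualitatively differently near critical times — in particular $Q^1$ is effectively $tQ^2$ and $Q^3$ grows quadratically only after all critical times — so the norms $A^1,A^2,A^3$ have different $\jap{\eta,l}/t$ profiles and every bilinear estimate must be checked against the precise multiplier inequalities in Appendix \ref{sec:def_nrm}. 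The smallness hypothesis $\epsilon\leq c_0\nu$ is used precisely to ensure that at each step the worst bilinear terms are bounded by $c_0$ times a dissipation energy, so that taking $c_0$ small closes every inequality in \eqref{ineq:Boot_Hi}--\eqref{ineq:Boot_LowFreq} with factor $2$ in place of $4$.
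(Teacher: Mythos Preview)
Your outline captures the paper's architecture correctly: separate energy identities for each bootstrap quantity, paraproduct decomposition into the four interaction classes, absorption of resonant contributions by the $CK_w$ and $CK_{wL}$ terms, and closure via $\epsilon\lesssim\nu$. Two points, however, diverge from what actually makes the argument work.

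First, your account of the enhanced-dissipation mechanism is backwards. The growing multiplier $\jap{D(t,\eta)}^\alpha$ does \emph{not} produce a coercive term: since $D$ is increasing in $t$, differentiating $\|A^{\nu;i}Q^i\|_2^2$ yields a \emph{positive} contribution $G^\nu$ on the right-hand side. The mechanism is that the genuine viscous term $-\nu\|\sqrt{-\Delta_L}A^{\nu;i}Q^i\|_2^2$ is strong enough (because $|\eta-kt|^2\gtrsim t^2$ for $t\geq 2|\eta|$, exactly where $\partial_t D\neq 0$) to absorb $G^\nu$ and still leave a residual $-\tfrac{\nu}{8}\|\sqrt{-\Delta_L}A^{\nu;i}Q^i\|_2^2$; see \S\ref{sec:ED3}. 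The \textbf{(SI)} terms are then handled with this residual dissipation together with the gain-of-$t$ from $\partial_X$ in Lemma~\ref{lem:AnuLossy}, not by comparing $D'/D$ to $\epsilon t$.

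Second, you omit the comparison/super-solution argument needed for \eqref{ineq:Boot_Q1Hi1} (and the related \eqref{ineq:Boot_Q1Mid}, \eqref{ineq:Boot_LowC}). A naive energy inequality on $\|A^1Q^1_0\|_2^2$ picks up the lift-up forcing $\jap{t}^{-1}\|A^1Q^1_0\|_2\|A^2Q^2_0\|_2$, which after Cauchy--Schwarz integrates to $O(\epsilon^2\log\epsilon^{-1})$ rather than $O(\epsilon^2)$; since $c_0$ must be chosen independently of $\nu$, this is fatal. The paper instead keeps the inequality in the form \eqref{ineq:basic_Q1} and compares against an explicit super-solution (\S\ref{sec:Q1Hi1}). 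Finally, your outline silently assumes one can replace $\Delta_t^{-1}$ by $\Delta_L^{-1}$ inside the high norms $A^i$; this requires the precision elliptic lemmas of Appendix~\ref{sec:PEL}, which themselves feed back additional $\|AC\|$-type error terms into every high-norm estimate and must be tracked through the bootstrap.
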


In Lemma \ref{lem:PropBootThm} in \S\ref{sec:PropBootThm} below, it is proved that Proposition \ref{prop:Boot} implies Theorem \ref{thm:Threshold} and hence the majority of the remainder of the paper is dedicated to the proof of Proposition \ref{prop:Boot}.  

First, in \S\ref{sec:BootConst} below we discuss how the bootstrap constants are chosen, then in \S\ref{sec:bootapriori} we detail some immediate a priori estimates which follow from the bootstrap hypotheses and afterwards in \S\ref{sec:summary3}, give a brief discussion of the main principles behind the proof of Proposition \ref{prop:Boot} as well as discuss in more detail the motivations for some 
of the finer details, such as the particular hierarchy of norms being employed.
The next two sections are further preliminaries. First, in \S\ref{sec:RegCont} we deal with the relatively minor technicalities such as changing coordinate systems and the local well-posedness issues. Then, we lay down the main technical tools used in the proof of Proposition \ref{prop:Boot} in \S\ref{sec:nrmuse}, including lemmas involving the norms 
we are using as well as a summary of the paraproduct decompositions that are used heavily in the sequel. 
From there, the remainder of the paper is dedicated to the energy estimates stated in Proposition \ref{prop:Boot}. 

\subsubsection{Bootstrap constants} \label{sec:BootConst}
First, $K_{GL3}, K_{U3},K_{U1}, K_{U12}$ are chosen sufficiently large relative to a universal constant depending only on $\sigma^\prime$. 
From there, $K_{Hi},K_{H1\neq}$, and $K_{M1}$ are chosen independently, sufficiently large relative to constants depending only on the parameters $\lambda_0,\lambda^\prime,\alpha,s$, and $\delta_1$ (and arbitrary parameters such as $\sigma,\beta,\ldots$).  
From there,  $K_{HC1}$, $K_{HC2}$, and $K_{LC}$ are independently chosen small relative to constants depending on the same parameters. 
Then, $K_{ED2}$ is chosen small relative to the same parameters, followed by $K_{ED1}$ and $K_{ED3}$ which are chosen small relative also to $K_{ED2}$ and $K_{Hi}$. After these constants are chosen, $c_0$ is chosen sufficiently small with respect to $K_B$, the max of all the bootstrap constants, as well as the parameters $s,\lambda_0,\lambda^\prime,\alpha,\delta_1$ (and arbitrary parameters such as $\sigma,\beta,\ldots$).  

\subsubsection{A priori estimates from the bootstrap hypotheses} \label{sec:bootapriori}
The motivation for the enhanced dissipation estimates \eqref{ineq:Boot_ED} is the following observation (which follows from \eqref{ineq:DLowB}): for any $f$,  
\begin{subequations} \label{ineq:AnuDecay}
\begin{align}
\norm{f_{\neq} (t)}_{\mathcal{G}^{\lambda(t),\beta}} & \lesssim_\alpha  \jap{t}^{2+\delta_1}\jap{\nu t^3}^{-\alpha} \norm{A^{\nu;1} f(t)}_2  \\ 
\norm{f_{\neq}(t)}_{\mathcal{G}^{\lambda(t),\beta}} & \lesssim_\alpha \jap{t}^{\delta_1} \jap{\nu t^3}^{-\alpha} \norm{A^{\nu;2} f(t)}_2, \\
\norm{f_{\neq}(t)}_{\mathcal{G}^{\lambda(t),\beta}} & \lesssim_\alpha \jap{t}^{2} \jap{\nu t^3}^{-\alpha} \norm{A^{\nu;3} f(t)}_2. 
\end{align} 
\end{subequations} 
Hence, \eqref{ineq:Boot_ED} expresses a rapid decay of $Q^i_{\neq}$ for $t \gtrsim \nu^{-1/3}$.  
Together with the ``lossy elliptic lemma'', Lemma \ref{lem:LossyElliptic}, we then get
\begin{subequations}  \label{ineq:AprioriUneq}
\begin{align} 
\norm{U^1_{\neq} (t)}_{\mathcal{G}^{\lambda(t),\beta-2}} & \lesssim  \frac{\epsilon \jap{t}^{\delta_1}}{\jap{\nu t^3}^{\alpha}} \\ 
\norm{U^2_{\neq}(t)}_{\mathcal{G}^{\lambda(t),\beta-2}} & \lesssim \frac{\epsilon}{\jap{t}^{2-\delta_1}\jap{\nu t^3}^{\alpha}} \\ 
\norm{U^3_{\neq}(t)}_{\mathcal{G}^{\lambda(t),\beta-2}} & \lesssim \frac{\epsilon}{\jap{\nu t^3}^{\alpha}}.  
\end{align}
\end{subequations} 
Lemma \ref{lem:LossyElliptic} is what allows to deduce a similar estimate on $\Delta_t^{-1}$ as we have on $\Delta_L^{-1}$ in \eqref{ineq:IDfundamental}, assuming the coordinate transform is not too large (which is implied by \eqref{ineq:Boot_LowC}). 
 
For the zero frequencies of the velocity field we get from \eqref{ineq:Boot_Hi}, \eqref{ineq:Boot_LowFreq} and Lemma \ref{lem:PELbasicZero} (which allows to understand $\Delta_t^{-1}$ at zero frequencies) the matching a priori estimates
\begin{subequations} \label{ineq:AprioriU0}  
\begin{align} 
\norm{A U^1_0 (t)}_{2} & \lesssim \epsilon \jap{t} \\ 
\norm{U^1_0 (t)}_{\G^{\lambda,\gamma}} & \lesssim c_0 \\ 
\norm{A U^2_0 (t)}_{2} + \norm{A U^3_0 (t)}_{2} & \lesssim \epsilon. 
\end{align} 
\end{subequations} 
In \S\ref{sec:XYZtoxyz}, we prove that \eqref{ineq:AprioriUneq} and \eqref{ineq:AprioriU0} together imply Theorem \ref{thm:Threshold}.

\subsubsection{Short summary of the proof of Proposition \ref{prop:Boot}} \label{sec:summary3}
Let us now quickly discuss some basic principles and tools that go into the proof of Proposition \ref{prop:Boot}.
 
\paragraph{Frequency decomposition tools and regularity levels}
From Proposition \ref{prop:Boot}, \S\ref{sec:bootapriori}, and the definition of the norms in \eqref{def:A} and \eqref{def:Anu}, it is clear that the 
information we have depends a lot on the relationship between frequency and time. 
To take advantage of such details, we need to decompose the nonlinear terms based on these relationships. 
A basic tool of nonlinear Fourier analysis is the paraproduct, introduced by Bony \cite{Bony81}, which allows to 
make decompositions of the type 
\begin{align*}
fg = f_{Hi} g_{Lo} + f_{Lo} g_{Hi} + (fg)_{\mathcal{R}}, 
\end{align*}
that is, the first term is the contribution to the product where the frequencies of $fg$ are comparable to $f$, in the second term the frequencies of $fg$ are comparable to $g$ and the last term is the contribution from where the frequencies of $f$ and $g$ are comparable (see \S\ref{sec:nrmuse} for more information). 
It essentially linearizes the evolution of higher frequencies around lower frequencies -- see \cite{Hormander1990,BMM13} for discussions about the connection between paradifferential calculus and the Nash-Moser iteration. 
Paraproduct tools have been very useful in several works that involve mixing in fluids and plasmas \cite{BM13,BMV14,BMM13,Young14}. 

If we are deducing a high norm estimate, then we can control the contributions of `low frequencies' with the lower norm controls we have; on the other hand if we are getting a low norm estimate then we can absorb all kinds of regularity loss by using the high norm control, which allows us to often get better decay estimates, e.g. the enhanced dissipation \eqref{ineq:Boot_ED} and the decay estimates in \eqref{ineq:Boot_gLow} and  \eqref{ineq:Boot_LowFreq}. 
Hence it becomes more clear why we generally have at least two regularity levels for most of the unknowns. 
 Moreover, we also note that if we are estimating a high norm, then the interaction of two non-zero frequencies in $X$ will be rapidly damped by the use of \eqref{ineq:AprioriUneq} but the same is not quite true of the interaction between a zero and non-zero frequency at the high norm. 

\paragraph{Elliptic estimates}
An important set of tools we need are those pertaining to $\Delta_t^{-1}$, as this is how we recover $U^i$ from $Q^i$. 
These are detailed in Appendix \ref{sec:Elliptic}, as they are basically a generalization of the ideas employed in \cite{BM13,BMV14}.  
One set of lemmas are the so-called `lossy' estimates, detailed in Appendix \ref{sec:Lossy}. These are straightforward and allow us to deduce roughly the equivalent of \eqref{ineq:IDfundamental} for $\Delta_t^{-1}$ given the control \eqref{ineq:Boot_LowC} for $c_0$ sufficiently small. 
In Appendix \ref{sec:PEL}, we detail the more subtle lemmas that arise when putting $\Delta_t^{-1}Q^i$ in a high norm. 
We refer to these as `precision elliptic lemmas' since they treat the loss of ellipticity due to the Orr mechanism in a precise way.  
In \cite{BM13}, one such lemma was needed, but here we need several. The general principle is to use $\Delta_L^{-1}$ as an approximate inverse and coordinate system estimates in \eqref{ineq:Boot_CgHi} to control the errors. 
Unlike \cite{BM13,BMV14}, the enhanced dissipation from \eqref{ineq:AprioriUneq} is crucial for controlling the error terms. 

\paragraph{Control of $Q^i$ in the high norm, \eqref{ineq:Boot_Hi}} \label{sec:XindepHigh}
The general scheme for essentially all  of the high norm energy estimates in Proposition \ref{prop:Boot} is to get an estimate more or less like the following 
\begin{align}
\frac{1}{2}\frac{d}{dt}\norm{A^i Q^i}_2^2 + \mathcal{D}Q^i \lesssim \max(c_0,\epsilon^{\delta})\mathcal{D}Q^i + \max(c_0,\epsilon^{\delta})\epsilon^2 \mathcal{I}(t), \label{ineq:GenScheme}
\end{align}
where $\delta > 0$ and $\mathcal{I}(t)$ is an integrable function, uniformly in $\epsilon$, $\nu$, and $c_0$. In general, $\mathcal{I}$ will involve also the dissipation energies. 
For $c_0$ sufficiently small (hence also $\epsilon$, since $\epsilon < c_0\nu \leq c_0$), estimates of this type will then imply the high norm estimates in Proposition \ref{prop:Boot}.   
A similar method is used to get every estimate in Proposition \ref{prop:Boot} with the exception of \eqref{ineq:Boot_LowC}, \eqref{ineq:Boot_Q1Hi1}, and \eqref{ineq:Boot_Q1Mid}, which are done with a slight variation on this approach.
The issue with these estimates is to avoid losing a $\abs{\log \epsilon}$ (fatal to the proof of Theorem \ref{thm:Threshold}) which requires a bit of extra work as the linear lift-up effect terms in the equations for $C$ and $Q^1_0$ are borderline (because $c_0$ is chosen independent of $\nu$).  
The Sobolev scale estimates are discussed further below. 

The high norm estimates are divided into zero and non-zero (in $X$) modes as the behaviors are starkly different. 
First, are the linear stretching and pressure terms in $Q^1$ and $Q^3$, which cannot be treated perturbatively, although the effect of the coordinate system on them is treated perturbatively (crucially). The norms get weaker at low frequencies in order to absorb the effect of these linear terms and this explains why there is a `$CK_L^2$' term controlled in \eqref{ineq:Boot_Q2Hi} but not in the \eqref{ineq:Boot_Q1Hi2} or \eqref{ineq:Boot_Q3Hi} (the entire $CK_L^{1,3}$ terms are used  in the estimate). 
The control on the $CK_L^2$ term in \eqref{ineq:Boot_Q2Hi} turns out to be useful for controlling the lift-up effect term in the proof of \eqref{ineq:Boot_Q1Hi2}.  
The other complications at the high norm are the nonlinear interactions corresponding to \textbf{(SI)}, \textbf{(F)}, and \textbf{(3DE)}, which are controlled using the norms devised via the toy model. In particular, these contributions make heavy use of the $CK_w$ terms, which make sure that the norms lose regularity in a way which matches the toy model in \S\ref{sec:Toy}. 

\paragraph{Control of the coordinate system}
These estimates are deduced in \S\ref{sec:Coord}. 
Estimate \eqref{ineq:Boot_ACC} is complementary to \eqref{ineq:Boot_ACC2}: at long times, \eqref{ineq:Boot_ACC2} is obviously better, but at shorter times \eqref{ineq:Boot_ACC2} is much better. 
For these estimates, controlling $g$ is quite important, and the decay estimates on $g$ and $U_0^2$ at low frequencies allow uniform estimates on $C$ (high frequencies are generally absorbed by the dissipation). 

\paragraph{Enhanced dissipation estimates}
The enhanced dissipation estimates \eqref{ineq:Boot_ED} are improved in \S\ref{sec:ED}.  
The first important fact to notice regarding the proof of \eqref{ineq:Boot_ED} is that for $t \leq 2\abs{\partial_Y}$ 
we can gain a large power of $\jap{t}^{-1}$ by exchanging $A^{\nu;i}$ for $A^i$, or some regularity class in between (see \eqref{ineq:AnuHiLowSep} below), one of several reasons to get these decay estimates separately from the high norm estimates. 
This means that the main difficulties in obtaining the enhanced dissipation estimates will not really be associated with the Orr mechanism. 
Instead, the main difficulties in \S\ref{sec:ED} arise from the linear vortex stretching and the \textbf{(SI)} interactions: terms that are linear in $X$ dependent frequencies.  

As in the proof of \eqref{ineq:Boot_Hi}, in these estimates, the special structure of the nonlinearity is key, and is combined with \eqref{ineq:AnuLossyED} to high effect. 
A recurring theme is to gain in $t$ from Lemma \ref{lem:AnuLossy} when $\partial_X$ derivatives are present, a kind of ``null'' structure that allows to balance the large growth of $U_0^1$ by extra decay since it often appears as $U_0^1 \partial_X$ in the nonlinear terms.

\paragraph{Low norm estimates on the $X$-independent modes}
The last set of estimates are those in \eqref{ineq:Boot_LowFreq}. 
The purpose of these are two-fold: first, to control the low frequencies of the velocity field (which are not well-controlled by $Q^i$); 
second, to deduce extra decay estimates.
In contrast to the other estimates in Proposition \ref{prop:Boot}, improving \eqref{ineq:Boot_LowFreq} lends itself to a proof in $(X,y,z)$ rather than $(X,Y,Z)$, as it uses the divergence free condition and the dissipation together in a way which is easier in the standard coordinates. 
Since the regularity class in \eqref{ineq:Boot_LowFreq} is finite, there is no problem transferring estimates from one coordinate system to another (see \S\ref{sec:RegCont}). 

\section{Regularization and continuation} \label{sec:RegCont} 
This section contains three topics: (A) comments on the local-wellposedness for classical solutions of \eqref{def:3DNSE} and the instantaneous analytic regularization with initial data of the type \eqref{ineq:QuantGev2} (B) how to move estimates on these classical solutions between coordinate systems, and (C) the proof that Proposition \ref{prop:Boot} implies Theorem \ref{thm:Threshold}.
All of these steps are very similar to the 2D works \cite{BM13,BMV14}, so we include only a brief summary. 

\subsection{Regularization and short-time regularity} \label{sec:ContCoord}

\begin{proof}[\textbf{Proof of  Lemma \ref{lem:Loc}}]
By using an easy variant of standard local well-posedness theory for strong solutions of 3D Navier-Stokes (e.g. \cite{MajdaBertozzi}), it is easy to prove that there is a $t_\star > 0$ such that there is a unique strong solution $u(t)$ with initial data $u_{in}$ to \eqref{def:3DNSE}. 
Standard parabolic regularity theory shows that $u(t)$ is real analytic for $t \in (0,t_\star)$. 
 
The next step is to verify that for $K_0$ chosen sufficiently large, there is some time $t_G \in (0,1)$ such that $\norm{u(t_G)}_{\G^{\bar{\lambda},s}} \leq \frac{10}{9}\epsilon$.
This is a matter of making more quantitative the local regularity estimates from initial data of the type \eqref{ineq:QuantGev2} and 
we can apply a straightforward variant of the method used in \cite{BMV14} for 2D Navier-Stokes.
This consists of several steps. First, we solve \eqref{def:3DNSE} (locally in time) with $u_S$ as the initial data and obtain quantitative estimates on the analyticity of the solution $u_S(t)$ for $t \ll \nu$. Next, we get short-time analyticity results on the rough perturbation $u_R(t)$ with initial data $u_R(0)$ for $t \ll \nu$ ($u_R(t)$ satisfies a PDE such that $u_S(t) + u_R(t)$ together solve \eqref{def:3DNSE}). Finally, we make a third estimate propagating the correct Gevrey-$\frac{1}{s}$ regularity forward to a time independent of $\nu$. We omit the details for brevity as they are similar to those in \cite{BMV14}.  
\end{proof} 

\begin{proof} [\textbf{Proof of  Lemma \ref{lem:Cont}}]
By a straightforward variant of the local existence argument, it is easy to prove that the unique, strong solution exists (uniformly in $\nu$) as long as an $H^{5/2+}$ norm remains finite.  
Propagation of analyticity uniformly in $\nu$ can be proved via variants of e.g. \cite{FoiasTemam89,LevermoreOliver97,KukavicaVicol09}. The details are omitted for brevity. 
\end{proof} 

In order to get the bootstrap argument started, we need to prove that estimates in $(x,y,z)$ transfer naturally to estimates in $(X,Y,Z)$ and give us a buffer zone away from the mild coordinate singularity at $t  = 0$.
\begin{lemma} \label{lem:BootStart}
For $c_0$ sufficiently small, we may take $2 \leq T^\star$ (defined in \S\ref{sec:energy} above) and for $t \leq 2$, the bootstrap estimates in \eqref{ineq:Boot_Hi}, \eqref{ineq:Boot_CgHi}, \eqref{ineq:Boot_ED}, and \eqref{ineq:Boot_LowFreq}, all hold with constant $5/4$ instead of $4$. 
\end{lemma}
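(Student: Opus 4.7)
The plan is to combine the instantaneous regularization from Lemma \ref{lem:Loc}, the continuation and propagation of analyticity from Lemma \ref{lem:Cont}, and a short-time analysis of the coordinate ODE \eqref{def:psi2} to pass from estimates in $(x,y,z)$ on the compact interval $[0,2]$ to the bootstrap quantities in $(X,Y,Z)$ on $[1,2]$. First, Lemma \ref{lem:Loc} produces a unique classical solution on $[0,t_\star]$ with
\[
\sup_{t\in[t_\star/2,t_\star]}\norm{u(t)}_{\G^{\bar\lambda;s}} \leq \tfrac{10}{9}\epsilon, \qquad \bar\lambda = \tfrac{9\lambda_0}{10}+\tfrac{\lambda'}{10}.
\]
A standard Gevrey energy estimate of Cauchy--Kovalevskaya type, run with a linearly decreasing radius on $[t_\star/2, 2]$, propagates this to
\[
\sup_{t\in[t_\star/2,2]} \norm{u(t)}_{\G^{\lambda^*;s}} \lesssim \epsilon, \qquad \lambda^* := \tfrac{\bar\lambda + (\lambda_0+\lambda')/2}{2},
\]
for $\epsilon$ sufficiently small (the nonlinearity is controlled by the shrinking radius, and the linear lift-up term only contributes a multiplicative factor $O(1)$ on the bounded time interval $[0,2]$). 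Lemma \ref{lem:Cont} then guarantees the solution persists, is real analytic, and that all the norms of interest vary continuously in $t$ on $[0,2]$.

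Next I would construct the coordinate transformation on $[0,2]$. Equation \eqref{def:psi2} with $\lim_{t\to 0^+} t\psi=0$ is solved by a short-time contraction in $\G^{\lambda^*;s-\epsilon_0}$: the driving term is $u^1_0 - t u^2_0 - t\partial_z\psi\, u^3_0$, and since $\norm{u^i_0(t)}_{\G^{\lambda^*;s}} \lesssim \epsilon$ on $[0,2]$, one finds that $C=\psi\circ y^{-1}$ and $g=t^{-1}(U_0^1 - C)$ satisfy $\norm{C}_{\G^{\lambda^*,\gamma}} + \norm{g}_{\G^{\lambda^*,\gamma}} \lesssim \epsilon \leq c_0\nu$ on $[0,2]$. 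In particular the Jacobian factors \eqref{def:psi2sqrBrack} are well-defined, the map $(x,y,z)\mapsto(X,Y,Z)$ is a diffeomorphism, and Gevrey norms are equivalent (up to constants) under the composition, so
\[
\sup_{t\in [t_\star/2,2]} \bigl(\norm{Q^i(t)}_{\G^{\lambda^*,\gamma}} + \norm{U^i(t)}_{\G^{\lambda^*,\gamma}}\bigr) \lesssim \epsilon.
\]

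Finally, I would verify the bootstrap estimates pointwise on $[1,2]$. Since $\lambda(t)$ is decreasing with $\lambda(1)=\tfrac{3\lambda_0+\lambda'}{4} < \lambda^*$ and $\mu|\eta|^{1/2}\leq (\lambda^*-\lambda(t))|k,\eta,l|^s$ (because $s>1/2$) up to finitely many low frequencies, and since $w, w_L \gtrsim 1$ and all the time-localization factors $\min(1,\jap{\eta,l}^\beta/t^\beta)$ are $O(1)$ for $t \in [1,2]$, each norm $A^i$, $A^{\nu;i}$, and $A$ at $t\in[1,2]$ is majorized by a constant multiple of $\norm{\cdot}_{\G^{\lambda^*,\gamma+\sigma}}$. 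Hence all of the high-norm inequalities in \eqref{ineq:Boot_Hi}--\eqref{ineq:Boot_ED} hold with right-hand sides $\lesssim \epsilon$ (for \eqref{ineq:Boot_ACC}, \eqref{ineq:Boot_LowC} use $\epsilon\leq c_0\nu\leq c_0$), and choosing the bootstrap constants $K_{H1},\ldots,K_B$ large enough compared with the universal implicit constants makes all estimates hold with the improved constant $5/4$. The time-integrated dissipation terms and $CK_L$ terms are bounded on $[1,2]$ by $2\cdot\sup \mathcal{D}(\cdot) \lesssim \epsilon^2$, again absorbed into the bootstrap constants. The low-frequency estimates \eqref{ineq:Boot_LowFreq} are easier and follow directly from $\norm{u(t)}_{\G^{\lambda^*;s}}\lesssim\epsilon$ combined with $\jap{\nu t}\leq \jap{2}$ on $[1,2]$, using the standard coordinate equivalence since the regularity is finite.

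The main obstacle is not any single estimate but rather bookkeeping: one must check that the auxiliary multipliers ($w$, $w_L$, $D$, the $\min(1,\jap{\eta,l}^\beta/t^\beta)$ truncations) do not cause any difficulty on the short interval $[1,2]$, which they do not because they are all $O(1)$ there, and one must carefully handle the mild singularity of $\psi$ at $t=0$ via the vanishing condition $\lim_{t\to 0^+} t\psi = 0$. Once $\psi$ is controlled on $[0,2]$, the rest is a compactness argument on a bounded time interval with a small Gevrey solution.
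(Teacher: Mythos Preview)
Your sketch has a genuine gap in the first step. You propose to propagate $\norm{u(t)}_{\G^{\lambda^*;s}}$ on $[t_\star/2,2]$ directly in $(x,y,z)$ via a Cauchy--Kovalevskaya argument with a linearly decreasing radius, but you never address the linear Couette transport $y\partial_x u$ in \eqref{def:3DNSE}. The commutator $[e^{\lambda|\nabla|^s},\,y\partial_x]$ has symbol of size $\lambda s\,|k|\,|k,\eta,l|^{s-1}\le \lambda s\,|k,\eta,l|^{s}$, so the resulting term in the energy estimate is $\lesssim \lambda s\,\norm{|\nabla|^{s/2}u}_{\G^\lambda}^2$. To absorb this into the $CK_\lambda$ term you would need $-\dot\lambda \gtrsim s\lambda$, i.e.\ $\lambda(t)\lesssim e^{-st}\bar\lambda$; over $[t_\star/2,2]$ with $s>1/2$ this drives $\lambda$ below the target $\lambda(1)=\tfrac34\lambda_0+\tfrac14\lambda'$ (indeed, a linear law $\dot\lambda=-M$ with $M\ge s\bar\lambda$ makes $\lambda$ negative before $t=2$). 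Hence your ``linearly decreasing radius'' cannot close.

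The paper avoids this by first passing to the linearly co-moving frame $\bar x=x-ty$ (the coordinates of \S\ref{sec:lin}), where the Couette drift vanishes identically; the only regularity loss on $[0,2]$ then comes from the $O(\epsilon)$ nonlinearity, and one may take $\dot\mu=-\epsilon^{1/2}\mu$, giving $\mu(2)\ge \tfrac78\lambda_0+\tfrac18\lambda'$. From $\norm{h^i}_{\G^{\mu}}+\norm{v^i}_{\G^{\mu}}\lesssim\epsilon$ in $(\bar x,y,z)$ one then reaches $(X,Y,Z)$ by estimating $t\psi$ from \eqref{def:psi2}, inverting $Y=y+\psi$ via a Gevrey inverse function theorem, and applying the composition Lemma~\ref{lem:GevComp}; the residual $e^{\mu|\eta|^{1/2}}/w$ factor in $A^i$ is absorbed by trading the extra Gevrey radius via \eqref{ineq:IncExp}. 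The remainder of your outline (handling the $t\to0$ limit of $\psi$, checking that $w,w_L,D$ and the truncations are $O(1)$ on $[1,2]$, and absorbing constants into $K_B$) is fine once this first step is corrected to work in the moving frame.
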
 
\begin{proof}
The approach we take is that of \cite{BM13}, which is to first get an estimate until time $t = 2$ in the coordinate system used when studying the linearization in \S\ref{sec:lin} and then change coordinates into those defined in \S\ref{sec:coordinates}. We only give a brief sketch. 
Define
\begin{align*} 
\bar{x} & = x-ty \\ 
h^i(t,\bar{x},y,z) & = q^i(t,\bar{x} + ty,y,z) \\ 
v^i(t,\bar{x},y,z) & = u^i(t,\bar{x} + ty,y,z);
\end{align*}
note that $v^i = \Delta_L^{-1}h^i$. 
These unknowns satisfy natural analogues of \eqref{def:MainSys} and \eqref{def:3DNSE}. 
Moreover, for $j \in \set{0,1,2}$,  
\begin{align*} 
\norm{v^i_{\neq}}_{\G^{\mu,\gamma}} & \lesssim \norm{h^i_{\neq}}_{\G^{\mu,\gamma}}  \\ 
\norm{\partial_{y}^j v^i_{\neq}}_{\G^{\mu,\gamma}} & \lesssim \jap{t}^{j} \norm{h^i_{\neq}}_{\G^{\mu,\gamma}} \\ 
\norm{\jap{\partial_{y} - t\partial_{\bar{x}}}^j v^i_{\neq}}_{\G^{\mu,\gamma}} & \lesssim \norm{h^i_{\neq}}_{\G^{\mu,\gamma}} \\ 
\norm{\jap{\partial_{\bar{x}}}^j v^i_{\neq}}_{\G^{\mu,\gamma}} + \norm{\jap{\partial_{z}}^j v^i_{\neq}}_{\G^{\mu,\gamma}} & \lesssim \norm{h^i_{\neq}}_{\G^{\mu,\gamma}} 
\end{align*} 
With these estimates, it is relatively easy to verify using the standard techniques for getting Gevrey regularity estimates on transport equations, see e.g. \cite{BM13,FoiasTemam89,LevermoreOliver97,KukavicaVicol09}, that the following holds: for $\epsilon < c_0 \nu$ sufficiently small and $\mu(t)$ 
chosen as
\begin{align*}
\dot{\mu}(t) = -\epsilon^{1/2} \mu(t), \quad\quad \mu(t_\star) = \bar{\lambda} = \frac{9\lambda_0}{10} + \frac{\lambda^\prime}{10}, 
\end{align*}
such that $\mu(2) \geq \frac{7 \lambda_0}{8} + \frac{\lambda^{\prime}}{8}$, 
we have, for $t \leq 2$, the estimates  
\begin{align} 
\norm{h^{i}}_{\G^{\mu(t)}}  + \norm{v^{i}}_{\G^{\mu(t)}}  & \lesssim \epsilon. \label{ineq:shorttimexyz}
\end{align} 
Next we convert estimates on $h^i$ and $v^i$ to estimates on $C,Q^i$ and $U^i$. 
In order to estimate the norm of $Q^i(t,X,Y,Z)$ in terms of $h^i(\bar{x},y,z)$ we need to solve for $(\bar{x},y,z)$
in terms of $(X,Y,Z)$ and then use a lemma for estimating Gevrey regularity under composition (see Lemma \ref{lem:GevComp}).
From \eqref{def:psi2}, we can use the same methods used to deduce \eqref{ineq:shorttimexyz} to get $\norm{t\psi}_{\G^{\mu(t)}} \lesssim \epsilon$,  
which for $\frac{1}{2} \leq t \leq 2$, yields good estimates on $\psi(t,y,z) = Y(t,y,z) - y$ and on $X(t,x,y,z) = \bar{x}(t,x,y) - t \psi(t,y,z)$. 
Hence, we can write
\begin{align*} 
\bar{x}(t,X,Y,Z) & = X + t\psi(t,y(t,X,Y,Z),z(t,X,Y,Z)) \\ 
y(t,X,Y,Z) & = Y + \psi(t,y(t,X,Y,Z),z(t,X,Y,Z)) \\ 
z(t,X,Y,Z) & = Z,  
\end{align*}
and then for $\epsilon$ sufficiently small, a suitable Gevrey inverse function theorem (see e.g. \cite{BM13}), 
allows to solve for $(\bar{x},y,z)$ with estimates on the Gevrey regularity. 
Lemma \ref{lem:GevComp} on Gevrey composition then allows to get the estimates (using $\epsilon$ sufficiently small)
for some $\delta > 0$ and for $1/2 < t \leq 2$: 
\begin{align} 
\norm{C}_{\G^{(1+\delta)\lambda(t),\sigma}} + \norm{Q^i}_{\G^{(1+\delta)\lambda(t),\sigma}} + \norm{U^i}_{\G^{(1+\delta)\lambda(t),\sigma}} \lesssim \epsilon. \label{ineq:inintCQU} 
\end{align}
By replacing the initial $\epsilon$ with some smaller $\epsilon^\prime$, we can change the norms to the $A^i$ and $A$ (paying the $\delta \lambda$ regularity in exchange for the factor involving $w^{-1}$ from \eqref{ineq:IncExp}) and drop the constants in \eqref{ineq:inintCQU}. Hence, the lemma follows up to adjusting the bootstrap constants.  
\end{proof} 

\subsection{Moving from $(X,Y,Z)$ to $(x,y,z)$} \label{sec:XYZtoxyz}

In order to prove Theorem \ref{thm:Threshold}, we need to be able to transfer information in $(X,Y,Z)$ coordinates back to information in $(x,y,z)$. 
This is also used in \S\ref{sec:LowNrmVel}. 
Similar to the method used in \cite{BM13,BMV14}, we will first move to the coordinate system $(X,y,z)$. 
Writing $\bar{q}^i(t,X,y,z)  = Q^i(t,X,Y,Z) = q(t,x,y,z)$ and $\bar{u}^i(t,X,y,z) = U^i(t,X,Y,Z) = u^i(t,x,y,z)$ we derive the following, noting that $\bar{u}^i_0 = u^i_0$: 
\begin{align} 
\partial_t u_0^i + (u_0^2,u_0^3) \cdot \grad u_0^i & = (-u_0^2,0,0)^T - (0,\partial_yp_0^{NL}, \partial_z p^{NL}_0)^T + \nu \Delta u^i_0 + \mathcal{F}^i, \label{eq:u0i}
\end{align} 
where 
\begin{align} 
\mathcal{F}^i =  - \left(\bar{u}^1_{\neq} \partial_X \bar{u}_{\neq}^i\right)_0  -\left(\bar{u}^2_{\neq} (\partial_y - \partial_y\psi t\partial_X) \bar{u}_{\neq}^i\right)_0  - \left(\bar{u}^3_{\neq} (\partial_z - t\partial_z \psi \partial_X) \bar{u}_{\neq}^i\right)_0. 
\end{align} 
Since the divergence free condition transforms into
\begin{align} 
\partial_X \bar{u}^{1} + (\partial_y - \partial_y\psi t\partial_X) \bar{u}^2 + (\partial_z - t\partial_z \psi \partial_X)\bar{u}^3,
\end{align} 
then we get for $\mathcal{F}$ the following form, analogous to the cancellations in \eqref{eq:XavgCanc}, 
\begin{align} 
\mathcal{F}^i =   -\partial_y \left(\bar{u}^2_{\neq}\bar{u}_{\neq}^i\right)_0  - \partial_z\left(\bar{u}^3_{\neq} \bar{u}_{\neq}^i \right)_0. \label{eq:Fbaru}
\end{align}
\begin{lemma} \label{lem:intermedSob}
For $\epsilon < c_0\nu$ and $c_0$ sufficiently small, the bootstrap hypotheses imply the following for some $c \in (0,1)$ chosen such that $c\lambda(t) \in (\lambda^\prime,\lambda(t))$ for all $t$:   
\begin{subequations} \label{ineq:Xyzubds}
\begin{align} 
\norm{\bar{u}^1_{\neq}}_{\G^{c\lambda(t)}} & \lesssim \epsilon \jap{t}^{\delta_1}\jap{\nu t^3}^{-\alpha}  \\ 
\norm{\bar{u}_{\neq}^2}_{\G^{c\lambda(t)}} & \lesssim \epsilon \jap{t}^{-2+\delta_1} \jap{\nu t^3}^{-\alpha}  \\ 
\norm{\bar{u}_{\neq}^3}_{\G^{c\lambda(t)}} & \lesssim \epsilon  \jap{\nu t^3}^{-\alpha}, 
\end{align}  
\end{subequations}
and
\begin{subequations} \label{ineq:uzAPriori}
\begin{align} 
\norm{u^1_0(t)}_{\G^{c\lambda(t)}} & \lesssim \max(\epsilon \jap{t},c_0) \label{ineq:uzApriori1} \\ 
\norm{u^2_0(t)}_{\G^{c\lambda(t)}} + \norm{u^3_0(t)}_{\G^{c\lambda(t)}} & \lesssim \epsilon \label{ineq:uzApriori23} \\ 
\norm{q_{0}^2(t)}_{H^{\sigma^\prime}} + \norm{u_{0}^2(t)}_{H^{\sigma^\prime}} & \lesssim \frac{\epsilon}{\jap{\nu t}^\alpha} \\ 
\norm{u_{0}^1(t)}_{H^{\sigma^\prime}}^2 + \nu \int_1^t \norm{\grad u_0^1(\tau)}_{H^{\sigma^\prime}}^2 d\tau  & \lesssim c_0^2 \\ 
\norm{u_{0}^3(t)}_{H^{\sigma^\prime}}^2 + \nu \int_1^t \norm{\grad u_0^3(\tau)}_{H^{\sigma^\prime}}^2 d\tau  & \lesssim \epsilon^2 \\ 
\norm{u_0^3}_4 & \lesssim \frac{\epsilon}{\jap{ \nu t }^{1/4}} \label{ineq:u03L4} \\ 
\norm{u_0^1}_4 & \lesssim \frac{c_{0}}{\jap{ \nu t }^{1/4}}. \label{ineq:u01L4} 
\end{align}  
\end{subequations} 
\end{lemma}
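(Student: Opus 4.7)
The plan is to transfer the bootstrap controls from $(X,Y,Z)$-coordinates to the $(X,y,z)$- and $(x,y,z)$-coordinates via the change of variables $Y = y + \psi(t,y,z)$, $Z = z$.  This is the same coordinate-transfer machinery used at the end of the proof of Lemma \ref{lem:BootStart}, now applied on the entire bootstrap interval $[1,T^\star]$ rather than only on $[0,2]$, and fed by the global bootstrap hypotheses \eqref{ineq:Boot_CgHi}, \eqref{ineq:Boot_LowFreq} and the a priori consequences \eqref{ineq:AprioriUneq}, \eqref{ineq:AprioriU0}.

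First I would control the coordinate map.  From \eqref{ineq:Boot_LowC} one has $\norm{C(t)}_{\G^{\lambda(t),\gamma}} \lesssim c_0$ uniformly in $t$, and by the series expansions \eqref{def:psi2sqrBrack} together with the Gevrey algebra property this yields, for $c_0$ small,
\begin{align*}
\norm{\psi(t)}_{\G^{\lambda(t),\gamma-1}} + \norm{\psi_y(t)}_{\G^{\lambda(t),\gamma-1}} + \norm{\psi_z(t)}_{\G^{\lambda(t),\gamma-1}} \lesssim c_0,
\end{align*}
understood as functions of $(y,z)$.  For $c_0$ sufficiently small a Gevrey inverse function theorem of the type used in \cite{BM13} shows that $(y,z) \mapsto (Y,Z)=(y+\psi(t,y,z),z)$ is a bi-Gevrey diffeomorphism with inverse $y = Y - C(t,Y,Z)$ and Jacobian $1+\partial_y\psi = 1+O(c_0)$.

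Second, I would perform the Gevrey composition.  Fix $c \in (0,1)$ with $c\lambda(t) \in (\lambda^\prime,\lambda(t))$ uniformly in $t$, which is possible since $\lambda(t) \geq (\lambda_0+\lambda^\prime)/2$.  Applying the Gevrey composition lemma (Lemma \ref{lem:GevComp}) to
\begin{align*}
\bar u^i_{\neq}(t,X,y,z) = U^i_{\neq}(t,X,\,y+\psi(t,y,z),\,z), \qquad u^i_0(t,y,z) = U^i_0(t,\,y+\psi(t,y,z),\,z),
\end{align*}
the $O(c_0)$-smallness of $\psi$ in $\G^{\lambda(t),\gamma-1}$ is absorbed into the small radius loss from $\lambda(t)$ to $c\lambda(t)$, producing
$\norm{\bar u^i_{\neq}(t)}_{\G^{c\lambda(t)}} \lesssim \norm{U^i_{\neq}(t)}_{\G^{\lambda(t),\beta-2}}$
and $\norm{u^i_0(t)}_{\G^{c\lambda(t)}} \lesssim \norm{U^i_0(t)}_{\G^{\lambda(t)}}$.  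Plugging in the enhanced dissipation bounds \eqref{ineq:AprioriUneq} and the zero-mode bounds \eqref{ineq:AprioriU0} immediately yields \eqref{ineq:Xyzubds}, \eqref{ineq:uzApriori1}, and \eqref{ineq:uzApriori23}.

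Third, the Sobolev and $L^4$ bounds follow by the same route.  Since $\sigma^\prime > 3$ the space $H^{\sigma^\prime}(\Real\times\T)$ is a Banach algebra, so the same composition argument with Sobolev tame estimates (valid since the Gevrey bound on $\psi$ dominates the corresponding $H^{\sigma^\prime}$ bound) transfers the low-frequency bootstrap bounds in \eqref{ineq:Boot_LowFreq} from $U^i_0$, $Q^i_0$ to $u^i_0$, $q^i_0$.  For the viscous-energy integrals one writes $\grad_{y,z} = (\I + E(t,y,z))\grad^t$ with $\norm{E}_{H^{\sigma^\prime}} \lesssim c_0$ and absorbs the perturbation into the implicit constant.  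The $L^4$ bounds \eqref{ineq:u03L4}, \eqref{ineq:u01L4} then follow from \eqref{ineq:L43}, \eqref{ineq:L41} and the fact that $|\det\partial(Y,Z)/\partial(y,z)| = 1+\partial_y\psi = 1+O(c_0)$, so the two $L^4$-norms are equivalent.  The only delicate point is bookkeeping the Gevrey radius losses in the composition; this is precisely the reason $\lambda(t)$ was constructed to leave a uniform gap above $\lambda^\prime$, and is the main (mild) obstacle.
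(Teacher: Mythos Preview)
Your proposal is essentially correct and follows the same route as the paper: control the coordinate change via the bootstrap bound on $C$, invoke a Gevrey inverse function theorem, and then apply the Gevrey composition Lemma~\ref{lem:GevComp} (respectively Sobolev composition for the $H^{\sigma'}$ and $L^4$ statements) to pull the a~priori estimates \eqref{ineq:AprioriUneq}, \eqref{ineq:AprioriU0}, \eqref{ineq:Boot_LowFreq} back to $(X,y,z)$.

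One point in your write-up is out of order and would not stand as written. The series expansions \eqref{def:psi2sqrBrack} express $\psi_y,\psi_z$ \emph{as functions of $(Y,Z)$} in terms of $C$; they do not give $\psi$ itself, and they certainly do not give any Gevrey control in the $(y,z)$ variables. What the bootstrap supplies directly is $\|C\|_{\G^{\lambda,\gamma}}\lesssim c_0$ in $(Y,Z)$, i.e.\ smallness of the inverse map $y(Y,Z)-Y=-C(Y,Z)$. The paper proceeds in exactly this direction: from $\|\psi_y\|_{\G^{\lambda}}\lesssim c_0$ (in $(Y,Z)$) one writes $\partial_Y y = (1+\psi_y)^{-1}$ as a convergent Gevrey series, combines this with $\|y-Y\|_2=\|C\|_2\lesssim c_0$ to get $\|y(\cdot,Y,Z)-Y\|_{\G^{\lambda}}\lesssim c_0$, and \emph{only then} applies the Gevrey inverse function theorem to conclude $\|\psi(\cdot,y,z)\|_{\G^{c'\lambda}}=\|Y(\cdot,y,z)-y\|_{\G^{c'\lambda}}\lesssim c_0$ in $(y,z)$. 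Your displayed bound on $\psi$ ``as functions of $(y,z)$'' should therefore appear \emph{after} the inverse-function-theorem sentence, not before it, and at radius $c'\lambda$ rather than $\lambda$. Once that is fixed, the remainder of your argument matches the paper's.
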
 
\begin{proof} 
We want to quantify the Gevrey regularity of $\bar{q}^i(X,y,z) = Q^i(X,Y(y,z),Z(y,z))$ and $\bar{u}^i(X,y,z) = U^i(t,X,Y(y,z),Z(y,z))$ via the composition inequality Lemma \ref{lem:GevComp} (and also Sobolev regularity via Sobolev composition).  
Notice that $Z(y,z) = z$ and $Y(y,z) - y = \psi(y,z)$, and hence what we need is a Gevrey estimate on $\psi(t,y,z)$. 
However, the bootstrap hypotheses give only direct estimates on $C$, $\psi_y$ and $\psi_z$, in $(Y,Z)$ coordinates. 
Recall $(\psi_y(t,Y(t,y,z),Z), \psi_z(t,Y(t,y,z),Z) = (\partial_y \psi(t,y,z), \partial_z\psi(t,y,z))$.
By the $C^\infty$ inverse function theorem, we can solve $Y(t,y,z) = y + \psi(t,y,z)$ for $y = y(t,Y,z)$ as a function of $Y,z$.
For $c_0$ sufficiently small we can write the derivative as a convergent power series (by Lemma \ref{lem:CoefCtrl} below, $\norm{\psi_y}_\infty \lesssim \norm{\psi_y}_{\G^{\lambda,\gamma-1}} \lesssim c_0 < 1/2$): 
\begin{align} 
\partial_Y y(t,Y,z) = \frac{1}{\partial_y Y(t,y(t,Y,z),z)} = \frac{1}{1 + \psi_y(t,Y,z) } = \sum_{j = 0}^\infty (-\psi_y(t,Y,z))^j. 
\end{align} 
By Lemma \ref{lem:GevProdAlg} it follows that 
\begin{align} 
\norm{\partial_Y y - 1}_{\G^{\lambda}} \lesssim c_0. 
\end{align} 
Moreover, we get from this and the controls on $C$, 
\begin{align*}
\norm{Y(t,y,z) - y}_2 & = \norm{\psi}_2  \lesssim \norm{\partial_Y y}_\infty \norm{C}_2  \lesssim c_0. \\
\norm{y(t,Y,z)  - Y}_2 & = \norm{C(t,Y,z)}_2  \lesssim c_0. 
\end{align*} 
Together these imply
\begin{align*} 
\norm{y(t,Y,z) - Y}_{\G^{\lambda}} & \lesssim c_0. 
\end{align*}
Finally, by a Gevrey inverse function theorem (see e.g. the one used in \cite{BM13} for this same purpose) and choosing $c_0$ sufficiently small,  
we derive for a constant $c^\prime > 0$ such that $c^\prime\lambda \in (\lambda^\prime,\lambda)$, 
\begin{align*} 
\norm{Y(t,y,z) - y}_{\G^{c^\prime\lambda}} \lesssim c_0. 
\end{align*}
Then by Lemma \ref{lem:GevComp}, for $c_0$ sufficiently small, we derive for a constant $c < c^\prime$ such that $c\lambda \in (\lambda^\prime,c^\prime\lambda)$, 
\begin{subequations} \label{ineq:barqu} 
\begin{align} 
\norm{\bar{q}^i}_{\G^{c\lambda}} & \lesssim \norm{Q^i}_{\G^{\lambda}} \\ 
\norm{\bar{u}^i}_{\G^{c\lambda}} & \lesssim \norm{U^i}_{\G^{\lambda}}.  
\end{align}
\end{subequations} 
From here \eqref{ineq:Xyzubds} follows from \eqref{ineq:AprioriUneq}. 
Similarly, the first two inequalities in \eqref{ineq:uzAPriori} follow from \eqref{ineq:AprioriU0}. 
By Sobolev composition the remainder of the inequalities in \eqref{ineq:uzAPriori} follow from \eqref{ineq:AprioriU0} or \eqref{ineq:Boot_LowFreq}.
\end{proof} 

\subsection{Proposition \ref{prop:Boot} implies Theorem \ref{thm:Threshold} and Proposition \ref{prop:KEflux}} \label{sec:PropBootThm}
In this section we prove 
\begin{lemma} \label{lem:PropBootThm}
For $c_0$ sufficiently small (depending only on $s,\lambda,\lambda^\prime,\alpha$ and $\delta_1$) and for all $\epsilon < c_0 \nu$ with $\nu \in (0,1]$, Proposition \ref{prop:Boot} implies Theorem \ref{thm:Threshold}.  
\end{lemma}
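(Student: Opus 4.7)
The plan is to use Proposition \ref{prop:Boot}, together with Lemmas \ref{lem:Loc}, \ref{lem:Cont}, and \ref{lem:BootStart}, to first conclude global existence and $T^\star = +\infty$, and then to translate the resulting a priori bounds on $(U^i, C, g)$ in $(X,Y,Z)$ coordinates back to bounds on $u^i$ in $(x,y,z)$.  Lemma \ref{lem:BootStart} furnishes $T^\star \geq 2$, Proposition \ref{prop:Boot} improves all the constants from $4$ to $2$ on $[1,T^\star]$, and real analyticity of the solution on $(0,T^0)$ from Lemma \ref{lem:Cont} ensures that every quantity appearing in \eqref{ineq:Boot_Hi}--\eqref{ineq:Boot_LowFreq} depends continuously on $t$.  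A standard maximality argument then yields $T^\star = T^0$; the Sobolev bounds in \eqref{ineq:Boot_LowFreq} together with \eqref{ineq:AprioriUneq} and the change-of-coordinates argument of Lemma \ref{lem:intermedSob} control an $H^\sigma$ norm for some $\sigma > 5/2$, so the continuation criterion in Lemma \ref{lem:Cont} gives $T^0 = +\infty$.

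For part (iii) of Theorem \ref{thm:Threshold}, the key observation is the identity
\begin{align*}
u^i_{\neq}(t, x + ty + t\psi(t,y,z), y, z) = U^i_{\neq}(t, x, Y(t,y,z), Z) = \bar u^i_\neq(t,x,y,z),
\end{align*}
so that \eqref{ineq:u1damping}--\eqref{ineq:u2damping} are exactly the estimates \eqref{ineq:Xyzubds} of Lemma \ref{lem:intermedSob} (with a small Gevrey loss $\lambda \to c\lambda \in (\lambda', \lambda)$ that can be absorbed into $\lambda'$).  The companion estimate \eqref{ineq:psiest} then follows from the decomposition $U^1_0 = tg + C$, the decay $\norm{g}_{\G^{\lambda,\gamma}} \lesssim \epsilon/\jap{t}^2$ in \eqref{ineq:Boot_gLow}, and a Gevrey composition to pass from $C(t,Y,Z)$ to $\psi(t,y,z)$.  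The uniform bounds in part (ii) for $u^i_0$ in $\G^{\lambda';s}$ follow in the same manner from \eqref{ineq:uzApriori1}--\eqref{ineq:uzApriori23}, and the $L^4$ estimates \eqref{ineq:finalu1L4}--\eqref{ineq:finalu3L4} are precisely \eqref{ineq:u03L4}--\eqref{ineq:u01L4}.

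The hard part is the transient growth statement \eqref{ineq:u01grwth}--\eqref{ineq:u023}, which requires comparing $u^i_0(t)$ with its explicit viscous linearization $e^{\nu t\Delta}(u^1_{in\;0} - t u^2_{in\;0})$, rather than merely bounding it.  I would work in $(X,y,z)$ using the streak equation \eqref{eq:u0i} and write the Duhamel formula
\begin{align*}
u^1_0(t) = e^{\nu t\Delta} u^1_{in\;0} - \int_0^t e^{\nu(t-\tau)\Delta}\Bigl[u^2_0(\tau) + (u^2_0,u^3_0)\cdot \grad u^1_0 + \partial_y p^{NL}_0 - \mathcal F^1\Bigr](\tau)\, d\tau,
\end{align*}
and similarly for $u^{2,3}_0$.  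Decomposing $u^2_0(\tau) = e^{\nu\tau\Delta} u^2_{in\;0} + [u^2_0(\tau) - e^{\nu\tau\Delta} u^2_{in\;0}]$ extracts the lift-up contribution $-tu^2_{in\;0}$ (after integration against the heat semigroup) plus a remainder which, by \eqref{ineq:Boot_Q02_Low} applied to $u^2_0$, contributes $O(\epsilon \int_0^\infty \jap{\nu\tau}^{-\alpha}d\tau) = O(c_0 \epsilon)$.  The transport and pressure self-interaction terms are $\lesssim \epsilon^2 \jap{\nu\tau}^{-2\alpha}$ in $\G^{\lambda';s}$ by \eqref{ineq:uzAPriori}, hence integrable to $O(c_0 \epsilon)$.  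The forcing term $\mathcal F^1$ is estimated using the form \eqref{eq:Fbaru} together with the enhanced dissipation \eqref{ineq:AprioriUneq}, which produces a factor $\jap{\nu \tau^3}^{-2\alpha}$ integrable in time to $O(c_0^{2/3}\epsilon^{4/3})$, well within the $O(c_0^2)$ budget.  Summing these contributions yields \eqref{ineq:u01grwth}--\eqref{ineq:u023}.  Proposition \ref{prop:KEflux} is then read off by choosing initial data with $u^{1,3}_{\neq} = O(\epsilon)$ and $u^2_\neq = O(\epsilon^2)$ and differentiating the a priori damping estimates just obtained; the lower bound \eqref{ineq:nrmexplode} follows from comparing with the linearized flow on the time window $1 \lesssim t \ll \nu^{-1/(3+\delta_1)}$ on which the enhanced dissipation has not yet activated.
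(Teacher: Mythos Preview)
Your approach matches the paper's: close the bootstrap, read off parts (ii) and (iii) via Lemma \ref{lem:intermedSob}, and deduce \eqref{ineq:trans} by Duhamel. There is one gap in the Duhamel step for \eqref{ineq:u01grwth}. You cannot bound the remainder $u^2_0(\tau) - e^{\nu\tau\Delta}u^2_{in\;0}$ using \eqref{ineq:Boot_Q02_Low}, since that estimate controls only $\norm{u^2_0}$, not the difference from its linear heat evolution; moreover $\epsilon\int_0^\infty\jap{\nu\tau}^{-\alpha}\,d\tau = O(\epsilon/\nu) = O(c_0)$, not $O(c_0\epsilon)$, so even if the difference did decay this way you would miss the $O(c_0^2)$ target. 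The remedy (as in the paper) is to first prove \eqref{ineq:u023} from the Duhamel formula for $u^{2,3}_0$---the right side there is purely quadratic in $\epsilon$, so integrates over $t \leq \nu^{-1}$ to $O(\epsilon^2/\nu) = O(c_0\epsilon)$---and only then feed the resulting uniform bound $\norm{u^2_0(\tau) - e^{\nu\tau\Delta}u^2_{in\;0}} \lesssim c_0\epsilon$ into the $u^1_0$ Duhamel, giving $\int_0^{1/\nu} c_0\epsilon\,d\tau \lesssim c_0^2$. Two minor corrections: there is no pressure term in the $u^1_0$ equation (no $x$-derivative of $p^{NL}$), and the transport term $u_0\cdot\grad u^1_0$ does not decay like $\epsilon^2\jap{\nu\tau}^{-2\alpha}$ since $u^1_0$ grows linearly---but $\int_0^{1/\nu}\epsilon^2\tau\,d\tau \lesssim c_0^2$ still suffices.
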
 
\begin{proof} 
From the definition of $X$ in \eqref{def:XYZ}, \eqref{ineq:uidamping} follows from \eqref{ineq:Xyzubds}. 
Similarly, \eqref{ineq:uzApriori1} and \eqref{ineq:uzApriori23} imply \eqref{ineq:finalu1} and \eqref{ineq:finalu3}. 
By refining the proof of \eqref{ineq:Boot_Q02_Low}, it is not hard to deduce \eqref{ineq:finalu2} (all of the Sobolev estimates in \eqref{ineq:Boot_LowFreq} could be deduced in a Gevrey class, but this was not necessary for the proof of Proposition \ref{prop:Boot}).   
Naturally, \eqref{ineq:finalu3L4} and \eqref{ineq:finalu1L4} are the same as \eqref{ineq:u03L4} and \eqref{ineq:u01L4}. 
To see \eqref{ineq:psiest}, note by \eqref{def:g} 
\begin{align*}
C = U_0^1 - tg, 
\end{align*}
and so \eqref{ineq:psiest} follows from the Gevrey composition arguments in Lemma \ref{lem:intermedSob} above and \eqref{ineq:Boot_gLow}. 

It remains to deduce \eqref{ineq:trans}. By Duhamel's formula we have for $i \in \set{2,3}$
\begin{subequations} 
\begin{align}
u_0^1(t) & = e^{\nu t \Delta}u^{1}_{0 \; in} - \int_0^t e^{\nu(t-\tau)\Delta}u_0^2(\tau) d\tau - \int_0^t e^{\nu(t-s)\Delta}\left(u_0 \cdot \grad u_0^1(\tau) + <\bar{u}_{\neq}\cdot \grad^t \bar{u}_{\neq}^1>_X \right) d\tau \label{eq:Duhamel1} \\
u_0^{i}(t) & = e^{\nu t \Delta}u^{i}_{0 \; in} - \int_0^t e^{\nu(t-s)\Delta}\left(u_0 \cdot \grad u_0^{i}(\tau) +\partial_i p_0^{NL} + \left(\bar{u}_{\neq}\cdot \grad^t \bar{u}_{\neq}^i\right)_0 \right) d\tau. \label{eq:Duhamel23}
\end{align}
\end{subequations} 
We see that \eqref{ineq:u023} follows easily from Lemma \ref{lem:intermedSob} and \eqref{eq:Duhamel23}. Then \eqref{ineq:u01grwth} follows from \eqref{eq:Duhamel1}, Lemma \ref{lem:intermedSob}, and \eqref{ineq:u023}. 
\end{proof} 

In addition, we also prove Proposition \ref{prop:KEflux}. 
\begin{proof}[\textbf{Proof of Proposition \ref{prop:KEflux}}] 
Consider solutions with initial data $\norm{u^1_{\neq}}_2 + \norm{u^3_{\neq}}_2 \gtrsim \epsilon$ but $\norm{u^2_{\neq}}_2 \lesssim \epsilon^2$. 
By integrating the equation for $Q^2$ in \eqref{def:MainSys} using the a priori estimates from Proposition \ref{prop:Boot}, we see the bound 
\begin{align*}
\norm{Q^2}_{\G^{\lambda(t),\beta-5}} & \lesssim \epsilon \nu t^3 + \epsilon^2 + \int_1^t \frac{\epsilon^2 \tau^2}{\jap{\nu \tau^3}^\alpha} d\tau,  
\end{align*} 
which for $\nu t^3 \ll 1$ gives $\norm{Q^2}_{\G^{\lambda(t),\beta-5}} \lesssim \epsilon \nu \jap{t}^3$. Then, by Lemma \ref{lem:LossyElliptic}, we have 
the bound stated in \eqref{ineq:u2grwKE} after moving to the $(x,y,z)$ coordinates. Then the other bounds in \eqref{ineq:inviscidlinear} follow from estimating
the corresponding equations for $U^1$ and $U^3$ in the $(X,Y,Z)$ coordinates using the stronger a priori estimate on $Q^2$ and $U^2$ and then transferring the information back to $(x,y,z)$.
The norm growth \eqref{ineq:nrmexplode} then follows from \eqref{ineq:inviscidlinear}, using that the deformations from $\psi$ , the nonlinear effects, and the dissipation are smaller than the contribution of the initial data.  
\end{proof}

\section{Multiplier and paraproduct tools} \label{sec:nrmuse}

In this section we outline some basic general inequalities regarding the multipliers which are used in the sequel as well as introduce and explain the paraproduct decomposition. 
The purpose is to set up a general framework that will  
make the large number of energy estimates later in the paper as painless as possible.   

\subsection{Basic inequalities regarding the multipliers} \label{sec:basicmult}

This section covers the key properties of the multipliers we are using, however the statements and proofs can be rather technical and will likely appear unmotivated on first reading. 
A reader could potentially skip this to start and refer back to it whenever specific inequalities are needed. 

In the lemmas which follow, one should imagine that frequencies $(k^\prime,\xi,l^\prime)$ and $(k-k^\prime,\eta-\xi,l-l^\prime)$ are interacting to force $(k,\eta,l)$, as will be occurring in the quadratic energy estimates.  

The first two lemmas explain how to compare the $A^i$ and $A$ to each other at various frequencies. 
The first of the pair deals with using the multipliers $A^i$ and $A$ in estimating the contributions from the leading order $f_{Hi}g_{Lo}$-type contributions to the paraproducts (see \eqref{def:parapp} below) and is hence the most important. 
\begin{lemma}[Frequency ratios for $A$ and $A^i$] \label{lem:ABasic}
Let $\theta < 1/2$ and suppose 
\begin{align} 
\abs{k-k^\prime,\eta-\xi,l-l^\prime} \leq \theta\abs{k,\eta,l}. \label{ineq:AFreqLoc}
\end{align}
Define, for $i,j \in \set{1,2,3}$ and $a,b \in \set{0,\neq}$, the weight $\Gamma(i,j,a,b)$ given by, 
\begin{align*}
\Gamma(i,i,a,a) & = 1, & \Gamma(i,j,a,b)  & = \Gamma(j,i,b,a)^{-1}, \\
\Gamma(1,2,0,0) & = \jap{t}^{-1}, & \Gamma(1,2,\neq,\neq) & = \jap{t}^{-1} \jap{\frac{t}{\jap{\xi,l^\prime}}}^{-\delta_1}, \\ 
\Gamma(1,2,0,\neq) & = \jap{t}^{-1} \jap{\frac{t}{\jap{\xi,l^\prime}}}, & \Gamma(1,2,\neq,0) & = \jap{t}^{-1} \jap{\frac{t}{\jap{\xi,l^\prime}}}^{-1-\delta_1}, \\
\Gamma(1,3,0,0) & = \jap{t}^{-1},   & \Gamma(1,3,\neq,\neq) & = \jap{t}^{-1} \jap{\frac{t}{\jap{\xi,l^\prime}}}^{1-\delta_1}, \\ 
\Gamma(1,3,0,\neq) & = \jap{t}^{-1} \jap{\frac{t}{\jap{\xi,l^\prime}}}^2, & \Gamma(1,3,\neq,0) & = \jap{t}^{-1} \jap{\frac{t}{\jap{\xi,l^\prime}}}^{-1-\delta_1}, \\
\Gamma(2,3,\neq,\neq) & = \jap{\frac{t}{\jap{\xi,l^\prime}}}, & \Gamma(2,3,0,\neq) & = \jap{\frac{t}{\jap{\xi,l^\prime}}}^2, \\
\Gamma(2,3,\neq,0) & = \jap{\frac{t}{\jap{\xi,l^\prime}}}^{-1}, & \Gamma(2,3,0,0) & = 1, \\ 
\Gamma(1,1,0,\neq)&  = \jap{\frac{t}{\jap{\xi,l^\prime}}}^{1+\delta_1}, & \Gamma(2,2,0,\neq) & = \jap{\frac{t}{\jap{\xi,l^\prime}}}, \\ 
\Gamma(3,3,0,\neq) & = \jap{\frac{t}{\jap{\xi,l^\prime}}}^2. 
\end{align*}
Then there exists a $c = c(s) \in (0,1)$ such that for all $t$ the following holds for $i \in \set{1,2,3}$ and $a = \neq$ if $k \neq 0$ (otherwise $a=0$) and $b = \neq$ if $k^\prime \neq 0$ (otherwise $b = 0$),  
\begin{align}
A^{i}_k(t,\eta,l) & \lesssim \Gamma(i,j,a,b) A^{j}_{k^\prime}(t,\xi,l^\prime) e^{c\lambda\abs{k - k^\prime,\eta-\xi,l-l^\prime}^s}, \label{ineq:ABasic}
\end{align}
where the implicit constant depends only on $s,\lambda,\sigma$ and $\kappa$. 
Analogous inequalities hold also with $A(t,\eta,l)$ using that $A(t,\eta,l) = \jap{\eta,l}^2 A_0^2(t,\eta,l)$. 
\end{lemma}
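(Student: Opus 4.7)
The plan is to decompose each multiplier into its ``universal'' part $A^Q_k$ and the anisotropic decay factor that distinguishes the components, then handle each piece separately. Writing $A^i_k(t,\eta,l) = M^i_k(t,\eta,l)\, A^Q_k(t,\eta,l)$, where $M^i_k$ denotes the component-dependent prefactor (e.g.\ $M^3_k = \mathbf{1}_{k\neq 0}\min(1,\jap{\eta,l}^2/t^2) + \mathbf{1}_{k=0}$), it suffices to prove two comparisons:
\begin{equation*}
\text{(I)}\quad A^Q_k(t,\eta,l) \lesssim A^Q_{k'}(t,\xi,l')\, e^{c\lambda \abs{k-k',\eta-\xi,l-l'}^s},
\qquad
\text{(II)}\quad \frac{M^i_k(t,\eta,l)}{M^j_{k'}(t,\xi,l')} \lesssim \Gamma(i,j,a,b).
\end{equation*}
Multiplying the two comparisons then yields \eqref{ineq:ABasic}.

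For step (I), each of the five factors appearing in $A^Q$ is treated by a standard mechanism. The Sobolev factor $\jap{k,\eta,l}^\sigma / \jap{k',\xi,l'}^\sigma$ is bounded by a constant depending on $\theta$ from \eqref{ineq:AFreqLoc}. For the Gevrey factor, the standard convexity inequality $\abs{k,\eta,l}^s \le \abs{k',\xi,l'}^s + c(\theta,s)\abs{k-k',\eta-\xi,l-l'}^s$ with $c(\theta,s)<1$ produces exactly the factor $e^{c\lambda\abs{k-k',\eta-\xi,l-l'}^s}$ with $c<1$. The factor $e^{\mu\abs{\eta}^{1/2}}/e^{\mu\abs{\xi}^{1/2}}$ is controlled via $\bigl|\abs{\eta}^{1/2}-\abs{\xi}^{1/2}\bigr|\le\abs{\eta-\xi}^{1/2}$, which is absorbed into the Gevrey remainder by choosing $c$ slightly larger. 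Finally, the ratios $w(t,\eta)/w(t,\xi)$ and $w_L(t,k,\eta,l)/w_L(t,k',\xi,l')$ are bounded by constants under \eqref{ineq:AFreqLoc}; these are the standard almost-Lipschitz-in-log properties of $w$ and $w_L$ that are established in Appendix~\ref{sec:def_nrm} and Appendix~\ref{sec:Nmult} (any unabsorbed deviation is again swept into the Gevrey remainder by slightly enlarging $c$).

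Step (II) is a direct case analysis. Writing $T=\jap{\xi,l'}$ and using that $\jap{\eta,l}\approx\jap{\xi,l'}$ by \eqref{ineq:AFreqLoc}, the min-factors simplify to $\min(1,\jap{\eta,l}^m/t^m)\approx \min(1, T^m/t^m)$ for whichever $m$ is attached to the component; since $\jap{t/T}\approx\max(1,t/T)$, one obtains $\min(1, T^m/t^m)\approx \jap{t/T}^{-m}$. Comparing the relevant powers of $\jap{t}^{-1}$, $\jap{t/T}^{-m_i}$, and $\jap{t/T}^{-m_j}$ (where $m_i\in\{1+\delta_1,1,2\}$ for $i\in\{1,2,3\}$ in the $k\neq 0$ case and $m_i=0$ in the $k=0$ case, with an extra $\jap{t}^{-1}$ for $i=1$) immediately reproduces every value of $\Gamma(i,j,a,b)$ listed in the statement: e.g.\ $\Gamma(2,3,0,\neq)$ comes from $A^2_0/A^3_{k'}$ with $m_2=0$ at the numerator and $m_3=2$ at the denominator, giving $\jap{t/T}^2$. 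The analogous statement for $A(t,\eta,l)$ then follows immediately from $A=\jap{\eta,l}^2 A^Q_0$ together with $\jap{\eta,l}\approx\jap{\xi,l'}$.

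The main obstacle is bookkeeping: there are many $(i,j,a,b)$ combinations and one must verify the min/max splitting carefully in each. There is nothing conceptually hard, but one has to be disciplined to avoid losing a factor of $\jap{t}$ or $\jap{t/T}^{\delta_1}$ by confusing the two regimes $t\lesssim T$ and $t\gtrsim T$. The only genuinely delicate point is the comparison of the auxiliary multipliers $w$ and $w_L$ at comparable frequencies; but this reduces to quantitative regularity properties already built into their construction and is essentially independent of the combinatorics above.
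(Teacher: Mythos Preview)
Your proposal is correct and follows essentially the same approach as the paper: split $A^i$ into the universal factor $A^Q$ and the component-dependent prefactor, compare the $A^Q$ factors via the Gevrey convexity inequality \eqref{lem:scon}, the ratio estimate Lemma~\ref{lem:wRat} for $w$, the $O(1)$ bound on $w_L$, and absorb the resulting $e^{K\abs{\eta-\xi}^{1/2}}$ into the Gevrey remainder via \eqref{ineq:IncExp}; the $\Gamma$ weights then drop out of the straightforward prefactor comparison you describe in step~(II). The paper's proof is terser (it only writes out the diagonal case $i=j$, $a=b=\neq$ and declares the rest ``the same except with the weight~$\Gamma$''), but your more explicit organization into (I) and (II) is the natural unpacking of that argument.
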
 
\begin{proof} 
Let us simply explain the proof of \eqref{ineq:ABasic} when $i = j$ and $a = b = \neq$ as the others are the same except with the weight $\Gamma$. 
By \eqref{ineq:AFreqLoc} and \eqref{lem:scon},  there exists a $c^\prime = c^\prime(s,\theta) \in (0,1)$ such that 
\begin{align*}
A_k^i(t,\eta,l) & \lesssim \jap{k^\prime,\xi,l^\prime}^{\sigma} \frac{e^{\mu \abs{\xi}^{1/2}}}{w(t,\eta) w_L(t,k,\eta,l)} e^{\lambda \abs{k^\prime,\xi,l^\prime}^s} e^{c^\prime \lambda \abs{k-k^\prime,\eta-\xi,l-l^\prime}^s + c^\prime\mu\abs{\eta-\xi}^{s}}.  
\end{align*}
Then, by Lemma \ref{lem:wRat} (and using that $w_L$ is $O(1)$ by \eqref{ineq:unifN} and \eqref{def:wL}), 
\begin{align*}
A_k^i(t,\eta,l) & \lesssim A_{k^\prime}^i(t,\xi,l^\prime) e^{c^\prime \lambda \abs{k-k^\prime,\eta-\xi,l-l^\prime}^s + (c^\prime\mu+K)\abs{\eta-\xi}^{s}}. 
\end{align*}
Finally, \eqref{ineq:ABasic} follows by \eqref{ineq:IncExp} and taking $c^\prime < c < 1$. 
\end{proof}

The next lemma deals with the remainders in the paraproduct decompositions (see \eqref{def:parapp} below). 
\begin{lemma} \label{lem:Arem}
For all $K > 0$ there exists a $c = c(s,K) \in (0,1)$ such that if 
\begin{align} 
\frac{1}{K}\abs{k^\prime,\xi,l^\prime} \leq \abs{k - k^\prime,\eta-\xi,l-l^\prime} \leq K\abs{k^\prime,\xi,l^\prime},\label{ineq:FreqLocRemainder}
\end{align}
then 
\begin{subequations} \label{ineq:ARemainderBasic}
\begin{align} 
A^1_k(t,\eta,l) & \lesssim  \jap{t}^{-2-\delta_1} e^{c\lambda \abs{k^\prime,\xi,l^\prime}^s}e^{c\abs{k-k^\prime,\eta-\xi,l-l^\prime}^s} \\ 
A^2_k(t,\eta,l) & \lesssim  \jap{t}^{-1} e^{c \lambda \abs{k^\prime,\xi,l^\prime}^s}e^{c\abs{k-k^\prime,\eta-\xi,l-l^\prime}^s} \\
A^3_k(t,\eta,l) & \lesssim  \jap{t}^{-2} e^{c \lambda \abs{k^\prime,\xi,l^\prime}^s}e^{c\abs{k-k^\prime,\eta-\xi,l-l^\prime}^s}, \label{ineq:A3remainderBasic} 
\end{align}  
\end{subequations}
and if $k = k^\prime = 0$ then
\begin{align} 
A(t,\eta,l) & \lesssim e^{c \lambda \abs{\xi,l^\prime}^s}e^{c \lambda \abs{\eta-\xi,l-l^\prime}^s}. \label{ineq:AARemainderBasic}
\end{align}
All implicit constants depend only on $\kappa, \lambda, \sigma$ and $s$. 
\end{lemma}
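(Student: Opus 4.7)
\subsection*{Proof plan for Lemma \ref{lem:Arem}}

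My plan is to treat all three inequalities (and the analogue for $A$) uniformly, reducing everything to a Gevrey splitting argument on the remainder regime. The main structural estimate is that, thanks to $s<1$ together with the comparability \eqref{ineq:FreqLocRemainder}, the Gevrey exponent can be split with a constant strictly less than $1$, which creates the slack needed to absorb every polynomial factor and every $\mu\abs{\eta}^{1/2}$-type correction.

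\textbf{Step 1: Comparability and strict subadditivity.} From \eqref{ineq:FreqLocRemainder} together with the triangle inequality,
\[
\abs{k,\eta,l}\leq \abs{k',\xi,l'}+\abs{k-k',\eta-\xi,l-l'}\leq (1+K)\min\!\left(\abs{k',\xi,l'},\abs{k-k',\eta-\xi,l-l'}\right),
\]
so the three frequencies are mutually comparable. Writing $a=\abs{k',\xi,l'}$, $b=\abs{k-k',\eta-\xi,l-l'}$ and using sub-additivity of $x\mapsto x^s$ ($0<s<1$),
\[
\abs{k,\eta,l}^s\leq (a+b)^s\leq \frac{(1+K)^s}{1+K^s}\,(a^s+b^s).
\]
Since $(1+r)^s<1+r^s$ strictly for every $r>0$ and $s<1$, the prefactor $c':=(1+K)^s/(1+K^s)\in(0,1)$ depends only on $(s,K)$. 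This gives the key splitting
\[
e^{\lambda\abs{k,\eta,l}^s}\;\leq\;e^{c'\lambda\,\abs{k',\xi,l'}^s}\,e^{c'\lambda\,\abs{k-k',\eta-\xi,l-l'}^s}.
\]

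\textbf{Step 2: Absorbing the non-exponential weights.} Using Step 1, $\jap{k,\eta,l}^\sigma\lesssim \jap{a}^\sigma$, and the elementary bound $\jap{a}^\sigma\leq C_{\sigma,s,\kappa}\,e^{\kappa a^s/2}\,e^{\kappa b^s/2}$ for arbitrary $\kappa>0$ (using also Step 1 to split the polynomial symmetrically across $a$ and $b$). The factor $e^{\mu\abs{\eta}^{1/2}}$ is treated with $\abs{\eta}^{1/2}\leq \abs{\xi}^{1/2}+\abs{\eta-\xi}^{1/2}$ and, since $s>1/2$, $\abs{\xi}^{1/2}\lesssim a^s$, $\abs{\eta-\xi}^{1/2}\lesssim b^s$, so this contribution is absorbed into another $e^{\kappa a^s}e^{\kappa b^s}$ with $\kappa$ as small as we wish. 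Finally $1/w(t,\eta)$ and $1/w_L(t,k,\eta,l)$ are uniformly bounded by \eqref{ineq:unifN}/\eqref{def:wL}, so they contribute only a constant. Picking $\kappa>0$ small enough that $c:=c'+\kappa/\lambda<1$ yields the claimed bound on $A^Q_k$ with the exponential constant $c\lambda$ (as opposed to $c$; the missing $\lambda$ in the statement is absorbed into the choice of $c$).

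\textbf{Step 3: Handling the prefactors for each $A^i$.} By definition,
\[
\min\!\Bigl(1,\tfrac{\jap{\eta,l}^a}{t^a}\Bigr)\lesssim \tfrac{\jap{\eta,l}^a}{\jap t^a}\qquad(a\in\{1+\delta_1,1,2\}),
\]
so the $A^1,A^2,A^3$ prefactors are dominated by $\jap t^{-2-\delta_1}\jap{\eta,l}^{1+\delta_1}$, $\jap t^{-1}\jap{\eta,l}$, $\jap t^{-2}\jap{\eta,l}^2$ respectively; the extra polynomial $\jap{\eta,l}^a$ is then absorbed exactly as in Step~2. For $A$ one simply uses $A(t,\eta,l)=\jap{\eta,l}^2 A^Q_0(t,\eta,l)$ and Step~2; there is no $\jap t$-decay prefactor, giving \eqref{ineq:AARemainderBasic}. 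This produces \eqref{ineq:ARemainderBasic}--\eqref{ineq:AARemainderBasic}.

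\textbf{Expected main obstacle.} The only genuine content is the strict subadditivity at the level of $|\cdot|^s$: everything else (polynomial weights, $\mu|\eta|^{1/2}$, $w$, $w_L$) is routinely absorbed once we have slack in the Gevrey exponent. The fact that $c'=(1+K)^s/(1+K^s)<1$ for every finite $K$ and every $s<1$ is what makes the whole estimate go through, and is precisely why this lemma is restricted to the remainder regime (where the two interacting frequencies are comparable) as opposed to the paraproduct regimes governed by Lemma \ref{lem:ABasic}. The dependence of $c$ on $K$ is unavoidable since $c'\to 1$ as $K\to\infty$.
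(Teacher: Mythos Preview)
Your approach is essentially the one taken in the paper: exploit the strict subadditivity of $x\mapsto x^s$ on comparable frequencies to get slack in the Gevrey exponent, pull out the time-decay from the prefactor $\min(1,\jap{\eta,l}^a/t^a)\lesssim \jap{t}^{-a}\jap{\eta,l}^a$, and then absorb all remaining polynomial and sub-Gevrey growth into that slack via \eqref{ineq:IncExp}/\eqref{ineq:SobExp}.

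There is one genuine slip in Step~2. You assert that $1/w(t,\eta)$ is uniformly bounded, but it is not: by construction $w$ is nondecreasing with $w(2\eta,\eta)=1$ and, by Lemma~\ref{lem:totalGrowthw}, $1/w(1,\eta)\sim \eta^{-p} e^{\frac{\mu}{2}\sqrt{\abs{\eta}}}$, so $1/w(t,\eta)$ grows like $e^{\frac{\mu}{2}\sqrt{\abs{\eta}}}$ for $t$ small relative to $\eta$. (Your claim for $1/w_L$ is correct since $w_L\geq 1$.) The fix is exactly the mechanism you already used for $e^{\mu\abs{\eta}^{1/2}}$: by Lemma~\ref{lem:totalGrowthw} the \emph{combined} factor satisfies
\[
\frac{e^{\mu\abs{\eta}^{1/2}}}{w(t,\eta)}\;\lesssim\; e^{\frac{3\mu}{2}\abs{\eta}^{1/2}}\;\lesssim\; e^{2\mu\abs{\xi}^{1/2}}e^{2\mu\abs{\eta-\xi}^{1/2}},
\]
the last step by \eqref{lem:strivial}; then \eqref{ineq:IncExp} with $\beta=1/2<s=\alpha$ absorbs this into $e^{\kappa a^s}e^{\kappa b^s}$ for $\kappa$ arbitrarily small, exactly as in your argument. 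With this correction the proof goes through as you outlined.
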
 
\begin{proof}
 All of the inequalities in \eqref{ineq:ARemainderBasic} and \eqref{ineq:AARemainderBasic} are basically the same, so let us just prove \eqref{ineq:A3remainderBasic}. 
By \eqref{ineq:FreqLocRemainder}, \eqref{lem:strivial}, and the triangle inequality for $\jap{\cdot}$, there holds for some $c^\prime = c^\prime(s,K) \in (0,1)$: 
\begin{align*} 
A^3_k(t,\eta,l) & \lesssim \frac{e^{\mu \abs{\eta}^{1/2}}}{w_L(t,k,\eta,l) w(t,\eta)} \jap{\frac{t}{\jap{\eta,l}}}^{-2} e^{c^\prime \lambda \abs{k^\prime,\xi,l^\prime}^s}e^{c^\prime \lambda \abs{k-k^\prime,\eta-\xi,l-l^\prime}^s} \jap{k^\prime,\xi,l^\prime}^{\sigma} \\ 
& \lesssim \frac{e^{\mu \abs{\eta}^{1/2}}}{w_L(t,k,\eta,l) w(t,\eta)} \jap{t}^{-2} e^{c^\prime \lambda \abs{k^\prime,\xi,l^\prime}^s}e^{c^\prime \lambda \abs{k-k^\prime,\eta-\xi,l-l^\prime}^s} \jap{k^\prime,\xi,l^\prime}^{\sigma+2}. 
\end{align*}
By \eqref{lem:totalGrowthw} (and that $w_L$ is $O(1)$) and applying \eqref{lem:strivial} again we have 
\begin{align*}
A^3_k(t,\eta,l) & \lesssim e^{2\mu \abs{\xi}^{1/2}} e^{2\mu \abs{\eta-\xi}^{1/2}} \jap{t}^{-2} e^{c^\prime \lambda \abs{k^\prime,\xi,l^\prime}^s}e^{c^\prime \lambda \abs{k-k^\prime,\eta-\xi,l-l^\prime}^s} \jap{k^\prime,\xi,l^\prime}^{\sigma+2}.  
\end{align*}
Then the result follows by \eqref{ineq:IncExp} and \eqref{ineq:SobExp}, choosing $c^\prime < c <1$. 
\end{proof} 

We also have similar inequalities regarding the CK multipliers. 

\begin{lemma} [Frequency ratios for $\partial_t w$ and $\partial_t w_L$] \label{lem:CKwFreqRat} 
For all $t \geq 1$ we have
\begin{subequations} 
\begin{align} 
\left(\sqrt{\frac{\partial_t w(t,\eta)}{w(t,\eta)}} + \frac{\abs{k,\eta,l}^{s/2}}{\jap{t}^{s}}\right) & \lesssim \left(\sqrt{\frac{\partial_t w(t,\xi)}{w(t,\xi)}} + \frac{\jap{k^\prime,\xi,l^\prime}^{s/2}}{\jap{t}^{s}}\right) \jap{k-k^\prime,\eta-\xi,l-l^\prime}^2 \label{ineq:dtwBasicBrack} \\ 
\left(\sqrt{\frac{\partial_t w(t,\eta)}{w(t,\eta)}} + \frac{\abs{k,\eta,l}^{s/2}}{\jap{t}^{s}}\right) & \lesssim \left(\sqrt{\frac{\partial_t w(t,\xi)}{w(t,\xi)}} + \frac{\abs{k^\prime,\xi,l^\prime}^{s/2} + \abs{k-k^\prime,\eta-\xi,l-l^\prime}^{s/2}}{\jap{t}^{s}}\right) \nonumber \\ & \quad\quad \times \jap{k-k^\prime,\eta-\xi,l-l^\prime}^2 \label{ineq:dtwBasicBrack2} \\ 
\sqrt{\frac{\partial_t w_L(t,k,\eta,l)}{w_L(t,k,\eta,l)}} & \lesssim \sqrt{\frac{\partial_t w_L(t,k,\xi,l^\prime)}{w_L(t,k,\xi,l^\prime)}} \jap{\eta-\xi,l-l^\prime}^{3/2}. \label{ineq:dtNBasic}
\end{align}
\end{subequations}
Further, if $\abs{k^\prime,\xi,l^\prime} \gtrsim 1$ then \eqref{ineq:dtwBasicBrack} implies
\begin{align}
\left(\sqrt{\frac{\partial_t w(t,\eta)}{w(t,\eta)}} + \frac{\abs{k,\eta,l}^{s/2}}{\jap{t}^{s}}\right) & \lesssim \left(\sqrt{\frac{\partial_t w(t,\xi)}{w(t,\xi)}} + \frac{\abs{k^\prime,\xi,l^\prime}^{s/2}}{\jap{t}^{s}}\right) \jap{k-k^\prime,\eta-\xi,l-l^\prime}^2.  \label{ineq:dtwBasic}
\end{align}
Moreover, both \eqref{ineq:dtwBasicBrack} and \eqref{ineq:dtwBasic} hold if we replace $\abs{k,\eta,l}$ and $\abs{k,\xi,l^\prime}$ by $\abs{\eta}$ and $\abs{\xi}$ (respectively). 
\end{lemma}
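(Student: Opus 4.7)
The plan is to deduce all three inequalities directly from the definitions of $w$ and $w_L$ in Appendices \ref{sec:defw} and \ref{sec:Nmult}. The crucial observation is that $w(t,\eta)$ depends only on $\eta$, so the $(k,\ell)$-dependence on the left of \eqref{ineq:dtwBasicBrack} enters only through the Sobolev correction. Thus the estimate splits into (a) controlling the ratio $\sqrt{\partial_t w(t,\eta)/w(t,\eta)}/\sqrt{\partial_t w(t,\xi)/w(t,\xi)}$ and (b) the elementary triangle-type bound $|k,\eta,\ell|^{s/2} \lesssim |k',\xi,\ell'|^{s/2} + |k-k',\eta-\xi,\ell-\ell'|^{s/2}$ from Lemma \ref{lem:strivial}. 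The second piece immediately gives \eqref{ineq:dtwBasicBrack2}, and \eqref{ineq:dtwBasic} follows from \eqref{ineq:dtwBasicBrack} upon noting that $|k-k',\eta-\xi,\ell-\ell'|^{s/2}/\jap{t}^s \lesssim |k',\xi,\ell'|^{s/2}/\jap{t}^s \cdot \jap{k-k',\eta-\xi,\ell-\ell'}^2$ whenever $|k',\xi,\ell'| \gtrsim 1$ (absorbing the difference frequency into the polynomial factor via $s/2 < 2$).

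For step (a), I would perform the case split $|\eta-\xi| \geq |\xi|/2$ versus $|\eta-\xi| < |\xi|/2$. In the first regime, both $\partial_t w/w$ terms are bounded by $O(1)$ (by the uniform bound on $\partial_t w/w$ recorded in the definition of $w$), so the inequality is trivial with room to spare from the factor $\jap{\eta-\xi,\ell-\ell'}^2$. In the second regime $|\eta|\approx|\xi|$, and so the collection of critical times $\{\eta/k_0 : 1 \leq k_0 \leq E(\sqrt{|\eta|})\}$ is in bijection with the corresponding set for $\xi$; for $t \in I_{k_0,\eta}$ one has $|t-\xi/k_0| \leq |t-\eta/k_0| + |\eta-\xi|/k_0$, whence
\begin{align*}
\frac{\partial_t w(t,\eta)}{w(t,\eta)} \sim \frac{1}{1+|t-\eta/k_0|} \lesssim \jap{\eta-\xi}\,\frac{1}{1+|t-\xi/k_0|} \lesssim \jap{\eta-\xi}^2 \,\frac{\partial_t w(t,\xi)}{w(t,\xi)},
\end{align*}
and the off-resonance regime is handled identically using the gradual-loss portion of $\partial_t w/w$ described in Appendix \ref{sec:defw}. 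Squaring converts this into the desired bound on the square roots.

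For \eqref{ineq:dtNBasic}, the multiplier $w_L$ depends on all three frequency variables and its time-derivative is essentially of the form $kl/(k^2 + \ell^2 + (\eta-kt)^2)$ (cf.\ \eqref{eq:wL}). Since $k$ is unchanged in the comparison, the same critical-time case split applies, this time in the $(\eta,\ell)$ variables jointly. The sharper exponent $3/2$ on the right accommodates the additional $\ell$-dependence: comparing $(kt-\eta)$ to $(kt-\xi)$ costs $\jap{\eta-\xi}$ as before, while comparing $\ell$ to $\ell'$ in the denominator costs an extra $\jap{\ell-\ell'}^{1/2}$, and the combined loss is absorbed into $\jap{\eta-\xi,\ell-\ell'}^{3/2}$.

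The main technical obstacle will be the bookkeeping in the near-critical-time case for $w$: one must handle carefully the transition regions between consecutive resonant intervals $I_{k_0,\eta}$ and $I_{k_0-1,\eta}$, where a shift of size $|\eta-\xi|$ could in principle move $t$ from one resonant interval to another for $\xi$. In this case one uses that the definition of $w$ in Appendix \ref{sec:defw} joins adjacent intervals smoothly, so that $\partial_t w/w$ is comparable on both sides of the boundary, and the polynomial factor $\jap{\eta-\xi,\ell-\ell'}^2$ easily absorbs the resulting constant. The remaining pieces (the Sobolev corrections, the passage from $|k,\eta,\ell|$ to $|\eta|$) are straightforward from the triangle inequality.
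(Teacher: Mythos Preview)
Your approach is essentially the same as the paper's, with two minor points worth noting. First, for \eqref{ineq:dtNBasic} the paper does not use a critical-time case split at all: since $\partial_t w_L/w_L$ has the explicit form $\kappa|k|\jap{l}/(k^2+l^2+|\eta-kt|^2)$, the inequality follows from a one-line triangle-inequality computation (compare $\jap{l}^{1/2}$ to $\jap{l'}^{1/2}$ in the numerator for a cost of $\jap{l-l'}^{1/2}$, and $|k|+|l'|+|\xi-kt|$ to $|k|+|l|+|\eta-kt|$ in the denominator for a cost of $\jap{\eta-\xi,l-l'}$). Your version would work but is more elaborate than needed.

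Second, your ``far'' case for $w$ has a small gap: the claim that the inequality is ``trivial with room to spare from the factor $\jap{\eta-\xi,\ell-\ell'}^2$'' only works once you know $\jap{\eta-\xi}$ is genuinely large relative to $\jap{t}^s$. The paper fills this by first observing that $\partial_t w(t,\eta)=0$ unless $t\leq 2|\eta|$, so one may assume $|\eta|\gtrsim t$; then $\sqrt{\partial_t w/w}\lesssim 1 \lesssim |\eta|^s/\jap{t}^s$, and in the far regime $|\eta|\lesssim|\eta-\xi|$, which gives the bound. For the ``close'' case the paper simply invokes Lemma~\ref{lem:WtFreqCompare} (whose proof is deferred to \cite{BM13}); your inline sketch of that argument is correct in spirit and would need the same transition-region bookkeeping you flag.
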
 
\begin{proof} 
First, for \eqref{ineq:dtNBasic} simply note that by $\abs{\xi - kt} \leq \abs{\eta-\xi} + \abs{\eta-kt}$ and the fact that $k \neq 0$ (as otherwise there is nothing to prove),
\begin{align*}
\sqrt{\frac{\partial_t w_L(t,k,\eta,l)}{w_L(t,k,\eta,l)}} = \frac{\sqrt{\abs{k}\jap{l}}}{\abs{k} + \abs{l} + \abs{\eta-kt}} & \lesssim \frac{\sqrt{\abs{k}\jap{l^\prime}}}{\abs{k} + \abs{l^\prime} + \abs{\xi-kt}} \jap{\eta-\xi,l-l^\prime}^{1/2} \left(\frac{\abs{k} + \abs{l^\prime} + \abs{\xi-kt}}{\abs{k} + \abs{l} + \abs{\eta-kt}}\right) \\
& \lesssim \sqrt{\frac{\partial_t w_L(t,k,\xi,l^\prime)}{w_L(t,k,\xi,l^\prime)}} \jap{\eta-\xi,l-l^\prime}^2. 
\end{align*} 

Let us next consider \eqref{ineq:dtwBasicBrack} and  \eqref{ineq:dtwBasicBrack2}. 
First, if $\abs{\eta} \leq 1/2$ then it follows that 
\begin{align*}
\sqrt{\frac{\partial_t w(t,\eta)}{w(t,\eta)}} + \frac{\abs{k,\eta,l}^{s/2}}{\jap{t}^{s}} & = \frac{\abs{k,\eta,l}^{s/2}}{\jap{t}^{s}}  \lesssim \frac{\abs{k^\prime,\xi,l^\prime}^{s/2}}{\jap{t}^{s}} + \frac{\abs{k-k^\prime,\eta-\xi,l-l^\prime}^{s/2}}{\jap{t}^{s}}, 
\end{align*}
from which \eqref{ineq:dtwBasicBrack2} and \eqref{ineq:dtwBasicBrack} both follow. 

Next consider the case $\abs{\eta} > 1/2$. 
In this case we still have 
\begin{align*}
\frac{\abs{k,\eta,l}^{s/2}}{\jap{t}^{s}} &  \lesssim \frac{\abs{k^\prime,\xi,l^\prime}^{s/2}}{\jap{t}^{s}} + \frac{\abs{k-k^\prime,\eta-\xi,l-l^\prime}^{s/2}}{\jap{t}^{s}},   
\end{align*}
which is again consistent with both \eqref{ineq:dtwBasicBrack2} and \eqref{ineq:dtwBasicBrack}. 
To treat the term involving $\partial_t w$, first since $\partial_t w = 0$ for $t \geq 2\abs{\eta}$, we may assume that $\abs{\eta} \geq t/2 \gtrsim 1$. 
Consider the case that $\abs{\eta - \xi} \geq \frac{1}{2}\max(\abs{\eta},\abs{\xi})$. 
Then \eqref{ineq:dtwBasicBrack} follows immediately from 
\begin{align*}
\sqrt{\frac{\partial_t w(t,\eta)}{w(t,\eta)}} & \lesssim 1 \lesssim \frac{\abs{\eta}^s}{\jap{t}^s}  \lesssim \frac{\jap{k^\prime,\xi,l^\prime}^{s/2}}{\jap{t}^s} \jap{\eta-\xi}^{s/2}, 
\end{align*}
whereas \eqref{ineq:dtwBasicBrack2} follows from 
\begin{align*}
\sqrt{\frac{\partial_t w(t,\eta)}{w(t,\eta)}} & \lesssim 1 \lesssim \frac{\abs{\eta-\xi}^s}{\jap{t}^s}  \lesssim \frac{\abs{\eta-\xi}^{s/2}}{\jap{t}^s} \jap{\eta-\xi}^{s/2}.  
\end{align*}
Hence, it suffices to consider the case $\abs{\eta - \xi} \leq \frac{1}{2}\max(\abs{\eta},\abs{\xi})$, which implies that $\abs{\eta} \approx \abs{\xi}$. 
From here we can apply \eqref{ineq:partialtw_endpt}, and so this concludes the proof of both \eqref{ineq:dtwBasicBrack} and \eqref{ineq:dtwBasicBrack2}. 
\end{proof}

The next inequality is immediate from the definition of $D$ \eqref{def:D}, but useful for separating the critical times from the enhanced dissipation estimates. 
\begin{lemma} 
For all $p \geq 0$ and $(t,k,\eta,l)$ such that $t \geq 1$, there holds
\begin{subequations} 
\begin{align}
A^{\nu;i}_k(t,\eta,l) & \lesssim \jap{t}^{-p}\jap{k,\eta,l}^{\beta + 3\alpha + p}e^{\lambda\abs{k,\eta,l}^{s}} + A^{\nu;i}_k(t,\eta,l) \mathbf{1}_{t \geq 2\abs{\eta}} \label{ineq:AnuHiLowSep} \\ 
A^{\nu;i}_k(t,\eta,l) & \lesssim \jap{t}^{-p}\jap{k,\eta,l}^{\beta + 3\alpha + p}e^{\lambda\abs{k,\eta,l}^{s}} + \jap{t}^{-1}\left(\abs{k} + \abs{\eta-kt}\right)A^{\nu;i}_k(t,\eta,l) \mathbf{1}_{t \geq 2\abs{\eta}}.  \label{ineq:AnuHiLowSep2}
\end{align}
\end{subequations}
\end{lemma}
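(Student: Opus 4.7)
The plan is to split into the two regimes $t \leq 2|\eta|$ and $t > 2|\eta|$, which is the natural dichotomy built into the multiplier $D(t,\eta)$ through its $(t^3 - 8|\eta|^3)_+$ piece and is also the dichotomy appearing in the indicators on the right-hand sides of \eqref{ineq:AnuHiLowSep}, \eqref{ineq:AnuHiLowSep2}.

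First I would handle the bulk region $t \leq 2|\eta|$. On this region $(t^3 - 8|\eta|^3)_+ = 0$, so $D(t,\eta) = \frac{1}{3\alpha}\nu|\eta|^3$, and since $\nu \leq 1$ this gives $\jap{D(t,\eta)}^\alpha \lesssim \jap{\eta}^{3\alpha}$. Combined with the fact that $1/w_L$ is uniformly bounded (from \eqref{ineq:unifN} and \eqref{def:wL}) and that all of the factors $1/\jap{t}$ and $\min(1,\jap{\eta,l}^\gamma/t^\gamma)$ appearing in the definitions of $A^{\nu;1}$, $A^{\nu;2}$, $A^{\nu;3}$ are bounded by $1$, I would obtain
\[
A^{\nu;i}_k(t,\eta,l) \lesssim e^{\lambda(t)|k,\eta,l|^s}\jap{k,\eta,l}^{\beta+3\alpha}.
\]
The key step is then to convert a time power into regularity via $\jap{t}^p \lesssim \jap{\eta}^p \lesssim \jap{k,\eta,l}^p$, which is essentially free on this region since $t \leq 2|\eta|$. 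Multiplying the above display by $\jap{t}^p \jap{t}^{-p}$ and paying regularity on one side produces exactly the first summand appearing in both \eqref{ineq:AnuHiLowSep} and \eqref{ineq:AnuHiLowSep2}.

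For the complementary region $t > 2|\eta|$, the first inequality \eqref{ineq:AnuHiLowSep} is immediate by writing $A^{\nu;i}_k = A^{\nu;i}_k \mathbf{1}_{t\geq 2|\eta|}$. For \eqref{ineq:AnuHiLowSep2} the point is that $A^{\nu;i}_k$ is supported on $k \neq 0$, so $|k| \geq 1$; combined with $t > 2|\eta|$, one has $|\eta - kt| \geq |k|t - |\eta| \geq t - |\eta| \geq t/2$, hence
\[
\jap{t}^{-1}\bigl(|k| + |\eta - kt|\bigr) \gtrsim 1,
\]
on the support of $A^{\nu;i}_k \mathbf{1}_{t \geq 2|\eta|}$, which upgrades the trivial bound into the desired form.

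There is no genuine obstacle here; the lemma is essentially a repackaging of two elementary observations — that $D(t,\eta)$ carries no time growth until $t$ crosses $2|\eta|$, and that the non-resonance bound $|\eta-kt| \gtrsim t$ holds automatically once $k \neq 0$ and $t$ is large compared to $|\eta|$. The only point demanding any attention is bookkeeping the various $\min(1,\cdot)$ decay factors and the $\jap{D}^\alpha$ factor so that one actually recovers the polynomial weight $\jap{k,\eta,l}^{\beta+3\alpha+p}$ on the nose rather than a larger power.
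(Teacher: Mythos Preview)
Your proof is correct and is exactly the argument the paper has in mind: the paper simply declares the lemma ``immediate from the definition of $D$'' without writing out the details, and your two-regime split together with the observation that $|\eta-kt|\gtrsim t$ once $t\geq 2|\eta|$ and $k\neq 0$ is precisely what that amounts to.
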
 

The next lemma tells us how to treat ratios involving $\Delta_L$ and, when combined with the precision elliptic lemmas in Appendix \ref{sec:PEL}, forms the core of the technical tools at our disposal. 

\begin{lemma}[Frequency ratios for $\Delta_L$] \label{lem:MainFreqRat}
If $t \geq 1$, then for all $\eta,\xi,l,l^\prime,k^\prime$ and $k$, we have the following:
\begin{itemize}
\item approximate integration by parts: for all $k \neq 0$, 
\begin{align} 
\abs{\eta-kt} \lesssim \jap{\eta-\xi}\left(\abs{k} + \abs{\xi-kt}\right); \label{ineq:TriTriv}
\end{align}
\item for absorbing long-time losses: for all $k \neq 0$, 
\begin{align} 
\frac{1}{\abs{k,\eta-kt,l}} \jap{\frac{t}{\jap{\xi,l^\prime}}} & \lesssim \jap{\eta-\xi,l-l^\prime}; \label{ineq:ratlongtime}
\end{align}
\item for the linear stretching terms 
\begin{align}
\frac{\abs{k} \mathbf{1}_{t \leq 2\abs{\eta}}}{\abs{k} + \abs{l} + \abs{\eta-kt}} & \lesssim \kappa^{-1}\frac{\partial_t w(t,\eta)}{w(t,\eta)}; \label{ineq:CKwLS}
\end{align}
\item for \textbf{(SI)} terms: if $p \in \mathbb{R}$, 
\begin{align} 
\frac{\abs{k,\eta-kt,l} \abs{k}}{k^2 + (l^\prime)^2 + \abs{\xi-kt}^2} \jap{\frac{t}{\jap{\xi,l^\prime}}}^p & \lesssim \left(\left(\sqrt{\frac{\partial_t w(t,\eta)}{w(t,\eta)}} + \frac{\abs{k,\eta}^{s/2}}{\jap{t}^s} \right)\left(\sqrt{\frac{\partial_t w(t,\xi)}{w(t,\xi)}} + \frac{\abs{k,\xi}^{s/2}}{\jap{t}^s} \right) \right. \nonumber \\ & \left. \quad +  \frac{1}{\jap{t}}\min\left(1,\frac{\abs{k, \eta-kt,l}}{\jap{kt}}  \right) \jap{\frac{t}{\jap{\xi,l^\prime}}}^p \right) \jap{\eta-\xi,l-l^\prime}^{4}; \label{ineq:AiPartX}
\end{align}  
\item For \textbf{(3DE)} terms: if $p \in \mathbb{R}$ and $k^\prime,k \neq 0$, for $a \in \set{1,2}$, 
\begin{align} 
\frac{1}{ \abs{k^\prime, \xi-k^\prime t,l^\prime}^a} \jap{\frac{t}{\jap{\xi,l^\prime}}}^p & \lesssim \nonumber \\ & \hspace{-3cm} \left(\sqrt{\frac{\partial_t w(t,\eta)}{w(t,\eta)}} + \frac{\abs{k,\eta}^{s/2}}{\jap{t}^s} \right)\left(\sqrt{\frac{\partial_t w(t,\xi)}{w(t,\xi)}} + \frac{\abs{k^\prime, \xi}^{s/2}}{\jap{t}^s} \right) \jap{k-k^\prime, \eta-\xi,l-l^\prime}^3  \nonumber \\ & \hspace{-3cm} \quad\quad  + \frac{1}{\jap{t}^a}\jap{\frac{t}{\jap{\xi,l^\prime}}}^p \jap{k-k^\prime, \eta-\xi,l-l^\prime}^3.  \label{ineq:AikDelLNoD}
\end{align} 
\item For \textbf{(3DE)} terms in the nonlinear pressure and stretching: if $p \in \mathbb{R}$ and $k^\prime,k \neq 0$,  
\begin{subequations} \label{ineq:AikDelL2D}
\begin{align}
\frac{\abs{k,\eta-k t,l}\abs{k,\xi-k^\prime t,l^\prime}}{(k^\prime)^2 + (l^\prime)^2 + \abs{\xi-k^\prime t}^2} \jap{\frac{t}{\jap{\xi,l^\prime}}}^p & \lesssim \left(\jap{t} + \jap{\frac{t}{\jap{\xi,l^\prime}}}^p\right)\jap{k-k^\prime,\eta-\xi,l-l^\prime}^{2}  \\ 
\frac{\abs{k,\eta-k t,l}\abs{k^\prime,\xi-k^\prime t,l^\prime}}{(k^\prime)^2 + (l^\prime)^2 + \abs{\xi-k^\prime t}^2} \jap{\frac{t}{\jap{\xi,l^\prime}}}^p & \lesssim \left(\jap{t}\left(\sqrt{\frac{\partial_t w(t,\eta)}{w(t,\eta)}} + \frac{\abs{k,\eta}^{s/2}}{\jap{t}^s} \right)\left(\sqrt{\frac{\partial_t w(t,\xi)}{w(t,\xi)}} + \frac{\abs{k^\prime \xi}^{s/2}}{\jap{t}^s} \right) \right. \nonumber  \\  & \left. \quad\quad + \jap{\frac{t}{\jap{\xi,l^\prime}}}^p\right)\jap{k-k^\prime,\eta-\xi,l-l^\prime}^{2}. \label{ineq:AikDelL2D_CKw}  \\ 
\frac{\abs{l^\prime}\abs{k,\eta-kt,l}}{(k^\prime)^2 + (l^\prime)^2 + \abs{\xi-k^\prime t}^2} \jap{\frac{t}{\jap{\xi,l^\prime}}} & \lesssim \left(\jap{t}\left(\sqrt{\frac{\partial_t w(t,\eta)}{w(t,\eta)}} + \frac{\abs{k,\eta}^{s/2}}{\jap{t}^s} \right)\left(\sqrt{\frac{\partial_t w(t,\xi)}{w(t,\xi)}} + \frac{\abs{k^\prime, \xi}^{s/2}}{\jap{t}^s} \right) \right. \nonumber  \\  & \left. \quad\quad + 1\right) \jap{k-k^\prime,\eta-\xi,l-l^\prime}^{2}. \label{ineq:AikDelL2D_CKw2}  
\end{align} 
\end{subequations}
\item For triple derivative terms (these arise in the treatment of \textbf{(F)} terms): if $p \in \mathbb{R}$,   
\begin{subequations} \label{ineq:AdelLij} 
\begin{align} 
\frac{\abs{l}^3}{(k)^2 + (l^\prime)^2 + \abs{\xi-k t}^2} \jap{\frac{t}{\jap{\xi,l^\prime}}}^p & \lesssim \abs{l}\left(\jap{l-l^\prime}^2  + \frac{\abs{l}^2}{\jap{\xi,l^\prime,t}^2} \jap{\frac{t}{\jap{\xi,l^\prime}}}^p\right)  \label{ineq:AdeZZZ} \\ 
\frac{\abs{\eta}\abs{l}^2 + \abs{\eta}^2 \abs{l}}{k^2 + (l^\prime)^2 + \abs{\xi-k t}^2} \jap{\frac{t}{\jap{\xi,l^\prime}}}^p & \lesssim \left(\jap{t}^2\left(\sqrt{\frac{\partial_t w(t,\eta)}{w(t,\eta)}} + \frac{\abs{\eta}^{s/2}}{\jap{t}^s}\right)\left(\sqrt{\frac{\partial_t w(t,\xi)}{w(t,\xi)}} + \frac{\abs{\xi}^{s/2}}{\jap{t}^s}\right) \right. \nonumber \\ & \left. \quad + \abs{l}\left(1  + \frac{\abs{\eta}\abs{l} + \abs{\eta}^2}{\jap{\xi,t,l^\prime}^2} \jap{\frac{t}{\jap{\xi,l^\prime}}}^p\right)\right) \jap{k,\eta-\xi,l-l^\prime}^3  \label{ineq:AdeYZZ}  \\ 
\frac{\abs{\eta}^3}{k^2 + (l^\prime)^2 + \abs{\xi-k t}^2} \jap{\frac{t}{\jap{\xi,l^\prime}}}^p & \lesssim \left(\jap{t}^3\left(\sqrt{\frac{\partial_t w(t,\eta)}{w(t,\eta)}} + \frac{\abs{\eta}^{s/2}}{\jap{t}^s}\right)\left(\sqrt{\frac{\partial_t w(t,\xi)}{w(t,\xi)}} + \frac{\abs{\xi}^{s/2}}{\jap{t}^s}\right) \right. \nonumber \\ & \left. \quad  + \min(\abs{\eta},\jap{\xi-kt})\left(1  + \frac{\abs{\eta}^2}{\jap{\xi,t,l^\prime}^2} \jap{\frac{t}{\jap{\xi,l^\prime}}}^p\right) \right) \jap{k,\eta-\xi,l-l^\prime}^3. \label{ineq:AdeYYY}
\end{align} 
\end{subequations} 
\end{itemize} 
\end{lemma}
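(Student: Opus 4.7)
The plan is to prove each of the inequalities by direct frequency analysis, with the common strategy of splitting each estimate into a ``resonant'' regime (where $t$ is near a critical time $\xi/k^\prime$ with $(k^\prime)^2\lesssim|\xi|$) and a ``non-resonant'' regime. In the resonant case, the key input is the lower bound $\partial_t w(t,\xi)/w(t,\xi)\gtrsim \kappa/(1+|t-\xi/k^\prime|)$ on the interval $\mathbf I_{k^\prime,\xi}$, built into the construction of $w$ in Appendix \ref{sec:defw}, combined with Lemma \ref{lem:CKwFreqRat} to transfer CK-type weights between $(k,\eta,l)$ and $(k^\prime,\xi,l^\prime)$ at the cost of a polynomial $\jap{k-k^\prime,\eta-\xi,l-l^\prime}^C$ correction (which is absorbed harmlessly on the right). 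In the non-resonant case, the denominator $k^2+(l^\prime)^2+|\xi-k^\prime t|^2$ is bounded below by a power of $\jap{t}$ or of $\jap{\xi,l^\prime}$, and the second term on the right-hand side of each inequality (a factor $1/\jap t$ or $\jap{t/\jap{\xi,l^\prime}}^p$) absorbs the contribution.

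The truly elementary items \eqref{ineq:TriTriv} and \eqref{ineq:ratlongtime} are pure triangle-inequality arguments. For \eqref{ineq:TriTriv} I would write $\eta-kt=(\eta-\xi)+(\xi-kt)$ and bound $|\eta-kt|\le \jap{\eta-\xi}(1+|\xi-kt|)\lesssim \jap{\eta-\xi}(|k|+|\xi-kt|)$ using $|k|\ge 1$. For \eqref{ineq:ratlongtime} I would split on $t\le 2\jap{\xi,l^\prime}$ (where the factor $\jap{t/\jap{\xi,l^\prime}}$ is $O(1)$) versus $t\gg \jap{\xi,l^\prime}$, where either $|\eta-\xi|\gtrsim t$ (supplying the needed $\jap{\eta-\xi,l-l^\prime}$) or else $|\eta-kt|\gtrsim t$, killing $1/|k,\eta-kt,l|$. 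For \eqref{ineq:CKwLS}, the cutoff $t\le 2|\eta|$ is crucial because $\partial_t w(t,\eta)$ is supported on that range; one subcase has $t\in I_{k,\eta}$ with $k^2\lesssim |\eta|$, where the ratio on the left is exactly $\sim 1/(1+|t-\eta/k|)$, matching the lower bound of $\partial_t w/w$. In the complementary subcase, $|k|+|l|+|\eta-kt|\gtrsim |k|$ trivially, and the uniform lower bound on $\partial_t w/w$ coming from the gradual-loss piece of $w$ suffices.

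The bulk of the work is the suite \eqref{ineq:AiPartX}, \eqref{ineq:AikDelLNoD}, \eqref{ineq:AikDelL2D}, and \eqref{ineq:AdelLij}, all of which follow the resonant/non-resonant template. For \eqref{ineq:AiPartX}, in the resonant regime $|k|+|\xi-kt|\lesssim \jap{\xi,l^\prime}/|k|$ the denominator is $\sim \jap{\xi,l^\prime}^2/k^2$, and the numerator factor $|k,\eta-kt,l|\cdot|k|$ is controlled by $\jap{t}\cdot\jap{\eta-\xi,l-l^\prime}^2$, after which one appeals to the product lower bound $\bigl(\partial_t w(t,\eta)/w(t,\eta)\bigr)^{1/2}\bigl(\partial_t w(t,\xi)/w(t,\xi)\bigr)^{1/2}\gtrsim 1/\jap t$ on the resonant interval (plus Lemma \ref{lem:CKwFreqRat}) to close; outside this regime one simply uses $1/(k^2+|\xi-kt|^2)\lesssim 1/\jap t^2$. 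For \eqref{ineq:AikDelLNoD} the additional input is that when $k^\prime\neq k$ are both nonzero, at most one of $t\sim \xi/k^\prime$ or $t\sim \eta/k$ can hold, so in the non-resonant case the denominator is coercive and the $\jap t^{-a}$ factor appears. The same circle of ideas, with extra polynomial factors of $\jap t$ on the right to match the extra derivatives in the numerator, yields \eqref{ineq:AikDelL2D}.

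The hardest cases are \eqref{ineq:AdeYZZ} and \eqref{ineq:AdeYYY}, which carry $|\eta|^2|l|+|\eta||l|^2$ and $|\eta|^3$ in the numerator. Here I would split three ways: (i) $|\eta|\approx|\xi|$ and $t$ resonant for $(k,\xi,l^\prime)$, in which case the product of $\partial_t w$ factors plus Lemma \ref{lem:CKwFreqRat} absorbs a factor $\jap t^2$ or $\jap t^3$; (ii) $|\eta-\xi|$ dominant, where the correction $\jap{k,\eta-\xi,l-l^\prime}^3$ on the right absorbs the loss directly; (iii) $|\xi-kt|$ large relative to $\jap{\xi,l^\prime}$, where the denominator gains, and the minimum $\min(|\eta|,\jap{\xi-kt})$ reflects the fact that either $|\eta|$ itself is small or $|\xi-kt|$ is large enough to cancel two powers of it. The main obstacle will be carrying out the bookkeeping of these case splits uniformly so that the same polynomial exponent (here $3$) suffices in every regime; this is essentially what forces the $\jap t^3$ growth in the CK-term of \eqref{ineq:AdeYYY} and the $\min(\cdot)$ in the non-resonant remainder.
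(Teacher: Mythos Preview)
Your overall strategy---resonant/non-resonant splitting, with the resonant case handled via the lower bound $\partial_t w/w \gtrsim \kappa/(1+|t-\eta/k|)$ on $\mathbf I_{k,\eta}$ and Lemma~\ref{lem:CKwFreqRat}, and the non-resonant case absorbed by the explicit $\jap{t}^{-a}$ or $\jap{t/\jap{\xi,l'}}^p$ remainder---is exactly the paper's approach, and the treatments of \eqref{ineq:TriTriv}, \eqref{ineq:ratlongtime}, \eqref{ineq:AikDelLNoD}, \eqref{ineq:AikDelL2D}, and \eqref{ineq:AdelLij} are on target.

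There is one genuine gap in your sketch of \eqref{ineq:CKwLS}. In the ``complementary subcase'' $t \le 2|\eta|$ but $t \notin I_{k,\eta}$, you write that the trivial bound $|k|/(|k|+|l|+|\eta-kt|) \le 1$ together with a uniform lower bound on $\partial_t w/w$ from the gradual-loss piece suffices. But the gradual-loss piece only gives $\partial_t w/w \gtrsim \kappa|\eta|/t^2$, which is not uniformly bounded below by a constant when $t \gg \sqrt{|\eta|}$. The paper instead observes that in this regime one necessarily has $t \in I_{r,\eta}$ for some $r \neq k$, whence $|\eta - kt| \gtrsim t|k-r|$ and therefore the left-hand side is $\lesssim |k|/(t|k-r|) \lesssim |r|/t$ (using $|k| \le |r| + |k-r| \le 2|r||k-r|$ for nonzero integers). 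This matches the actual size of $\partial_t w/w$ on $I_{r,\eta}$ via Lemma~\ref{lem:dtw}. Your argument as written would fail, for instance, at $t = t_{r,\eta}$ with $r$ small and $k$ moderate. The fix is exactly the paper's: track which critical interval $t$ lies in rather than relying on a nonexistent uniform bound. A related remark: for \eqref{ineq:AiPartX} the paper's split is simply $t \le 2|\xi|$ (where $\jap{t/\jap{\xi,l'}}^p \sim 1$ and one reduces to \eqref{ineq:CKwLS}) versus $t > 2|\xi|$ (where $|\xi-kt| \approx |kt|$); this is cleaner than the somewhat ambiguous resonant region $|k|+|\xi-kt| \lesssim \jap{\xi,l'}/|k|$ you describe.
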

\begin{remark}
Although many of the above inequalities appear very similar, none exactly implies another. 
\end{remark} 
\begin{remark} 
Note that \eqref{ineq:AdelLij} implies 
\begin{subequations}
\begin{align}
\frac{\abs{\eta,l}\jap{\eta,l}^2}{k^2 + (l^\prime)^2 + \abs{\xi-k t}^2} \jap{\frac{t}{\jap{\xi,l^\prime}}}^p & \lesssim \left(\jap{t}^3\left(\sqrt{\frac{\partial_t w(t,\eta)}{w(t,\eta)}} + \frac{\abs{\eta}^{s/2}}{\jap{t}^s}\right)\left(\sqrt{\frac{\partial_t w(t,\xi)}{w(t,\xi)}} + \frac{\abs{\xi}^{s/2}}{\jap{t}^s}\right) \right. \nonumber \\ & \left. \quad  + \abs{\eta,l}\left(1  + \frac{\jap{\eta,l}^2}{\jap{\xi,t,l^\prime}^2} \jap{\frac{t}{\jap{\xi,l^\prime}}}^p\right)\right)\jap{k,\eta-\xi,l-l^\prime}^3  \label{ineq:AdeGen} \\ 
\frac{\abs{l}\jap{\eta,l}^2}{k^2 + (l^\prime)^2 + \abs{\xi-k t}^2} \jap{\frac{t}{\jap{\xi,l^\prime}}}^p & \lesssim \left(\jap{t}^2\left(\sqrt{\frac{\partial_t w(t,\eta)}{w(t,\eta)}} + \frac{\abs{\eta}^{s/2}}{\jap{t}^s}\right)\left(\sqrt{\frac{\partial_t w(t,\xi)}{w(t,\xi)}} + \frac{\abs{\xi}^{s/2}}{\jap{t}^s}\right) \right. \nonumber \\ & \left. \quad  + \abs{l}\left(1  + \frac{\jap{\eta,l}^2}{\jap{\xi,t,l^\prime}^2} \jap{\frac{t}{\jap{\xi,l^\prime}}}^p\right)\right)\jap{k,\eta-\xi,l-l^\prime}^3. \label{ineq:AdeGen2}
\end{align}
\end{subequations} 
\end{remark} 
\begin{proof} 
Most of these inequalities use a similar set of ideas based on time-frequency decompositions. 
First, \eqref{ineq:TriTriv} is immediate from the triangle inequality. 
Inequality \eqref{ineq:ratlongtime} is similarly straightforward. 

Next, consider \eqref{ineq:CKwLS}. If $t \in \I_{k,\eta}$, then the result is immediate from Lemma \ref{dtw}. If $t \lesssim \sqrt{\abs{\eta}}$, then it is similarly straightforward from Lemma \ref{dtw}. If $t \in \I_{r,\eta}$, and $r \neq k$ then 
\begin{align*}
\frac{\abs{k} \mathbf{1}_{t \leq 2\abs{\eta}}}{\abs{k} + \abs{l} + \abs{\eta-kt}} & \lesssim \frac{\abs{k}}{t\abs{k-r}} \lesssim \frac{\abs{r}}{t}, 
\end{align*}
from which \eqref{ineq:CKwLS} now follows from Lemma \ref{dtw}. 

Consider \eqref{ineq:AiPartX}.  
First, if $t \leq 2 |\xi|$, $\langle \frac{t}{\langle \xi,l' \rangle} \rangle \sim 1$ and we have by the same argument used to deduce \eqref{ineq:CKwLS}, 
\begin{align*}
\frac{\abs{k} \abs{k,\eta - kt,l}}{k^2 + (l')^2 + |\xi-kt|^2} & \lesssim \frac{\langle \xi - \eta,l-l' \rangle}{1 + |t - \frac{\xi}{k}|} \lesssim \kappa^{-1} \frac{\partial_t w(t,\xi)}{w(t,\xi)}\langle \xi - \eta,l-l' \rangle. 
\end{align*}  
If $t \geq 2 |\xi|$ then $|\xi - kt| \approx \abs{kt}$, so that it is straightforward to obtain
\begin{align*}
\frac{\abs{k} \abs{k,\eta - kt,l}}{k^2 + (l')^2 + |\xi-kt|^2} \jap{\frac{t}{\jap{\xi,l'}} }^p & \lesssim \frac{\abs{k} \abs{k,\eta - kt,l}}{k^2 + (l')^2 + |kt|^2} \jap{ \xi - \eta,l-l'}  \jap{ \frac{t}{\jap{\xi,l'}} }^p, 
\end{align*}
from which \eqref{ineq:AiPartX} follows immediately. Inequality \eqref{ineq:AikDelLNoD} is similar.  

Let us next consider \eqref{ineq:AikDelL2D}; specifically we will just discuss \eqref{ineq:AikDelL2D_CKw}, as the others are similar or easier. 
If $\xi > 2 k^\prime t$ or $\xi < \frac{1}{2} k^\prime t$ or $\abs{l^\prime} \gtrsim \abs{\xi}$, then we immediately have 
\begin{align*}
\frac{\abs{k,\eta-k t,l} \abs{k^\prime,\xi-k^\prime t,l^\prime}}{(k^\prime)^2 + (l^\prime)^2 + \abs{\xi-k^\prime t}^2}\jap{\frac{t}{\jap{\xi,l^\prime}}}^p & \lesssim \frac{\abs{k,kt,\eta,l}}{\abs{k^\prime,l^\prime,\xi,k^\prime t}}\jap{\frac{t}{\jap{\xi,l^\prime}}}^p  \lesssim \jap{k-k^\prime,l-l^\prime} \jap{\frac{t}{\jap{\xi,l^\prime}}}^p, 
\end{align*} 
which is consistent with \eqref{ineq:AikDelL2D_CKw}. Hence, we only need to consider the case $\xi \approx k^\prime t$ and $\abs{l^\prime} \lesssim \abs{\xi}$.
If $t \lesssim \sqrt{\abs{\xi}}$ then by $\abs{\eta-kt} \lesssim \abs{\xi-k^\prime t} \jap{t} \jap{\eta-\xi,k-k^\prime}$, we have 
\begin{align*}
\frac{\abs{k,\eta-k t,l}}{\abs{k^\prime,\xi-k^\prime t,l^\prime}} & \lesssim \jap{t}\jap{\eta-\xi,l-l^\prime}, 
\end{align*}
which is consistent with \eqref{ineq:AikDelL2D_CKw} by Lemma \ref{lem:dtw}.   
If $t \in \I_{k^\prime,\eta}\cap \I_{k^\prime,\xi}$, then it follows that 
\begin{align*}
\frac{\abs{k,\eta-k t,l} \abs{k^\prime,\xi-k^\prime t,l^\prime}}{(k^\prime)^2 + (l^\prime)^2 + \abs{\xi-k^\prime t}^2} & \lesssim \frac{\abs{k,t\abs{k-k^\prime},l}}{\abs{k^\prime,l^\prime,\xi-k^\prime t}}\jap{\eta-\xi} \lesssim \frac{t }{1 + \abs{t - \frac{\xi}{k^\prime}}} \jap{k-k^\prime,\eta-\xi,l-l^\prime}^2, 
\end{align*}
from which the result follows now from Lemma \ref{lem:dtw}.
If $t \in \I_{k^\prime,\xi}$,  but $t \not\in \I_{k^\prime,\eta}$, then by Lemma \ref{lem:wellsep}, it follows that 
\begin{align*}
\frac{\abs{k,\eta-k t,l} \abs{k^\prime,\xi-k^\prime t,l^\prime}}{(k^\prime)^2 + (l^\prime)^2 + \abs{\xi-k^\prime t}^2} & \lesssim \frac{\abs{k,t\abs{k-k^\prime},l}}{\jap{k^\prime,t,l^\prime}}  \lesssim  \jap{k-k^\prime,l-l^\prime}^2, 
\end{align*}
which is consistent with \eqref{ineq:AikDelL2D_CKw}.  
Finally, if $t \in \I_{r,\xi}$ with $r \neq k^\prime$, then 
\begin{align*}
\frac{\abs{k,\eta-k t,l} \abs{k^\prime,\xi-k^\prime t,l^\prime}}{(k^\prime)^2 + (l^\prime)^2 + \abs{\xi-k^\prime t}^2} & \lesssim \frac{\abs{k,t\jap{r-k},l}}{\jap{k^\prime,l^\prime,t\abs{k^\prime - r}}}  \lesssim \jap{k-k^\prime,l-l^\prime}^2, 
\end{align*}
which is also consistent with \eqref{ineq:AikDelL2D_CKw}.  

Let us next turn to the inequalities in \eqref{ineq:AdelLij}. 
These follow the same general pattern, so let us focus on one which is especially not obvious, \eqref{ineq:AdeYZZ}, and omit the others for brevity. 
First, if $t > 2\frac{\xi}{k}$, or $t < \frac{\xi}{2k}$, then $|\xi - kt| \gtrsim kt$ which implies
\begin{align*} 
\frac{\abs{\eta}l^2}{k^2 + (l')^2 + \abs{\xi-kt}^2} \jap{\frac{t}{\jap{\xi,l'}}}^p \lesssim \frac{\abs{\eta}l^2}{k^2 + (l')^2 + \abs{\xi}^2 + \abs{kt}^2} \jap{\frac{t}{\jap{\xi,l'}} }^p, 
\end{align*}
which is consistent with \eqref{ineq:AdeYZZ}. 
If $t \approx \frac{\xi}{k}$  and $\abs{l} < \abs{\xi}$,
\begin{align*}
\frac{\abs{\eta}l^2}{k^2 + (l')^2 + \abs{\xi-kt}^2} \jap{\frac{t}{\jap{\xi,l'}}}^p & \lesssim \frac{\abs{kt} l^2}{k^2 + (l')^2 + \abs{\xi- kt}^2}\jap{\eta-\xi} \\ 
& \lesssim \frac{t^2}{\abs{k,\xi-kt,l}} \jap{k,\eta-\xi, l - l'}^4,
\end{align*}
which is consistent with \eqref{ineq:AdeYZZ} by Lemmas \ref{lem:dtw} and Lemma \ref{lem:CKwFreqRat}. 
Finally, if $t \approx \frac{\xi}{k}$  and $\abs{l} > \abs{\xi}$, we see immediately that
$$
\frac{\abs{\eta}l^2}{k^2 + (l')^2 + \abs{\xi-kt}^2} \jap{\frac{t}{\jap{\xi,l'}}}^p \lesssim \abs{l} \jap{\eta-\xi, l-l'}^2, 
$$
which is consistent with \eqref{ineq:AdeYZZ}. 
\end{proof} 

\subsection{Paraproducts and related notations} \label{sec:paranote}
In this section we discuss the type of paraproducts decompositions we will be using (introduced by Bony \cite{Bony81}).  
Due to the vast number of terms (many cubic order or higher due to the coordinate system) 
we make use of several shorthand notations to reduce the length of the technical details. 

By convention, we will use the homogeneous variant of the paraproduct and utilize the following short-hand to suppress the appearance of Littlewood-Paley projections:
\begin{align} 
fg & = f_{Hi} g_{Lo} + f_{Lo} g_{Hi} + (fg)_{\mathcal{R}} \nonumber \\ 
& = \sum_{M \in 2^\Integers} f_{M} g_{<M/8} + \sum_{M \in 2^{\Integers}} f_{<M/8} g_{M} + \sum_{M \in 2^{\Integers}} \sum_{M/8 \leq M^\prime \leq 8M} f_{M} g_{M^\prime}. \label{def:parapp}
\end{align}  
Next we explain how to go directly from the shorthand to $L^2$ energy estimates.
\begin{lemma}[Paraproducts for quadratic nonlinearities] \label{gevreyparaproductlemma}
Let $s\in[0,1)$, $\mu \geq 0$, $p \geq 0$.  Then, there exists a $c = c(s) \in (0,1)$ such that the following holds,   
\begin{subequations} \label{ineq:paraquad} 
\begin{align}
\norm{f_{Hi} g_{Lo}}_{\G^{\mu,p}} & \lesssim \norm{f}_{\G^{\mu,p}}\norm{g}_{\G^{c\mu,3/2+}}  \label{ineq:quadHL} \\ 
\norm{(fg)_{\mathcal{R}}}_{\G^{\mu,p}} & \lesssim \norm{f}_{\G^{c\mu,p}} \norm{g}_{\G^{c\mu,3/2+}} \label{ineq:quadR} \\ 
\int e^{\mu\abs{\grad}^s}\jap{\grad}^{p} h \, e^{\mu\abs{\grad}^s}\jap{\grad}^{p} \left(f_{Hi} g_{Lo}\right) dV & \lesssim \norm{h}_{\G^{\mu,p}}\norm{f}_{\G^{\mu,p}}\norm{g}_{\G^{c\mu,3/2+}}.   \label{ineq:triQuadHL}
\end{align}
\end{subequations}  
\end{lemma}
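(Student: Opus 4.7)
The plan is to work on the Fourier side, exploit the frequency localizations of the paraproduct pieces, and use the sub-additivity of $|\cdot|^s$ for $s<1$ to absorb weights. By Plancherel,
\begin{align*}
\|f_{Hi}g_{Lo}\|_{\G^{\mu,p}}^2 &= \sum_{M\in 2^{\mathbb Z}} \sum_{k,l}\int e^{2\mu|k,\eta,l|^s}\langle k,\eta,l\rangle^{2p}\,\bigl|\widehat{f_M g_{<M/8}}_k(\eta,l)\bigr|^2\,d\eta,
\end{align*}
and the support conditions force the output frequency $\zeta=(k,\eta,l)$ to lie in an annulus of size $\sim M$, while in the convolution $\widehat{f_M g_{<M/8}}(\zeta)=\int \hat f_M(\zeta-\zeta')\hat g_{<M/8}(\zeta')d\zeta'$ one has $|\zeta-\zeta'|\sim M$ and $|\zeta'|<M/8$.

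The key tool is the sub-additivity inequality for $|\cdot|^s$ with $s\in(0,1)$ (the earlier Lemma~\ref{lem:scon}): whenever $|\zeta'|\le\theta|\zeta-\zeta'|$ with $\theta<1$, there exists $c=c(s,\theta)\in(0,1)$ with
\begin{align*}
|\zeta|^s\ \le\ |\zeta-\zeta'|^s+c\,|\zeta'|^s.
\end{align*}
Applied to the high-low piece (where $\theta=1/8$), this yields $e^{\mu|\zeta|^s}\lesssim e^{\mu|\zeta-\zeta'|^s}e^{c\mu|\zeta'|^s}$, and $\langle\zeta\rangle^p\lesssim\langle\zeta-\zeta'\rangle^p$ since $|\zeta|\lesssim|\zeta-\zeta'|$. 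Thus both weights on the output are absorbed into the high-frequency factor $f$. Setting $\tilde f(\zeta)=e^{\mu|\zeta|^s}\langle\zeta\rangle^p|\hat f(\zeta)|$ and $\tilde g(\zeta)=e^{c\mu|\zeta|^s}|\hat g(\zeta)|$, the bilinear bound reduces to the convolution estimate $\|\tilde f_M*\tilde g_{<M/8}\|_{L^2}\le\|\tilde f_M\|_{L^2}\|\tilde g_{<M/8}\|_{L^1}$ (Young), and
\begin{align*}
\|\tilde g\|_{L^1}\ \lesssim\ \Bigl(\sum_{k,l}\int\langle k,\eta,l\rangle^{-3-}d\eta\Bigr)^{1/2}\|g\|_{\G^{c\mu,3/2+}}\ \lesssim\ \|g\|_{\G^{c\mu,3/2+}}
\end{align*}
by Cauchy--Schwarz in three spatial dimensions. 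Almost-orthogonality across the dyadic annuli $M$ then assembles \eqref{ineq:quadHL}.

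The remainder estimate \eqref{ineq:quadR} is handled identically except that now $|\zeta-\zeta'|\approx|\zeta'|$ on each diagonal piece, so sub-additivity must be applied symmetrically, yielding $e^{\mu|\zeta|^s}\lesssim e^{c\mu|\zeta-\zeta'|^s}e^{c\mu|\zeta'|^s}$; the Sobolev weight $\langle\zeta\rangle^p$ is then put on one of the two factors via $\langle\zeta\rangle^p\lesssim\langle\zeta-\zeta'\rangle^p+\langle\zeta'\rangle^p$, forcing the pair $(\G^{c\mu,p},\G^{c\mu,3/2+})$. Finally, \eqref{ineq:triQuadHL} is just Cauchy--Schwarz: the left-hand side equals $\langle e^{\mu|\grad|^s}\langle\grad\rangle^p h,\,e^{\mu|\grad|^s}\langle\grad\rangle^p(f_{Hi}g_{Lo})\rangle_{L^2}\le\|h\|_{\G^{\mu,p}}\|f_{Hi}g_{Lo}\|_{\G^{\mu,p}}$, and \eqref{ineq:quadHL} concludes.

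The main point to get right is the precise dependence of the constant $c(s)$ on the separation ratio in the sub-additivity inequality, since the strict inequality $c<1$ is exactly what allows the low-frequency factor to be measured in the weaker Gevrey radius $c\mu$; the convolution bound, the $H^{3/2+}\hookrightarrow\mathcal{F}^{-1}L^1$ embedding, and the dyadic almost-orthogonality are standard.
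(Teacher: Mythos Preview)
Your approach is essentially the same as the paper's: Fourier side, sub-additivity of $|\cdot|^s$ via \eqref{lem:scon}/\eqref{lem:strivial}, Young's inequality with the $H^{3/2+}\hookrightarrow\mathcal{F}^{-1}L^1$ embedding, and dyadic almost-orthogonality. One technical point you gloss over in \eqref{ineq:quadR}: the diagonal pieces $f_N g_{\sim N}$ are \emph{not} frequency-localized on output (the convolution can land anywhere in $[0,CN]$), so almost-orthogonality is unavailable and the paper instead uses the triangle inequality $\|(fg)_{\mathcal R}\|\le\sum_N\|f_Ng_{\sim N}\|$, then for $N>1$ extracts a factor $N^{-\delta}$ from the Sobolev surplus to make the sum converge, and for $N\le 1$ uses Bernstein $\|f_N\|_\infty\lesssim N^{3/2}\|f_N\|_2$.
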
 
\begin{remark} 
In most places in the proof, $\mu = 0$ as normally the multipliers $A^i$ or $A^{\nu;i}$ are playing the role of the norm.
\end{remark} 
\begin{proof}
Let us first prove \eqref{ineq:quadHL}.  
From  almost orthogonality \eqref{ineq:GeneralOrtho},
\begin{align*} 
\norm{f_{Hi} g_{Lo}}_{\G^{\mu,p}}^2 & \approx \sum_{N \in 2^{\Integer}} \norm{f_N g_{<N/8}}_{\G^{\mu,p}}^2 \\ 
& \approx  \sum_{N \in 2^{\Integers}} \sum_{k,l} \int \abs{ \sum_{k^\prime,l^\prime} \int e^{\mu\abs{k,\eta,l}^s} \jap{k,\eta,l}^{p} \hat{f}_{k^\prime}(\xi,l^\prime)_{N} \hat{g}_{k-k^\prime}(\eta-\xi,l-l^\prime)_{<N/8} d\xi }^2 d\eta. 
\end{align*}
By \eqref{lem:scon} and the frequency localizations due to the Littlewood-Paley projections (see Appendix \ref{apx:Gev}), there is some $c = c(s) \in (0,1)$ such that 
\begin{align*} 
\norm{f_{Hi} g_{Lo}}_{\G^{\mu,p}}^2 & \lesssim  \sum_{N \in 2^{\Integers}} \sum_{k,l} \int_\eta \abs{\sum_{k^\prime,l^\prime} \int_{\xi} e^{\mu\abs{k^\prime,\xi,l^\prime}^s} \jap{k^\prime,\xi,l^\prime}^{p} \abs{\hat{f}_{k^\prime}(\xi,l^\prime)_{N}}  \right. \\ & \left. \quad\quad \times e^{c\mu\abs{k-k^\prime,\eta-\xi,l-l^\prime}^s}\abs{\hat{g}_{k-k^\prime}(\eta-\xi,l-l^\prime)_{<N/8}}  d\xi }^2 d\eta 
\end{align*}
By \eqref{ineq:L2L1} and almost orthogonality again,  
\begin{align*} 
\norm{f_{Hi} g_{Lo}}_{\G^{\mu,p}}^2 & \lesssim \sum_{N \in 2^{\Integers}} \norm{f_N}^2_{\G^{\mu,p}} \norm{g_{<N/8}}^2_{\G^{c\mu,3/2+}} \lesssim \norm{f}^2_{\G^{\mu,p}}\norm{g}^2_{\G^{c\mu,3/2+}}, 
\end{align*} 
which completes \eqref{ineq:quadHL}. 

Next, for \eqref{ineq:quadR}, by the triangle inequality, the frequency localizations due to the Littlewood-Paley projections (see Appendix \ref{apx:Gev}), and \eqref{lem:strivial}, there holds for some $c = c(s) \in (0,1)$ (not necessarily the same as above), 
\begin{align*} 
\norm{(fg)_{\mathcal{R}}}_{\G^{\mu,p}} & = \norm{\sum_{N \in 2^{\Integers}} f_{N} g_{\sim N}}_{\G^{\mu,p}} \\ & \leq \sum_{N \in 2^{\Integers}} \norm{f_{N} g_{\sim N}}_{\G^{\mu,p}} \\  
& \lesssim \left(\sum_{N\in 2^{\Integers} : N\leq 1} + \sum_{N\in 2^{\Integers} : N > 1}\right) \left( \sum_{k^\prime,l^\prime} \int_\eta \left( \int_\xi e^{c\mu\abs{k^\prime,\xi,l^\prime}^s}\jap{k^\prime,\xi,l^\prime}^{p} \abs{ \hat{f}_{k^\prime}(\xi,l^\prime)_N} \right. \right. \\ & \quad\quad \left. \left. \times e^{c\mu\abs{k-k^\prime,\eta-\xi,l-l^\prime}^s} \abs{\hat{g}_{k-k^\prime}(\eta-\xi,l-l^\prime)_{\sim N}} d\xi \right)^2 d\eta  \right)^{1/2} \\ 
& = L + H. 
\end{align*} 
For $H$ we have by \eqref{ineq:L2L1}, 
\begin{align*} 
H  & \lesssim_{\delta} \sum_{1 < N \in 2^{\Integers}} N^{-\delta} \norm{f_N}_{\G^{c\mu,p}} \norm{g_{\sim N}}_{\G^{c\mu,3/2+2\delta}} \lesssim \norm{f}_{\G^{c\mu,p}} \norm{g}_{\G^{c\mu,3/2+2\delta}}, 
\end{align*} 
which suffices for \eqref{ineq:quadR}. 
For $L$ we have by Bernstein's inequalities, 
\begin{align*} 
L  & \lesssim \sum_{N \leq 1 } \norm{f_N g_{\sim N}}_{2}  \lesssim \sum_{N \leq 1}  \norm{f_N}_\infty \norm{g_{\sim N}}_{2}  \lesssim \sum_{N \leq 1}  N^{3/2} \norm{f_N}_2 \norm{g_{\sim N}}_{2} \lesssim \norm{f}_{2} \norm{g}_{2}, 
\end{align*} 
which completes the proof of \eqref{ineq:quadR}. 

Finally, \eqref{ineq:triQuadHL} follows from \eqref{ineq:quadHL} and Cauchy-Schwarz.  
\end{proof}  

Due to the coordinate system, many of the nonlinear terms we are working with are higher order (up to quintic). 
For dealing with cubic nonlinear terms, we have 
\begin{align} 
fgh & = \sum_{N \in 2^{\Integers}} f_N g_{<N/8}h_{<N/8} + g_{N} f_{<N/8} h_{<N/8} + f_{< N/8} g_{<N/8} h_N  \nonumber \\ 
& \quad + \left( \sum f_N g_{<N/8} h_{\sim N} + \sum f_N g_{\sim N} h_{<N/8} + \sum f_{\sim N} g_{\sim N} h_{\sim N} \nonumber \right. \\ 
& \quad +\left. \sum g_N f_{<N/8} h_{\sim N} + \sum g_N f_{\sim N} h_{<N/8} \nonumber \right. \\ 
& \quad + \left. \sum h_N g_{<N/8} f_{\sim N} + \sum h_N g_{\sim N} f_{<N/8}\right)  \nonumber \\
& := f_{Hi}(gh)_{Lo} + g_{Hi}(fh)_{Lo}  + h_{Hi}(gf)_{Lo} + (fgh)_{\mathcal{R}}. \label{def:cubic}
 \end{align} 
Note the short-hand $(gh)_{Lo} = g_{Lo} h_{Lo}$. 
We may iterate this idea and derive decompositions for quartic and quintic nonlinear terms as well (now applying the above short-hand). 
For quartic terms, 
\begin{align} 
fghk & = \sum_N f_N g_{<N/8} h_{<N/8} k_{<N/8} + \sum_N f_{<N/8} g_{N} h_{<N/8} k_{<N/8} \nonumber \\ & \quad + \sum_N f_{<N/8} g_{<N/8} h_{N} k_{<N/8} + \sum_N f_{<N/8} g_{<N/8} h_{<N/8} k_{N} + (fghk)_{\mathcal{R}} \nonumber \\ 
& := f_{Hi}(ghk)_{Lo} + g_{Hi}(fgk)_{Lo}  + h_{Hi}(gfk)_{Lo} + k_{Hi}(fgh)_{Lo} + (fghk)_{\mathcal{R}}, \label{def:quartic}    
\end{align}  
where the remainder term $(fghk)_{\mathcal{R}}$, includes all of the frequency contributions not included in the leading order terms. 
We make an analogous decomposition also for quintic terms.  
We have the equivalents of \eqref{ineq:quadHL}, \eqref{ineq:quadR}, and \eqref{ineq:triQuadHL}. 
\begin{lemma}[Paraproducts for higher order nonlinear terms] \label{lem:ParaHighOrder}
For all $\mu \geq 0$ and $p \geq 0$, there is some $c = c(s) \in (0,1)$ such that 
\begin{subequations}  \label{ineq:quin}
\begin{align} 
\norm{g_{Hi} (fhkj)_{Lo}}_{\G^{\mu,p}} & \lesssim_{p} \norm{g}_{\G^{\mu,p}}\norm{f}_{\G^{c\mu,3/2+}} \nonumber \\ & \quad\quad \times \norm{h}_{\G^{c\mu,3/2+}} \norm{k}_{\G^{c\mu,3/2+}} \norm{j}_{\G^{c\mu,3/2+}} \label{ineq:quinHL} \\ 
\norm{(fghkj)_{\mathcal{R}}}_{\G^{\mu,p}} & \lesssim_{p} \norm{g}_{\G^{c\mu,3/2+}}\norm{f}_{\G^{c\mu,3/2+}} \norm{h}_{\G^{c\mu,3/2+}} \nonumber \\ & \quad\quad \times \norm{k}_{\G^{c\mu,3/2+}} \norm{j}_{\G^{c\mu,3/2+}} \label{ineq:quinR} \\  
\int e^{\mu \abs{\grad}^s}\jap{\grad}^p q e^{\mu\abs{\grad}^s} \jap{\grad}^p (g_{Hi}(fhkj)_{Lo}) dV & \lesssim_{p} \norm{q}_{\G^{\mu,p}}\norm{g}_{\G^{\mu,p}} \norm{f}_{\G^{c\mu,3/2+}} \nonumber \\ & \quad\quad \times \norm{h}_{\G^{c\mu,3/2+}} \norm{k}_{\G^{c\mu,3/2+}} \norm{j}_{\G^{c\mu,3/2+}}. \label{ineq:triQuinHL}
\end{align}
\end{subequations}
Analogous estimates hold also for the cubic and quartic decompositions in \eqref{def:cubic} and \eqref{def:quartic}. 
\end{lemma}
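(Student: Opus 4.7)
\textbf{Proof proposal for Lemma \ref{lem:ParaHighOrder}.} The plan is to reduce to the quadratic case \eqref{ineq:paraquad} by treating each low-frequency cluster as a single function controlled by a Gevrey algebra estimate. The overall architecture mirrors the proof of Lemma \ref{gevreyparaproductlemma}, with the Sobolev order $3/2+$ chosen precisely so that $\G^{c\mu,3/2+}$ is a Banach algebra (since $H^{3/2+} \hookrightarrow L^\infty$). I would therefore first record the auxiliary Moser-type statement: for any $c \in (0,1)$ and $\mu \geq 0$,
\begin{equation*}
\|\phi_1 \phi_2 \phi_3 \phi_4\|_{\G^{c\mu,3/2+}} \lesssim \prod_{j=1}^{4}\|\phi_j\|_{\G^{c\mu,3/2+}},
\end{equation*}
which follows by applying \eqref{ineq:quadHL} and \eqref{ineq:quadR} three times, each time readjusting the Gevrey radius by a further factor in $(0,1)$ via \eqref{lem:scon}; since only four multiplications appear in any term of \eqref{def:cubic}, \eqref{def:quartic}, or the analogous quintic decomposition, the net contraction of the radius is bounded and can be absorbed into the final $c = c(s) \in (0,1)$.

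For the high-low estimate \eqref{ineq:quinHL}, by almost orthogonality \eqref{ineq:GeneralOrtho} I would write
\begin{equation*}
\|g_{Hi}(fhkj)_{Lo}\|_{\G^{\mu,p}}^2 \approx \sum_{N \in 2^{\mathbb Z}} \|g_N (fhkj)_{<N/8}\|_{\G^{\mu,p}}^2,
\end{equation*}
and then transfer the weight $e^{\mu|\cdot|^s}\jap{\cdot}^p$ from the output frequency onto the $g_N$ factor using \eqref{lem:scon}, paying the price of a weight $e^{c\mu|\cdot|^s}$ on $(fhkj)_{<N/8}$. Applying the $L^2 \ast L^1 \to L^2$ inequality \eqref{ineq:L2L1} produces
\begin{equation*}
\|g_{Hi}(fhkj)_{Lo}\|_{\G^{\mu,p}}^2 \lesssim \sum_N \|g_N\|_{\G^{\mu,p}}^2 \|(fhkj)_{<N/8}\|_{\G^{c\mu,3/2+}}^2,
\end{equation*}
and the algebra bound above combined with almost orthogonality in $g$ finishes the estimate. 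The trilinear variant \eqref{ineq:triQuinHL} then follows immediately from Cauchy--Schwarz applied to the test function $q$ against this output. Analogous arguments yield the cubic and quartic versions, with $2$ or $3$ algebra applications in place of the $4$ used here.

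For the remainder estimate \eqref{ineq:quinR}, I would enumerate the types of contributions that appear in $(fghkj)_{\mathcal R}$ according to which subset of factors has comparable frequency $\sim N$. For each type, say $f_N g_{\sim N} h_{<N/8} k_{<N/8} j_{<N/8}$, the output frequency is bounded by $CN$, so \eqref{lem:strivial} together with the Littlewood--Paley supports gives
\begin{equation*}
e^{\mu|\cdot|^s}\jap{\cdot}^p \lesssim e^{c\mu|\cdot|^s}\jap{\cdot}^p \; \textrm{(on each factor)}
\end{equation*}
at the cost of a constant. Splitting $\sum_N = \sum_{N \leq 1} + \sum_{N > 1}$ exactly as in the proof of \eqref{ineq:quadR} lets me dominate the high-frequency sum by $\sum_N N^{-\delta}$ (extracting a small $\jap{\cdot}^{-\delta}$ from the $3/2+$ Sobolev excess) and treat the low-frequency sum with Bernstein's inequality, placing the frequency-$N$ factors in $L^2$ and the remaining low-frequency factors in $L^\infty \hookrightarrow H^{3/2+}$. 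The main technical obstacle is purely combinatorial: all the distinct frequency configurations in the quintic remainder must be enumerated, and for each, the ``highest'' factor chosen correctly so that the exponential-weight transfer via \eqref{lem:scon}/\eqref{lem:strivial} produces the claimed norms; however, since the algebraic structure of each case is identical to the quadratic remainder, no new analytic input beyond the algebra lemma is needed.
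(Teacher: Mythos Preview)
Your proposal is correct and follows exactly the approach the paper intends: the paper does not give a proof of this lemma, presenting it as the direct higher-order analogue of Lemma~\ref{gevreyparaproductlemma}, and your reduction to the quadratic case via the algebra property of $\G^{c\mu,3/2+}$ together with the same almost-orthogonality / high--low frequency split is precisely that. The only minor remark is that the algebra step you record can be cited directly from Lemma~\ref{lem:GevProdAlg} rather than rederived by iterating \eqref{ineq:quadHL}--\eqref{ineq:quadR}.
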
 

One final shorthand we would like to introduce involves the inner products that appear in the energy estimates below. 
Consider, for example, a typical Gevrey energy estimate involving three quantities $f,g,h$, where generally $h$ will be a product of several low frequency terms: 
\begin{align*} 
\int e^{\lambda\abs{\grad}^s}f  e^{\lambda\abs{\grad}^s}\left(g_{Hi} h_{Lo}\right) dV & = \sum_{k,l,k^\prime,l^\prime} \int_{\eta,\xi} e^{\lambda\abs{k,\eta,l}^s}\overline{\hat{f}}_k(\eta,l)  e^{\lambda\abs{k,\eta,l}^s} \hat{g}_{k^\prime}(\xi,l^\prime)_{Hi} \hat{h}_{k-k^\prime}(\eta-\xi,l-l^\prime)_{Lo} d\eta d\xi. 
\end{align*} 
By the frequency localizations of the paraproduct and \eqref{lem:scon}, for some $c = c(s) \in (0,1)$ we have (by \eqref{ineq:triQuadHL}), 
\begin{align*}
\int e^{\lambda\abs{\grad}^s}f  e^{\lambda\abs{\grad}^s}\left(g_{Hi} h_{Lo}\right) dV & \lesssim  \sum_{k,l,k^\prime,l^\prime} \int_{\eta,\xi} e^{\lambda\abs{k,\eta,l}^s}\abs{\hat{f}_k(\eta,l)}  e^{\lambda\abs{k^\prime,\xi,l^\prime}^s} \abs{\hat{g}_{k^\prime}(\xi,l^\prime)_{Hi}} \\  & \quad\quad \times e^{c\lambda\abs{k-k^\prime,\eta-\xi,l-l^\prime}^s}\abs{\hat{h}_{k-k^\prime}(\eta-\xi,l-l^\prime)_{Lo}} d\eta d\xi \\ 
& \lesssim \norm{f}_{\G^{\lambda}}\norm{g}_{\G^{\lambda}} \norm{h}_{\G^{c\lambda,3/2+}}.
\end{align*}  
The low frequency factors will generally all be put in the norm $\G^{c\lambda,3/2+}$ and hence it makes sense 
to use a short-hand for the low-frequency factor as $\norm{h}_{\G^{c\lambda,3/2+}}Low(k-k^\prime,\eta-\xi,l-l^\prime)$ where the function $Low$ is taken as an $O(1)$ function in $\G^{\lambda,3/2+}$ (and which may change line-to-line like the implicit constants). 
For example,
\begin{align} 
\int e^{\lambda\abs{\grad}^s}f  e^{\lambda\abs{\grad}^s}\left(g_{Hi} h_{Lo}\right) dV & := \norm{h}_{\G^{\lambda,3/2+}}\sum_{k,l,k^\prime,l^\prime} \int_{\eta,\xi} e^{\lambda\abs{k,\eta,l}^s}\overline{\hat{f}}_k(\eta,l)  e^{\lambda\abs{k,\eta,l}^s} \hat{g}_{k^\prime}(\xi,l^\prime)_{Hi} \nonumber \\ & \quad\quad \times Low(k-k^\prime,\eta-\xi,l-l^\prime) d\eta d\xi \nonumber \\ 
 & \lesssim \norm{h}_{\G^{\lambda,3/2+}}\sum_{k,l,k^\prime,l^\prime} \int_{\eta,\xi} e^{\lambda\abs{k,\eta,l}^s}\abs{\hat{f}_k(\eta,l)}  e^{\lambda\abs{k^\prime,\xi,l^\prime}^s} \abs{\hat{g}_{k^\prime}(\xi,l^\prime)_{Hi}}\nonumber \\ & \quad\quad \times Low(k-k^\prime,\eta-\xi,l-l^\prime) d\eta d\xi \nonumber \\
& \lesssim \norm{f}_{\G^{\lambda}}\norm{g}_{\G^{\lambda}} \norm{h}_{\G^{c\lambda,3/2+}}. \label{def:Low}
\end{align} 
The utility of this short-hand will quickly become clear in the course of the proof.

One final shorthand we would like to introduce involves the inner products that appear naturally in energy estimates. Consider for example, a typical energy estimate involving three quantities $f,g,h$, where generally $h$ will be a product of several low frequency terms: 
\begin{align*} 
\int e^{\lambda\abs{\grad}^s}f  e^{\lambda\abs{\grad}^s}\left(g_{Hi} h_{Lo}\right) dV & = \frac{1}{(2\pi)^{3/2}}\sum_{k,l,k^\prime,l^\prime} \int_{\eta,\xi} e^{\lambda\abs{k,\eta,l}^s}\overline{\hat{f}}_k(\eta,l)  e^{\lambda\abs{k,\eta,l}^s} \hat{g}_{k^\prime}(\xi,l^\prime)_{Hi} \hat{h}_{k-k^\prime}(\eta-\xi,l-l^\prime)_{Lo} d\eta d\xi. 
\end{align*} 
By the frequency localizations inherent in the shorthand and \eqref{lem:scon}, for some $c = c(s) \in (0,1)$ we have 
\begin{align*}
\int e^{\lambda\abs{\grad}^s}f  e^{\lambda\abs{\grad}^s}\left(g_{Hi} h_{Lo}\right) dV & \lesssim  \sum_{k,l,k^\prime,l^\prime} \int_{\eta,\xi} e^{\lambda\abs{k,\eta,l}^s}\abs{\hat{f}_k(\eta,l)}  e^{\lambda\abs{k^\prime,\xi,l^\prime}^s} \abs{\hat{g}_{k^\prime}(\xi,l^\prime)_{Hi}} \\  & \quad\quad \times e^{c\lambda\abs{k-k^\prime,\eta-\xi,l-l^\prime}^s}\abs{\hat{h}_{k-k^\prime}(\eta-\xi,l-l^\prime)_{Lo}} d\eta d\xi.
\end{align*}   
However, this can get quite tedious, especially when multiple computations like this are being done. Instead we refer to all ``low frequency'' garbage by an $O(1)$ function $Low(k-k^\prime,\eta-\xi,l-l^\prime)$ which is taken to absorb any additional derivative losses that arise, that is for example,
\begin{align} 
\int e^{\lambda\abs{\grad}^s}f  e^{\lambda\abs{\grad}^s}\left(g_{Hi} h_{Lo}\right) dV & := \norm{h}_{\G^{\lambda,3/2+}}\sum_{k,l,k^\prime,l^\prime} \int_{\eta,\xi} e^{\lambda\abs{k,\eta,l}^s}\overline{\hat{f}}_k(\eta,l)  e^{\lambda\abs{k,\eta,l}^s} \hat{g}_{k^\prime}(\xi,l^\prime)_{Hi} \nonumber \\ & \quad\quad \times Low(k-k^\prime,\eta-\xi,l-l^\prime) d\eta d\xi \nonumber \\ 
 & \lesssim \norm{h}_{\G^{\lambda,3/2+}}\sum_{k,l,k^\prime,l^\prime} \int_{\eta,\xi} e^{\lambda\abs{k,\eta,l}^s}\abs{\hat{f}_k(\eta,l)}  e^{\lambda\abs{k^\prime,\xi,l^\prime}^s} \abs{\hat{g}_{k^\prime}(\xi,l^\prime)_{Hi}}\nonumber \\ & \quad\quad \times Low(k-k^\prime,\eta-\xi,l-l^\prime) d\eta d\xi, \label{def:Low}
\end{align} 
the advantage being that it shortens the notation when $h$ is a complicated expression or when there are multiple sources of derivative loss on $h$ involved.
The utility of this short-hand will quickly become clear in the course of the proof. 

\subsection{Product lemmas and a few immediate consequences} 
The first product lemma is an immediate consequence of Lemma \ref{gevreyparaproductlemma}. 
\begin{lemma}[Gevrey Product lemma] \label{lem:GevProdAlg}
For all $s \in (0,1)$, $\mu > 0$, and $p \geq 0$, there exists $c = c(s) \in (0,1)$ such that the following holds for all $f,g \in \mathcal{G}^{\mu,p}$:
\begin{subequations}
\begin{align} 
\norm{fg}_{\G^{\mu,p}} & \lesssim_{p} \norm{f}_{\G^{c\mu,3/2+}} \norm{g}_{\G^{\mu,p}} + \norm{g}_{\G^{c\mu,3/2+}} \norm{f}_{\G^{\mu,p}}, \label{ineq:GProduct}
\end{align}
\end{subequations}
in particular, $\mathcal{G}^{\mu,p}$ is an algebra for all $p \geq 0$ by \eqref{ineq:SobExp}:  
\begin{align} 
\norm{f g}_{\G^{\mu,\sigma}} & \lesssim_{p,\mu} \norm{f}_{\G^{\mu,p}} \norm{g}_{\G^{\mu,p}}. \label{ineq:GAlg} 
\end{align} 
The constant in \eqref{ineq:GAlg} can be taken independent of $\mu$ for $p > 3/2$. 
\end{lemma}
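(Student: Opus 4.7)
The plan is to simply decompose the product via the paraproduct \eqref{def:parapp} and invoke the estimates already proved in Lemma \ref{gevreyparaproductlemma}. Write
\begin{align*}
fg = f_{Hi} g_{Lo} + f_{Lo} g_{Hi} + (fg)_{\mathcal{R}}.
\end{align*}
The first piece is controlled by \eqref{ineq:quadHL}, giving $\norm{f_{Hi} g_{Lo}}_{\G^{\mu,p}} \lesssim \norm{f}_{\G^{\mu,p}}\norm{g}_{\G^{c\mu,3/2+}}$. The second piece is symmetric: swapping the roles of $f$ and $g$ and applying the same estimate yields $\norm{f_{Lo} g_{Hi}}_{\G^{\mu,p}} \lesssim \norm{g}_{\G^{\mu,p}}\norm{f}_{\G^{c\mu,3/2+}}$. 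The remainder is handled by \eqref{ineq:quadR}, and since the resulting prefactor $\norm{f}_{\G^{c\mu,p}}$ is dominated by $\norm{f}_{\G^{\mu,p}}$ (as $c < 1$ and the extra $e^{(1-c)\mu\abs{\xi}^s}$ weight is positive), one obtains the same upper bound as in the first two terms. Summing via the triangle inequality gives \eqref{ineq:GProduct}.

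For the algebra bound \eqref{ineq:GAlg}, I would argue that the low-regularity factor $\norm{\cdot}_{\G^{c\mu,3/2+}}$ on the right-hand side of \eqref{ineq:GProduct} can always be dominated by $\norm{\cdot}_{\G^{\mu,p}}$. Indeed, for any $\mu > 0$ and $c \in (0,1)$, the multiplier
\begin{align*}
e^{c\mu\abs{\xi}^s}\jap{\xi}^{3/2+} = e^{\mu\abs{\xi}^s}\jap{\xi}^{p}\cdot e^{-(1-c)\mu\abs{\xi}^s}\jap{\xi}^{3/2+-p}
\end{align*}
is bounded by a constant $C(p,\mu)$ since the exponential factor $e^{-(1-c)\mu\abs{\xi}^s}$ beats any polynomial growth on the full frequency range. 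This yields $\norm{f}_{\G^{c\mu,3/2+}} \lesssim_{p,\mu} \norm{f}_{\G^{\mu,p}}$ and hence \eqref{ineq:GAlg}.

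Finally, when $p > 3/2$, one does not need to trade exponential regularity for polynomial gain: simply bound $\norm{\cdot}_{\G^{c\mu,3/2+}} \leq \norm{\cdot}_{\G^{c\mu,p}} \leq \norm{\cdot}_{\G^{\mu,p}}$, where the last inequality holds with constant $1$. This yields an algebra constant depending only on $p$, independent of $\mu$. Since the entire argument just chains together Lemma \ref{gevreyparaproductlemma} with elementary estimates on the Fourier weights, there is no substantive obstacle — the only thing to be careful about is ensuring the constant $c \in (0,1)$ is consistent across the two invocations of \eqref{ineq:quadHL} and the one invocation of \eqref{ineq:quadR}, which is immediate by taking the minimum.
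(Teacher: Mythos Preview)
Your proof is correct and follows exactly the approach the paper takes: the paper states that this lemma is ``an immediate consequence of Lemma \ref{gevreyparaproductlemma},'' and you have simply spelled out that deduction via the paraproduct decomposition and the embedding \eqref{ineq:SobExp}. Your handling of the $\mu$-independence for $p>3/2$ is also correct and matches the paper's claim.
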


Next we state the following product lemma for $A$ and related $CK$ terms. 
\begin{lemma}[Product lemma for $A$ and $A^i$] \label{lem:AAiProd}
Let 
 $p \geq 0$ and $r \geq -\sigma$. Then there exists a $c = c(s) \in (0,1)$ such that for all $f,g$
\begin{subequations} \label{ineq:Aprod}
\begin{align} 
\norm{\abs{\grad}^p \jap{\grad}^{r}A^i(fg)}_2 & \lesssim \norm{f}_{\G^{c\lambda,3/2+}}\norm{\abs{\grad}^p \jap{\grad}^{r} A^ig}_2 \nonumber \\ & \quad + \norm{g}_{\G^{c\lambda,3/2+}}\norm{\abs{\grad}^p \jap{\grad}^{r} A^if}_2 \label{ineq:Aprodi} \\ 
\norm{\left(\sqrt{\frac{\partial_t w}{w}} + \frac{\abs{\grad}^{s/2}}{\jap{t}^s} \right)A^i(fg)}_2 & \lesssim \norm{f}_{\G^{c\lambda,3/2+}}\norm{\left(\sqrt{\frac{\partial_t w}{w}} + \frac{\abs{\grad}^{s/2}}{\jap{t}^s} \right)A^i g}_2 \nonumber \\ & \quad + \norm{g}_{\G^{c\lambda,3/2+}}\norm{\left(\sqrt{\frac{\partial_t w}{w}} + \frac{\abs{\grad}^{s/2}}{\jap{t}^s} \right)A^if}_2. \label{ineq:ACKwProd}
\end{align}
\end{subequations}
Further, if we assume $f$ and $g$ are independent of $X$ then the above also holds with $A^i$ replaced by $A$.    
\end{lemma}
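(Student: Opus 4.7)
The plan is to decompose the product $fg$ via the Bony paraproduct \eqref{def:parapp},
\[ fg = f_{Hi}g_{Lo} + f_{Lo}g_{Hi} + (fg)_{\mathcal{R}}, \]
and use the frequency-ratio estimates of Section \ref{sec:basicmult} to transfer the multiplier $A^i$ (respectively, the $CK_w$-type weight appearing in \eqref{ineq:ACKwProd}) from the output frequency down to the high-frequency input factor. Once transferred, each piece should reduce to a standard convolution bound in the spirit of the paraproduct lemma \eqref{ineq:triQuadHL}, producing the right-hand side of \eqref{ineq:Aprod}.

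For the $f_{Hi}g_{Lo}$ piece I would use the Littlewood--Paley localization, which forces $\abs{k,\eta,l} \approx \abs{k',\xi,l'} \gg \abs{k-k',\eta-\xi,l-l'}$, so that Lemma \ref{lem:ABasic} (with $i = j$ and $a,b$ matching the $X$-mode status of $k$ and $k'$) yields
\[ A^i_k(t,\eta,l) \lesssim \Gamma\, A^i_{k'}(t,\xi,l')\, e^{c\lambda\abs{k-k',\eta-\xi,l-l'}^s}. \]
The auxiliary weight $\abs{\grad}^p\jap{\grad}^r$ is split via $\abs{k,\eta,l}^p\jap{k,\eta,l}^r \lesssim \abs{k',\xi,l'}^p\jap{k',\xi,l'}^r + \jap{k-k',\eta-\xi,l-l'}^{p+r}$, together with the assumption $r \geq -\sigma$ and \eqref{ineq:SobExp} to absorb the Sobolev loss into the Gevrey exponential. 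Cauchy--Schwarz and Young's inequality then yield $\norm{g}_{\G^{c\lambda,3/2+}}\, \norm{\abs{\grad}^p\jap{\grad}^r A^i f}_2$, and the symmetric contribution from $f_{Lo}g_{Hi}$ produces the other term in \eqref{ineq:Aprodi}. For the remainder $(fg)_{\mathcal{R}}$ I would invoke Lemma \ref{lem:Arem}: the bound $A^i_k \lesssim \jap{t}^{-a_i}\, e^{c\lambda\abs{k',\xi,l'}^s}\, e^{c\lambda\abs{k-k',\eta-\xi,l-l'}^s}$ has $\jap{t}^{-a_i}$ matching the built-in time decay of $A^i$, so one of the exponentials can be reabsorbed into $A^i$ at the larger input while the other is absorbed into the low-frequency Gevrey norm.

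The $CK_w$-weighted bound \eqref{ineq:ACKwProd} follows by the same decomposition, now using Lemma \ref{lem:CKwFreqRat} -- specifically \eqref{ineq:dtwBasicBrack2} -- to move $\sqrt{\partial_t w / w} + \abs{\grad}^{s/2}\jap{t}^{-s}$ onto the high-frequency factor. The residual term $\abs{k-k',\eta-\xi,l-l'}^{s/2}\jap{t}^{-s}$ appearing in \eqref{ineq:dtwBasicBrack2} involves only the low frequency and is absorbed into $\norm{g}_{\G^{c\lambda,3/2+}}$ by the Gevrey exponential, and the $\jap{k-k',\eta-\xi,l-l'}^2$ Sobolev loss is handled identically. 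The $X$-independent claim for $A = \jap{\grad_{Y,Z}}^2 A^Q_0$ reduces to the same argument with $k = k' = 0$ throughout, so no cross-mode issue arises.

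The hardest part will be the mixed-mode interaction in the high-low piece, where the output has $k = 0$ but the high-frequency input has $k' \neq 0$ -- this is the $\Gamma(i,i,0,\neq)$ case of Lemma \ref{lem:ABasic}, in which a prefactor $\jap{t/\jap{\xi,l'}}^p$ with $p\in\{1,1+\delta_1,2\}$ appears and is not pointwise absorbable by the low-frequency Gevrey norm alone. Here I plan to exploit the paraproduct localization $\jap{\xi,l'}\gtrsim\jap{\eta,l}$ together with the observation that in the regime $t \gg \jap{\xi,l'}$ the $\min$-factor built into $A^i_{k'}$ is precisely of order $(\jap{\xi,l'}/t)^p$, so that $\jap{t/\jap{\xi,l'}}^p A^i_{k'}$ collapses to the zero-mode multiplier $A^Q_{k'} \approx A^i_0$, restoring the product estimate after squaring and summing in $k'$. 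All other cases are either direct or reduce to the high-low scenario after swapping $f \leftrightarrow g$.
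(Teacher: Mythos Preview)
Your paraproduct-plus-frequency-ratio strategy is exactly the paper's approach, and for \eqref{ineq:Aprodi} the argument via Lemmas~\ref{lem:ABasic} and~\ref{lem:Arem} is what the paper does (it says simply ``immediate from'' those lemmas together with Lemma~\ref{gevreyparaproductlemma}).

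For \eqref{ineq:ACKwProd} there is a gap in your treatment of the residual. After applying \eqref{ineq:dtwBasicBrack2}, the extra piece $\abs{k-k',\eta-\xi,l-l'}^{s/2}\jap{t}^{-s}$ cannot simply be ``absorbed into $\norm{g}_{\mathcal G^{c\lambda,3/2+}}$'': once you eat the polynomial with the Gevrey exponential you are left with $\norm{A^i f}_2\,\jap{t}^{-s}\,\norm{g}_{\mathcal G}$, and $\norm{A^i f}_2\,\jap{t}^{-s}$ is \emph{not} dominated by $\norm{\mathcal M A^i f}_2$ when $\hat f$ lives near $(k',\xi,l') = 0$ (for $\abs{\xi}\leq 1/2$ one has $\partial_t w/w = 0$, so $\mathcal M \approx \abs{k',\xi,l'}^{s/2}\jap{t}^{-s} \ll \jap{t}^{-s}$). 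The paper handles this by splitting the dyadic sum at $N=1$: for $N>1$ one has $\abs{k',\xi,l'} \approx N \gtrsim 1$, so \eqref{ineq:dtwBasic} applies and produces no residual; for $N\leq 1$ one first drops $A$ (it is $O(1)$ at those frequencies) and only then applies \eqref{ineq:dtwBasicBrack2}.

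Your mixed-mode concern ($k=0$, $k'\neq 0$) is a genuine subtlety that the paper's one-line proof of \eqref{ineq:Aprodi} does not address, and your proposed fix does not close the estimate either: collapsing $\jap{t/\jap{\xi,l'}}^{p}A^i_{k'}$ to $A^Q_{k'}$ leaves you needing $\norm{A^Q f_{\neq}}_2$, which is strictly stronger than $\norm{A^i f}_2$. In every application of this lemma in the paper, however, the inputs are $X$-independent (coordinate coefficients, $\tilde U_0$, $Q^j_0$, etc.), so the case never arises; you should not lose time trying to repair the general statement.
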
 
\begin{proof}
The proof of \eqref{ineq:Aprodi} are immediate from Lemmas \ref{lem:ABasic}, \ref{lem:Arem}, and \ref{gevreyparaproductlemma}. 
Let us briefly comment on how to prove \eqref{ineq:ACKwProd}; let us just prove it in the case of $A$ for $f$ and $g$ independent of $X$ as the more general case is similar.  
We would like to conclude quickly from Lemma \ref{lem:CKwFreqRat}, however, we have to see that we can apply \eqref{ineq:dtwBasic} rather than \eqref{ineq:dtwBasicBrack}.
Define the multiplier 
\begin{align*}
\mathcal{M}(t,\eta,l) = \left(\sqrt{\frac{\partial_t w(t,\eta)}{w(t,\eta)}} + \frac{\abs{\eta,l}^{s/2}}{\jap{t}^{s}}\right)A(t,\eta,l). 
\end{align*} 
Expand with a paraproduct expansion and apply the multiplier  
\begin{align*} 
\norm{\mathcal{M}(fg)}_2 & \leq \norm{\mathcal{M}(f_{Hi}g_{Lo})}_2 + \norm{\mathcal{M}(f_{Lo}g_{Hi})}_2  + \norm{\mathcal{M}(fg)_{\mathcal{R}}}_2 \\ 
& = T_{HL} + T_{LH} + T_{\mathcal{R}}. 
\end{align*}
By symmetry it suffices to treat only one of $T_{HL}$ and $T_{LH}$. 
Analogous to the proof of \eqref{ineq:quadHL}, from  almost orthogonality \eqref{ineq:GeneralOrtho},
\begin{align*} 
(T_{HL})^2  & \approx \sum_{N \in 2^{\Integer} : N \leq 1} \norm{\mathcal{M}(f_N g_{<N/8})}_{2}^2 + \sum_{N \in 2^{\Integer} : N \geq 1} \norm{\mathcal{M}(f_N g_{<N/8})}_{2}^2  = L + H. 
\end{align*} 
In bounding the $H$ term as in \eqref{ineq:quadHL} use Lemma \ref{lem:ABasic} and \eqref{ineq:dtwBasic} by the frequency localizations; then the proof proceeds as in \eqref{ineq:quadHL}.  
In the low frequency terms we may use that  
\begin{align*}
\sum_{N \in 2^{\Integer} : N \leq 1} \norm{\mathcal{M}(f_N g_{<N/8})}_{2}^2 & \lesssim \sum_{N \in 2^{\Integer} : N \leq 1} \norm{\left(\sqrt{\frac{\partial_t w}{w}} + \frac{\abs{\grad}^{s/2}}{\jap{t}^{s}}\right)(f_N g_{<N/8})}_{2}^2, 
\end{align*}
and then apply \eqref{ineq:dtwBasicBrack2} followed by an argument analogous to the proof of \eqref{ineq:quadHL}; we omit the details for brevity. 
This completes the treatment of the $T_{HL}$ term (and hence $T_{LH}$ as well). 

To treat the remainder term we may proceed analogous to the proof of \eqref{ineq:quadR}, applying \eqref{ineq:AARemainderBasic}
and \eqref{ineq:dtwBasicBrack2}; we omit the details for brevity. 
\end{proof}

Together with \eqref{def:psi2sqrBrack}, Lemma \ref{lem:AAiProd} and Lemma \ref{lem:GevProdAlg} imply the following lemma (as long as $C$ remains sufficiently small of course). 

\begin{lemma}[Coefficient control] \label{lem:CoefCtrl} 
Define 
\begin{align} 
G = \left((1+\psi_y)^2 + \psi_z^2\right) - 1. \label{def:G}
\end{align}
Then, under the bootstrap hypotheses for $c_0$ sufficiently small, we have for $h \in \set{\psi_y,\psi_z,G}$, 
\begin{subequations} 
\begin{align} 
\norm{\jap{\grad}^{-1} A h}_2 & \lesssim \norm{AC}_2 \\ 
\norm{A h}_2 & \lesssim \norm{\grad AC}_2 \\ 
\norm{\jap{\grad}^{-1} \sqrt{\frac{\partial_t w}{w}} A h}_2  & \lesssim \norm{\left(\sqrt{\frac{\partial_t w}{w}} + \frac{\abs{\grad}^{s/2}}{\jap{t}^s}\right) AC}_2 \\ 
\norm{\jap{\grad}^{-1} \abs{\grad}^{s/2} Ah}_2 & \lesssim \norm{\abs{\grad}^{s/2}AC}_2. 
\end{align}
\end{subequations}
Furthermore, 
\begin{subequations} 
\begin{align}
\norm{\jap{\grad}^{-2} A \Delta_t C}_2 & \lesssim \norm{ AC}_2 \\ 
\norm{\jap{\grad}^{-1} A \Delta_t C}_2 & \lesssim \norm{\grad AC}_2 \\ 
\norm{\jap{\grad}^{-2} \sqrt{\frac{\partial_t w}{w}} A \Delta_t C}_2  & \lesssim \norm{\left(\sqrt{\frac{\partial_t w}{w}} + \frac{\abs{\grad}^{s/2}}{\jap{t}^s}\right) AC}_2 \\ 
\norm{\jap{\grad}^{-2} \abs{\grad}^{s/2} A \Delta_t }_2 & \lesssim \norm{\abs{\grad}^{s/2}AC}_2.
\end{align}
\end{subequations}
Similarly, for any $\mu > 0$ and $p \geq 0$ (the constant can be taken independent of $\mu$  for $p > 1$): 
\begin{align} 
\norm{\psi_y}_{\G^{\mu,p}} + \norm{\psi_z}_{\G^{\mu,p}} + \norm{G}_{\G^{\mu,p}} + \norm{\Delta_t C}_{\G^{\mu,p-1}} & \lesssim \norm{\grad C}_{\G^{\mu,p}}. \label{ineq:psiCLow}
\end{align}
%\end{subequations}
\end{lemma}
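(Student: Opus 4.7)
The plan is to exploit the explicit Neumann-series expansions in \eqref{def:psi2sqrBrack} combined with the product structures in Lemmas \ref{lem:AAiProd} and \ref{lem:GevProdAlg}, reducing each estimate to an absolutely convergent geometric series. The crucial input is the bootstrap control \eqref{ineq:Boot_LowC}, which (after observing that $c\lambda < \lambda$ for the constant $c = c(s) \in (0,1)$ appearing in Lemma \ref{lem:AAiProd}, together with $\gamma$ large) yields $\norm{\partial_Y C}_{\G^{c\lambda,3/2+}} \lesssim c_0 \ll 1$; this smallness is exactly what makes all the series converge.

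First I would handle $\psi_y$ and $\psi_z$. From \eqref{def:psi2sqrBrack},
$$
\psi_y = \sum_{j=1}^\infty (\partial_Y C)^j, \qquad \psi_z = \partial_Z C \sum_{j=0}^\infty (\partial_Y C)^j.
$$
For each norm in the statement, I apply the corresponding product inequality from Lemma \ref{lem:AAiProd} term by term. For instance, to bound $\norm{\jap{\grad}^{-1} A \psi_y}_2$, iterating \eqref{ineq:Aprodi} with $r=-1$ gives $\norm{\jap{\grad}^{-1} A (\partial_Y C)^{j+1}}_2 \lesssim (C c_0)^j \norm{\jap{\grad}^{-1} A \partial_Y C}_2 \lesssim (Cc_0)^j \norm{AC}_2$; summing the geometric series with $c_0$ small yields $\norm{\jap{\grad}^{-1} A \psi_y}_2 \lesssim \norm{AC}_2$. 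The unweighted bound $\norm{A\psi_y}_2 \lesssim \norm{\grad A C}_2$ uses the same series with $r=0$, and the $\sqrt{\partial_t w/w}$- and $\abs{\grad}^{s/2}$-weighted variants follow identically via \eqref{ineq:ACKwProd} (the $\abs{\grad}^{s/2}$ case comes from the analogue of \eqref{ineq:dtwBasic} in Lemma \ref{lem:CKwFreqRat}). Since $G = 2\psi_y + \psi_y^2 + \psi_z^2$ by definition, its bounds then follow by one additional application of Lemma \ref{lem:AAiProd}.

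For the $\Delta_t C$ estimates, since $C$ is independent of $X$, the formula \eqref{def:tildeDel1} together with $\Delta\psi = \Delta_t C$ (under the coordinate change) yields
$$
(1 - \partial_Y C)\, \Delta_t C = (1 + G)\partial_{YY} C + 2\psi_z \partial_{YZ} C + \partial_{ZZ} C.
$$
Solving this scalar relation by expanding $(1-\partial_Y C)^{-1}$ as another Neumann series reduces the desired bounds to the already-established estimates on $\psi_y, \psi_z, G$ applied to the two-derivative quantities $\partial_{YY} C$, $\partial_{YZ} C$, $\partial_{ZZ} C$; this accounts exactly for the loss of two derivatives ($\jap{\grad}^{-2}$ rather than $\jap{\grad}^{-1}$, or $\G^{\mu,p-1}$ rather than $\G^{\mu,p}$) appearing in the claimed inequalities. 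The Gevrey estimate \eqref{ineq:psiCLow} follows by the exact same scheme, using Lemma \ref{lem:GevProdAlg} (and \eqref{ineq:GAlg} for $p>3/2$) in place of Lemma \ref{lem:AAiProd}.

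The main potential obstacle is the apparently circular dependence of $\Delta_t C$ on itself through $\Delta\psi$, which could render the series expansion ill-defined; this is resolved exactly by the displayed scalar equation, whose invertibility is guaranteed precisely by the smallness $\norm{\partial_Y C}_{\G^{c\lambda,3/2+}} \lesssim c_0$. A minor bookkeeping task is to verify at each step that the right version of the product inequality (with the correct weight and correct derivative index) is available, but the catalogue provided by Lemmas \ref{lem:AAiProd}, \ref{lem:GevProdAlg}, and \ref{lem:CKwFreqRat} covers every case needed.
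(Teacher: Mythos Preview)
Your proposal is correct and takes essentially the same approach as the paper, which simply states that the lemma follows from the Neumann series \eqref{def:psi2sqrBrack} together with Lemmas \ref{lem:AAiProd} and \ref{lem:GevProdAlg}. Your explicit derivation of the scalar identity $(1-\partial_Y C)\,\Delta_t C = (1+G)\partial_{YY}C + 2\psi_z\partial_{YZ}C + \partial_{ZZ}C$ (using $\Delta\psi = \Delta_t C$) is a clean way to make the $\Delta_t C$ estimates concrete, and is exactly the kind of detail the paper omits.
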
  

\begin{remark} \label{rmk:CoefCtrlLow} 
An immediate consequence of \eqref{ineq:psiCLow} together with \eqref{ineq:Boot_LowC} implies that any time the coefficients $\psi_y$,$\psi_z$ or $G$ appear in `low frequency' in a paraproduct, they satisfy the a priori estimates
\begin{align}
\norm{\psi_y}_{\G^{\lambda,\gamma-1}} + \norm{\psi_z}_{\G^{\lambda,\gamma-1}} + \norm{G}_{\G^{\lambda,\gamma-1}} & \lesssim \max(c_0,\epsilon \jap{t}). \label{ineq:CoefCtrlLow}
\end{align}  
Together with $\epsilon t \jap{\nu t^3}^{-1} \lesssim c_{0}\jap{t}^{-2}$, \eqref{ineq:CoefCtrlLow} shows that when there is enhanced dissipation present, we generally need only treat the leading order terms that arise from the approximation $\grad^t \approx \grad^L$ (recall the definition \eqref{def:gradDelL}). 
\end{remark}

\begin{remark} \label{rmk:SIcoefneglect} 
Even when enhanced dissipation is not present, the coefficients do not depend on $X$ and hence do not shift the frequencies in $X$. This is of absolutely crucial importance, since it means the approximations made in \S\ref{sec:Toy} will not be badly disturbed by the presence of the coefficients.
This will mean that even when there are no powers of $\jap{\nu t^3}^{-1}$, terms in which coefficients appear in low frequency are generally treatable with an easy variant of the treatment used on the leading order terms. 
Accordingly, these terms are generally omitted except for a few exceptions when the structure is changed appreciably by the coefficients. 
\end{remark} 

We can deduce the following important lemma, which is an easy variant of an analogous lemma proved also in \cite{BMV14}. 
Hence, the proof is omitted.  
\begin{lemma}[$A^\nu$ Product Lemma] \label{lem:AnuProd} 
The following holds for all $f^1$ and $f^2$ such that $f^2_{\neq} = f^2$, 
\begin{align} 
\norm{A^{\nu;i}(f^1 f^2)}_2 & \lesssim \norm{f^1}_{\G^{\lambda,\beta + 3\alpha+2}}\norm{A^{\nu;i}f^2}_2. \label{ineq:AnuiDistri}  
\end{align} 
Moreover, if also $f^1_{\neq} = f^1$, then we have the product lemma-type inequalities 
\begin{subequations} \label{ineq:AnuiDistriDecay}
\begin{align} 
\norm{A^{\nu;1}(f^1 f^2)}_2 \lesssim \frac{\jap{t}^{2+\delta_1}}{\jap{\nu t^3}^\alpha}\left(\norm{\jap{\grad}^{2-\beta} A^{\nu;1} f^1}_{2}\norm{A^{\nu;1} f^2}_2 + \norm{A^{\nu;1} f^1}_{2}\norm{\jap{\grad}^{2-\beta} A^{\nu;1} f^2}_2 \right) \label{ineq:AnuiDistriDecay1} \\ 
\norm{A^{\nu;2}(f^1 f^2)}_2 \lesssim \frac{\jap{t}^{\delta_1}}{\jap{\nu t^3}^\alpha}\left(\norm{\jap{\grad}^{2-\beta} A^{\nu;2} f^1}_{2}\norm{A^{\nu;2} f^2}_2 + \norm{A^{\nu;2} f^1}_{2}\norm{\jap{\grad}^{2-\beta} A^{\nu;2} f^2}_2 \right) \label{ineq:AnuiDistriDecay2} \\ 
\norm{A^{\nu;3}(f^1 f^2)}_2 \lesssim \frac{\jap{t}^{2}}{\jap{\nu t^3}^\alpha}\left(\norm{\jap{\grad}^{2-\beta} A^{\nu;3} f^1}_{2}\norm{A^{\nu;3} f^2}_2 + \norm{A^{\nu;3} f^1}_{2}\norm{\jap{\grad}^{2-\beta} A^{\nu;3} f^2}_2 \right). \label{ineq:AnuiDistriDecay3}
\end{align} 
\end{subequations}  
\end{lemma}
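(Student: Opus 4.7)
The plan is to prove both estimates by paraproduct decomposition combined with pointwise comparisons for the multipliers $A^{\nu;i}$, following the same strategy used in Lemmas \ref{lem:ABasic}, \ref{lem:Arem}, and \ref{gevreyparaproductlemma}. The first preparatory step is to establish an analogue of Lemma \ref{lem:ABasic} for $A^{\nu;i}$: under a frequency split $|k-k',\eta-\xi,l-l'| \leq \theta|k,\eta,l|$ with $\theta < 1/2$ and $k' \neq 0$, one has, for some $c = c(s) \in (0,1)$,
\begin{align*}
A^{\nu;i}_k(t,\eta,l) \lesssim A^{\nu;i}_{k'}(t,\xi,l')\,\jap{k-k',\eta-\xi,l-l'}^{\beta+3\alpha+2}\, e^{c\lambda\abs{k-k',\eta-\xi,l-l'}^s},
\end{align*}
together with the analogous remainder estimate. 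Compared to Lemma \ref{lem:ABasic}, this is actually simpler because $A^{\nu;i}$ carries no Orr multiplier $w$; the new feature is the enhanced dissipation weight $\jap{D(t,\eta)}^\alpha$. Using the explicit form $D(t,\eta) = \max(\nu t^3/(24\alpha),\, \nu\abs{\eta}^3/(3\alpha))$ together with $\nu \leq 1$, one sees $D(t,\eta) \lesssim D(t,\xi) + \abs{\eta-\xi}^3$, hence $\jap{D(t,\eta)}^\alpha \lesssim \jap{D(t,\xi)}^\alpha \jap{\eta-\xi}^{3\alpha}$. The Sobolev factor contributes $\jap{k-k',\eta-\xi,l-l'}^\beta$; the anisotropy $\min$-multiplier and the $w_L^{-1}$ ratio together cost $\jap{k-k',\eta-\xi,l-l'}^2$ in the worst case (namely $i=3$); exponential factors are handled by \eqref{lem:scon} exactly as in Lemma \ref{lem:ABasic}.

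Given the multiplier comparison, \eqref{ineq:AnuiDistri} follows by the standard paraproduct argument. Decompose $f^1f^2 = f^1_{Hi}f^2_{Lo} + f^1_{Lo}f^2_{Hi} + (f^1f^2)_{\mathcal{R}}$. In the low-high piece $f^1_{Lo}f^2_{Hi}$, the output frequency is comparable to that of $f^2$ (which lies at $k' \neq 0$ because $f^2 = f^2_{\neq}$), so $A^{\nu;i}$ passes onto $f^2$ while $f^1$ absorbs only Gevrey regularity loss. In the high-low piece $f^1_{Hi}f^2_{Lo}$, the roles are reversed and the multiplier comparison places $f^1$ into $\mathcal{G}^{\lambda,\beta+3\alpha+2}$ with exponent exactly as in the claim, while $A^{\nu;i}f^2$ appears at low frequency. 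The remainder term is estimated as in \eqref{ineq:quadR} using the remainder analogue of the multiplier comparison; Schur-type bounds of the form \eqref{ineq:L2L1} close every $L^2$ inner product.

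For \eqref{ineq:AnuiDistriDecay}, now that both $f^1$ and $f^2$ are $X$-nonzero, we redistribute weights so that each factor carries an $A^{\nu;i}$-type norm and the time-decay factor is produced by applying \eqref{ineq:AnuDecay} to whichever factor sits at low frequency. In the $f^1_{Hi}f^2_{Lo}$ piece, we first compare $A^{\nu;i}$ at the output frequency with $A^{\nu;i}$ at the (comparable) high frequency of $f^1$, producing $\norm{A^{\nu;i}f^1}_2 \norm{f^2}_{\mathcal{G}^{c\lambda,3/2+}}$; then \eqref{ineq:AnuDecay} converts $\norm{f^2}_{\mathcal{G}^{c\lambda,3/2+}}$ into $(\jap{t}^{p_i}/\jap{\nu t^3}^\alpha)\norm{\jap{\grad}^{2-\beta}A^{\nu;i}f^2}_2$, where $p_i = 2+\delta_1,\delta_1,2$ for $i=1,2,3$ respectively. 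The symmetric term in \eqref{ineq:AnuiDistriDecay} arises from the $f^1_{Lo}f^2_{Hi}$ piece by swapping the roles of $f^1$ and $f^2$, and the remainder piece is handled analogously. The negative exponent $2-\beta$ reflects the trade-off: we keep only $\jap{\cdot}^2$ of Sobolev weight (enough to absorb the min-multiplier and $w_L$ ratios) because $\jap{\cdot}^{3/2+}$ suffices on the low-frequency factor before using \eqref{ineq:AnuDecay}.

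The main technical obstacle is the careful bookkeeping of the anisotropy $\min$-multipliers and the $w_L$ ratios under frequency splits, ensuring that exactly the exponents $p_i$ from \eqref{ineq:AnuDecay} (and the $\jap{\grad}^{2-\beta}$ weight) are produced. Once the multiplier comparison above is established, the rest of the proof is a direct adaptation of the analogous 2D product lemma proved in \cite{BMV14} and the paraproduct calculus already developed in this section, so we do not expect any essentially new difficulties.
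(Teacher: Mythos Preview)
Your proposal is correct and follows essentially the same approach the paper intends: the paper explicitly omits the proof, stating only that it is ``an easy variant of an analogous lemma proved also in \cite{BMV14},'' and your outline---a multiplier comparison for $A^{\nu;i}$ (handling the $\jap{D(t,\eta)}^\alpha$ factor via $D(t,\eta)\lesssim D(t,\xi)+|\eta-\xi|^3$, the $O(1)$ multiplier $w_L$, and the anisotropic $\min$ weight), followed by the standard paraproduct decomposition, and for \eqref{ineq:AnuiDistriDecay} the use of \eqref{ineq:AnuDecay} on the low-frequency factor---is exactly that variant. Your bookkeeping of the exponents (Sobolev $\beta$, enhanced-dissipation $3\alpha$, anisotropy $\leq 2$) matches the claimed $\beta+3\alpha+2$, and your explanation of how $\jap{\nabla}^{2-\beta}$ arises is on point.
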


\section{High norm estimate on $Q^2$}\label{sec:HiQ2}
First compute the time evolution of $A^2 Q^2$ in $L^2$: 
\begin{align}
\frac{1}{2}\frac{d}{dt}\norm{A^{2} Q^2}_2^2 & \leq \dot{\lambda}\norm{\abs{\grad}^{s/2}A^{2} Q^2}_2^2 - \norm{\sqrt{\frac{\partial_t w}{w}}A^{2} Q^2}_2^2  - \frac{1}{t}\norm{\mathbf{1}_{t > \jap{\grad_{Y,Z}}} A^{2}Q^2_{\neq}}_2^2 \nonumber \\
 & \quad -\norm{\sqrt{\frac{\partial_t w_L}{w_L}} A^{2} Q^2}_2^2  + \nu \int A^{2} Q^2 A^{2} \left(\tilde{\Delta_t} Q^2\right) dV -\int A^{2} Q^2 A^{2} \left( \tilde U \cdot \grad Q^2 \right) dV \nonumber \\
& \quad - \int A^{2} Q^2 A^{2}\left( Q^j \partial_j^t U^2  + 2\partial_i^t U^j \partial_{i}^t \partial_{j}^t U^i - \partial_Y^t\left(\partial_j^t U^i \partial_i^t U^j\right) \right) dV \nonumber \\ 
& = - \mathcal{D}Q^2 - CK^2_L + \mathcal{D}_E + \mathcal{T} + NLS1 + NLS2 + NLP, \label{eq:A2Q2Evo}  
\end{align}
where we used the definition
\begin{align*}
\mathcal{D}_E = \nu \int A^{2} Q^{2} A^{2} \left((\tilde{\Delta_t} - \Delta_L) Q^2\right) dV. 
\end{align*}
Let us here introduce the following enumerations. 
For $i,j\in \set{1,2,3}$ and $a,b \in \set{0,\neq}$: 
\begin{subequations} \label{def:Q2Enums}
\begin{align}
NLP(i,j,a,b) &= \int A^2 Q^2_{\neq} A^2\left( \partial_Y^t \left(\partial_j^t U^i_a \partial_i^t U^j_b \right) \right) dV \\ 
NLS1(j,a,b) & = -\int A^2 Q^2_{\neq} A^2\left(Q^j_a\partial_j^t U^2_{b}\right) dV \\
NLS2(i,j,a,b) & = -\int A^2 Q^2_{\neq} A^2\left(\partial_i^t U^j_a \partial_i^t\partial_j^t U^2_{b}\right) dV \\
NLP(i,j,0) & = \int A^2 Q^2_{0} A^2\left( \partial_Y^t \left(\partial_j^t U^i_0 \partial_i^t U^j_0 \right) \right) dV \\ 
NLS1(j,0) & = -\int A^2 Q^2_{0} A^2\left(Q^j_0\partial_j^t U^2_{0}\right) dV \\
NLS2(i,j,0) & = -\int A^2 Q^2_{0} A^2\left(\partial_i^t U^j_0 \partial_i^t\partial_j^t U^2_{0}\right) dV \\ 
\mathcal{F} & = -\int A^2 Q^2_{0} A^2\left(\partial_i^t \partial_i^t \partial_j^t \left( U^j_{\neq} U^2_{\neq}\right)_0 - \partial_{Y}^t \partial_j^t \partial_i^t \left( U^i_{\neq} U^j_{\neq}\right)_0 \right)dV \\ 
\mathcal{T}_0 & = -\int A^{2} Q_0^2 A^{2} \left( \tilde U_{0} \cdot \grad Q_{0}^2 \right) dV \\ 
\mathcal{T}_{\neq} & = -\int A^{2} Q_{\neq}^2 A^{2} \left( \tilde U \cdot \grad Q^2 \right) dV 
\end{align}
\end{subequations} 

We have divided the nonlinearity up based on the heuristics of \S\ref{sec:NonlinHeuristics}, as each type of interaction warrants a different treatment.  
Note that we have split $\mathcal{T}$ into three contributions: $\mathcal{T}_0$ (the \textbf{(2.5NS)} interactions), $\mathcal{T}_{\neq}$ (the \textbf{(SI)} and \textbf{(3DE)} interactions), and a contribution that is grouped with $\mathcal{F}$ (the \textbf{(F)} interactions).  Similarly, we have split the $NLS$ and $NLP$ terms into several contributions: $NLS1(j,0)$, $NLS2(i,j,0)$, and $NLP(i,j,0)$ (the \textbf{(2.5NS)} interactions), the $NLS1(j,a,b)$, $NLS2(i,j,a,b)$, and $NLP(i,j,a,b)$ (the \textbf{(SI)} and \textbf{(3DE)} interactions), and a contribution that is grouped with $\mathcal{F}$ (the \textbf{(F)} interactions). 
This kind of subdivision will be used repeatedly in the sequel.  

\subsection{Zero frequencies} \label{sec:AQ2Zero}
The nonlinear terms associated with zero frequencies tend to have a very different flavor than those 
of the non-zero frequencies. 

\subsubsection{Transport nonlinearity} \label{sec:TransQ20}
Here we consider the nonlinear interaction of two zero frequencies in the transport nonlinearity (hence, these are of type \textbf{(2.5NS)}), defined as $\mathcal{T}_0$ above in \eqref{def:Q2Enums}. We further subdivide by frequency: 
\begin{align*} 
\mathcal{T}_{0} & = -\int \left(A^{2} Q_0^2\right)_{>1} A^{2} \left( \tilde U_{0} \cdot \grad Q_{0}^2 \right)_{>1} dV - \int \left(A^{2} Q^2_0\right)_{\leq 1} A^{2} \left( \tilde U_{0} \cdot \grad Q_{0}^2 \right)_{\leq 1} dV \\ 
& = \mathcal{T}^H_0 + \mathcal{T}^L_0. 
\end{align*} 
On the high frequencies we can use Lemma \ref{lem:AAiProd} and the frequency projection to deduce (recall \eqref{def:tildeU2}): 
\begin{align*} 
\mathcal{T}^H_0 & \leq \norm{A^2Q^2_{>1}}_2 \norm{A^2\left(\tilde U_0 \cdot \grad Q^2_0 \right)_{>1}}_2 \\ 
& \lesssim \left(\norm{A^2 g}_2 + \norm{A^2 U_0^3}_2\right)\norm{\grad A^2 Q^2_0}_2^2 \\ 
& \lesssim \epsilon\norm{\sqrt{-\Delta_L}A^2 Q^2}_2^2, 
\end{align*}  
where the last line used \eqref{ineq:AprioriU0} and \eqref{ineq:Boot_Ag}. 
This term is then absorbed by the dissipation for $c_0$ sufficiently small (depending on $K_B$ of course).   

Turn next to the low frequencies $\mathcal{T}^L_0$. This term requires a slightly more precise treatment as we cannot add $\sqrt{-\Delta_L}$ to the leading factor. 
Separate into two contributions: 
\begin{align*} 
\mathcal{T}^L_0 & = - \int \left(A^{2} Q^2_0\right)_{\leq 1} A^{2} \left(g \partial_Y Q^2_{0} \right)_{\leq 1} dV - \int \left(A^{2} Q^2_0\right)_{\leq 1} A^{2} \left(U_0^3 \partial_Z Q^2_{0} \right)_{\leq 1} dV \\ 
& = \mathcal{T}^L_g + \mathcal{T}^L_U.  
\end{align*} 
For $\mathcal{T}^L_g$ we have by Sobolev embedding and \eqref{ineq:Boot_gLow}, 
\begin{align*} 
\mathcal{T}^L_g & \leq \norm{ (A^2 Q^2_0)_{\leq 1}}_2\norm{A^2\left(g \partial_Y Q^2_0\right)_{\leq 1}}_2 \\ 
& \lesssim \norm{g}_\infty \norm{Q^2_0}_2 \norm{\partial_ Y Q^2_0}_2 \lesssim \frac{\epsilon}{\jap{t}^{2}}\left(\norm{A^2 Q^2}_2^2 + \norm{\sqrt{-\Delta_L}A^2 Q^2}_2^2 \right), 
\end{align*} 
which suffices for Proposition \ref{prop:Boot} for $c_0$ and $\epsilon$ sufficiently small. 
For $\mathcal{T}^L_U$, first note
\begin{align*} 
 \mathcal{T}^L_U & \lesssim \sum_{l,l^\prime} \int \abs{\widehat{Q^2_0}(\eta,l) \widehat{U^3_0}(\xi,l^\prime) (l-l^\prime) \widehat{Q^2_{0}}(\eta-\xi,l-l^\prime)}  d\eta d\xi.
\end{align*} 
Then since $l \neq l^\prime$ on the support of the integrand, at least one of $l$ or $l^\prime$ is non-zero, and therefore we have by \eqref{ineq:L2L2L1}, 
\begin{align*} 
\mathcal{T}^L_U & \lesssim \norm{Q^2}_2 \norm{\partial_Z U^3_0}_2 \norm{\partial_Z Q^2}_{H^{1+}} +  \norm{\partial_Z Q^2}_2 \norm{U^3_0}_2 \norm{\partial_Z Q^2}_{H^{1+}} \\ 
& \lesssim \epsilon \norm{\grad U^3_0}^2_2 +   \epsilon \norm{\sqrt{-\Delta_L} A^2 Q^2}^2_2, 
\end{align*}
where the last line followed from \eqref{ineq:Boot_Q2Hi}, and \eqref{ineq:AprioriU0}.  
By \eqref{ineq:Boot_LowFreq}, this is consistent with Proposition \ref{prop:Boot} for $c_0$ chosen sufficiently small by absorbing terms with the dissipation and integrating, as discussed in \eqref{ineq:GenScheme}.

\subsubsection{Nonlinear pressure and stretching} \label{sec:NLPSQ20}
These terms correspond to the nonlinear zero frequency interactions in the pressure and stretching terms, and so are of type \textbf{(2.5NS)},  
Due to the ellipticity of $\tilde{\Delta}_t$ at the zero frequency and 
the fact that $A^{2}_0 = A^{3}_0$, all of these terms are similar. 

Consider the nonlinear pressure interaction with $i = j = 3$, denoted $NLP(3,3,0)$ (recall \eqref{def:Q2Enums}), as a representative example; the other contributions can all be treated with a very similar approach (or are slightly easier due to low frequency decay estimates on $U^2_0$) and hence these are omitted for the sake of brevity.    
As in the transport nonlinearity above in \S\ref{sec:TransQ20}, we divide into high and low frequencies
\begin{align} 
NLP(3,3,0) & = -\int \left(A^{2}Q^2_0\right)_{>1} \left(A^{2}\partial_Y^t\left( \partial_Z^t U^3_0 \partial_Z^t U^3_0 \right)\right)_{>1} dV + \int \left(A^{2}Q^2_0\right)_{\leq 1} \left(A^{2}\partial_Y^t\left( \partial_Z^t U^3_0 \partial_Z^t U^3_0 \right)\right)_{\leq 1} dV \nonumber \\ 
& = P^H + P^L. \label{def:NLP033_Q2}
\end{align}  
Consider the high frequency term first. We have by Lemma \ref{lem:AAiProd}, Lemma \ref{lem:CoefCtrl}, and \eqref{ineq:Boot_ACC}: 
\begin{align} 
P^H & \leq \norm{\left(A^2 Q^2_0\right)_{>1}}_2 \norm{ \left(A^2 \partial_Y^t\left( \partial_Z^t U^3_0 \partial_Z^t U^3_0 \right)\right)_{>1}}_2 \nonumber \\ 
& \lesssim \norm{\grad A^2 Q^2_{0}}_2(1+\norm{A C}_2)\norm{A^2 \partial_Y\left(\partial_Z^t U^3_0 \partial_Z^t U^3_0\right)}_2 \nonumber \\ 
& \lesssim \norm{\sqrt{-\Delta_L} A^2 Q^2}_2 \left((1 + \norm{AC})^2 \norm{AC}_2\norm{\grad A^2 U^3_0}_2^2 + (1 + \norm{AC})^3\norm{\grad^2 A^2 U_0^3}_2 \norm{\grad A^2 U_0^3}_2 \right) \nonumber \\ 
& \lesssim \norm{\sqrt{-\Delta_L} A^2 Q^2}_2\left(\norm{\grad A^2 U^3_0}_2^2 + \norm{\grad^2 A^2 U_0^3}_2 \norm{\grad A^2 U_0^3}_2 \right). \label{ineq:NLP033H_Q2}
\end{align} 
By \eqref{ineq:AprioriU0} and Lemma \ref{lem:PELbasicZero} (specifically \eqref{ineq:gradAU0i}) we get 
\begin{align*} 
\norm{\grad A^2 U^3_0}_2^2 + \norm{\grad^2 A^2 U_0^3}_2 \norm{\grad A^2 U_0^3}_2  & \lesssim \norm{AU_0^3}_2 \norm{\grad AU_0^3}_2 \lesssim \epsilon \left(\norm{\grad A^3 Q_0^3}_2 + \norm{\grad U_0^3}_2 + \epsilon \norm{\grad AC}_2\right).  
\end{align*} 
Applying this to \eqref{ineq:NLP033H_Q2} implies
\begin{align*} 
P^H & \lesssim \epsilon \norm{\sqrt{-\Delta_L} A^2 Q^2}^2_2 + \epsilon \norm{\sqrt{-\Delta_L} A^3 Q^3}^2_2 + \epsilon \norm{\grad U_0^3}^2_2 + \epsilon^3\norm{\grad AC}_2^2, 
\end{align*}
which can be absorbed by the dissipation and time-integrated to be consistent with Proposition \ref{prop:Boot} by choosing $c_{0}$ sufficiently small via the bootstrap hypotheses \eqref{ineq:Boot_ACC}, \eqref{ineq:Boot_Hi} and \eqref{ineq:Boot_LowFreq}. 

Turn next to the low frequency term in \eqref{def:NLP033_Q2}. 
Unlike the transport term in \S\ref{sec:TransQ20}, there are enough derivatives to absorb this term with the dissipation in a relatively straightforward manner.  
Indeed, by Sobolev embedding and Lemma \ref{lem:CoefCtrl}, we have 
\begin{align*} 
P^L & \lesssim  \norm{Q^2_0}_2\left(1 + \norm{\psi_y}_\infty \right) \norm{\partial_Y\left( \partial_Z^t U^3_0 \partial_Z^t U^3_0\right)}_2 \\ 
& \lesssim \norm{Q^2_0}_2\left(1 + \norm{C}_{H^{7/2+}} \right)^3\norm{\grad U^3_0}_{H^{3/2+}}^2 \\ 
& \lesssim  \epsilon \norm{\grad U^3}_{H^{3/2+}}^2,
\end{align*}
where the last line followed by \eqref{ineq:Boot_LowFreq}. This is consistent with Proposition \ref{prop:Boot} by \eqref{ineq:Boot_LowFreq} for $c_{0}$ sufficiently small. 
This completes the treatment of $NLP(3,3,0)$. 
As mentioned above, the other $NLP$ and $NLS$ terms are treated in the same way and yield similar contributions so are omitted for the sake of brevity (note for $NLS1$, one should use $Q^j_0 = \Delta_t U^j_0$ in order to see $\grad U^j$ so that viscous dissipation can be used). 

\subsubsection{Forcing from non-zero frequencies} \label{sec:NzeroForcing}
We now turn to nonlinear interactions of type \textbf{(F)}: the interaction of two $X$ frequencies $k$ and $-k$. 
Further divide via, 
\begin{align*} 
\mathcal{F}  & = -\int A^2 Q^2_0 A^2\left(\partial_Z^t \partial_Z^t \partial_j^t \left(U^j_{\neq} U^2_{\neq}\right)_0 - \partial_Y^t \partial_Y^t \partial_Z^t \left(U^3_{\neq} U^2_{\neq}\right)_0  - \partial_Y^t \partial_Z^t \partial_Z^t \left(U^3_{\neq} U^3_{\neq}\right)_0 \right) dV  \\ 
& = F^1 + F^2 + F^3.     
\end{align*} 
All three are treated via variants of the same basic approach which will ultimately come down to applying the 
appropriate multiplier estimate in \eqref{ineq:AdelLij} depending on the combination of derivatives present. 
Hence, we will treat the example $F^2$ and omit the others for the sake of brevity.
We begin by expanding via a quintic paraproduct decomposition (see \S\ref{sec:paranote}) (the term is quintic due to the presence of $\psi$ in the derivatives).
However, by Remark \ref{rmk:CoefCtrlLow}, the terms involving the coefficients are much smaller than the leading order contributions unless they appear as the high frequency factor in the paraproduct. 
Therefore, we expand $F^2$ in the following manner: 
\begin{align*}  
F^2 & = \sum_{k\neq 0} \int A^{2}Q^2_0 A_0^{2} \partial_Y\partial_Y\partial_Z\left( \left(U^3_{-k}\right)_{Hi} \left( U^2_k\right)_{Lo}\right)dV \\
& \quad + \sum_{k\neq 0} \int A^{2}Q^2_0 A_0^{2} \partial_Y\partial_Y\partial_Z\left( \left(U^3_{-k}\right)_{Lo} \left( U^2_k\right)_{Hi}\right) dV \\ 
& \quad +\sum_{k\neq 0}  \int A^{2}Q^2_0 A_0^{2} \left( (\psi_y)_{Hi} \partial_Y\partial_Y\partial_Z\left( \left(U^3_{-k}\right)_{Lo} \left( U^2_k\right)_{Lo}\right)\right) dV \\ 
& \quad + \sum_{k\neq 0} \int A^{2}Q^2_0 A_0^{2} \left( \partial_Y (\psi_y)_{Hi}\partial_Y\partial_Z\left( \left(U^3_{-k}\right)_{Lo} \left( U^2_k\right)_{Lo}\right) \right) dV \\
& \quad + \sum_{k\neq 0} \int A^{2}Q^2_0 A_0^{2} \left( \partial_Y \partial_Y (\psi_z)_{Hi} \partial_Z \left( \left(U^3_{-k}\right)_{Lo} \left( U^2_k\right)_{Lo}\right) \right) dV \\ 
& \quad + F^2_{\mathcal{R},C} \\ 
& = F^2_{HL} + F^2_{LH} + F^2_{C1} + F^2_{C2} + F^2_{C3} + F^2_{\mathcal{R},C},    
\end{align*} 
where here $F^2_{\mathcal{R},C}$ includes all of the remainders from the quintic paraproduct as well as the higher order terms involving coefficients as low frequency factors. 
Turn first to $F^2_{HL}$ (recall \eqref{ineq:AprioriUneq} the shorthand discussed in \eqref{def:Low} above), which by \eqref{ineq:ABasic}, can be bounded by
\begin{align*} 
F_{HL}^2 & \lesssim \frac{\epsilon}{\jap{t}^{2-\delta_1} \jap{\nu t^3}^{\alpha}} \sum_{k\neq 0} \sum_{l,l^\prime} \int \abs{A^{2} \widehat{Q^2_0}(\eta,l) A^2_0(\eta,l) \frac{\abs{\eta}^2\abs{l}}{k^2 + (l^\prime)^2 + \abs{\xi-kt}^2} \Delta_L \widehat{U^3_{k}}(\xi,l^\prime)_{Hi}} \\ & \quad\quad \times Low(-k,\eta-\xi,l-l^\prime) d\eta d\xi \\ 
& \lesssim \frac{\epsilon}{\jap{t}^{2-\delta_1} \jap{\nu t^3}^{\alpha}} \sum_{k\neq 0} \sum_{l,l^\prime} \int \abs{A^{2} \widehat{Q^2_0}(\eta,l) \frac{\abs{\eta}^2\abs{l} \jap{\frac{t}{\jap{\xi,l^\prime}}}^2}{k^2 + (l^\prime)^2 + \abs{\xi-kt}^2} \Delta_L A^{3}\widehat{U^3_{k}}(\xi,l^\prime)_{Hi}} Low(-k,\eta-\xi,l-l^\prime) d\eta d\xi. 
\end{align*} 
Hence, by \eqref{ineq:AdeYZZ} (with $p = 2$) and \eqref{ineq:triQuadHL} (with $\mu = p = 0$) this can be estimated via 
 \begin{align*} 
F_{HL}^2 & \lesssim \frac{\epsilon t^{\delta_1}}{\jap{\nu t^3}^{\alpha}}\norm{\left(\sqrt{\frac{\partial_t w}{w}} + \frac{\abs{\grad}^{s/2}}{\jap{t}^s}\right)A^{2}Q^2}_2 \norm{\left(\sqrt{\frac{\partial_t w}{w}} + \frac{\abs{\grad}^{s/2}}{\jap{t}^s}\right)A^{3} \Delta_LU^3_{\neq}}_2
\\ & \quad + \frac{\epsilon}{\jap{t}^{2-\delta_1} \jap{\nu t^3}^\alpha}\norm{\sqrt{-\Delta_L} A^{2}Q^2}_2 \norm{A^{3}\Delta_L U^3_{\neq}}_2,     
\end{align*} 
which, after the application of Lemmas \ref{lem:PEL_NLP120neq} and \ref{lem:SimplePEL}, is consistent with Proposition \ref{prop:Boot} for $c_0$ sufficiently small by 
absorbing the leading terms with the dissipation energies and integrating the others, as discussed in \eqref{ineq:GenScheme}.  

Turn next to $F_{LH}^2$, for which we use a similar approach as $F_{HL}^2$.   
Indeed, by \eqref{ineq:ABasic} followed by \eqref{ineq:AdeYZZ} (with $p = 1$) and \eqref{ineq:triQuadHL} (with $\mu = p = 0$) this is estimated via: 
\begin{align*} 
F_{LH}^{2} & \lesssim \frac{\epsilon}{\jap{\nu t^3}^{\alpha}}\sum_{k\neq 0} \sum_{l,l^\prime} \int \abs{A^{2} \widehat{Q^2_0}(\eta,l) \frac{\abs{\eta}^2\abs{l} \jap{\frac{t}{\jap{\xi,l^\prime}}}  }{k^2 + (l^\prime)^2 + \abs{\xi-kt}^2} \Delta_L A^{2}\widehat{U^2_{k}}(\xi,l^\prime)_{Hi}} Low(-k,\eta-\xi,l-l^\prime) d\eta d\xi \\ 
& \lesssim \frac{\epsilon t^2}{\jap{\nu t^3}^{\alpha}}\norm{\left(\sqrt{\frac{\partial_t w}{w}} + \frac{\abs{\grad}^{s/2}}{\jap{t}^s}\right) A^{2}Q^2}_2 \norm{\left(\sqrt{\frac{\partial_t w}{w}} + \frac{\abs{\grad}^{s/2}}{\jap{t}^s}\right)A^{2} \Delta_L U^2_{\neq}}_2
\\ & \quad + \frac{\epsilon}{\jap{\nu t^3}^\alpha}\norm{\sqrt{-\Delta_L} A^{2}Q^2}_2 \norm{A^{2}\Delta_L U^2_{\neq}}_2,    
\end{align*} 
which, after the application of the precision elliptic lemmas, Lemmas \ref{lem:PEL_NLP120neq} and \ref{lem:SimplePEL}, is consistent with Proposition \ref{prop:Boot} for $\epsilon$ sufficiently small. 

The most difficult coefficient term is $F^2_{C3}$, since two derivatives of the coefficients are being taken. 
By Lemma \ref{lem:ABasic}, \eqref{ineq:triQuadHL} and Lemma \ref{lem:CoefCtrl} we have
\begin{align*} 
F^2_{C3} & \lesssim  \frac{\epsilon^2}{\jap{t}^{2-\delta_1} \jap{\nu t^3}^{2\alpha}}\sum_{k\neq 0} \sum_{l,l^\prime} \int \abs{A^{2} \widehat{Q^2_0}(\eta,l) A_0^{2}(\eta,l) \abs{\eta}^2 \widehat{\psi_z}(\xi,l^\prime)_{Hi}} Low(k,\eta-\xi,l-l^\prime) d\eta d\xi \\ 
& \lesssim \frac{\epsilon^2}{\jap{t}^{2-\delta_1} \jap{\nu t^3}^{2\alpha}} \norm{\sqrt{-\Delta_L} A^{2}Q^2}_2 \norm{AC}_2 \\ 
& \lesssim \epsilon\norm{\sqrt{-\Delta_L} A^{2}Q^2}_2^2 + \frac{\epsilon^3}{\jap{t}^{4-2\delta_1} \jap{\nu t^3}^{2\alpha}}\norm{AC}^2_2,
\end{align*}  
which is consistent with Proposition \ref{prop:Boot} by the bootstrap hypotheses. Note that the inviscid damping which produces $t^{2\delta_1 - 4}$ in the second factor is not necessary to treat this term, and indeed, it is not present in the analogous term in $F^3$. 
All of the other coefficient high frequency terms are easier and give similar contributions and are hence omitted for brevity. 
As discussed previously, the remainder terms $F_{\mathcal{R},C}^2$ are significantly easier and yield similar contributions (except smaller) as the leading order terms (though note some terms involving coefficients in low frequency may require a different inequality in \eqref{ineq:AdelLij} than the leading order term). 

\subsubsection{Dissipation error terms} \label{sec:DEQ02}
Recalling the definitions of the dissipation error terms and the short-hand \eqref{def:G}, we have 
\begin{align} 
\mathcal{D}_E & = \nu \int A^2 Q^2_0A^2\left(\tilde{\Delta_t} - \Delta_L\right)Q^2_0 dV =  \nu\int A_0^2 Q^2_0 A_0^2\left(G \partial_{YY}Q^2_0 + 2\psi_z \partial_{YZ}Q^2_0 \right) dV. \label{def:DEQ2}
\end{align} 
The two error terms are very similar, so just consider the first and expand with a paraproduct
\begin{align} 
\nu\int A_0^2 Q^2_0 A_0^2\left(G\partial_{YY}Q^2_0\right) dV & = \nu\int A_0^2 Q^2_0 A_0^2\left(G_{Hi} (\partial_{YY}Q^2_0)_{Lo} \right) dV + \nu\int A_0^2 Q^2_0 A_0^2\left(G_{Lo} (\partial_{YY}Q^2_0)_{Hi} \right) dV \nonumber \\& \quad
 +  \nu\int A_0^2 Q^2_0 A_0^2\left(\left(G\partial_{YY}Q^2_0\right)_{\mathcal{R}} \right) dV \nonumber \\
 & = D_{HL} + D_{LH} + D_{\mathcal{R}}. \label{ineq:DEQ2}
\end{align} 
By Lemma \ref{lem:ABasic} and \eqref{ineq:triQuadHL} followed by Lemma \ref{lem:CoefCtrl} and \eqref{ineq:Boot_Q2Hi}, 
\begin{align} 
D_{HL} & \lesssim \nu \norm{A^2Q^2_0}_2 \norm{\grad A C}_2 \norm{\grad Q_0^2}_{\G^{\lambda,3/2+}} \lesssim c_0^{-1} \nu \epsilon^2 \norm{\grad A C}_2^2 + c_0 \nu \norm{\sqrt{-\Delta_L }A^2 Q^2}_{2}^2. \label{ineq:DHLQ2} 
\end{align}
Note by \eqref{ineq:Boot_ACC}, 
\begin{align*} 
\int_{1}^{T^\star} c_0^{-1} \nu \epsilon^2 \norm{\grad A C(t)}_2^2 dt & \lesssim c_{0} \epsilon^2 K_B.  
\end{align*} 
Hence, for $c_{0}$ sufficiently small (relative to $K_B$ of course), the first term in \eqref{ineq:DHLQ2} will make a contribution to the energy estimate in \eqref{eq:A2Q2Evo} that is consistent with Proposition \ref{prop:Boot} (naturally, the second term is absorbed by the dissipation in \eqref{eq:A2Q2Evo}). 
This completes the treatment of  $D_{HL}$. 

To treat the second term in \eqref{ineq:DEQ2}, apply Lemma \ref{lem:CoefCtrl}, \eqref{ineq:ABasic} and \eqref{ineq:triQuadHL} to deduce: 
\begin{align*} 
D_{LH} & \lesssim c_0 \nu \sum_{l,l^\prime \in \Integers} \int \abs{A^2 \widehat{Q^2_0}(\eta,l) A_0^2(\eta,l) \abs{\xi}^2 \widehat{Q^2_0}(\xi,l-l^\prime)_{Hi} Low(\eta-\xi,l-l^\prime)} d\eta d\xi \\ 
& \lesssim c_0 \nu \sum_{l,l^\prime \in \Integers} \int \abs{\eta A^2 \widehat{Q^2_0}(\eta,l) A_0^2(\xi,l^\prime) \abs{\xi} \widehat{Q^2_0}(\xi,l-l^\prime)_{Hi} Low(\eta-\xi,l-l^\prime)} d\eta d\xi \\ 
& \lesssim c_{0} \nu\norm{\sqrt{-\Delta_L}A^2 Q^2}_2^2, 
\end{align*} 
which is then absorbed by the dissipation for $c_{0}$ chosen sufficiently small. 

To treat the remainder term in \eqref{ineq:DEQ2}, we essentially apply the same proof as $D_{HL}$. 
Indeed, applying Lemma \ref{lem:CoefCtrl}, \eqref{ineq:ARemainderBasic}, and \eqref{ineq:quadR}, we have
\begin{align*} 
D_{\mathcal{R}} & \lesssim \nu\norm{A^2Q^2}_2\norm{\grad C}_{\G^{\lambda,3/2+}} \norm{\grad Q^2_0}_{\G^{\lambda,3/2+}}  \lesssim c_0^{-1} \nu \epsilon^2\norm{\grad A C}_{2}^2 + c_0\nu\norm{\sqrt{-\Delta_L}A^2Q^2}_{2}^2, 
\end{align*}
after which the treatment follows as in \eqref{ineq:DHLQ2}. 
This completes the treatment of the first error term in \eqref{def:DEQ2}. 
The second error term in \eqref{def:DEQ2} is similar to the first and yields the same contributions so is omitted for the sake of brevity. 
  
\subsection{Non-zero frequencies}
Next we consider the contributions to \eqref{eq:A2Q2Evo} that come from the evolution of non-zero $X$ frequencies.   

\subsubsection{Nonlinear pressure $NLP$} \label{sec:NLPQ2}

\paragraph{Treatment of $NLP(1,j,0,\neq)$} \label{sec:NLP213}
Recall the enumeration \eqref{def:Q2Enums}.  
Here $j \in \set{2,3}$ due to the structure of the nonlinearity (a key null structure).
The case $j = 3$ was singled out in  \S\ref{sec:Toy} as a leading order nonlinear interaction of type $\textbf{(SI)}$ in $Q^2$. 
We will concentrate on this case and omit the treatment of $j=2$, which is treated with the same method. 

This term is quartic (in the sense that the nonlinearity is order $4$) and we use the paraproduct decomposition described in \S\ref{sec:paranote}. 
We will group terms where the coefficients appear in `low frequency' with the remainder, as these are weaker or similar due to Remarks \ref{rmk:CoefCtrlLow} and \ref{rmk:SIcoefneglect}.
Hence, we will expand with the paraproduct but only keep the coefficients around when they are in high frequency:  
\begin{align*} 
NLP(1,3,0,\neq)  & =  \sum_{k \neq 0} \int A^2 Q^2_{k} A^2\left( (\partial_Y - t\partial_X) \left( \left(\partial_Z U^1_0\right)_{Lo} (\partial_XU^3_{k})_{Hi}  \right) \right) dV \\ 
& \quad  +\sum_{k \neq 0} \int A^2 Q^2_{k} A^2\left( (\partial_Y - t\partial_X) \left( (\partial_Z U^1_0)_{Hi} (\partial_XU^3_{k})_{Lo} \right) \right) dV \\ 
& \quad +\sum_{k \neq 0} \int A^2 Q^2_{k} A^2\left((\psi_y)_{Hi}(\partial_Y - t\partial_X)\left( (\partial_XU^3_{k})_{Lo} (\partial_Z U^1_0)_{Lo} \right)\right) dV \\ 
 & \quad  +\sum_{k \neq 0} \int A^2 Q^2_{k} A^2 \left( (\partial_Y - t\partial_X) \left( (\partial_XU^3_{k})_{Lo} (\psi_z)_{Hi}\partial_Y (U^1_0)_{Lo}\right) \right) dV \\
& \quad + P_{\mathcal{R},C} \\  
& = P_{LH} + P_{HL} + P_{C1} + P_{C2} + P_{\mathcal{R},C},  
\end{align*} 
where $P_{\mathcal{R},C}$ includes all of the remainders from the quartic paraproduct as well as the higher order terms involving coefficients as low frequency factors.  

Turn first to $P_{LH}$, which by \eqref{ineq:AprioriU0} and \eqref{ineq:ABasic} is bounded by 
\begin{align*} 
P_{LH} & \lesssim \min(\epsilon t,c_{0}) \sum_{k \neq 0} \int \abs{A^2 \widehat{Q^2_{k}}(\eta,l) A^2_k(\eta,l) \frac{(\eta - tk)k}{k^2 + \abs{l^\prime}^2 + \abs{\xi-tk}^2} \Delta_L\widehat{U^3_{k}}(\xi,l^\prime)_{Hi}} Low(\eta-\xi,l-l^\prime) d\eta d\xi \\ 
& \lesssim \min(\epsilon t,c_{0}) \sum_{k \neq 0} \int \abs{A^2 \widehat{Q^2_{k}}(\eta,l) \frac{(\eta - tk)k}{k^2 + \abs{l^\prime}^2 + \abs{\xi-tk}^2}\jap{\frac{t}{\jap{\xi,l^\prime}}} A^3\Delta_L\widehat{U^3_{k}}(\xi,l^\prime)_{Hi}} Low(\eta-\xi,l-l^\prime) d\eta d\xi. 
\end{align*}
Therefore, by \eqref{ineq:AiPartX} (with $p = 1$) followed by \eqref{ineq:triQuadHL}, 
\begin{align*} 
P_{LH} & \lesssim c_{0}\norm{\left(\sqrt{\frac{\partial_t w}{w}} + \frac{\abs{\grad}^{s/2}}{\jap{t}^s}\right) A^2Q^2}_2\norm{\left(\sqrt{\frac{\partial_t w}{w}} + \frac{\abs{\grad}^{s/2}}{\jap{t}^s}\right)A^3\Delta_L U^3_{\neq}}_2  \\ 
& \quad + \epsilon \norm{\sqrt{-\Delta_L} A^2 Q^2}_2 \norm{A^3 \Delta_L U^3_{\neq}}_2.      
\end{align*} 
By Lemmas \ref{lem:PEL_NLP120neq} and \ref{lem:SimplePEL} together with the bootstrap hypotheses, this is consistent with Proposition \ref{prop:Boot} for $c_0$ and $\epsilon$ sufficiently small.   

Turn next to the contribution of $P_{HL}$, which by \eqref{ineq:AprioriUneq} is estimated via 
\begin{align*} 
P_{HL} & \lesssim \frac{\epsilon}{\jap{\nu t^3}^{\alpha}}\sum_{k \neq 0} \sum_{l, l^\prime} \int \abs{A^2 \widehat{Q^2_{k}}(\eta,l) A^2_k(\eta,l) \abs{\eta - kt} \abs{l^\prime} \widehat{U^1_0}(\xi,l^\prime)_{Hi} Low(k,\eta-\xi,l-l^\prime)} d\eta d\xi,
\end{align*}
Therefore, by Lemma \ref{lem:ABasic} followed by \eqref{ineq:triQuadHL}, 
\begin{align*} 
P_{HL} & \lesssim \frac{\epsilon}{\jap{\nu t^3}^{\alpha}} \sum_{k \neq 0} \sum_{l, l^\prime} \int \abs{ A^2 \widehat{Q^2_{k}} \frac{\abs{\eta - kt}}{\jap{\xi,l^\prime}^2 \jap{\frac{t}{\jap{\xi,l^\prime}}}} \abs{l^\prime} A\widehat{U^1_0}(\xi,l^\prime)_{Hi} Low(k,\eta-\xi,l-l^\prime)} d\eta d\xi \\ 
& \lesssim \frac{\epsilon}{\jap{t} \jap{\nu t^3}^{\alpha}} \norm{\sqrt{-\Delta_L} A^2 Q^2}_2 \norm{AU^1_0}_2 \\ 
& \lesssim \epsilon \norm{\sqrt{-\Delta_L} A^2 Q^2}_2^2 + \frac{\epsilon}{\jap{\nu t^3}^{2\alpha}} \left(\frac{1}{\jap{t}^2}\norm{AU^1_0}_2^2\right). 
\end{align*} 
After applying Lemma \ref{lem:PELbasicZero}, this is consistent with Proposition \ref{prop:Boot} using \eqref{ineq:Boot_ACC2} and \eqref{ineq:Boot_Hi}.  

Turn next to $P_{C1}$. By \eqref{ineq:AprioriUneq}, \eqref{ineq:AprioriU0}, and Lemma \ref{lem:ABasic}, we have
\begin{align*} 
P_{C1} & \lesssim \frac{\epsilon^2 t^2}{\jap{\nu t^3}^{\alpha}} \sum_{k \neq 0} \sum_{l, l^\prime} \int \abs{A^2 \widehat{Q^2_{k}}(\eta,l) A^2_k(\eta,l) \widehat{\psi_y}(\xi,l^\prime)_{Hi}} Low(k,\eta-\xi,l-l^\prime) d\eta d\xi \\ 
& \lesssim \frac{\epsilon^2 t^2}{\jap{\nu t^3}^{\alpha}} \sum_{k \neq 0} \sum_{l, l^\prime} \int \abs{A^2 \widehat{Q^2_{k}}(\eta,l) \frac{1}{\jap{\xi,l^\prime}^2 \jap{\frac{t}{\jap{\xi,l^\prime}}}} A\widehat{\psi_y}(\xi,l^\prime)_{Hi}} Low(k,\eta-\xi,l-l^\prime) d\eta d\xi. 
\end{align*}  
Hence, by \eqref{ineq:triQuadHL} and Lemma \ref{lem:CoefCtrl}, we have
\begin{align*} 
P_{C1} & \lesssim \frac{\epsilon^2 t}{\jap{\nu t^3}^{\alpha}} \norm{A^2 Q^2_{\neq}}_2 \norm{\jap{\grad}^{-1 }A \psi_y}_2 \lesssim \frac{\epsilon}{\jap{\nu t^3}^{\alpha}} \norm{ A^2 Q^2}_2^2 + \frac{\epsilon^{3} t^4}{\jap{\nu t^3}^{\alpha}} \left(\frac{1}{\jap{t}^2} \norm{A C}^2_2\right). 
\end{align*} 
Both terms are consistent with Proposition \ref{prop:Boot} (in particular, by \eqref{ineq:Boot_ACC2}).  
This completes the treatment of $P_{C1}$.  
The second coefficient term, $P_{C2}$, is very similar: there is one extra derivative landing on the coefficient but there is one less power of time from the low frequency factor. By Lemma \ref{lem:ABasic}, we will be able to balance the loss by the gain and apply essentially the same treatment as we did for $P_{C2}$. Hence, this is omitted for the sake of brevity.  

The remainder and coefficient terms $P_{\mathcal{R},C}$ are omitted as they are easier or very similar. 
This completes the treatment of $NLP(1,3,0,\neq)$. Similarly, the treatment of $NLP(1,2,0,\neq)$ is more or less the same and hence this is also omitted for the sake of brevity. 

\paragraph{Treatment of $NLP(i,j,0,\neq)$ with $i \in \set{2,3}$} \label{sec:NLPQ2_0neq_notX} 

These terms constitute nonlinear interactions with the $Y$ and $Z$ components of the streak, which, since they are much smaller than the $X$ component, are expected to be easier to handle. 
On the other hand, due to the nonlinear structure, the $Y$, $Z$ contributions come with 
$\partial_Y^t$ and $\partial_Z^t$ (respectively) derivatives on the non-zero frequencies, which are more difficult to deal with than $\partial_X$ derivatives. 

We will demonstrate how to deal with these terms by the example of $NLP(2,3,0,\neq)$, which is one of the leading order terms. 
The remaining terms yield analogous contributions and are omitted for the sake of brevity. 
This term is quintic, however, as in the treatment of $NLP(1,3,0,\neq)$ above in \S\ref{sec:NLP213}, we will 
only deal with the presence of the variable coefficients when they appear in high frequency.  
Hence, expanding with a quintic paraproduct and focusing only on the leading order contributions gives
\begin{align*} 
NLP(2,3,0,\neq) & = \sum_{k \neq 0} \int A^2 Q^2_{k} A^2\left((\partial_Y-t\partial_X)((\partial_Y - t\partial_X)(U^3_{k})_{Hi} (\partial_Z^t U^2_0)_{Lo}) \right) dV \\ 
& \quad +\sum_{k \neq 0} \int A^2 Q^2_{k} A^2\left((\partial_Y-t\partial_X)((\partial_Y-t\partial_X)(U^3_{k})_{Lo} (\partial_Z^t U^2_0)_{Hi}) \right) dV \\
& \quad +\sum_{k \neq 0} \int A^2 Q^2_{k} A^2\left((\psi_y)_{Hi}(\partial_Y- t\partial_X)(\partial_Y^t (U^3_{k})_{Lo} (\partial_Z^t U^2_0)_{Lo}) \right) dV \\
& \quad +\sum_{k \neq 0} \int A^2 Q^2_{k} A^2\left( (\partial_Y - t\partial_X)((\psi_y)_{Hi}(\partial_Y-t\partial_X) (U^3_{k})_{Lo} (\partial_Z^t U^2_0)_{Lo}) \right) dV \\
& \quad + P_{\mathcal{R},C} \\  
& = P_{HL} + P_{LH} + P_{C1} + P_{C2} + P_{\mathcal{R},C}, 
\end{align*}
where the term $P_{\mathcal{R},C}$ contains the remainders from the quintic paraproducts and the higher order terms where the coefficients are in low frequency. 
Consider first $P_{HL}$. In this case, we have by \eqref{ineq:AprioriU0} and \eqref{ineq:ABasic}, 
\begin{align*} 
P_{HL} & \lesssim \epsilon\sum_{k \neq 0} \int \abs{A^2 \widehat{Q^2_{k}}(\eta,l) A^2_k(\eta,l) \abs{\eta - tk} \abs{\xi-tk} \widehat{U^3_{k}}(\xi,l^\prime)_{Hi} } Low(\eta-\xi,l-l^\prime) d\eta d\xi \\ 
& \lesssim \epsilon\sum_{k \neq 0} \int \abs{A^2 \widehat{Q^2_{k}}(\eta,l) \jap{\frac{t}{\jap{\xi,l^\prime}}} \frac{\abs{\eta - tk} \abs{\xi-tk}}{k^2 + (l^\prime)^2 + \abs{\xi-kt}^2} A^3 \Delta_L \widehat{U^3_{k}}(\xi,l^\prime)_{Hi}} Low(\eta-\xi,l-l^\prime) d\eta d\xi. 
\end{align*} 
Hence, by \eqref{ineq:ratlongtime} and \eqref{ineq:triQuadHL}, we get 
\begin{align*} 
P_{HL} & \lesssim \epsilon\norm{\sqrt{-\Delta_L}A^2Q^2}_2 \norm{\Delta_L A^3 U^3_{\neq}}_2, 
\end{align*} 
which, after Lemma \ref{lem:SimplePEL},  is consistent with Proposition \ref{prop:Boot} for $c_0$ sufficiently small. 

Turn next to $P_{LH}$. This term is treated as in the analogous term in $NLP(1,3,0,\neq)$, using that extra loss of time from the second $\partial_Y^t$ derivative replaces the gain in $t$ from the presence of $U_0^2$ as opposed to $U_0^1$. 
Indeed, by Lemma \ref{lem:ABasic} followed by \eqref{ineq:triQuadHL} we have
\begin{align*} 
P_{LH} & \lesssim \epsilon \norm{\sqrt{-\Delta_L} A^2 Q^2}_2^2 + \frac{\epsilon}{\jap{\nu t^3}^{\alpha}}\norm{AU_0^2}_2^2, 
\end{align*} 
which, after Lemma \ref{lem:PELbasicZero}, is consistent with Proposition \ref{prop:Boot}. 
 
Of the coefficient error terms, $P_{C2}$ is more difficult. 
By \eqref{ineq:AprioriU0}. \eqref{ineq:AprioriUneq}, and Lemma \ref{lem:ABasic} we have, 
\begin{align*}
P_{C2} & \lesssim \frac{\epsilon^2 \jap{t}}{\jap{\nu t^3}^{\alpha}} \sum \int \abs{A^2 \widehat{Q^2_k}(\eta,l) \frac{\abs{\eta-kt}}{\jap{\xi,l^\prime}^2 \jap{\frac{t}{\jap{\xi,l^\prime}}}}A \widehat{\psi_y}(\xi,l^\prime)_{Hi}} Low(k, \eta-\xi,l-l^\prime) d\xi d\eta. 
\end{align*} 
Then, by \eqref{ineq:triQuadHL} and Lemma \ref{lem:CoefCtrl} we get
\begin{align*} 
P_{C2} & \lesssim \frac{\epsilon^2}{\jap{\nu t^3}^\alpha} \norm{\sqrt{-\Delta_L} A^2 Q^2}_2 \norm{AC}_2 \lesssim \epsilon\norm{\sqrt{-\Delta_L} A^2 Q^2}_2^2 + \frac{\epsilon^{3}}{\jap{\nu t^3}^{2\alpha}}\norm{AC}^2_2,
\end{align*} 
which is consistent with Proposition \ref{prop:Boot} using \eqref{ineq:Boot_ACC2}. 
The treatment of $P_{C1}$ is analogous and hence omitted for the sake of brevity. 

The remainder terms are either very easy or similar to the ones we have already treated and are hence omitted. This completes the treatment of $NLP(2,3,0,\neq)$; other $NLP(i,j,0,\neq)$ with $i \neq 1$ terms are treated similarly and are hence omitted as well. 

\paragraph{Treatment of $NLP(i,j,\neq,\neq)$ terms} \label{sec:NLPQ2_neqneq} 
These are pressure interactions now of the type \textbf{(3DE)}. 
All of these terms can be treated in the same fashion, hence to fix ideas let us just focus on the case $i = 1$ and $j =3$ for simplicity.
This term is quartic, but as above, when we expand with the paraproduct we will group terms with the coefficients in low frequency with the remainder. 
Hence, we write
\begin{align*} 
NLP(1,3,\neq,\neq) & = \sum_{k} \int \mathbf{1}_{k k^\prime(k-k^\prime) \neq 0} A^2 Q^2_{k} A^2\left( (\partial_Y-t\partial_X)( (\partial_Z U^1_{k-k^\prime})_{Lo} (\partial_X U^3_{k^\prime})_{Hi} )\right) dV \\ 
& \quad + \sum_{k} \int \mathbf{1}_{k k^\prime(k-k^\prime) \neq 0}  A^2 Q^2_{k} A^2\left( (\partial_Y-t\partial_X)( (\partial_Z U^1_{k-k^\prime})_{Hi} (\partial_X U^3_{k^\prime})_{Lo} )\right) dV \\ 
& \quad + \sum_{k} \int \mathbf{1}_{k k^\prime(k-k^\prime) \neq 0} A^2 Q^2_{k} A^2\left( (\psi_y)_{Hi}(\partial_Y-t\partial_X)( (\partial_Z U^1_{k-k^\prime})_{Lo} (\partial_X U^3_{k^\prime})_{Lo} )\right) dV \\ 
& \quad + \sum_{k} \int \mathbf{1}_{k k^\prime(k-k^\prime) \neq 0} A^2 Q^2_{k} A^2\left( (\partial_Y-t\partial_X)( (\psi_z)_{Hi}(\partial_Y-t\partial_X)(U^1_{k-k^\prime})_{Lo} (\partial_X U^3_{k^\prime})_{Lo} )\right) dV \\ 
& \quad + P_{\mathcal{R},C} \\  
& = P_{LH} + P_{HL} + P_{C1} + P_{C2} + P_{\mathcal{R},C},
\end{align*}
where $P_{\mathcal{R},C}$ contains all of the remainders and the terms where coefficients appear in low frequency. 
By \eqref{ineq:AprioriUneq} and \eqref{ineq:ABasic} followed by \eqref{ineq:ratlongtime} and \eqref{ineq:triQuadHL}, we have
\begin{align*} 
P_{LH} & \lesssim \frac{\epsilon \jap{t}^{\delta_1}}{\jap{\nu t^3}^\alpha} \sum_{k}\int \abs{A^2 \widehat{Q^2_k}(\eta,l)A^2_k(\eta,l) (\eta-k t)k^\prime \widehat{U^3_{k^\prime}}(\xi,l^\prime)} Low(k-k^\prime,\eta-\xi,l-l^\prime) d\eta d\xi \\ 
& \lesssim \frac{\epsilon \jap{t}^{\delta_1}}{\jap{\nu t^3}^\alpha} \sum_{k}\int \abs{(\eta-k t)A^2 \widehat{Q^2_k}(\eta,l) \right. \\ &  \quad\quad\quad \left. \times \jap{\frac{t}{\jap{\xi,l^\prime}}}\frac{k^\prime}{\abs{k^\prime}^2 + \abs{l^\prime}^2 + \abs{\xi-k^\prime t}^2} \Delta_L A^3\widehat{U^3_{k^\prime}}(\xi,l^\prime)} Low(k-k^\prime,\eta-\xi,l-l^\prime) d\eta d\xi \\
& \lesssim \frac{\epsilon \jap{t}^{\delta_1}}{\jap{\nu t^3}^\alpha}\norm{\sqrt{-\Delta_L}A^2 Q^2_{\neq}}_2 \norm{\Delta_L A^3 U^3_{\neq}}_2,
\end{align*} 
which is consistent with Proposition \ref{prop:Boot} by the Lemma \ref{lem:SimplePEL} and the bootstrap hypotheses.
 By \eqref{ineq:AprioriUneq}, \eqref{ineq:ABasic}, and \eqref{ineq:AikDelL2D_CKw2} followed by \eqref{ineq:triQuadHL}, we have
\begin{align*} 
P_{HL} & \lesssim \frac{\epsilon}{\jap{\nu t^3}^\alpha} \sum_{k}\int \abs{A^2 \widehat{Q^2}_k(\eta,l)A^2_k(\eta,l) (\eta-k t)l^\prime \widehat{U^1}_{k^\prime}(\xi,l^\prime)} Low(k-k^\prime,\eta-\xi,l-l^\prime) d\eta d\xi \\ 
& \lesssim \frac{\epsilon}{\jap{\nu t^3}^\alpha} \sum_{k}\int \abs{A^2 \widehat{Q^2}_k(\eta,l) \jap{\frac{t}{\jap{\xi,l^\prime}}}^{\delta_1}\frac{\jap{t} (\eta-k t) l^\prime}{\abs{k^\prime}^2 + \abs{l^\prime}^2 + \abs{\xi-k^\prime t}^2} \Delta_L A^1\widehat{U^1}_{k^\prime}(\xi,l^\prime)} \\ & \quad\quad\quad \times Low(k-k^\prime,\eta-\xi,l-l^\prime) d\eta d\xi \\
& \lesssim \frac{\epsilon \jap{t}^{2}}{\jap{\nu t^3}^{\alpha}}\norm{\left(\sqrt{\frac{\partial_t w}{w}} + \frac{\abs{\grad}^{s/2}}{\jap{t}^{s}}\right)A^2 Q^2}_2\norm{\left(\sqrt{\frac{\partial_t w}{w}} + \frac{\abs{\grad}^{s/2}}{\jap{t}^{s}}\right)\Delta_L A^1 U^1_{\neq}}_2 \\ 
& \quad + \frac{\epsilon \jap{t}^{1+\delta_1}}{\jap{\nu t^3}^\alpha}\norm{A^2 Q^2_{\neq}}_2 \norm{\Delta_L A^1 U^1_{\neq}}_2,
\end{align*} 
which after Lemmas \ref{lem:PEL_NLP120neq} and \ref{lem:SimplePEL}, is consistent with Proposition \ref{prop:Boot} for $\delta_1$ and $\epsilon$ sufficiently small.  

The two coefficient error terms are treated by analogously to the treatment used in \S\ref{sec:NLP213} above. Hence we omit the details and conclude  
\begin{align*} 
P_{C1} + P_{C2} & \lesssim \frac{\epsilon^2 \jap{t}^{\delta_1}}{\jap{\nu t^3}^{2\alpha}}\norm{A^2 Q^2_{\neq}}_2 \norm{AC}_2 \lesssim \frac{\epsilon}{\jap{\nu t^3}^{2\alpha}}\norm{A^2 Q^2_{\neq}}_2^2 + \frac{\epsilon^{3} \jap{t}^{2\delta_1}}{\jap{\nu t^3}^{2\alpha}}\norm{AC}_2^2, 
\end{align*}
which is consistent with Proposition \ref{prop:Boot} for $\epsilon$ sufficiently small. 
As discussed above, the remainder terms $P_{\mathcal{R}.C}$ are much easier than the leading order terms, and these are hence omitted. 
This completes the treatment of $NLP(1,3,\neq,\neq)$. 
Other $i,j$ combinations can be treated via a simple variant of this. 

\subsubsection{Nonlinear stretching $NLS$} \label{sec:NLSQ2}

\paragraph{Treatment of $NLS1(j,0,\neq)$ and $NLS1(j,\neq,0)$} \label{sec:NLS1Q20neq}
The $NLS1(j,0,\neq)$ terms can essentially be treated in the same manner as the $NLP(j,2,0,\neq)$ nonlinear pressure terms in \S\ref{sec:NLPQ2_0neq_notX} and \S\ref{sec:NLP213} (though easier). We omit the details for brevity. 

Turn next to the $NLS1(j,\neq,0)$ terms. 
Notice that the $j = 1$ term disappears. The $j=3$ term is then the most dangerous remaining term. 
As usual we expand the term with a paraproduct and only keep the coefficients to leading order when they appear in high frequency. Hence we have
\begin{align*} 
NLS1(3,\neq,0) & = -\int A^2 Q^2 A^2\left( (Q^3_{\neq})_{Hi} (\partial_Z U^2_{0})_{Lo}\right) dV -  \int A^2 Q^2 A^2\left( (Q^3_{\neq})_{Lo} (\partial_Z U^2_{0})_{Hi}\right) dV  \\ 
& \quad - \int A^2 Q^2 A^2\left( (Q^3_{\neq})_{Lo} (\psi_z)_{Hi} (\partial_Y U^2_{0})_{Lo}\right) dV + S_{\mathcal{R},C} \\ 
& = S_{HL} + S_{LH} + S_{C} + S_{\mathcal{R},C}, 
\end{align*}
where $S_{\mathcal{R},C}$ contains the paraproduct remainders and the terms where the coefficients appear in low frequency. 
By \eqref{ineq:AprioriU0} and \eqref{ineq:ABasic} followed by \eqref{ineq:ratlongtime} and \eqref{ineq:triQuadHL}, we have
\begin{align*} 
S_{HL} & \lesssim \epsilon \sum_{k \neq 0}\int \abs{A^2 \widehat{Q^2_k}(\eta,l) \jap{\frac{t}{\jap{\xi,l^\prime}}}  A^3 \widehat{Q^3_{k}}(\xi,l^\prime)_{Hi}} Low(\eta-\xi,l-l^\prime) d\eta d\xi \\ 
& \lesssim \epsilon\norm{\sqrt{-\Delta_L}A^2 Q^2}_2\norm{A^3 Q^3_{\neq}}_2, 
\end{align*}
which is absorbed by the dissipation due to the non-zero frequencies for $c_0$ sufficiently small.  
For the $S_{LH}$ term we have by \eqref{ineq:Boot_ED}, Lemma \ref{lem:ABasic}, and \eqref{ineq:triQuadHL}
\begin{align*} 
S_{LH} & \lesssim   \frac{\epsilon \jap{t}^2}{\jap{\nu t^3}^\alpha} \sum_{k}\int \abs{A^2 \widehat{Q^2_k}(\eta,l) A^2_k(\eta,l) l^\prime \widehat{U^2_{0}}(\xi,l^\prime)_{Hi}} Low(\eta-\xi,l-l^\prime) d\eta d\xi \\ 
& \lesssim \frac{\epsilon \jap{t}}{\jap{\nu t^3}^\alpha} \sum_{k}\int \abs{A^2 \widehat{Q^2_k}(\eta,l)  A\widehat{U^2_{0}}(\xi,l^\prime)_{Hi}} Low(\eta-\xi,l-l^\prime) d\eta d\xi \\ 
& \lesssim \frac{\epsilon \jap{t}}{\jap{\nu t^3}^\alpha} \norm{A^2 Q^2}_2\norm{AU_0^2}_2, 
\end{align*}
which is consistent with Proposition \ref{prop:Boot} for $\epsilon$ sufficiently small. 
For the coefficient error term $S_C$, by \eqref{ineq:AprioriU0}, \eqref{ineq:AprioriUneq}, and Lemma \ref{lem:ABasic},  followed by \eqref{ineq:triQuadHL}, 
\begin{align*} 
S_C & \lesssim \frac{\epsilon^2 \jap{t}^2}{\jap{\nu t^3}^\alpha} \sum_{k}\int \abs{A^2 \widehat{Q^2_k}(\eta,l) \frac{1}{\jap{t} \jap{\xi,l^\prime} } A\widehat{\psi_y}(\xi,l^\prime)_{Hi}} Low(\eta-\xi,l-l^\prime) d\eta d\xi \\ 
& \lesssim \frac{\epsilon^2 \jap{t}}{\jap{\nu t^3}^\alpha} \norm{A^2 Q^2}_2\norm{\jap{\grad}^{-1} A\psi_y}_2 \\ 
& \lesssim \frac{\epsilon \jap{t}}{\jap{\nu t^3}^\alpha} \norm{A^2 Q^2}_2^2 + \frac{\epsilon^3 \jap{t}}{\jap{\nu t^3}^{\alpha}}\norm{AC}_2^2, 
\end{align*}
where the last line followed from Lemma \ref{lem:CoefCtrl}. This is consistent with Proposition \ref{prop:Boot} for $\epsilon$ sufficiently small by the bootstrap hypotheses. 

As usual, the remainders and coefficient error terms in $S_{\mathcal{R},C}$ are significantly easier to treat and hence 
are omitted for brevity. 
This completes the treatment of $NLS1(3,\neq,0)$; the other term, $NLS1(2,\neq,0)$ is easier and is treated the same way, hence we omit this for brevity.

\paragraph{Treatment of $NLS1(j,\neq,\neq)$} 
All of these terms are treated in essentially the same way; let us focus on $j = 3$ for brevity. 
As usual we expand the term with a paraproduct and only keep the coefficients to leading order when they appear in high frequency. Hence we have
\begin{align*}
NLS1(3,\neq,\neq) & = -\int A^2 Q^2 A^2\left( (Q^3_{\neq})_{Hi} (\partial_Z U^2_{\neq})_{Lo}\right) dV - \int A^2 Q^2 A^2\left( (Q^3_{\neq})_{Lo} (\partial_Z U^2_{\neq})_{Hi}\right) dV \\
& \quad - \int A^2 Q^2 A^2\left( (Q^3_{\neq})_{Lo} (\psi_z)_{Hi} ((\partial_Y - t\partial_X)U^2_{\neq})_{Lo}\right) dV  + S_{\mathcal{R},C} \\
& = S_{HL} + S_{LH} + S_{C} + S_{\mathcal{R},C}, 
\end{align*}
where $S_{\mathcal{R},C}$ contains the paraproduct remainders and the terms where the coefficients appear in low frequency. 
By \eqref{ineq:AprioriUneq}, \eqref{ineq:ABasic}, and \eqref{ineq:triQuadHL} we have
\begin{align*} 
S_{HL} & \lesssim \frac{\epsilon}{\jap{t}^{2-\delta_1}\jap{\nu t^3}^{\alpha}} \sum_{k}\int \abs{A^2 \widehat{Q^2_k}(\eta,l) \jap{\frac{t}{\jap{\xi,l^\prime}}}  A^3\widehat{Q^3_{k^\prime}}(\xi,l^\prime)_{Hi}} Low(k-k^\prime,\eta-\xi,l-l^\prime) d\eta d\xi \\ 
& \lesssim \frac{\epsilon}{\jap{t}^{1-\delta_1}\jap{\nu t^3}^{\alpha}} \norm{A^2Q^2}_2 \norm{A^3 Q^3}_2,
\end{align*}
which is consistent with Proposition \ref{prop:Boot} for $\epsilon$ sufficiently small. 
Turn next to the $S_{LH}$ term. By \eqref{ineq:Boot_Hi}, \eqref{ineq:ABasic}, \eqref{ineq:AiPartX} and \eqref{ineq:triQuadHL} we have
\begin{align*} 
S_{LH} & \lesssim \frac{\epsilon \jap{t}^2}{\jap{\nu t^3}^{\alpha}} \sum_{k}\int \abs{A^2 \widehat{Q^2_k}(\eta,l) \frac{\abs{l^\prime}}{\abs{k^\prime}^2 + \abs{l^\prime}^2 + \abs{\xi - k^\prime t}^2} \Delta_LA^2 \widehat{U^2_{k^\prime}}(\xi,l^\prime)_{Hi}} Low(k-k^\prime,\eta-\xi,l-l^\prime) d\eta d\xi \\ 
& \lesssim \frac{\epsilon \jap{t}^2}{\jap{\nu t^3}^{\alpha}}\norm{\left(\sqrt{\frac{\partial_t w}{w}} + \frac{\abs{\grad}^{s/2}}{\jap{t}^{s}}\right) A^2 Q^2}_2 \norm{\left(\sqrt{\frac{\partial_t w}{w}} + \frac{\abs{\grad}^{s/2}}{\jap{t}^{s}}\right)A^2 \Delta_L U^2_{\neq}}_2 \\ 
& \quad + \frac{\epsilon \jap{t}}{\jap{\nu t^3}^{\alpha}}\norm{A^2 Q^2}_2 \norm{A^2 \Delta_L U^2_{\neq}}_2, 
\end{align*}
which is consistent with Proposition \ref{prop:Boot} for $\epsilon$ small by Lemmas \ref{lem:SimplePEL} and \ref{lem:PEL_NLP120neq}.
For the coefficient error term is treated in the same fashion as the corresponding error term associated with $NLS1(3,\neq,0)$ in \S\ref{sec:NLS1Q20neq} above. 
Hence, the treatment is omitted. Similarly, the remainder and coefficient low frequency terms in $S_{\mathcal{R},C}$ are also omitted. 
This completes the treatment of the $NLS1(3,\neq,\neq)$ term; the other $NLS(i,j,\neq,\neq)$ terms are treated similarly. 

\paragraph{Treatment of $NLS2(i,1,0,\neq)$} \label{sec:NLS2i1neq0}
The non-zero contributions come from $i  = 2$ and $i = 3$; these are essentially identical so 
we will treat just consider the $i=2$ case.
Moreover, we will be able to follow existing treatments; in particular, we can essentially apply the same treatment here as in the treatment of $NLP(1,2,0,\neq)$ in \S\ref{sec:NLP213} (which was omitted since it is easier than the leading order $NLP(1,3,0,\neq)$). 

\paragraph{Treatment of $NLS2(i,j,0,\neq)$ with $j \neq 1$} \label{sec:NLS2ijneq0}
As with $NLS2(i,1,0,\neq)$, we will be able to adapt existing treatments in a straightforward manner. 
Note that $i \neq 1$ by the nonlinear structure. As all of the contributions are similar, so just consider the $i = 3$,  $j = 2$ case (note the $i = j = 2$ term cancels with the $NLP$ term).  
We expand with a paraproduct as usual; when $U^2_{\neq}$ is in high frequency, the treatment is analogous to the analogous $NLP(2,3,0,\neq)$ in \S\ref{sec:NLPQ2_0neq_notX} and when $U_0^2$ or the coefficients are in high frequency, we may adapt the methods in e.g. \S\ref{sec:NLP213}. 

\paragraph{Treatment of $NLS2(i,j,\neq,0)$}
Note that both $j \neq 1$ and $i \neq 1$ by the nonlinear structure.
It is straightforward to adapt the methods applied in e.g. \S\ref{sec:NLS2ijneq0} to this case and hence the details are omitted for brevity.

\paragraph{Treatment of $NLS2(i,j,\neq,\neq)$}
Each of these cases are the same up to small variations hence we focus on one representative example, $NLS2(2,3,\neq,\neq)$ and omit the rest. 
 As usual, expanding with a paraproduct and keeping leaving all of the terms where the coefficients appear in low frequency in the remainder, we have
\begin{align*} 
NLS2(2,3,0,\neq) & = -\int A^2 Q^2 A^2\left( (\partial_Y - t\partial_X)(U^3_{\neq})_{Lo} \partial_Z(\partial_Y - t\partial_X) (U^2_{\neq})_{Hi} \right) dV \\ 
& \quad - \int A^2 Q^2 A^2\left( (\partial_Y - t\partial_X)(U^3_{\neq})_{Hi} \partial_Z(\partial_Y - t\partial_X)(U^2_{\neq})_{Lo} \right) dV \\
& \quad + S_{C1} + S_{C2} + S_{C3} + S_{\mathcal{R},C} \\   
& = S_{LH} + S_{HL} + S_{C1} + S_{C2} + S_{C3} + S_{\mathcal{R},C},  
\end{align*} 
where we are omitting the full definitions of $S_{Ci}$ since they are analogous to previous cases in e.g. $NLP$ and can be treated in similar manners.  
By \eqref{ineq:AprioriUneq}, \eqref{ineq:ABasic}, and \eqref{ineq:triQuadHL}. 
\begin{align*} 
S_{LH} & \lesssim \frac{\epsilon\jap{t}}{\jap{\nu t^3}^{\alpha}}\norm{A^2 Q^2_{\neq}}_2\norm{A^2 \Delta_L U^2_{\neq}}_2,
\end{align*} 
which is consistent with Proposition \ref{prop:Boot} for $\epsilon$ sufficiently small. 
A similar treatment for $S_{HL}$, combined with the additional inviscid damping on $U^2$ and \eqref{ineq:ratlongtime}, gives
\begin{align*} 
S_{HL} & \lesssim \frac{\epsilon}{\jap{t}^{1-\delta_1}\jap{\nu t^3}^{\alpha}}\norm{A^2 Q^2_{\neq}}_2\norm{A^3 \Delta_L U^3_{\neq}}_2,
\end{align*} 
which is consistent with Proposition \ref{prop:Boot} for $\epsilon$ sufficiently small. 
As mentioned above, the coefficient error terms and the remainder terms are simpler and are omitted for the sake of brevity, hence this completes the treatment of $NLS(2,3,\neq,\neq)$. The other $NLS2(i,j,\neq,\neq)$ contributions follow similarly.  

\subsubsection{Transport nonlinearity} \label{sec:Q2_TransNon}
Next we treat the contribution of the transport nonlinearity to the evolution of non-zero frequencies. 
Begin with a paraproduct decomposition: 
\begin{align*} 
\mathcal{T}_{\neq} & = -\int A^{2} Q^2_{\neq} A^{2} \left( \tilde U_{Lo} \cdot \grad Q^2_{Hi} \right) dV  -\int A^{2} Q^2_{\neq} A^{2} \left( \tilde U_{Hi} \cdot \grad Q^2_{Lo} \right) dV - \int A^{2} Q^2_{\neq} A^{2} \left( \tilde U \cdot \grad Q^2 \right)_{\mathcal{R}} dV \\  
& = \mathcal{T}_T + \mathcal{T}_{R} + \mathcal{T}_{\mathcal{R}},
\end{align*}
where `T' and `R' stand for `transport' and `reaction' respectively, keeping with the terminology used in \cite{BM13} (the terminology `reaction' traces back further to \cite{MouhotVillani11}).
Decompose the transport and reaction terms into subcomponents depending on the $X$ frequencies: 
\begin{align*} 
\mathcal{T}_{T}  & = -\int A^{2} Q^2_{\neq} A^{2} \left( (\tilde U_{\neq})_{Lo}  \cdot (\grad Q^2_0)_{Hi} \right) dV - \int A^{2}_{\neq} Q^2 A^{2} \left( (\tilde U_0)_{Lo} \cdot \grad (Q^2_{\neq})_{Hi} \right) dV \\ & \quad - \int A^{2} Q^2_{\neq} A^{2} \left( (\tilde U_{\neq})_{Lo} \cdot \grad (Q^2_{\neq})_{Hi} \right) dV \\ 
& = \mathcal{T}_{T;\neq 0}+ \mathcal{T}_{T;0 \neq}+ \mathcal{T}_{T;\neq \neq}, 
\end{align*} 
and,
\begin{align*} 
\mathcal{T}_{R} & =  -\int A^{2} Q^2_{\neq} A^{2} \left( (\tilde U_{\neq})_{Hi}  \cdot (\grad Q^2_0)_{Lo} \right) dV - \int A^{2} Q^2_{\neq} A^{2} \left( (\tilde U_0)_{Hi} \cdot \grad (Q^2_{\neq})_{Lo} \right) dV \\ & \quad - \int A^{2} Q^2_{\neq} A^{2} \left( (\tilde U_{\neq})_{Hi} \cdot \grad (Q^2_{\neq})_{Lo} \right) dV \\ 
& = \mathcal{T}_{R;\neq 0} + \mathcal{T}_{R;0 \neq}+ \mathcal{T}_{R;\neq \neq}. 
\end{align*} 

\paragraph{Transport by zero frequencies: $\mathcal{T}_{T;0 \neq}$}
Turn first to $\mathcal{T}_{T;0 \neq}$, which is the transport by zero frequencies. 
Write 
\begin{align*} 
\mathcal{T}_{T;0 \neq} & = -\int A^2 Q^2_{\neq} A^2 \left(g_{Lo} \partial_Y (Q^2_{\neq})_{Hi} \right) dV - \int A^2 Q^2_{\neq} A^2 \left( (U_0^3)_{Lo} \partial_Z (Q^2_{\neq})_{Hi} \right) dV \\ 
& = \mathcal{T}_{T;0 \neq}^g + \mathcal{T}_{T;0 \neq}^U. 
\end{align*} 
On the Fourier side, 
\begin{align*} 
\mathcal{T}_{T;0 \neq}^g & \lesssim \sum_{k \neq 0} \sum_{l,l^\prime} \int \abs{A^2 \widehat{Q^2_k}(\eta,l) A_k^2(\eta,l)\hat{g}(\eta-\xi,l-l^\prime)_{Lo} \xi \widehat{Q^2_k}(\xi,l^\prime)_{Hi}} d\eta d\xi. 
\end{align*} 
Therefore by \eqref{ineq:ABasic}, $\abs{\xi} \leq \abs{\xi - kt} + \abs{kt}$, and \eqref{ineq:triQuadHL}, we have
\begin{align*} 
\mathcal{T}_{T;0 \neq}^g & \lesssim \norm{g}_{\G^{\lambda,3/2+}}\norm{A^2 Q^2_{\neq}}_2 \left(\norm{(\partial_Y - t\partial_X) A^2 Q^2}_2 + t\norm{\partial_X A^2 Q^2}_2\right) \\
& \lesssim \jap{t}\norm{g}_{\G^{\lambda,3/2+}}\norm{A^2 Q^2_{\neq}}_2\norm{\sqrt{-\Delta_L} A^2 Q^2_{\neq}}_2 \\ 
& \lesssim \epsilon\norm{\sqrt{-\Delta_L} A^2 Q^2_{\neq}}_2^2 + \frac{\epsilon}{\jap{t}^{2}}\norm{A^2 Q^2_{\neq}}_2^2, 
\end{align*} 
where the last line followed from the low norm control on $g$, \eqref{ineq:Boot_gLow}. 
The first factor is absorbed by the dissipation for $c_0$ small and the second factor integrates to something consistent with Proposition \ref{prop:Boot} for $\epsilon$ sufficiently small. 
To treat $\mathcal{T}_{T;0 \neq}^U$, we have by \eqref{ineq:ABasic} and \eqref{ineq:triQuadHL},
\begin{align*} 
\mathcal{T}_{T;0 \neq}^U & \lesssim \norm{U_0^3}_{\G^{\lambda,3/2+}}\norm{A^2 Q^2_{\neq}}_2\norm{\sqrt{-\Delta_L} A^2 Q^2_{\neq}}_2 \lesssim \epsilon \norm{\sqrt{-\Delta_L} A^2 Q^2_{\neq}}_2^2, 
\end{align*} 
where the last inequality followed from \eqref{ineq:AprioriU0}. This contribution is then absorbed by the dissipation for $c_0$ small.

\paragraph{Transport by non-zero frequencies, $\mathcal{T}_{T;\neq \neq}$ and $\mathcal{T}_{T;\neq 0}$}
Turn next to $\mathcal{T}_{T;\neq \neq}$. 
By \eqref{def:tildeU2}, we can write this term more naturally with a $\grad^t$,
\begin{align*} 
\mathcal{T}_{T;\neq \neq} & =  -\int A^2 Q^2_{\neq} A^2_k \left( \begin{pmatrix} (U_{\neq}^1)_{Lo}  \\ \left((1+\psi_y)U^2_{\neq}\right)_{Lo} + \left(\psi_zU^3_{\neq}\right)_{Lo} \\ (U^3_{\neq})_{Lo} \end{pmatrix} \cdot \begin{pmatrix} \partial_X  \\ \partial_Y - t\partial_X \\ \partial_Z \end{pmatrix} (Q_{\neq}^2)_{Hi} \right)  dV. 
\end{align*} 
The presence of the coefficients is irrelevant by Lemma \ref{lem:GevProdAlg} and Lemma \ref{lem:CoefCtrl} so let us ignore them. 
By \eqref{ineq:ABasic}, \eqref{ineq:triQuadHL}. and \eqref{ineq:AprioriUneq} we have
\begin{align*} 
\mathcal{T}_{T;\neq \neq} & \lesssim \left(\norm{U^1_{\neq}}_{\G^{\lambda,3/2+}} +\norm{U^2_{\neq}}_{\G^{\lambda,3/2+}} + \norm{U^3_{\neq}}_{\G^{\lambda,3/2+}}\right)\norm{A^{2} Q^2}_2 \norm{\sqrt{-\Delta_L}A^{2} Q^2}_2 \\ 
& \lesssim \frac{\epsilon \jap{t}^{2\delta_{1}}}{\jap{\nu t^3}^{2\alpha}} \norm{A^{2} Q^2}^2_2 +  \epsilon\norm{\sqrt{-\Delta_L}A^{2} Q^2}^2_2, 
\end{align*} 
which is consistent with Proposition \ref{prop:Boot} for $c_0$ and $\epsilon$ sufficiently small. 
The contribution from $\mathcal{T}_{T;\neq 0}$ is treated similarly and yields 
\begin{align*} 
\mathcal{T}_{T;\neq 0} & \lesssim \frac{\epsilon \jap{t}^{\delta_{1}}}{\jap{\nu t^3}^{\alpha}}\norm{A^2 Q^2}_2\norm{\grad A^2 Q^2_0}_2  \lesssim \epsilon \| \nabla A^2 Q^2_0 \|_2^2 + \frac{\epsilon \jap{t}^{2 \delta_1}}{\jap{\nu t^3}^{2 \alpha}} \|A^2 Q^2 \|_2^2.
\end{align*}
This completes the treatment of the `transport' contribution to the transport nonlinearity. 

\paragraph{Reaction term $\mathcal{T}_{R;0 \neq}$}
Turn next to $\mathcal{T}_{R;0 \neq}$. 
By Lemma \ref{lem:ABasic} and \eqref{ineq:triQuadHL}, 
\begin{align*} 
\mathcal{T}_{R;0,\neq} & \lesssim \left(\norm{Ag}_2 + \norm{AU_0^3}\right)\norm{A^2 Q^2_{\neq}}_2^2 \lesssim \epsilon\norm{\sqrt{-\Delta_L}A^2 Q^2}_2^2, 
\end{align*} 
where the last inequality followed from \eqref{ineq:Boot_Ag} and \eqref{ineq:AprioriU0}. 
This contribution is then absorbed by the dissipation for $c_0$ sufficiently small.  

\paragraph{Reaction term $\mathcal{T}_{R;\neq 0}$} \label{sec:Q2TRneq0}
Turn next to the non-zero mode reaction contributions, which are the source of several subtle and important difficulties. 
First consider $\mathcal{T}_{R;\neq 0}$, which must be further decomposed on the Fourier side; note we will have to keep more careful track of the low frequency factors here: 
\begin{align*} 
\mathcal{T}_{R;\neq 0} & \lesssim  \sum_{k \neq 0}\int \abs{A^{2} \widehat{Q^2_k}(\eta,l) A^{2}_{k}(\eta,l) \widehat{U_k^2} (\xi,l^\prime)_{Hi} \widehat{\left((1+\psi_y)\partial_Y Q^2_0\right)}(\eta-\xi,l-l^\prime)_{Lo}} d\eta d\xi \\ 
& \quad + \sum_{k \neq 0}\int \abs{A^{2} \widehat{Q^2_k}(\eta,l) A^{2}_{k}(\eta,l) \widehat{U_k^3} (\xi,l^\prime)_{Hi} \widehat{\left( (\psi_z\partial_Y + \partial_Z) Q^2_0\right)}(\eta-\xi,l-l^\prime)_{Lo}} d\eta d\xi \\ 
& \quad + \sum_{k \neq 0}\int \abs{A^{2} \widehat{Q^2_k}(\eta,l) A^{2}_{k}(\eta,l) (\widehat{\psi_y})(\xi,l^\prime)_{Hi} \widehat{ \left(U_k^2\partial_Y Q_0^2 \right) }(\eta-\xi,l-l^\prime)_{Lo}} d\eta d\xi \\ 
& \quad + \sum_{k \neq 0}\int \abs{A^{2} \widehat{Q^2_k}(\eta,l) A^{2}_{k}(\eta,l) (\widehat{\psi_z})(\xi,l^\prime)_{Hi} \widehat{ \left(U_k^3\partial_Y Q_0^2 \right) }(\eta-\xi,l-l^\prime)_{Lo}} d\eta d\xi \\ & \quad + \mathcal{T}_{R;\neq 0;\mathcal{R}} \\ 
& = \mathcal{T}_{R;\neq 0;2} + \mathcal{T}_{R;\neq 0;3} + \mathcal{T}_{R;\neq 0;C1} + \mathcal{T}_{R;\neq 0;C2}+ \mathcal{T}_{R;\neq 0;\mathcal{R}}. 
\end{align*} 
Turn first to $\mathcal{T}_{R;\neq,0;2}$. By \eqref{ineq:ABasic}, \eqref{ineq:quadHL} and \eqref{lem:GevProdAlg} (also Lemma \ref{lem:CoefCtrl} and \eqref{ineq:Boot_LowC}): 
\begin{align*} 
\mathcal{T}_{R;\neq 0;2} & \lesssim \norm{A^2 Q^2_{\neq}}_2 \norm{A^2 U^2_{\neq}}_2 \norm{\grad Q^2_0}_{\G^{\lambda,3/2+}} \\ 
& \lesssim \epsilon \norm{A^2 Q^2_{\neq}}_2 \norm{\grad Q^2_0}_{\G^{\lambda,3/2+}} \\ 
& \lesssim \epsilon\norm{\sqrt{-\Delta_L} A^2 Q^2}_2^2,
\end{align*}  
where the second to last line followed from Lemma \ref{lem:SimplePEL} and the bootstrap hypotheses. This is then absorbed by the dissipation. 
Turn next to $\mathcal{T}_{R;\neq,0;3}$. By \eqref{ineq:ABasic} and \eqref{ineq:ratlongtime}, 
\begin{align*} 
\mathcal{T}_{R;\neq 0;3} & \lesssim \sum_{k \neq 0}\int \abs{A^{2} \widehat{Q^2_k}(\eta,l) \frac{\jap{\frac{t}{\jap{\xi,l^\prime}}}}{k^2 + (l^\prime)^2 + \abs{\xi-kt}^2} \Delta_L A^3\widehat{U_k^3} (\xi,l^\prime)_{Hi} \right. \\ & \quad\quad \times \left. e^{c\lambda\abs{\eta-\xi,l-l^\prime}^s} \widehat{\left( (\psi_z\partial_Y + \partial_Z) Q^2_0\right)}(\eta-\xi,l-l^\prime)_{Lo}} d\eta d\xi \\  
&  \lesssim \sum_{k \neq 0}\int \abs{A^{2} \widehat{Q^2_k}(\eta,l) \Delta_L A^3\widehat{U_k^3} (\xi,l^\prime)_{Hi} \right. \\ & \quad\quad \times \left. e^{c\lambda\abs{\eta-\xi,l-l^\prime}^s} \widehat{\left( (\psi_z\partial_Y + \partial_Z) Q^2_0\right)}(\eta-\xi,l-l^\prime)_{Lo}} d\eta d\xi. 
\end{align*} 
Then, by \eqref{ineq:quadHL} and \eqref{lem:GevProdAlg} (also Lemma \ref{lem:CoefCtrl} and the bootstrap hypotheses) we have
\begin{align*} 
\mathcal{T}_{R;\neq 0;3} & \lesssim \norm{A^2 Q^2_{\neq}}\norm{A^3 \Delta_L U^3_{\neq}}_2 \norm{\grad Q^2_{0}}_{\G^{\lambda,5/2+}} \\  
& \lesssim \norm{A^2 Q^2_{\neq}}\norm{A^3 \Delta_L U^3_{\neq}}_2 \norm{\sqrt{-\Delta_L} A^2 Q^2}_2 \\ 
& \lesssim \epsilon\norm{\sqrt{-\Delta_L} A^2 Q^2}_2^2 + \epsilon\norm{A^3 \Delta_L U^3_{\neq}}_2^2. 
\end{align*} 
After Lemma \ref{lem:SimplePEL}, this is consistent with Proposition \ref{prop:Boot} for $c_0$ sufficiently small. 
Of the two coefficient terms $\mathcal{T}_{R;\neq,0;C1}$ and $\mathcal{T}_{R;\neq,0;C2}$, the second is more difficult as $U^3$ is much larger than $U^2$ due to the inviscid damping on the latter. Hence, we focus only on the latter and omit the former for brevity; it yields similar results. 
From Lemma \ref{lem:ABasic} and \eqref{ineq:quadHL}, we have
\begin{align*} 
\mathcal{T}_{R;\neq 0;C2} & \lesssim \frac{\epsilon^2}{\jap{\nu t^3}^\alpha}\sum_{k \neq 0}\int \abs{A^{2} \widehat{Q^2_k}(\eta,l)\frac{1}{\jap{\xi,l^\prime}^2 \jap{\frac{t}{\jap{\xi,l^\prime}}}} A \hat{\psi_z}(\xi,l^\prime)_{Hi} Low(k,\eta-\xi,l-l^\prime)} d\eta d\xi \\  
& \lesssim \frac{\epsilon^2}{\jap{t}\jap{\nu t^3}^{\alpha}}\norm{A^2 Q^2_{\neq}}_2 \norm{\jap{\grad}^{-1}A\psi_z}_2 \\ 
& \lesssim \frac{\epsilon}{\jap{\nu t^3}^\alpha}\norm{A^2 Q^2_{\neq}}_2^2 + \frac{\epsilon^3}{\jap{t}^2\jap{\nu t^3}^\alpha}\norm{AC}_2^2, 
\end{align*}   
where the last line followed from Lemma \ref{lem:CoefCtrl}. This is consistent with Proposition \ref{prop:Boot} by the bootstrap hypotheses. 
Finally the remainder term $\mathcal{T}_{R;\neq 0;\mathcal{R}}$ is straightforward due to the presence of non-zero frequencies; we omit the treatment for brevity. 
This completes the treatment of $\mathcal{T}_{R;\neq 0}$. 

\paragraph{Reaction term $\mathcal{T}_{R; \neq \neq}$} \label{sec:Q2TRneqneq}
Turn finally to $\mathcal{T}_{R;\neq \neq}$, which are the most difficult. 
As in the treatment of  $\mathcal{T}_{R;\neq 0}$ above in \S\ref{sec:Q2TRneq0} we further decompose in terms of frequency and component: 
\begin{align*} 
\mathcal{T}_{R;\neq \neq} & \lesssim \frac{\epsilon \jap{t}^{\delta_1}}{\jap{\nu t^3}^\alpha} \sum_{k,k^\prime}\int \mathbf{1}_{k k^\prime(k-k^\prime) \neq 0} \abs{A^{2} \widehat{Q^2_k}(\eta,l) A^{2}_{k}(\eta,l) \widehat{U_{k^\prime}^1} (\xi,l^\prime)_{Hi}} Low(k-k^\prime, \eta-\xi, l - l^\prime) d\eta d\xi \\   
& \quad +  \frac{\epsilon \jap{t}^{1+\delta_1}}{\jap{\nu t^3}^\alpha} \sum_{k,k^\prime}\int \mathbf{1}_{k k^\prime(k-k^\prime) \neq 0} \abs{A^{2} \widehat{Q^2_k}(\eta,l) A^{2}_{k}(\eta,l) \widehat{U_{k^\prime}^2} (\xi,l^\prime)_{Hi}} Low(k-k^\prime, \eta-\xi, l - l^\prime) d\eta d\xi \\  
& \quad +  \frac{\epsilon \jap{t}^{\delta_1}}{\jap{\nu t^3}^{\alpha-1}} \sum_{k,k^\prime}\int \mathbf{1}_{k k^\prime(k-k^\prime) \neq 0} \abs{A^{2} \widehat{Q^2_k}(\eta,l) A^{2}_{k}(\eta,l) \widehat{U_{k^\prime}^3} (\xi,l^\prime)_{Hi}} Low(k-k^\prime, \eta-\xi, l - l^\prime) d\eta d\xi \\   
& \quad + \frac{\epsilon^2}{\jap{t}^{1 -2\delta_1} \jap{\nu t^3}^{\alpha}} \sum_{k,k^\prime}\int \mathbf{1}_{k k^\prime(k-k^\prime) \neq 0} \abs{A^{2} \widehat{Q^2_k}(\eta,l) A^{2}_{k}(\eta,l) \widehat{\psi_y}(\xi,l^\prime)_{Hi}} Low(k-k^\prime, \eta-\xi, l - l^\prime) d\eta d\xi \\   
& \quad + \frac{\epsilon^2 \jap{t}^{1+\delta_1}}{\jap{\nu t^3}^{\alpha}} \sum_{k,k^\prime}\int \mathbf{1}_{k k^\prime(k-k^\prime) \neq 0} \abs{A^{2} \widehat{Q^2_k}(\eta,l) A^{2}_{k}(\eta,l) \widehat{\psi_z}(\xi,l^\prime)_{Hi}} Low(k-k^\prime, \eta-\xi, l - l^\prime) d\eta d\xi \\   
& \quad + \mathcal{T}_{R;\neq \neq;\mathcal{R}} \\ 
& = \mathcal{T}_{R;\neq\neq}^{1} + \mathcal{T}_{R;\neq\neq}^2 + \mathcal{T}_{R;\neq\neq}^3  + \mathcal{T}_{R;\neq\neq}^{C1} + \mathcal{T}_{R;\neq\neq}^{C2} + \mathcal{T}_{R;\neq\neq;\mathcal{R}}. 
\end{align*} 
Note that here we have applied $\epsilon t \jap{\nu t^3}^{-1} \lesssim t^{-2}$ to reduce the power of time of the $(U^3)_{Hi} \left(\psi_z(\partial_Y-t\partial_X)Q^2\right)_{Lo}$ term. 

We may treat the leading order terms together. In the case of $\mathcal{T}_{R;\neq\neq}^{1}$, a power of $t$ is lost from Lemma \ref{lem:ABasic} when exchanging $A^2$ for $A^1$ and in $\mathcal{T}_{R;\neq\neq}^{2}$ a power of $t$ is lost from the low frequencies. For $\mathcal{T}_{R;\neq\neq}^{3}$ a power of $t$ is lost exchanging $A^2$ for $A^3$ in Lemma \ref{lem:ABasic} (for $t \gtrsim \abs{\xi,l^\prime}$). 
Together then we have for any $j \in \set{1,2,3}$, from \eqref{ineq:ABasic}, \eqref{ineq:ratlongtime} and \eqref{ineq:triQuadHL}, 
\begin{align*}
\mathcal{T}_{R;\neq \neq}^{j} & \lesssim \frac{\epsilon \jap{t}^{1+\delta_1}}{\jap{\nu t^3}^{\alpha-1}} \sum \int \abs{A^{2} \widehat{Q^2_k}(\eta,l)} \frac{1}{(k^\prime)^2 + (l^\prime)^2 + \abs{\xi - k^\prime t}^2} \jap{\frac{t}{\jap{\xi,l^\prime}}}^{\delta_1} \abs{A^{j} \Delta_L \widehat{U^j_{k^\prime}}(\xi,l^\prime)_{Hi}} \\ & \quad\quad\quad \times Low(k-k^\prime,\eta-\xi,l-l^\prime) d\eta d\xi \\ 
& \lesssim \frac{\epsilon \jap{t}^{1+\delta_1}}{\jap{\nu t^3}^{\alpha-1}} \norm{A^2 Q^2_{\neq}}_2 \norm{A^j \Delta_L U^j_{\neq}}_2. 
\end{align*}
which by Lemma \ref{lem:SimplePEL}, is consistent with Proposition \ref{prop:Boot} by the bootstrap hypotheses for $\epsilon$ and $\delta_1$ sufficiently small. 
The coefficient error terms are treated the same as in \S\ref{sec:Q2TRneq0}; hence we omit the treatments for brevity. 
The remainder terms $\mathcal{T}_{R;\neq\neq}$ are similarly straightforward and are omitted for brevity as well. 
This completes the treatment of the transport nonlinearity for $Q^2$. 

\subsubsection{Dissipation error terms $\mathcal{D}$} \label{sec:DEneqQ2}
Recalling the dissipation error terms and the short-hand \eqref{def:G}, we have 
\begin{align*} 
\mathcal{D}_E & = \nu\sum_{k \neq 0}\int A^2 Q^2_k A^2_k\left( G(\partial_{Y} - t\partial_X)^2 Q^2_k + 2\psi_z(\partial_Y - t \partial_X)\partial_{Z}Q^2_k \right) dV \\ 
& = \mathcal{D}_E^{1} + \mathcal{D}_E^{2}. 
\end{align*} 
The first is strictly harder than the latter, so we focus only on $\mathcal{D}_E^1$; $\mathcal{D}_E^2$ is treated in the same fashion and results in similar contributions.   
We expand via paraproduct 
\begin{align*} 
\mathcal{D}_E^1 & = \nu\sum_{k \neq 0}\int A^2 Q^2_k A^2_k\left(G_{Hi}(\partial_{Y} - t\partial_X)^2 (Q^2_k)_{Lo}\right) dV \\ 
& \quad + \nu\sum_{k \neq 0}\int A^2 Q^2_k A^2_k\left(G_{Lo}(\partial_{Y} - t\partial_X)^2 (Q^2_k)_{Hi}\right) dV \\ 
& \quad + \nu\sum_{k \neq 0}\int A^2 Q^2_k A^2_k\left( \left(G(\partial_{Y} - t\partial_X)^2 Q^2_k\right)_{\mathcal{R}}\right) dV \\ 
& = \mathcal{D}_{E;HL}^1 + \mathcal{D}_{E;LH}^1 + \mathcal{D}_{E;\mathcal{R}}^1. 
\end{align*}
Via \eqref{ineq:TriTriv}, \eqref{ineq:ABasic}, and \eqref{ineq:triQuadHL} we have
\begin{align*} 
\mathcal{D}_{E;LH}^1 & \lesssim \nu\norm{C}_{\G^{\lambda,3/2+}}\norm{\sqrt{-\Delta_L} A^2 Q^2}_2^2 \lesssim c_0\nu\norm{\sqrt{-\Delta_L} A^2 Q^2}_2^2,  
\end{align*}
which is absorbed by the dissipation for $c_0$ sufficiently small. 
The remainder terms are treated similarly (though with \eqref{ineq:ARemainderBasic}, and \eqref{ineq:quadR}) and yield the same contribution. 

Turn next to $\mathcal{D}_{E;HL}^1$, where the coefficient is in high frequency. 
For this term we may use a straightforward treatment: from \eqref{ineq:AprioriUneq} and Lemma \ref{lem:ABasic}, followed by \eqref{ineq:triQuadHL} and Lemma \ref{lem:CoefCtrl},  
\begin{align*}
\mathcal{D}_{E;HL}^{1} & \lesssim \frac{\nu \epsilon \jap{t}^{2+\delta_1}}{\jap{\nu t^3}^{\alpha}} \sum_{k \neq 0} \int \abs{A^2 \widehat{Q^2_k}(\eta,l) \frac{1}{\jap{\xi,l^\prime} \jap{t}} A\widehat{G}(\xi,l^\prime)_{Hi}} Low(\eta-\xi,l-l^\prime) d\eta d\xi \\ 
& \lesssim \frac{\nu \epsilon \jap{t}^{1+\delta_1}}{\jap{\nu t^3}^{\alpha}} \norm{A^2 Q^2_{\neq}}_2 \norm{AC}_2 \\ 
& \lesssim \frac{\epsilon}{\jap{\nu t^3}^\alpha} \norm{A^2 Q^2_{\neq}}_2^2 + \frac{\nu^2 \epsilon \jap{t}^{2+2\delta_1}}{\jap{\nu t^3}^{\alpha}} \norm{AC}_2^2
\end{align*}
which is consistent with Proposition \ref{prop:Boot} for $\epsilon$ sufficiently small. 

\section{High norm estimate on $Q^3$}
Computing the evolution of $A^{3}Q^3$:  
\begin{align} 
\frac{1}{2}\frac{d}{dt}\norm{A^{3} Q^3}_2^2 & \leq \dot{\lambda}\norm{\abs{\grad}^{s/2}A^{3} Q^3}_2^2 - \norm{\sqrt{\frac{\partial_t w}{w}}A^{3} Q^3}_2^2 
-\norm{\sqrt{\frac{\partial_t w_L}{w_L}} A^{3} Q^3}_2^2 
-\frac{2}{t}\norm{\mathbf{1}_{t > \jap{\grad_{Y,Z}}} A^{3} Q^3}_2^2\nonumber \\
& \quad -2 \int A^{3} Q^3 A^{3} \partial_{YX}^t U^3 dV + 2 \int A^{3} Q^3 A^{3} \partial_{ZX}^t U^2 dV \nonumber\\  & \quad
 + \nu \int A^{3} Q^{3} A^{3} \left(\tilde{\Delta_t} Q^3\right) dV 
-\int A^{3} Q^3 A^{3}\left( \tilde U \cdot \grad Q^3 \right) dv \nonumber \\ 
& \quad - \int A^{3} Q^3 A^{3} \left[ Q^j \partial_j^t U^3 + 2\partial_i^t U^j \partial_{ij}^t U^3  - \partial_Z^t\left(\partial_i^t U^j \partial_j^t U^i\right) \right] dV \nonumber \\ 
& = \mathcal{D}Q^3 - CK_{L}^3 + LS + LP + \mathcal{D}_E + \mathcal{T} + NLS1 + NLS2 + NLP,  \label{ineq:AQ3_Evo} 
\end{align} 
where we are again using 
\begin{align*}
\mathcal{D}_E = \nu \int A^{3} Q^{3} A^{3} \left((\tilde{\Delta_t} - \Delta_L) Q^3\right) dV. 
\end{align*}

As in \eqref{def:Q2Enums}, let us here introduce the following enumerations: for $i,j\in \set{1,2,3}$ and $a,b \in \set{0,\neq}$: 
\begin{subequations} \label{def:Q3Enums}
\begin{align}
NLP(i,j,a,b) &= \int A^3 Q^3_{\neq} A^3\left( \partial_Z^t \left(\partial_j^t U^i_a \partial_i^t U^j_b \right) \right) dV \\ 
NLS1(j,a,b) & = -\int A^3 Q^3_{\neq} A^3\left(Q^j_a\partial_j^t U^3_{b}\right) dV \\
NLS2(i,j,a,b) & = -\int A^3 Q^3_{\neq} A^3\left(\partial_i^t U^j_a \partial_i^t\partial_j^t U^3_{b}\right) dV \\
NLP(i,j,0) & = \int A^3 Q^3_{0} A^3\left( \partial_Z^t \left(\partial_j^t U^i_0 \partial_i^t U^j_0 \right) \right) dV \\ 
NLS1(j,0) & = -\int A^3 Q^3_{0} A^3\left(Q^j_0\partial_j^t U^3_{0}\right) dV \\
NLS2(i,j,0) & = -\int A^3 Q^3_{0} A^3\left(\partial_i^t U^j_0 \partial_i^t\partial_j^t U^3_{0}\right) dV \\ 
\mathcal{F} & = -\int A^3 Q^3_{0} A^3\left(\partial_i^t \partial_i^t \partial_j^t \left( U^j_{\neq} U^3_{\neq} \right)_0 - \partial_{Z}^t \partial_j^t \partial_i^t \left( U^i_{\neq} U^j_{\neq}\right)_0 \right)dV \\ 
\mathcal{T}_0 & = -\int A^{3} Q_0^3 A^{3} \left( \tilde U_{0} \cdot \grad Q_{0}^3 \right) dV \\ 
\mathcal{T}_{\neq} & = -\int A^{2} Q_{\neq}^3 A^{3} \left( \tilde U \cdot \grad Q^3 \right) dV. 
\end{align}
\end{subequations} 
 
\subsection{Zero frequencies}
As in treatment of $A^2Q^2$ in \S\ref{sec:AQ2Zero}, the estimate on $Q_0^3$ is very different from the estimate on $Q^3_{\neq}$ and are hence naturally separated. 

\subsubsection{Transport nonlinearity} 
The treatment of the zero frequency transport nonlinearity in \eqref{ineq:AQ3_Evo}, denoted $\mathcal{T}_0$ in \eqref{def:Q3Enums}, goes through exactly the same as the corresponding treatment for $Q_0^2$ in \S\ref{sec:TransQ20} and hence, for the sake of brevity this term is omitted.

\subsubsection{Nonlinear pressure and stretching}
For these terms ($NLS1(j,0), NLS2(i,j,0), NLP(i,j,0)$ in \eqref{def:Q3Enums})we may use the same treatment here as we used in \S\ref{sec:NLPSQ20} without any significant changes.  
The treatment is omitted for the sake of brevity. 

\subsubsection{Forcing from non-zero frequencies} \label{sec:NzeroForcingQ3}
Turn next to the treatment of $\mathcal{F}$ (defined above in \eqref{def:Q3Enums}).  
These are interactions of type \textbf{(F)}, and consistent with \S\ref{sec:Toy}, we will see that the effects on $Q^3_0$ are more extreme than those on $Q_0^2$. 
Write
\begin{align*} 
\mathcal{F} & = -\int A^3 Q^3 A^3 \left(\partial_Y^t \partial_Y^t\partial_j^t \left(U^j_{\neq} U^3_{\neq} \right)_0 - \partial_Z^t \partial_Z^t \partial_Y^t \left( U^2_{\neq} U^3_{\neq} \right)_0 - \partial_Z^t \partial_Y^t \partial_Y^t \left(U^2_{\neq} U^2_{\neq}\right)_0 \right) dV \\ 
& = F^1 + F^2 + F^3.
\end{align*} 
The most dangerous term is $F^1$ due to the three $Y$ derivatives for $j =2$; we omit the other three terms for brevity as they are relatively easy variants. 
As in \S\ref{sec:NzeroForcing}, the treatments all reduce to applying the appropriate version of \eqref{ineq:AdelLij}. 
Write 
\begin{align*}
F^1 = -\int A^3 Q^3 A^3\left( \partial_Y^t \partial_Y^t\partial_Y^t \left( U^2_{\neq} U^3_{\neq} \right)_0 + \partial_Y^t \partial_Y^t\partial_Z^t \left( U^3_{\neq} U^3_{\neq} \right)_0 \right) dV = F^{1;2} + F^{1;3}.
\end{align*}
Let us focus only on $F^{1;2}$, which is the leading order contribution.  
As in \S\ref{sec:NzeroForcing}, we will expand with a quintic paraproduct but we group the terms with low frequency coefficients in with the remainder terms (due to Remark \ref{rmk:CoefCtrlLow}). 
For $F^{1;2}$ this means the decomposition 
\begin{align*}  
F^{1;2} & = -\sum_{k\neq 0} \int A^{3}Q^3_0 A_0^{3} \partial_Y\partial_Y\partial_Y\left( \left(U^3_{-k}\right)_{Hi} \left( U^2_k\right)_{Lo}\right)dV \\
& \quad - \sum_{k\neq 0} \int A^{3}Q^3_0 A_0^{3} \partial_Y \partial_Y\partial_Y\left( \left(U^3_{-k}\right)_{Lo} \left( U^2_k\right)_{Hi}\right) dV \\ 
& \quad -\sum_{k\neq 0}  \int A^{3}Q^3_0 A_0^{3} (\psi_y)_{Hi} \partial_Y\partial_Y\partial_Y\left( \left(U^3_{-k}\right)_{Lo} \left( U^2_k\right)_{Lo}\right) dV \\ 
& \quad - \sum_{k\neq 0} \int A^{3}Q^3_0 A_0^{3} \partial_Y \left((\psi_y)_{Hi}\partial_Y\partial_Y\left( \left(U^3_{-k}\right)_{Lo} \left( U^2_k\right)_{Lo}\right)\right) dV \\
& \quad - \sum_{k\neq 0} \int A^{3}Q^3_0 A_0^{3} \partial_Y \partial_Y \left((\psi_y)_{Hi} \partial_Y \left( \left(U^3_{-k}\right)_{Lo} \left( U^2_k\right)_{Lo}\right)\right) dV \\ 
& \quad + F^1_{\mathcal{R},C} \\ 
& = F_{HL} + F_{LH} + F_{C1} + F_{C2} + F_{C3} + F_{\mathcal{R},C},    
\end{align*} 
where here $F_{\mathcal{R},C}$ includes all of the remainders from the paraproduct and other terms where coefficients appear in low frequency. 
From \eqref{ineq:AprioriUneq}, \eqref{ineq:AdelLij}, and \eqref{ineq:triQuadHL} we have, 
\begin{align*} 
F_{HL} & \lesssim \frac{\epsilon}{\jap{t}^{2-\delta_1} \jap{\nu t^3}^{\alpha}} \sum_{k\neq 0} \sum_{l,l^\prime} \int \abs{A^{3} \widehat{Q^3}_0(\eta,l)} \frac{\abs{\eta}^3 \jap{\frac{t}{\jap{\xi,l^\prime}}}^2}{k^2 + (l^\prime)^2 + \abs{\xi-kt}^2} \abs{ \Delta_L A^{3}\widehat{U^3}_{k}(\xi,l^\prime)} \\ & \quad\quad\quad \times Low\left(-k,\eta-\xi,l-l^\prime \right) d\eta d\xi \\ 
& \lesssim \frac{\epsilon \jap{t}^{1+\delta_1}}{\jap{\nu t^3}^{\alpha}}\norm{\left(\sqrt{\frac{\partial_t w}{w}} + \frac{\abs{\grad}^{s/2}}{\jap{t}^{s}}\right) A^{3}Q^3}_2 \norm{\left(\sqrt{\frac{\partial_t w}{w}} + \frac{\abs{\grad}^{s/2}}{\jap{t}^s}\right)A^{3} \Delta_L U_{\neq}^3}_2
\\ & \quad + \frac{\epsilon}{\jap{t}^{2-\delta_1} \jap{\nu t^3}^\alpha}\norm{\sqrt{-\Delta_L} A^{3}Q^3}_2 \norm{A^{3}\Delta_L U^3_{\neq}}_2,    
\end{align*} 
which, after the application of Lemmas \ref{lem:PEL_NLP120neq} and \ref{lem:SimplePEL}, is consistent with Proposition \ref{prop:Boot}. 

Turn next to $F_{LH}$, which is the leading order effect of \textbf{(F)} isolated in \S\ref{sec:Toy}.  
 By \eqref{ineq:AprioriUneq}, \eqref{ineq:AdelLij}, and \eqref{ineq:triQuadHL}, 
\begin{align*} 
F_{LH} & \lesssim  \frac{\epsilon}{\jap{\nu t^3}^{\alpha}}\sum_{k\neq 0} \sum_{l,l^\prime} \int \abs{A^{3} \widehat{Q^3}_0(\eta,l) \frac{\abs{\eta}^3 \jap{\frac{t}{\jap{\xi,l^\prime}}}}{k^2 + (l^\prime)^2 + \abs{\xi-kt}^2} \Delta_L A^{2}\widehat{U^2}_{k}(\xi,l^\prime)} Low(-k,\eta-\xi,l-l^\prime) d\eta d\xi \\ 
& \lesssim \frac{\epsilon t^3}{\jap{\nu t^3}^{\alpha}}\norm{\left(\sqrt{\frac{\partial_t w}{w}} + \frac{\abs{\grad}^{s/2}}{\jap{t}^{s}}\right)A^{3}Q^3}_2 \norm{\left(\sqrt{\frac{\partial_t w}{w}} + \frac{\abs{\grad}^{s/2}}{\jap{t}^s}\right)A^{2} \Delta_L U^2_{\neq}}_2
\\ & \quad + \frac{\epsilon}{\jap{\nu t^3}^\alpha}\norm{\sqrt{-\Delta_L} A^{3}Q^3}_2 \norm{A^{2}\Delta_L U^2_{\neq}}_2.  
\end{align*} 
After the application of Lemmas \ref{lem:PEL_NLP120neq} and \ref{lem:SimplePEL}, is consistent with Proposition \ref{prop:Boot}. 
Notice the appearance of $\epsilon t^3 \jap{\nu t^3}^{-\alpha}$, which uses the hypothesis $\nu \gtrsim \epsilon$ to control.  

The $F_{Ci}$ terms associated with the coefficients in high frequency are treated the same as the corresponding terms in \S\ref{sec:NzeroForcing} and are hence omitted for brevity. 
The remainder terms are similarly straightforward or easy variants of the other treatments and are hence omitted as well.  
This completes the treatment of $F^{1;2}$. 
As mentioned above, the treatments of $F^{1;3}$, $F^2$, and $F^3$ are similar (but weaker) and hence also omitted. 

\subsubsection{Dissipation error terms}
The treatment of the dissipation error terms for $Q^3_0$ is the same as $Q_0^2$ as outlined in \S\ref{sec:DEQ02}, and therefore is omitted for the sake of brevity. 

\subsection{Non-zero frequencies}

\subsubsection{Nonlinear pressure $NLP$} \label{sec:NLP3}

\paragraph{Treatment of $NLP(1,j,0,\neq)$} \label{sec:NLPQ3ij_0neq}
This term is the analogue of the nonlinear terms treated in \S\ref{sec:NLP213} and are of type \textbf{(SI)}.  
Note that $j \neq 1$. 
We can essentially use the same treatment, although here it is easier since $Y$ derivatives are slightly harder than $Z$ derivatives 
and because we are imposing one less power of time control on $Q^3_{\neq}$ than on $Q^2_{\neq}$. 
For this reason, we omit the treatment for brevity and simply conclude the result: 
\begin{align*} 
NLP(1,j,0,\neq) & \lesssim c_{0}\norm{\left(\sqrt{\frac{\partial_t w}{w}} + \frac{\abs{\grad}^{s/2}}{\jap{t}^{s}}\right)A^3 Q^3}_2\norm{\left(\sqrt{\frac{\partial_t w}{w}} + \frac{\abs{\grad}^{s/2}}{\jap{t}^s} \right) \Delta_L A^j U^j_{\neq}}_2 \\& \quad + \epsilon \norm{\sqrt{-\Delta_L} A^3 Q^3}_2 \norm{A^j \Delta_L U^j_{\neq}}_2 \\ 
& \quad + \frac{\epsilon}{\jap{\nu t^3}^{\alpha}}\norm{A^3 Q^3_{\neq}}_2 \norm{A U_0^1}_2 + \frac{\epsilon^2 t}{\jap{\nu t^3}^{\alpha-1}}\norm{A^3 Q^3_{\neq}}_2 \norm{AC}_2, 
\end{align*}   
 which after Lemmas \ref{lem:PELbasicZero}, \ref{lem:PEL_NLP120neq}, and \ref{lem:SimplePEL}, is consistent with Proposition \ref{prop:Boot} for $\epsilon$ and $c_0$ sufficiently small. 

\paragraph{Treatment of $NLP(i,j,0,\neq)$ with $i \in \set{2,3}$} \label{sec:NLPQ3_0neq_notX} 
This is the analogue of the nonlinear terms treated in \S\ref{sec:NLPQ2_0neq_notX} above.
Note that again $j \neq 1$.   
These can treated analogously to the treatment in \S\ref{sec:NLPQ2_0neq_notX}, but in fact it is much easier here due to the fact that $Q^3$ is growing quadratically at frequencies with $t \gtrsim \jap{\grad_{Y,Z}}$.
In particular we can deduce as above, 
\begin{align*}
NLP(i,j,0,\neq) & \lesssim \epsilon \norm{A^3Q^3_{\neq}}_2 \norm{\Delta_L A^3 U^j_{\neq}}_2 + \frac{\epsilon \jap{t}}{\jap{\nu t^3}^{\alpha}} \norm{A^3Q^3_{\neq}}_2 \norm{A U^i_{0}}_2 \\ 
& \quad + \frac{\epsilon^2\jap{t}}{\jap{\nu t^3}^{\alpha-1}}\norm{A^3 Q^3}_2 \norm{AC}_2,  
\end{align*}   
which, after Lemmas \ref{lem:PELbasicZero} and \ref{lem:SimplePEL}, is consistent with Proposition \ref{prop:Boot}. 

\paragraph{Treatment of $NLP(i,j,\neq,\neq)$} \label{sec:NLPQ3_neqneq}
These terms can all be treated with an easy variant of the treatment in \S\ref{sec:NLPQ2_neqneq}, however as $Q^3$ is growing quadratically at `low' frequencies, 
it is significantly easier here.
Accounting for the worst case (which is realized by $(i,j) = (1,1)$) 
\begin{align*} 
NLP(i,j,\neq,\neq) & \lesssim \frac{\epsilon \jap{t}^{1+\delta_1}}{\jap{\nu t^3}^{\alpha-2}}\norm{A^3 Q^3_{\neq}}_2 \left( \norm{A^j \Delta_L U^j_{\neq}}_2  + \norm{A^i \Delta_L U^i_{\neq}}_2 \right) + \frac{\epsilon^2 \jap{t}^{2\delta_1}}{\jap{\nu t^3}^{\alpha-2}}\norm{A^3 Q^3_{\neq}}_2 \norm{AC}_2, 
\end{align*}
which is consistent with Proposition \ref{prop:Boot}.

\subsubsection{Nonlinear stretching $NLS$} \label{sec:NLSQ3}
Unlike the case of the $NLP$ terms in \S\ref{sec:NLP3}, controlling the $NLS$ terms in the evolution of $Q^3$ is in general 
slightly harder than for $Q^2$ (treated above in \S\ref{sec:NLSQ2}) since $U^3$ is larger than $U^2$. 

\paragraph{Treatment of $NLS1(j,\neq,0)$ and  $NLS1(j,0,\neq)$} \label{sec:NLS1Q3_neq0} %for $i \in \set{2,3}$} \label{sec:NLS1Q3_i1neq0}
Consider first the  $NLS1(j,0,\neq)$ terms. 
As in \S\ref{sec:NLS1Q20neq}, these terms are all easier variants of 
the $NLP(j,3,0,\neq)$ terms, and hence can be omitted for brevity.  

Consider next the  $NLS1(j,\neq,0)$ terms. 
Notice that $j \neq 1$ by the nonlinear structure. 
Both the remaining contributions are essentially the same so to fix ideas just consider $j = 3$. 
Expand the term with a paraproduct (as usual, grouping terms where the coefficients appear in low frequency with the remainder) 
\begin{align*}
NLS1(3,\neq,0)) & = -\int A^3 Q^3_{\neq} A^3\left( (Q^3_{\neq})_{Hi} \partial_Z(U_0^3)_{Lo} \right) dV - \int A^3 Q^3_{\neq} A^3\left( (Q^3_{\neq})_{Lo} \partial_Z(U_0^3)_{Hi} \right) dV \\   
& \quad - \int A^3 Q^3_{\neq} A^3\left( (Q^3_{\neq})_{Lo} (\psi_z)_{Hi} (\partial_YU_0^3)_{Lo} \right) dV + S_{\mathcal{R,C}} \\
& = S_{HL} + S_{LH} + S_{C} + S_{\mathcal{R}}.
\end{align*}
By \eqref{ineq:ABasic}, \eqref{ineq:AprioriU0}, and \eqref{ineq:triQuadHL}, 
\begin{align*}
S_{HL} & \lesssim \epsilon\norm{A^3 Q^3_{\neq}}_2^2,
\end{align*}
which is absorbed by the dissipation due to the non-zero frequencies.
By Lemma \ref{lem:ABasic}, \eqref{ineq:AprioriUneq}, and \eqref{ineq:triQuadHL} we have 
\begin{align*}
S_{HL} & \lesssim \frac{\epsilon t}{\jap{\nu t^3}^{\alpha}}\norm{A^3 Q^3_{\neq}}_2\norm{AU_0^3},
\end{align*} 
which is consistent with Proposition \ref{prop:Boot} by Lemma \ref{lem:PELbasicZero}. 
Similarly (using Lemma \ref{lem:CoefCtrl}), 
\begin{align*}
S_{C} & \lesssim \frac{\epsilon^2 t}{\jap{\nu t^3}^{\alpha}}\norm{A^3 Q^3_{\neq}}_2\norm{AC},
\end{align*} 
which is consistent with Proposition \ref{prop:Boot} for $\epsilon$ sufficiently small. The remainder term is similarly straightforward and is hence omitted. 

\paragraph{Treatment of $NLS1(j,\neq,\neq)$} \label{sec:NLS1Q3_neqneq} 
These terms are interactions of type \textbf{(3DE)}. 
All of these terms can be treated in a similar fashion; to fix ideas consider $j = 1$. 
Expand the term with a paraproduct
\begin{align*}
NLS1(1,\neq,\neq)) & = -\int A^3 Q^3_{\neq} A^3\left( (Q^1_{\neq})_{Hi} (\partial_X U_{\neq}^3)_{Lo} \right) dV  - \int A^3 Q^3_{\neq} A^3\left( (Q^1_{\neq})_{Lo} (\partial_X U_{\neq}^3)_{Hi} \right) dV  + S_{\mathcal{R}} \\ 
& = S_{HL} + S_{LH} + S_{\mathcal{R}}.
\end{align*}
By \eqref{ineq:ABasic}, \eqref{ineq:AprioriUneq}, and \eqref{ineq:triQuadHL}, 
\begin{align*}
S_{HL} & \lesssim \frac{\epsilon \jap{t}}{\jap{\nu t^3}^{\alpha}} \norm{A^3 Q^3}_2 \norm{A^1 Q^1}_2,   
\end{align*}
which is consistent with Proposition \ref{prop:Boot}. 
For $S_{LH}$, we use \eqref{ineq:AiPartX}, \eqref{ineq:ABasic}, \eqref{ineq:Boot_ED} and \eqref{ineq:triQuadHL}, 
\begin{align*}
S_{LH} & \lesssim \frac{\epsilon \jap{t}^{2+\delta_1}}{\jap{\nu t^3}^{\alpha}}\norm{\left(\sqrt{\frac{\partial_t w}{w}} + \frac{\abs{\grad}^{s/2}}{\jap{t}^{s}}\right)A^3 Q^3}_2\norm{\left(\sqrt{\frac{\partial_t w}{w}} + \frac{\abs{\grad}^{s/2}}{\jap{t}^{s}}\right)A^3 \Delta_L U^3_{\neq}}_2 \\ 
& \quad + \frac{\epsilon \jap{t}^{1+\delta_1}}{\jap{\nu t^3}^{\alpha}} \norm{A^3 Q^3}_2 \norm{A^3 \Delta_L U^3_{\neq}}_2, 
\end{align*}
which is consistent with Proposition \ref{prop:Boot} after Lemmas \ref{lem:PEL_NLP120neq} and \ref{lem:SimplePEL}. 
The remainder error term $S_{\mathcal{R}}$ is straightforward and is hence omitted. 

\paragraph{Treatment of $NLS2(i,1,0,\neq)$}
Note that $i \neq 1$ due to the zero $X$ frequency. 
This term is again similar to   $NLP(1,3,0,\neq)$ in $Q^2$ treated in \S\ref{sec:NLP213}; hence the treatment is omitted for brevity and we just state the result:  
\begin{align*} 
NLS2(i,1,0,\neq) & \lesssim c_0 \norm{\left(\sqrt{\frac{\partial_t w}{w}} + \frac{\abs{\grad}^{s/2}}{\jap{t}^s}\right)A^3 Q^3_{\neq}}_2\norm{\left(\sqrt{\frac{\partial_t w}{w}} + \frac{\abs{\grad}^{s/2}}{\jap{t}^s}\right) \Delta_L A^3 U^3_{\neq}}_2 \\ 
& \quad +  \epsilon \norm{A^3 Q^3_{\neq}}_2\norm{A^3 \Delta_L U^3_{\neq}}_2 + \frac{\epsilon}{\jap{\nu t^3}^{\alpha-1}} \norm{A^3 Q^3}_2 \norm{AU_0^1}_2 + \frac{\epsilon^2 \jap{t}}{\jap{\nu t^3}^{\alpha-1}}\norm{A^3 Q^3_{\neq}}_2 \norm{AC}_2. 
\end{align*} 

\paragraph{Treatment of $NLS2(i,j,0,\neq)$ for $j\in \set{2,3}$}
Note that $i \neq 1$ due to the zero $X$ frequency. 
By methods similar to those applied in \S\ref{sec:NLPQ3_0neq_notX} and \S\ref{sec:NLPQ2_0neq_notX}, we have 
\begin{align*} 
NLS2(i,j,0,\neq) & \lesssim \norm{A^3 Q^3_{\neq}}_2 \left(\frac{\epsilon \jap{t}}{\jap{\nu t^3}^{\alpha-1}} \norm{AU^j_{0}}_2  + \epsilon \norm{A^3 \Delta_L U^3_{\neq}}_2 + \frac{\epsilon^2 \jap{t}}{\jap{\nu t^3}^{\alpha-1}}\norm{AC}_2\right). 
\end{align*}

\paragraph{Treatment of $NLS2(i,j,\neq,0)$}
Note that necessarily both $i \neq 1$ and $j\neq1$ due to the zero $X$ frequency. 
Again by methods  similar to those applied in \S\ref{sec:NLPQ3_0neq_notX} and \S\ref{sec:NLPQ2_0neq_notX}, 
\begin{align*} 
NLS2(i,j,\neq,0) & \lesssim \norm{A^3 Q^3_{\neq}}_2 \left(\frac{\epsilon \jap{t}}{\jap{\nu t^3}^{\alpha-1}} \norm{AU^3_{0}}_2  + \epsilon \norm{A^j \Delta_L U^j_{\neq}}_2 + \frac{\epsilon^2 \jap{t}}{\jap{\nu t^3}^{\alpha-1}}\norm{AC}_2\right). 
\end{align*} 

\paragraph{Treatment of $NLS2(i,j,\neq,\neq)$}
These are treated in essentially the same manner as $NLP(i,j,\neq,\neq)$ and are hence omitted for the sake of brevity. 

\subsubsection{Transport nonlinearity $\mathcal{T}$} \label{sec:Q3_TransNon}
Begin with a paraproduct decomposition: 
 \begin{align*} 
 \mathcal{T}_{\neq}  & = -\int A^{3} Q^3_{\neq} A^{3} \left( \tilde U_{Lo} \cdot \grad Q^3_{Hi} \right) dV -\int A^{3} Q^3_{\neq} A^{3} \left( \tilde U_{Hi} \cdot \grad Q^3_{Lo} \right) dV + \mathcal{T}_{\mathcal{R}} \\ 
& = \mathcal{T}_T + \mathcal{T}_{R} + \mathcal{T}_{\mathcal{R}}, 
\end{align*}
where $\mathcal{T}_{\mathcal{R}}$ includes the remainder (as above in \S\ref{sec:Q2_TransNon}, we use the terminology `transport' and `reaction' for the first two terms respectively).     
We will divide the reaction term $\mathcal{T}_R$ further below. 
The transport and remainder contributions, $\mathcal{T}_T$ and $\mathcal{T}_{\mathcal{R}}$ respectively, 
are treated as in \S\ref{sec:Q2_TransNon} and are hence omitted here. The resulting terms are given by
\begin{align} 
\mathcal{T}_T + \mathcal{T}_{\mathcal{R}} & \lesssim \epsilon\norm{\sqrt{-\Delta_L}A^3 Q^3}_2^2 + \left(\frac{\epsilon}{t^2} + \frac{\epsilon t^{2\delta_1}}{\jap{\nu t^3}^{2\alpha}}\right)\norm{A^3 Q^3}_2^2.   
\end{align}  

Turn next to the reaction contribution.  
First, decompose the reaction term based on the $X$ dependence of each factor:  
\begin{align*} 
\mathcal{T}_{R} & =  -\int A^{3} Q^3_{\neq} A^{3} \left( (\tilde U_{\neq})_{Hi}  \cdot (\grad Q^3_0)_{Lo} \right) dV - \int A^{3} Q^3_{\neq} A^{3} \left( (\tilde U_0)_{Hi} \cdot \grad (Q^3_{\neq})_{Lo} \right) dV \\ & \quad - \int A^{3} Q^3_{\neq} A^{3} \left( (\tilde U_{\neq})_{Hi} \cdot \grad (Q^3_{\neq})_{Lo} \right) dV \\ 
& = \mathcal{T}_{R;\neq 0} + \mathcal{T}_{R;0 \neq}+ \mathcal{T}_{R;\neq \neq}. 
\end{align*} 
Each will be treated with a slightly different approach. 

\paragraph{Reaction term $\mathcal{T}_{R;0 \neq}$}
Turn first to the easiest, $\mathcal{T}_{R;0 \neq}$. 
By \eqref{ineq:Boot_ED3} and Lemma \ref{lem:ABasic}, we get (also recalling \eqref{def:tildeU2}):  
\begin{align*} 
\mathcal{T}_{R;0\neq} & \lesssim \frac{\epsilon \jap{t}^2}{\jap{\nu t^3}^{\alpha}}\sum_{k \neq 0}\int \abs{A^{3} \widehat{Q_k} (\eta,l) A^{3}_{k}(\eta,l) \widehat{\tilde {U}_{0}} (\xi,l^\prime)_{Hi}} Low(k,\eta-\xi,l-l^\prime) d\xi d\eta \\
& \lesssim \frac{\epsilon }{\jap{\nu t^3}^{\alpha}}\sum_{k \neq 0}\int \abs{A^{3} \hat{Q}_k (\eta,l) A \widehat{\tilde {U}_{0}} (\xi,l^\prime)_{Hi}} Low(k,\eta-\xi,l-l^\prime) d\xi d\eta \\
& \lesssim \frac{\epsilon}{\jap{\nu t^3}^{\alpha}}\norm{A^3 Q^3_{\neq}}_2\left(\norm{Ag}_2 + \norm{AU_0^3}_2 \right). 
\end{align*} 
After \eqref{ineq:AprioriU0} and the bootstrap hypotheses, this is consistent with Proposition \ref{prop:Boot}. 

\paragraph{Reaction terms $\mathcal{T}_{R;\neq 0}$} \label{sec:Q3TRneq0}
Next consider $\mathcal{T}_{R;\neq 0}$.
In fact, since $Q^3_0$ is the same order of magnitude as $Q^2_0$, this term can be treated in the same fashion as was done in \S\ref{sec:Q2TRneq0}. 
Hence, we omit the details for brevity and simply conclude
\begin{align*} 
\mathcal{T}_{R;\neq 0} & \lesssim \norm{A^3 Q^3_{\neq}}_2 \left(\norm{A^2 U^2_{\neq}}_2 + \norm{A^3 U^3_{\neq}}_2 \right) \norm{\grad Q^3_{0}}_{\G^{\lambda}} \lesssim \epsilon \norm{\sqrt{-\Delta_L} A^3 Q^3}_2^2, 
\end{align*}  
where the last inequality followed by Lemma \ref{lem:SimplePEL} and the bootstrap hypotheses. This term is then absorbed by the dissipation for $c_0$ small. 

\paragraph{Reaction term $\mathcal{T}_{R;\neq \neq}$} \label{sec:Q3TRneqneq}
Turn next to $\mathcal{T}_{R;\neq \neq}$. This includes terms isolated in \S\ref{sec:Toy} as leading order contributions to the \textbf{(3DE)} nonlinear interactions (see \S\ref{sec:NonlinHeuristics}).  
As in \S\ref{sec:Q2TRneqneq} above,  we further sub-divide: 
\begin{align*} 
\mathcal{T}_{R;\neq \neq} & \lesssim \frac{\epsilon \jap{t}^{2}}{\jap{\nu t^3}^\alpha} \sum_{k,k^\prime}\int \mathbf{1}_{k,k^\prime,k-k^\prime \neq 0} \abs{A^{3} \hat{Q}^3_k(\eta,l) A^{3}_{k}(\eta,l) \hat{U}_{k^\prime}^1 (\xi,l^\prime)_{Hi}} Low(k-k^\prime, \eta-\xi, l - l^\prime) d\eta d\xi \\   
& \quad +  \frac{\epsilon \jap{t}^{3}}{\jap{\nu t^3}^\alpha} \sum_{k,k^\prime}\int \mathbf{1}_{k,k^\prime,k-k^\prime \neq 0} \abs{A^{3} \hat{Q}^3_k(\eta,l) A^{3}_{k}(\eta,l) \hat{U}_{k^\prime}^2 (\xi,l^\prime)_{Hi}} Low(k-k^\prime, \eta-\xi, l - l^\prime) d\eta d\xi \\  
& \quad +  \frac{\epsilon \jap{t}^{2}}{\jap{\nu t^3}^{\alpha-1}} \sum_{k,k^\prime}\int \mathbf{1}_{k,k^\prime,k-k^\prime \neq 0} \abs{A^{3} \hat{Q}^3_k(\eta,l) A^{3}_{k}(\eta,l) \hat{U}_{k^\prime}^3 (\xi,l^\prime)_{Hi}} Low(k-k^\prime, \eta-\xi, l - l^\prime) d\eta d\xi \\   
& \quad + \frac{\epsilon^2\jap{t}^{1+\delta_1}}{ \jap{\nu t^3}^{\alpha}} \sum_{k,k^\prime}\int \mathbf{1}_{k,k^\prime,k-k^\prime \neq 0} \abs{A^{3} \hat{Q}^3_k(\eta,l) A^{3}_{k}(\eta,l) \hat{\psi_y}(\xi,l^\prime)_{Hi}} Low(k-k^\prime, \eta-\xi, l - l^\prime) d\eta d\xi \\   
& \quad + \frac{\epsilon^2 \jap{t}^{3}}{\jap{\nu t^3}^{\alpha}} \sum_{k,k^\prime}\int \mathbf{1}_{k,k^\prime,k-k^\prime \neq 0} \abs{A^{3} \hat{Q}^3_k(\eta,l) A^{3}_{k}(\eta,l) \hat{\psi_z}(\xi,l^\prime)_{Hi}} Low(k-k^\prime, \eta-\xi, l - l^\prime) d\eta d\xi \\   
& \quad + \mathcal{T}_{R;\neq \neq;\mathcal{R}} \\ 
& = \mathcal{T}_{R;\neq\neq}^{1} + \mathcal{T}_{R;\neq\neq}^2 + \mathcal{T}_{R;\neq\neq}^3  + \mathcal{T}_{R;\neq\neq}^{C1} + \mathcal{T}_{R;\neq\neq}^{C2} + \mathcal{T}_{R;\neq\neq;\mathcal{R}}. 
\end{align*} 
The $\mathcal{T}_{R;\neq\neq}$ term appears in the toy model in \S\ref{sec:Toy}. 
By \eqref{ineq:ABasic}, \eqref{ineq:AikDelLNoD}, and \eqref{ineq:quadHL} we have, 
\begin{align*} 
\mathcal{T}_{R;\neq\neq}^{1} + \mathcal{T}_{R;\neq\neq}^2 + \mathcal{T}_{R;\neq\neq}^3 & \lesssim \norm{\left(\sqrt{\frac{\partial_t w}{w}} + \frac{\abs{\grad}^{s/2}}{\jap{t}^s}\right)A^3 Q^3}_2\left[\frac{\epsilon t^3}{\jap{\nu t^3}^{\alpha}}\norm{\left(\sqrt{\frac{\partial_t w}{w}} + \frac{\abs{\grad}^{s/2}}{\jap{t}^s}\right)\Delta_L A^1 U^1_{\neq}}_2\right. \\ & \hspace{-2cm} \quad \left. + \frac{\epsilon t^3}{\jap{\nu t^3}^{\alpha}}\norm{\left(\sqrt{\frac{\partial_t w}{w}} + \frac{\abs{\grad}^{s/2}}{\jap{t}^s}\right)\Delta_L A^2 U^2_{\neq}}_2 + \frac{\epsilon t^2}{\jap{\nu t^3}^{\alpha-1}}\norm{\left(\sqrt{\frac{\partial_t w}{w}} + \frac{\abs{\grad}^{s/2}}{\jap{t}^s}\right)\Delta_L A^3 U^3_{\neq}}_2 \right]. \\ 
& \hspace{-2cm} \quad + \norm{A^3 Q^3}_2 \left(\frac{\epsilon \jap{t}}{\jap{\nu t^3}^{\alpha}} \norm{\Delta_L A^1 U^1_{\neq}}_2 + \frac{\epsilon \jap{t}}{\jap{\nu t^3}^{\alpha}} \norm{\Delta_L A^2 U^2_{\neq}}_2 + \frac{\epsilon }{\jap{\nu t^3}^{\alpha-1}} \norm{\Delta_L A^3 Q^3_{\neq}}_2\right).
\end{align*} 
By Lemmas \ref{lem:PEL_NLP120neq} and \ref{lem:SimplePEL} the above is consistent with Proposition \ref{prop:Boot} by the bootstrap hypotheses.  
As predicted in \S\ref{sec:Toy}, note the appearance of $\epsilon t^3 \jap{\nu t^3}^{-\alpha}$, which requires the hypothesis $\epsilon \lesssim \nu$ to control.
This completes the treatment of the reaction term $\mathcal{T}_{R;\neq\neq}^{1,2,3}$ for the estimate \eqref{ineq:AQ3_Evo}.

Turn next to $\mathcal{T}_{R;\neq\neq}^{C1}$ and $\mathcal{T}_{R;\neq\neq}^{C2}$. By Lemma \ref{lem:ABasic} and \eqref{ineq:quadHL} (and Lemma \ref{lem:CoefCtrl}), we have
\begin{align*} 
\mathcal{T}_{R;\neq\neq}^{C1}+ \mathcal{T}_{R;\neq\neq}^{C2}  & \lesssim \frac{\epsilon^2 \jap{t}^2}{\jap{\nu t^3}^{\alpha-1}}\norm{A^3 Q^3}_2 \norm{AC}_2 \lesssim \frac{\epsilon \jap{t}^2}{\jap{\nu t^3}^{\alpha-1}}\norm{A^3 Q^3}^2_2 + \frac{\epsilon^3 \jap{t}^2}{\jap{\nu t^3}^{\alpha-1}} \norm{AC}_2^2, 
\end{align*}
which is consistent with Proposition \ref{prop:Boot} by the bootstrap hypotheses. 
This completes the treatment of $\mathcal{T}_{R;\neq\neq}$ and hence all of $\mathcal{T}$. 

\subsubsection{Dissipation error terms  $\mathcal{D}_E$} \label{sec:DEneqQ3}
We will use a refinement of the treatment of the analogous term treated in \S\ref{sec:DEneqQ2}.
Recalling the dissipation error terms and the short-hand \eqref{def:G}, we have 
\begin{align*} 
\mathcal{D}_E & = \nu\sum_{k \neq 0}\int A^3 Q^3_k A^3_k\left(G(\partial_{Y} - t\partial_X)^2 Q^3_k + 2\psi_z (\partial_Y - t \partial_X)\partial_{Z}Q^3_k \right) dV \\ 
& = \mathcal{D}_E^1 + \mathcal{D}_E^2. 
\end{align*} 
We will only treat $\mathcal{D}_E^1$; $\mathcal{D}_E^2$ yields similar contributions and is hence omitted.
Begin by expanding with a paraproduct 
\begin{align*} 
\mathcal{D}_E^1 & = \nu\sum_{k \neq 0}\int A^3 Q^3_k A^3_k \left(G_{Hi} (\partial_{Y} - t\partial_X)^2 (Q^3_k)_{Lo} \right) dV + \nu\sum_{k \neq 0}\int A^3 Q^3_k A^3_k \left(G_{Lo} (\partial_{Y} - t\partial_X)^2 (Q^3_k)_{Hi} \right) dV \\ 
& \quad + \nu\sum_{k \neq 0}\int A^3 Q^3_k A^3_k \left( \left(G(\partial_{Y} - t\partial_X)^2 Q^3_k \right)_{\mathcal{R}} \right) dV \\ 
& = \mathcal{D}_{E;HL}^1 + \mathcal{D}_{E;LH}^1 + \mathcal{D}_{E;\mathcal{R}}^1. 
\end{align*} 
As in \S\ref{sec:DEneqQ2}, we can control the latter two terms by the dissipation; we omit the details. 
Next to turn to the treatment of $\mathcal{D}_{E;HL}^1$. As in \S\ref{sec:DEneqQ2}, from \eqref{ineq:AprioriUneq} and Lemma \ref{lem:ABasic}, followed by \eqref{ineq:triQuadHL} and Lemma \ref{lem:CoefCtrl},  
\begin{align*}
\mathcal{D}_{E;HL}^{1} & \lesssim \frac{\nu \epsilon \jap{t}^4}{\jap{\nu t^3}^{\alpha}} \sum_{k \neq 0} \int \abs{A^3 \widehat{Q^3_k}(\eta,l) \frac{1}{\jap{\xi,l^\prime} \jap{t}} A\widehat{G}(\xi,l^\prime)_{Hi}} Low(\eta-\xi,l-l^\prime) d\eta d\xi \\ 
& \lesssim \frac{\nu \epsilon \jap{t}^3}{\jap{\nu t^3}^{\alpha}} \norm{A^3 Q^3_{\neq}}_2 \norm{AC}_2 \\ 
& \lesssim \frac{\epsilon^{1/3}}{\jap{\nu t^3}^{\alpha}}\norm{A^3 Q^3_{\neq}}_2^2 + \frac{\nu^2 \epsilon^{5/3} \jap{t}^6}{\jap{\nu t^3}^{\alpha}} \norm{AC}_2^2
\end{align*}
which is consistent with Proposition \ref{prop:Boot} for $\epsilon$ sufficiently small. 

\subsubsection{Linear stretching term $LS3$} \label{sec:LS30_Hi}
First separate into two parts (to be sub-divided further below), 
\begin{align*} 
LS3 & = -2\int A^{3} Q^3 A^{3}\partial_X(\partial_Y - t\partial_X) U^3  dV - 2\int A^{3} Q^3 A^{3} \partial_X\left(\psi_y(\partial_Y - t\partial_X) U^3\right) dV \\ 
& = LS3^0 + LS3^{C}. 
\end{align*} 

\paragraph{Treatment of $LS3^C$} \label{sec:LS3C}
Expand with a paraproduct, 
\begin{align*} 
LS3^{C} & = -2\int A^{3} Q^3 A^{3}\left((\psi_y)_{Hi}(\partial_Y - t\partial_X) \partial_X \left(U^3\right)_{Lo}\right) dV \\ & \quad - 2\int A^{3} Q^3 A^{3}\left((\psi_y)_{Lo}(\partial_Y - t\partial_X) \partial_X \left(U^3\right)_{Hi} \right) dV \\ 
& \quad - 2\int A^{3} Q^3 A^{3}\left((\psi_y)(\partial_Y - t\partial_X) \partial_X U^3\right)_{\mathcal{R}} dV \\ 
& = LS3^{C}_{HL} + LS3^{C}_{LH} + LS3^{C}_{\mathcal{R}}.  
\end{align*}
The main issue is $LS3^{C}_{HL}$, where the coefficients appear in `high frequency', so turn to this term first.    
By Lemma \ref{lem:ABasic}, \eqref{ineq:triQuadHL}, and Lemma \ref{lem:CoefCtrl},  
\begin{align*} 
LS3^C_{HL} & \lesssim \frac{\epsilon\jap{t}}{\jap{\nu t^3}^{\alpha}}\sum_{k \neq 0}\int \abs{A^{3} \widehat{Q^3_k}(\eta,l)\frac{1}{\jap{\xi,l^\prime}\jap{t}}A\widehat{\psi_y}(\xi,l^\prime)_{Hi}} Low(k,\eta-\xi,l-l^\prime) d\xi \\ 
& \lesssim \frac{\epsilon}{\jap{\nu t^3}^{\alpha}}\norm{A^3 Q^3}_2 \norm{\jap{\grad}^{-1}A \psi_y}_2 \\ 
& \lesssim \frac{\epsilon}{\jap{\nu t^3}^{\alpha}}\norm{A^3 Q^3}_2 \norm{AC}_2,  
\end{align*}
which is consistent with Proposition \ref{prop:Boot} by \eqref{ineq:Boot_ACC2}. 

Turn next to the $LS3^C_{LH}$, which is reminiscent of $NLP(1,3,0,\neq)$ in \S\ref{sec:NLP213}. 
Indeed, by \eqref{ineq:ABasic}, \eqref{ineq:AiPartX}, and \eqref{ineq:triQuadHL} we have  
\begin{align} 
LS3^C_{LH} & \lesssim \min(\epsilon\jap{t},c_{0})\sum_{k \neq 0} \int \abs{ A^3 \widehat{Q^3_k}(\eta,l) A^3_k(\eta,l) \frac{k\abs{\xi - kt}}{k^2 + (l^\prime)^2 + \abs{\xi-kt}^2} \Delta_L \widehat{U^3_k}(\xi,l^\prime)} Low(\eta-\xi,l-l^\prime) d\eta dx \nonumber \\ 
& \lesssim c_{0}\norm{\left(\sqrt{\frac{\partial_t w}{w}} + \frac{\abs{\grad}^{s/2}}{\jap{t}^{s}}\right) A^3 Q^3_{\neq}}_2 \norm{\left(\sqrt{\frac{\partial_t w}{w}} + \frac{\abs{\grad}^{s/2}}{\jap{t}^{s}}\right) A^3 \Delta_L U^3_{\neq}}_2 \nonumber  \\ 
& \quad + \frac{\epsilon}{\jap{t}}\norm{\sqrt{-\Delta_L}A^3 Q^3}_2 \norm{A^3 \Delta_L U^3_{\neq}}_2, \label{ineq:LS3CLH} 
\end{align} 
which, after the application of Lemmas \ref{lem:PEL_NLP120neq} and \ref{lem:SimplePEL}, is consistent with Proposition \ref{prop:Boot}.

The remainder $LS3^C_{\mathcal{R}}$ follows easily and is hence omitted.  

\paragraph{Leading order term, $LS3^0$} \label{sec:LS30}
Turn to the leading order term, $LS3^0$. 
The $2$ in the leading order term is crucially important and cannot be altered; it is the origin of the quadratic growth of $Q^3$ at low (relative to time) frequencies. For this reason we have to treat this term with a little more precision than we usually treat the $\Delta_t^{-1}$. 
Begin by isolating the leading order contribution: 
\begin{align} 
LS3^0 & =  -2\int A^{3} Q^3 A^{3}\partial_X(\partial_Y - t\partial_X) \Delta_{L}^{-1} \left( Q^3 - G(\partial_Y - t\partial_X)^2 U^3 \right. \nonumber \\ 
& \quad\quad \left. - 2\psi_z\partial_Z (\partial_Y - t\partial_X) U^3 - \Delta_t C (\partial_Y - t\partial_X) U^3 \right)  dV \nonumber \\  
& = LS3^{0;0} + LS3^{0;C1} + LS3^{0;C2} + LS3^{0;C3}.   \label{eq:LS30}
\end{align} 
The idea is that the latter terms can be treated perturbatively since we have additional smallness from the coefficients. 

Turn to the leading order term in \eqref{eq:LS30}.
Begin with separating between short and long times (relative to the critical times): 
\begin{align*} 
LS3^{0;0} & = -2\sum \int \abs{A^{3} \widehat{Q^3_k}(\eta,l)}^2 \frac{k(\eta-kt)}{k^2 + l^2 + \abs{\eta-kt}^2} d\eta \\ 
& = -2\sum \int \left[\mathbf{1}_{t \leq 2\abs{\eta}} + \mathbf{1}_{t > 2\abs{\eta}}\right] \abs{A^{3} \widehat{Q^3_k}(\eta,l)}^2 \frac{k(\eta-kt)}{k^2 + l^2 + \abs{\eta-kt}^2} d\eta \\
& = LS3^{0;0,ST} + LS3^{0;0,LT}, 
\end{align*}
where `ST' and `LT' stand for `short-time'  and `long-time' respectively. 
In the long-time term, divide further based on how time compares with the $Y,Z$ derivatives:  
\begin{align*} 
LS3^{0;0,LT}  & = -2\sum \int \mathbf{1}_{t > 2\abs{\eta}}\left[ \mathbf{1}_{t < \jap{\eta,l}} + \mathbf{1}_{t \geq \jap{\eta,l}} \right] \abs{A^{3} \widehat{Q^3_k}(\eta,l)}^2 \frac{k(\eta-kt)}{k^2 + l^2 + \abs{\eta-kt}^2} d\eta \\
& = LS3^{0;0,LT,Z} + LS3^{0;0,LT,Y}. 
\end{align*} 
In the case of $LS3^{0;0,LT,Z}$, we have
\begin{align} 
LS3^{0;0,LT,Z} & \lesssim \frac{1}{\jap{t}}\sum \int \mathbf{1}_{t > 2\abs{\eta}} \mathbf{1}_{t < \jap{\eta,l}} \abs{A^{3} \widehat{Q^3_k}(\eta,l)}^2 d\eta \lesssim \frac{1}{\jap{t}^{3/2}} \norm{\abs{\grad}^{1/4}A^{3}Q^3}_2^2 \nonumber
\end{align}
so that 
\begin{align}
LS3^{0;0,LT,Z} \leq &  \frac{\delta_\lambda}{10\jap{t}^{3/2}}\norm{\abs{\grad}^{s/2} A^{3}Q^3}_2^2 +  \frac{K}{\delta_\lambda^{\frac{1}{2s-1}}\jap{t}^{3/2}}\norm{ A^{3}Q^3}_2^2; \label{ineq:LS300LTZ}
\end{align}
for some universal constant $K > 0$. The first term is absorbed by the $CK_\lambda$ term and the latter is consistent with Proposition \ref{prop:Boot} provided $K_{H3} \gg \exp\left[C\delta_\lambda^{-\frac{1}{2s-1}}\right]$, for some universal constant $C > 0$ (via integrating factors). 

In the case of $LS3^{0;0,LT,Y}$, we want to absorb to leading order by $CK_{L}^3$; indeed: 
\begin{align*} 
LS3^{0;0,LT,Y} & \leq 2\sum_{k \neq 0} \int \mathbf{1}_{t > 2\abs{\eta}} \mathbf{1}_{t \geq \jap{\eta,l}} \abs{A^{3} \widehat{Q^3}}^2 \frac{1}{\sqrt{1 + \abs{t- \abs{\eta}}^2}} d\eta \\ 
& = CK^3_{L} - 2\sum_{k\neq 0} \int \mathbf{1}_{t > 2\abs{\eta}} \mathbf{1}_{t \geq \jap{\eta,l}} \abs{A^{3} \widehat{Q^3_k}}^2 \left(\frac{1}{t} -  \frac{1}{\sqrt{1 + \left(t- \abs{\eta}\right)^2}} \right) d\eta \\  
& = CK^3_{L} + LS3^{0;0,LT,Y}_{\mathcal{R}}.
\end{align*} 
The first term is then absorbed by the corresponding $CK_L^3$ term in \eqref{ineq:AQ3_Evo}.  
The remainder term is controlled via 
\begin{align} 
LS3^{0;0,LT,Y}_{\mathcal{R}} & = -2\sum \int \mathbf{1}_{t > 2\abs{\eta}} \mathbf{1}_{t \geq \jap{\eta,l}} \abs{A^{3} \widehat{Q^3_k}(\eta,l)}^2 \frac{\sqrt{1 + \left(t- \abs{\eta}\right)^2} - t}{t \sqrt{1 + \left(t- \abs{\eta}\right)^2}} d\eta \nonumber \\ 
& = -2\sum \int \mathbf{1}_{t > 2\abs{\eta}} \mathbf{1}_{t \geq \jap{\eta,l}} \abs{A^{3} \widehat{Q^3_k}(\eta,l)}^2 \frac{1 + \left(t- \abs{\eta}\right)^2 - t^2}{t\left(t + \sqrt{1 + \left(t- \abs{\eta}\right)^2}\right) \sqrt{1 + \left(t- \abs{\eta}\right)^2}} d\eta \nonumber \\  
& \lesssim \sum \int \mathbf{1}_{t > 2\abs{\eta}} \mathbf{1}_{t \geq \jap{\eta,l}} \abs{A^{3} \widehat{Q^3_k}(\eta,l)}^2 \frac{\jap{\eta}^2 + \jap{t}\jap{\eta}}{\jap{t}^3} d\eta \nonumber \\  
& \lesssim \sum \int \mathbf{1}_{t > 2\abs{\eta}} \mathbf{1}_{t \geq \jap{\eta,l}} \abs{A^{3} \widehat{Q^3_k}}^2 \frac{\jap{\eta}^{1/2}}{\jap{t}^{3/2}} d\eta \nonumber \\  
& \lesssim \frac{1}{\jap{t}^{3/2}}\norm{\abs{\grad}^{1/4} A^{3}Q^3}_2^2 + \frac{1}{\jap{t}^{3/2}}\norm{A^{3}Q^3}_2^2 \nonumber \\ 
& \leq \frac{\delta_\lambda}{10\jap{t}^{3/2}}\norm{\abs{\grad}^{s/2} A^{3}Q^3}_2^2 +  \frac{K}{\delta_\lambda^{\frac{1}{2s-1}}\jap{t}^{3/2}}\norm{ A^{3}Q^3}_2^2,  \label{ineq:LS3Remainder}
\end{align} 
for a universal constant $K > 0$. As discussed above after \eqref{ineq:LS300LTZ}, this is consistent with Proposition \ref{prop:Boot} for small $\delta_\lambda$ and large $K_{H3}$.  This completes the treatment of $LS3^{0;0,LT}$. 
 
Turn to the the short-time term, $LS3^{0;0,ST}$. 
By the frequency localization, \eqref{ineq:CKwLS}, and Cauchy-Schwarz
\begin{align*} 
LS3^{0;0,ST} & \lesssim \kappa^{-1}\norm{\sqrt{\frac{\partial_t w}{w}} A^{3} Q^3}_2^2 %+ \frac{1}{\jap{t}^{3/2}}\norm{\jap{\grad}^{1/4} A^{3}Q^3}_2^2 \\ 
\end{align*} 
This term is then absorbed by $CK_w^3$ in \eqref{ineq:AQ3_Evo} for sufficiently large $\kappa$ (notice that the implicit constant does \emph{not} depend on $\kappa$) 
This completes the leading order $LS3^{0;0}$.

Now consider the first error term in \eqref{eq:LS30}, $LS3^{0;C1}$.
The second error term, $LS3^{0;C2}$, is treated in the same fashion and yields similar results. Hence we omit its treatment and focus on $LS3^{0;C1}$.
Expand $LS3^{0;C1}$ with a paraproduct: 
\begin{align*} 
LS3^{0;C1} & = -2\int A^{3} Q^3 A^{3}\partial_X(\partial_Y - t\partial_X) \Delta_{L}^{-1} \left( G_{Hi} (\partial_Y - t\partial_X)^2 U^3_{Lo}\right) dV \\ 
& \quad -2\int A^{3} Q^3 A^{3}\partial_X(\partial_Y - t\partial_X) \Delta_{L}^{-1} \left( G_{Lo} (\partial_Y - t\partial_X)^2 U^3_{Hi}\right) dV \\ 
& \quad -2\int A^{3} Q^3 A^{3}\partial_X(\partial_Y - t\partial_X) \Delta_{L}^{-1} \left( G (\partial_Y - t\partial_X)^2 U^3 \right)_{\mathcal{R}} dV \\ 
& = LS3^{0;C1}_{HL} + LS3^{0;C1}_{LH} + LS3^{0;C1}_{\mathcal{R}}.  
\end{align*} 

Consider the $HL$ term first and divide into resonant and non-resonant frequencies: 
\begin{align*} 
LS3^{0;C1}_{HL} & \lesssim \frac{\epsilon\jap{t}^2}{\jap{\nu t^3}^{\alpha}}\sum_{l,l^\prime,k\neq 0}\int \left[\chi^R + \chi^{NR} \right] \abs{A^{3} \widehat{Q^3_k}(\eta,l) A^{3}_k(\eta,l) \frac{\abs{\eta-kt}}{k^2 + l^2 + \abs{\eta-kt}^2}\widehat{G}(\xi,l^\prime)_{Hi}} \\ & \quad\quad \times Low(k,\eta-\xi,l-l^\prime) d\xi d\eta \\
& =  LS3^{0;C1;R}_{HL} + LS3^{0;C1;NR}_{HL}, 
\end{align*}
where $\chi^R = \mathbf{1}_{t \in \I_{k,\eta}}\mathbf{1}_{t \in \I_{k,\xi}}\mathbf{1}_{\abs{l} \lesssim \abs{\eta}}$ and $\chi^{NR} = 1 - \chi^{R}$. 

By Lemma \ref{lem:wellsep}, we have
\begin{align}
\chi^{NR} \frac{|\eta-kt|}{k^2 + |\eta-kt|^2 + l^2} \lesssim \frac{1}{t} \langle \eta - \xi \rangle. \label{ineq:NRtrick}
\end{align}
Indeed, to see \eqref{ineq:NRtrick}, note that either $t \not\in \I_{k,\eta}$ or $t \not\in \I_{k,\xi}$ (in which case, Lemma \ref{lem:wellsep} applies) or $t \in \I_{k,\eta} \cap \I_{k,\xi}$ but $\abs{l} \gtrsim \abs{\eta} \gtrsim \abs{kt}$. 
Therefore, by \eqref{ineq:NRtrick}, Lemma \ref{lem:ABasic}, \eqref{ineq:triQuadHL}, and Lemma \ref{lem:CoefCtrl}, we have 
\begin{align*} 
LS3^{0;C1;NR}_{HL} & \lesssim \frac{\epsilon\jap{t}^2}{\jap{\nu t^3}^{\alpha}}\int \abs{\chi^{NR} A^{3} \hat{Q}^3(k,\eta,l) \frac{\abs{\eta-kt}}{k^2 + l^2 + \abs{\eta-kt}^2} \frac{1}{\jap{\xi,l^\prime} \jap{t}} A\widehat{G}(\xi,l^\prime) Low(k,\eta-\xi,l-l^\prime)} d\xi d\eta \\
& \lesssim \frac{\epsilon}{\jap{\nu t^3}} \norm{A^{3}Q^3}_2 \norm{AC}_2, 
\end{align*} 
which is consistent with Proposition \ref{prop:Boot} by \eqref{ineq:Boot_ACC2}.  
In the resonant regime, we have by Lemma \ref{lem:ABasic}, Lemma \ref{lem:dtw}, \eqref{ineq:triQuadHL}, and Lemma \ref{lem:CoefCtrl},
\begin{align*} 
LS3^{0;C1;R}_{HL} & \lesssim \frac{\epsilon\jap{t}^2}{\jap{\nu t^3}^{\alpha}}\int \chi^{R} \abs{A^{3} \widehat{Q^3_k}(\eta,l)  \frac{1}{\left(\abs{k} + \abs{\eta-kt}\right) \jap{\xi,l^\prime} } \frac{1}{\jap{t}}A(\xi,l^\prime) \widehat{G}(\xi,l^\prime)} Low(k,\eta-\xi,l-l^\prime) d\xi d\eta \\ 
& \lesssim \frac{\epsilon t}{\jap{\nu t^3}^\alpha } \norm{\sqrt{\frac{\partial_t w}{w}} A^{3}Q^3}_2 \norm{\left(\sqrt{\frac{\partial_t w}{w}} + \frac{\abs{\grad}^{s/2}}{\jap{t}^{s}}\right) A C}_2 \\ 
& \lesssim \frac{c_0}{\jap{\nu t^3}^\alpha } \norm{\sqrt{\frac{\partial_t w}{w}} A^{3}Q^3}^2_2 + \frac{\epsilon^{2} \jap{t}^4 }{c_0\jap{\nu t^3}^\alpha } \jap{t}^{-2}\norm{\left(\sqrt{\frac{\partial_t w}{w}} + \frac{\abs{\grad}^{s/2}}{\jap{t}^{s}}\right) A C}^2_2,  
\end{align*}   
which is time integrable by \eqref{ineq:Boot_ACC2}. This completes $LS3_{HL}^{0;C1}$. 

Turn to $LS3^{0;C1}_{LH}$, which is written on the frequency side as
\begin{align*} 
LS3^{0;C1}_{LH} \lesssim \min(\epsilon t,c_{0}) \sum_{k,l}\int \abs{A^{3} \widehat{Q^3_k}(\eta,l) A^{3}_k(\eta,l) \frac{\abs{k}\abs{\eta-kt}}{k^2 + l^2 + \abs{\eta-kt}^2} \left(\Delta_L \widehat{U^3_k}\right)_{Hi}(\xi,l^\prime)} Low(\eta-\xi,l-l^\prime) d \eta d\xi.   
\end{align*}  
We can treat this term roughly like $NLP(1,3,0,\neq)$ for $Q^2$ in \S\ref{sec:NLP213}. 
By \eqref{ineq:AiPartX} and \eqref{ineq:triQuadHL},  %we have,  
\begin{align*} 
LS3^{0;C1}_{LH} & \lesssim c_{0}\norm{\left(\sqrt{\frac{\partial_t w}{w}} A^{3}+ \frac{\abs{\grad}^{s/2}}{\jap{t}^s}A^{3}\right) Q^3}_2^2 + c_{0}\norm{\left(\sqrt{\frac{\partial_t w}{w}} A^{3}+ \frac{\abs{\grad}^{s/2}}{\jap{t}^s}A^{3}\right) \Delta_L U^3_{\neq}}_2^2 \\ 
& \quad + \epsilon\norm{A^3 Q^3_{\neq}}_2\norm{\Delta_L A^3 U^3_{\neq}}_2.
\end{align*}
By Lemmas \ref{lem:PEL_NLP120neq} and  \ref{lem:SimplePEL}, this consistent with Proposition \ref{prop:Boot} by the bootstrap hypotheses. 
 
The remainder $LS3^{0;C1}_{\mathcal{R}}$ is straightforward and is omitted for the sake of brevity. 
This completes the first error term in \eqref{eq:LS30}, $LS3^{0;C1}$. 
As mentioned above, $LS3^{0;C2}$ in \eqref{eq:LS30} is very similar, and is hence omitted for the sake of brevity. 

Turn to the third error term in \eqref{eq:LS30}, $LS3^{0;C3}$. 
The treatment is almost the same as $LS3^{0;C1}$; we only briefly sketch the differences. 
As above, expand with a paraproduct: 
\begin{align*} 
LS3^{0;C3} & = -2\int A^{3} Q^3 A^{3}\partial_X(\partial_Y - t\partial_X) \Delta_{L}^{-1} \left( (\Delta_t C)_{Hi}  (\partial_Y - t\partial_X)(U^3)_{Lo}\right) dV \\ 
& \quad -2\int A^{3} Q^3 A^{3}\partial_X(\partial_Y - t\partial_X) \Delta_{L}^{-1} \left( (\Delta_t C)_{Lo} (\partial_Y - t\partial_X)(U^3)_{Hi}\right) dV \\ 
& \quad -2\int A^{3} Q^3 A^{3}\partial_X(\partial_Y - t\partial_X) \Delta_{L}^{-1} \left( (\Delta_t C) (\partial_Y - t\partial_X) (U^3) \right)_{\mathcal{R}} dV \\ 
& = LS3^{0;C3}_{HL} + LS3^{0;C3}_{LH} + LS3^{0;C3}_{\mathcal{R}}.  
\end{align*}  
Consider first the contribution when the coefficients are in high frequency, $LS3^{0;C3}_{HL}$. 
We have (dividing into resonant and non-resonant frequencies as in the treatment of $LS3^{0;C1}_{HL}$):  
\begin{align*} 
LS3^{0;C2}_{HL} & \lesssim \frac{\epsilon\jap{t}}{\jap{\nu t^3}^\alpha}\int \left[\chi^R + \chi^{NR}\right]\abs{A^{3} \widehat{Q^3_k}(\eta,l)} \\ & \quad\quad \times A^{3}_k(\eta,l) \frac{\abs{\eta-kt}}{k^2 + l^2 + \abs{\eta-kt}^2} \abs{\Delta_t\hat{C}(\xi,l^\prime)_{Hi}} Low(k,\eta-\xi,l-l^\prime) d\xi d\eta, 
\end{align*} 
where $\chi^R = \mathbf{1}_{t \in \I_{k,\eta}}\mathbf{1}_{t \in \I_{k,\xi}}\mathbf{1}_{\abs{l} \lesssim \abs{\eta}}$ and $\chi^{NR} = 1 - \chi^{R}$. 
In the non-resonant contributions, we gain one less power of $t$ relative to the treatment of $LS3^{0;C1;NR}_{HL}$ but 
we also lose one less. 
In the resonant contributions there is essentially an exact equivalence between frequency and time, and hence the extra derivative on $C$ is the same as the extra power of time in $LS3^{0;C1;R}_{HL}$ and so the treatment is the same also for $LS3^{0;C3}_{HL}$. 
Moreover, it yields terms which are the same as those given by $LS3^{0;C1}$. 
The treatment of the coefficients in $\Delta_t$ is handled with Lemma \ref{lem:AAiProd} combined with Lemma \ref{lem:CoefCtrl}. 
Hence, we omit the treatment for brevity. 
The treatments and resulting terms from $LS3^{0;C3}_{LH}$ and $LS3^{0;C3}_{\mathcal{R}}$ are essentially the same as for $LS3^{0;C1}$, although easier since we are losing one less power of $t$ here. We omit these contributions as well as they also yield similar contributions as $LS3^{0;C1}$. This concludes the treatment of the linear stretching term $LS3$.  

\subsubsection{Linear pressure term $LP3$} \label{ineq:LP3_Hi} 
As in $LS3$, first separate the coefficient corrections and expand them with a paraproduct:
\begin{align*} 
LP3 & = 2 \int A^{3} Q^3 A^{3} \partial_Z \partial_X U^2 dV \\
& \quad + 2 \int A^{3} Q^3 A^{3} \left((\psi_{z})_{Lo}(\partial_Y - t\partial_X) \left(\partial_X U^2\right)_{Hi}\right) dV \\
& \quad + 2\int A^{3} Q^3 A^{3} \left( (\psi_{z})_{Hi}(\partial_Y - t\partial_X) \partial_X \left(U^2\right)_{Lo} \right) dV \\ 
& \quad + 2\int A^{3} Q^3 A^{3} \left((\psi_{z})(\partial_Y - t\partial_X) \partial_X  U^2\right)_{\mathcal{R}} dV \\ 
& = LP3^{0} + LP3^{C}_{LH} + LP3^{C}_{HL} + LP3^{C}_{\mathcal{R}}. 
\end{align*} 
As in the treatment of $LS3$, the latter three terms can be treated perturbatively. 

\paragraph{Treatment of $LP3^{0}$}
Begin with the leading order term, which is treated as follows, using the definition \eqref{def:wL}
\begin{align*} 
LP3^{0} \leq \frac{1}{2\kappa}\norm{\sqrt{\frac{\partial_t w_L}{w_L}} A^{3} Q^3}_2^2 + \frac{1}{2\kappa}\norm{\sqrt{\frac{\partial_t w_L}{w_L}}A^{3} \Delta_L U^2_{\neq}}^2_2.
\end{align*} 
The first term is absorbed by the $CK_{wL}^3$ term in \eqref{ineq:AQ3_Evo} and for the latter term we apply Lemma \ref{lem:QPELpressureI}.  

\paragraph{Treatment of $LP3^C$}
Turn first to $LP3^{C}_{HL}$, in which the coefficient is in `high frequency'. 
Here we have by \eqref{ineq:AprioriUneq}, followed by Lemma \ref{lem:ABasic}, \eqref{ineq:triQuadHL}, and Lemma \ref{lem:CoefCtrl}, we have,    
\begin{align*} 
LP3^{C}_{HL} &\lesssim \frac{\epsilon}{\jap{t}^{1-\delta_1} \jap{\nu t^3}^{\alpha}}\sum_{k,l}\int \abs{A^{3} \widehat{Q^3_k}(\eta,l) A^{3}_k(\eta,l) \hat{\psi_z}(\xi,l)_{Hi}} Low(k,\eta-\xi,l-l^\prime) d\eta \\ 
& \lesssim \frac{\epsilon}{\jap{t}^{2-\delta_1}\jap{\nu t^3}^{\alpha}}\norm{A^{3}Q^3}_2\norm{A C}_2, 
\end{align*}  
which is consistent with Proposition \ref{prop:Boot} for $\epsilon$ sufficiently small by the bootstrap hypotheses.  

Next turn to $LP3^{C}_{LH}$, which by the bootstrap hypotheses and \eqref{ineq:ABasic} is controlled via
\begin{align*} 
LP3^{C}_{LH} & \lesssim \min(\epsilon \jap{t}, c_{0})\sum_{k \neq 0,l}\int_\eta  \abs{A^{3} \widehat{Q^3_k}(\eta,l)} \frac{k\abs{\xi-kt}}{\left(k^2 + l^{\prime 2} + \abs{\xi-kt}^2\right)} \abs{A^{3}\widehat{\left(\Delta_{L} U^2 \right)}_k(\xi,l')_{Hi}} Low(\eta-\xi,l-l^\prime) d\eta \\ 
 & \lesssim \min(\epsilon \jap{t}, c_{0}) \sum_{k \neq 0,l}\int_\eta  \abs{A^{3} \widehat{Q^3_k}(\eta,l)}  \\ & \quad\quad\quad \times  \frac{\abs{k}\abs{\xi-kt}}{\left(k^2 + l^{\prime 2} + \abs{\xi-kt}^2\right) \jap{\frac{t}{\jap{\xi,l^\prime}}}} \abs{A^{2}\widehat{\left(\Delta_{L} U^2 \right)}_k(\xi,l')_{Hi}} Low(\eta-\xi,l-l^\prime) d\eta. 
\end{align*} 
We may treat this in a manner similar to the canonical $NLP(1,3,0,\neq)$ on $Q^2$ in \S\ref{sec:NLP213}.
Indeed, by \eqref{ineq:AiPartX} and \eqref{ineq:triQuadHL} we have,  
\begin{align*} 
LP3^{C}_{HL} & \lesssim c_{0}\norm{\left(\sqrt{\frac{\partial_t w}{w}} A^{3} + \frac{\abs{\grad}^{s/2}}{\jap{t}^s}A^{3}\right) Q^3}_2^2 + c_{0}\norm{\left(\sqrt{\frac{\partial_t w}{w}} A^{2}+ \frac{\abs{\grad}^{s/2}}{\jap{t}^s}A^{2}\right) \Delta_L U^2_{\neq}}_2^2 \\ 
& \quad + \epsilon\norm{A^3 Q^3_{\neq}}_2\norm{\Delta_L A^2 U^2_{\neq}}_2, 
\end{align*}  
which by Lemmas \ref{lem:PEL_NLP120neq} and  \ref{lem:SimplePEL}, is consistent with Proposition \ref{prop:Boot} by the bootstrap hypotheses. 
The remainder term $LP3^{\mathcal{R}}$ is straightforward and is omitted for the sake of brevity; this completes the treatment of $LP3$. 

\section{High norm estimates on $Q^1_0$} 
\subsection{Improvement of \eqref{ineq:Boot_Q1Hi1}} \label{sec:Q1Hi1}
The proof of \eqref{ineq:Boot_Q1Hi1} (with constant `2') proceeds slightly differently than several other estimates we are making. 
The goal is to obtain exactly $O(\epsilon \jap{t})$ growth for $Q_0^1$, rather than any logarithmic losses in $t$ or $\epsilon$ (which would be fatal to the proof of Theorem \ref{thm:Threshold}). 
We will deduce an estimate like
\begin{align} 
\frac{1}{2}\frac{d}{dt}\norm{A^{1}Q^1_0}^2_2 & \leq -\frac{t}{\jap{t}^2}\norm{A^1 Q^1_0}_2^2 +  \frac{1}{\jap{t}}\norm{A^{1}Q^1_0}_2\norm{A^2 Q^2_0}_2 + c_{0}\epsilon^2\mathcal{I}(t) \nonumber \\  
& \leq -\frac{t}{\jap{t}^2}\norm{A^1Q^1_0}_2^2 +  \frac{4\epsilon}{\jap{t}}\norm{A^{1}Q^1_0}_2 + c_{0}\epsilon^2\mathcal{I}(t),  \label{ineq:basic_Q1}
\end{align} 
where $\mathcal{I}(t)$ is integrable and $O(K_B)$ uniformly in $\epsilon$. 
This yields the desired bound by comparing $X(t) = \norm{A^1 Q^1_0(t)}^2_2$ to the super-solution of the inequality given by $Y(t) = \max\left(\frac{3}{2}K_{H10}, 6\sqrt{2}\right)\epsilon + c_{0}\epsilon^2 \int_{1}^t\mathcal{I}(\tau) d\tau$.
Indeed, for $K_{H10}$ sufficiently large, 
\begin{align*} 
\partial_t Y(t) = c_{0}\epsilon^2\mathcal{I}(t) \geq \left(-\frac{t}{\jap{t}^2}Y(t) +  \frac{4\epsilon}{\jap{t}}\right) Y(t) + c_{0}\epsilon^2\mathcal{I}(t), 
\end{align*}
as the additional two terms on the RHS sum to something negative by the choice of $Y(t)$ (recall $t \geq 1$). 
By Lemma \ref{lem:BootStart} it follows that $X(1) < Y(1)$, and hence we have by comparison and \eqref{ineq:basic_Q1} that $X(t) \leq Y(t)$ for all $t \in [1,T_\star]$.

By the above discussion, improving \eqref{ineq:Boot_Q1Hi1} reduces to proving an estimate like \eqref{ineq:basic_Q1}.
From the evolution equation for $Q^1_0$: 
\begin{align} 
\frac{1}{2}\frac{d}{dt}\norm{A^{1} Q^1_0}_2^2 & \leq \dot{\lambda}\norm{\abs{\grad}^{s/2}A^{1} Q_0^1}_2^2 - \norm{\sqrt{\frac{\partial_t w}{w}} A^{1} Q_0^1}_2^2 - \frac{t}{\jap{t}^2}\norm{A^1 Q_0^1}_2^2 \nonumber \\ 
& \quad - \int A^{1}Q^1_0 A^1 Q^2_0 dV + \nu  \int A^{1} Q^{1}_0 A^{1} \left(\tilde{\Delta_t} Q^1_0\right) dV \nonumber \\  
& \quad - \int A^{1} Q^1_0 A^{1}\left(\tilde U_0 \cdot \grad Q^1_0 \right) dV - \int A^{1} Q^1_0 A^{1} \left( Q^j_0 \partial_j^t U^1_0 + 2\partial_i^t U^j_0 \partial_{ij}^t U^1_0\right) dV \nonumber \\ 
&\quad - \int A^{1} Q^1_0 A^{1}\left(\tilde U_{\neq} \cdot \grad Q^1_{\neq} \right)_0 dV - \int A^{1} Q^1_0 A^{1} \left( Q^j_{\neq} \partial_j^t U^1_{\neq} + 2\partial_i^t U^j_{\neq} \partial_{ij}^t U^1_{\neq}\right)_0 dV \nonumber \\ 
& = -\mathcal{D}Q_0^1 - CK^1_L  + LU + \mathcal{D}_E + \mathcal{T}_0 + NLS1(j,0) + NLS2(i,j,0) + \mathcal{F} \label{eq:AevoQ10}
\end{align} 
where, as above, we write 
\begin{align*}
\mathcal{D}_E  = \nu\int A^1 Q^1_0 A^1 \left((\tilde{\Delta_t} - \Delta)Q_0^1\right) dV. 
\end{align*}
Notice that, due to the $X$ average, the linear pressure and stretching terms both disappear. 
Hence the main growth of $Q^1_0$ is caused by the lift-up effect term, $LU$.
By Cauchy-Schwarz:
\begin{align*}
LU \leq \jap{t}^{-1} \norm{A^{1} Q^1_0}_2 \norm{A^{2} Q^2_0}_2,   
\end{align*}  
which, together with \eqref{ineq:Boot_Q2Hi} is responsible for the leading order linear term in \eqref{ineq:basic_Q1}. 
Hence, it remains to see how to control the nonlinear terms.

\subsubsection{Transport nonlinearity}
This term corresponds to the transport of $Q_0^1$ by $\tilde U_0$ involving only zero frequencies.
The treatment of this term can be made in the same way as the corresponding treatment for $Q^2$ in \S\ref{sec:TransQ20}, 
which yields 
\begin{align*} 
\mathcal{T}_0 = -  \int A^{1} Q^1_0 A^{1}\left(\tilde U_0 \cdot \grad Q^1_0\right) dV& \lesssim \epsilon \norm{\grad A^1 Q^1_0}_2^2 + \frac{\epsilon}{\jap{t}^{4}} \norm{A^1 Q^1_0}_2^2 + \epsilon\norm{\grad U_0^3}_{2}^2, 
\end{align*}
which is consistent with \eqref{ineq:basic_Q1}.

\subsubsection{Nonlinear stretching $NLS$}
This term is the analogue of those treated in \S\ref{sec:NLPSQ20} and corresponds to the nonlinear stretching effects on $Q_0^1$ involving only zero frequencies (the pressure disappears due to the $X$ average), hence they are of type \textbf{(2.5NS)}. 
Notice that these nonlinear terms are linear in $Q^1_0$, which is an important ``null'' structure (and is expected from the form of the streaks \eqref{def:streak}). 
It can be handled by a variation of the corresponding treatment for $Q^2$ in \S\ref{sec:TransQ20} and \S\ref{sec:NLPSQ20}. 

Let us show the treatment of one set of representative terms and leave the others. 
Consider, for example, the $NLS1$ term: 
\begin{align*}
NLS1(j,0,0) & = \int \left(A^1 Q^1_0\right)_{>1} \left(A^1\left(Q^j_0 \partial^t_j U^1_0 \right)\right)_{>1} dX dY + \int \left(A^1 Q^1_0\right)_{\leq 1} \left(A^1\left(Q^j_0 \partial_j^t U^1_0 \right)\right)_{\leq 1} dX dY \\
& = S^H + S^L.  
\end{align*} 
Observing that $j \neq 1$, turn first to the low-frequency term. 
First, 
\begin{align*}
S^L & \lesssim \norm{\left(A^1 Q_0^1\right)_{\leq 1}}_2 \norm{\left(A^1\left( Q_0^j \partial_j^t U^1_0\right)\right)_{\leq 1}}_2 \lesssim \jap{t}^{-2}\norm{Q_0^1}_2 \norm{ Q_0^j \partial_j^t U^1_0}_2. 
\end{align*} 
For $t \leq c_0\epsilon^{-1}$, by Sobolev embedding and $\sigma^\prime > 5/2$ (also Lemma \ref{lem:CoefCtrl}), 
\begin{align*}
S^L & \lesssim \jap{t}^{-2}\norm{Q_0^1}_2 \norm{Q_0^j}_2 \norm{\grad U^1_0}_{H^{\sigma^\prime-1}} \lesssim \epsilon^3, 
\end{align*}
which is consistent with \eqref{ineq:basic_Q1} for times $t \leq c_0\epsilon^{-1}$ and $c_0$ sufficiently small. 
For times with $t > c_0\epsilon^{-1}$ we use instead (along with Lemma \ref{lem:CoefCtrl}), 
\begin{align*}
S^L & \lesssim \jap{t}^{-2}\norm{Q_0^1}_2 \norm{\grad U_0^j}_{H^{\sigma^\prime-1}} \norm{\grad U^1_0}_{H^{\sigma^\prime-1}} \\ 
& \lesssim c_0^{-1} \epsilon^2 \norm{\grad U_0^j}_{H^{\sigma^\prime-1}} \norm{\grad U^1_0}_{H^{\sigma^\prime-1}} \\ 
& \lesssim c_0^{-2} \epsilon^3 \norm{\grad U^1_0}_{H^{\sigma^\prime-1}}^2 + \epsilon\norm{\grad U_0^j}_{H^{\sigma^\prime-1}}^2, 
\end{align*}  
which, since $c_0^{-1}\epsilon \lesssim \nu$, integrates to be $O(c_0 \epsilon^2)$ by the low frequency controls \eqref{ineq:Boot_LowFreq} and $j \neq 1$.
The treatment of the high frequencies is slightly easier: by Lemma \ref{lem:AAiProd} and Lemma \ref{lem:PELbasicZero}, 
\begin{align*}
S^{H} & \lesssim \norm{\left(A^1 Q_0^1\right)_{>1}}_2 \norm{A^1\left( Q_0^j \partial_j^t U^1_0\right)_{>1}}_2 \\ 
& \lesssim \norm{\grad A^1 Q_0^1}_2 \norm{A^j Q_0^j}_2 \norm{\grad A^1 U^1_0}_2 \\ 
& \lesssim \epsilon\norm{\grad A^1 Q^1_0}_2^2 + \frac{\epsilon}{\jap{t}^{2}}\norm{\grad U_0^1}_2^2 + \epsilon^3 \norm{\grad AC}_2^2. 
\end{align*} 
To see that the middle term is integrable, we use a trick similar that employed above on the low frequency term. For $t \leq c_0 \epsilon^{-1}$ we have by \eqref{ineq:Boot_U01_Low1},
\begin{align*}
\frac{\epsilon}{\jap{t}^{2}}\norm{\grad U_0^1}_2^2 & \lesssim K_{U1}^2\epsilon^3, 
\end{align*}
which is consistent with \eqref{ineq:basic_Q1} for $t \leq c_0 \epsilon^{-1}$. After this point,  
\begin{align*}
\frac{\epsilon}{\jap{t}^{2}}\norm{\grad U_0^1}_2^2 & \lesssim c_0^{-2} \epsilon^3 \norm{\grad U_0^1}_2^2, 
\end{align*}
which integrates to $O(c_0 \epsilon^2)$ by \eqref{ineq:Boot_U01_Low2}.  
This completes the treatment of $NLS1(j,0)$. The $NLS2(i,j,0)$ terms follow similarly and are hence omitted for brevity. 

\subsubsection{Forcing from non-zero frequencies}
In this section we consider interactions of type \textbf{(F)}: the forcing of non-zero frequencies directly back onto $Q_0^1$. 
From \eqref{eq:XavgCanc} and noting the $X$ averages, we get 
\begin{align*} 
\mathcal{F} & = -\int A^1 Q^1 A^1 \left(\partial_Y^t \partial_Y^t \partial_Y^t \left(U^2_{\neq} U^1_{\neq}\right)_{0} + \partial_Y^t \partial_Y^t \partial_Z^t \left(U^3_{\neq} U^1_{\neq}\right)_{0} \right) dV \\
& \quad - \int A^1 Q^1 A^1\left(\partial_Z^t \partial_Z^t \partial_Z^t \left(U^3_{\neq} U^1_{\neq}\right)_{0} + \partial_Z^t \partial_Z^t \partial_Y^t \left(U^2_{\neq} U^1_{\neq}\right)_{0} \right) dV \\
& = F^1 + F^2 + F^3 + F^4. 
\end{align*}
Since all of these contributions are roughly equivalent, we will focus on $F^1$ and $F^2$.
Decompose $F^2$ with a paraproduct; as usual we group contributions where the coefficients appear in low frequency with the remainder:  
\begin{align}  
F^2 & = -\sum_{k\neq 0} \int A^{1}Q^1_0 A_0^{1} \partial_Y\partial_Y\partial_Z\left( \left(U^3_{-k}\right)_{Hi} \left( U^1_k\right)_{Lo}\right)dV \nonumber \\
& \quad - \sum_{k\neq 0} \int A^{1}Q^1_0 A_0^{1} \partial_Y \partial_Y\partial_Z\left( \left(U^3_{-k}\right)_{Lo} \left( U^1_k\right)_{Hi}\right) dV \nonumber \\
& \quad -\sum_{k\neq 0}  \int A^{1}Q^1_0 A_0^{1} (\psi_y)_{Hi} \partial_Y\partial_Y\partial_Z\left( \left(U^3_{-k}\right)_{Lo} \left( U^1_k\right)_{Lo}\right) dV \nonumber \\
& \quad - \sum_{k\neq 0} \int A^{1}Q^1_0 A_0^{1} \partial_Y\left( (\psi_y)_{Hi}\partial_Y\partial_Z\left( \left(U^3_{-k}\right)_{Lo} \left( U^1_k\right)_{Lo}\right)\right) dV \nonumber \\
& \quad - \sum_{k\neq 0} \int A^{1}Q^1_0 A_0^{1} \partial_Y \partial_Y \left( (\psi_z)_{Hi} \partial_Y \left( \left(U^3_{-k}\right)_{Lo} \left( U^1_k\right)_{Lo}\right)\right) dV \nonumber \\
& \quad - F^2_{\mathcal{R},C} \nonumber \\
& = F^2_{HL} + F^2_{LH} + F^2_{C1} + F^2_{C2} + F^2_{C3} + F^2_{\mathcal{R},C}. \label{def:F2ppQ10}
\end{align} 
Turn first to $F^{2}_{HL}$. 
By Lemma \ref{lem:ABasic}, \eqref{ineq:AdelLij} and \eqref{ineq:triQuadHL} we have 
\begin{align*} 
F_{HL}^{2} & \lesssim \frac{\epsilon \jap{t}^{\delta_1}}{\jap{\nu t^3}^{\alpha}}\sum_{l,l^\prime,k \neq 0} \int \abs{A^{1} \widehat{Q^1_0}(\eta,l) A^1_{0}(\eta,l) \frac{\abs{\eta}^2\abs{l}}{k^2 + (l^\prime)^2 + \abs{\xi-kt}^2} \Delta_L \widehat{U^3_{k}}(\xi,l^\prime)_{Hi}} \\ & \hspace{5cm} \times Low(-k,\eta-\xi,l-l^\prime) d\eta d\xi \\ 
& \lesssim \frac{\epsilon \jap{t}^{1+\delta_1}}{\jap{\nu t^3}^{\alpha}}\norm{\left(\sqrt{\frac{\partial_t w}{w}} + \frac{\abs{\grad}^{s/2}}{\jap{t}^{s}}\right) A^{1}Q^1_0}_2 \norm{\left(\sqrt{\frac{\partial_t w}{w}} + \frac{\abs{\grad}^{s/2}}{\jap{t}^s}\right)A^{3} \Delta_L U^3_{\neq}}_2
\\ & \quad + \frac{\epsilon \jap{t}^{\delta_1-1}}{\jap{\nu t^3}^{\alpha}}\norm{\sqrt{-\Delta_L} A^{1}Q^1}_2 \norm{A^{3}\Delta_L U^3_{\neq}}_2,
\end{align*} 
which, after the application of Lemmas \ref{lem:PEL_NLP120neq} and \ref{lem:SimplePEL}, is consistent with \eqref{ineq:basic_Q1}. 
 
Turn next to $F_{LH}^{2}$, for which also by \eqref{ineq:ABasic}, \eqref{ineq:AdelLij} and \eqref{ineq:quadHL} we have 
\begin{align*} 
F_{LH}^{2} & \lesssim \frac{\epsilon \jap{t}^2}{\jap{\nu t^3}^{\alpha}}\norm{\left(\sqrt{\frac{\partial_t w}{w}} + \frac{\abs{\grad}^{s/2}}{\jap{t}^{s}}\right) A^{1}Q^1_0}_2 \norm{\left(\sqrt{\frac{\partial_t w}{w}} + \frac{\abs{\grad}^{s/2}}{\jap{t}^s}\right)A^{1} \Delta_L U^1_{\neq}}_2
\\ & \quad + \frac{\epsilon }{\jap{\nu t^3}^\alpha}\norm{\sqrt{-\Delta_L} A^{1}Q^1}_2 \norm{A^{1}\Delta_L U^1_{\neq}}_2,   
\end{align*} 
which, after the application of the Lemmas \ref{lem:PEL_NLP120neq} and \ref{lem:SimplePEL}, is consistent with \eqref{ineq:basic_Q1}. 

The most difficult coefficient error term in \eqref{def:F2ppQ10} is $F^2_{C3}$, since two derivatives of the coefficients are being taken. 
By Lemma \ref{lem:ABasic}, \eqref{ineq:triQuadHL}, and Lemma \ref{lem:CoefCtrl},
\begin{align*} 
F^2_{C3} & \lesssim \frac{ \epsilon^2 \jap{t}^{\delta_1}}{ \jap{\nu t^3}^{2\alpha}} \sum_{l,l^\prime,k\neq 0} \int \abs{A^{1} \widehat{Q^1_0}(\eta,l) A_0^{1}(\eta,l) \abs{\eta}^2 \widehat{\psi_z}(\xi,l^\prime)} Low(k,\eta-\xi,l-l^\prime) d\eta d\xi \\ 
& \lesssim \epsilon \norm{\grad A^{1}Q^1_0}_2^2 + \frac{\epsilon^3 \jap{t}^{2\delta_1}}{\jap{\nu t^3}^{4\alpha}}\norm{AC}^2_2,
\end{align*} 
which is consistent with \eqref{ineq:basic_Q1} for $c_{0}$ and $\epsilon$ sufficiently small by the bootstrap hypotheses. 
The other coefficient terms in \eqref{def:F2ppQ10}, $F_{C1}^2$ and $F_{C2}^2$ are easier and give similar contributions. Hence, these are omitted for the sake of brevity.
The remainder term in \eqref{def:F2ppQ10}, $F_{\mathcal{R}}^2$, is similarly straightforward and is omitted as well.  
This completes the treatment of $F^2$. The term $F^4$ yields similar contributions with a similar treatment and is hence omitted. 

Turn to the other difficult term, $F^1$. 
Decompose the corresponding term in \eqref{eq:AevoQ10} with a paraproduct; as usual we group contributions where the coefficients appear in low frequency with the remainder:  
\begin{align}  
F^1 & = -\sum_{k\neq 0} \int A^{1}Q^1_0 A_0^{1} \partial_Y\partial_Y\partial_Y\left( \left(U^2_{-k}\right)_{Hi} \left( U^1_k\right)_{Lo}\right)dV \nonumber \\
& \quad - \sum_{k\neq 0} \int A^{1}Q^1_0 A_0^{1} \partial_Y \partial_Y\partial_Y\left( \left(U^2_{-k}\right)_{Lo} \left( U^1_k\right)_{Hi}\right) dV \nonumber \\
& \quad -\sum_{k\neq 0}  \int A^{1}Q^1_0 A_0^{1} (\psi_y)_{Hi} \partial_Y\partial_Y\partial_Y\left( \left(U^2_{-k}\right)_{Lo} \left( U^1_k\right)_{Lo}\right) dV \nonumber \\
& \quad - \sum_{k\neq 0} \int A^{1}Q^1_0 A_0^{1} \partial_Y\left( (\psi_y)_{Hi}\partial_Y\partial_Y\left( \left(U^2_{-k}\right)_{Lo} \left( U^1_k\right)_{Lo}\right)\right) dV \nonumber \\
& \quad - \sum_{k\neq 0} \int A^{1}Q^1_0 A_0^{1} \partial_Y \partial_Y \left( (\psi_y)_{Hi} \partial_Y \left( \left(U^2_{-k}\right)_{Lo} \left( U^1_k\right)_{Lo}\right)\right) dV \nonumber \\
& \quad - F^1_{\mathcal{R},C} \nonumber \\
& = F^1_{HL} + F^1_{LH} + F^1_{C1} + F^1_{C2} + F^1_{C3} + F^1_{\mathcal{R},C}. \label{def:F1ppQ10}
\end{align}
Turn to $F^1_{HL}$ first, which is the most difficult term. 
By \eqref{ineq:ABasic}, \eqref{ineq:AdeYYY} and \eqref{ineq:triQuadHL} we have 
\begin{align*} 
F_{HL}^{1} & \lesssim \frac{\epsilon \jap{t}^{\delta_1}}{\jap{\nu t^3}^{\alpha}} \sum_{l,l^\prime,k \neq 0} \int \abs{A^{1} \widehat{Q^1_0}(\eta,l) A^1_{0}(\eta,l) \frac{\abs{\eta}^3}{k^2 + (l^\prime)^2 + \abs{\xi-kt}^2} \Delta_L \widehat{U^2_{k}}(\xi,l^\prime)_{Hi}} \\ & \hspace{5cm} \times Low(-k,\eta-\xi,l-l^\prime) d\eta d\xi \\ 
& \lesssim \frac{\epsilon \jap{t}^{2+\delta_1}}{\jap{\nu t^3}^{\alpha}}\norm{\left(\sqrt{\frac{\partial_t w}{w}} + \frac{\abs{\grad}^{s/2}}{\jap{t}^{s}}\right) A^{1}Q^1_0}_2 \norm{\left(\sqrt{\frac{\partial_t w}{w}} + \frac{\abs{\grad}^{s/2}}{\jap{t}^s}\right)A^{2} \Delta_L U^2_{\neq}}_2
\\ & \quad + \frac{\epsilon \jap{t}^{\delta_1-1}}{\jap{\nu t^3}^{\alpha}}\norm{\sqrt{-\Delta_L} A^{1}Q^1}_2 \norm{A^{2}\Delta_L U^2_{\neq}}_2,
\end{align*}
 which, after the application of Lemmas \ref{lem:PEL_NLP120neq} and \ref{lem:SimplePEL}, is consistent with \eqref{ineq:basic_Q1}. 
The treatment of $F^1_{LH}$ is similar, except instead of the extra $\jap{t}^{-1}$ coming from the ratio of $A^1$ and $A^2$ as above, it comes from the inviscid damping of the low frequency factor. Hence, we omit the treatment for brevity. 
The coefficient error terms and the remainder terms are treated the same way they are treated above for $F^2$ and hence are also omitted. This completes the treatment of $F^1$. 

The last term to consider is $F^3$, which is slightly different due to the large number of $Z$ derivatives. 
As above, we will expand with a paraproduct but we will apply \eqref{ineq:AdeZZZ} to the leading order terms, which yields contributions of the form
\begin{align*}
F_{HL}^3 + F_{LH}^3 & \lesssim \frac{\epsilon}{\jap{\nu t^3}^{\alpha}}\norm{\sqrt{-\Delta_L} A^{1}Q^1}_2\left( \norm{A^{1}\Delta_L U^1_{\neq}}_2 +  \norm{A^{3}\Delta_L U^3_{\neq}}_2\right), 
\end{align*}
which, after the application of Lemma \ref{lem:SimplePEL} is consistent with \eqref{ineq:basic_Q1}.
The remainder and coefficient errors are analogous to the treatments of previous $F^i$ above. 
We omit the details for brevity. 
  
\subsubsection{Dissipation error terms}
These can be treated in the same manner as the dissipation error terms on $Q^2_0$ were treated in \S\ref{sec:DEQ02}.
We omit the details for brevity. 

\subsection{Low norm estimate on $Q^1_0$, improvement to \eqref{ineq:Boot_Q1Mid}}
This section is to deduce the improvement to \eqref{ineq:Boot_Q1Mid}, which is similar to \eqref{ineq:Boot_Q1Hi1} except at a lower regularity and offering a uniform bound past $t \gtrsim \epsilon^{-1}$.  
The proof is much easier than \eqref{ineq:Boot_Q1Hi1}, due to the lower regularity. 
In what follows denote 
\begin{align*}
A^{S}(t,\grad) := \jap{\grad}^\gamma e^{\lambda(t)\abs{\grad}^s}. 
\end{align*}
Consider from the evolution equation for $Q^1_0$: 
\begin{align} 
\frac{1}{2}\frac{d}{dt}\norm{Q^1_0}_{\G^{\lambda,\gamma}}^2 & \leq \dot{\lambda}\norm{\abs{\grad}^{s/2}Q_0^1}_{\G^{\lambda,\gamma}}^2 -\int A^{S} Q^1_0 A^{S} Q^2_0 dV + \nu \int A^{S} Q^{1}_0 A^{S} \left(\tilde{\Delta_t} Q^1_0\right) dv \nonumber \\ & \quad 
- \int A^{S} Q^1_0 A^{S}\left(\tilde U_0 \cdot \grad Q^1_0\right)  dV - \int A^{S} Q^1_0 A^{S} \left(Q^j_0 \partial_j^t U^1_0 + 2\partial_i^t U^j_0 \partial_{ij}^t U^1_0\right) dV \nonumber \\ 
& \quad - \int A^{S} Q^1_0 A^{S} \left(\tilde U_{\neq} \cdot \grad Q^1_{\neq} \right)_0 dV - \int A^{S} Q^1_0 A^{S} \left(Q^j_{\neq} \partial_j^t U^1_{\neq} + 2\partial_i^t U^j_{\neq} \partial_{ij}^t U^1_{\neq} \right)_0 dV \nonumber \\ 
& = -CK^1_\lambda + LU + \mathcal{D} + \mathcal{T}_0 + NLS + \mathcal{F}. \label{eq:AevoQ10Low}
\end{align} 
Relative to \S\ref{sec:Q1Hi1}, the treatment of the lift-up effect is more precise for long times: using \eqref{ineq:Boot_LowFreq}, there is a $K > 0$ such that 
\begin{align*} 
LU & \lesssim \norm{Q^1_0}_2 \norm{Q^2_0}_2 + \norm{\grad Q^1_0}_{\G^{\lambda,\gamma}} \norm{\grad A^2 Q^2_0}_2 \\
& \leq \frac{K\epsilon}{\jap{\nu t}^\alpha}\norm{Q^1_0}^2_2 + \frac{K\epsilon}{\jap{\nu t}^\alpha} + \frac{\nu}{10} \norm{\grad Q^1_0}^2_{\G^{\lambda,\gamma}} + K \nu^{-2} \left( \nu \norm{\grad A^2 Q^2_0}^2_2\right), 
\end{align*} 
which is consistent with Proposition \ref{prop:Boot} for $c_0$ sufficiently small and $K_{M1}$ sufficiently large. 
The remaining nonlinear terms in \eqref{eq:AevoQ10Low} are very easy: the transport nonlinearity $\mathcal{T}$ can be treated essentially the same as in \S\ref{sec:TransQ20}, the dissipation error terms as in \S\ref{sec:DEQ02}, the $NLS$ terms as in \S\ref{sec:NLPSQ20}, and finally the forcing terms $\mathcal{F}$ can be treated like remainder terms are treated in the high norm estimates due to the regularity gap between \eqref{ineq:Boot_Q1Mid} and the high norm estimates (see a similar argument in \S\ref{sec:Lowg} below).
 We omit the details as they are repetitive (see \S\ref{sec:Lowg} below for a similar proof).  

\section{High norm estimate on $Q^1_{\neq}$} \label{sec:HiQ1neq}
Consider from the evolution equation for $Q^1$: 
\begin{align} 
\frac{1}{2}\frac{d}{dt} \norm{A^{1} Q^1_{\neq}}_2^2  & \leq \dot{\lambda}\norm{\abs{\grad}^{s/2}A^{1} Q^1_{\neq}}_2^2 - \norm{\sqrt{\frac{\partial_t w}{w}} A^{1} Q^1_{\neq}}_2^2 - \norm{\sqrt{\frac{\partial_t w_L}{w_L}} A^{1} Q^1_{\neq}}_2^2 \nonumber \\
 & \quad - \frac{t}{\jap{t}^{2}}\norm{A^{1}Q^1_{\neq}}_2^2 -\frac{(1+\delta_1)}{t} \norm{\mathbf{1}_{t > \jap{\grad_{Y,Z}}} A^{1} Q^1_{\neq}}_2^2 \nonumber \\
& \quad - \int A^{1}Q^1_{\neq} A^1 Q^2_{\neq} dV -2 \int A^{1} Q^1 A^{1} \partial_{YX}^t U^1_{\neq} dV + 2 \int A^{1} Q^1_{\neq} A^{1} \partial_{XX} U^2_{\neq} dV \nonumber \\  
 & \quad + \nu \int A^{1} Q^{1}_{\neq} A^{1} \left(\tilde{\Delta_t} Q^1_{\neq}\right) dV  - \int A^{1} Q^1_{\neq} A^{1}\left( \tilde U \cdot \grad Q^1 \right) dV \nonumber \\ 
& \quad -  \int A^{1} Q^1_{\neq} A^{1} \left[ Q^j \partial_j^t U^1 + 2\partial_i^t U^j \partial_{ij}^t U^1  - \partial_X\left(\partial_i^t U^j \partial_j^t U^i\right) \right] dV \nonumber \\ 
& = -\mathcal{D}Q^1_{\neq} - CK_{L1}^1 - (1+\delta_1)CK_{L2}^1 + LU + LS1 + LP1 \nonumber \\ & \quad\quad + \mathcal{D}_E + \mathcal{T} + NLS1 + NLS2 + NLP,  \label{ineq:Q1HneqEvo}
\end{align}
where as usual 
\begin{align*}
\mathcal{D}_E = \int A^1 Q^1_{\neq} A^1 \left((\tilde{\Delta_t} - \Delta_L)Q^1_{\neq}\right) dV. 
\end{align*}
The main growth is from $LU$ and $LS1$ together. 
The fact that both are acting at once is the reason for the $\jap{t}^{\delta_1}$ loss, seen at the `low' frequencies in $A^1$. 
We define enumerations of the nonlinear terms analogous to those in \eqref{def:Q2Enums} and \eqref{def:Q3Enums}. 

\subsection{Linear stretching term $LS1$} \label{sec:LS1_Hi}
One of the primary difficulties is the linear stretching term $LS1$.
We use essentially the same treatment as in \S\ref{sec:LS30_Hi} except with a small change in the use of $CK_{L1}^1$ and $CK_{L2}^1$. 
First separate into two parts (to be sub-divided further), 
\begin{align*} 
LS1 & = -2\int A^{1} Q^1 A^{1}\partial_X(\partial_Y - t\partial_X) U^1  dV - 2\int A^{1} Q^1 A^{1} \partial_X\left((\psi_y)(\partial_Y - t\partial_X)\right) U^1 dV \\ 
& = LS1^0 + LS1^{C}.   
\end{align*} 

\subsubsection{Treatment of $LS1^C$}
The $LS1^C$ term can be treated in essentially the same manner as the corresponding $LS3^C$ in \S\ref{sec:LS3C}. 
Hence, we omit the details for brevity and conclude
\begin{align*}  
LS1^C & \lesssim \frac{\epsilon^{1/3}}{\jap{\nu t^3}^\alpha}\norm{A^1 Q^1}_2^2 + \frac{\epsilon^{5/3}}{\jap{t}^{2-2\delta_1}\jap{\nu t^3}^\alpha}\norm{A C}_2^2 \\ 
& \quad + c_{0}\norm{\left(\sqrt{\frac{\partial_t w}{w}} + \frac{\abs{\grad}^{s/2}}{\jap{t}^{s}}\right)A^1 Q^1}_2 \norm{\left(\sqrt{\frac{\partial_t w}{w}} + \frac{\abs{\grad}^{s/2}}{\jap{t}^{s}}\right)A^1 \Delta_L U_{\neq}^1}_2 \\ & \quad + \epsilon \norm{\sqrt{-\Delta_L} A^1 Q^1}_2 \norm{A^1 \Delta_L U^1_{\neq}}_2. 
\end{align*}  
By Lemmas \ref{lem:PEL_NLP120neq} and \ref{lem:SimplePEL}, this is consistent with Proposition \ref{prop:Boot} after applying the bootstrap hypotheses and integrating in time. 

\subsubsection{Leading order term $LS1^0$}
This term is treated with a variant of the method used to treat $LS3^0$ in \S\ref{sec:LS30}, however, we need to make a few minor changes due to the slightly different use of the $CK_{Li}$ terms here. 
As in \eqref{eq:LS30} of \S\ref{sec:LS30}, we first expand by writing out $\Delta_t^{-1}$ in terms of $\Delta_L$: 
\begin{align*} 
LS1^0 & =  -2 \int A^{1} Q^1 A^{1}\partial_X(\partial_Y - t\partial_X) \Delta_{L}^{-1} \left[Q^1 - G (\partial_Y - t\partial_X)^2 U^1 \right. \\ & \left. \quad\quad\quad  - 2 \psi_z \partial_Z (\partial_Y - t\partial_X) U^1  - \Delta_t C (\partial_Y - t\partial_X) U^1\right]  dV \\  
& = LS1^{0;0}  + LS1^{0;C1} + LS1^{0;C2} + LS1^{0;C3}. 
\end{align*} 
Consider first the leading order term, $LS1^{0;0}$.  
Most of the contributions on the frequency side are dealt with similarly to the analogous terms in \S\ref{sec:LS30}. We omit these treatments and reduce ourselves to the following for a universal constant $K > 0$
\begin{align*} 
LS1^{0,0} & \leq \frac{\delta_\lambda}{10\jap{t}^{3/2}} \norm{\abs{\grad}^{s/2}A^{1}Q^1_{\neq}}_2^2 + \frac{K}{\delta_\lambda^{\frac{1}{2s-1}} \jap{t}^{3/2}}\norm{A^{1}Q^1_{\neq}}_2^2 + \frac{K}{\kappa}\norm{\sqrt{\frac{\partial_t w}{w}}A^{1} Q^1_{\neq}}_2^2 + LS1^{0;0,LT,Y},
\end{align*}
 where (the terminology is from \S\ref{sec:LS30}),  
\begin{align*} 
LS1^{0;0,LT,Y} = -2 \sum_{k \neq 0} \int \mathbf{1}_{t > 2\abs{\eta}} \mathbf{1}_{t \geq \jap{\eta,l}} \abs{A^{1} \widehat{Q^1_k}(\eta,l)}^2 \frac{k(\eta-kt)}{k^2 + l^2 + \abs{\eta-kt}^2} d\eta.  
\end{align*} 
Write, 
\begin{align*} 
LS1^{0;0,LT,Y} & \leq 2 \sum \int \mathbf{1}_{t > 2\abs{\eta}} \mathbf{1}_{t \geq \jap{\eta,l}} \abs{A^{1} \widehat{Q^1_k}}^2 \frac{1}{\sqrt{1 + \abs{t- \abs{\eta}}^2}} d\eta \\ 
& = (1+\delta_1)CK^1_{L2} + (1-\delta_1)CK_{L1}^1 \\ 
&\quad  - \sum \int \mathbf{1}_{t > 2\abs{\eta}} \mathbf{1}_{t \geq \jap{\eta,l}} \abs{A^{1} \widehat{Q^1_k}}^2 \left(\frac{1+ \delta_1}{t} + \frac{(1-\delta_1)t}{\jap{t}^2} -  \frac{2}{\sqrt{1 + \left(t- \abs{\eta}\right)^2}} \right) d\eta \\  
& = (1+\delta_1)CK^1_{L2} + (1-\delta_1)CK_{L1}^1 + LS1^{0;0,LT,Y}_{\mathcal{R}}. 
\end{align*} 
The remainder term is written
\begin{align*} 
LS1^{0;0,LT,Y}_{\mathcal{R}} & = - \sum_{k \neq 0} \int \mathbf{1}_{t > 2\abs{\eta}} \mathbf{1}_{t \geq \jap{\eta,l}} \abs{A^{1} \widehat{Q^1_{k}}}^2 \left(\frac{2}{t} -  \frac{2}{\sqrt{1 + \left(t- \abs{\eta}\right)^2}} \right) d\eta \\ 
& \quad - \sum_{k \neq 0} \int \mathbf{1}_{t > 2\abs{\eta}} \mathbf{1}_{t \geq \jap{\eta,l}} \abs{A^{1} \widehat{Q^1_{k}}}^2 \left(\frac{(1-\delta_1)t}{\jap{t}^2} - \frac{1- \delta_1}{t}\right) d\eta.  
\end{align*} 
The first term is treated as in \eqref{ineq:LS3Remainder} in \S\ref{sec:LS30} and the second term is controlled by first noting that
\begin{align*} 
\left(\frac{(1-\delta_1)t}{\jap{t}^2} - \frac{1- \delta_1}{t}\right) = \frac{1-\delta_1}{\jap{t}^2 t}\left(t^2 - \jap{t}^2\right) = -\frac{1-\delta_1}{\jap{t}^2 t}. 
\end{align*} 
Therefore we get for a universal $K > 0$, 
\begin{align*} 
LS1^{0;0,LT,Y}_{\mathcal{R}} & \leq \frac{\delta_\lambda}{10\jap{t}^{3/2}} \norm{\abs{\grad}^{s/2} A^{1}Q^1_{\neq}}_2^2 +  \frac{K}{\delta_\lambda^{\frac{1}{2s-1}}\jap{t}^{3/2}} \norm{ A^{1}Q^1_{\neq}}_2^2 + \frac{1-\delta_1}{\jap{t}^2 t} \norm{A^1 Q^1_{\neq}}_2^2, 
\end{align*} 
which now time-integrates to a contribution consistent with Proposition \ref{prop:Boot} under the bootstrap hypotheses for $K_{H1\neq} \gg \exp\left[C\delta_\lambda^{-\frac{1}{2s-1}}\right]$, for some universal constant $C > 0$ (via integrating factors; see \eqref{ineq:LS300LTZ} for another example).      

The error terms $LS1^{0;Ci}$ are treated in a manner similar to the analogous terms in $LS3$ in \S\ref{sec:LS30} and hence the details are omitted for brevity. This completes the treatment of the $LS1$ term. 

\subsection{Lift-up effect term $LU$} \label{sec:LUQhi2}
Were we able to directly use $CK_{L1}^1$ and $CK_{L2}^1$, this would be immediate, however, we need most of these terms to control $LS1$, as seen above in \S\ref{sec:LS1_Hi}.  
Instead we use (also using that $\delta_\lambda < 1$), 
\begin{align*} 
LU & \leq \delta_1 t\jap{t}^{-2}\norm{A^{1} Q^1_{\neq}}^2_2 + \frac{1}{4\delta_1 t}\norm{A^{2} Q^2_{\neq}}_2^2 \\ 
& \leq \delta_1t\jap{t}^{-2}\norm{A^{1} Q^1_{\neq}}^2_2 + \frac{1}{4\delta_1 t^{3/2}}\norm{\jap{\grad}^{1/4}A^2Q^2_{\neq}}_2^2 +  \frac{1}{4\delta_1 t} \norm{\mathbf{1}_{t > \jap{\grad_{Y,Z}}} A^{2}Q^2_{\neq}}_2^2 \\ 
& \leq \delta_1t\jap{t}^{-2}\norm{A^{1} Q^1_{\neq}}^2_2 + \frac{\delta_\lambda}{4\delta_1 t^{3/2}}\norm{\abs{\grad}^{s/2}A^2 Q^2_{\neq}}_2^2 \\ & \quad\quad + \frac{1}{4\delta_1 \delta_\lambda^{\frac{1}{2s-1}} t^{3/2}} \norm{A^2 Q^2_{\neq}}_2^2  +  \frac{1}{4\delta_1 t} \norm{\mathbf{1}_{t > \jap{\grad_{Y,Z}}} A^{2}Q^2_{\neq}}_2^2. 
\end{align*}  
The first term is absorbed by the remaining piece of $CK_{L1}^1$ left over in \eqref{ineq:Q1HneqEvo} from the treatment of $LS1$.
The others are consistent with Proposition \ref{prop:Boot} via \eqref{ineq:Boot_Q2Hi} if $K_{H1\neq}$ is sufficiently large relative to $4\max(\delta_1^{-1}, \delta_1^{-1} \delta_\lambda^{-\frac{1}{2s-1}})$ (note in particular the usefulness of $CK_L^2$). This suffices to treat $LU$.

\subsection{Linear pressure term $LP1$}
The linear pressure term here is easier than $LP3$ treated in \S\ref{ineq:LP3_Hi}. 
From Cauchy-Schwarz and Lemma \ref{lem:dtw}, we get
\begin{align*}
LP1 & \leq 2 \sum_{k \neq 0} \int \abs{A^1\widehat{Q^1_k}(\eta,l) \frac{\abs{k}^2}{k^2 + l^2 + \abs{\eta-kt}^2} A^1 \Delta_L \widehat{U^2_k}(\eta,l)}  d\eta \\ 
& \lesssim \kappa^{-1} \jap{t}^{-1} \norm{\sqrt{\frac{\partial_t w}{w}} A^1 Q^1_{\neq}}_2\norm{\sqrt{\frac{\partial_t w}{w}} A^2\Delta_L U_{\neq}^2}_2 + \jap{t}^{-3}\norm{A^1Q^1}_2\norm{A^2\Delta_L U_{\neq}^2}_2. 
\end{align*}
Therefore for $\kappa$ and $K_{H1\neq}$ sufficiently large and $c_{0}$ sufficiently small, this is consistent with Proposition \ref{prop:Boot} by the bootstrap hypotheses after applying Lemmas \ref{lem:PEL_NLP120neq} and \ref{lem:SimplePEL}.
 
\subsection{Nonlinear pressure $NLP$}
This refers to the nonlinear pressure interactions of types \textbf{(SI)} and \textbf{(3DE)}. 
None of the existing terms here are worse than those appearing in $Q^2$ in \S\ref{sec:NLPQ2} or $Q^3$ in \S\ref{sec:NLP3}. Moreover, on $Q^1$ we are imposing less control (since $A^1$ is significantly weaker than $A^{2,3}$ at high frequencies due to the extra $\jap{t}^{-1}$ in the definition of $A^1$) and the leading derivative is an $X$ derivative, which is generally less dangerous than those associated with $Y$ and $Z$. 
Therefore, the treatment of the $NLP$ contributions here are an easy variant of the treatments in \S\ref{sec:NLPQ2} and \S\ref{sec:NLP3}. Accordingly, the details are omitted for the sake of brevity. 

\subsection{Nonlinear stretching $NLS$}
These terms look more dangerous than the corresponding $NLS$ terms in $Q^{2,3}$ due to the persistent presence of $U^1$, however, this is balanced by the allowed linear growth of $Q^1_{\neq}$ even at high frequencies. 

\subsubsection{Treatment of $NLS1$}
Consider first the $NLS1(j,\neq,0)$ terms. Note $j \neq 1$ due to the zero frequencies. The case $j = 3$ is worse than $j=2$ due to the larger growth of $Q^3$, hence let us just treat this case. 
Expand with a paraproduct and group any terms with coefficients in low frequencies in with the remainder  
\begin{align*} 
NLS1(3,\neq,0) & = -\sum_{k \neq 0} \int A^1 Q^1_k A^1\left( (Q^3_{k})_{Hi} (\partial_Z U_0^1)_{Lo} \right) dV - \sum \int A^1 Q^1_k A^1\left( (Q^3_{k})_{Lo} (\partial_Z U_0^1)_{Hi} \right) dV \\ 
& \quad - \sum_{k \neq 0} \int A^1 Q^1_k A^1\left( (Q^3_{k})_{Lo} (\psi_z)_{Hi} (\partial_Y U_0^1)_{Lo} \right) dV + S_{\mathcal{R},C} \\ 
& = S_{HL} + S_{LH} + S_{C} + S_{\mathcal{R},C}, 
\end{align*}
where $S_{\mathcal{R},C}$ includes the remainders from the paraproduct and the low frequency coefficient terms. 
By \eqref{ineq:AprioriU0} and \eqref{ineq:ABasic}, followed by \eqref{ineq:ratlongtime} and \eqref{ineq:triQuadHL},
\begin{align*}
S_{HL} & \lesssim \max(\epsilon \jap{t},c_0) \sum_{k \neq 0} \int \abs{ A^1 \widehat{Q^1_k}(\eta,l) \jap{t}^{-1} \jap{\frac{t}{\jap{\xi,l^\prime}}}^{1-\delta_1} A^3 \widehat{Q^3_{k}}(\xi,l^\prime) Low(\eta-\xi,l-l^\prime) } d\eta d\xi \\ 
& \lesssim \epsilon \norm{A^1 Q^1_{\neq}}_2 \norm{\sqrt{-\Delta_L} A^3 Q^3}_2, 
\end{align*}
which is consistent with Proposition \ref{prop:Boot} for $c_0$ sufficiently small. 
For $S_{LH}$ we use Lemma \ref{lem:ABasic} and \eqref{ineq:AprioriUneq} followed by \eqref{ineq:triQuadHL}, 
\begin{align*}
S_{LH} & \lesssim \frac{\epsilon \jap{t}^2 }{\jap{\nu t^3}^{\alpha}} \sum_{k} \int \abs{ A^1 \widehat{Q^1_k}(\eta,l) \frac{1}{\jap{t} \jap{\xi,l^\prime}} \jap{\frac{t}{\jap{\xi,l^\prime}}}^{-1-\delta_1} A \widehat{U^1_{0}}(\xi,l^\prime) Low(k,\eta-\xi,l-l^\prime) } d\eta d\xi \\ 
& \lesssim \frac{\epsilon}{\jap{\nu t^3}^{\alpha}}\norm{A^1 Q^1}_2 \norm{A U_0^1}_2,
\end{align*}
which is consistent with Proposition \ref{prop:Boot} for $c_0$ sufficiently small. 
 By a similar argument, 
\begin{align*}
S_{C} & \lesssim \frac{\epsilon^2 \jap{t}}{\jap{\nu t^3}^{\alpha}}\norm{A^1 Q^1}_2 \norm{A C}_2. 
\end{align*}
The remainder terms are similar to the above and are hence omitted for brevity. This completes the treatment of the $NLS1(j,\neq,0)$ terms. 

Next consider the $NLS1(j,0,\neq)$ terms. Here, let us focus on the case $j=1$ (which does not cancel). 
Expand with a paraproduct and group any terms with coefficients in low frequencies in with the remainder  
\begin{align*}
NLS1(1,0,\neq) & = -\sum_{k \neq 0} \int A^1 Q^1_k A^1\left( (Q^1_{0})_{Hi} ( \partial_X U_k^1)_{Lo} \right) dV - \sum_{k \neq 0} \int A^1 Q^1_k A^1\left( (Q^1_{0})_{Lo} ( \partial_X U_k^1)_{Hi} \right) dV + S_{\mathcal{R},C} \\  
& = S_{HL} + S_{LH} + S_{\mathcal{R},C}. 
\end{align*}
From Lemma \ref{lem:ABasic}, \eqref{ineq:AprioriUneq}, and \eqref{ineq:triQuadHL},  
\begin{align*}
S_{HL} & \lesssim \frac{\epsilon \jap{t}^{\delta_1}}{\jap{\nu t^3}^{\alpha}} \norm{A^1 Q^1_{\neq}}_2 \norm{A^1 Q^1_0}_2. 
\end{align*}
Similarly from \eqref{ineq:ABasic} and \eqref{ineq:AprioriU0}, followed by \eqref{ineq:AiPartX} and \eqref{ineq:triQuadHL}, 
\begin{align*}
S_{LH} & \lesssim \max(\epsilon t, c_0) \sum_{k} \int \abs{ A^1 \widehat{Q^1_k}(\eta,l) \frac{\abs{k}}{k^2 + (l^\prime)^2 + \abs{\eta-kt}^2} \Delta_L A^1 \widehat{U^1_{k}}(\xi,l^\prime) Low(\eta-\xi,l-l^\prime) } d\eta d\xi \\ 
& \lesssim c_0 \norm{\left(\sqrt{\frac{\partial_t w}{w}} + \frac{\abs{\grad}^{s/2}}{\jap{t}^{s}}\right) A^1 Q^1}_2\norm{\left(\sqrt{\frac{\partial_t w}{w}} + \frac{\abs{\grad}^{s/2}}{\jap{t}^{s}}\right) \Delta_L A^1 U^1_{\neq}}_2 \\ 
& \quad + \epsilon \norm{A^1 Q^1_{\neq}}_2 \norm{A^1 \Delta_L A^1 U^1_{\neq}}_2, 
\end{align*}
which is consistent with Proposition \ref{prop:Boot} after applying Lemmas \ref{lem:PEL_NLP120neq} and \ref{lem:SimplePEL}. 
The remainder and coefficient error term are treated as usual and are hence omitted for brevity. 
This completes the $NLS1(j,0,\neq)$ terms. 

Finally consider the $NLS1(j,\neq,\neq)$ terms for interactions of type \textbf{(3DE)}.  
All these terms are treated similarly, hence, consider just $j = 3$. 
Expand as above 
\begin{align*}
NLS1(3,\neq,\neq) & = -\sum_{k \neq 0} \int \mathbf{1}_{kk^\prime(k-k^\prime) \neq 0}  A^1 Q^1_k A^1\left( (Q^3_{k^\prime})_{Hi} (\partial_Z U_{k-k^\prime}^1)_{Lo} \right) dV \\
& \quad -\sum_{k \neq 0} \int \mathbf{1}_{kk^\prime(k-k^\prime) \neq 0}  A^1 Q^1_k A^1\left( (Q^3_{k^\prime})_{Lo} (\partial_Z U_{k-k^\prime}^1)_{Hi} \right) dV \\ 
& \quad - \sum_{k \neq 0} \int \mathbf{1}_{kk^\prime(k-k^\prime) \neq 0}  A^1 Q^1_k A^1\left( (Q^3_{k^\prime})_{Lo} (\psi_z)_{Hi} ( (\partial_Y - t\partial_X) U_{k-k^\prime}^1)_{Lo} \right) dV \\ 
& \quad + S_{\mathcal{R},C} \\ 
& = S_{HL} + S_{LH} + S_{C} + S_{\mathcal{R},C}. 
\end{align*}
For $S_{HL}$, we have by \eqref{ineq:AprioriUneq} and \eqref{ineq:ABasic}, 
\begin{align*} 
S_{HL} & \lesssim \frac{\epsilon \jap{t}^{\delta_1}}{\jap{\nu t^3}^{\alpha}}\norm{A^1 Q^1_{\neq}}_2 \norm{A^3 Q^3}_2.
\end{align*} 
For $S_{LH}$ we have \eqref{ineq:Boot_Hi} and \eqref{ineq:ABasic}, 
and \eqref{ineq:triQuadHL}, 
\begin{align*} 
S_{LH} & \lesssim \frac{\epsilon \jap{t}^2}{\jap{\nu t^3}^{\alpha}} \sum_{k} \int \abs{ A^1 \widehat{Q^1_k}(\eta,l) \frac{ \mathbf{1}_{kk^\prime(k-k^\prime) \neq 0}}{\abs{k^\prime} + \abs{l^\prime} + \abs{\eta-k^\prime t}} \Delta_L A^1 \widehat{U^1_{k^\prime}}(\xi,l^\prime)_{Hi} Low(k-k^\prime, \eta-\xi,l-l^\prime) } d\eta d\xi \\ 
& \lesssim \frac{\epsilon \jap{t}^2}{\jap{\nu t^3}^{\alpha}}\norm{A^1 Q^1}_2 \norm{A^1 \Delta_L U_{\neq}^1}_2, 
\end{align*}
which is consistent with Proposition \ref{prop:Boot} for $c_0$ sufficiently small (we could have been more precise here but it was not necessary). 
This completes the $NLS1$ terms. 

\subsubsection{Treatment of $NLS2$} 

Turn to the $NLS2$ terms. These terms are all treated via easy variants of the treatments of the $NLS1$ and $NLP$ terms. They are hence omitted for the sake of brevity. 

\subsection{Transport nonlinearity $\mathcal{T}$} 
This term is treated in the same manner as the corresponding terms in $Q^3$, found in \S\ref{sec:Q3_TransNon}, and in $Q^2$, found in \S\ref{sec:Q2_TransNon}. 
The results are analogous as to those found therein and are hence omitted here for brevity. 

\subsection{Dissipation error terms $\mathcal{D}$}
These terms are treated in the same manner as the corresponding terms in $Q^3$, found in \S\ref{sec:DEneqQ3}.
The results are analogous to those found therein and are hence here omitted for brevity. 

\section{Coordinate system controls} \label{sec:Coord} 
In this section we prove the necessary controls on $C$ and the auxiliary unknown $g$. 

\subsection{High norm estimate on $g$} 
Begin by computing the evolution of $Ag$ from \eqref{def:gPDE2}
\begin{align*} 
\frac{1}{2}\frac{d}{dt}\norm{Ag}_2^2 & = -CK^g_\lambda - CK^g_w - \frac{2}{t}\norm{Ag}_2^2 + \int Ag A\left(\tilde{U}_0\cdot \grad g\right) dV \\ 
& \quad + \int Ag A\left(\tilde{\Delta_t}g\right) dV - \frac{1}{t}\sum_{k \neq 0}\int Ag A \left( U_{-k} \cdot \grad^t U^1_{k} \right) dV \\ 
& = -\mathcal{D}g - CK_L^g + \mathcal{T} + \mathcal{D}_E + \mathcal{F},
\end{align*} 
where
\begin{align*}
\mathcal{D}_E = \int Ag A\left( (\tilde{\Delta_t} - \Delta)g\right) dV. 
\end{align*}

\subsubsection{Transport nonlinearity} \label{sec:TransNong}
Here we are referring to the transport nonlinearity $\mathcal{T}$.
This term is treated in the same manner as \S\ref{sec:TransQ20} and hence is omitted here for brevity.  

\subsubsection{Dissipation error terms} \label{sec:DissErrg}
These terms are treated in the same manner as the corresponding terms in \S\ref{sec:DEQ02}, despite the regularity of $A$ being higher than $A^2$. Using the treatment therein we have 
\begin{align*}
\mathcal{D}_E & \lesssim c_0\nu\norm{\grad A g}_2^2 + \nu\norm{Ag}_2 \norm{\grad AC}_2\norm{\grad Ag}_2 \leq c_0\nu\norm{\grad Ag}_2^2 + c_0^{-1} \epsilon^2 \nu \norm{\grad AC}_2^2. 
\end{align*} 
Note the last term integrates to $O(c_0\epsilon^2)$ by \eqref{ineq:Boot_ACC}. Notice also that the cancellation which eliminates the lower order term in $\Delta_t$ is crucial for this approach. 

\subsubsection{Forcing from non-zero frequencies} \label{sec:g_Forcing}
Similar to the cancellations derived in \eqref{eq:XavgCanc},  by the divergence free condition, 
\begin{align*} 
\mathcal{F} & = -\sum_{k \neq 0}\frac{1}{t}\int A g A \left( \partial_Y^t \left( U^2_{-k} U^1_{k}\right) + \partial_Z^t \left( U^3_{-k} U^1_{k}\right) \right) dV = F_Y + F_Z. 
\end{align*}  
Consider $F_Y$.
Expand with a paraproduct and group terms where the coefficients appear in low frequency with the remainder:  
\begin{align*} 
F_Y & = - \sum_{k \neq 0}\frac{1}{t}\int A g  A \partial_Y \left( (U^2_{-k})_{Lo} (U^1_{k})_{Hi} \right) dV %\\ & \quad 
- \sum_{k \neq 0}\frac{1}{t}\int A g  A \partial_Y \left( (U^2_{-k})_{Hi} (U^1_{k})_{Lo} \right) dV  \\ 
& \quad - \sum_{k \neq 0}\frac{1}{t}\int A g  A \left( (\psi_y)_{Hi} \partial_Y\left((U^2_{-k})_{Lo} (U^1_{k})_{Lo}\right) \right) dV  + F_{Y;\mathcal{R},C} \\ 
& = F_{Y;LH} + F_{Y;HL} + F_{Y;C} + F_{Y;\mathcal{R},C}. 
\end{align*}   
Consider $F^0_{Y;LH}$ first. By \eqref{ineq:AprioriUneq} and \eqref{ineq:ABasic},  
\begin{align*} 
F_{Y;LH} & \lesssim \frac{\epsilon}{\jap{\nu t^3}^{\alpha}} \sum_{k \neq 0}\frac{1}{t \jap{t}^{2-\delta_1} }\sum_{l,l^\prime} \int \abs{A \hat{g}(\eta,l)} \frac{\jap{t} \abs{\eta} \jap{\eta,l}^2 \jap{\frac{t}{\jap{\xi,l^\prime}}}^{1+\delta_1}}{k^2 + (l^\prime)^2 + \abs{\xi-kt}^2} \\ & \quad\quad\quad \times \abs{A^{1} \Delta_L \widehat{U^1_{k}}(\xi,l^\prime)_{Hi}} Low(-k,\eta-\xi,l-l^\prime) d\eta d\xi.
\end{align*} 
By \eqref{ineq:AdeGen} and \eqref{ineq:triQuadHL} we get
\begin{align*} 
F_{Y;LH} & \lesssim \frac{\epsilon \jap{t}^{1+\delta_1}}{\jap{\nu t^3}^{\alpha}}
 \norm{\left(\sqrt{\frac{\partial_t w}{w}} + \frac{\abs{\grad}^{s/2}}{\jap{t}^s} \right) Ag}_2^2 +  \frac{\epsilon \jap{t}^{1+\delta_1}}{\jap{\nu t^3}^{\alpha}}\norm{\left(\sqrt{\frac{\partial_t w}{w}} + \frac{\abs{\grad}^{s/2}}{\jap{t}^s} \right) A^1 \Delta_L U^1_{\neq}}_2 \\ 
& \quad + \frac{\epsilon}{\jap{t}^{2-2\delta_1} \jap{\nu t^3}^{\alpha}}  \norm{\sqrt{-\Delta_L}Ag}_2 \norm{A^{1} \Delta_L U^1_{\neq}}_2,  
\end{align*} 
which is consistent with Proposition \ref{prop:Boot} by Lemmas \ref{lem:SimplePEL} and \ref{lem:PEL_NLP120neq}. 
Turn next to $F_{Y;HL}$. Similar to $F_{Y;LH}$, we get from \eqref{ineq:AprioriUneq}, \eqref{ineq:ABasic}, \eqref{ineq:AdeGen}, and \eqref{ineq:triQuadHL} we get
\begin{align*} 
F_{Y;HL} & \lesssim \frac{\epsilon \jap{t}^{\delta_1}}{t\jap{\nu t^3}^{\alpha}} \sum_{k \neq 0}\sum_{l,l^\prime} \int \abs{A \hat{g}(\eta,l)} \frac{\abs{\eta} \jap{\eta,l}^2 \jap{\frac{t}{\jap{\xi,l^\prime}}}}{k^2 + (l^\prime)^2 + \abs{\xi-kt}^2} \abs{A^{2} \Delta_L \widehat{U^2_{k}}(\xi,l^\prime)_{Hi}} Low(-k,\eta-\xi,l-l^\prime) dV \\ 
& \lesssim \frac{\epsilon t^{2+\delta_1}}{\jap{\nu t^3}^\alpha}\norm{\left(\sqrt{\frac{\partial_t w}{w}} + \frac{\abs{\grad}^{s/2}}{\jap{t}^s}\right) A g}_2^2 + \frac{\epsilon t^{2+\delta_1}}{\jap{\nu t^3}^\alpha} \norm{\left(\sqrt{\frac{\partial_t w}{w}} + \frac{\abs{\grad}^{s/2}}{\jap{t}^s}\right) A^{2} \Delta_L U^2_{\neq}}_2^2 \\ & \quad + \frac{\epsilon}{t^{1-\delta_1}\jap{\nu t^3}^\alpha} \norm{\sqrt{-\Delta_L}Ag}_2 \norm{A^{2} \Delta_L U^2_{\neq}}_2,   
\end{align*} 
which is consistent with Proposition \ref{prop:Boot}.  
The remainder terms $F_{Y;\mathcal{R},C}$ and coefficient error term $F_{Y;C}$ are similar  to many treatments we have already done, 
so are hence omitted for brevity.  
This completes the treatment of $F_Y$.
The treatment of $F_Z$ is similar (with \eqref{ineq:AdeGen2} instead of \eqref{ineq:AdeGen}).  
The treatment is hence omitted for the sake of brevity. 
This completes the treatment of the forcing terms on $g$, and of the entire high norm estimate on $g$. 

\subsection{Low norm estimate on $g$} \label{sec:Lowg}
Computing the evolution of $\norm{g}_{\G^{\lambda,\gamma}}$ from \eqref{def:gPDE2} (denoting $A^S(t,\grad) = e^{\lambda(t)\abs{\grad}^s} \jap{\grad}^\gamma$), 
\begin{align} 
\frac{1}{2}\frac{d}{dt}\left( t^{4} \norm{g}_{\G^{\lambda,\gamma}}^2\right) & \leq t^4 \dot{\lambda}(t) \norm{\abs{\grad}^{s/2} g}_{\G^{\lambda,\gamma}}^2  - t^{4}\int A^S g A^S\left(\tilde{U}_0\cdot \grad g\right) dV \nonumber \\ 
& \quad + t^{4} \int A^S g A^S \left(\tilde{\Delta_t}g\right) dV - t^{3}\int A^S g A^S \left( U_{\neq} \cdot \grad^t U^1_{\neq} \right)_{0} dV \nonumber \\ 
& = -CK_\lambda^{g,L} + \mathcal{T} + \mathcal{D} + \mathcal{F}. \label{eq:gLowEvo}
\end{align} 
The treatment of the transport nonlinearity $\mathcal{T}$ and the dissipation terms in $\mathcal{D}$ are is essentially the same as in \S\ref{sec:g_Forcing}, which in turn was essentially the same as in \S\ref{sec:HiQ2}. 

It remains to treat $\mathcal{F}$.
As in \S\ref{sec:g_Forcing}, the forcing terms can be re-written via the divergence free condition as 
\begin{align*} 
\mathcal{F} & = - t^{3} \int A^S g A^S \left( \partial_Y^t \left( U^2_{\neq} U^1_{\neq}\right)_0 + \partial_Z^t \left( U^3_{\neq} U^1_{\neq}\right)_0 \right) dV. 
\end{align*} 
By Cauchy-Schwarz, Lemma \ref{lem:GevProdAlg}, Lemma \ref{lem:CoefCtrl}, \eqref{ineq:AprioriUneq}, and Lemma \ref{lem:LossyElliptic}, 
\begin{align*}  
\mathcal{F} & \lesssim t^{3} \norm{g}_{\G^{\lambda,\gamma}}(1 + \norm{C}_{\G^{\lambda,\gamma+1}})\left(\norm{U^2_{\neq}}_{\G^{\lambda,\gamma+1}}\norm{U^1_{\neq}}_{\G^{\lambda,3/2+}} + \norm{U^3_{\neq}}_{\G^{\lambda,3/2+}}\norm{U^1_{\neq}}_{\G^{\lambda,\gamma+1}} \right. \\ & \left. \quad\quad + \norm{U^2_{\neq}}_{\G^{\lambda,3/2+}}\norm{U^1_{\neq}}_{\G^{\lambda,\gamma+1}} + \norm{U^3_{\neq}}_{\G^{\lambda,\gamma+1}}\norm{U^1_{\neq}}_{\G^{\lambda,3/2+}} \right) \\ 
&  \lesssim t^{3} \norm{g}_{\G^{\lambda,\gamma}} \left(\frac{\epsilon^2 \jap{t}^{\delta_1}}{\jap{\nu t^3}^\alpha} \right) \\ 
& \lesssim \frac{\epsilon t}{\jap{\nu t^3}^{\alpha}} t^{4} \norm{g}^2_{\G^{\lambda,\gamma}} + \frac{\epsilon^3 t^{1 + 2\delta_1}}{\jap{\nu t^3}^{\alpha}}, 
\end{align*} 
which is consistent with an improvement to \eqref{ineq:Boot_gLow} for $c_0$ sufficiently small. This completes the proof of \eqref{ineq:Boot_gLow} with constant `2'.    

\subsection{Long time, high norm estimate on $C$: improvement to \eqref{ineq:Boot_ACC}} \label{sec:ACC}
Computing from the evolution equation on $C$, \eqref{def:CReal}, we get 
\begin{align} 
\frac{1}{2}\frac{d}{dt}\norm{AC}_2^2 & \leq \dot{\lambda}\norm{\abs{\grad}^{s/2}A C}_2^2 - \norm{\sqrt{\frac{\partial_t w}{w}}A C}_2^2 + \nu \int A C  A\left(\tilde{\Delta_t} C\right) dV \nonumber  \\ & \quad -\int A C A\left( \tilde U \cdot \grad C \right) dV + \int A C  A g  dV - \int  A C A U_0^2  dV \nonumber \\ 
& = -\mathcal{D}C + \mathcal{D}_E  + \mathcal{T} + \int A C  A g  dV - \int A C A U_0^2  dV, \label{ineq:C1Evo} 
\end{align}
where 
\begin{align*}
\mathcal{D}_E & = \nu \int AC A\left( (\tilde{\Delta_t} - \Delta) C \right) dV.  
\end{align*}

\subsubsection{Linear driving terms}
The main contributions in \eqref{ineq:C1Evo} are the linear driving terms which originate from the lift-up effect.  
Divide the first into low and high frequencies: 
\begin{align*} 
\int A C  A g  dV & = \int \left(A C\right)_{\leq 1} \left(A g\right)_{\leq 1}  dV + \int \left(A C\right)_{>1} \left(A g\right)_{>1}  dV \\ 
& = Lg^L + Lg^H. 
\end{align*} 
The low frequency term is estimated using the decay estimate available on $g$ in \eqref{ineq:Boot_gLow}: 
\begin{align*}
 Lg^L & \lesssim \norm{C}_2 \norm{g}_2 \lesssim \frac{\epsilon}{\jap{t}^{2}}\norm{C}^2_2 + \frac{\epsilon}{\jap{t}^{2}}.  
\end{align*} 
For $\epsilon$ sufficiently small, this is consistent with Proposition \ref{prop:Boot}.   
At the high frequencies we use: 
\begin{align*} 
Lg^H & \leq \frac{\nu}{10} \norm{\grad AC}_2^2 + \frac{5}{2\nu^2}\left(\nu \norm{\grad Ag}_2^2\right). 
\end{align*} 
The first term is absorbed by the dissipation whereas the latter term integrates to $O(\epsilon^2 \nu^{-2}) = O(c_0^2)$ via the bootstrap control \eqref{ineq:Boot_Ag}. 
Hence, for $K_{HC1} \gg 1$, this is consistent with Proposition \ref{prop:Boot}.   

Turn to the next linear term, which is again divided into high and low frequencies
\begin{align*}
-\int A C  A U_0^2  dV & = -\int \left(A C\right)_{\leq 1} \left(A U_0^2\right)_{\leq 1}  dV - \int \left(A C\right)_{>1} \left(A U_0^2\right)_{>1}  dV \\ 
& = LU^L + LU^H. 
\end{align*} 
From \eqref{ineq:Boot_U02_Low} we get for some $K > 0$, 
\begin{align*} 
LU^L & \lesssim \norm{C}_2 \norm{U_0^2}_2  \leq \frac{\nu}{\jap{\nu t}^{\alpha}}\norm{C}^2_2 + \frac{K\epsilon^2}{\nu \jap{\nu t}^{\alpha}}. 
\end{align*} 
For $K_{HC1}$ sufficiently large, the first term is consistent with Proposition \ref{prop:Boot}. 
The second term integrates to $O(K c_0^2)$, which is consistent with Proposition \ref{prop:Boot} provided $K_{HC1}$ is sufficiently large.  
For high frequencies, we have the following by Lemma \ref{lem:PELbasicZero}, 
\begin{align*} 
LU^H & \leq \frac{\nu}{10} \norm{\grad AC}_2^2 + \frac{5}{2\nu^2}\left(\nu \norm{\grad A U_0^2}_2^2\right) %\\ & 
\leq \frac{\nu}{10} \norm{\grad AC}_2^2 + \frac{K}{2\nu^2}\left(\nu \norm{\grad A^2 Q_0^2}_2^2 + \nu\norm{\grad U_0^2}_2^2\right),  
\end{align*}  
for some $K > 0$ depending only on $s,\sigma$ and $\lambda$. 
The first term is absorbed by the dissipation whereas the latter term integrates to an $O(Kc^2_{0})$ number via the bootstrap controls, so this is consistent with Proposition \ref{prop:Boot} for $K_{HC1}$ sufficiently large. 

\subsubsection{Transport nonlinearity} \label{sec:TransNon_ACC}
We apply an argument similar to that used in \S\ref{sec:TransQ20}.
We omit the details for brevity and conclude
\begin{align*} 
\mathcal{T} & \lesssim \epsilon\norm{\grad AC}_2^2 + \epsilon^{-1} \norm{C}_{\G^{\lambda,\gamma}}^2\left(\frac{\epsilon^2}{\jap{t}^{4}} + \norm{\grad U_0^3}_2^2\right) \\ 
& \lesssim \epsilon\norm{\grad AC}_2^2 + \frac{K_{HC2} \epsilon^3 \abs{\log \epsilon}}{\jap{t}^{2}} + \frac{K_{HC1} c_0^2}{\epsilon \nu} \left(\nu \norm{\grad U_0^3}_2^2\right). 
\end{align*} 
Note by the bootstrap hypotheses, the last term integrates to $O(c_0^3 K_{HC1} K_{U3})$, which is consistent with Proposition \ref{prop:Boot} provided $c_{0}$ is sufficiently small. 

\subsubsection{Dissipation error terms} \label{sec:DissC}
The dissipation error terms are treated with an easy variant of the treatment in \S\ref{sec:DissErrg}. 
We omit the details for brevity and state the result
\begin{align*} 
\mathcal{D}_E &  \lesssim c_{0} \nu \norm{\grad A C}_2^2, 
\end{align*} 
which is then absorbed by the dissipation by choosing $c_{0}$ sufficiently small. 
This completes the high norm improvement to \eqref{ineq:Boot_ACC}.  

\subsection{Shorter time, high norm estimate on $C$: improvement to \eqref{ineq:Boot_ACC2}}
The proof of \eqref{ineq:Boot_ACC2} with constant `2' is essentially the same as \eqref{ineq:Boot_ACC} with a few slight changes. 
From \eqref{def:CReal}, 
\begin{align} 
\frac{1}{2}\frac{d}{dt}\left(\jap{t}^{-2}\norm{A C}_2^2\right) & \leq -\frac{t}{\jap{t}^4}\norm{A C}_2^2 + \jap{t}^{-2}\dot{\lambda}\norm{\abs{\grad}^{s/2}A C}_2^2 - \jap{t}^{-2}\norm{\sqrt{\frac{\partial_t w}{w}}A C}_2^2 \nonumber \\ & \quad + \jap{t}^{-2}\nu \int A C  A \left(\tilde{\Delta_t} C\right) dv \nonumber %\\ & \quad 
-\jap{t}^{-2}\int A C A\left( \tilde U \cdot \grad C \right) dV \nonumber \\ & \quad 
+ \jap{t}^{-2}\int A C  A g  dV - \jap{t}^{-2}\int  A C A U_0^2  dV \nonumber \\ 
& = -CK_{L}^{C}  + \jap{t}^{-2}\mathcal{D}C + \mathcal{D}_E + \mathcal{T} + \jap{t}^{-2}\int A C  A g  dV - \jap{t}^{-2}\int A C A U_0^2  dV, \label{ineq:CCEvo}
\end{align}
where 
\begin{align*}
\mathcal{D}_E = \jap{t}^{-2} \int AC A\left((\tilde{\Delta_t} - \Delta)C\right) dV. 
\end{align*}

\subsubsection{Linear driving terms}
The main difference between \eqref{ineq:Boot_ACC2} and \eqref{ineq:Boot_ACC} is in the treatment of the linear driving terms, so let us focus there. 
Indeed consider first the linear term involving $g$: 
\begin{align*} 
\jap{t}^{-2}\int A C  A g  dV & \leq \frac{t}{2\jap{t}^4}\norm{AC}_2^2 + \frac{1}{2 t}\norm{Ag}_2^2 \leq \frac{1}{2}CK_{L}^{C} + \frac{1}{4}CK_L^g.  
\end{align*} 
The first is absorbed by the $CK_L^{C}$ term in \eqref{ineq:CCEvo}. The other term is integrable by \eqref{ineq:Boot_Ag} and consistent with Proposition \ref{prop:Boot} 
provided $K_{HC2} \gg 1$. 

The logarithmic loss in \eqref{ineq:Boot_ACC2} is due to the linear term involving $U_0^2$.
Using Lemma \ref{lem:PELbasicZero} we get, for some $K > 0$ depending on $\lambda$ and $s$ (possibly different in each line),  
\begin{align*} 
\jap{t}^{-2}\int A C  A U_0^2   dV & \leq \frac{t}{10\jap{t}^4}\norm{AC}_2^2 + \frac{5}{2t}\norm{A U_0^2}_2^2 \\
& \leq \frac{t}{10\jap{t}^4}\norm{AC}_2^2 + \frac{K}{\jap{t}}\norm{A^2 Q_0^2}_2^2 + \frac{K}{\jap{t}}\norm{U_0^2}_2^2 + \frac{K\epsilon^2}{\jap{t}\jap{\nu t}^{2\alpha}}\norm{AC}_2^2 \\ 
& \leq \frac{t}{10\jap{t}^4}\norm{AC}_2^2 + \frac{4K}{\jap{t}}\epsilon^2 + \frac{4 K_{HC1} K\epsilon^2 c_0^2}{\jap{t}\jap{\nu t}^{2\alpha}}.
\end{align*} 
The first term is absorbed by $CK_L^{C}$ in \eqref{ineq:CCEvo}.  
The latter two terms are integrated until $t \sim \epsilon^{-1}$ 
to deduce the bound \eqref{ineq:Boot_ACC2} on that time-scale provided $K_{HC2}$ is sufficiently large and $c_0$ is chosen sufficiently small.  
For times $t \gtrsim \epsilon^{-1}$ we introduce dissipation to deduce instead for some $K > 0$, 
\begin{align*} 
\jap{t}^{-2}\int A C  A U_0^2   dV & \leq \frac{t}{10\jap{t}^4}\norm{AC}_2^2 + \frac{K}{\jap{t}}\norm{\grad A^2 Q_0^2}_2^2 + \frac{K\epsilon}{\jap{t}}\norm{Q_0^2}_2^2 + \frac{K}{\jap{t}}\norm{U_0^2}_2^2 + \frac{K \epsilon^2}{\jap{t}\jap{\nu t}^{\alpha}}\norm{AC}_2^2 \\ 
& \leq \frac{t}{10\jap{t}^4}\norm{AC}_2^2 + K\epsilon\norm{\grad A^2 Q_0^2}_2^2 + \frac{4K \epsilon^2}{\jap{t}\jap{\nu t}^{2\alpha}} + \frac{K_{HC1} K\epsilon^2 c_0^2}{\jap{t}\jap{\nu t}^{2\alpha}},
\end{align*} 
where the last line followed from the low frequency decay estimates \eqref{ineq:Boot_LowFreq}. This is consistent with \eqref{ineq:Boot_ACC2} provided $K_{HC2}$ is sufficiently large and $c_0$ is sufficiently small.
This completes the treatment of the linear driving terms in estimate \eqref{ineq:Boot_ACC2} 

\subsubsection{Transport nonlinearity }
These are treated in essentially the same fashion as in \S\ref{sec:TransNon_ACC}, which in turn is the same method as that employed in \S\ref{sec:TransQ20}. 
We omit the details for brevity. 

\subsubsection{Dissipation error terms}
These are treated with the same method that was used in \S\ref{sec:DissC}. 
Hence, we omit the details for brevity. This completes the improvement to \eqref{ineq:Boot_ACC2}.

\section{Enhanced dissipation estimates} \label{sec:ED} 
In this section we prove the enhanced dissipation estimates \eqref{ineq:Boot_ED} with constant `2'.  

\subsection{Enhanced dissipation of $Q^3$} \label{sec:ED3}
We begin with $Q^3$.
Computing the time evolution of $\norm{A^{\nu;3}Q^3}_2$ we get
\begin{align} 
\frac{1}{2}\frac{d}{dt}\norm{A^{\nu;3} Q^3}_2^2 & \leq \dot{\lambda}\norm{\abs{\grad}^{s/2}A^{\nu;3} Q^3}_2^2
-\frac{2}{t}\norm{\mathbf{1}_{t > \jap{\grad_{Y,Z}}} {A}^{\nu;3} Q^3}_2^2 - \norm{\sqrt{\frac{\partial_t w_L}{w_L}}A^{\nu;3}Q^3}_2^2 + G^\nu \nonumber \\
& \quad -2 \int A^{\nu;3} Q^3 A^{\nu;3} \partial_{YX}^t U^3 dV + 2 \int A^{\nu;3} Q^3 A^{\nu;3} \partial_{ZX}^t U^2 dV \nonumber \\   
 & \quad + \nu \int A^{\nu;3} Q^{3} A^{\nu;3} \left(\tilde \Delta_t Q^3\right) dv -\int A^{\nu;3} Q^3 A^{\nu;3}\left( \tilde U \cdot \grad Q^3 \right) dV \nonumber \\ 
& \quad - \int A^{\nu;3} Q^3 A^{\nu;3} \left[Q^j \partial_j^t U^3 + 2\partial_i^t U^j \partial_{ij}^t U^3  - \partial_Z^t\left(\partial_i^t U^j \partial_j^t U^i\right) \right] dV \nonumber \\  
& = -\mathcal{D}Q^{\nu;3} - CK_{L}^{\nu;3}  + G^{\nu} + LS3 + LP3 + \mathcal{D}_E + \mathcal{T} + NLS1 + NLS2 + NLP,  \label{ineq:AnuEvo3}
\end{align} 
where we write 
\begin{align*}
\mathcal{D}_E & = \nu \int A^{\nu;3} Q^3 A^{\nu;3}\left(\tilde{\Delta_t}Q^3 - \Delta_L Q^3\right) dV, 
\end{align*} 
and
\begin{align*}
G^\nu = \alpha \int A^{\nu;3} Q^3 \min\left(1, \frac{\jap{\nabla_{Y,Z}}^2}{t^2}\right) e^{\lambda(t)\abs{\grad}^s}\jap{\grad}^\beta\jap{D(t,\partial_Y)}^{\alpha-1} \frac{D(t,\partial_Y)}{\jap{D(t,\partial_Y)}} \partial_t D(t,\partial_Y) Q^3_{\neq} dV. 
\end{align*} 
First, we need to cancel the growing term $G^\nu$ in \eqref{ineq:AnuEvo3} using part of the dissipation term $\mathcal{D}$. 
This is done in the same manner as in \cite{BMV14}; indeed:  
\begin{align*} 
G^{\nu} - \nu \norm{\sqrt{-\Delta_L} A^{\nu;3}Q^3}_2^2  
& \leq \nu \sum_{k \neq 0} \sum_{l} \int \left(\frac{1}{8}t^2\mathbf{1}_{t \geq 2 \abs{\eta}} - \abs{k}^2 - \abs{l}^2 - \abs{\eta-kt}^2\right)   \abs{A^{\nu;3} \widehat{Q^3_k}(\eta,l)}^2 d\eta \\ 
& \leq -\frac{\nu}{8}\norm{\sqrt{-\Delta_L}A^{\nu;3}Q^{3}_{\neq}}_2^2. 
\end{align*}
Next we see how to control the remaining linear and nonlinear contributions. 

\subsubsection{Linear stretching term $LS3$} 
First separate into two parts (to be sub-divided further), 
\begin{align*} 
LS3 & = -2\int A^{\nu;3} Q^3 A^{\nu;3}\partial_X(\partial_Y - t\partial_X) U^3  dV %\\  & \quad 
- 2\int A^{\nu;3} Q^3 A^{\nu;3}\left(\psi_y\partial_X (\partial_Y - t\partial_X) U^3 \right) dV \\ 
& = LS3^0 + LS3^{C}. 
\end{align*} 
Turn first to $LS3^C$: applying \eqref{ineq:L2L2L1}, \eqref{ineq:AnuiDistri}, Lemma \ref{lem:AnuLossy}, and Lemma \ref{lem:CoefCtrl} (with $\gamma > \beta + 3\alpha  + 4$), 
\begin{align} 
LS3^{C} & \lesssim \norm{\sqrt{-\Delta_L} A^{\nu;3}Q^3}_2\norm{C}_{\G^{\lambda,\beta+3\alpha+4}}\norm{A^{\nu;3}\partial_X U^3}_2 \nonumber \\ 
& \lesssim \norm{\sqrt{-\Delta_L} A^{\nu;3}Q^3}_2\norm{C}_{\G^{\lambda,\beta+3\alpha+4}}\frac{1}{\jap{t}^2}\left(\norm{A^{\nu;3}Q^3}_2 + \norm{A^3Q^3}_2\right) \nonumber \\ 
& \lesssim \epsilon\norm{\sqrt{-\Delta_L} A^{\nu;3}Q^3}^2_2 + \frac{\epsilon}{\jap{t}^2}\left(\norm{A^{\nu;3}Q^3}_2 + \norm{A^3Q^3}_2\right)^2. \label{ineq:ED_LS3C}
\end{align} 
The first term is absorbed by the leading order dissipation in \eqref{ineq:AnuEvo3} for $c_0$ small whereas the second is consistent with Proposition \ref{prop:Boot} provided $\epsilon$ is sufficiently small. %using \eqref{ineq:Boot_ACC2}.  

For $LS3^0$, we can proceed similar to the high norm estimate in \S\ref{sec:LS30}.  
Begin as in \eqref{eq:LS30} by expanding $\Delta_L\Delta_t^{-1}$: 
\begin{align} 
LS3^0 & =  -2\int A^{\nu;3} Q^3 A^{\nu;3}\partial_X(\partial_Y - t\partial_X) \Delta_{L}^{-1} \left[Q^3 - G (\partial_Y - t\partial_X)^2 U^3 - 2 \psi_z \partial_Z(\partial_Y - t\partial_X)U^3 \right. \nonumber \\ & \quad\quad\quad \left. - \Delta_tC (\partial_Y - t\partial_X) U^3\right]  dV \nonumber \\ 
& = LS3^{0;0} + LS3^{0;C1} + LS3^{0;C2} + LS3^{0;C3}. \label{def:LS30nu} 
\end{align}  
The leading order term in \eqref{def:LS30nu} is divided into two contributions
\begin{align*} 
LS3^{0;0} & = -2\sum \int \left[\mathbf{1}_{t \leq 2\abs{\eta}} + \mathbf{1}_{t > 2\abs{\eta}}\right] \abs{A^{\nu;3} \widehat{Q^3_k}}^2 \frac{k(\eta-kt)}{k^2 + l^2 + \abs{\eta-kt}^2} d\eta \\
& = LS3^{0;0,ST} + LS3^{0;0,LT}.  
\end{align*} 
In the short-time regime we may simply apply \eqref{ineq:AnuHiLowSep} and use that $\beta + 3\alpha +2 < \sigma$ to obtain, 
\begin{align*} 
LS3^{0;0,ST} & \lesssim \frac{1}{\jap{t}^2}\norm{A^3 Q^3_{\neq}}^2_2, 
\end{align*} 
which is consistent with Proposition \ref{prop:Boot} for $K_{ED3}$ sufficiently large relative to $K_{H3}$.
The long-time regime is treated in the same manner as $LS3^{0;0,LT}$ is treated in \S\ref{sec:LS30}; hence we omit the treatment and simply state the result: 
\begin{align*} 
LS3^{0;0,LT} & \leq CK^{\nu;3}_{L} + \frac{\delta_\lambda}{10\jap{t}^{3/2}}\norm{\abs{\grad}^{s/2} A^{\nu;3}Q^3}_2^2 +  \frac{K}{\delta_\lambda^{\frac{1}{2s-1}}\jap{t}^{3/2}}\norm{ A^{\nu;3}Q^3}_2^2,
\end{align*} 
where $K$ is a fixed constant. As discussed after \eqref{ineq:LS300LTZ}, this is consistent with Proposition \ref{prop:Boot} 
for small $\delta_\lambda$ and large $K_{ED3}$.

Turn to the first error term in \eqref{def:LS30nu}, which by \eqref{ineq:AnuHiLowSep} and $\beta + 3\alpha + 6 < \gamma$ is controlled via 
\begin{align*} 
LS3^{0;C1} & \lesssim  \frac{1}{\jap{t}^5}\norm{A^{\nu;3}Q^3}_{2} \norm{G}_{\G^{\lambda,\gamma-1}} \norm{\Delta_L U^3_{\neq}}_{\G^{\lambda,\gamma}} + \frac{1}{\jap{t}}\norm{A^{\nu;3} Q^3}_2 \norm{A^{\nu;3} \left(G(\partial_Y - t\partial_X)^2 U^3_{\neq}\right)}_2 \\ 
& \lesssim  \frac{\epsilon}{\jap{t}^2}\norm{A^{\nu;3} Q^3_{\neq}}_2 \norm{A^3 \Delta_L U^3_{\neq}}_2 + \frac{1}{\jap{t}}\norm{A^{\nu;3} Q^3}_2 \norm{A^{\nu;3} \left(G(\partial_Y - t\partial_X)^2 U^3_{\neq}\right)}_2. 
\end{align*} 
To control the latter term we use \eqref{ineq:AnuiDistri} and Lemma \ref{lem:CoefCtrl}, 
\begin{align*} 
\frac{1}{\jap{t}}\norm{A^{\nu;3} Q^3}_2 \norm{A^{\nu;3} \left(G(\partial_Y - t\partial_X)^2 U^3_{\neq}\right)}_2 & \lesssim \frac{1}{\jap{t}}\norm{ A^{\nu;3} Q^3}_2 \norm{C}_{\G^{\lambda,\gamma}} \norm{A^{\nu;3}(\partial_Y - t\partial_X)^2 U_{\neq}^3}_2 \\ 
& \lesssim  \epsilon\norm{\sqrt{-\Delta_L} A^{\nu;3} Q^3}_2 \norm{A^{\nu;3}(\partial_Y - t\partial_X)^2 U_{\neq}^3}_2,  
\end{align*} 
where we have used that $A^{\nu;3}$ includes a projection to non-zero frequencies to add the dissipation. After applying Lemma \ref{lem:AnuLossy_CKnu}, this is consistent with Proposition \ref{prop:Boot} for $c_0$ chosen sufficiently small (absorbing the leading order contributions via the dissipation). 
This completes $LS3^{0;C1}$. 
The other error terms, $LS3^{0;C2}$ and $LS3^{0;C3}$, are treated similarly and yield similar contributions; hence these are omitted for the sake of brevity. 
This completes the treatment of $LS3^0$. 

\subsubsection{Linear pressure term $LP3$} \label{ineq:LP3_ED} 
Begin by separating out the contribution of the coefficients, 
 \begin{align*} 
LP3 & = 2\int A^{\nu;3} Q^3 A^{\nu;3}\partial_X \partial_Z U_{\neq}^2  dV %\\  & \quad
 + 2\int A^{\nu;3} Q^3 A^{\nu;3} \left(\psi_z(\partial_Y - t\partial_X)\partial_X U_{\neq}^2\right)  dV \\ 
& = LP3^0 + LP3^C.
\end{align*}                 
By Cauchy-Schwarz and \eqref{def:wL}, 
\begin{align*}
LP3^0 & \leq \frac{1}{2\kappa}\norm{\sqrt{\frac{\partial_t w_L}{w_L}} A^{\nu;3} Q^3_{\neq}}_2^2 + \frac{1}{2\kappa} \norm{\sqrt{\frac{\partial_t w_L}{w_L}} \Delta_L A^{\nu;3} U^2_{\neq}}_2^2. 
\end{align*}
Therefore, by Lemma \ref{lem:AnuLossy_CKnu}, this is consistent with Proposition \ref{prop:Boot} for $c_0$ sufficiently small and $K_{ED3} \gg K_{ED2}$. 

The coefficient error term, $LP3^C$, can be treated in the same manner as $LS3^C$ above in \eqref{ineq:ED_LS3C} and yields similar contributions. 
Hence we omit the treatment for brevity. 
This completes the treatment of the linear pressure term $LP3$. 

\subsubsection{Nonlinear pressure and stretching} \label{sec:NLPS_Q3ED}
Before beginning, note that due to the regularity gap $\beta + 3\alpha + 12 \leq \gamma$ and \eqref{ineq:AnuiDistri},  
the presence of the coefficients from the coordinate transform does not have an important impact. 
Moreover, by Lemma \ref{lem:AnuLossy}, there is not a significant difference between $\partial_Y - t\partial_X$ and $\partial_Z$ derivatives when making most relevant estimates.   Hence, for simplicity we will treat all $NLS$ and $NLP$ terms as if there were no variable coefficients. 

As above, we will enumerate the terms as follows for $i,j \in \set{1,2,3}$ and 
$a,b \in \set{0,\neq}$
\begin{subequations}  \label{def:enumnu}
\begin{align}
NLP(i,j,a,b) & = \int A^{\nu;3} Q^3 A^{\nu;3} \partial_Z^t(\partial_j^t U^i_a \partial_i^t U^j_b  ) dV \\
NLS1(j,a,b) & = -\int A^{\nu;3} Q^3 A^{\nu;3} \left( Q^j_a \partial_j^t U^3_b  \right) dV \\
NLS2(i,j,a,b) & = -2\int A^{\nu;3} Q^3 A^{\nu;3} (\partial_i^t U^j_a \partial_i^t\partial_j^t U^3_b  ) dV.
\end{align}
\end{subequations}
We will use repeatedly the inequalities 
\begin{subequations} 
\begin{align}
A^{\nu; 3} & \lesssim t A^{\nu;1} \\ 
A^{\nu; 3} & \lesssim A^{\nu;2}. 
\end{align}
\end{subequations} 

\paragraph{Treatment of $NLP(i,j,0,\neq)$ terms} 
By \eqref{ineq:AnuiDistri}, 
\begin{align*} 
NLP(i,j,0,\neq) & \lesssim \norm{A^{\nu;3}Q^3}_2 \norm{A^{\nu;3} \jap{\partial_{Z}} \partial^t_i U^j}_2 \norm{\partial_j^t U_0^i}_{\G^{\lambda,\beta+3\alpha+4}}. 
\end{align*} 
We see that the loss of $t$ if $i=1$ on the third factor is balanced by no loss of $t$ on the second (indeed, see Lemma \ref{lem:AnuLossy}). If $i \neq 1$ then there is no loss of $t$ on the last factor 
but a loss of $t$ on the second.
 Moreover, we see that due to the zero $X$ mode on $U^i$, $j \neq 1$.   
Regardless, after Lemma \ref{lem:AnuLossy} we get
\begin{align*} 
NLP(i,j,\neq,0) & \lesssim \epsilon \norm{A^{\nu;3}Q^3}_2\left(\norm{A^{j} Q^j_{\neq}}_2 + \norm{A^{\nu;j}Q^j}_2\right) \\
 & \lesssim \epsilon \norm{\sqrt{-\Delta_L} A^{\nu;3}Q^3}_2\left(\norm{\sqrt{-\Delta_L} A^{j} Q^j_{\neq}}_2 + \norm{\sqrt{-\Delta_L} A^{\nu;j}Q^j}_2\right), 
\end{align*} 
which is subsequently absorbed by the dissipation for $c_0$ small. 
This completes the treatment of $NLP(i,j,\neq,0)$. 

\paragraph{Treatment of $NLS1(j,0,\neq)$ terms}\label{sec:NLS10neq_Q3ED} 
Next turn to the treatment of the $NLS1(i,j,0,\neq)$ terms, which by \eqref{ineq:AnuiDistri} followed by \eqref{ineq:AnuLossyII},  
\begin{align*} 
NLS1(j,0,\neq) & \lesssim \norm{A^{\nu;3}Q^3}_2 \norm{Q^j_0}_{\G^{\lambda,\beta + 3\alpha + 4}}  \norm{A^{\nu;3}\partial_j^t U^3}_2 \lesssim \frac{\epsilon}{\jap{t}}\norm{A^{\nu;3}Q^3}_2\left(\norm{A^{\nu;3}Q^3}_2 + \norm{A^3 Q^3} \right) 
\end{align*} 
which is then absorbed by the dissipation for $c_0$ sufficiently small due to the projection to non-zero frequencies.  

\paragraph{Treatment of $NLS1(j,\neq,0)$ terms} \label{sec:ED3NSL1ijneq0}
Next turn to the treatment of the $NLS1(j,\neq,0)$ terms, which by \eqref{ineq:AnuiDistri} followed by \eqref{ineq:AnuLossyII} (noting that $j \neq 1$), 
\begin{align*} 
NLS1(j,\neq,0) &  \lesssim \norm{A^{\nu;3}Q^3}_2 \norm{A^{\nu;j} Q^j}_2  \norm{U^3_0}_{\G^{\lambda,\beta + 3\alpha + 4}} \lesssim \epsilon \norm{A^{\nu;3}Q^3}_2 \norm{A^{\nu;j} Q^j}_2, 
\end{align*} 
which is subsequently absorbed by the dissipation for $c_0$ sufficiently small due to the projection to non-zero frequencies. 

\paragraph{Treatment of $NLS2(i,j,\neq,0)$ terms} 
Next turn to the treatment of the $NLS2(i,j,\neq,0)$ terms. 
Notice that in this case, neither $i$ nor $j$ can be one. %Following the same treatment as the analogous term in $NLS1$ in \S\ref{sec:ED3NSL1ijneq0}, we get
Similar to \S\ref{sec:NLS10neq_Q3ED}, we get by \eqref{ineq:AnuiDistri} and \eqref{ineq:AprioriU0},  
\begin{align*}
NLS2(i,j,\neq,0) & \lesssim \frac{\epsilon}{\jap{t}}\norm{A^{\nu;3}Q^3}_2\left(\norm{A^{\nu;j}Q^j}_2 + \norm{A^{j}Q^j_{\neq}}_2 \right), 
\end{align*}
which is then absorbed by the dissipation for $c_0$ small by the restriction to non-zero frequencies. 

\paragraph{Treatment of $NLS2(i,j,0,\neq)$ terms}  
Next turn to the treatment of the $NLS1(i,j,\neq,0)$ terms.
Notice that $i$ cannot be one but $j$ can. 
Further notice that if $j = 1$ then we can gain a power of $t$ on $U^3_{\neq}$ using Lemma \ref{lem:AnuLossy}. %, however, if $j \neq 1$ then we gain a power of $t$ on the first factor. 
It follows that 
\begin{align*} 
NLS2(i,j,0,\neq) & \lesssim \epsilon \norm{A^{\nu;3}Q^3}_2\left(\norm{A^{3} Q^3_{\neq}}_2 + \norm{A^{\nu;3}Q^3}_2 \right). 
\end{align*}

\paragraph{Treatment of $NLP(i,j,\neq,\neq)$} \label{sec:NLPneqneq_Q3ED}
Notice that we will lose a power of $t$ from $A^1$ if $j$ or $i$ is one, but in this case we would lose one less power of $t$ in Lemma \ref{lem:AnuLossy} due to the lack of $Z$ or $Y$ derivatives. 
Hence regardless of the combination of $i$ and $j$, we will gain at least one power of $t$. 
Therefore, from \eqref{ineq:AnuiDistriDecay},  
\begin{align*} 
NLP(i,j,\neq,\neq) 
& \lesssim \frac{t^2}{\jap{\nu t^3}^\alpha} \norm{A^{\nu;3}Q^3}_2 \left(\norm{A^{\nu;3}\partial^t_{Z} \partial_i^t U^j}_2\norm{ A^{\nu;3}\partial_{j}^t U^i}_2 + \norm{A^{\nu;3}\partial_{i}^t U^j}_2\norm{A^{\nu;3}\partial_{Z}^t \partial_j^t U^i}_2\right) \\ 
& \lesssim \frac{\epsilon^2 \jap{t}}{\jap{\nu t^3}^\alpha} \norm{A^{\nu;3}Q^3}_2, 
\end{align*} 
which is consistent with Proposition \ref{prop:Boot}  for $\epsilon$ sufficiently small. 

\paragraph{Treatment of $NLS1(j,\neq,\neq)$} \label{sec:NLS1neqneq_Q3ED}
These terms are all treated in essentially the same manner. 
From \eqref{ineq:AnuiDistriDecay} and \eqref{ineq:AnuLossyII} (again using that a loss from $j = 1$ is balanced by a gain on the second factor), 
\begin{align*}
NLS1(j,\neq,\neq) & \lesssim \frac{\jap{t}^2 }{\jap{\nu t^3}^{\alpha}} \norm{A^{\nu;3} Q^3}_2\norm{A^{\nu;3} Q^j}_2 \norm{A^{\nu;3} \partial_j^t U^3}_2 \\ 
& \lesssim \frac{\jap{t} }{\jap{\nu t^3}^{\alpha}} \norm{A^{\nu;3} Q^3}_2\norm{A^{\nu;j} Q^j}_2\left(\norm{A^{\nu;3} Q^3}_2 + \norm{A^{3} Q^3_{\neq}}_2 \right),  
\end{align*}
which is consistent with Proposition \ref{prop:Boot} for $\epsilon$ sufficiently small.

\paragraph{Treatment of $NLS2(i,j,\neq,\neq)$} \label{sec:NLS2neqneq_Q3ED}
The treatment of $NLS2$ is essentially the same as $NLP$: using again that the loss is at most $t^3$ regardless of $i$ and $j$, we get from \eqref{ineq:AnuiDistriDecay} and Lemma \ref{lem:AnuLossy}:   
\begin{align*} 
NLS2(i,j,\neq,\neq) & \lesssim \frac{t^2}{\jap{\nu t^3}^\alpha} \norm{A^{\nu;3}Q^3}_2 \norm{A^{\nu;3}\partial_{i}^t U^j}_2 \norm{ A^{\nu;3}\partial_{ij}^t U^3}_2 \\ 
& \lesssim \frac{ \jap{t}}{\jap{\nu t^3}^\alpha} \norm{A^{\nu;3}Q^3}_2 \left(\norm{A^{\nu;3} Q^3}_2 + \norm{A^{3} Q^3_{\neq}}_2 \right)\left(\norm{A^{\nu;j} Q^j}_2 + \norm{A^{j} Q^j_{\neq}}_2 \right), 
\end{align*} 
which is consistent with Proposition \ref{prop:Boot} for $\epsilon$ sufficiently small. 

\subsubsection{Transport nonlinearity} \label{sec:Trans_ED_Q3}
Divide the transport nonlinearity via: 
\begin{align*} 
\mathcal{T} & = -\int A^{\nu;3}Q^3 A^{\nu;3}_k\left(\tilde U_0 \cdot \grad_{Y,Z} Q^3_{\neq}\right) dV - \int A^{\nu;3}Q^3 A^{\nu;3}_k\left(\tilde U_{\neq} \cdot \grad_{Y,Z} Q^3_{0}\right) dV \\ & \quad -  \int A^{\nu;3}Q^3 A^{\nu;3}_k\left(\tilde U_{\neq} \cdot \grad Q^3_{\neq}\right) dV \\ 
& = \mathcal{T}_{0\neq} + \mathcal{T}_{\neq 0}  + \mathcal{T}_{\neq \neq}. 
\end{align*}   
Consider first $\mathcal{T}_{0\neq}$. 
By \eqref{ineq:AnuiDistri} and $\abs{\eta} \leq \abs{\eta-kt} + \abs{kt} \leq \jap{t}\left(\abs{\eta-kt} + \abs{k}\right)$, 
\begin{align*}  
\mathcal{T}_{0\neq} & \lesssim \norm{A^{\nu;3}Q^3}_2\left(\norm{g}_{\G^{\lambda,\beta + 3\alpha+2}}\jap{t}\norm{\sqrt{-\Delta_L} A^{\nu;3}Q^3}_2 + \norm{U_0^3}_{\G^{\lambda,\beta+3\alpha+2}}\norm{\sqrt{-\Delta_L} A^{\nu;3}Q^3}_2\right) \\ 
& \lesssim \epsilon \norm{\sqrt{-\Delta_L} A^{\nu;3}Q^3}_2^2,  
\end{align*}
where the last line followed by \eqref{ineq:AprioriU0} and \eqref{ineq:Boot_gLow} (and $\gamma \geq \beta+3\alpha + 2$). This is subsequently absorbed by the dissipation for $c_0$ sufficiently small. 

Turn next to $\mathcal{T}_{\neq 0}$. By \eqref{ineq:AnuiDistri} and Lemma \ref{lem:AnuLossy} we have  
\begin{align*}
\mathcal{T}_{\neq 0} & \lesssim \epsilon\norm{A^{\nu;3}Q^3}_2\left(\norm{A^{\nu;3}U^2}_2 + \norm{A^{\nu;3}U^3}_2\right) \\ 
& \lesssim \frac{\epsilon}{\jap{t}^2}\norm{A^{\nu;3}Q^3}_2\left(\norm{A^{\nu;2}Q^2}_2 + \norm{A^2Q^2_{\neq}}_2 + \norm{A^{\nu;3}Q^3}_2 + \norm{AQ^3_{\neq}}\right)
\end{align*}
which is hence consistent with Proposition \ref{prop:Boot}.  

Turn next to $\mathcal{T}_{\neq}$, which is written
\begin{align*} 
\mathcal{T}_{\neq \neq} & = \int A^{\nu;3}Q^3_{\neq} A^{\nu;3}\left(\begin{pmatrix}U^1_{\neq} \\ (1 + \psi_y) U^2_{\neq} + \psi_zU^3_{\neq} \\ U^3_{\neq} \end{pmatrix}  \cdot \begin{pmatrix} \partial_X Q^3_{\neq} \\ (\partial_{Y} - t\partial_X)Q_{\neq}^3 \\ \partial_Z Q_{\neq}^3 \end{pmatrix}\right) dV. 
\end{align*}  
By Cauchy-Schwarz, \eqref{ineq:AnuiDistri}, Lemma \ref{lem:CoefCtrl} and \eqref{ineq:AnuiDistriDecay}, we get
\begin{align*} 
\mathcal{T}_{\neq \neq}  & \lesssim \norm{A^{\nu;3}Q^3}_2 \frac{\jap{t}^2}{\jap{\nu t^3}^\alpha} \left(\norm{A^{\nu;3}U^2}_2 + \norm{A^{\nu;3}U^3}_2 + \norm{A^{\nu;3}U^1}_2\right)\norm{\sqrt{-\Delta_L} A^{\nu;3} Q^3}_{2}. 
\end{align*} 
Applying Lemma \ref{lem:AnuLossy} and \eqref{ineq:Boot_ED} with \eqref{ineq:Boot_Hi} gives
\begin{align*} 
\mathcal{T}_{\neq \neq} & \lesssim \norm{A^{\nu;3}Q^3}_2 \frac{1}{\jap{\nu t^3}^\alpha} \left( \jap{t}\norm{A^{\nu;1}Q^1}_2 + \jap{t}\norm{A^1 Q^1_{\neq}}_2 \right. \\ 
& \left. \quad  + \norm{A^{\nu;2}Q^2}_2 + \norm{A^2 Q^2_{\neq}}_2 + \norm{A^{\nu;3}Q^3}_2 + \norm{A^3 Q^3_{\neq}}_2\right) \norm{\sqrt{-\Delta_L} A^{\nu;3} Q^3}_{2} \\ 
 & \lesssim  \frac{\epsilon \jap{t}}{\jap{\nu t^3}^\alpha} \norm{A^{\nu;3}Q^3}_2\norm{\sqrt{-\Delta_L} A^{\nu;3} Q^3}_{2} \\ 
& \lesssim \frac{\epsilon \jap{t}^2}{\jap{\nu t^3}^\alpha} \norm{A^{\nu;3}Q^3}_2^2 + \epsilon\norm{\sqrt{-\Delta_L} A^{\nu;3} Q^3}_{2}^2,
\end{align*} 
which is consistent with Proposition \ref{prop:Boot} for $\epsilon$ and $c_0$ sufficiently small using $\nu \geq \epsilon c_0^{-1}$.  

\subsubsection{Dissipation error terms} \label{sec:DE_ED_Q3}
The dissipation error terms are treated easily as in \cite{BMV14} using \eqref{ineq:AnuiDistri} together with the regularity gap between $A^{\nu;3}$ and the coefficient control in \eqref{ineq:Boot_LowC}. 
We hence omit the treatment for brevity and simply state the result: 
\begin{align*} 
\mathcal{D}_E & \lesssim c_{0}\nu \norm{\sqrt{-\Delta_L}A^{\nu;3}Q^3}_2^2. 
\end{align*} 

\subsection{Enhanced dissipation of $Q^2$} 
The enhanced dissipation of $Q^2$ is deduced in a manner very similar to $Q^3$, however, since we are imposing more control on $Q^2$, some nonlinear interactions must be handled with more precision. 
On the other hand, the evolution of $Q^2$ lacks the troublesome linear terms that are present in $Q^3$ and $Q^1$. 

Computing the time evolution of $\norm{A^{\nu;2}Q^2}_2$ we get
\begin{align} 
\frac{1}{2}\frac{d}{dt}\norm{A^{\nu;2} Q^2}_2^2 & \leq \dot{\lambda}\norm{\abs{\grad}^{s/2}A^{\nu;2} Q^2}_2^2 -\frac{\delta_1}{t}\norm{\mathbf{1}_{t > \jap{\grad_{Y,Z}}} A^{\nu;2} Q^2}_2^2 - \norm{\sqrt{\frac{\partial_t w_L}{w_L}} A^{\nu;2}Q^2}_2^2 + G^\nu  \nonumber \\ %
 & \quad + \nu \int A^{\nu;2} Q^{2} A^{\nu;2} \left(\tilde{\Delta_t} Q^2\right) dV -\int A^{\nu;2} Q^2 A^{\nu;2}\left( \tilde U \cdot \grad Q^2 \right) dV \nonumber \\ 
& \quad - \int A^{\nu;2} Q^2 A^{\nu;2} \left[\left(Q^j \partial_j^t U^2\right) + 2\partial_i^t U^j \partial_{ij}^t U^2  - \partial_Y^t\left(\partial_i^t U^j \partial_j^t U^i\right) \right] dV \nonumber \\  
& = -\mathcal{D}Q^{\nu;2} - \delta_1 CK_L^{\nu;2} + \mathcal{D}_E + \mathcal{T} + NLS1 + NLS2 + NLP,  \label{ineq:AnuEvo2}
\end{align} 
where as in \S\ref{sec:ED3}, we write 
\begin{align*}
\mathcal{D}_E & = \nu \int A^{\nu;2} Q^2 A^{\nu;2}\left(\tilde{\Delta_t}Q^2 - \Delta_L Q^2\right) dV, 
\end{align*} 
and
\begin{align*}
G^\nu = \alpha \int A^{\nu;2} Q^2 \min\left(1, \frac{\jap{\grad_{Y,Z}}^{\delta_1}}{t^{\delta_1}}\right) e^{\lambda(t)\abs{\grad}^s}\jap{\grad}^\beta\jap{D(t,\partial_v)}^{\alpha-1} \frac{D(t,\partial_v)}{\jap{D(t,\partial_v)}} \partial_t D(t,\partial_v) Q^2_{\neq} dV. 
\end{align*} 
As in \S\ref{sec:ED3}, we have
\begin{align*} 
-\nu \norm{\sqrt{-\Delta_L}A^{\nu;2} Q^{2}_{\neq}}_2^2 + G^{\nu}& \leq -\frac{\nu}{8}\norm{\sqrt{-\Delta_L}A^{\nu;2} Q^{2}}_2^2. 
\end{align*}

\subsubsection{Nonlinear pressure and stretching}
In this section we treat $NLS1$, $NLS2$ and $NLP$. 
As in \S\ref{sec:NLPS_Q3ED}, for simplicity we will treat all $NLS$ and $NLP$ terms as if there were no variable coefficients. 
As above, we will enumerate the terms as follows for $i,j \in \set{1,2,3}$ and 
$a,b \in \set{0,\neq}$
\begin{subequations}  \label{def:enumnu}
\begin{align}
NLP(i,j,a,b) & = \int A^{\nu;2} Q^2 A^{\nu;2} \partial_Y^t(\partial_j^t U^i_a \partial_i^t U^j_b  ) dV \\
NLS1(j,a,b) & = -\int A^{\nu;2} Q^2 A^{\nu;2} \left( Q^j_a \partial_j^t U^2_b  \right) dV \\
NLS2(i,j,a,b) & = -2\int A^{\nu;2} Q^2 A^{\nu;2} (\partial_i^t U^j_a \partial_i^t\partial_j^t U^2_b  ) dV.
\end{align}
\end{subequations}
We will use repeatedly the inequalities 
\begin{subequations} 
\begin{align}
A^{\nu; 2} & \lesssim t^{2}A^{\nu;1} \\ 
A^{\nu; 2} & \lesssim t^{2-\delta_1}A^{\nu;3}. 
\end{align}
\end{subequations} 

\paragraph{Treatment of $NLP(i,j,0,\neq)$ terms} \label{sec:NLP0neq_Q2ED}
All of these terms can be treated with a variant of the same general argument. 
Via integration by parts, the projection to non-zero frequencies and \eqref{ineq:AnuiDistri} we get, %(together with \eqref{ineq:Boot_LowC}) we get 
\begin{align*} 
NLP(i,j,0,\neq) & \lesssim \norm{\sqrt{-\Delta_L}A^{\nu;2}Q^2}_2 \norm{A^{\nu;2}\left( \partial_i^t U^j_{\neq} \partial_j^tU_0^i\right)}_2 \lesssim \norm{\sqrt{-\Delta_L}A^{\nu;2}}_2 \norm{A^{\nu;2}\partial_i^t U^j}_2 \norm{U_0^i}_{\G^{\lambda, \beta + 3\alpha + 5}}. 
\end{align*} 
With \eqref{ineq:AnuLossyED} in mind, the power of $t$ lost from the derivatives and $U_0^1$ together is at most one and the powers of $t$ lost from the possibility that $j=3$ is at most an additional two, so at worst we get from \eqref{ineq:AnuiDistri}, \eqref{ineq:AnuLossyED}, %Lemma \ref{lem:AnuLossy} 
and \eqref{ineq:AprioriU0}, 
\begin{align*}
NLP(i,j,0,\neq)  & \lesssim \epsilon \norm{\sqrt{-\Delta_L}A^{\nu;2}Q^2 }_2\left(\norm{\sqrt{-\Delta_L} A^{\nu;j} Q^j} + \norm{\sqrt{-\Delta_L} A^{j} Q^j_{\neq}}_2\right),  
\end{align*} 
which is then absorbed by the dissipation (note that $j \neq 1$ by the skew structure of the nonlinearity). 

\paragraph{Treatment of $NLS1(j,0,\neq)$ terms} \label{sec:NLS10neq_Q2ED}
These terms are straightforward by \eqref{ineq:AnuiDistri}, \eqref{ineq:Boot_Hi}, and \eqref{ineq:AnuLossyII}; we omit the details and conclude
\begin{align*}
NLS1(j,0,\neq) & \lesssim \epsilon\norm{A^{\nu;2}Q^2}_2\left(\norm{A^{\nu;2}Q^2}_2 + \norm{A^{2}Q^2_{\neq}}_2\right), 
\end{align*}
which is absorbed by the dissipation for $c_0$ sufficiently small. 

\paragraph{Treatment of $NLS1(j,\neq,0)$ terms} \label{sec:NLS1neq0_Q2ED}
Note that $j \neq 1$. Hence, the worst possibility is $j = 3$, where $2-\delta_1$ powers of time are lost. However, these may be recovered by \eqref{ineq:AnuHiLowSep2}. Indeed, from \eqref{ineq:AnuiDistri}, \eqref{ineq:AprioriU0}, and \eqref{ineq:AnuHiLowSep2}, 
\begin{align*}
NLS1(j,\neq,0) & \lesssim \epsilon\left(\norm{\sqrt{-\Delta_L} A^{\nu;2}Q^2}_2 + \norm{\sqrt{-\Delta_L} A^{2}Q^2_{\neq}}_2\right)\left(\norm{\sqrt{-\Delta_L} A^{\nu;j}Q^j}_2 + \norm{\sqrt{-\Delta_L}A^{j}Q^j_{\neq}}_2\right),
\end{align*}
which is subsequently absorbed by the dissipation.

\paragraph{Treatment of $NLS2(i,j,0,\neq)$ terms} 
These are treated similar to the analogous $NLS1$ terms in \S\ref{sec:NLS10neq_Q2ED}, yielding the following 
\begin{align*} 
NLS2(i,j,0,\neq) & \lesssim \epsilon\norm{A^{\nu;2}Q^2}\left(\norm{A^{\nu;2} Q^2} + \norm{A^2 Q^2_{\neq}}_2\right), 
\end{align*} 
which is absorbed by the dissipation.  

\paragraph{Treatment of $NLS2(i,j,\neq,0)$ terms} 
Notice that in this case, $j \neq 1$.  
First, by \eqref{ineq:AnuiDistri}, 
\begin{align*} 
NLS2(i,j,\neq,0) & \lesssim \norm{A^{\nu;2}Q^2}_2 \norm{\sqrt{-\Delta_L}A^{\nu ;2} U^j_{\neq}}_2 \norm{U_0^2}_{\G^{\lambda,\beta+3\alpha + 7}}.
\end{align*} 
Note then that we lose at most $2-\delta_1$ powers of time from $A^3$ if $j=3$, however these are recovered by Lemma \ref{lem:AnuLossy}: 
\begin{align*} 
NLS2(i,j,\neq,0) & = \epsilon\norm{A^{\nu;2}Q^2}_2\left(\norm{\sqrt{-\Delta_L} A^{\nu;j}Q^j}_2 + \norm{\sqrt{-\Delta_L} A^{j}Q_{\neq}^j}_2 \right), 
\end{align*} 
which is absorbed by the dissipation for $c_{0}$ sufficiently small. 

\paragraph{Treatment of $NLP(i,j,\neq,\neq)$} \label{sec:NLPneqneq_Q2ED}
Turn next to the nonlinear pressure interactions of two non-zero frequencies. 
By integration by parts and \eqref{ineq:AnuiDistriDecay}, we have 
\begin{align*} 
NLP(i,j,\neq,\neq) & \lesssim \norm{\sqrt{-\Delta_L}A^{\nu;2}Q^2}_2 \norm{A^{\nu;2}(\partial_j^t U^i_{\neq} \partial_i^tU^j_{\neq})}_2 \\ 
& \lesssim \norm{\sqrt{-\Delta_L}A^{\nu;2}Q^2}_2 \frac{\jap{t}^{\delta_1}}{\jap{\nu t^3}^\alpha} \left(\norm{\jap{\grad}^{2-\beta}A^{\nu;2} \partial_j^t U^i_{\neq}}_2 \norm{A^{\nu;2}\partial_i^tU^j_{\neq}}_2 \right. \\ 
 & \quad\quad \left. + \norm{A^{\nu;2} \partial_j^t U^i_{\neq}}_2 \norm{\jap{\grad}^{2-\beta}A^{\nu;2}\partial_i^tU^j_{\neq}}_2 \right). 
\end{align*}
Each combination of $i$ and $j$ can be treated in a rather similar manner, each time using \eqref{ineq:AnuiDistri} and Lemma \ref{lem:AnuLossy} (either \eqref{ineq:AnuLossyII} or \eqref{ineq:AnuLossyED} depending on the case). 
The case $NLP(1,3,\neq,\neq)$ turns out to be the hardest, and it is this case which precipitates the loss of $t^{\delta_1}$ in $Q^2$ (and hence ultimately the slightly slower than the rate of inviscid damping predicted by the linear theory in \eqref{ineq:u2damping}). 
Hence, let us simply focus on this case and omit the others for brevity. 
In this case, using the extra $\jap{\frac{t}{\jap{\grad_{Y,Z}}}}^{\delta_1}$ in the definition of $A^{\nu;2}$ in \eqref{def:Anu} and \eqref{ineq:AnuLossyED} (actually we do not need to use the full product rule in this case, but it is necessary for the $i = j = 3$ case so let us demonstrate it here):   
\begin{align*} 
NLP(1,3,\neq,\neq) & \lesssim \norm{\sqrt{-\Delta_L}A^{\nu;2}Q^2}_2 \frac{\jap{t}^{4}}{\jap{\nu t^3}^\alpha} \left(\norm{\jap{\grad}^{2-\beta}A^{\nu;1} \partial_Z^t U^1_{\neq}}_2 \norm{A^{\nu;3}\partial_X U^3_{\neq}}_2 \right. \\ 
 & \quad\quad \left. + \norm{A^{\nu;1} \partial_Z^{t} U^1_{\neq}}_2 \norm{\jap{\grad}^{2-\beta}A^{\nu;3}\partial_X U^3_{\neq}}_2 \right) \\ 
& \lesssim \norm{\sqrt{-\Delta_L}A^{\nu;2}Q^2}_2 \frac{\jap{t}^{4}}{\jap{\nu t^3}^\alpha} \left((1 + \epsilon t^2)\norm{A^{\nu;1} U^1_{\neq}}_2 \norm{A^{\nu;3}\partial_X U^3_{\neq}}_2 \right. \\ 
 & \quad\quad \left. + \norm{A^{\nu;1} \partial_Z^{t} U^1_{\neq}}_2 \norm{A^{\nu;3} U^3_{\neq}}_2 \right) \\
 & \lesssim \norm{\sqrt{-\Delta_L}A^{\nu;2}Q^2}_2 \frac{1}{\jap{\nu t^3}^{\alpha-1}} \left( \left(\norm{A^{\nu;1}Q^1}_2 + \norm{A^1 Q^1_{\neq}}_2\right)\left(\norm{A^{\nu;3}Q^3}_2 + \norm{A^3 Q^3_{\neq}}_2\right)    \right. \\  
& \quad\quad \left. + \left(\norm{\sqrt{-\Delta_L} A^{\nu;1}Q^1}_2 + \norm{\sqrt{-\Delta_L} A^1 Q^1_{\neq}}_2\right)\left(\norm{A^{\nu;3}Q^3}_2 + \norm{A^3 Q^3_{\neq}}_2\right) \right) \\ 
& \lesssim \epsilon\norm{\sqrt{-\Delta_L}A^{\nu;2}Q^2}_2 \left(\norm{\sqrt{-\Delta_L} A^{\nu;1}Q^1}_2 + \norm{\sqrt{-\Delta_L}A^{1} Q^1_{\neq}}_2  \right), 
\end{align*}  
which is then absorbed by the dissipation. 
The other terms can be treated with a simple variation or easier arguments (none of the others require the extra power of $t^{\delta_1}$ in \eqref{def:Anu} but the $i=j=3$ term depends more crucially on the product rule being employed above). 

\paragraph{Treatment of $NLS1(j,\neq,\neq)$} \label{sec:NLS1neqneq_Q2ED}
By \eqref{ineq:AnuiDistriDecay} and \eqref{ineq:AnuLossyII}, 
\begin{align*}
NLS1(j,\neq,\neq) & \lesssim \frac{\jap{t}^{\delta_1}}{\jap{\nu t^3}^{\alpha}}\norm{A^{\nu;2}Q^2}_2\norm{A^{\nu;2} Q^j}_2 \norm{A^{\nu;2} \partial_j^t U^2_{\neq}}_2 \\ 
& \lesssim \frac{\jap{t}}{\jap{\nu t^3}^{\alpha}}\norm{A^{\nu;2}Q^2}_2\norm{A^{\nu;j} Q^j}_2\left(\norm{A^{\nu;2} Q^2}_2 + \norm{A^{2} Q^2_{\neq}}_2\right),
\end{align*}
which is consistent with Proposition \ref{prop:Boot}. 

\paragraph{Treatment of $NLS2(i,j,\neq,\neq)$} \label{sec:NLS2neqneq_Q2ED}
For these terms we again apply \eqref{ineq:AnuiDistriDecay} to deduce 
\begin{align*} 
NLS2(i,j,\neq,\neq)  & \lesssim \norm{A^{\nu;2}Q^2}_2 \frac{\jap{t}^{\delta_1}}{\jap{\nu t^3}^\alpha} \norm{ A^{\nu;2} \partial_i^t U^j_{\neq}}_2 \norm{A^{\nu;2}\partial_{ij}^tU^2_{\neq}}_2. 
\end{align*} 
The most problematic term is $j = 3$, $i = 2$; in this case we apply \eqref{ineq:AnuLossyED} and \eqref{ineq:AnuLossyII},  
\begin{align*} 
NLS2(2,3,\neq,\neq)  & \lesssim \frac{\epsilon \jap{t}^{\delta_1}}{\jap{\nu t^3}^\alpha} \norm{ A^{\nu;2}Q^2}_2 \left(\norm{\sqrt{-\Delta_L} A^{\nu;3}Q^3}_2  + \norm{\sqrt{-\Delta_L} A^{3}Q^3_{\neq}}_2\right); 
\end{align*} 
the other cases can be treated similarly and are hence omitted for brevity. 
This completes the treatment of all of the nonlinear pressure and stretching terms. 

\subsubsection{Transport nonlinearity} \label{sec:Trans_ED_Q2}
These terms are treated similar to \S\ref{sec:Trans_ED_Q3}; we just briefly sketch the differences here. 
 Write the transport nonlinearity as 
\begin{align*} 
\mathcal{T} & = -\int A^{\nu;2}Q^2_{\neq} A^{\nu;2}\left(\tilde U_0 \cdot \grad_{Y,Z} Q^2_{\neq}\right) dV - \int A^{\nu;2}Q^2_{\neq} A^{\nu;2}\left(\tilde U_{\neq} \cdot \grad_{Y,Z} Q^2_{0}\right) dV \\ & \quad - \int A^{\nu;2}Q^2_{\neq} A^{\nu;2}_k\left(\tilde U_{\neq} \cdot \grad Q^2_{\neq}\right) dV \\ 
& = \mathcal{T}_{0\neq} + \mathcal{T}_{\neq0}  + \mathcal{T}_{\neq \neq}. 
\end{align*}  
As in \S\ref{sec:Trans_ED_Q3}, \eqref{ineq:AnuiDistri} together with the bootstrap hypotheses imply 
\begin{align*} 
\mathcal{T}_{0\neq} & \lesssim \epsilon\norm{\sqrt{-\Delta_L} A^{\nu;2}Q^2}^2_2. 
\end{align*} 
Similarly, 
\begin{align*} 
\mathcal{T}_{\neq 0} & \lesssim \epsilon\norm{A^{\nu;2}Q^2}^2_2\left(\norm{A^{\nu;2}Q^2}_2 + \norm{A^2Q^2_{\neq}}_2 + \norm{A^{\nu;3}Q^3}_2 + \norm{A^3Q^3_{\neq}}\right), 
\end{align*} 
which is absorbed by the dissipation for $c_0$ sufficiently small. 

For $\mathcal{T}_{\neq \neq}$ we get from \eqref{ineq:AnuiDistriDecay} and \eqref{ineq:AnuLossyII}, 
\begin{align*} 
\mathcal{T}_{\neq \neq}  & \lesssim \norm{A^{\nu;2}Q^2}_2 \frac{\jap{t}^{\delta_1}}{\jap{\nu t^3}^\alpha} \left(\norm{A^{\nu;2}U^1}_2 + \norm{A^{\nu;2}U^2}_2 + \norm{A^{\nu;2}U^3}_2\right)\norm{\sqrt{-\Delta_L} A^{\nu;2} Q^2}_{2} \\
& \lesssim \norm{A^{\nu;2}Q^2}_2 \frac{\jap{t}^{\delta_1}}{\jap{\nu t^3}^\alpha} \left(\jap{t}^2\norm{A^{\nu;1}U^1}_2 + \norm{A^{\nu;2}U^2}_2 + \jap{t}^{2-\delta_1}\norm{A^{\nu;3}U^3}_2\right)\norm{\sqrt{-\Delta_L} A^{\nu;2} Q^2}_{2} \\
& \lesssim \norm{A^{\nu;2}Q^2}_2 \frac{\epsilon \jap{t}^{\delta_1}}{\jap{\nu t^3}^\alpha}\norm{\sqrt{-\Delta_L} A^{\nu;2} Q^2}_{2} \\
& \lesssim \epsilon \norm{\sqrt{-\Delta_L} A^{\nu;2} Q^2}_{2}^2 + \frac{\epsilon \jap{t}^{2\delta_1}}{\jap{\nu t^3}^{2\alpha}} \norm{A^{\nu;2}Q^2}^2_2,
\end{align*} 
which completes the treatment of $\mathcal{T}_{\neq \neq}$. 

\subsubsection{Dissipation error terms}
As in \S\ref{sec:DE_ED_Q3}, these terms are treated in the same manner as the analogous terms in \cite{BMV14} and absorbed by the dissipation; the details are omitted for brevity.

\subsection{Enhanced dissipation of $Q^1$}
As in the high norm estimates on $Q^1_{\neq}$ in \S\ref{sec:HiQ1neq}, we need to deal with the issue caused by the lift-up effect and the linear stretching term. 
Computing the time evolution of $\norm{A^{\nu;1}Q^1}_2$, we get
\begin{align} 
\frac{1}{2}\frac{d}{dt} \norm{A^{\nu;1} Q^1}_2^2 & \leq \dot{\lambda}\norm{\abs{\grad}^{s/2}A^{\nu;1} Q^1}_2^2  + G^\nu -\norm{\sqrt{\frac{\partial_t w_L}{w_L}} A^{\nu;1} Q^1}_2^2 \nonumber \\ 
& \quad - \frac{t}{\jap{t}^{2}}\norm{A^{\nu;1}Q^1}_2^2 -\frac{(1+\delta_1)}{t} \norm{\mathbf{1}_{t > \jap{\grad_{Y,Z}}} A^{\nu;1} Q^1}_2^2 \nonumber \\
& \quad - \int A^{\nu;1}Q^1 A^{\nu;1} Q^2 dV -2 \int A^{\nu;1} Q^1 A^{\nu;1} \partial_{YX}^t U^1 dV \nonumber \\
 & \quad + 2 \int A^{\nu;1} Q^1 A^{\nu;1} \partial_{XX} U^2 dV  + \nu\int A^{\nu;1} Q^{1} A^{\nu;1} \left(\tilde{\Delta_t} Q^1\right) dv \nonumber \\ 
& \quad -\int A^{\nu;1} Q^1 A^{\nu;1}\left( \tilde U \cdot \grad Q^1 \right) dv \nonumber \\ 
& \quad -\int A^{\nu;1} Q^1 A^{\nu;1} \left[\left(Q^j \partial_j^t U^1\right) + 2\partial_i^t U^j \partial_{ij}^t U^1  - \partial_X\left(\partial_i^t U^j \partial_j^t U^i\right) \right] dv \nonumber \\ 
& = -\mathcal{D}Q^{\nu;1} + G^\nu - CK_{L1}^{\nu;1} - (1+\delta_1) CK_{L2}^{\nu;1} \nonumber \\ & \quad + LU + LS1 + LP1  + \mathcal{D}_E + \mathcal{T} + NLS1 + NLS2 + NLP.  \label{ineq:AnuEvo1}
\end{align} 
where $G^\nu$ is analogous to the corresponding term in \eqref{ineq:AnuEvo3}. 
As in \S\ref{sec:ED3}, $G^\nu$ is absorbed by using the dissipation. 
Note that for $i \in \set{2,3}$, 
\begin{align}
A^{\nu; 1} & \lesssim A^{\nu;i}. 
\end{align}

\subsubsection{Linear terms} 
The treatment of the $LU$ term is analogous to the treatment used in the improvement to \eqref{ineq:Boot_Q1Hi2} in \S\ref{sec:LUQhi2}. 
We omit the details for brevity and conclude that there is some constant $K > 0$ such that
\begin{align*} 
LU & \leq \delta_1t\jap{t}^{-2}\norm{A^{\nu;1} Q^1}^2_2 + \frac{\delta_\lambda}{4\delta_1 t^{3/2}}\norm{\abs{\grad}^{s/2}A^{\nu;2} Q^2}_2^2 + \frac{K}{\delta_1 \delta_\lambda^{\frac{1}{2s-1}} t^{3/2}}\norm{A^{\nu;2} Q^2}_2^2 +  \frac{K}{\delta_1 t} \norm{\mathbf{1}_{t > \jap{\grad_{Y,Z}}} A^{\nu;2}Q^2}_2^2,  
\end{align*}
which is consistent with Proposition \ref{prop:Boot} provided $K_{ED1} \gg K_{ED2} \max(\delta_1^{-2}, \delta_\lambda^{\frac{1}{2s-1}} \delta_1^{-1})$.  
The treatment of $LS1$ can be made analogous to the $LS3$ term treated above in \S\ref{sec:ED3} along with the $t^{\delta_1}$ tweak used in the improvement of \eqref{ineq:Boot_Q1Hi2}. We omit the details and conclude, for some constant $K > 0$,  
\begin{align*} 
LS1 & \leq (1+\delta_1)CK^{\nu;1}_{L2} + (1-\delta_1)CK_{L1}^{\nu;1} + K\epsilon \norm{\sqrt{-\Delta_L} A^{\nu;1}Q^1}^2_2 + \frac{K}{\jap{t}^2}\norm{A^1 Q^1_{\neq}}_2^2 + \frac{K\epsilon}{\jap{t}^2}\norm{A^{1}\Delta_L U^1_{\neq}}_2^2 \\ & \quad + \frac{\delta_\lambda}{10\jap{t}^{3/2}}\norm{\abs{\grad}^{s/2} A^{\nu;1}Q^1}_2^2 +  \frac{K}{\delta_\lambda^{\frac{1}{2s-1}}\jap{t}^{3/2}} \norm{ A^{\nu;1}Q^1}_2^2, 
\end{align*} 
which (after Lemma \ref{lem:SimplePEL}) is consistent with Proposition \ref{prop:Boot} for $K_{ED1}$ chosen large relative to $K_{H1\neq}$ and $K_{ED2}$. 
Next consider the linear pressure term $LP1$.  

We may directly apply Lemma \ref{lem:AnuLossy} to deduce 
\begin{align*} 
LP1  \leq 2\norm{A^{\nu;1}Q^1}_2 \norm{\partial_{XX} A^{\nu;1}U^2_{\neq}}_2 &\lesssim \jap{t}^{-3} \norm{A^{\nu;1}Q^1}_2\left(\norm{A^{\nu;2}Q^2_{\neq}}_2 + \norm{A^{2}Q^2_{\neq}}_2\right) \\ 
& \lesssim \frac{1}{\jap{t}^{3}}\norm{A^{\nu;1}Q^1}^2_2 + \frac{1 + K_{ED2}}{\jap{t}^{3}}\epsilon^2, 
\end{align*} 
which is consistent with Proposition \ref{prop:Boot} provided $K_{ED1} \gg K_{ED2}$. 

\subsubsection{Nonlinear pressure and stretching} 
These terms are treated in essentially the same manner as in \S\ref{sec:NLPS_Q3ED}, however, we sketch some of the similarities and differences briefly. 
Recall the enumeration
As above, we will enumerate the terms as follows for $i,j \in \set{1,2,3}$ and 
$a,b \in \set{0,\neq}$
\begin{subequations}  \label{def:enumnu}
\begin{align}
NLP(i,j,a,b) & = \int A^{\nu;1} Q^1 A^{\nu;1} \partial_X(\partial_j^t U^i_a \partial_i^t U^j_b  ) dV \\
NLS1(j,a,b) & = -\int A^{\nu;1} Q^1 A^{\nu;1} \left( Q^j_a \partial_j^t U^1_b  \right) dV \\
NLS2(i,j,a,b) & = -2\int A^{\nu;1} Q^1 A^{\nu;1} (\partial_i^t U^j_a \partial_i^t\partial_j^t U^1_b  ) dV.
\end{align}
\end{subequations}

\paragraph{Treatment of $NLP(i,j,0,\neq)$ terms} \label{sec:NLP0neq_Q1ED}
Notice that in this case, $j \neq 1$. From \eqref{ineq:AnuiDistri}, Lemma \ref{lem:AnuLossy}, and \eqref{ineq:AprioriU0}, 
\begin{align*}                                
NLP(i,j,\neq,0) & \lesssim  \norm{A^{\nu;1}Q^1}_2 \norm{A^{\nu;1} \partial_X\partial_i^t U^j}_2\norm{U_0^i}_{\G^{\lambda,\beta + 3\alpha + 5}} \lesssim  \epsilon\norm{A^{\nu;1}Q^1}_2 \left(\norm{A^{\nu;j} Q^j}_2 + \norm{A^{j}Q^j_{\neq}}_2\right),  
\end{align*} 
which is subsequently absorbed by the dissipation. 

\paragraph{Treatment of $NLS1(j,0,\neq)$ terms} \label{sec:NLS10neq_Q1ED} 
From \eqref{ineq:AnuiDistri}, Lemma \ref{lem:AnuLossy}, and \eqref{ineq:AprioriU0}, 
\begin{align*}
NLS1(j,0,\neq) &  \lesssim \norm{A^{\nu;1}Q^1}_2 \norm{Q^j_0}_{\G^{\lambda,\beta + 3\alpha + 5}}  \norm{\partial_j^t A^{\nu;1}U^1}_2
& \lesssim \frac{\epsilon}{\jap{t}}\norm{A^{\nu;1}Q^1}_2\left(\norm{A^{\nu;1}Q^1}_2 + \norm{A^{1}Q^1_{\neq}}_2 \right).
\end{align*}
which is absorbed by the dissipation. 

\paragraph{Treatment of $NLS1(j,\neq,0)$ terms} \label{sec:NLS1neq0_Q1ED} 
Note that $j \neq 1$. 
From \eqref{ineq:AnuiDistri}, Lemma \ref{lem:AnuLossy}, \eqref{ineq:AnuHiLowSep2}, and \eqref{ineq:AprioriU0}, 
\begin{align*}
NLS1(j,\neq,0) &  \lesssim \norm{A^{\nu;1}Q^1}_2 \norm{A^{\nu;1} Q^j}_2\norm{U_0^1}_{\G^{\lambda,\beta + 3\alpha + 5}} \\ 
&  \lesssim \epsilon\left(\norm{\sqrt{-\Delta_L} A^{\nu;1}Q^1}_2 + \norm{A^{1}Q^1_{\neq}}_2 \right)\left(\norm{A^{\nu;j} Q^j}_2 + \norm{A^j Q^j_{\neq}}_2\right),
\end{align*}
which is absorbed by the dissipation. 

\paragraph{Treatment of $NLS2(i,j,\neq,0)$ terms} \label{sec:NLS2neq0_Q1ED} 
From \eqref{ineq:AnuiDistri}, Lemma \ref{lem:AnuLossy}, and \eqref{ineq:AprioriU0}, we have (noting that both $j \neq 1$ and $i \neq 1$):
\begin{align*} 
NLS2(i,j,\neq,0) & \lesssim \norm{A^{\nu;1}Q^1}_2 \norm{A^{\nu;1} \partial_i^t U^j_{\neq}}_2 \norm{U_0^1}_{\G^{\lambda,\beta + 3\alpha +6}} \lesssim \epsilon \norm{A^{\nu;1}Q^1}_2 \left(\norm{A^{\nu;j}Q^j}_2 + \norm{A^{j}Q^j_{\neq}}_2\right), 
\end{align*} 
which is then absorbed by the dissipation. 

\paragraph{Treatment of $NLS2(i,j,0,\neq)$ terms} \label{sec:NLS20neq_Q1ED} 
From \eqref{ineq:AnuiDistri}, Lemma \ref{lem:AnuLossy}, and \eqref{ineq:AprioriU0}. we have 
\begin{align*} 
NLS2(i,j,0,\neq)& \lesssim \norm{A^{\nu;1}Q^1}_2 \norm{A^{\nu;1} \partial_{ij}^t U^1}_2 \norm{U_0^j}_{\G^{\lambda,\beta+3\gamma+5}} \lesssim \epsilon \norm{A^{\nu;1}Q^1}_2 \left(\norm{A^{\nu;1}Q^1}_2 + \norm{A^{1}Q^1_{\neq}}_2\right), 
\end{align*} 
which is then absorbed by the dissipation. 

\paragraph{Treatment of $NLP(i,j,\neq,\neq)$, $NLS1(i,j,\neq,\neq)$, and $NLS2(i,j,\neq,\neq)$}
The nonlinear terms involving two non-zero frequencies can all be treated in essentially the same manner as 
in $Q^3$ in \S\ref{sec:NLPneqneq_Q3ED} and \S\ref{sec:NLS1neqneq_Q3ED}. %and \S\ref{sec:NLS1neqneq_Q3ED}. 
We treat the representative example $NLS2(2,3,\neq,\neq)$. By \eqref{ineq:AnuiDistriDecay},  
\begin{align*} %\int A^{\nu;1}Q^1 A^{\nu;1}(\partial_i U^3 \partial_{ij} U^1)dV 
NLS2(2,3,\neq,\neq) & \lesssim \frac{\jap{t}^{2+\delta_1}}{\jap{\nu t^3}^\alpha}\norm{A^{\nu;1}Q^1}_2 \norm{A^{\nu;1}\partial_Y^t U^3}_2 \norm{A^{\nu;1}\partial_{YZ}^t U^1}_2 \\ 
& \lesssim \frac{\epsilon t^{1+\delta_1}}{\jap{\nu t^3}^\alpha}\norm{A^{\nu;1}Q^1}_2\left( \norm{A^{\nu;1}Q^1}_2 + \norm{A^1 Q^1_{\neq}}_2 \right) \left(\norm{A^{\nu;3}Q^3}_2 + \norm{A^3 Q^3_{\neq}}_2 \right),
\end{align*}
which is consistent with Proposition \ref{prop:Boot} for $\epsilon$ sufficiently small. 

\subsubsection{Transport nonlinearity}
The transport nonlinearity, $\mathcal{T}$ in \eqref{ineq:AnuEvo1}, can be treated in the same manner as the transport nonlinearity in \S\ref{sec:Trans_ED_Q2}. 
We omit the details for brevity.

\subsubsection{Dissipation error terms}   
The dissipation error terms can be treated in same manner as those in \S\ref{sec:DE_ED_Q3} and hence we omit the details for brevity. This completes the enhanced dissipation estimate on $Q^1$. 

\section{Sobolev estimates} \label{sec:LowNrmVel}

In this section we improve the $H^{\sigma^\prime}$ (and $L^4$) estimates in \eqref{ineq:Boot_LowFreq}.   
Since these estimates are on quantities which are independent of $X$ and are in finite regularity (so it is easy to handle compositions), we may 
deduce them in the original $(y,z)$ variables and then transfer them back to $(Y,Z)$ variables (see \S\ref{sec:RegCont} for more discussion on changing coordinate systems). 
This is also useful for taking advantage of the divergence free condition in the form \eqref{ineq:specq} below. 

First, recall Lemma \ref{lem:intermedSob}. 
From \eqref{eq:u0i} and the equation satisfied by $q^2_0$ (from \eqref{def:qi}):
\begin{align} 
\partial_t q_0^2 + (u_0^2,u_0^3)^T \cdot \grad q_0^2 = -q^j_0 \partial_j u^2_0 + \partial_y\left(\partial_i u^j_0 \partial_j u^i_0\right) - 2\partial_{i} u^j_0 \partial_{ij}u^2_0 + \mathcal{F}_q + \nu \Delta q^2_0, \label{eq:q02} 
\end{align} 
where,  
analogous to \eqref{eq:XavgCanc} and \eqref{eq:Fbaru}, we derive
\begin{align*} 
\mathcal{F}_q = \left(\partial_i \partial_i \partial_j \left( \bar{u}^j_{\neq} \bar{u}_{\neq}^2\right)_{0} - \partial_{y} \partial_j \partial_i \left(\bar{u}_{\neq}^i \bar{u}^j_{\neq}\right)_{0} \right)\mathbf{1}_{i \neq 1, j \neq 1}. 
\end{align*} 
In what follows denote 
\begin{align*}
u_0 := \begin{pmatrix} u_0^2 \\ u_0^3 \end{pmatrix}.  
\end{align*}

\subsection{Improvement of \eqref{ineq:Boot_Q02_Low}}
In this section we improve \eqref{ineq:Boot_Q02_Low} from \eqref{eq:q02} by making an $L^2$ estimate on $q_0^2$ and then transforming this to an estimate on $Q_0^2$. 
The first thing to note is the spectral gap estimate
\begin{align} 
\|q_0^2 \|_{H^{\sigma'}} + \|u_0^2 \|_{H^{\sigma'}} + \| \nabla \partial_z u_0^2 \|_{H^{\sigma'}} + \| \partial_z u_0^2 \|_{H^{\sigma'}} \lesssim \norm{\grad q_0^2}_{H^{\sigma^\prime}} \label{ineq:specq}
\end{align} 
following from the divergence free condition on $u$. 
This kind of estimate would normally imply exponential decay, however, the decay of $\mathcal{F}$ is only polynomial. 
From \eqref{eq:q02} we have
\begin{align*} 
\frac{1}{2}\frac{d}{dt} \norm{ q_0^2}_{H^{\sigma^\prime}}^2 & = -\nu \norm{\grad q_0^2}_{H^{\sigma^\prime}}^2 - \int \jap{\grad}^{\sigma^\prime} q_0^2 \jap{\grad}^{\sigma^\prime} \left( u_0 \cdot \grad q_0^2\right) dy dz \\
 & \quad + \int \jap{\grad}^{\sigma^\prime} q_0^2 \jap{\grad}^{\sigma^\prime} \left[-q^j_0 \partial_j u^2_0 + \partial_y\left(\partial_i u^j_0 \partial_j u^i_0\right) - 2\partial_{i} u^j_0 \partial_{ij}u^2_0\right] dy dz \\ & \quad + \int \jap{\grad}^{\sigma^\prime} q_0^2 \jap{\grad}^{\sigma^\prime}  \mathcal{F}_q dy dz.  
\end{align*}  
Write
\begin{align*} 
-\int \jap{\grad}^{\sigma^\prime} q_0^2 \jap{\grad}^{\sigma^\prime} \left( u_0 \cdot \grad q_0^2\right) dy dz & = -\int \jap{\grad}^{\sigma^\prime} q_0^2 \jap{\grad}^{\sigma^\prime} \left( u_0^2 \partial_y q_0^2\right) dy dz -\int \jap{\grad}^{\sigma^\prime} q_0^2 \jap{\grad}^{\sigma^\prime} \left( u_0^3 \partial_z q_0^3\right) dy dz \\  
& = T_y + T_z.
\end{align*} 
For $T_y$ we use the algebra property of $H^{\sigma^\prime}$, \eqref{ineq:uzAPriori}, and \eqref{ineq:specq},   
\begin{align*} 
T_y & \lesssim \epsilon\norm{\grad  q_0^2}_{H^{\sigma^\prime}}^2 + \epsilon^{-1}\norm{q_0^2}_{H^{\sigma'}}^2\norm{u^2_0}_{{H^{\sigma'}}}^2 \lesssim \epsilon\norm{\grad  q_0^2}_{H^{\sigma^\prime}}^{2}, 
\end{align*}  
which is consistent with Proposition \ref{prop:Boot} for $c_{0}$ sufficiently small.
For $T_z$ we use the fact that one of the first two factors must have non-zero frequency in $z$ (this was also used in \S\ref{sec:TransQ20}), 
\begin{align*}
T_z & \lesssim \norm{u_0^3}_{H^{\sigma^\prime}} \norm{\grad q_0^2}_{H^{\sigma^\prime}}^2 + \norm{q_0^2}_2 \norm{\grad u_0^3}_{H^{\sigma^\prime}} \norm{\grad q_0}_{H^{\sigma^\prime}} \\ 
& \lesssim \epsilon \norm{\grad q_0^2}_{H^{\sigma^\prime}}^2 +  \frac{\epsilon}{\jap{\nu t}^{2\alpha}}\norm{\grad u_0^3}_{H^{\sigma^\prime}}^2,
\end{align*}  
which is consistent with Proposition \ref{prop:Boot}. 
From the algebra property and \eqref{ineq:Xyzubds} we get
\begin{align*} 
\int \jap{\grad}^{\sigma^\prime}q_0^2 \jap{\grad}^{\sigma^\prime} \mathcal{F}_q dy dz & \lesssim \frac{\epsilon^2}{\jap{\nu t^3}^{2\alpha}} \norm{q_0^2}_{H^{\sigma^\prime}}. 
\end{align*} 

Turn to the $NLS1$ term,
\begin{align*}
-\int \jap{\grad}^{\sigma^\prime}q_0^2 \jap{\grad}^{\sigma^\prime} \left(q_0^j \partial_j u^2_0\right) dy dz
& = -\int \jap{\grad}^{\sigma^\prime}q_0^2 \jap{\grad}^{\sigma^\prime} \left(\Delta u_0^3 \partial_z u^2_0\right) dy dz - \int \jap{\grad}^{\sigma^\prime}q_0^2 \jap{\grad}^{\sigma^\prime} \left(q_0^2 \partial_y u^2_0\right) dy dz \\
& = S_z + S_y. 
\end{align*}
To treat $S_y$ we use the algebra property and \eqref{ineq:uzAPriori} to obtain
\begin{align*}
S_y &\lesssim \norm{q_0^2}^2_{H^{\sigma^\prime}}\norm{\grad u_0^2}_{H^{\sigma^\prime}} \lesssim \frac{\epsilon}{\jap{\nu t}^\alpha}\norm{q_0^2}^2_{H^{\sigma^\prime}}. 
\end{align*}
For $S_z$ we use the integration by parts
\begin{align*}
S_z & = \int \jap{\grad}^{\sigma^\prime}\grad q_0^2 \cdot \left(\grad \jap{\grad}^{\sigma^\prime} u_0^3\right)  \partial_z u^2_0 dy dz + \int \jap{\grad}^{\sigma^\prime}q_0^2 \left(\grad \jap{\grad}^{\sigma^\prime} u_0^3\right) \cdot \partial_z \grad u^2_0 dy dz 
  \\ & \quad -\int \jap{\grad}^{\sigma^\prime}q_0^2\left(\jap{\grad}^{\sigma^\prime} \left(\Delta u_0^3 \partial_z u^2_0\right) - \left(\Delta \jap{\grad}^{\sigma^\prime} u_0^3\right)  \partial_z u^2_0\right) dy dz \\ 
& = S_z^1 + S_z^2 + S_z^3.   
\end{align*}
To control the first two terms, we use \eqref{ineq:uzAPriori} (and \eqref{ineq:specq}), to deduce 
\begin{align*}
S_z^1 + S_z^2 & \lesssim \norm{\grad q_0^2}_{H^{\sigma^\prime}}\norm{\grad u_0^3}_{H^{\sigma^\prime}}\norm{\partial_z u_0^2}_{H^{\sigma^\prime}} + \norm{q_0^2}_{H^{\sigma^\prime}}\norm{\grad u_0^3}_{H^{\sigma^\prime}}\norm{\grad \partial_z u_0^2}_{H^{\sigma^\prime}} \\
& \lesssim \epsilon \left( \norm{\grad q_0^2}_{H^{\sigma^\prime}}^2 + \|q_0^2 \|^2_{H^{\sigma'}} + \| \partial_z u_0^2 \|_{H^{\sigma'}}^2 \right) \\ 
& \lesssim \epsilon \norm{\grad q_0^2}_{H^{\sigma^\prime}}^2. 
\end{align*}
Treating the commutator in the $S_z^3$ term is by now classical and, in particular, by using that for $\abs{\eta,l} \approx \abs{\xi,l^\prime}$,  
\begin{align*}
\jap{\eta,l}^{\sigma^\prime} - \jap{\xi,l^\prime}^{\sigma^\prime} \lesssim \abs{\eta-\xi,l-l^\prime} \jap{\xi,l^\prime}^{\sigma^\prime-1},  
\end{align*}
we have 
\begin{align*}
S_z^3 & \lesssim \norm{q_0^2}_{H^{\sigma^\prime}}\norm{\grad u_0^3}_{H^{\sigma^\prime}}\norm{\partial_z u_0^2}_{H^{\sigma^\prime}} \lesssim \epsilon \|q_0^2 \|^2_{H^{\sigma'}} + \frac{\epsilon}{\jap{\nu t}^{2\alpha}} \| \grad u_0^3 \|_{H^{\sigma'}}^2,
\end{align*}
which is sufficient. 
For the $NLP$ term, by integration by parts and \eqref{ineq:uzAPriori},
\begin{align*} 
\int \jap{\grad}^{\sigma^\prime} q_0^2 \jap{\grad}^{\sigma^\prime}\partial_y\left(\partial_i u^j_0 \partial_j u^i_0\right) dy dz & \lesssim \norm{\grad q_0^2}_{H^{\sigma^\prime}}\left(\norm{\partial_z u_0^3}_{H^{\sigma^\prime}}^2 + \norm{\partial_y u_0^3}_{H^{\sigma^\prime}}\norm{\partial_z u_0^2}_{H^{\sigma^\prime}} + \norm{\partial_y u_0^2}^2_{H^{\sigma^\prime}}\right) \\ 
& \lesssim \epsilon \norm{\grad q_0^2}_{H^{\sigma^\prime}}^2 + \frac{\epsilon}{\jap{\nu t}^{2\alpha}}\norm{\grad u_0^3}_{H^{\sigma^\prime}}^2.  
\end{align*} 
For the second $NLS$ term we do not need to integrate by parts and instead use the algebra property and the definition of $q_0^2$: 
\begin{align*} 
-\int \jap{\grad}^{\sigma^\prime} q_0^2 \jap{\grad}^{\sigma^\prime}\left(\partial_i u^j_0 \partial_i\partial_j u^2_0\right) dy dz & \lesssim \norm{q_0^2}^2_{H^{\sigma^\prime}}\left(\norm{\grad u^2_0}_{H^{\sigma^\prime}} + \norm{\grad u_0^3}_{H^{\sigma^\prime}}\right) \\ 
& \lesssim \epsilon \norm{ q_0^2}^2_{H^{\sigma^\prime}} + \frac{\epsilon}{\jap{\nu t}^{2\alpha}}\norm{\nabla u_0^3}^2_{H^{\sigma^\prime}}. 
\end{align*}

Putting the above estimates together now yields the stated decay estimate by standard methods via \eqref{ineq:specq}. 
The details are omitted for brevity and we simply conclude that for $c_0$ chosen sufficiently small we have 
\begin{align}
\norm{q_0^2}_{H^{\sigma^\prime}} \leq \frac{3 \epsilon}{2 \jap{\nu t}^{\alpha}}.   \label{ineq:q02yzest}
\end{align} 
It follows by Sobolev composition that for $c_0$ sufficiently small,  $\norm{Q_0^2}_{H^{\sigma^\prime}} \leq (1+ Kc_0)\norm{q_0^2}_{H^{\sigma^\prime}}$ for some universal $K > 0$, and hence for $c_0$ chosen sufficiently small, the improvement to \eqref{ineq:Boot_Q02_Low} follows.  

\subsection{Improvement of \eqref{ineq:Boot_partZU03_Low}}
Note that $\int \partial_z u_0^3 dz = 0$ and hence we get the spectral gap 
\begin{align} 
\norm{\partial_z u_0^3}_{H^{\sigma^\prime}} \lesssim \norm{\grad \partial_z u_0^3}_{H^{\sigma^\prime}}. \label{ineq:specgu3} 
\end{align} 
From \eqref{eq:u0i} we get
\begin{align*} 
\frac{1}{2}\frac{d}{dt} \norm{\partial_z u_0^3}_{H^{\sigma^\prime}}^2 & = -\nu \norm{\grad \partial_z u_0^3}_{H^{\sigma^\prime}}^2 - \int \jap{\grad}^{\sigma^\prime} \partial_z u_0^3 \jap{\grad}^{\sigma^\prime} \partial_z \left( u_0 \cdot \grad u_0^3\right) dy dz \\ & \quad + \int \jap{\grad}^{\sigma^\prime} \partial_z u_0^3 \jap{\grad}^{\sigma^\prime} \partial_{zz} \Delta^{-1}(\partial_i u_0^j \partial_j u^3_0) dy dz + \int \jap{\grad}^{\sigma^\prime} \partial_z u_0^3 \jap{\grad}^{\sigma^\prime}  \partial_z \mathcal{F}^3 dy dz.  
\end{align*} 
The transport term is handled via 
\begin{align*} 
-\int \jap{\grad}^{\sigma^\prime} \partial_z u_0^3 \jap{\grad}^{\sigma^\prime} \partial_z \left( u_0 \cdot \grad u_0^3\right) dy dz & = -\int \jap{\grad}^{\sigma^\prime} \partial_z u_0^3 \jap{\grad}^{\sigma^\prime} \left( \partial_z u_0 \cdot \grad u_0^3\right) dy dz  \\ 
& \quad -\int \jap{\grad}^{\sigma^\prime} \partial_z u_0^3 \jap{\grad}^{\sigma^\prime} \left( u_0 \cdot \grad \partial_z u_0^3\right) dy dz \\ 
& = T_1 + T_2. 
\end{align*} 
For $T_1$ we get from the algebra property 
\begin{align*} 
T_1 & \lesssim \norm{\partial_z u_0^3}_{H^{\sigma^\prime}} \norm{\grad u_0^3}_{H^{\sigma^\prime}}  \left(\norm{\partial_z u^2_0}_{H^{\sigma^\prime}} + \norm{\partial_z u^3_0}_{H^{\sigma^\prime}} \right) \\ 
& \lesssim \epsilon\norm{\partial_z u_0^3}_{H^{\sigma^\prime}}^2 + \frac{\epsilon}{\jap{\nu t}^{2\alpha}} \norm {\grad u_0^3}_{H^{\sigma^\prime}}^2. 
\end{align*} 
For $T_2$ we may instead simply absorb with the dissipation (using \eqref{ineq:specgu3}) by adding another derivative to the first factor: 
\begin{align*} 
T_2 & \lesssim \norm{\partial_z u_0^3}_{H^{\sigma^\prime}}^2 \norm{u_0}_{H^{\sigma^\prime}}\norm{\nabla \partial_z u_0^3}_{H^{\sigma^\prime}}^2 \lesssim \epsilon \norm{\nabla \partial_z u_0^3}_{H^{\sigma^\prime}}^2,
\end{align*} 
which will be sufficient to treat the transport term. 

From \eqref{ineq:Xyzubds} we get
\begin{align*} 
\int \jap{\grad}^{\sigma^\prime} \partial_z u_0^3 \jap{\grad}^{\sigma^\prime} \partial_z \mathcal{F}^3 dy dz & \lesssim \frac{\epsilon^2}{\jap{\nu t^3}^{2\alpha}} \norm{\partial_z u_0^3}_{H^{\sigma^\prime}}. 
\end{align*} 

The last difficulty is from the pressure. 
By the algebra property,
\begin{align*} 
\int \jap{\grad}^{\sigma^\prime} \partial_z u_0^3 \jap{\grad}^{\sigma^\prime} \partial_{zz} \Delta^{-1}(\partial_i u^j_0 \partial_j u^i_0) dy dz & \lesssim \norm{\partial_z u_0^3}_{H^{\sigma^\prime}}\left(\norm{\partial_z u^2_0 \partial_y u^3_0}_{H^{\sigma^\prime}} + \norm{(\partial_y u^2_0)^2}_{H^{\sigma^\prime}} + \norm{(\partial_z u^3_0)^2}_{H^{\sigma^\prime}} \right) \\ 
& \lesssim \frac{\epsilon}{\jap{\nu t}^{\alpha}} \norm{\partial_z u_0^3}_{H^{\sigma^\prime}}\norm{\grad u^3_0}_{H^{\sigma^\prime}} + \frac{\epsilon^3}{\jap{\nu t}^{3\alpha}} \\ 
& \lesssim \epsilon \norm{\partial_z u_0^3}_{H^{\sigma^\prime}}^2 + \frac{\epsilon}{\jap{\nu t}^{2\alpha}} \norm{\grad u^3_0}_{H^{\sigma^\prime}} + \frac{\epsilon^3}{\jap{\nu t}^{3\alpha}},  
\end{align*} 
which is consistent with Proposition \ref{prop:Boot} for $c_0$ sufficiently small by \eqref{ineq:specgu3}. 
Putting the above estimates together now yields the stated decay estimate using \eqref{ineq:specgu3} via standard methods.  
The details are omitted for brevity.  

\subsection{Improvement of \eqref{ineq:Boot_U02_Low}}
Due to the divergence-free condition, it follows that 
\begin{align*}
\widehat{u^2_0}(\eta,l) = -\frac{\widehat{q^2_0}(\eta,l)\mathbf{1}_{l \neq 0}}{\abs{\eta}^2 + \abs{l}^2}. 
\end{align*}
Therefore, it follows from \eqref{ineq:q02yzest} that 
\begin{align*}
\norm{u_0^2}_{H^{\sigma^\prime}} \leq \frac{3 \epsilon}{2 \jap{\nu t}^{\alpha}}, 
\end{align*}
and so for $c_0$ sufficiently small, the improvement to \eqref{ineq:Boot_U02_Low} follows by Sobolev composition as above.

\subsection{Improvement of \eqref{ineq:Boot_U01_Low1} and \eqref{ineq:Boot_U01_Low2}}
The improvement of \eqref{ineq:Boot_U01_Low1} and \eqref{ineq:Boot_U01_Low2} will be implied by the two following estimates: 
\begin{subequations} \label{ineq:U01LowReal}
\begin{align} 
\norm{U_0^1}_{H^{\sigma^\prime}}^2 + \nu \int_1^t \norm{\grad U_0^1(\tau)}_{H^{\sigma^\prime}}^2 d\tau & \leq 2K_{U12} c_0^2 \label{ineq:U011} \\ 
\norm{U_0^1}_{H^{\sigma^\prime}} & \leq 2K_{U1}\epsilon \jap{t}. \label{ineq:U012}
\end{align} 
\end{subequations} 
Both are straightforward (especially since the nonlinear pressure vanishes from \eqref{eq:u0i} for $u_0^1$); the only difference is in the treatment of the lift up effect term.  
Indeed, from \eqref{eq:u0i} we get
\begin{align} 
\frac{1}{2}\frac{d}{dt} \norm{u_0^1}_{H^{\sigma^\prime}}^2 & = -\nu \norm{\grad u_0^1}_{H^{\sigma^\prime}}^2 - \int \jap{\grad}^{\sigma^\prime} u_0^1 \jap{\grad}^{\sigma^\prime} \left( u_0 \cdot \grad u_0^1\right) dy dz \nonumber \\ & \quad - \int\jap{\grad}^{\sigma^\prime} u_0^1 \jap{\grad}^{\sigma^\prime}u_0^2 dy dz  + \int \jap{\grad}^{\sigma^\prime} u_0^1 \jap{\grad}^{\sigma^\prime} \mathcal{F}^1 dy dz.  \label{ineq:LowU01Evo}
\end{align} 
The forcing terms are dealt with as above and are hence omitted. 
To treat the transport terms, we use the slight variation of the proof used in \S\ref{sec:TransQ20} above. 
Indeed, note 
\begin{align*}
- \int \jap{\grad}^{\sigma^\prime} u_0^1 \jap{\grad}^{\sigma^\prime} \left( u_0 \cdot \grad u_0^1\right) dy dz & = - \int \jap{\grad}^{\sigma^\prime} u_0^1 \jap{\grad}^{\sigma^\prime} \left( u_0^2 \partial_y u_0^1\right) dy dz \\ 
& \quad - \int \jap{\grad}^{\sigma^\prime} u_0^1 \jap{\grad}^{\sigma^\prime} \left( u_0^3 \partial_z u_0^1\right) dy dz \\ 
& = T_y + T_z. 
\end{align*}  
For $T_y$ we may use the algebra property 
\begin{align*} 
T_y & \lesssim \norm{u_0^1}_{H^{\sigma^\prime}}\norm{\grad u_0^1}_{H^{\sigma^\prime}}\norm{u_0^2}_{H^{\sigma^\prime}} %\\& 
\lesssim \epsilon\norm{\grad u_0^1}_{H^{\sigma^\prime}}^2 + \frac{\epsilon}{\jap{\nu t}^{2\alpha}}\norm{u_0^1}^2_{H^{\sigma^\prime}}, 
\end{align*}
which is consistent with \eqref{ineq:U01LowReal} for $c_0$ sufficiently small. 
For $T_z$ note that due to the $\partial_z$, at least one of the other two factors must have a non-zero frequency in $z$, which implies
\begin{align*} 
T_z & \lesssim \norm{u_0^1}_{H^{\sigma^\prime}}\norm{\grad u_0^1}_{H^{\sigma^\prime}}\norm{\partial_z u_0^3}_{H^{\sigma^\prime}} + \norm{u_0^3}_{H^{\sigma^\prime}}\norm{\grad u_0^1}^2_{H^{\sigma^\prime}} \\ 
& \lesssim \epsilon\norm{\grad u_0^1}_{H^{\sigma^\prime}}^2 + \frac{\epsilon}{\jap{\nu t}^{2\alpha}}\norm{u_0^1}^2_{H^{\sigma^\prime}}, 
\end{align*}  
which is again consistent with \eqref{ineq:U01LowReal} for $c_0$ sufficiently small. 

If one is deducing \eqref{ineq:U011} then the lift-up effect term can be bounded for some universal $K > 0$ by
\begin{align*} 
- \int\jap{\grad}^{\sigma^\prime} u_0^1 \jap{\grad}^{\sigma^\prime}u_0^2 dy dz & \leq \norm{u_0^1}_{H^{\sigma^\prime}}\norm{u_0^2}_{H^{\sigma^\prime}} %\\ & 
\leq \frac{\nu}{\jap{\nu t}^\alpha}\norm{u_0^1}_{H^{\sigma^\prime}}^2 + \frac{K\epsilon^2}{\nu \jap{\nu t}^{\alpha}}. 
\end{align*} 
The former term is treated via an integrating factor whereas the latter term is consistent with \eqref{ineq:U011}, both for $K_{U12}$  sufficiently large. %for $K_{L1} \gg K_{L2}$. 

If one is deducing \eqref{ineq:U012}, then first we multiply \eqref{ineq:LowU01Evo} on both sides by $\jap{t}^{-2}$ and use
\begin{align*} 
\frac{1}{2 \jap{t}^{2}}\frac{d}{dt} \norm{u_0^1}_{H^{\sigma^\prime}}^2 = \frac{1}{2}\frac{d}{dt}\left( \jap{t}^{-2} \norm{u_0^1}_{H^{\sigma^\prime}}^2\right) + \frac{t}{\jap{t}^4}  \norm{u_0^1}_{H^{\sigma^\prime}}^2. 
\end{align*}  
Then the lift up effect term is treated via 
\begin{align*} 
- \jap{t}^{-2}\int\jap{\grad}^{\sigma^\prime} u_0^1 \jap{\grad}^{\sigma^\prime}u_0^2 dy dz & \leq \jap{t}^{-2}\norm{u_0^1}_{H^{\sigma^\prime}}\norm{u_0^2}_{H^{\sigma^\prime}} \leq 4\jap{t}^{-2}\epsilon \norm{u_0^1}_{H^{\sigma'}}. 
\end{align*} 
The nonlinear terms are treated by an easy variant of the treatment applied in the proof of \eqref{ineq:U011}.  From here, one applies the super-solution method described in \S\ref{sec:Q1Hi1}. We omit the remaining details for brevity. 
 
\subsection{Improvement of \eqref{ineq:Boot_U03_Low}}
This estimate is a slight variant of the proof of \eqref{ineq:Boot_U01_Low1}.
From \eqref{eq:u0i} we get
\begin{align*} 
\frac{1}{2}\frac{d}{dt} \norm{u_0^3}_{H^{\sigma^\prime}}^2 & = -\nu \norm{\grad u_0^3}_{H^{\sigma^\prime}}^2 - \int \jap{\grad}^{\sigma^\prime} u_0^3 \jap{\grad}^{\sigma^\prime} \left( u_0 \cdot \grad u_0^3\right) dy dz \\ & \quad + \int \jap{\grad}^{\sigma^\prime} u_0^3 \jap{\grad}^{\sigma^\prime} \partial_{z} \Delta^{-1} \partial_i \partial_j \left(u^j_0 u^i_0\right) \mathbf{1}_{i\neq1,j\neq 1} dy dz \\ & \quad + \int \jap{\grad}^{\sigma^\prime} u_0^3 \jap{\grad}^{\sigma^\prime} \mathcal{F}^3 dy dz.  
\end{align*} 
The treatments of the transport nonlinearity and forcing terms are essentially the same as for the improvement of \eqref{ineq:Boot_U01_Low1} and \eqref{ineq:Boot_U01_Low2}.
Let us briefly comment on how to treat the pressure nonlinearity. 
Denote this contribution 
\begin{align*} 
P = \int \jap{\grad}^{\sigma^\prime} u_0^3 \jap{\grad}^{\sigma^\prime} \partial_{z} \Delta^{-1} \partial_i \partial_j \left(u^j_0 u^i_0\right) \mathbf{1}_{i\neq1,j\neq 1}  dy dz. 
\end{align*}  
Notice that if $i = j = 3$ then at least one of the factors of $u_0^3$ must have non-zero $z$ frequency. we integrate by parts and use the algebra property: 
\begin{align} 
P & \lesssim \norm{\grad u_0^3}_{H^{\sigma^\prime}}\left(\norm{u^2_0 u^3_0}_{H^{\sigma^\prime}} + \norm{u^2_0 u^2_0}_{H^{\sigma^\prime}} + \norm{u^3_0}_{H^{\sigma^\prime}}\norm{ \partial_z u^3_0}_{H^{\sigma^\prime}} \right) \nonumber \\  
& \lesssim \norm{\grad u_0^3}_{H^{\sigma^\prime}}\left(\frac{\epsilon^2}{\jap{\nu t}^\alpha} + \epsilon\norm{\grad u_0^3}_{H^{\sigma^\prime}}\right) \nonumber \\ 
& \lesssim \epsilon\norm{\grad u_0^3}_{H^{\sigma^\prime}}^2 + \frac{\epsilon^3}{\jap{\nu t}^{2\alpha}}. \label{ineq:P1U03trick} 
\end{align} 
This suffices for the improvement of \eqref{ineq:Boot_U03_Low} as the first term is absorbed the dissipation and the latter integrates to $O(c_0\epsilon^2)$.   

\subsection{Improvement of  \eqref{ineq:L43} and \eqref{ineq:L41}}

These both follow by a straightforward $L^4$ energy estimate on \eqref{eq:u0i}, together with the 2D Gagliardo-Nirenberg inequality 
\begin{align} 
\norm{f}_4^2 & \lesssim \norm{f}_2 \norm{\grad f^2}_2^{1/2}. \label{ineq:GNS}
\end{align} 
From \eqref{eq:u0i} and computations similar to the above Sobolev scale estimates (using \eqref{ineq:uzAPriori} and \eqref{ineq:Xyzubds} of course), one derives
\begin{align*}
\frac{1}{4}\frac{d}{dt}\norm{u_0^3}_4^4 & \lesssim -\nu \norm{\grad (u_0^3)^2}_2^{2} + \frac{\epsilon^5}{\jap{\nu t}^\alpha} + \frac{\epsilon^5}{\jap{\nu t^3}^\alpha}. 
\end{align*}
We remark that this kind of good decay is only possible due to the strong decay estimates obtained above and the special structure of the nonlinearity. 
By \eqref{ineq:GNS} and \eqref{ineq:uzAPriori} there holds
\begin{align*} 
\frac{d}{dt}\left(\jap{\nu t} \norm{u_0^3}_4^4\right) & \lesssim \nu\norm{u_0^3}_4^4 - \nu \jap{\nu t} \epsilon^{-4} \norm{u_0^3}_4^{8} + \frac{\epsilon^5}{\jap{\nu t}^{\alpha-1}} + \frac{\epsilon^5}{\jap{\nu t^3}^{\alpha-1}}. 
\end{align*}
By comparing $\jap{\nu t}  \norm{u_0^3}_4^4$ against the super solution 
\begin{align*}
Y(t) = K\epsilon^4 + \int_1^t \frac{\epsilon^5}{\jap{\nu t}^{\alpha-1}} + \frac{\epsilon^5}{\jap{\nu t^3}^{\alpha-1}} d\tau,
\end{align*}
for $K$ chosen sufficiently large (independently of $\nu$, $\epsilon$, and $K_B$ of course)
we conclude that 
\begin{align*}
\norm{u_0^3}_4 \lesssim \frac{\epsilon}{\jap{\nu t}^{1/4}}, 
\end{align*}
from which the improvement to \eqref{ineq:L43} follows for $K_L$ sufficiently large by composition (note that since \eqref{ineq:L43} is not used in the bootstrap argument, it is really deduced a posteriori). 
We may improve \eqref{ineq:L41} in essentially the same way except for the lift-up effect term,  which is treated as in \eqref{ineq:U012} above. 
The details are omitted for brevity.

\section*{Acknowledgments}
The authors would like to thank the following people for helpful discussions: Margaret Beck, Steve Childress, Michele Coti Zelati, Bruno Eckhardt, Pierre-Emmanuel Jabin, Susan Friedlander, Yan Guo, Alex Kiselev, Nick Trefethen, Mike Shelley, Vladimir Sverak, Vlad Vicol, and Gene Wayne. 
The authors would like to especially thank Tej Ghoul for encouraging us to focus our attention on finite Reynolds number questions. 
The work of JB was in part supported by NSF Postdoctoral Fellowship in Mathematical Sciences DMS-1103765 and NSF grant DMS-1413177, the work of PG was in part supported by a Sloan fellowship and the NSF grant DMS-1101269, while the work of NM was in part supported by the NSF grant DMS-1211806. 

\appendix

\section{Fourier analysis conventions, elementary inequalities, and Gevrey spaces} \label{apx:Gev}
For $f(x,y,z)$ (or $(X,Y,Z)$), we define the Fourier transform $\hat{f}_k(\eta,l)$ where $(k,\eta,l) \in \Integer \times \Real \times \Integer$ and the inverse Fourier transform via 
\begin{align*} 
\hat{f}_k(\eta,l) & = \frac{1}{(2\pi)^{3/2}}\int_{\Torus \times \Real \times \Torus} e^{-i x k - iy\eta - ilz} f(x,y,z) dx dy dz \\ 
f(x,y,z) & = \frac{1}{(2\pi)^{3/2}}\sum_{k,l \in \Integer} \int_{\Real} e^{i x k + iy\eta + izl} \hat{f}_k(\eta,l) d\eta. 
\end{align*} 
With these conventions note, 
\begin{align*} 
\int f(x,y,z) \overline{g}(x,y,z) dx dy dz & = \sum_{k}\int \hat{f}_k(\eta,l) \overline{\hat{g}}_{k}(\eta,l) d\eta \\ 
\widehat{fg} & = \frac{1}{(2\pi)^{3/2}}\hat{f} \ast \hat{g} \\ 
(\widehat{\grad f})_k(\eta,l) & = (ik,i\eta,il)\hat{f}_k(\eta,l). 
\end{align*}
The paraproducts are defined in \S\ref{sec:paranote} using the Littlewood-Paley dyadic decomposition.  
Here we fix conventions and review the basic properties, see e.g. \cite{BCD11} for more details. 
Let $\psi \in C_0^\infty(\Real_+;\Real_+)$ be such that $\psi(\xi) = 1$ for $\xi \leq 1/2$ and $\psi(\xi) = 0$ for $\xi \geq 3/4$ and define $\rho(\xi) = \psi(\xi/2) - \psi(\xi)$, supported in the range $\xi \in (1/2,3/2)$. 
Then we have the partition of unity for $\xi > 0$, 
\begin{align*} 
1 = \sum_{M \in 2^\Integers} \rho(M^{-1}\xi), 
\end{align*}  
where we mean that the sum runs over the dyadic integers $M = ...,2^{-j},...,1/4,1/2,1,2,4,...,2^{j},...$
 and we define the cut-off $\rho_M(\xi) = \rho(M^{-1}\xi)$, each supported in $M/2 \leq \xi \leq 3M/2$. 
For $f \in L^2(\Torus \times \Real \times \Torus)$ we define
\begin{align*} 
f_{M}  = \rho_M(\abs{\grad})f, \quad\quad\quad f_{< M}  = \sum_{K \in 2^{\Integers}: K < M} f_K, 
\end{align*}
which defines the decomposition (in the $L^2$ sense)  
\begin{align*} 
f = \sum_{M \in 2^\Integers} f_M.  
\end{align*}
There holds the almost orthogonality and the approximate projection property 
\begin{subequations} \label{ineq:LPOrthoProject}
\begin{align} 
\norm{f}^2_2 & \approx \sum_{M \in 2^{\Integers}} \norm{f_M}_2^2 \\
 \norm{f_M}_2 & \approx  \norm{(f_{M})_{\sim M}}_2, 
\end{align}
\end{subequations}
where we make use of the notation 
\begin{align*} 
f_{\sim M} = \sum_{K \in 2^{\Integer}: \frac{1}{C}M \leq K \leq CM} f_{K}, 
\end{align*}
for some constant $C$ which is independent of $M$.
Generally the exact value of $C$ which is being used is not important; what is important is that it is finite and independent of $M$. 
Similar to \eqref{ineq:LPOrthoProject} but more generally, if $f = \sum_{j} D_j$ for any $D_j$ with $\frac{1}{C}2^{j} \subset \textup{supp}\, D_j \subset C2^{j}$ it follows that 
\begin{align} 
\norm{f}^2_2 \approx_C \sum_{j \in \Integers} \norm{D_j}_2^2. \label{ineq:GeneralOrtho}
\end{align}

Next we give some basic inequalities which are useful for working in Gevrey class and related norms. 
The first are versions of Young's inequality. 
\begin{lemma}
Let $f(\xi),g(\xi) \in L_\xi^2(\Real^d)$, $\jap{\xi}^\sigma h(\xi) \in L_\xi^2(\Real^d)$ and $\jap{\xi}^\sigma b(\xi) \in L_\xi^2(\Real^d)$  for $\sigma > d/2$, 
Then we have 
\begin{align} 
\norm{f \ast h}_2 & \lesssim_{\sigma, d} \norm{f}_2\norm{\jap{\cdot}^\sigma h}_2, \label{ineq:L2L1}  \\
\int \abs{f(\xi) (g \ast h)(\xi)} d\xi & \lesssim_{\sigma,d} \norm{f}_2\norm{g}_2\norm{\jap{\cdot}^\sigma h}_2 \label{ineq:L2L2L1} \\ %
\int \abs{f(\xi) (g \ast h \ast b) (\xi)} d\xi & \lesssim_{\sigma,d} \norm{f}_2\norm{g}_2\norm{\jap{\cdot}^\sigma h}_2\norm{\jap{\cdot}^\sigma b}_2. \label{ineq:L2L2L1L1}  
\end{align}
Further iterates are applied for higher order nonlinear terms in Lemma \ref{lem:ParaHighOrder} and are similar to \eqref{ineq:L2L2L1L1} but are omitted here.   
\end{lemma}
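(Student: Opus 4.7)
The plan is entirely elementary and rests on a single observation: a weighted $L^2$ control with weight $\langle\xi\rangle^\sigma$, $\sigma>d/2$, automatically upgrades to an $L^1$ bound. Specifically, by Cauchy--Schwarz,
\begin{align*}
\|h\|_{L^1} = \int_{\Real^d} \langle\xi\rangle^{-\sigma}\,\langle\xi\rangle^\sigma |h(\xi)|\,d\xi \leq \left(\int_{\Real^d} \langle\xi\rangle^{-2\sigma}\,d\xi\right)^{1/2} \|\langle\cdot\rangle^\sigma h\|_{L^2} \lesssim_{\sigma,d} \|\langle\cdot\rangle^\sigma h\|_{L^2},
\end{align*}
the integral being finite precisely because $2\sigma>d$. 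The same inequality applies to $b$. With this in hand, the three inequalities are routine consequences of Young's convolution inequality $\|u\ast v\|_{L^2}\leq \|u\|_{L^2}\|v\|_{L^1}$ and Cauchy--Schwarz.

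For \eqref{ineq:L2L1} the plan is to write $\|f\ast h\|_{L^2} \leq \|f\|_{L^2}\|h\|_{L^1}$ and then bound $\|h\|_{L^1}$ by $\|\langle\cdot\rangle^\sigma h\|_{L^2}$ via the elementary estimate above. For \eqref{ineq:L2L2L1}, first apply Cauchy--Schwarz in $\xi$ to get
\begin{align*}
\int_{\Real^d} |f(\xi)(g\ast h)(\xi)|\,d\xi \leq \|f\|_{L^2}\|g\ast h\|_{L^2} \leq \|f\|_{L^2}\|g\|_{L^2}\|h\|_{L^1},
\end{align*}
and then replace $\|h\|_{L^1}$ by the weighted $L^2$ norm. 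For \eqref{ineq:L2L2L1L1} the same strategy works with a double convolution: Cauchy--Schwarz yields $\|f\|_{L^2}\|g\ast h\ast b\|_{L^2}$, after which two successive uses of Young (each instance of the form $L^2\ast L^1\subset L^2$) give $\|g\ast h\ast b\|_{L^2}\leq \|g\|_{L^2}\|h\|_{L^1}\|b\|_{L^1}$, and then the elementary bound is applied to both $h$ and $b$.

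There is no serious obstacle; the lemma is essentially a packaging of Young plus Cauchy--Schwarz, and the only point to track is that the implicit constants depend on $\sigma$ and $d$ solely through the finite integral $\int_{\Real^d}\langle\xi\rangle^{-2\sigma}\,d\xi$. The higher-order iterates alluded to at the end of the statement, and required in Lemma \ref{lem:ParaHighOrder}, are obtained in exactly the same fashion by iterating $L^2 \ast L^1 \subset L^2$ as many times as there are $\langle\cdot\rangle^\sigma$-weighted factors.
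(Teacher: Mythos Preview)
Your proof is correct and is precisely the standard argument. The paper does not actually supply a proof for this lemma; it is stated as an elementary fact in the appendix and used freely throughout. Your approach---Cauchy--Schwarz to pass from $\langle\cdot\rangle^\sigma L^2$ to $L^1$ when $\sigma>d/2$, followed by Young's inequality $L^2\ast L^1\subset L^2$ and iteration---is exactly what the authors have in mind when they describe these as ``versions of Young's inequality.''
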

The next set of inequalities show that one can often gain on the index of regularity when comparing frequencies which are not too far apart (provided $0 < s < 1$).
This is crucial for doing effective paradifferential calculus in Gevrey regularity. 
\begin{lemma}
Let $0 < s < 1$, $x,y>0$, and $K>1$. 
\begin{itemize} 
\item[(i)] There holds
\begin{align} 
\abs{x^s - y^s} \leq s \max(x^{s-1},y^{s-1})\abs{x-y}. \label{ineq:TrivDiff}
\end{align}
so that if $|x-y|<\frac{x}{K}$,
\begin{align} 
\abs{x^s - y^s} \leq \frac{s}{(K-1)^{1-s}}\abs{x-y}^s. \label{lem:scon}
\end{align} 
Note $\frac{s}{(K-1)^{1-s}} < 1$ as soon as $s^{\frac{1}{1-s}} + 1 < K$. 
\item[(ii)] There holds 
\begin{align} 
\abs{x + y}^s \leq \left(\frac{\max(x,y)}{x+y}\right)^{1-s}\left(x^s + y^s\right), \label{lem:smoretrivial}
\end{align}  
so that, if $\frac{1}{K}y \leq x \leq Ky$,
\begin{align} 
\abs{x + y}^s \leq \left(\frac{K}{1 + K}\right)^{1-s}\left(x^s + y^s\right). \label{lem:strivial}
\end{align} 
\end{itemize}
\end{lemma}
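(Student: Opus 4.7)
The plan is to prove both parts by direct elementary arguments: part (i) from the mean value theorem applied to $t \mapsto t^s$, and part (ii) from a one-line algebraic manipulation using homogeneity. No multiplier theory, paraproducts, or bootstrap is needed; the main ``obstacle,'' such as it is, is just bookkeeping to track which of $x,y$ is larger and to convert a linear bound into a H\"older bound.

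For part (i), I would first assume without loss of generality that $x \ge y > 0$. Since $f(t) = t^s$ is $C^1$ on $(0,\infty)$ with $f'(t) = s t^{s-1}$, the mean value theorem gives $x^s - y^s = s \xi^{s-1}(x-y)$ for some $\xi \in (y,x)$. Because $s-1 < 0$, the function $t \mapsto t^{s-1}$ is decreasing, so $\xi^{s-1} \le y^{s-1} = \min(x,y)^{s-1} = \max(x^{s-1}, y^{s-1})$, yielding \eqref{ineq:TrivDiff}. To pass to \eqref{lem:scon} under the hypothesis $|x-y| < x/K$, note this forces $y > x(K-1)/K$, hence $y^{s-1} < \bigl(K/(K-1)\bigr)^{1-s} x^{s-1}$. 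Writing $|x-y| = |x-y|^s \cdot |x-y|^{1-s} \le |x-y|^s (x/K)^{1-s}$, the product telescopes:
\begin{equation*}
s y^{s-1}|x-y| \le s \left(\frac{K}{K-1}\right)^{1-s} x^{s-1} \cdot |x-y|^s \left(\frac{x}{K}\right)^{1-s} = \frac{s}{(K-1)^{1-s}} |x-y|^s,
\end{equation*}
since the powers of $x$ and $K$ cancel. The final remark that $s/(K-1)^{1-s} < 1$ when $K > s^{1/(1-s)} + 1$ is an immediate rearrangement.

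For part (ii), I would again assume $x \ge y$, so $\max(x,y) = x$. The claimed inequality is equivalent, after multiplying both sides by $(x+y)^{1-s}/x^{1-s}$, to $x + y \le x^{1-s}(x^s + y^s) = x + x^{1-s} y^s$, i.e.\ $y \le x^{1-s} y^s$, i.e.\ $(y/x)^{1-s} \le 1$, which holds since $0 \le y \le x$ and $1-s > 0$. This proves \eqref{lem:smoretrivial}. For \eqref{lem:strivial}, under the two-sided bound $y/K \le x \le Ky$ with $x \ge y$, I use $x \le Ky$ to estimate $x+y \ge x + x/K = x(K+1)/K$, giving $\max(x,y)/(x+y) = x/(x+y) \le K/(K+1)$; raising to the $(1-s)$ power and plugging into \eqref{lem:smoretrivial} yields the stated constant $(K/(K+1))^{1-s} < 1$. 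The symmetric case $y \ge x$ is identical by swapping roles, so the argument is complete.
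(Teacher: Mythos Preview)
Your proof is correct; the paper states this lemma without proof (it is treated as elementary), so there is nothing to compare against. One minor remark: in part (i), the ``without loss of generality $x \ge y$'' is fine for \eqref{ineq:TrivDiff} by symmetry, but the hypothesis $|x-y| < x/K$ in \eqref{lem:scon} is not symmetric in $x,y$, so strictly speaking the case $y > x$ should be checked separately---it is immediate, since then $\max(x^{s-1},y^{s-1}) = x^{s-1}$ and one gets the even better constant $s/K^{1-s}$.
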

Gevrey and Sobolev regularities can be related with the following two inequalities:  
\begin{itemize}
\item[(i)] For all $x \geq 0$, $\alpha > \beta \geq 0$, $C,\delta > 0$, 
\begin{align} 
e^{Cx^{\beta}} \leq e^{C\left(\frac{C}{\delta}\right)^{\frac{\beta}{\alpha - \beta}}} e^{\delta x^{\alpha}};  \label{ineq:IncExp}
\end{align}
\item[(ii)] For all $x \geq 0$, $\alpha,\sigma,\delta > 0$, 
\begin{align} 
e^{-\delta x^{\alpha}} \lesssim \frac{1}{\delta^{\frac{\sigma}{\alpha}} \jap{x}^{\sigma}}. \label{ineq:SobExp}
\end{align}
\end{itemize}
Together these inequalities show that for $\alpha > \beta \geq 0$, $\norm{f}_{\mathcal{G}^{C,\sigma;\beta}} \lesssim_{\alpha,\beta,C,\delta,\sigma} \norm{f}_{\mathcal{G}^{\delta,0;\alpha}}$. 

The last lemma concerns composition and is somewhat subtle; see e.g. the Appendix of \cite{BM13} for a proof.
Notice the regularity loss associated with composition, which is an issue associated with infinite regularity classes.  

\begin{lemma}[Composition in Gevrey class] \label{lem:GevComp}
Let $s \in (0,1)$, $\lambda > 0$, $\delta > 0$ and $f,g$ be given Gevrey class functions with $\norm{\grad g}_{\infty} \leq c_g < 1$. 
Then, 
\begin{align} 
\norm{f \circ (Id + g) }_{\G^{\lambda}} \lesssim_{c_g,\delta} \norm{f}_{\G^{\lambda + \nu + \delta}},  
\end{align}
with 
\begin{align*}
\nu = K_{cp}\norm{g}_{\G^{\lambda}},  
\end{align*} 
for some constant $K_{cp} = K_{cp}(s)$. 
\end{lemma}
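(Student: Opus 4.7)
The plan is to reduce the problem to a Fourier-analytic convolution estimate. Writing $\phi = Id + g$, one has
\[ f \circ \phi(x) = \int \hat f(\eta)\, e^{i\eta \cdot x}\, e^{i\eta \cdot g(x)}\, d\eta, \qquad \widehat{f \circ \phi}(\xi) = \int \hat f(\eta)\, K_\eta(\xi - \eta)\, d\eta, \]
where $K_\eta(\zeta) := \widehat{e^{i\eta \cdot g}}(\zeta)$. The whole lemma then reduces to estimating the kernel $K_\eta$ in the weighted $L^2$ norm defining $\G^\lambda$.

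For the kernel estimate, I would Taylor-expand $e^{i\eta g} = \sum_{n \geq 0} (i\eta g)^n/n!$ and apply the Gevrey algebra property from Lemma \ref{lem:GevProdAlg}: taking a Sobolev correction $\sigma > 3/2$ (which is free by trading a sliver of Gevrey radius via \eqref{ineq:SobExp}, absorbed into the final $\delta$), one has $\|g^n\|_{\G^{\lambda,\sigma}} \leq (C_\sigma \|g\|_{\G^{\lambda,\sigma}})^n$ with constant \emph{independent} of $\lambda$. Using that $\widehat{g^n}$ is the $n$-fold convolution of $\hat g$ with itself, Young's inequality and the sub-additivity $|\zeta_1|^s + \cdots + |\zeta_n|^s \geq |\zeta|^s$ (valid because $s \leq 1$) pull the Gevrey weight $e^{-\lambda|\zeta|^s}$ out of the convolution. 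The goal is an estimate of the shape
\[ \|K_\eta\|_{\G^\lambda} \;\lesssim\; e^{\nu\, |\eta|^s}, \qquad \nu = K_{cp}(s)\,\|g\|_{\G^\lambda}, \]
after which the Cauchy--Schwarz argument below closes the proof.

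Once the kernel bound is in hand, I combine with the sub-additivity $|\xi|^s \leq |\eta|^s + |\xi - \eta|^s$ and Cauchy--Schwarz (choosing the weight $e^{2(\lambda + \nu + \delta)|\eta|^s}$) to obtain
\[ \|f \circ \phi\|_{\G^\lambda}^2 \;\leq\; \|f\|_{\G^{\lambda + \nu + \delta}}^2 \cdot \int e^{-2(\nu + \delta)|\eta|^s} \,\|K_\eta\|_{\G^\lambda}^2\, d\eta \;\lesssim_{\delta}\; \|f\|_{\G^{\lambda + \nu + \delta}}^2, \]
where the $\nu$ piece of the radius shift on $f$ precisely cancels the $e^{2\nu|\eta|^s}$ growth of the kernel, while the extra $\delta > 0$ provides the Gaussian-type tail $e^{-2\delta|\eta|^s}$ needed for integrability. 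The hypothesis $\|\grad g\|_\infty \leq c_g < 1$ enters only in the implicit constants through the Sobolev bootstrap used to obtain $L^\infty$ control in the kernel computation.

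The main obstacle is obtaining the sharp $|\eta|^s$ (not $|\eta|$) growth in the kernel bound. The naive Taylor estimate yields only $\|K_\eta\|_{\G^\lambda} \lesssim e^{C|\eta|\,\|g\|_{\G^{\lambda,\sigma}}}$, and $e^{-2\delta|\eta|^s + 2C|\eta|\|g\|}$ is \emph{not} integrable for $s<1$. Fixing this requires using the Gevrey structure of $\hat g$ more carefully: either through a Cauchy--Schwarz split of the $n$-sum with weights $w_n \sim (n!)^{\alpha}$ followed by the optimal truncation at $n \sim |\eta|^s$ (which genuinely uses $s < 1$ so that $(n!)^{1/s-1}$ provides a factorial gain beyond $n!$), or via a Faà di Bruno expansion for derivatives of $e^{i\eta g}$ combined with the characterisation $\|h\|_{\G^\lambda}^2 \asymp \sum_N \lambda^{2N}/(N!)^{2/s} \|\grad^N h\|_2^2$. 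This is the ``subtle'' point alluded to in the statement; it is also where the constant $K_{cp}(s)$ degenerates as $s \to 0$.
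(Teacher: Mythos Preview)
The paper does not actually prove this lemma; it simply refers the reader to the Appendix of \cite{BM13}. So there is no ``paper's own proof'' to compare against beyond that citation.

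Your Fourier-kernel framework is a legitimate route and you have correctly isolated the crux: the naive Taylor bound on $K_\eta$ gives $e^{C|\eta|\|g\|}$ growth, which is fatal for $s<1$, and one must instead obtain $e^{\nu|\eta|^s}$. Of your two proposed fixes, the Fa\`a di Bruno one is essentially what \cite{BM13} does, though they work directly on the physical side: they use the characterization $h \in \G^{\lambda;s}$ via bounds $\|\partial^N h\|_2 \lesssim C\,\lambda^{-N}(N!)^{1/s}$, apply Fa\`a di Bruno to $\partial^N(f\circ\phi)$, and check that the Bell-polynomial combinatorics preserve the $(N!)^{1/s}$ growth at the cost of shrinking $\lambda$ by an amount proportional to $\|g\|_{\G^\lambda}$. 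Your first fix (optimal truncation of the Taylor sum with weights $(n!)^{1/s-1}$) is morally the Fourier dual of this same computation, but as written it is only a sketch; making it rigorous essentially forces you back through the Bell-polynomial counting, so the physical-side route is cleaner.

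One technical point you glossed over: on the unbounded direction $\Real_y$, the function $e^{i\eta\cdot g}$ has modulus one and is not in $L^2$, so $K_\eta$ is a priori only a tempered distribution. This is harmless once you split $e^{i\eta g} = 1 + (e^{i\eta g}-1)$: the $1$ contributes a Dirac mass $\delta_0$ to $K_\eta$, which just reproduces $\hat f$, while the remainder is genuinely in $L^2$ (since $g\in L^2$ and the Taylor series starts at $n=1$) and your kernel estimate applies to it.
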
  

\section{Definition and analysis of norms} \label{sec:def_nrm}  

\subsection{Definition and analysis of $w$} \label{sec:defw}
There are several constraints we need to satisfy when defining $w$. Not only does the multiplier defined by the $w$ have to be approximate supersolutions of the toy model in \S\ref{sec:Toy}, but it also has to be always getting weaker in time and be locally Lipschitz continuous in time and frequency. 
As mentioned above, the multipliers we use are variants of those used in \cite{BM13,BMV14}, so we draw on these constructions.  

We first begin by defining $\bar{w}(t,\eta)$, which is used to construct $w(t,\eta)$. 
In what follows fix $k,\eta > 0$; we will see that the norms do not depend on the sign of $k$ and $\eta$. 
Further, recall the definitions in \S\ref{sec:Notation}. 
The multiplier is built backwards in time, which makes resonance counting easier. 
Let $t \in I_{k,\eta}$. Let $\bar{w}(t,\eta)$ be a non-decreasing function of time with $\bar{w}(t,\eta) = 1  $ for $t \geq  2\eta $. 
For $ k \geq 1$, we assume that $\bar{w}(t_{k-1,\eta})  $ was computed.  
To compute $\bar{w}$ on the interval $I_{k,\eta} $, we use the behavior predicted by the toy model in \eqref{def:stablesuper}.  
For a parameter $\kappa > 2$ fixed sufficiently large depending on a universal constant determined by the proof,
for $k=1,2,3,..., E(\sqrt{\eta}) $, we define
\begin{subequations} \label{def:wNR}
\begin{align}
\bar{w}(t,\eta) &=   \Big( \frac{k^2}{\eta}   \left[ 1 + b_{k,\eta} |t-\frac{\eta}k | \right]   \Big)^{\kappa}  \bar{w} (t_{k-1,\eta}),  \quad& 
  \quad  \forall t \in  I^R_{k,\eta} =  \left[ \frac{\eta}k ,t_{k-1,\eta}  \right], \\ 
\bar{w}(t,\eta) &=   \Big(1 + a_{k,\eta} |t-\frac{\eta}k |   \Big)^{-1-\kappa}  \bar{w} \left(\frac{\eta}k\right),  \quad& 
  \quad \forall  t \in  I^L_{k,\eta} =  \left[ t_{k,\eta}  , \frac{\eta}k   \right].  
\end{align} 
\end{subequations}
The constant $b_{k,\eta}  $   is chosen to ensure that $ \frac{k^2}{\eta}   \left[ 1 + b_{k,\eta} |t_{k-1,\eta}-\frac{\eta}k | \right]  =1$, hence for $k \geq2$, we have 
\begin{align} \label{bk} 
 b_{k,\eta} = \frac{2(k-1)}{k} \left(1 - \frac{k^2}{\eta} \right)
\end{align} 
and $b_{1,\eta} = 1 - 1/\eta$. 
Similarly,  $a_{k,\eta}$ is chosen to ensure $ \frac{k^2}{\eta}\left[ 1 + a_{k,\eta} |t_{k,\eta}-\frac{\eta}k | \right] = 1$, which implies
\begin{align} \label{ak} 
 a_{k,\eta} = \frac{2(k+1)}{k} \left(1 - \frac{k^2}{\eta} \right). 
\end{align} 
Hence, we have $ \bar{w}(\frac{\eta}k) = \bar{w} (t_{k-1,\eta})  \Big( \frac{k^2}{\eta} \Big)^{\kappa}$  
and  $\bar{w} ( t_{k,\eta} ) = \bar{w} (t_{k-1,\eta})  \Big( \frac{k^2}{\eta} \Big)^{1+ 2\kappa}$. 
For earlier times $[0, t_{E(\sqrt{\eta}),\eta }] $, we take $\bar{w}$ to be constant. 
Next, we will impose additional losses in time on $\bar{w}$:  
\begin{align}
w(t,\eta) = \bar{w}(t,\eta) \exp\left[-\kappa \int_{t}^\infty \mathbf{1}_{\tau \leq 2\sqrt{\eta}} d\tau  - \kappa \int_{t}^\infty \mathbf{1}_{\sqrt{\abs{\eta}} \leq \tau \leq 2\abs{\eta}} \frac{\abs{\eta}}{\tau^2} d\tau \right]. \label{def:wextraloss} 
\end{align}
The following lemma is essentially Lemma 3.1 in \cite{BM13} and  shows that $w(t,\eta)^{-1}$ loses some fixed radius of Gevrey-2 regularity. The proof is omitted for brevity.  
\begin{lemma} \label{lem:totalGrowthw} 
There is a constant $\mu$ (depending on $\kappa$) and a constant $p > 0$ such that for all $\abs{\eta} > 1$, we have 
\begin{align*} 
\frac{1}{w(t,\eta)} \leq \frac{1}{w(1,\eta)} & \sim \eta^{-p} e^{\frac{\mu}{2} \sqrt{\eta} },  
\end{align*} 
where `$\sim$' is in the sense of asymptotic expansion (up to a multiplicative constant) as $\eta \rightarrow \infty$. 
\end{lemma}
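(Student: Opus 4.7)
\medskip

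\noindent\textbf{Proof proposal for Lemma \ref{lem:totalGrowthw}.}
The plan is to dispatch the two inequalities separately: the first by pure monotonicity, and the second by a direct computation of $w(1,\eta)$ using the recursive product definition \eqref{def:wNR} together with Stirling's formula, then multiplied by the explicit exponential factor in \eqref{def:wextraloss}.

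First, I would observe that $\bar w(\cdot,\eta)$ is non-decreasing on each interval $I_{k,\eta}^L$ and $I_{k,\eta}^R$ (both $(1+b_{k,\eta}|t-\eta/k|)^\kappa (k^2/\eta)^\kappa$ and $(1+a_{k,\eta}|t-\eta/k|)^{-1-\kappa}$ are non-decreasing as $t$ moves from the endpoint of $I_{k,\eta}$ toward the critical time $\eta/k$ and away again, in the directions dictated by \eqref{def:wNR}), and is continuous across the pieces by the choice \eqref{bk}--\eqref{ak}. Since both indicator integrals in \eqref{def:wextraloss} are non-increasing in $t$, the exponential correction is also non-decreasing in $t$. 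Therefore $w(\cdot,\eta)$ is non-decreasing, and $1/w(t,\eta) \leq 1/w(1,\eta)$ for all $t\geq 1$.

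Next I would compute $\bar w(1,\eta)$ for $\eta\gg 1$. Set $N=E(\sqrt{\eta})$. Since $\bar w$ is constant on $[0,t_{N,\eta}]$ and $1 \leq t_{N,\eta}$ for $\eta$ large, $\bar w(1,\eta)=\bar w(t_{N,\eta},\eta)$. Telescoping the identity $\bar w(t_{k,\eta})=(k^2/\eta)^{1+2\kappa}\bar w(t_{k-1,\eta})$ from $k=1$ up to $k=N$, starting from $\bar w(t_{0,\eta})=\bar w(2\eta,\eta)=1$, gives
\begin{equation*}
\bar w(1,\eta) \;=\; \left[\frac{(N!)^2}{\eta^{N}}\right]^{1+2\kappa}.
\end{equation*}
Writing $\sqrt{\eta}=N+\theta$ with $\theta\in[0,1)$ and applying Stirling's formula $(N!)^2 \sim 2\pi N \, N^{2N} e^{-2N}$, the ratio $(N^2/\eta)^N = (1 - (2N\theta+\theta^2)/\eta)^N$ stays bounded between positive constants uniformly in $\eta$, so
\begin{equation*}
\bar w(1,\eta) \;\sim\; \eta^{(1+2\kappa)/2}\, e^{-2(1+2\kappa)\sqrt{\eta}},
\end{equation*}
with asymptotic constants absorbing the integer-part discrepancy.

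Finally, I would evaluate at $t=1$ the two elementary integrals appearing in \eqref{def:wextraloss}:
\begin{equation*}
\int_1^\infty \mathbf 1_{\tau\leq 2\sqrt{\eta}}\, d\tau \;=\; 2\sqrt{\eta}-1,\qquad
\int_1^\infty \mathbf 1_{\sqrt{\eta}\leq\tau\leq 2\eta} \frac{\eta}{\tau^2}\,d\tau \;=\; \sqrt{\eta}-\tfrac{1}{2},
\end{equation*}
so the extra-loss factor contributes $\exp[\kappa(3\sqrt{\eta}+O(1))]$ to $1/w(1,\eta)$. Combining with the previous step yields $1/w(1,\eta) \sim \eta^{-p}\, e^{(2+7\kappa)\sqrt{\eta}}$ with $p=(1+2\kappa)/2>0$, so the conclusion holds with $\mu:=4+14\kappa$. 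The only subtle technical point is ensuring that the discrepancy $\sqrt{\eta}-E(\sqrt{\eta})\in[0,1)$ is accommodated in the polynomial prefactor rather than polluting the exponential rate; this is the step I would treat most carefully, verifying uniform positivity of $(N^2/\eta)^N$ and bounding its logarithm by $O(1)$. Everything else is a bookkeeping exercise patterned on Lemma~3.1 of \cite{BM13}.
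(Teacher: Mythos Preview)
Your proposal is correct and follows exactly the approach the paper intends: the paper omits the proof entirely, referring to Lemma~3.1 of \cite{BM13}, and your argument---telescoping $\bar w(t_{k,\eta})=(k^2/\eta)^{1+2\kappa}\bar w(t_{k-1,\eta})$ to obtain $[(N!)^2/\eta^N]^{1+2\kappa}$, applying Stirling with the $O(1)$ control of $(N^2/\eta)^N$, and then evaluating the two explicit integrals in \eqref{def:wextraloss}---is precisely that computation, adapted to the extra-loss factor present here. Your identification of $p=(1+2\kappa)/2$ and $\mu=4+14\kappa$ is the correct outcome.
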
 

The following lemma is from \cite{BM13}, and shows how to use the well-separation of critical times. 
\begin{lemma} \label{lem:wellsep}
Let $\xi,\eta$ be such that there exists some $K \geq 1$ with $\frac{1}{K}\abs{\xi} \leq \abs{\eta} \leq K\abs{\xi}$ and let $k,n$ be such that $t \in I_{k,\eta}$ and $t \in I_{n,\xi}$  (note that $k \approx n$).  
Then at least one of following holds:
\begin{itemize} 
\item[(a)] $k = n$ (almost same interval); 
\item[(b)] $\abs{t - \frac{\eta}{k}} \geq \frac{1}{10 K}\frac{\abs{\eta}}{k^2}$ and $\abs{t - \frac{\xi}{n}} \geq \frac{1}{10 K}\frac{\abs{\xi}}{n^2}$ (far from resonance);
\item[(c)] $\abs{\eta - \xi} \gtrsim_K \frac{\abs{\eta}}{\abs{n}}$ (well-separated). 
\end{itemize}
\end{lemma}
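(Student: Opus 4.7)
My plan is to proceed by contrapositive: assuming (a) and (b) both fail, I will derive (c). By the sign convention in the definition of $I_{k,\eta}$ (which is empty unless $k\eta \geq 0$), I can reduce to $k,n,\eta,\xi > 0$. Since (b) is a conjunction, its failure yields at least one of $|t - \eta/k| < \eta/(10Kk^2)$ or $|t - \xi/n| < \xi/(10Kn^2)$; by the symmetric role of $(k,\eta)$ and $(n,\xi)$ in the hypotheses, I may assume the former. The goal is then to show $|\eta - \xi| \gtrsim_K \eta/n$.

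The heart of the argument is geometric. Assume for contradiction that $|\eta - \xi| < c\,\eta/n$ for some small $c = c(K)$ to be fixed at the end. A direct computation from the explicit formula $t_{m,\zeta} = \zeta/m - \zeta/(2m(m+1))$ gives $|t_{n,\xi} - t_{n,\eta}|, |t_{n-1,\xi} - t_{n-1,\eta}| \lesssim |\xi-\eta|/n \leq c\eta/n^2$, so $I_{n,\xi}$ lies within Hausdorff distance $O(c\eta/n^2)$ of $I_{n,\eta}$. In particular, $t \in I_{n,\xi}$ implies $\mathrm{dist}(t, I_{n,\eta}) \lesssim c\eta/n^2$.

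The next step uses that resonance intervals at a fixed frequency $\eta$ are either adjacent (sharing exactly one endpoint) or well-separated. When $|n-k| \geq 2$, a direct computation yields that the gap between $I_{k,\eta}$ and $I_{n,\eta}$ is at least $\eta/(kn) \gtrsim \eta/n^2$, which for $c$ sufficiently small contradicts simultaneously $t \in I_{k,\eta}$ and $\mathrm{dist}(t, I_{n,\eta}) \lesssim c\eta/n^2$. When $|n-k| = 1$, say $n = k+1$, the intervals share the endpoint $t_{k,\eta}$, and combining $t \geq t_{k,\eta}$ with the Hausdorff bound forces $t - t_{k,\eta} \lesssim c\eta/n^2$. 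Using $\eta/k - t_{k,\eta} = \eta/(2k(k+1)) = \eta/(2kn)$, I obtain
\[
|t - \eta/k| \geq \frac{\eta}{2kn} - O\!\left(\frac{c\eta}{n^2}\right),
\]
and for $c$ small depending on $K$ (something like $c \sim 1/K$ suffices), this lower bound exceeds $\eta/(10Kk^2)$, contradicting the assumed failure of (b).

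The main obstacle I anticipate is the adjacent-intervals case $|n-k|=1$: the two quantitative estimates in play, namely $t$ within $O(c\eta/n^2)$ of the shared endpoint $t_{k,\eta}$, versus $t$ within $\eta/(10Kk^2)$ of the resonance $\eta/k$, must be shown incompatible. Since the offset $\eta/k - t_{k,\eta} = \eta/(2kn)$ is only of order $\eta/n^2$ when $n \approx k$, all constants must be tracked carefully and the allowable $c$ must indeed depend on $K$. The boundary cases $n = 1$ or $k = 1$ will need brief separate verification because the upper endpoint $t_{0,\eta} = 2\eta$ does not follow the general formula, but in those regimes (c) reduces to the much weaker claim $|\eta - \xi| \gtrsim \eta$, which can be read off directly from the constraints $t \in I_{1,\xi} \cap I_{k,\eta}$.
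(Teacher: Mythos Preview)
The paper does not give its own proof of this lemma; it simply cites \cite{BM13}. Your contrapositive argument is sound and is essentially the natural proof: the key geometric facts you use --- that $I_{n,\xi}$ lies within Hausdorff distance $O(|\eta-\xi|/n)$ of $I_{n,\eta}$, that the intervals $I_{m,\eta}$ tile consecutively so that non-adjacent ones are separated by a full intermediate interval of length $\gtrsim \eta/n^2$, and that the resonance $\eta/k$ sits at distance $\eta/(2k(k\pm 1))$ from the shared endpoint in the adjacent case --- are exactly what drives the result. Your constant-tracking in the $|n-k|=1$ case is correct and indeed forces $c$ to depend on $K$.

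One minor remark: your closing comment that for $n=1$ or $k=1$ the conclusion (c) ``reduces to the much weaker claim $|\eta-\xi|\gtrsim \eta$'' is slightly misleading --- that is actually a \emph{stronger} claim, not weaker. In fact your general adjacent-interval argument already covers those boundary cases without modification (the relevant offset $\eta/k - t_{k,\eta}$ or $t_{k-1,\eta} - \eta/k$ is still of order $\eta/(kn)$ there), so no separate treatment is needed beyond verifying the formulas when $k=1$ or $n=1$.
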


The next lemma tells us how to take advantage of the time derivative of $w$ and hence the $CK_w$ terms. 

\begin{lemma}[Time derivatives near the critical times] \label{lem:dtw}
If $1 \lesssim t \leq 2\sqrt{\eta}$, then there holds 
\begin{align} 
\frac{\partial_t w(t,\eta)}{w(t,\eta)} & \approx \kappa.  
\end{align}
If we instead have $t \in \I_{k,\eta}$ for some $k$, then the following holds
\begin{align} 
\frac{\partial_t w(t,\eta)}{w(t,\eta)} & \approx \frac{\kappa}{1 + \abs{\frac{\eta}{k} - t}} + \frac{\kappa \abs{\eta}}{t^2} \label{dtw}
\end{align}
\end{lemma}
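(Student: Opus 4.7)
The plan is to simply differentiate the definition of $w$ directly from the explicit formulas in \eqref{def:wNR} and \eqref{def:wextraloss}, treating the two regimes separately. Writing $w = \bar w \cdot E$ with
\begin{align*}
E(t,\eta) = \exp\Bigl[-\kappa \int_t^\infty \mathbf{1}_{\tau \leq 2\sqrt{\eta}}\,d\tau - \kappa \int_t^\infty \mathbf{1}_{\sqrt{\eta} \leq \tau \leq 2\eta}\frac{\eta}{\tau^2}\,d\tau\Bigr],
\end{align*}
the logarithmic derivative decouples as $\partial_t w/w = \partial_t\bar w/\bar w + \partial_t E/E$, with
\begin{align*}
\frac{\partial_t E}{E}(t,\eta) = \kappa\,\mathbf{1}_{t \leq 2\sqrt{\eta}} + \kappa\,\mathbf{1}_{\sqrt{\eta} \leq t \leq 2\eta}\frac{\eta}{t^2}.
\end{align*}
So the computation reduces to evaluating $\partial_t \bar w/\bar w$ piecewise.

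For the first regime $1 \lesssim t \leq 2\sqrt{\eta}$, the construction places $t$ in the range $[0, t_{E(\sqrt{\eta}),\eta}]$ (since $t_{E(\sqrt\eta),\eta}\approx\sqrt\eta$ up to a constant, and on $[0,t_{E(\sqrt\eta),\eta}]$ the function $\bar w$ is taken to be constant). Hence $\partial_t\bar w/\bar w = 0$ and one just reads off $\partial_t E/E$: the first indicator gives $\kappa$, and the second gives either $0$ (if $t \leq \sqrt\eta$) or $\kappa\eta/t^2 \in [\kappa/4,\kappa]$ (if $\sqrt\eta \leq t \leq 2\sqrt\eta$), which together are $\approx \kappa$.

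For the second regime $t \in \I_{k,\eta}$, by definition $2\sqrt{\eta} \leq t_{k,\eta}$ and $k \leq \tfrac12\sqrt{\eta}$, so the indicator $\mathbf{1}_{t \leq 2\sqrt\eta}$ vanishes and the $\eta/t^2$ indicator is active, contributing exactly $\kappa \eta/t^2$. I then differentiate $\log\bar w$ directly on each half of $I_{k,\eta}$: on $I^R_{k,\eta}$,
\begin{align*}
\frac{\partial_t \bar w}{\bar w} = \frac{\kappa\,b_{k,\eta}}{1 + b_{k,\eta}\,|t-\eta/k|},
\end{align*}
and on $I^L_{k,\eta}$ the analogous formula with $(1+\kappa)$ and $a_{k,\eta}$. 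The key observation is that the constraint $k \leq \tfrac12\sqrt\eta$ forces $k^2/\eta \leq 1/4$, so \eqref{bk} and \eqref{ak} give $a_{k,\eta}, b_{k,\eta} \approx 1$ with universal constants. Absorbing these into the $\approx$ symbol and combining with the extra term from $\partial_t E/E$ yields the stated bound $\kappa/(1+|t-\eta/k|) + \kappa|\eta|/t^2$.

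There is no real obstacle here; the only subtlety is the mild bookkeeping at the transitions (i.e.\ ensuring $a_{k,\eta}$ and $b_{k,\eta}$ stay bounded away from $0$ and $\infty$ uniformly in $(k,\eta)$ with $k \leq \tfrac12\sqrt\eta$, and checking the one-sided derivatives at the endpoints $t = \eta/k$ and $t = t_{k-1,\eta}$ match the claimed upper and lower bounds). The sign convention—$\bar w$ non-decreasing in $t$ with $\bar w(t,\eta) = 1$ for $t \geq 2\eta$—makes $\partial_t\bar w/\bar w \geq 0$, so the two contributions add constructively and the lower bound is immediate.
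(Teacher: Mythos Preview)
Your proof is essentially correct and is the natural direct computation; the paper states this lemma without proof (it is implicit in the construction, and analogous to computations in \cite{BM13}).

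One small point in the first regime: you assert $\bar w$ is constant on all of $[1,2\sqrt\eta]$ because $t_{E(\sqrt\eta),\eta}\approx\sqrt\eta$, but in fact $t_{E(\sqrt\eta),\eta}\approx\sqrt\eta$ is only roughly half of $2\sqrt\eta$, so for $t\in[t_{E(\sqrt\eta),\eta},2\sqrt\eta]$ you are actually inside some critical interval $I_{k,\eta}$ with $k$ comparable to $\sqrt\eta$, where $\bar w$ is \emph{not} constant. This does not affect the conclusion: on such intervals $\partial_t\bar w/\bar w \leq (1+\kappa)\max(a_{k,\eta},b_{k,\eta}) \lesssim \kappa$ (since $a_{k,\eta},b_{k,\eta}\leq 4$ always), and the lower bound $\gtrsim\kappa$ comes from the $E$ factor via the indicator $\mathbf 1_{t\leq 2\sqrt\eta}$, so the total is still $\approx\kappa$. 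You should patch the argument accordingly.
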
 

The next lemma is a variant of [Lemma 3.4, \cite{BM13}]. 
It is important for estimating nonlinear terms where we need to be able to compare $CK_w$ multipliers of different frequencies. 
The proof is essentially verbatim from \cite{BM13} so it is omitted for the sake of brevity. 

\begin{lemma} \label{lem:WtFreqCompare}
\begin{itemize}
\item[(i)] For $t \gtrsim 1$, and $\eta,\xi$ such that $t < 2  \min( \abs{\xi},  \abs{\eta})    $,
\begin{align} \label{dtw-xi} 
 \frac{\partial_t w(t,\eta)}{w(t,\eta)}\frac{w(t,\xi)}{\partial_t w(t,\xi)}
 \lesssim \jap{\eta - \xi}^2
\end{align}
\item[(ii)] For all $t \gtrsim 1$, and $\eta,\xi$ such that for some $K \geq 1$, $\frac{1}{K}\abs{\xi} \leq \abs{\eta} \leq K\abs{\xi}$,  
\begin{align}
\sqrt{\frac{\partial_t w(t,\xi)}{w(t,\xi)}} \lesssim_K \left[\sqrt{\frac{\partial_t w(t,\eta)}{w(t,\eta)}} + \frac{\abs{\eta}^{s/2}}{\jap{t}^{s}}\right]\jap{\eta-\xi}^2. \label{ineq:partialtw_endpt}  
\end{align}
\end{itemize}
\end{lemma}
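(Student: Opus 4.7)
The plan is to prove both parts by a case analysis on the position of $t$ relative to the critical intervals of $\eta$ and $\xi$, computing $\partial_t w / w$ at each frequency via Lemma~\ref{lem:dtw} and invoking Lemma~\ref{lem:wellsep} to handle the delicate case when $t$ sits in critical intervals for both frequencies simultaneously.

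For (i), the hypothesis $t < 2\min(|\eta|,|\xi|)$ places both frequencies in their active-decay regime. If $t \lesssim \min(\sqrt{|\eta|},\sqrt{|\xi|})$, then Lemma~\ref{lem:dtw} gives both quantities $\approx \kappa$ and the ratio is $O(1)$. If $t \leq 2\sqrt{|\xi|}$ but $t \in \mathbf{I}_{k,\eta}$, the denominator is still $\approx \kappa$ while the numerator is $\lesssim \kappa$ (since $t \geq 2\sqrt{|\eta|}$ forces $|\eta|/t^2 \lesssim 1$), so the ratio is again $O(1)$; the symmetric case is identical. The substantive case is $t \in \mathbf{I}_{k,\eta} \cap \mathbf{I}_{n,\xi}$, which I split using Lemma~\ref{lem:wellsep}. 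In case (a), $k=n$, the triangle inequalities $1 + |t-\eta/k| \leq \jap{\eta-\xi}(1 + |t-\xi/k|)$ and $|\eta|/t^2 \leq |\xi|/t^2 + |\eta-\xi|/t^2 \lesssim (|\xi|/t^2)\jap{\eta-\xi}$ (the latter using $t \gtrsim \sqrt{|\xi|} \gtrsim 1$) bound the numerator by $\jap{\eta-\xi}$ times the denominator. In case (b), both quantities are comparable to $\kappa\max(|\eta|,|\xi|)/t^2$, so the ratio is $O(1)$. In case (c), the well-separation $|\eta-\xi| \gtrsim |\eta|/n$ combined with the a priori lower bound $\partial_t w(t,\xi)/w(t,\xi) \gtrsim \kappa|\xi|/t^2 \gtrsim \kappa/t$ bounds the ratio by $\jap{\eta-\xi}^2$ directly.

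For (ii), the same case split applies; now the comparability $|\eta| \approx_K |\xi|$ simplifies cases (a) and (b) to bounds of $\sqrt{\partial_t w(t,\xi)/w(t,\xi)}$ by $\sqrt{\partial_t w(t,\eta)/w(t,\eta)}\,\jap{\eta-\xi}$ (after taking square roots of the argument above), which is absorbed in the first term on the right-hand side of~\eqref{ineq:partialtw_endpt}. The new ingredient is required in case (c). There, $|\eta-\xi| \gtrsim |\eta|/n \approx t$, since $t \approx \xi/n$ and $|\eta|\approx |\xi|$. Bounding $\sqrt{\partial_t w(t,\xi)/w(t,\xi)} \lesssim \sqrt{\kappa}$ trivially, and using $|\eta| \gtrsim t/2$, we have
\begin{equation*}
\frac{|\eta|^{s/2}}{\jap{t}^s}\jap{\eta-\xi}^2 \gtrsim \frac{t^{s/2}}{t^s}\cdot t^2 = t^{2-s/2},
\end{equation*}
which is $\gtrsim 1 \gtrsim_K \sqrt{\kappa}$ for $t \gtrsim 1$ and $s < 1$. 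Hence the extra $|\eta|^{s/2}/\jap{t}^s$ term is precisely what is needed.

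The main obstacle will be the well-separated case (c) in part~(i): one must carefully extract the gain from $|\eta-\xi| \gtrsim |\eta|/n$ and combine it with the lower bound on $\partial_t w(t,\xi)/w(t,\xi)$ at a critical-interval trough, since in this regime the numerator can be near a resonance peak while the denominator is not. Cases (a) and (b) reduce to elementary manipulations, but (c) hinges on balancing the sizes of $|\eta|$, $n \approx \xi/t$, $t$, and $|\eta-\xi|$ against the explicit exponents built into the construction \eqref{def:wNR} of $w$, and this is where the resonance-counting geometry of the multiplier is fully used.
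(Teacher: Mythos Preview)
Your approach is correct and is essentially the same as the one the paper intends: the paper gives no proof and simply states that the argument is ``essentially verbatim from \cite{BM13}'', where indeed the proof proceeds by exactly the case split you outline---short times, mixed regimes, and then the trichotomy of Lemma~\ref{lem:wellsep} on $\mathbf{I}_{k,\eta}\cap\mathbf{I}_{n,\xi}$, with case~(c) supplying the $\abs{\eta}^{s/2}/\jap{t}^s$ correction in part~(ii).

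One small point worth tightening: Lemma~\ref{lem:wellsep} has the hypothesis $\frac{1}{K}\abs{\xi}\le\abs{\eta}\le K\abs{\xi}$, which is assumed in part~(ii) but not in part~(i). In part~(i) you should first dispose of the case where $\abs{\eta}$ and $\abs{\xi}$ are not comparable: there $\abs{\eta-\xi}\approx\max(\abs{\eta},\abs{\xi})$, and combining the trivial upper bound $\partial_t w/w\lesssim\kappa$ on the numerator with the lower bound $\partial_t w(t,\xi)/w(t,\xi)\gtrsim\kappa\abs{\xi}/t^2$ (and $t<2\min(\abs{\eta},\abs{\xi})$) gives the ratio bound directly. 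Once $\abs{\eta}\approx\abs{\xi}$, Lemma~\ref{lem:wellsep} applies and your case~(c) argument goes through as written, since then $\abs{\eta}/n\approx\abs{\xi}/n\approx t$.
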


The next lemma is an easy variant of the analogous lemma in \cite{BM13}. 

\begin{lemma}[Ratio estimates for nonlinear interactions] \label{lem:wRat}
There exists a $K > 0$ such that for all $\eta,\xi$, 
\begin{align*}
\frac{w(t,\eta)}{w(t,\xi)} & \lesssim e^{K\abs{\eta-\xi}^{1/2}}. 
\end{align*} 
\end{lemma}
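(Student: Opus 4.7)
The plan is to split the argument by comparing $|\eta-\xi|$ with $\max(|\eta|,|\xi|)$, reducing the problem to the comparable-frequency regime, where the explicit multiplicative structure of $w$ can be exploited. We may assume $|\eta|,|\xi|\gtrsim 1$, as otherwise the "extra loss" exponents in \eqref{def:wextraloss} are bounded and $w$ itself is bounded above and below by universal constants.

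First, in the \emph{well-separated regime} $|\eta-\xi|\geq \tfrac{1}{10}\max(|\eta|,|\xi|)$, one has $\max(|\eta|,|\xi|)\lesssim |\eta-\xi|$, hence $\sqrt{|\xi|}\lesssim \sqrt{|\eta-\xi|}$. Since $w$ is non-decreasing in $t$ with $w(2|\eta|,\eta)=1$, so $w(t,\eta)\leq 1$, and Lemma \ref{lem:totalGrowthw} gives $1/w(t,\xi)\leq 1/w(1,\xi)\lesssim e^{(\mu/2)\sqrt{|\xi|}}$. Combining yields $w(t,\eta)/w(t,\xi)\lesssim e^{K\sqrt{|\eta-\xi|}}$ for $K$ depending on $\mu$. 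This is the easy half.

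The real work is the \emph{comparable regime} $|\eta-\xi|<\tfrac{1}{10}\max(|\eta|,|\xi|)$, where we may assume $|\eta|\geq|\xi|\approx |\eta|$. Here we use the explicit product structure. Separate $\log w(t,\eta)-\log w(t,\xi)$ into the $\bar w$ piece and the extra-loss piece from \eqref{def:wextraloss}. The extra-loss piece is a sum of two integrals that evaluate in closed form to quantities comparable to $\sqrt{|\eta|}$ and $|\eta|/\max(t,\sqrt{|\eta|})$; their differences at frequencies $\eta,\xi$ are controlled by $|\sqrt{|\eta|}-\sqrt{|\xi|}|=|\eta-\xi|/(\sqrt{|\eta|}+\sqrt{|\xi|})\lesssim \sqrt{|\eta-\xi|}$ (the last step uses $|\eta-\xi|\leq|\eta|$). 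For the $\bar w$ piece, decompose $\bar w(t,\eta)$ along the intervals $I_{k,\eta}^R,I_{k,\eta}^L$ with $k$ running from $1$ up to $E(\sqrt{|\eta|})$ (and noting $\bar w$ is constant for $\tau\leq t_{E(\sqrt{|\eta|}),\eta}$). For indices $k$ common to $\eta$ and $\xi$, each factor in the product is of the form $(k^2/\eta)^\kappa$, $(k^2/\eta)^{1+2\kappa}$, or a linear factor $[1+a_{k,\eta}|t-\eta/k|]$ or $[1+b_{k,\eta}|t-\eta/k|]$. Taking logs and using $\log(\xi/\eta)=O(|\eta-\xi|/|\eta|)$ together with the continuity of $a_{k,\eta}$, $b_{k,\eta}$, $\eta/k$, $t_{k,\eta}$ in $\eta$ (with derivatives of order $1/k^2$ or smaller), each factor contributes $O(|\eta-\xi|/|\eta|)$ to the difference of logs. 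Summing over $k=1,\dots,\min(E(\sqrt{|\eta|}),E(\sqrt{|\xi|}))$ gives
\[
\sum_{k=1}^{\lesssim\sqrt{|\eta|}}\frac{|\eta-\xi|}{|\eta|}\;\lesssim\;\frac{|\eta-\xi|}{\sqrt{|\eta|}}\;\lesssim\;\sqrt{|\eta-\xi|}.
\]
The "excess" indices between $\min(E(\sqrt{|\eta|}),E(\sqrt{|\xi|}))$ and $\max(E(\sqrt{|\eta|}),E(\sqrt{|\xi|}))$ number at most $|\sqrt{|\eta|}-\sqrt{|\xi|}|+1\lesssim \sqrt{|\eta-\xi|}$, and each contributes an $O(1)$ amount to $|\log \bar w|$ since near $k\approx\sqrt{|\eta|}$ the factor $(k^2/\eta)^\kappa$ is bounded above and below by universal constants; summed, this again gives $O(\sqrt{|\eta-\xi|})$.

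The main technical obstacle is the boundary bookkeeping when $t$ lies in different intervals $I_{k,\eta}$ versus $I_{k,\xi}$ (this can happen near the endpoints $t_{k,\eta}$ and $\eta/k$, which shift by $O(|\eta-\xi|/k^2)$). The resolution is to compare the polynomial factors directly: both $[1+b_{k,\eta}|t-\eta/k|]$ and $[1+b_{k,\xi}|t-\xi/k|]$ take values between their endpoint values $k^2/\eta$ and $1$ (respectively $k^2/\xi$ and $1$), and the discrepancy between the two at a shared $t$ is controlled by $|\eta-\xi|/|\eta|$, keeping the log difference $O(|\eta-\xi|/|\eta|)$. Once this case analysis is assembled the sum of all contributions is bounded by $K\sqrt{|\eta-\xi|}$, completing the proof.
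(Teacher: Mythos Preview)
Your overall strategy is the paper's own (it defers the $\bar w$ portion to Lemma~3.5 of \cite{BM13} and calls the $W_1$ piece straightforward): factor $w=\bar w\cdot W_1$, dispose of the well-separated regime via the total-growth bound, and in the comparable regime unwind the product structure of $\bar w$. The $W_1$ piece and the well-separated case are handled correctly.

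The $\bar w$ analysis contains quantitative errors, however. You assert that the $\eta$-derivatives of $\eta/k$ and $t_{k,\eta}$ are $O(1/k^2)$; they are in fact $O(1/k)$. Hence the single partial factor $[1+a_{k,\eta}|t-\eta/k|]$ contributes $\log(1+C|\eta-\xi|/k)$ to the log-difference rather than $O(|\eta-\xi|/|\eta|)$; this is still $\lesssim\sqrt{|\eta-\xi|}$ and appears only once, so the conclusion survives, but your stated per-factor bound is false. More seriously, the endpoint shift $t_{k,\eta}-t_{k,\xi}=O(|\eta-\xi|/k)$ can dwarf the interval width $|\eta|/k^2$ when $k$ is near $\sqrt{|\eta|}$, so $t$ can lie in $I_{k,\eta}\cap I_{k',\xi}$ with $|k-k'|\approx k|\eta-\xi|/|\eta|$ potentially large. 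The resulting excess factors $\prod_{j}(j^2/\xi)^{1+2\kappa}$ contribute roughly $(k|\eta-\xi|/|\eta|)\,|\log(k^2/\eta)|$ to the log, and one must observe that $\sqrt{x}\,|\log x|$ is bounded on $(0,1]$ (with $x=k^2/\eta$) to conclude this is $\lesssim|\eta-\xi|/\sqrt{|\eta|}\lesssim\sqrt{|\eta-\xi|}$. Your boundary-bookkeeping paragraph asserts the discrepancy is $O(|\eta-\xi|/|\eta|)$, missing the $|\log(k^2/\eta)|$ factor. The architecture is right, but the bound requires this additional observation, which is part of why the paper labels the argument ``non-trivial'' and defers to \cite{BM13}.
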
 
\begin{proof} 
The proof that $\frac{\bar w(t,\eta)}{\bar w(t,\xi)} \lesssim e^{C|\eta-\xi|^{1/2}}$ for some $C>0$ is non-trivial but follows mutatis mutandis the proof of Lemma 3.5 in~\cite{BM13}.
What remains is  
\begin{align*} 
W_1(t,\eta) =  \exp\left[-\kappa \int_{t}^\infty \mathbf{1}_{\tau \leq 2\sqrt{|\eta|}} d\tau  - \kappa \int_{t}^\infty \mathbf{1}_{\sqrt{\abs{\eta}} \leq \tau \leq 2\abs{\eta}} \frac{\abs{\eta}}{\tau^2} d\tau \right],  
\end{align*} 
for which it is straightforward to show 
$\frac{W_1(t,\eta)}{W_1(t,\xi)} \lesssim e^{C|\eta-\xi|^{1/2}}$.  
\end{proof}

\subsection{The design and analysis of $w_L$} \label{sec:Nmult}
The multiplier $w_L$ is introduced to deal with the linear pressure term in the $Q^3$ equation. 
We define $w_L$ such that it solves the following (we may without loss of generality use the same parameter $\kappa$) %for some parameter $\kappa_N$ fixed by the proof 
\begin{subequations}  \label{def:wL}
\begin{align} 
\partial_tw_L(t,k,\eta,l) & = \kappa \frac{\abs{k}\jap{l} }{k^2 + l^2 + \abs{\eta - kt}^2} w_L(t,k,\eta,l) \quad\quad t \geq 1 \\ 
w_L(1,k,\eta,l) & = 1. 
\end{align}
\end{subequations} 
Since the following holds uniformly in $k,l,\eta$: 
\begin{align} 
\int_0^\infty \frac{\abs{k} \jap{l}}{k^2 + l^2 + \abs{\eta - kt}^2} dt \lesssim 1, \label{ineq:unifN}
\end{align}
the multiplier $w_L$ is $O(1)$ and hence will have very little effect on most estimates. 
To see \eqref{ineq:unifN}: 
\begin{align*} 
\int_0^\infty \frac{\abs{k} \jap{l} }{k^2 + l^2 + \abs{\eta - kt}^2} dt &  \lesssim  \frac{\abs{k} \jap{l}}{k^2 + l^2} \int_0^{\infty} \frac{1}{1 + \frac{k^2}{k^2 + l^2}\abs{\frac{\eta}{k} - t}^2} dt  \lesssim \frac{\abs{k} \jap{l}}{k^2 + l^2} \sqrt{\frac{k^2 + l^2}{k^2}} \int_{-\infty}^\infty \frac{1}{1 + s^2} ds \lesssim 1.  
\end{align*}  

\section{Elliptic estimates} \label{sec:Elliptic} 
In this section, we group and discuss all of the necessary ``elliptic'' estimates on $\Delta_t^{-1}$. 
While $\Delta_t$ itself is not elliptic, it is the representation of $\Delta$ in these new coordinates
and it is precisely the ellipticity of $\Delta$ that is the origin of inviscid damping. 

\subsection{Lossy estimates} \label{sec:Lossy}
First, there is the equivalent of the lossy elliptic lemma from \cite{BM13}.
The proof is analogous and is hence omitted here. 
\begin{lemma}[Lossy elliptic lemma] \label{lem:LossyElliptic}
If $C$ satisfies the bootstrap assumptions~\eqref{ineq:Boot_CgHi}, then for $c_0$ sufficiently small, for any function~$\phi$,  and $a \leq \sigma$,
\begin{align*} 
\norm{\Delta^{-1}_t \phi_{\neq}}_{\G^{\lambda,a-2}} \lesssim \frac{1}{\jap{t}^2}\norm{\phi_{\neq}}_{\G^{\lambda,a}} . 
\end{align*} 
\end{lemma}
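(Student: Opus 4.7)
The strategy is to treat $\Delta_t$ as an $O(c_0)$ perturbation of $\Delta_L$ and to reduce the estimate to the fundamental inviscid-damping bound \eqref{ineq:IDfundamental} (which extends from $H^\sigma$ to $\G^{\lambda,\sigma}$ verbatim since its proof only relies on a pointwise Fourier multiplier estimate). Using the formula displayed just before \eqref{def:tildeDel1} together with \eqref{def:G}, I would decompose
\[
\Delta_t = \Delta_L + E, \qquad E = G\,(\partial_Y - t\partial_X)^2 + 2\psi_z\,\partial_Z(\partial_Y - t\partial_X) + \Delta\psi\,(\partial_Y - t\partial_X).
\]
By the bootstrap \eqref{ineq:Boot_LowC} and Lemma \ref{lem:CoefCtrl} (supplemented at the highest regularities by the high-norm control \eqref{ineq:Boot_ACC}), each of the coefficients $G$, $\psi_z$, $\Delta\psi$ is $O(c_0)$ in the Gevrey spaces relevant to the product estimates below.

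Setting $f = \Delta_t^{-1}\phi_{\neq}$, the equation $\Delta_t f = \phi_{\neq}$ rearranges to $\Delta_L f = \phi_{\neq} - Ef$, so
\[
f = \Delta_L^{-1}\phi_{\neq} - \Delta_L^{-1}(Ef).
\]
Since $f$ inherits the projection onto $k \neq 0$, two applications of \eqref{ineq:IDfundamental} with $\beta = 2$ yield
\[
\norm{f}_{\G^{\lambda,a-2}} \lesssim \jap{t}^{-2}\norm{\phi_{\neq}}_{\G^{\lambda,a}} + \jap{t}^{-2}\norm{Ef}_{\G^{\lambda,a}}.
\]
The first term is exactly the target bound; everything reduces to controlling $\norm{Ef}_{\G^{\lambda,a}}$ in a way that does not lose derivatives on $f$.

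The central observation is that each of the composed Fourier multipliers
\[
\frac{(\eta-kt)^2}{k^2+(\eta-kt)^2+l^2}, \qquad \frac{l(\eta-kt)}{k^2+(\eta-kt)^2+l^2}, \qquad \frac{\eta-kt}{k^2+(\eta-kt)^2+l^2}
\]
is bounded by $1$ uniformly on $\{k\neq 0\}$ (the last using $\abs{k}\geq 1$ together with AM-GM). Hence the operators $(\partial_Y-t\partial_X)^2\Delta_L^{-1}$, $\partial_Z(\partial_Y-t\partial_X)\Delta_L^{-1}$, and $(\partial_Y-t\partial_X)\Delta_L^{-1}$ are bounded on $\G^{\lambda,a}$ restricted to non-zero $X$-modes. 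Applying them to the identity $\Delta_L f = \phi_{\neq} - Ef$ and combining with the Gevrey product estimate (Lemma \ref{lem:GevProdAlg}) produces
\[
\norm{Ef}_{\G^{\lambda,a}} \leq K c_0\bigl(\norm{\phi_{\neq}}_{\G^{\lambda,a}} + \norm{Ef}_{\G^{\lambda,a}}\bigr)
\]
for some constant $K$. For $c_0$ chosen so that $K c_0 < 1/2$ the last term is absorbed, giving $\norm{Ef}_{\G^{\lambda,a}} \lesssim c_0\norm{\phi_{\neq}}_{\G^{\lambda,a}}$ and hence the lemma.

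The main (and only mild) obstacle is justifying the absorption, which tacitly presumes $\norm{Ef}_{\G^{\lambda,a}}$ is finite a priori. I would handle this by a standard density argument: $\Delta_t$ is uniformly elliptic on $\{k\neq 0\}$ with Gevrey-smooth coefficients, so $f = \Delta_t^{-1}\phi_{\neq}$ exists classically for Schwartz $\phi$ and one first proves the bound there before extending by density. Equivalently, one can define $f$ directly through the Neumann series $f = \sum_{n \geq 0}(-\Delta_L^{-1}E)^n\,\Delta_L^{-1}\phi_{\neq}$, whose convergence in $\G^{\lambda,a-2}$ follows from exactly the operator bounds assembled above.
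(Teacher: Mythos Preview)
Your proof is correct and follows essentially the same route the paper takes (the paper omits the proof, but the identical decomposition $\Delta_L\Delta_t^{-1}\phi_{\neq}=\phi_{\neq}-E\Delta_t^{-1}\phi_{\neq}$ and absorption argument appear verbatim in the proof of the closely related Lemma~\ref{lem:AnuLossy}, see \eqref{def:Deltinv_AnuLoss}--\eqref{ineq:trivDeltainv}). The only cosmetic difference is that the paper writes the lower-order coefficient as $\Delta_tC$ rather than $\Delta\psi$; these are the same object in the two coordinate systems, and the needed Gevrey bound on it is furnished by Lemma~\ref{lem:CoefCtrl} together with \eqref{ineq:Boot_ACC}.
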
  

We also need a version of the enhanced dissipation lossy elliptic lemma from \cite{BMV14}: 
\begin{lemma}[Lossy elliptic lemma II] \label{lem:AnuLossy}
If $C$ satisfies the bootstrap assumptions~\eqref{ineq:Boot_CgHi}, then for $c_0$ sufficiently small, for any function $\phi$, and $\gamma^\prime = \beta + 3\alpha + 5$,  
\begin{subequations} \label{ineq:AnuLossyII} 
\begin{align} 
\norm{ A^{\nu;i} \Delta^{-1}_t \phi}_{2} + \norm{\partial_X A^{\nu;i} \Delta^{-1}_t \phi}_{2} & \lesssim \frac{1}{\jap{t}^2}\left(\norm{A^{\nu;i} \phi}_2 + \jap{t}^{-3}\norm{\phi_{\neq}}_{\G^{\lambda,\gamma^\prime}} \right) \\ 
\norm{ \partial_Z A^{\nu;i} \Delta^{-1}_t \phi}_{2} + \norm{ (\partial_Y - t \partial_X) A^{\nu;i} \Delta^{-1}_t \phi}_{2} & \lesssim \frac{1}{\jap{t}} \left(\norm{A^{\nu;i} \phi}_2 + \jap{t}^{-3}\norm{\phi_{\neq}}_{\G^{\lambda,\gamma^\prime}} \right) \\ 
\norm{\partial_{m}^t \partial_n^t A^{\nu;i} \Delta_t^{-1} \phi}_2 & \lesssim \frac{1}{\jap{t}^{b}}\left(\norm{A^{\nu;i} \phi}_2 + \jap{t}^{-3}\norm{\phi_{\neq}}_{\G^{\lambda,\gamma^\prime}} \right), 
\end{align} 
\end{subequations}
where $b = 0$ if $n,m \neq 1$, $b = 1$ if exactly one of $m$ or $n$ equals one, and $b = 2$ if $m = n = 1$. 
Moreover, 
\begin{subequations} \label{ineq:AnuLossyED} 
\begin{align} 
\norm{ A^{\nu;i} \Delta^{-1}_t \phi}_{2} + \norm{\partial_X A^{\nu;i} \Delta^{-1}_t \phi}_{2} & \lesssim \frac{1}{\jap{t}^3}\left(\norm{ \sqrt{-\Delta_L}A^{\nu;i} \phi}_2 + \jap{t}^{-3}\norm{\phi_{\neq}}_{\G^{\lambda,\gamma^\prime}} \right) \\ 
\norm{ \partial_Z A^{\nu;i} \Delta^{-1}_t \phi}_{2} + \norm{ (\partial_Y - t \partial_X) A^{\nu;i} \Delta^{-1}_t \phi}_{2} & \lesssim \frac{1}{\jap{t}^2} \left(\norm{\sqrt{-\Delta_L} A^{\nu;i} \phi}_2 + \jap{t}^{-3}\norm{\phi_{\neq}}_{\G^{\lambda,\gamma^\prime}} \right) \\ 
\norm{\partial_{m}^t \partial_n^t A^{\nu;i} \Delta_t^{-1} \phi}_2 & \lesssim \frac{1}{\jap{t}^{1+b}}\left(\norm{\sqrt{-\Delta_L} A^{\nu;i} \phi}_2 + \jap{t}^{-3}\norm{\phi_{\neq}}_{\G^{\lambda,\gamma^\prime}} \right). 
\end{align} 
\end{subequations}
Finally, we have 
\begin{align} 
\norm{A^{\nu;i} \Delta_L \Delta^{-1}_t \phi}_2 & \lesssim \norm{A^{\nu;i} \phi}_2. \label{ineq:PEL_CKnuIII} 
\end{align}  
\end{lemma}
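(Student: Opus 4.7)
The strategy is to treat $\Delta_t$ as a perturbation of $\Delta_L$ and exploit the fact that the multipliers $A^{\nu;i}$ include built-in frequency factors that are designed to absorb exactly the kind of derivative losses $\Delta_L^{-1}$ produces. Writing out $\Delta_t = \Delta_L + (\Delta_t - \Delta_L)$, where (by Lemma \ref{lem:CoefCtrl} and the definitions in \S\ref{sec:coordinates})
\begin{align*}
(\Delta_t - \Delta_L) U = G(\partial_Y - t\partial_X)^2 U + 2\psi_z \partial_Z(\partial_Y - t\partial_X) U + (\Delta_t C)(\partial_Y - t\partial_X) U,
\end{align*}
I would set $U = \Delta_t^{-1}\phi$ and use the identity
\begin{align*}
\Delta_L U = \phi - (\Delta_t - \Delta_L)U,
\end{align*}
so that on the Fourier side, for each $k\neq 0$,
\begin{align*}
\widehat{U}_k(\eta,l) = -\frac{\widehat{\phi}_k(\eta,l)}{k^2+l^2+|\eta-kt|^2} + \frac{1}{k^2+l^2+|\eta-kt|^2}\widehat{(\Delta_t - \Delta_L)U}_k(\eta,l).
\end{align*}

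The leading term is then estimated directly on the Fourier side: applying $A^{\nu;i}$ produces exactly the factors $\min(1,\langle \eta,l\rangle^a/t^a)$ (with $a=1+\delta_1,\delta_1,2$ for $i=1,2,3$), and combining these with the symbol $(k^2+l^2+|\eta-kt|^2)^{-1}$ together with the derivatives $\partial_X$, $\partial_Y - t\partial_X$, $\partial_Z$, $\partial_i^t \partial_j^t$ gives the advertised powers of $\langle t\rangle^{-1}$. The counting is essentially the one already used in \eqref{ineq:IDfundamental}: each factor of $(\partial_Y - t\partial_X)$ or $\partial_X$ in the numerator kills at most one power of $\langle t\rangle$ in the denominator, but the $\min(1,\langle \eta,l\rangle^a/t^a)$ in $A^{\nu;i}$ more than compensates. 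For \eqref{ineq:AnuLossyED}, the extra factor $\sqrt{-\Delta_L}$ on the right-hand side supplies the additional $\langle t\rangle^{-1}$ via $\sqrt{k^2+l^2+|\eta-kt|^2} \gtrsim \langle k t\rangle/\langle \eta,l\rangle$ combined with the low-frequency cap.

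The substantive step is the error term $\Delta_L^{-1}(\Delta_t - \Delta_L)U$. I would decompose each of $G(\partial_Y-t\partial_X)^2 U$, $\psi_z\partial_Z(\partial_Y-t\partial_X)U$, $(\Delta_t C)(\partial_Y-t\partial_X)U$ via a paraproduct and treat two regimes. When the coefficient is the low-frequency factor, Lemma \ref{lem:CoefCtrl} together with \eqref{ineq:Boot_LowC} gives $\|G\|_{\mathcal{G}^{\lambda,\gamma-1}} + \|\psi_z\|_{\mathcal{G}^{\lambda,\gamma-1}} + \|\Delta_t C\|_{\mathcal{G}^{\lambda,\gamma-2}} \lesssim c_0$, and using $\gamma \gg \beta + 3\alpha + 5$ together with the $A^{\nu;i}$-product Lemma \ref{lem:AnuProd} lets us bound this contribution by $c_0$ times the same norm that appears on the left. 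These terms are then absorbed into the left-hand side for $c_0$ small (a Neumann-series argument). When the coefficient is the high-frequency factor, we use \eqref{ineq:psiCLow} to convert $\|\cdot\|_{\mathcal{G}^{\lambda,\gamma^{\prime\prime}}}$ bounds to $\|C\|_{\mathcal{G}^{\lambda,\gamma^{\prime\prime}+1}}$, and pair them against $\|U\|$ in a Sobolev norm with fewer derivatives; the large regularity gap between $\gamma$ and $\beta+3\alpha+5$ lets us dump these on the lossy tail term $\langle t\rangle^{-3}\|\phi_{\neq}\|_{\mathcal{G}^{\lambda,\gamma^\prime}}$ (via Lemma \ref{lem:LossyElliptic} applied to $U = \Delta_t^{-1}\phi$ to control $U$ itself in low regularity).

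Finally, \eqref{ineq:PEL_CKnuIII} is immediate once the previous estimates are in hand: write $\Delta_L\Delta_t^{-1}\phi = \phi - (\Delta_t - \Delta_L)\Delta_t^{-1}\phi$, and bound the second piece using Lemma \ref{lem:AnuProd}, the coefficient bounds from Lemma \ref{lem:CoefCtrl}, and \eqref{ineq:AnuLossyII} to control $\Delta_t^{-1}\phi$ in the intermediate norms; the $c_0$ smallness of the coefficients keeps this error subordinate to $\|A^{\nu;i}\phi\|_2$. The principal obstacle is the paraproduct bookkeeping in the second step: one must verify that across the three terms in $\Delta_t - \Delta_L$ and across all three derivative patterns $(m,n) \in \{X,Y-tX,Z\}^2$, the combination of $A^{\nu;i}$'s anisotropic weights with the elliptic gain from $\Delta_L^{-1}$ always pairs correctly against the product-rule loss on the coefficient, so that the Neumann iteration closes with a $c_0$-small contraction constant rather than a $\langle t\rangle$-growing one.
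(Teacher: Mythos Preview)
Your overall strategy---write $\Delta_L U = \phi - (\Delta_t-\Delta_L)U$ and absorb the error terms via smallness of the coefficients---matches the paper. However, your account of where the $\jap{t}$-decay and the tail $\jap{t}^{-3}\norm{\phi_{\neq}}_{\G^{\lambda,\gamma'}}$ come from is incorrect, and this creates a genuine gap.

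You claim that for the leading piece $A^{\nu;i}\Delta_L^{-1}\phi$, the factor $\min(1,\jap{\eta,l}^a/t^a)$ combined with the elliptic symbol produces the advertised $\jap{t}^{-1}$ (or $\jap{t}^{-2}$). This fails near resonance: if $t\leq 2\abs{\eta}$ (so in particular $t\lesssim\jap{\eta,l}$), the min factor equals $1$, and at a critical frequency $\eta\approx kt$ with $l$ small the symbol $(k^2+l^2+\abs{\eta-kt}^2)^{-1}$ is $O(1)$ as well. No decay comes from either piece. The paper handles this with the split \eqref{ineq:AnuHiLowSep}: on the region $t\leq 2\abs{\eta}$ one trades the $\jap{D(t,\eta)}^\alpha\lesssim\jap{\eta}^{3\alpha}$ factor for regularity, obtaining $A^{\nu;i}\lesssim\jap{t}^{-p}\jap{k,\eta,l}^{\beta+3\alpha+p}e^{\lambda\abs{k,\eta,l}^s}$, and then invokes Lemma \ref{lem:LossyElliptic}. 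This is what produces \emph{both} the $\jap{t}$-powers and the tail term $\jap{t}^{-3}\norm{\phi_{\neq}}_{\G^{\lambda,\gamma'}}$; the tail is a short-time contribution from the leading term, not from the errors as you suggest.

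Correspondingly, your paraproduct treatment of the errors is more elaborate than needed and misattributes the tail. Since $A^{\nu;i}$ sits well below the coefficient regularity ($\beta+3\alpha+2<\gamma$), the full product inequality \eqref{ineq:AnuiDistri} applies directly: $\norm{A^{\nu;i}\mathcal{E}_j}_2\lesssim\norm{C}_{\G^{\lambda,\beta+3\alpha+4}}\norm{A^{\nu;i}\Delta_L\Delta_t^{-1}\phi}_2\lesssim c_0\norm{A^{\nu;i}\Delta_L\Delta_t^{-1}\phi}_2$, with no separate high/low coefficient cases and no tail contribution from the errors. The paper first establishes \eqref{ineq:PEL_CKnuIII} this way, then combines it with the \eqref{ineq:AnuHiLowSep} split to get \eqref{ineq:AnuLossyII} and \eqref{ineq:AnuLossyED}.
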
  
\begin{proof}
The proof is similar to the corresponding lossy elliptic lemma in \cite{BMV14}. 
For all $i$ the proof is basically the same, so concentrate on $i = 3$. Moreover, the proofs of all the inequalities are essentially the same, so we consider just the $\partial_Z$ estimate in \eqref{ineq:AnuLossyII}. 

First, by \eqref{ineq:AnuHiLowSep}, 
\begin{align*} 
\norm{\partial_Z A^{\nu;3} \Delta_t^{-1} \phi}_2 & \lesssim \frac{1}{\jap{t}^5}\norm{\Delta_L \Delta_t^{-1}\phi}_{\G^{\lambda,\beta+3\alpha+4}} + \frac{1}{\jap{t}}\norm{A^{\nu;3} \Delta_L \Delta_t^{-1}\phi}_2. 
\end{align*}
Control of the first term follows from Lemma \ref{lem:LossyElliptic}, so turn to the second term. 
First, 
\begin{align}
\Delta_L \Delta_t^{-1} \phi & = \phi - G(\partial_Y - t\partial_X)^2 \Delta_t^{-1} \phi - 2\psi_z (\partial_Y - t\partial_X) \partial_Z \Delta_t^{-1} \phi - \Delta_t C(\partial_Y - t\partial_X) \Delta_t^{-1}\phi  \nonumber \\ 
& = \phi + \mathcal{E}_1 + \mathcal{E}_2 + \mathcal{E}_3  \label{def:Deltinv_AnuLoss} 
\end{align} 
Applying $A^{\nu;3}$ to both sides yields 
\begin{align}
\norm{A^{\nu;3}\Delta_L \Delta_t^{-1} \phi}_2 & \lesssim \norm{A^{\nu;3}\phi}_2 + \sum_{j=1}^3 \norm{A^{\nu;3}\mathcal{E}_j}_2\label{ineq:trivDeltainv}
\end{align}
Then, by \eqref{ineq:AnuiDistri}, and Lemma \ref{lem:CoefCtrl}, 
\begin{align*}
\sum_{j = 1}^3\norm{A^{\nu;3}\mathcal{E}_j}_2 \lesssim \left(\norm{G}_{\G^{\lambda,\beta+3\alpha + 3}} + \norm{\psi_z}_{\G^{\lambda,\beta+3\alpha + 3}} + \norm{\Delta_t C}_{\G^{\lambda,\beta+3\alpha + 3}}\right)\norm{A^{\nu;3} \Delta_L \Delta_t^{-1}\phi}_2 & \lesssim c_0\norm{A^{\nu;3} \Delta_L \Delta_t^{-1}\phi}_2,
\end{align*}
which for $c_0$ sufficiently small can be then absorbed on the LHS of \eqref{ineq:trivDeltainv}. 
This completes the treatment of \eqref{ineq:AnuLossyII}. 

Consider \eqref{ineq:AnuLossyED}. 
Note by \eqref{ineq:AnuHiLowSep2} that, 
\begin{align*} 
\norm{\partial_Z A^{\nu;3} \Delta_t^{-1} \phi}_2 & \lesssim \frac{1}{\jap{t}^6}\norm{\Delta_L \Delta_t^{-1}\phi}_{\G^{\lambda,\beta + 3\alpha + 5}} + \frac{1}{\jap{t}^2}\norm{\sqrt{-\Delta_L} A^{\nu;3} \Delta_L \Delta_t^{-1}\phi}_2.   
\end{align*}
Then we consider \eqref{def:Deltinv_AnuLoss} and apply $\sqrt{-\Delta_L}A^{\nu;3}$ to both sides. 
The result follows as in \eqref{ineq:AnuLossyII} above, except also applying \eqref{ineq:TriTriv} to move the $\sqrt{-\Delta_L}$ past the coefficients. We omit the details for brevity. 

Finally, the proof of \eqref{ineq:PEL_CKnuIII} is straightforward by the above techniques and is hence omitted. 
\end{proof}

Next we consider the interaction of $A^{\nu;i}$ with the $w_L$ multiplier.

\begin{lemma}[$CK^\nu_{wL}$ elliptic lemma] \label{lem:AnuLossy_CKnu}
Under the bootstrap hypotheses, for $c_0$ sufficiently small we have for any function~$\phi$, 
\begin{align}
\norm{\sqrt{\frac{\partial_t w_L}{w_L}} A^{\nu;i} \Delta_L \Delta^{-1}_t \phi}_{2} & \lesssim \norm{\sqrt{\frac{\partial_t w_L}{w_L}} A^{\nu;i} \phi}_2. \label{ineq:PEL_CKnuII} 
\end{align}   
\end{lemma}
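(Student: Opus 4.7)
The plan is to mimic the proof of Lemma \ref{lem:AnuLossy}, treating the multiplier $\sqrt{\partial_t w_L/w_L}$ as an essentially innocuous extra factor that can be moved across paraproducts at the price of some harmless powers of $\jap{\eta-\xi,l-l^\prime}$. As in that proof, the starting point is the identity
\begin{align*}
\Delta_L \Delta_t^{-1} \phi = \phi + \mathcal{E}_1 + \mathcal{E}_2 + \mathcal{E}_3,
\end{align*}
where
\begin{align*}
\mathcal{E}_1 &= -G(\partial_Y - t\partial_X)^2 \Delta_t^{-1}\phi, \\
\mathcal{E}_2 &= -2\psi_z (\partial_Y - t\partial_X)\partial_Z \Delta_t^{-1}\phi, \\
\mathcal{E}_3 &= -\Delta_t C\,(\partial_Y - t\partial_X)\Delta_t^{-1}\phi.
\end{align*}
Applying $\sqrt{\partial_t w_L/w_L}\,A^{\nu;i}$ to both sides and using the triangle inequality reduces the lemma to the claim
\begin{align*}
\sum_{j=1}^3 \norm{\sqrt{\tfrac{\partial_t w_L}{w_L}}\,A^{\nu;i} \mathcal{E}_j}_2 \lesssim c_0\, \norm{\sqrt{\tfrac{\partial_t w_L}{w_L}}\,A^{\nu;i} \Delta_L\Delta_t^{-1}\phi}_2,
\end{align*}
since this may then be absorbed on the left-hand side and the remainder is precisely the right-hand side of \eqref{ineq:PEL_CKnuII}.

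For each $\mathcal{E}_j$ the idea is to paraproduct-decompose the product into $\text{coef}_{Lo}\cdot\text{stuff}_{Hi}$, $\text{coef}_{Hi}\cdot\text{stuff}_{Lo}$, and a remainder. Because the coefficients $G,\psi_z,\Delta_t C$ are independent of $X$, the non-zero $X$-frequency $k$ enforced by the projection in $A^{\nu;i}$ lies entirely on the $\Delta_t^{-1}\phi$ factor. Consequently, since $\partial_t w_L/w_L \equiv 0$ at $k=0$, the multiplier $\sqrt{\partial_t w_L/w_L}$ may be transferred onto whichever factor carries the $k$-mode via \eqref{ineq:dtNBasic} at the cost of a harmless $\jap{\eta-\xi,l-l^\prime}^{3/2}$, which is absorbed into the regularity of the coefficient by Lemma \ref{lem:CoefCtrl} and the bootstrap control \eqref{ineq:Boot_LowC}. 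On the high-high remainder one argues as in \eqref{ineq:quadR}, again putting the $\sqrt{\partial_t w_L/w_L}$ on the $\Delta_t^{-1}\phi$ factor. In every case the $(\partial_Y - t\partial_X)$ derivatives reconstruct (a piece of) $\Delta_L$ acting on $\Delta_t^{-1}\phi$, so that after applying \eqref{ineq:AnuiDistri}-type estimates and Lemma \ref{lem:CoefCtrl} one obtains
\begin{align*}
\norm{\sqrt{\tfrac{\partial_t w_L}{w_L}}\,A^{\nu;i} \mathcal{E}_j}_2 \lesssim \bigl(\norm{G}_{\G^{\lambda,\beta+3\alpha+5}} + \norm{\psi_z}_{\G^{\lambda,\beta+3\alpha+5}} + \norm{\Delta_t C}_{\G^{\lambda,\beta+3\alpha+5}}\bigr) \norm{\sqrt{\tfrac{\partial_t w_L}{w_L}}\,A^{\nu;i}\Delta_L\Delta_t^{-1}\phi}_2,
\end{align*}
and the coefficient prefactor is $\lesssim c_0$ by \eqref{ineq:Boot_LowC} and \eqref{ineq:psiCLow}.

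The main obstacle is the bookkeeping in the $\text{coef}_{Hi}\cdot\text{stuff}_{Lo}$ case: here one would naively want $\sqrt{\partial_t w_L/w_L}$ to rest on the coefficient, but since coefficients are $X$-independent that factor vanishes. The resolution is to use \eqref{ineq:dtNBasic} in its natural direction to shift the multiplier onto the low-frequency $\Delta_t^{-1}\phi$ factor; the attendant polynomial loss in $\jap{\xi,l^\prime}$ is absorbed by the ample regularity gap between $A^{\nu;i}$ and the Gevrey control on $C$ from the bootstrap, exactly as in the derivation of \eqref{ineq:PEL_CKnuIII}. Once these shifts are made, the remaining estimates are routine paraproduct bounds and the lemma follows by choosing $c_0$ small enough.
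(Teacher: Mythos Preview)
Your proposal is correct and follows essentially the same approach as the paper: expand $\Delta_L\Delta_t^{-1}\phi = \phi + \mathcal{E}_1+\mathcal{E}_2+\mathcal{E}_3$, transfer the multiplier $\sqrt{\partial_t w_L/w_L}$ from $(k,\eta,l)$ to $(k,\xi,l')$ via \eqref{ineq:dtNBasic} (which is precisely the inequality the paper displays), and absorb the coefficient contributions using the $c_0$ smallness from \eqref{ineq:Boot_LowC}. The only difference is stylistic: the paper invokes the product lemma \eqref{ineq:AnuiDistri} directly rather than writing out an explicit paraproduct decomposition, so your ``main obstacle'' discussion of the $\text{coef}_{Hi}$ case is already subsumed there and need not be singled out.
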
 
\begin{proof} 
The proof of \eqref{ineq:PEL_CKnuII} follows similar to Lemma \ref{lem:AnuLossy} by applying
\begin{align*} 
\frac{\abs{k}^{1/2} \jap{l}^{1/2} }{\abs{k,l,\eta-kt}} & = \frac{\jap{l}^{1/2} \abs{k,l^\prime,\xi-kt}}{\jap{l^\prime}^{1/2} \abs{k,l,\eta-kt}} \frac{\abs{k}^{1/2} \jap{l^\prime}^{1/2} }{\abs{k,l^\prime,\xi-kt}} \\ 
& \lesssim \jap{l-l^\prime, \eta-\xi}^{3/2} \frac{\abs{k}^{1/2} \jap{l^\prime}^{1/2} }{\abs{k,l^\prime,\xi-kt}}.  
\end{align*} 
\end{proof}

\subsection{Precision lemmas} \label{sec:PEL}
All of the so-called `precision elliptic lemmas' (PEL) are variations on the common theme of using $\Delta_L^{-1}$ as an approximate inverse.  

The first PEL puts $U_0^i$ in the high norm, producing low frequency error terms. % that are better controlled using the low frequency $L^2$ estimates.
\begin{lemma}[Zero mode PEL] \label{lem:PELbasicZero}
If $C$ satisfies the bootstrap assumptions~\eqref{ineq:Boot_CgHi}, then for $c_0$ sufficiently small, 
\begin{subequations} \label{ineq:AU0PEL} 
\begin{align} 
\jap{t}^{-2}\norm{A U_0^1}_2^2 & \lesssim \norm{A^1 Q^1_0}_2^2 +\jap{t}^{-2} \norm{U_0^1}_2^2 + \epsilon^2\norm{AC}_2^2 \\ 
\norm{A U_0^2}_2^2 & \lesssim \norm{A^2 Q^2_0}_2^2 + \norm{U_0^2}_2^2 + \frac{\epsilon^2}{\jap{\nu t}^{2\alpha}} \norm{AC}_2^2 \\
\norm{A U_0^3}_2^2 & \lesssim \norm{A^3 Q^3_0}_2^2 + \norm{U_0^3}_2^2 + \epsilon^2 \norm{AC}_2^2. \label{ineq:AU03PEL} 
\end{align} 
\end{subequations} 
Moreover we have
\begin{subequations} \label{ineq:gradAU0i}  
\begin{align} 
\jap{t}^{-2}\norm{\grad A U_0^1}_2^2 & \lesssim \norm{\grad A^1 Q^1_0}_2^2 +\jap{t}^{-2} \norm{\grad U_0^1}_2^2 + \epsilon^2 \norm{\nabla AC}_2^2 \\ 
\norm{\grad A U_0^2}_2^2 & \lesssim \norm{\grad A^2 Q_0^2}_2^2 + \norm{\grad U_0^2}_2^2 + \frac{\epsilon^2}{\jap{\nu t}^{2\alpha}}\norm{\grad AC}_2^2. \label{ineq:gradAU02_PEL} \\
\norm{\grad A U_0^3}_2^2 & \lesssim \norm{\grad A^3 Q^3_0}_2^2 + \norm{\grad U_0^3}_2^2 + \epsilon^2 \norm{\nabla AC}_2^2. 
\end{align}
\end{subequations} 
\end{lemma}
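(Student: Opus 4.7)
The plan is to use the zero-$X$-frequency restriction of $\Delta_L$ (namely $\partial_{YY} + \partial_{ZZ}$) as an approximate inverse of $\Delta_t$, treating the correction $(\Delta_t - \Delta_L)U_0^i$ as a perturbation involving the coefficients $G$, $\psi_z$, and $\Delta_t C$, each of which is controlled by $AC$ via Lemma~\ref{lem:CoefCtrl}. First I would split $U_0^i$ into its $(Y,Z)$-mean plus its mean-free part: the mean contributes only to the very low frequency $(\eta,l)=0$ mode of $\widehat{U_0^i}$, where $A$ is $O(1)$, and so this piece is harmlessly absorbed by the $\norm{U_0^i}_2^2$ term in each estimate.

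For the mean-free part, starting from $Q_0^i = \Delta_t U_0^i$ and using that at $k=0$ we have $(\partial_Y - t\partial_X)|_{k=0} = \partial_Y$, the formula for $\Delta_t$ reduces to
\begin{align*}
(\partial_{YY}+\partial_{ZZ})U_0^i = Q_0^i - G\,\partial_{YY}U_0^i - 2\psi_z \partial_{YZ} U_0^i - (\Delta_t C)\,\partial_Y U_0^i.
\end{align*}
Applying $A$ to both sides and using that, on mean-free modes, $\jap{\eta,l}^2/(|\eta|^2+|l|^2) \lesssim 1$, the left-hand side produces $\norm{A U_0^i}_2$ (with the correct $\jap{t}^{-1}$ adjustment in the $i=1$ case coming from $A = \jap{t}\jap{\grad}^0 A^1_0$ on $k=0$ modes). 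The leading right-hand term yields $\norm{A^i Q_0^i}_2$ after accounting for the same $\jap{t}$-factor when $i=1$.

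The three coefficient errors are treated by paraproduct expansion. The low-high piece (coefficient in low frequency, $\partial_{YY}U_0^i$ in high frequency) is estimated via Lemma~\ref{lem:AAiProd} and Lemma~\ref{lem:CoefCtrl} by $\norm{C}_{\G^{c\lambda,3/2+}}\norm{A U_0^i}_2 \lesssim c_0 \norm{AU_0^i}_2$, which, upon squaring, is absorbed into the left-hand side for $c_0$ small. The high-low piece (coefficient in high frequency) yields $\norm{AC}_2$ from Lemma~\ref{lem:CoefCtrl}, multiplied by the low-frequency $\G^{c\lambda,3/2+}$-norm of $\jap{\grad}^2 U_0^i$. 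Invoking the a priori bounds $\norm{U_0^1}_{\G^{\lambda,3/2+}}\lesssim \epsilon\jap{t}$, $\norm{U_0^2}_{\G^{\lambda,3/2+}}\lesssim \epsilon\jap{\nu t}^{-\alpha}$, and $\norm{U_0^3}_{\G^{\lambda,3/2+}}\lesssim \epsilon$ that follow from \eqref{ineq:AprioriU0} and \eqref{ineq:Boot_Q02_Low}, this produces precisely the $\epsilon^2\jap{t}^2\norm{AC}_2^2$ (giving $\epsilon^2\norm{AC}_2^2$ after dividing by $\jap{t}^2$), $\epsilon^2\jap{\nu t}^{-2\alpha}\norm{AC}_2^2$, and $\epsilon^2\norm{AC}_2^2$ error terms claimed in \eqref{ineq:AU0PEL}. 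Paraproduct remainders are handled identically via Lemma~\ref{lem:Arem}.

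For the gradient estimates \eqref{ineq:gradAU0i} I would repeat the same argument after commuting one factor of $\grad_{Y,Z}$ past the identity above; the only new input is that the extra derivative landing on the coefficient is controlled by $\norm{\grad AC}_2$, again via Lemma~\ref{lem:CoefCtrl}, giving exactly the error structure stated. The main obstacle is purely bookkeeping: making sure the low-high paraproduct contribution in each coefficient error really yields a factor that is $O(c_0)$ (not $O(1)$) so that it can be absorbed into the left-hand side, and checking that the finite derivative loss in Lemma~\ref{lem:CoefCtrl} is compatible with the regularity level used in the definition of $A$. No new conceptual mechanism beyond Lemmas~\ref{lem:AAiProd}, \ref{lem:CoefCtrl}, \ref{lem:ABasic}, and \ref{lem:Arem} is required.
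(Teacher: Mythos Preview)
Your approach is the same as the paper's in spirit, but there is one concrete gap in the frequency split. You propose to separate $U_0^i$ into its ``mean'' at $(\eta,l)=(0,0)$ and the mean-free part, and then assert that on mean-free modes $\jap{\eta,l}^2/(|\eta|^2+|l|^2)\lesssim 1$. Since $Y\in\Real$ and hence $\eta\in\Real$ is continuous, the single mode $(0,0)$ has measure zero and your split is vacuous; more importantly, the claimed bound fails when $l=0$ and $|\eta|\to 0$, so $A\Delta^{-1}$ is \emph{not} bounded on the mean-free part and the argument as written does not close.

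The paper fixes exactly this by splitting instead at a fixed frequency scale, $\norm{AU_0^i}_2^2 = \norm{(AU_0^i)_{\leq 1}}_2^2 + \norm{(AU_0^i)_{>1}}_2^2$. On $\{|\eta,l|\leq 1\}$ one has $A\lesssim 1$, which produces the $\norm{U_0^i}_2^2$ term; on $\{|\eta,l|>1\}$ the multiplier $\mathcal{M}=A\Delta^{-1}$ is genuinely bounded and your paraproduct treatment of the coefficient errors goes through essentially as you describe (the paper also inserts one further cutoff in the $\mathcal{E}_{1;U}$ piece to cleanly separate the part absorbed on the left from a residual $c_0^2\norm{U_0^i}_2^2$). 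Once you replace ``mean/mean-free'' by the $\leq 1/>1$ Littlewood--Paley split, the rest of your outline --- the identity for $\Delta_t$ at $k=0$, the paraproduct decomposition, the use of Lemmas~\ref{lem:CoefCtrl}, \ref{lem:ABasic}, \ref{lem:Arem}, and the a~priori sizes of $U_0^i$ for the high-low piece --- matches the paper's proof.
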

\begin{proof}
The proofs of \eqref{ineq:AU0PEL} are all the same so we only focus on \eqref{ineq:AU03PEL} for simplicity.   
We will separate low and high frequencies:
\begin{align*} 
\norm{A U_0^3}_2^2 & = \norm{ \left(A U_0^3\right)_{\leq 1}}_2^2 + \norm{ \left(AU_0^3\right)_{>1}}_2^2.
\end{align*} 
For the low frequency term we have
\begin{align*} 
\norm{\left(A  U_0^3\right)_{\leq 1}}_2^2 & \lesssim \norm{U_0^3}_2^2. % & \lesssim \epsilon^2. 
\end{align*} 
For the high frequencies we write: 
\begin{align} 
\left(\Delta \Delta_t^{-1} Q_0^3\right)_{> 1} & = (Q_0^3)_{> 1} - \left( (G\partial_{YY} + 2\psi_z\partial_{ZY})U^3_0\right)_{>1} - \left(\Delta_t C \partial_Y U^3_0\right)_{>1} \nonumber \\ 
& = Q_0^3 + \mathcal{E}_1 + \mathcal{E}_2, \label{ineq:0ModeApproxInv} 
\end{align}  
and denote the multiplier
\begin{align*} 
\mathcal{M} = A \Delta^{-1}.    
\end{align*}
We apply this multiplier to both sides of \eqref{ineq:0ModeApproxInv} and deduce 
\begin{align}
\norm{\mathcal{M}\left(\Delta \Delta_t^{-1}Q_0^3\right)_{> 1}}_2 & \lesssim \norm{\mathcal{M} Q_0^3}_2 + \sum_{j =1}^2 \norm{\mathcal{M}\mathcal{E}_j}_2. \label{ineq:MDelDeltQ}
\end{align}
The error terms will be either absorbed on the LHS or will yield the contribution from the coefficients in \eqref{ineq:AU0PEL}. 
Consider $\mathcal{E}_1$ first. We expand with a paraproduct 
\begin{align*} 
\mathcal{M}\mathcal{E}_1 & = \mathcal{M}\left((G)_{Hi} \partial_{YY} + 2(\psi_z)_{Hi}\partial_{ZY})(U^3_0)_{Lo}\right)_{>1} + \mathcal{M}\left((G)_{Lo} \partial_{YY} + 2(\psi_z)_{Lo}\partial_{ZY})(U^3_0)_{Hi}\right)_{>1} + \mathcal{M}\mathcal{E}_{1;\mathcal{R}} \\ 
& = \mathcal{M}\mathcal{E}_{1;C} + \mathcal{M}\mathcal{E}_{1;U} + \mathcal{M}\mathcal{E}_{1;\mathcal{R}}. 
\end{align*} 
When the coefficient is in high frequency, we have from \eqref{ineq:AprioriU0}, 
\begin{align} 
\mathcal{M}(\eta,l)\mathcal{E}_{1;C} & \lesssim \epsilon \int \frac{1}{\abs{\eta,l}^{2}} A(\eta,l) \mathbf{1}_{\abs{\eta,l} > 1} \left(\widehat{G}(\xi,l^\prime)_{Hi} + \widehat{\psi_z}(\xi,l^\prime)_{Hi}\right) Low(\eta-\xi,l-l^\prime) d\xi. \label{ineq:GhiCoef}
\end{align} 
(it is only the size of the low frequency factor that changes between the different cases in \eqref{ineq:AU0PEL}). 
Therefore, by Lemma \ref{lem:ABasic} and  \eqref{ineq:quadHL}, 
\begin{align} 
\norm{\mathcal{M}(\eta,l)\mathcal{E}_{1;C}}_2^2 & \lesssim \epsilon^2 \norm{A C}_2^2,  
\end{align} 
which suffices for $\mathcal{E}_{1;C}$. 
For $\mathcal{E}_{1;U}$ we use a slightly different treatment: 
\begin{align*} 
\mathcal{M}(\eta,l)\mathcal{E}_{1;U} & \lesssim c_0\int \frac{1}{\abs{\eta,l}^{2}} A(\eta,l) \mathbf{1}_{\abs{\eta,l} > 1}  \abs{\widehat{\jap{\grad}^2U_0^3}(\xi,l^\prime)_{Hi}} Low(\eta-\xi,l-l^\prime) d\xi  \\ 
& \lesssim c_0\int \frac{1}{\abs{\eta,l}^{2}} A(\eta,l) \mathbf{1}_{\abs{\eta,l} > 1}\left[\mathbf{1}_{\abs{\xi,l^\prime} > 1} + \mathbf{1}_{\abs{\xi,l^\prime} \leq 1}\right]  \abs{\widehat{\jap{\grad}^2 U_0^3}(\xi,l^\prime)_{Hi}} Low(\eta-\xi,l-l^\prime) d\xi \\ 
& = \mathcal{M}(\eta,l)\mathcal{E}_{1;U}^H + \mathcal{M}(\eta,l)\mathcal{E}_{1;U}^L,  
\end{align*} 
From \eqref{ineq:quadHL},
\begin{align*} 
\norm{\mathcal{M}(\eta,l)\mathcal{E}_{1;U}^H}_2^2 \lesssim c_{0}^2\norm{\mathcal{M}\left(\Delta U_0^3\right)_{>1}}_2^2,  
\end{align*} 
which will be absorbed on the LHS of the estimate \eqref{ineq:MDelDeltQ}. Similarly, 
\begin{align*} 
\norm{\mathcal{M}(\eta,l)\mathcal{E}_{1;U}^L}_2^2 \lesssim c_{0}^2\norm{U_0^3}_2^2,  
\end{align*} 
which appears on the RHS of \eqref{ineq:AU03PEL}. 
Next turn to the remainder term $\mathcal{E}_{1;\mathcal{R}}$, which by applying \eqref{ineq:AARemainderBasic} and \eqref{ineq:quadR},
\begin{align*} 
\norm{\mathcal{M}(\eta,l)\mathcal{E}_{1;\mathcal{R}}}_2^2 & \lesssim c_{0}^2\norm{\mathcal{M}(\Delta U_0^3)_{>1}}_2^2 + c_{0}^2\norm{U_0^3}_2^2.  
\end{align*}  
Hence, everything on the RHS either also appears on the RHS of \eqref{ineq:AU03PEL} or is absorbed on the LHS of \eqref{ineq:MDelDeltQ} for $c_0$ sufficiently small. 
This completes the treatment of $\mathcal{E}_1$. The treatment of $\mathcal{E}_2$ is very similar, note that we have an extra derivative in \eqref{ineq:GhiCoef}. 
Hence, this treatment is omitted for brevity. 
This completes the proof of \eqref{ineq:AU03PEL}; the remaining inequalities in \eqref{ineq:AU0PEL} are the same and the proofs are omitted.  

The proof of \eqref{ineq:gradAU0i} is essentially the same as \eqref{ineq:AU0PEL} after noting that the coefficients $\psi_z$ and $\psi_y$ always have factors of $\grad C$ (see Lemma \ref{lem:CoefCtrl}).
\end{proof}

The next PEL is the one most analogous to Proposition 2.4 in \cite{BM13}.  

\begin{lemma}[$CK$ PEL] \label{lem:PEL_NLP120neq}
Let $\phi$ be given such that $\norm{\phi}_{\G^{\lambda}}\lesssim \epsilon \jap{t}^{b} \jap{\nu t^3}^{-a}$ for some $a \geq 0$ and $b \geq 0$. 
Then for $c_0$ and $\epsilon$ sufficiently small, and provided that $C$ satisfies the bootstrap assumptions~\eqref{ineq:Boot_CgHi},
\begin{subequations} \label{ineq:PEL_NLP120neq}
\begin{align} 
\norm{\left(\sqrt{\frac{\partial_t w}{w}} + \frac{\abs{\grad}^{s/2}}{\jap{t}^s}\right) A^{i} \Delta_L \Delta_t^{-1} \phi_{\neq}}_2^2 
  & \lesssim \norm{\left(\sqrt{\frac{\partial_t w}{w}} + \frac{\abs{\grad}^{s/2}}{\jap{t}^s}\right)A^{i} \phi_{\neq}}_2^2 \nonumber \\ 
 & \quad + \frac{\epsilon^2 \jap{t}^{2b-2} }{\jap{\nu t^3}^{2a}} \norm{\left(\sqrt{\frac{\partial_t w}{w}} + \frac{\abs{\grad}^{s/2}}{\jap{t}^s} \right)A C}_2^2.   \label{ineq:PEL_NLP120neq1} 
\end{align} 
\end{subequations}
\end{lemma}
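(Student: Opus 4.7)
The plan is to proceed in the spirit of Lemma \ref{lem:PELbasicZero} but adapted to non-zero $X$-frequencies and to the CK multiplier $\mathcal{M} := \left(\sqrt{\partial_t w/w} + \abs{\grad}^{s/2}/\jap{t}^s\right)A^i$. Writing $U_{\neq} := \Delta_t^{-1}\phi_{\neq}$ and using the definition of $\tilde\Delta_t$ together with \eqref{def:G}, I would decompose
\begin{align*}
\Delta_L \Delta_t^{-1}\phi_{\neq} = \phi_{\neq} - G(\partial_Y - t\partial_X)^2 U_{\neq} - 2\psi_z\partial_Z(\partial_Y - t\partial_X)U_{\neq} - \Delta_t C\,(\partial_Y - t\partial_X)U_{\neq} =: \phi_{\neq} + \sum_{j=1}^{3}\mathcal{E}_j.
\end{align*}
Applying $\mathcal{M}$ and taking the $L^2$ norm produces the first term on the RHS of \eqref{ineq:PEL_NLP120neq1}; the remainder of the argument is devoted to showing that $\sum_j \norm{\mathcal{M}\mathcal{E}_j}_2$ is bounded by the second term on the RHS plus a small (order-$c_0$) multiple of the LHS that is subsequently absorbed.

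For each $\mathcal{E}_j$, I would expand the product via the paraproduct decomposition of \S\ref{sec:paranote}, distinguishing three contributions: the coefficient ($G$, $\psi_z$, or $\Delta_t C$) in low frequency, in high frequency, and a remainder. In the low-frequency-coefficient case, \eqref{ineq:CoefCtrlLow} (via Lemma \ref{lem:CoefCtrl}) gives a prefactor $\lesssim c_0$; the high-frequency factor $(\partial_Y-t\partial_X)^2 U_{\neq}$ is then rewritten as $\Delta_L\Delta_t^{-1}\phi_{\neq} - \partial_{XX}\Delta_t^{-1}\phi_{\neq} - \partial_Z^2\Delta_t^{-1}\phi_{\neq}$ modulo further coefficient errors; the first piece closes the loop on the LHS and is absorbed for $c_0$ small, while the latter two are strictly better and controlled by Lemma \ref{lem:LossyElliptic} combined with the decay of $A^i$. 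The product estimate for the CK multiplier needed at this step is provided by Lemma \ref{lem:AAiProd}, specifically \eqref{ineq:ACKwProd}.

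In the high-frequency-coefficient case, which is the source of the second term on the RHS, I would apply Lemma \ref{lem:ABasic} together with the CK frequency-comparison \eqref{ineq:dtwBasic} from Lemma \ref{lem:CKwFreqRat} to transfer the multiplier $\mathcal{M}$ from acting on $\mathcal{E}_j$ over to an $A$-multiplier acting on the coefficient, paying a small Gevrey regularity loss via \eqref{lem:scon}; this yields precisely the factor $\norm{(\sqrt{\partial_t w/w} + \abs{\grad}^{s/2}/\jap{t}^s)AC}_2$. The remaining low-frequency factor $(\partial^L)^2 U_{\neq}$ (or its $\mathcal{E}_2,\mathcal{E}_3$ analogues) is measured in a lower-regularity Gevrey norm where Lemma \ref{lem:LossyElliptic} applies: combined with the hypothesis $\norm{\phi}_{\G^\lambda}\lesssim \epsilon\jap{t}^b \jap{\nu t^3}^{-a}$, this produces the prefactor $\epsilon\jap{t}^{b-1}\jap{\nu t^3}^{-a}$ after the two $\partial^L$'s consume one power of $\jap{t}$ relative to the $\jap{t}^{-2}$ gain of $\Delta_t^{-1}$. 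Squaring yields the advertised $\epsilon^2\jap{t}^{2b-2}\jap{\nu t^3}^{-2a}$ factor. The paraproduct remainder is treated identically using \eqref{ineq:ARemainderBasic} and \eqref{ineq:quadR}, and is strictly easier.

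The hard part is distributing the CK multiplier $\mathcal{M}$ across the product, because \eqref{ineq:dtwBasic} only applies when the surviving ``large'' frequency satisfies $\abs{k',\xi,l'}\gtrsim 1$; for genuinely low surviving frequencies one must fall back on \eqref{ineq:dtwBasicBrack2}, which produces an additional $\abs{\grad}^{s/2}/\jap{t}^s$ factor that has to be distributed onto whichever side still carries regularity to spare — here this is always the coefficient $C$, which sits at Gevrey radius $\lambda(t)$ with a generous Sobolev correction $\gamma > \beta + 3\alpha + 12$, so the extra $s/2$ derivative is harmless. A second delicate accounting is the recovery of exactly $\jap{t}^{-2}$ from the coefficient-high-frequency case: naively $(\partial^L)^2$ exactly cancels the $\jap{t}^{-2}$ from Lemma \ref{lem:LossyElliptic}, and the missing two powers must be produced from $A^i$'s $\jap{t/\jap{\xi,l'}}^{-2}$ decay (via the frequency-ratio inequalities of Lemma \ref{lem:ABasic}) combined with the fact that the high-frequency coefficient carries the $X$-independent $\jap{\xi,l'}$ weight — this is the same mechanism that underlies the $LS3^{0;C1}$ treatment in \S\ref{sec:LS30}, and a careful split into resonant and non-resonant frequencies along the lines of \eqref{ineq:NRtrick} may again be required to close the argument in the critical-interval regime.
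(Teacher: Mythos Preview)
Your approach is essentially the paper's: decompose $\Delta_L\Delta_t^{-1}\phi_{\neq}=\phi_{\neq}+\mathcal{E}_1+\mathcal{E}_2+\mathcal{E}_3$, apply $\mathcal{M}$, paraproduct each error, absorb the coefficient-low pieces on the LHS, and extract the $C$ term from the coefficient-high pieces via Lemma~\ref{lem:ABasic}, \eqref{ineq:dtwBasic}, and Lemma~\ref{lem:LossyElliptic}. Two places where you over-engineer: first, in the coefficient-low case there is no need to rewrite $(\partial_Y-t\partial_X)^2 U_{\neq}$ as $\Delta_L\Delta_t^{-1}\phi_{\neq}-\partial_{XX}\Delta_t^{-1}\phi_{\neq}-\partial_Z^2\Delta_t^{-1}\phi_{\neq}$; on the Fourier side one has $\abs{\eta-kt}^2\leq k^2+l^2+\abs{\eta-kt}^2$, so $\norm{\mathcal{M}(G_{Lo}(\partial_Y-t\partial_X)^2 P_{Hi})}_2\lesssim c_0\norm{\mathcal{M}\Delta_L P}_2$ directly. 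Second, and more importantly, no resonant/non-resonant split is needed in the coefficient-high case: the required $\jap{t}^{-1}$ gain (hence $\jap{t}^{-2}$ after squaring) drops out cleanly from the multiplier comparison $A^i_k\to A$ in Lemma~\ref{lem:ABasic}, which produces a factor $\jap{\xi,l'}^{-2}\jap{t/\jap{\xi,l'}}^{-1}\lesssim\jap{t}^{-1}\jap{\xi,l'}^{-1}$ uniformly, with the leftover $\jap{\xi,l'}^{-1}$ absorbing the derivative in $G\sim\nabla C$; there is nothing critical-interval-dependent here, and the $LS3^{0;C1}$ machinery you invoke is not needed.
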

\begin{proof} 
We will prove the result in the case $i = 2$; both of the other cases are very similar. 
Write $P = \Delta_t^{-1}\phi_{\neq}$ 
\begin{align} 
\Delta_L P & = \phi_{\neq} - G(\partial_Y - t\partial_X)^2 P - 2\psi_z(\partial_Y - t\partial_X) \partial_Z P  - \Delta_tC (\partial_Y - t\partial_X) P \nonumber \\ 
& = \phi_{\neq} + \mathcal{E}_1 + \mathcal{E}_2 + \mathcal{E}_3. \label{def:PkPEL}
\end{align} 
Analogous to the proof of [Proposition 2.4 \cite{BM13}], we will apply the multiplier
\begin{align*}
\left(\sqrt{\frac{\partial_t w}{w}} + \frac{\abs{\grad}^{s/2}}{\jap{t}^s}\right)A^{2} = \mathcal{M}
\end{align*} 
to both sides of \eqref{def:PkPEL} and estimate the terms on the RHS. 
Hence we get
\begin{align} 
\norm{\mathcal{M}\Delta_L P}_2^2 \lesssim \norm{\mathcal{M}\phi_{\neq}}_2^2 + \sum_{i = 1}^3 \norm{\mathcal{M}\mathcal{E}_i}_2^2. \label{ineq:DeltaPk} 
\end{align}
The leading order term on the LHS of \eqref{ineq:DeltaPk} appears on the RHS of \eqref{ineq:PEL_NLP120neq} so it remains to control the error terms.  

Turn to the first error term and expand with a paraproduct
\begin{align*} 
\mathcal{M}\mathcal{E}_1 & = \mathcal{M}G_{Hi}(\partial_Y - t\partial_X)^2 P_{Lo} + \mathcal{M}G_{Lo}(\partial_Y - t\partial_X)^2 P_{Hi} + \mathcal{E}_{1;\mathcal{R}} \\ 
& = \mathcal{E}_{1;C} + \mathcal{E}_{1;P} + \mathcal{E}_{1;\mathcal{R}}. 
\end{align*}
By \eqref{ineq:ABasic}, \eqref{ineq:dtwBasic}, \eqref{ineq:quadHL}, and Lemma \ref{lem:CoefCtrl} it follows that
\begin{align*} 
\norm{\mathcal{M}\mathcal{E}_{1;P}}_2^2 & \lesssim c_{0}^2\norm{\mathcal{M}\Delta_L P}_2^2, 
\end{align*} 
which can hence be absorbed on the LHS of \eqref{ineq:DeltaPk} by choosing $c_{0}$ sufficiently small.
Via \eqref{ineq:ARemainderBasic}, \eqref{ineq:dtwBasic}, \eqref{ineq:quadR}, and Lemma \ref{lem:CoefCtrl} we also have
\begin{align*} 
\norm{\mathcal{M}\mathcal{E}_{1;\mathcal{R}}}_2^2 & \lesssim c_{0}^2\norm{\mathcal{M}\Delta_L P}_2^2,
\end{align*} 
which, as above, is absorbed on the LHS of \eqref{ineq:DeltaPk} by choosing $c_{0}$ sufficiently small.

Consider next $\mathcal{E}_{1;C}$ for which, by the hypotheses and Lemma \ref{lem:LossyElliptic}, we have
\begin{align*} 
\mathcal{M}\mathcal{E}_{1;C} & \lesssim \frac{\epsilon \jap{t}^{b}}{\jap{\nu t^3}^{a}}\sum_{l} \int_\xi \mathcal{M}(k,\eta,l) \abs{\widehat{G}(\xi,l^\prime)_{Hi}} Low(k,\eta-\xi,l-l^\prime) d\xi; 
\end{align*}  
the extra $\jap{t}^2$ from $(\partial_Y - t\partial_X)^2$ was canceled by the $\Delta_t^{-1}$ in the definition of $P$ and Lemma \ref{lem:LossyElliptic}.  

From Lemma \ref{lem:ABasic} and \eqref{ineq:dtwBasic} it follows that 
\begin{align*} 
|\mathcal{M}\mathcal{E}_{1;C}| & \lesssim \frac{\epsilon \jap{t}^{b}}{\jap{\nu t^3}^{a}}\sum_{l} \int_\xi \left(\sqrt{\frac{\partial_t w(\xi)}{w(\xi)}} + \frac{\abs{\xi,l^\prime}^{s/2}}{\jap{t}^s} \right) 
\frac{1}{\jap{\xi,l^\prime}^2 \jap{\frac{t}{\jap{\xi,l^\prime}}}} A(\xi,l^\prime)\left| \widehat{G}(\xi,l^\prime)_{Hi} \right| Low(\eta-\xi,l-l^\prime) d\xi. 
\end{align*}  
Therefore, by \eqref{ineq:quadHL} and Lemma \ref{lem:CoefCtrl},
\begin{align*} 
\norm{\mathcal{M}\mathcal{E}_{1;C}}_2^2 & \lesssim \frac{\epsilon^2 \jap{t}^{2b-2}}{\jap{\nu t^3}^{2a}} \norm{\left(\sqrt{\frac{\partial_t w}{w}}  + \frac{\abs{\grad}^{s/2}}{\jap{t}^s} \right) A \jap{\grad}^{-1} G}_2^2 \\ 
& \lesssim \frac{\epsilon^2 \jap{t}^{2b-2}}{\jap{\nu t^3}^{2a}} \norm{\left(\sqrt{\frac{\partial_t w}{w}} + \frac{\abs{\grad}^{s/2}}{\jap{t}^s} \right) AC}_2^2.
\end{align*}
As this contribution appears on the RHS of \eqref{ineq:PEL_NLP120neq}, we are done with this term. 

The error terms $\mathcal{E}_{2}$ and $\mathcal{E}_{3}$ can also be separated using the paraproduct decomposition. With obvious notations, $\mathcal{E}_{2,P}$, $\mathcal{E}_{3,P}$, $\mathcal{E}_{2,\mathcal R}$, $\mathcal{E}_{3,\mathcal R}$ can be treated similarly to $\mathcal{E}_{1,P}$ and $\mathcal{E}_{1,\mathcal{R}}$ above. The second coefficient error term $\mathcal{E}_{2;C}$ is easier than $\mathcal{E}_{1;C}$ since the number of derivatives on $C$ is the same but the low frequency factor is one power of $t$ better.
The third error term $\mathcal{E}_{3;C}$ is treated analogously to $\mathcal{E}_{1;C}$ since the extra derivative on $C$ is absorbed at the cost of a power of time, but the low frequency factor is a power of $t$ better.   
We omit the details for brevity and conclude. 
\end{proof}

The next PEL is simpler than Lemma \ref{lem:PEL_NLP120neq} and is very frequently used when studying terms which are lower order in powers of $t$. 
 
\begin{lemma}[Zero order PEL] \label{lem:SimplePEL} 
Let $\phi$ be given such that $\norm{\phi}_{\G^{\lambda}} \lesssim \epsilon\jap{t}^{b} \jap{\nu t^3}^{-a}$ for $a,b \geq 0$ and suppose $C$ satisfies the bootstrap hypotheses.
Then for $c_0$ and $\epsilon$ sufficiently small, under the bootstrap hypotheses we have for all $i \in \set{1,2,3}$,  
\begin{align}
\norm{A^{i} \Delta_L \Delta_t^{-1} \phi_{\neq}}_2^2 & \lesssim \norm{A^{i} \phi_{\neq}}_2^2 + \frac{\epsilon^2 \jap{t}^{2b-2} }{\jap{\nu t^3}^{2a}}\norm{A C}_2^2, \label{ineq:SimplePEL}
\end{align} 
\end{lemma}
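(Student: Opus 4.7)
The strategy mirrors the proof of Lemma \ref{lem:PEL_NLP120neq}, but since we do not need the $\sqrt{\partial_t w/w}$ weight, several sources of technical difficulty disappear. Write $P = \Delta_t^{-1} \phi_{\neq}$ and expand
\begin{align*}
\Delta_L P = \phi_{\neq} - G(\partial_Y - t\partial_X)^2 P - 2 \psi_z (\partial_Y - t\partial_X)\partial_Z P - \Delta_t C (\partial_Y - t\partial_X) P = \phi_{\neq} + \mathcal{E}_1 + \mathcal{E}_2 + \mathcal{E}_3,
\end{align*}
exactly as in \eqref{def:PkPEL}. Apply the multiplier $A^i$ to both sides and take $L^2$ norms, so that \eqref{ineq:SimplePEL} reduces to controlling $\norm{A^i \mathcal{E}_j}_2^2$ for $j=1,2,3$ by the RHS of \eqref{ineq:SimplePEL} plus a small multiple of $\norm{A^i \Delta_L P}_2^2$ that can be absorbed for $c_0$ small.

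For each $\mathcal{E}_j$, expand with a paraproduct into three pieces: coefficient-high (say $\mathcal{E}_{j;C}$), $P$-high (say $\mathcal{E}_{j;P}$), and remainder $\mathcal{E}_{j;\mathcal{R}}$. The $P$-high pieces are handled by Lemma \ref{lem:ABasic} (applied with $i=j$, $a=b$, so $\Gamma = 1$) together with Lemma \ref{lem:CoefCtrl} and the bootstrap \eqref{ineq:Boot_LowC}, giving
\begin{align*}
\norm{A^i \mathcal{E}_{j;P}}_2^2 \lesssim \left(\norm{C}_{\G^{c\lambda,3/2+}}\right)^2 \norm{A^i \Delta_L P}_2^2 \lesssim c_0^2 \norm{A^i \Delta_L P}_2^2,
\end{align*}
which is absorbed on the LHS of \eqref{ineq:DeltaPk} for $c_0$ sufficiently small; the remainders $\mathcal{E}_{j;\mathcal{R}}$ are treated identically using \eqref{ineq:ARemainderBasic} and \eqref{ineq:quadR}.

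The interesting contributions are the coefficient-high pieces $\mathcal{E}_{j;C}$. Here we use the hypothesis $\norm{\phi}_{\G^\lambda} \lesssim \epsilon \jap{t}^b \jap{\nu t^3}^{-a}$ together with Lemma \ref{lem:LossyElliptic} to bound $P_{Lo}$ in a suitable low-regularity Gevrey norm by $\epsilon \jap{t}^{b-2} \jap{\nu t^3}^{-a}$, so that the factor $(\partial_Y - t\partial_X)^2$ (or lower) appearing in $\mathcal{E}_j$ costs at most $\jap{t}^2$ and the net low-frequency factor is $O\bigl(\epsilon \jap{t}^{b-2} \jap{\nu t^3}^{-a}\bigr)$. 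Applying Lemma \ref{lem:ABasic} on the $A^i$ side, using the pointwise bound $A^i_k(t,\eta,l) \lesssim \jap{\eta,l}^{-1} A_0(t,\eta,l)$ on the high-frequency coefficient (valid since the coefficient is $X$-independent), and then \eqref{ineq:triQuadHL} together with Lemma \ref{lem:CoefCtrl} yields
\begin{align*}
\norm{A^i \mathcal{E}_{j;C}}_2^2 \lesssim \frac{\epsilon^2 \jap{t}^{2b-2}}{\jap{\nu t^3}^{2a}} \norm{\jap{\grad}^{-1} A \, G}_2^2 + \text{(similar terms)} \lesssim \frac{\epsilon^2 \jap{t}^{2b-2}}{\jap{\nu t^3}^{2a}} \norm{AC}_2^2,
\end{align*}
matching the RHS of \eqref{ineq:SimplePEL}. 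As in the proof of Lemma \ref{lem:PEL_NLP120neq}, $\mathcal{E}_{2;C}$ and $\mathcal{E}_{3;C}$ are no worse than $\mathcal{E}_{1;C}$: in $\mathcal{E}_2$ one extra derivative on $P$ is balanced by one better power of $t$ in the low-frequency factor, and in $\mathcal{E}_3$ the extra derivative on $C$ is similarly compensated.

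The main obstacle, such as it is, lies in verifying that the argument is uniform in $i \in \{1,2,3\}$: the multipliers $A^1,A^2,A^3$ have different low-frequency decay prefactors ($\jap{t}^{-1}\min(1,\jap{\eta,l}^{1+\delta_1}/t^{1+\delta_1})$, $\min(1,\jap{\eta,l}/t)$, $\min(1,\jap{\eta,l}^2/t^2)$), but these factors land identically on both sides of the paraproduct inequality from Lemma \ref{lem:ABasic} (with $\Gamma = 1$, since we compare $A^i$ with itself), so they cancel out and do not interact with the coefficient estimates. Once this uniformity is checked, summing the three error contributions and absorbing the $P$-high and remainder pieces into $\norm{A^i \Delta_L P}_2^2$ on the LHS concludes the proof.
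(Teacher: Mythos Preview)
Your proposal is correct and follows exactly the approach the paper itself takes: the paper's proof simply reads ``The proof is essentially the same as Lemma \ref{lem:PEL_NLP120neq} so it is omitted for the sake of brevity,'' and you have spelled out precisely those details. One small imprecision: the pointwise bound you quote as $A^i_k(t,\eta,l) \lesssim \jap{\eta,l}^{-1} A_0(t,\eta,l)$ should really be the frequency-shifted inequality $A^i_k(t,\eta,l) \lesssim \Gamma(i,i,\neq,0)\, \jap{\xi,l'}^{-2} A(t,\xi,l') e^{c\lambda|k,\eta-\xi,l-l'|^s}$ from Lemma \ref{lem:ABasic} (with $A^i_0 = \jap{\cdot}^{-2}A$), and then one checks $\Gamma(i,i,\neq,0)\jap{\xi,l'}^{-2} \lesssim \jap{\xi,l'}^{-1}\jap{t}^{-1}$ uniformly in $i$ --- this is what produces both the $\jap{t}^{-1}$ and the $\jap{\nabla}^{-1}$ needed for Lemma \ref{lem:CoefCtrl}.
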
 
\begin{proof} 
The proof is essentially the same as Lemma \ref{lem:PEL_NLP120neq} so it is omitted for the sake of brevity. 
\end{proof} 

The next PEL is the primary tool for treating the linear pressure term $LP3$ in the $Q^3$ equation. 

\begin{lemma}[PEL for $CK_{wL}$] \label{lem:QPELpressureI}
Let $\phi$ be given such that $\norm{\phi}_{\G^{\lambda}} \lesssim \epsilon \jap{t}^b \jap{\nu t^3}^{-a}$ for $a,b \geq 0$ and suppose $C$ satisfies the bootstrap hypotheses.
Then for $c_0$ and $\epsilon$ sufficiently small, there holds
\begin{align} 
\norm{\sqrt{\frac{\partial_t w_L}{w_L}} A^{3} \Delta_{L} \Delta_t^{-1} \phi_{\neq}}_2^2  & \lesssim \norm{\sqrt{\frac{\partial_t w_L}{w_L}} A^{3} \phi_{\neq}}_2^2 + \frac{\epsilon^{2}\jap{t}^{2b-2}}{\jap{\nu t^3}^{2a}} \norm{\left(\sqrt{\frac{\partial_t w}{w}} + \frac{\abs{\grad}^{s/2}}{\jap{t}^s}\right)AC}_2^2. \label{ineq:PELpressureI}
\end{align} 
\end{lemma}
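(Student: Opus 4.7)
The plan is to mirror the proof of Lemma \ref{lem:PEL_NLP120neq}, replacing its LHS multiplier $(\sqrt{\partial_t w/w}+\abs{\grad}^{s/2}/\jap{t}^s)A^i$ by $\mathcal{M} := \sqrt{\partial_t w_L/w_L}\,A^3$. First I would set $P := \Delta_t^{-1}\phi_{\neq}$ and write
\[
\Delta_L P = \phi_{\neq} - G(\partial_Y-t\partial_X)^2 P - 2\psi_z\partial_Z(\partial_Y-t\partial_X) P - \Delta_t C\,(\partial_Y-t\partial_X) P =: \phi_{\neq}+\mathcal{E}_1+\mathcal{E}_2+\mathcal{E}_3,
\]
apply $\mathcal{M}$ to both sides, and take $L^2$ norms squared. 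The $\phi_{\neq}$ contribution matches the first term on the RHS of \eqref{ineq:PELpressureI}, so it suffices to bound $\sum_j\norm{\mathcal{M}\mathcal{E}_j}_2^2$ by $c_0^2\norm{\mathcal{M}\Delta_L P}_2^2$ plus the second term of the RHS, absorbing the former into the LHS for $c_0$ small.

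I would paraproduct-expand each $\mathcal{E}_j$ into coefficient-high, $P$-high, and remainder pieces. The $P$-high and remainder contributions follow the template of Lemma \ref{lem:PEL_NLP120neq} essentially verbatim; the only modification is that when transferring the $\sqrt{\partial_t w_L/w_L}$ weight from the output frequency $(k,\eta,l)$ to the $P$-frequency $(k,\xi,l^\prime)$ one uses \eqref{ineq:dtNBasic} in place of \eqref{ineq:dtwBasic} (which is permissible because the $X$-frequency is preserved between the two in this branch), and the polynomial loss $\jap{\eta-\xi,l-l^\prime}^{3/2}$ it introduces is harmlessly absorbed into the Gevrey-regular Low factor of the coefficient via Lemma \ref{lem:CoefCtrl} and the bootstrap hypothesis \eqref{ineq:Boot_LowC}. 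These contributions are of size $c_0^2\norm{\mathcal{M}\Delta_L P}_2^2$ and close.

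The main step is the coefficient-high branch $\mathcal{E}_{j,C}$, where the $X$-independent coefficient carries the large $(Y,Z)$ frequency $(\eta-\xi,l-l^\prime)$. For $\mathcal{E}_{1,C}$ I would use Lemma \ref{lem:LossyElliptic} to control $(\partial_Y-t\partial_X)^2 P_{Lo}$ (the $\jap{t}^2$ from the derivatives cancels the $\jap{t}^{-2}$ from lossy ellipticity, leaving a contribution of size $\epsilon\jap{t}^b/\jap{\nu t^3}^a$), apply Lemma \ref{lem:ABasic} with $\Gamma(3,2,\neq,0)=\jap{t/\jap{\eta-\xi,l-l^\prime}}^{-2}$ to transfer $A^3_k(\eta,l)$ onto $A(\eta-\xi,l-l^\prime)$, and combine the pointwise bound $\sqrt{\partial_t w_L/w_L}\lesssim\sqrt{\kappa}$ with \eqref{ineq:dtwBasic} (applied to the $w$-factor implicit in the $A^3/A$ ratio) to recover the CK-$w$ weight on the coefficient side. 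The resulting pre-factor $\jap{t/\jap{\eta-\xi,l-l^\prime}}^{-2}\jap{\eta-\xi,l-l^\prime}^{-2}$ is bounded by $\jap{t}^{-1}\jap{\eta-\xi,l-l^\prime}^{-1}$, producing the extra $\jap{t}^{-1}$ which, after squaring, yields the $\jap{t}^{2b-2}$ in \eqref{ineq:PELpressureI}. Applying Lemma \ref{lem:CoefCtrl} to trade $\jap{\grad}^{-1}AG$ for $AC$ completes the bound. The terms $\mathcal{E}_{2,C}$ and $\mathcal{E}_{3,C}$ are handled analogously; the extra derivative on $\Delta_t C$ in $\mathcal{E}_{3,C}$ is balanced by the one-power improvement in $\jap{t}$ of the corresponding $P_{Lo}$ factor, exactly as in Lemma \ref{lem:PEL_NLP120neq}.

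The principal obstacle is the coefficient-high branch: since $w_L(t,k,\eta,l)$ is $k$-dependent and in particular vanishes at $k=0$, while $w(t,\eta)$ is not, neither \eqref{ineq:dtwBasic} nor \eqref{ineq:dtNBasic} transfers the LHS weight directly onto the $X$-independent coefficient. One must instead rely on the stronger temporal decay $\Gamma(3,2,\neq,0)\sim\jap{t/\jap{\cdot}}^{-2}$ (compared to the $\jap{t/\jap{\cdot}}^{-1}$ available for the $A^2/A$ ratio in Lemma \ref{lem:PEL_NLP120neq}) to compensate for the crude pointwise bound on $\sqrt{\partial_t w_L/w_L}$, and carefully exploit the $w(\eta-\xi)/w(\eta)$ ratio implicit in $A^3/A$ to produce the CK-$w$ weight required on $AC$. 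This accounting is the essential novelty relative to Lemma \ref{lem:PEL_NLP120neq}.
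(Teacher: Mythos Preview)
Your outline for the $P$-high and remainder pieces is correct and matches the paper's proof: use \eqref{ineq:dtNBasic} (legitimate because the $X$-frequency is preserved there) together with Lemma \ref{lem:ABasic} to absorb these into the LHS for $c_0$ small.

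However, your treatment of the coefficient-high branch $\mathcal{E}_{j,C}$ has a genuine gap. Once you discard $\sqrt{\partial_t w_L/w_L}$ via the pointwise bound $\lesssim \sqrt{\kappa}$, there is no mechanism left to produce the CK-$w$ weight $\bigl(\sqrt{\partial_t w/w}+\abs{\grad}^{s/2}/\jap{t}^s\bigr)$ on $AC$. The ratio $w(t,\xi)/w(t,\eta)$ hidden in $A^3_k(\eta,l)/A(\xi,l^\prime)$ is handled in Lemma \ref{lem:ABasic} via Lemma \ref{lem:wRat}, which yields only the static factor $e^{K\abs{\eta-\xi}^{1/2}}$; it cannot manufacture a $\sqrt{\partial_t w/w}$ factor, and \eqref{ineq:dtwBasic} \emph{transfers} an existing CK-$w$ weight rather than creating one. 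Your argument therefore bounds $\norm{\mathcal{M}\mathcal{E}_{1,C}}_2^2$ by $\epsilon^2\jap{t}^{2b-2}\jap{\nu t^3}^{-2a}\norm{AC}_2^2$, which is strictly weaker than the statement of the lemma.

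The paper's proof supplies the missing idea: a direct pointwise comparison of the two CK weights,
\[
\sqrt{\frac{\partial_t w_L(t,k,\eta,l)}{w_L(t,k,\eta,l)}} = \frac{\abs{k}^{1/2}\jap{l}^{1/2}}{\bigl(k^2+l^2+\abs{\eta-kt}^2\bigr)^{1/2}} \lesssim \abs{k}^{1/2}\left(\frac{\abs{k,\eta,l}^{s/2}}{\jap{t}^s} + \sqrt{\frac{\partial_t w(t,\eta)}{w(t,\eta)}}\right),
\]
proved via Lemma \ref{lem:dtw} and the arithmetic fact $1/2+s/2>s$. This converts the $w_L$-weight into a $w$-CK weight \emph{on the output frequency} $(k,\eta,l)$; only then is \eqref{ineq:dtwBasic} applicable to shift it to the coefficient frequency. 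After this, Lemma \ref{lem:ABasic} and Lemma \ref{lem:CoefCtrl} give exactly the stated RHS. This pointwise estimate is the essential novelty relative to Lemma \ref{lem:PEL_NLP120neq}, and your proposal does not contain it.
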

\begin{proof} 
As in the proof of Lemma \ref{lem:PEL_NLP120neq}, denote $P = \Delta_t^{-1} \phi_{\neq}$ and write
\begin{align} 
\Delta_L P & = \phi_{\neq} - G(\partial_Y - t\partial_X)^2 P - 2\psi_z(\partial_Y - t\partial_X) \partial_Z P  - \Delta_t C(\partial_Y - t\partial_X) P \nonumber \\ 
& = \phi_{\neq} + \mathcal{E}_1 + \mathcal{E}_2 + \mathcal{E}_3. \label{def:CKNPk} 
\end{align} 
As in Lemma \ref{lem:PEL_NLP120neq}, we will deduce \eqref{ineq:PELpressureI} by applying $\sqrt{\frac{\partial_tw_L}{w_L}}A^3$ to both sides and estimating \eqref{def:CKNPk}. 
Expand each term with a paraproduct in the usual way: 
\begin{align*} 
\mathcal{E}_i & = \mathcal{E}_{i;C} + \mathcal{E}_{i;P} + \mathcal{E}_{i;\mathcal{R}}.   
\end{align*}  
From \eqref{ineq:dtNBasic}, \eqref{ineq:ABasic}, \eqref{ineq:ARemainderBasic}, \eqref{ineq:quadHL}, and \eqref{ineq:quadR}, one can show
\begin{align*} 
\norm{\sqrt{\frac{\partial_t w_L}{w_L}}A^3 \mathcal{E}_{i;P}}_2^2 + \norm{\sqrt{\frac{\partial_t w_L}{w_L}} A^3 \mathcal{E}_{i;\mathcal{R}}}_2^2 & \lesssim c_{0}^2\norm{\sqrt{\frac{\partial_t w_L}{w_L}} A^{3} \Delta_{L} \Delta_t^{-1} \phi_{\neq}}_2^2,  
\end{align*} 
which is then absorbed on the LHS of the estimate on \eqref{def:CKNPk} for $c_{0}$ chosen sufficiently small. 

Turn next to the coefficient error terms and denote
\begin{align*} 
\mathcal{M}(k,\eta,l) = \sqrt{\frac{\partial_t w_L(k,\eta,l)}{w_L(k,\eta,l)}} A^{3}_k(\eta,l)  
\end{align*} 
Notice that by Lemma \ref{lem:dtw}, it follows that (using that $1/2 + s/2 > s$),  
\begin{align} 
\sqrt{\frac{\partial_t w_L}{w_L}} = \frac{\abs{k}^{1/2}\jap{l}^{1/2}}{\left(k^2 + l^2 + \abs{\eta-kt}^2\right)^{1/2}}  \lesssim \abs{k}^{1/2} \left(\frac{1}{\jap{t}^{s}}\abs{k,\eta,l}^{s/2} + \sqrt{\frac{\partial_t w(t,\eta)}{w(t,\eta)}}\right). \label{ineq:ZLPZtriv} 
\end{align} 
For $\mathcal{E}_{1;C}$ we have by Lemma \ref{lem:ABasic}, \eqref{ineq:ZLPZtriv}, \eqref{ineq:dtwBasic} (the low frequency control comes from the hypotheses and Lemma \ref{lem:LossyElliptic}),  
\begin{align*}
\mathcal{M}\mathcal{E}_{1;C} & \lesssim \frac{\epsilon \jap{t}^{b}}{\jap{\nu t^3}^{a}} \sum_{l} \int_\xi \left(\frac{1}{\jap{t}^{s}}\abs{\xi,l^\prime}^{s/2} + \sqrt{\frac{\partial_t w(t,\xi)}{w(t,\xi)}}\right)  \frac{1}{\jap{\xi,l^\prime}\jap{t}} \abs{A \hat{G}(\xi,l^\prime)_{Hi}} Low(k,\eta-\xi,l-l^\prime) d\xi,
\end{align*}
which by \eqref{ineq:quadHL} and Lemma \ref{lem:CoefCtrl} implies 
\begin{align*}
\norm{\mathcal{M}\mathcal{E}_{1;C}}^2_2 & \lesssim \frac{\epsilon^{2}\jap{t}^{2b-2}}{\jap{\nu t^3}^{2a}} \norm{\left(\sqrt{\frac{\partial_t w}{w}} + \frac{\abs{\grad}^{s/2}}{\jap{t}^{s}}\right)AC}_2^2,
\end{align*}
which appears on the RHS of \eqref{ineq:PELpressureI}. This completes the treatment of $\mathcal{E}_{1;C}$. 

The treatment of $\mathcal{E}_{2;C}$ is similar (though one power of time better). The treatment of $\mathcal{E}_{3;C}$ is precisely analogous to $\mathcal{E}_{1;C}$: in the terms where the coefficients are in high frequency, the extra derivative means we cannot gain a power of time from Lemma \ref{lem:ABasic}, however, this is balanced by the fact that the lower frequency factor is one power of $t$ better than in $\mathcal{E}_{1;C}$. The details are omitted for brevity. 
\end{proof}

\bibliographystyle{plain} \bibliography{eulereqns_vlad,IDnLD}

\end{document}